\newcommand{\tensor}{\otimes}
\newcommand{\colim}{\operatorname{colim}}
\newcommand{\Spec}{\operatorname{Spec}}
\newcommand{\Aut}{{\mathbf{Aut}}}
\newcommand{\isomto}{{\stackrel{\sim}{\;\longrightarrow\;}}}
\newcommand{\isomt}{{\stackrel{{\scriptscriptstyle{\sim}}}{\;\rightarrow\;}}}
\newcommand{\sma}{{\scriptstyle{\wedge}}}
\newcommand{\Singaone}{\operatorname{Sing}^{\aone}\!\!}
\newcommand{\Map}{\operatorname{Map}}
\renewcommand{\hom}{\operatorname{Hom}}
\newcommand{\real}{{\mathbb R}}
\newcommand{\cplx}{{\mathbb C}}
\newcommand{\Q}{{\mathbb Q}}
\newcommand{\Z}{{\mathbb Z}}
\newcommand{\aone}{{\mathbb A}^1}
\newcommand{\pone}{{\mathbb P}^1}
\newcommand{\gm}[1]{{{\mathbf G}_{m}^{#1}}}
\newcommand{\MW}{\mathrm{MW}}
\newcommand{\et}{\text{\'et}}
\newcommand{\ho}[1]{\mathrm{Ho}_{#1}}
\newcommand{\DM}{{\mathrm{DM}}}
\newcommand{\bpi}{\bm{\pi}}
\newcommand{\eff}{\operatorname{eff}}
\newcommand{\D}{{\mathrm D}}
\newcommand{\Daone}[1]{{\mathrm D}_{\aone}^{\eff}(#1)}
\newcommand{\Nis}{\operatorname{Nis}}
\newcommand{\Laone}{\operatorname{L}_{\aone}}
\newcommand{\LR}{\operatorname{L}_R}
\newcommand{\Sm}{\mathrm{Sm}}
\newcommand{\Cor}{\mathrm{Cor}}
\newcommand{\Spc}{\mathrm{Spc}}
\newcommand{\Mod}{\mathrm{Mod}}
\newcommand{\Ab}{\mathrm{Ab}}
\newcommand{\Grp}{\mathrm{Grp}}
\newcommand{\K}{{{\mathbf K}}}
\renewcommand{\H}{{{\mathbf H}}}
\newcommand{\sPre}{\mathrm{sPre}}
\newcommand{\im}{\operatorname{im}}
\newcommand{\Addresses}{{
 \bigskip
 \footnotesize

 A.~Asok, Department of Mathematics, University of Southern California, 3620 S. Vermont Ave.,
  Los Angeles, CA 90089-2532, United States; \textit{E-mail address:} \url{asok@usc.edu}

  \medskip

 J.~Fasel, Institut Fourier - UMR 5582, Universit\'e Grenoble Alpes, 100, rue des math\'ematiques, F-38402 Saint Martin d'H\`eres; France \textit{E-mail address:} \url{jean.fasel@gmail.com}

 \medskip

 M.J.~Hopkins, Department of Mathematics, Harvard University, One Oxford Street, Cambridge, MA 02138, United States \textit{E-mail address:} \url{mjh@math.harvard.edu}
}}
\newcounter{intro}
\theoremstyle{plain}
\newtheorem{thm}{Theorem}[subsection]
\newtheorem{lem}[thm]{Lemma}
\newtheorem{cor}[thm]{Corollary}
\newtheorem{prop}[thm]{Proposition}
\newtheorem*{claim*}{Claim}  
\newtheorem*{thm*}{Theorem}
\newtheorem*{problem*}{Problem}
\newtheorem{thmintro}{Theorem}
\theoremstyle{definition}
\newtheorem{defn}[thm]{Definition}
\newtheorem{notation}[thm]{Notation}
\theoremstyle{remark}
\newtheorem{rem}[thm]{Remark}
\newtheorem{remintro}[thmintro]{Remark}
\newtheorem{ex}[thm]{Example}
\numberwithin{equation}{section}
\begin{document}
\pagestyle{fancy}
\renewcommand{\sectionmark}[1]{\markright{\thesection\ #1}}
\fancyhead{}
\fancyhead[LO,R]{\bfseries\footnotesize\thepage}
\fancyhead[LE]{\bfseries\footnotesize\rightmark}
\fancyhead[RO]{\bfseries\footnotesize\rightmark}
\chead[]{}
\cfoot[]{}
\setlength{\headheight}{1cm}

\author{Aravind Asok\thanks{Aravind Asok was partially supported by National Science Foundation Awards DMS-1254892 and DMS-1802060.} \and Jean Fasel \and Michael J. Hopkins\thanks{Michael J. Hopkins was partially supported by National Science Foundation Awards DMS-0906194, DMS-1510417 and DMS-1810917.}}

\title{{\bf Localization and nilpotent spaces in $\aone$-homotopy theory}}
\date{}
\maketitle

\begin{abstract}
For a subring $R$ of the rational numbers, we study $R$-localization functors in the local homotopy theory of simplicial presheaves on a small site and then in $\aone$-homotopy theory.  To this end, we introduce and analyze two notions of nilpotence for spaces in $\aone$-homotopy theory paying attention to future applications for vector bundles.  We show that $R$-localization behaves in a controlled fashion for the nilpotent spaces we consider.  We show that the classifying space $BGL_n$ is $\aone$-nilpotent when $n$ is odd, and analyze the (more complicated) situation where $n$ is even as well.  We establish analogs of various classical results about rationalization in the context of $\aone$-homotopy theory: if $-1$ is a sum of squares in the base field, ${\mathbb A}^n \setminus 0$ is rationally equivalent to a suitable motivic Eilenberg--Mac Lane space, and the special linear group decomposes as a product of motivic spheres.
\end{abstract}

\begin{footnotesize}
\setcounter{tocdepth}{2}
\tableofcontents
\end{footnotesize}

\section{Introduction}
In this paper we build some foundations for localization with respect to a set of primes in the Morel--Voevodsky unstable $\aone$-homotopy theory \cite{MV} and study some applications.  Our approach proceeds largely by analogy with the classical topological story \cite{QuillenRHT, BK, Sullivan}.

The Morel--Voevodsky approach to the construction of the $\aone$-homotopy category is a two stage localization.  One begins with a category of spaces: we choose the model of simplicial presheaves on the category of smooth schemes.  At the first stage, one formally inverts ``Nisnevich local" weak equivalences to obtain a ``local homotopy category".  At the second stage, one formally inverts projection maps with one factor the affine line to obtain the unstable $\aone$-homotopy category.

If $R$ is a subring of the rational numbers, then our goal will be to further localize the $\aone$-homotopy category to obtain an $R$-local $\aone$-homotopy category.  While such localizations formally exist, as in the classical story, we want to isolate a class of spaces for which $R$-localization is ``well-behaved".  In the classical situation, for sufficiently nice spaces (e.g., nilpotent spaces), ``well-behaved" means that localization preserves fiber sequences, and has the effect of simply tensoring homotopy groups with $R$.

Following the two-stage construction of the $\aone$-homotopy category above, we begin in Section~\ref{s:nilpotencelocalhomotopy} by analyzing nilpotence in the more general context of the local homotopy theory of simplicial presheaves on a site (with enough points in the topos-theoretic sense).  In this context, there is a ``strong" notion of nilpotence, which is analogous to the classical notion in the sense that Postnikov towers for spaces can be refined to a tower of principal fibrations.  However, we also introduce broader ``local" notion of nilpotence, where local nilpotence means ``stalkwise" nilpotence, which is natural from the standpoint of local homotopy theory.  Section~\ref{s:nilpotencelocalhomotopy} closes with a quick discussion of $R$-localization in the local homotopy theory of simplicial presheaves.

In Section~\ref{s:nilpotenceinaonehomotopytheory}, we begin our analysis of nilpotence in $\aone$-homotopy theory.  After reviewing foundational structural facts about $\aone$-homotopy sheaves, the bulk of Sections~\ref{ss:aonelocalgrouptheory} and \ref{ss:aonenilpotentgroups} is devoted to an analysis of the relevant ``group theory" in the context of $\aone$-homotopy theory.  Then we analyze variants of the above notions of nilpotence in $\aone$-homotopy theory (see Definition~\ref{defn:aonenilpotentspace}).

A basic motivation for our analysis is that the analogs of spaces that are simple or even simply-connected in classical algebraic topology can fail to be so in $\aone$-algebraic topology.  For example, the projective line or, more generally, the variety parameterizing complete flags in a vector space are $\aone$-nilpotent (see Theorem~\ref{thm:flagmanifoldaonenilpotent}).  However, these examples typically fail to be $\aone$-simple because their $\aone$-fundamental sheaves of groups may fail to be abelian (see Remark~\ref{rem:nonabelianaonenilpotent} for further discussion).

In Section~\ref{s:nilpotenceaonehomology}, we then proceed to analyze $R$-localization in $\aone$-homotopy theory, keeping our motivating examples in mind.  Classically, $R$-local equivalences can be characterized as maps that induce isomorphisms on $R$-local homology.  The natural analog of ``homology" in the context of $\aone$-homotopy theory is the $\aone$-homology theory studied by F. Morel in \cite[\S 6.2]{MField}.  Thus, we analyze morphisms of spaces that induce isomorphisms on $R$-local $\aone$-homology.  

\begin{remintro}
For spaces that are ($\aone$-)simply connected, there is a tight connection between $\aone$-homotopy and $\aone$-homology.  In such sitations, our approach agrees with previous approaches to localization, e.g., \cite[\S 3]{WickelgrenWilliams}, which use equivalent alternative approaches circumventing discussion of $\aone$-homology.  On the other hand, more recently, G. Guzman \cite{Guzman} has studied localization of the unstable motivic homotopy category with respect to $\aone$-homology.  Her focus is rather different from ours, as her goal is to obtain a ``coalgebraic" description of the associated local homotopy category along the lines of \cite{QuillenRHT} or \cite{Goerss}.
\end{remintro}

A significant portion of Section~\ref{s:nilpotenceaonehomology} is devoted to analyzing the relationship between nilpotence and $\aone$-homology.  This section closes with a construction of a model for $R$-localization that behaves ``reasonably" for suitable nilpotent spaces (see Theorems~\ref{thm:aonelocalizationofnilpotentspaces} and \ref{thm:Raonelocalizationoffibersequences}): for example, the effect on higher $\aone$-homotopy sheaves of $R$-localization amounts to tensoring with $R$).

\begin{remintro}
Morel's $\aone$-homology theory does {\em not} coincide with motivic homology defined via Voevodsky's triangulated category of motives \cite{MVW}.  For example, if we consider the motivic Hopf map $\eta: {\mathbb A}^2 \setminus 0 \to \pone$, then the map $\eta_*$ in motivic homology is the zero map but, as Morel observed, non-zero in $\aone$-homology \cite[Remark 3.12]{MICM}.  Moreover, this theory is {\em not} $\pone$-stable, i.e., representable in the stable $\aone$-homotopy category of $\pone$-spectra (if it was, then the $\aone$-homology sheaves of a space in the sense of Morel could be equipped with transfers of a suitable form, but they cannot always be equipped with this structure; see \cite[\S 2]{LevineSandT} for an example).  
\end{remintro}

Section~\ref{s:applications} focuses on the main applications of our constructions: we establish analogs of some classical results of J.-P. Serre.  First, Serre showed that after inverting sufficiently many primes, compact Lie groups split as products of odd-dimensional spheres \cite{Serre}.  Serre obtained this decomposition by analyzing explicitly the computation of the cohomology of compact Lie groups. An algebro-geometric analog of a compact Lie group is a split reductive group; once-punctured affine spaces are algebro-geometric analogs of odd-dimensional spheres.  With these analogies in mind, we establish the following result.

\begin{thmintro}[See Theorem \ref{thm:suslinsplitting}]
\label{thmintro:suslinsplitting}
Suppose $k$ is a field that is not formally real and $n \geq 2$ is an integer.  After inverting $(n-1)!$, there is an $\aone$-weak equivalence of the form
\[
{\mathbb A}^2 \setminus 0 \times {\mathbb A}^3 \setminus 0 \cdots \times {\mathbb A}^n \setminus 0 \cong SL_n.
\]
\end{thmintro}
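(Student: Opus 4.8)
The plan is to argue by induction on $n$, using the first-column morphism $p : SL_n \to \A^n \setminus 0$, which exhibits $SL_n$ as a Zariski-locally trivial torsor under the stabilizer $H := SL_{n-1} \ltimes \A^{n-1}$ of $e_1$; since the normal copy of $\A^{n-1}$ is $\aone$-contractible, $H \simeq_{\aone} SL_{n-1}$. The case $n = 2$ holds without inverting anything: there $(n-1)! = 1$, $H \simeq_{\aone} \ast$, and $p$ is a Zariski-locally trivial $\aone$-bundle, so $SL_2 \simeq_{\aone} \A^2 \setminus 0$. For $n \geq 3$, set $R = \Z[1/(n-1)!]$ and let $c : \A^n \setminus 0 \to B_{\Nis}H$ classify $p$, so that $SL_n \overset{p}{\to} \A^n \setminus 0 \overset{c}{\to} B_{\Nis}H$ is an $\aone$-fiber sequence. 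All three spaces are $\aone$-nilpotent (Definition~\ref{defn:aonenilpotentspace}): $SL_n$ is an $\aone$-connected group object, hence $\aone$-simple; $\A^n \setminus 0$ is $\aone$-$(n{-}2)$-connected with $n-2 \geq 1$, hence $\aone$-simply connected; and $B_{\Nis}H$ is $\aone$-simply connected because $H$ is $\aone$-connected. By Theorem~\ref{thm:Raonelocalizationoffibersequences}, $\LR$ carries this to a fiber sequence, and $\LR SL_n \simeq \operatorname{fib}\!\big(\LR c : \LR(\A^n \setminus 0) \to \LR B_{\Nis}H\big)$.

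The engine of the inductive step is Suslin's theorem on completing unimodular rows: over any commutative ring $A$, if $(a_1, \dots, a_n)$ is unimodular then some matrix in $GL_n(A)$ has $(a_1, \dots, a_{n-1}, a_n^{(n-1)!})$ as its first column. I apply this to the universal unimodular row $(x_1, \dots, x_n)$ over $Q := k[x_1, \dots, x_n, y_1, \dots, y_n]/(\textstyle\sum_i x_i y_i - 1)$; the projection $q : \Spec Q \to \A^n \setminus 0$, $(x,y) \mapsto x$, is a torsor under a vector bundle, hence an $\aone$-weak equivalence. Writing $\psi : \A^n \setminus 0 \to \A^n \setminus 0$ for the morphism $(x_1, \dots, x_n) \mapsto (x_1, \dots, x_{n-1}, x_n^{(n-1)!})$, the row $\psi \circ q$ over $Q$ is, by Suslin's theorem, the first column of some $M \in GL_n(Q)$; right-multiplying $M$ by $\operatorname{diag}(1, \dots, 1, (\det M)^{-1})$ we may assume $M \in SL_n(Q)$. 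Composing $M$ with a homotopy inverse of $q$ gives $\sigma : \A^n \setminus 0 \to SL_n$ in $\ho{\aone}$ with $p \circ \sigma \simeq \psi$, and hence $c \circ \psi \simeq (c \circ p) \circ \sigma \simeq \ast$, since $c \circ p$ is null in the fiber sequence.

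It remains to see that $\psi$ becomes an $\aone$-weak equivalence after $R$-localization; granting this, applying $\LR$ to $c \circ \psi \simeq \ast$ and cancelling the isomorphism $\LR \psi$ forces $\LR c \simeq \ast$. The $\aone$-homology of $\A^n \setminus 0 \simeq_{\aone} S^{n-1}_s \wedge \gm{\wedge n}$ is concentrated in degree $n-1$, where Morel identifies it with $\K^{\MW}_n$, and $\psi$ acts there by multiplication by the quadratic integer $((n-1)!)_\epsilon \in \mathbf{GW}(k)$ of rank $(n-1)!$. Here the hypothesis that $k$ is not formally real is used decisively: then $W(k)$ is $2$-primary torsion, so — as $2 \mid (n-1)!$ — the rank induces an isomorphism $\mathbf{GW}(k) \otimes R \cong R$ under which $((n-1)!)_\epsilon$ becomes the unit $(n-1)!$. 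Thus $\psi_\ast$ is an isomorphism on $R$-local $\aone$-homology, and since the source and target of $\psi$ are $\aone$-nilpotent, $\LR \psi$ is an $\aone$-weak equivalence. Consequently $\LR SL_n \simeq \LR(\A^n \setminus 0) \times \Omega \LR B_{\Nis}H \simeq \LR(\A^n \setminus 0) \times \LR H$. Since $H \simeq_{\aone} SL_{n-1}$, the inductive hypothesis identifies $\operatorname{L}_{\Z[1/(n-2)!]} SL_{n-1}$ with $\operatorname{L}_{\Z[1/(n-2)!]}\!\big(\prod_{j=2}^{n-1}(\A^j \setminus 0)\big)$; as $\Z[1/(n-2)!] \subseteq R$ this is absorbed by $\LR$, and $\LR$ commutes with finite products, so assembling everything gives $\LR SL_n \simeq \LR\!\big(\prod_{j=2}^{n}(\A^j \setminus 0)\big)$, which is the asserted equivalence after inverting $(n-1)!$.

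I expect the hardest step to be the passage from Suslin's algebraic completion to a lift of $\psi$ in $\ho{\aone}$ — matching the representability of $\A^n \setminus 0$ by unimodular rows with the affine-bundle identification $\Spec Q \simeq_{\aone} \A^n \setminus 0$ — together with verifying that the induced endomorphism of $\K^{\MW}_n = \tilde H^{\aone}_{n-1}(\A^n \setminus 0)$ is precisely multiplication by $((n-1)!)_\epsilon$ rather than something subtler; the remainder is bookkeeping with the localization formalism of the preceding sections. It is worth stressing that, unlike Serre's topological theorem, this argument genuinely needs $k$ non-formally-real: the motivic sphere $\A^n \setminus 0$ carries the quadratic refinement $\K^{\MW}_n$ of its homology, and only this hypothesis collapses the Grothendieck--Witt contribution after inverting $2$ and makes $((n-1)!)_\epsilon$ invertible in $\mathbf{GW}(k) \otimes R$.
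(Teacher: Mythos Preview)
Your approach is essentially the paper's: both split the $\aone$-fiber sequence linking $SL_{n-1}$, $SL_n$ and $\A^n\setminus 0\simeq Q_{2n-1}$ by a Suslin matrix, reducing everything to the claim that the associated self-map of $\A^n\setminus 0$ (your $\psi$, the paper's $\phi_n$) becomes invertible after inverting $(n-1)!$.  You phrase the splitting as ``the classifying map $c:\A^n\setminus 0\to B_{\Nis}H$ becomes null,'' while the paper constructs a section of $SL_n\to Q_{2n-1}$; these are equivalent ways to split the same fiber sequence.  Your use of the Suslin row $(a_1,\dots,a_{n-1},a_n^{(n-1)!})$ rather than the paper's $(a_1,a_2,a_3^2,\dots,a_n^{n-1})$ is harmless, since both have motivic Brouwer degree $((n-1)!)_\epsilon$.

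There is, however, a genuine gap in your justification that $\LR\psi$ is an equivalence.  You assert that ``the $\aone$-homology of $\A^n\setminus 0$ is concentrated in degree $n-1$''; this is \emph{not known}.  Vanishing of $\tilde\H_i^{\aone}(\A^n\setminus 0)$ above the Hurewicz degree is an instance of Morel's vanishing conjecture, and even rationally is tied to Beilinson--Soul\'e (see Corollary~\ref{cor:beilinsonsoule} and Remark~\ref{rem:otherreformulations}).  The conclusion you want is nonetheless correct and does not require this.  The paper's route (Proposition~\ref{prop:suslinisalocalweakequivalence}) is to argue directly on mapping sets: for any $R$-$\aone$-local $\mathscr{W}$, the set $[\A^n\setminus 0,\mathscr{W}]_{\aone}$ is a $GW(k)$-module via precomposition, on which $\psi^*$ acts by $((n-1)!)_\epsilon$; since $\mathscr{W}$ is $R$-local the module structure factors through $GW(k)\otimes R\cong R$ (Lemma~\ref{lem:gwwhen2isinvertible}), where $((n-1)!)_\epsilon$ becomes the unit $(n-1)!$.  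Equivalently, one may use the cohomological criterion of Proposition~\ref{prop:homologicalandcohomologicalequivalences}(3), since $H^i(\A^n\setminus 0,\mathbf{M})$ \emph{is} concentrated in degrees $0$ and $n-1$ for strictly $\aone$-invariant $\mathbf{M}$.
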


\begin{remintro}
\label{remintro:suslinsplitting}
Our proof of this fact is rather different from the classical proof as we make no direct cohomology computations.  While the rational Voevodsky motive (or motivic cohomology) of split reductive groups is understood \cite{Gross,Biglari}, the $\aone$-homology has only been analyzed in low-degrees \cite[Appendix A]{MFM}.  We are forced to construct the summands by means of explicit maps and we do this by studying ``Suslin matrices" (introduced in \cite{Suslinstablyfree}).  Appealing to (complex) realization, our proof thus yields a new proof of Serre's classical splitting in this case.
\end{remintro}

Serre also showed that the homotopy groups of $S^{2n-1}$ are finite except in degree equal $2n-1$.  Likewise, the homotopy groups of $S^{2n}$ are finite except in degrees $2n$ and $4n-1$.  The first result is a juxtaposition of two results: one shows that odd-dimensional spheres are rationally equivalent to Eilenberg--Mac Lane spaces by analysis of the homotopy fiber of the map $S^{2n-1} \to K(\Z,2n-1)$ induced by the fundamental class (equivalently, the first non-trivial stage of the Postnikov tower), and one establishes that Eilenberg--Mac Lane spaces have finitely generated cohomology by use of Serre's class theory.  The second result follows from the first by construction of a fiber sequence of the form
\[
S^{2n} \longrightarrow K(\Z,2n) \longrightarrow K(\Z,4n).
\]
We establish some motivic analogs of these results.  To phrase the results in a uniform geometric fashion, we use the split affine quadrics $Q_{2n-1}$ (defined by setting the even-dimensional hyperbolic form equal to $1$) and its corresponding even dimensional analog $Q_{2n}$ (this is not quite obtained by setting the odd-dimensional standard split hyperbolic form equal to $1$, but it is isomorphic to this if $k$ has characteristic not equal to $2$; see the beginning of Section~\ref{ss:Suslin} for more details).

\begin{thmintro}[See Theorems \ref{thm:rationalizedmotivicspheres} and \ref{thm:fibersequenceevenspheres}]
\label{thmintro:rationalizedspheres}
Assume $k$ is a field that is not formally real.
\begin{enumerate}[noitemsep,topsep=1pt]
\item The map
\[
Q_{2n-1} \longrightarrow K(\Z(n),2n-1)
\]
induced by the fundamental class in motivic cohomology is an $\aone$-equivalence for $n = 1$, and a rational $\aone$-weak equivalence if $n \geq 2$.
\item There is a rational $\aone$-fiber sequence of the form
\[
Q_{2n} \longrightarrow K(\Z(n),2n) \longrightarrow K(\Z(2n),4n),
\]
where the first map is the fundamental class in motivic cohomology, and second map is the squaring map.
\end{enumerate}
\end{thmintro}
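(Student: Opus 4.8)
The plan is to follow Serre's classical analysis of the rational homotopy of spheres, with ``rational $\aone$-weak equivalence'' replacing ``rational equivalence'' and Morel's $\aone$-homology replacing singular homology. The first step is structural. The spaces in play are all $\aone$-nilpotent: $Q_{2n-1}$ is $\aone$-weakly equivalent to $\A^n\setminus 0$, which is $(n-2)$-$\aone$-connected; $Q_{2n}$ is $\aone$-weakly equivalent to $\pone^{\wedge n}$, which is $(n-1)$-$\aone$-connected, hence $\aone$-simple, for $n\geq 2$, while $\pone$ itself is $\aone$-nilpotent by Theorem~\ref{thm:flagmanifoldaonenilpotent}; the motivic Eilenberg--Mac Lane spaces are $\aone$-simple; and the homotopy fiber $F$ of $K(\Z(n),2n)\to K(\Z(2n),4n)$ in part~(2) is $\aone$-nilpotent, being the homotopy fiber of a map of $\aone$-simple spaces. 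Hence by Theorems~\ref{thm:aonelocalizationofnilpotentspaces} and~\ref{thm:Raonelocalizationoffibersequences}, $\Q$-localization of these spaces is detected by, and preserves the relevant $\aone$-fiber sequences among, rational $\aone$-homology; so it suffices to prove that the fundamental-class map induces an isomorphism on rational $\aone$-homology in part~(1), and that a comparison map $Q_{2n}\to F$ does so in part~(2).

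The second step uses the hypothesis that $k$ is not formally real: then $-1$ is a sum of squares in $k$ and in every finitely generated extension, all residue fields are non-real, their Witt groups are $2$-primary torsion, and the Witt sheaf is rationally trivial. Consequently the ``minus'' summand of the rational $\aone$-derived category vanishes, ${\mathrm D}^{\eff}_{\aone}(k)_{\Q}$ is identified with $\DM^{\eff}(k,\Q)$, the rational $\aone$-chain complex of a space computes its rational motive, and $\mathbf{K}^{\MW}_*\otimes\Q = \mathbf{K}^M_*\otimes\Q$. The localization (Gysin) triangles for $\A^n\setminus 0\hookrightarrow\A^n$ and for $\pone^{\wedge n}$, together with $CH^{>0}(\Spec k) = 0$, give $\widetilde{M}(Q_{2n-1})_{\Q} = \Z(n)[2n-1]$ and $\widetilde{M}(Q_{2n})_{\Q} = \Z(n)[2n]$ in $\DM^{\eff}(k,\Q)$. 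The remaining essential input is the rational motive of a motivic Eilenberg--Mac Lane space: $\widetilde{M}(K(\Z(q),m))_{\Q}$ is the reduced free graded-commutative algebra on $\Z(q)[m]$, and since a Tate twist is symmetrically even while the shift $[m]$ carries the sign $(-1)^m$, this is $\Z(q)[m]$ when $m$ is odd (the exterior case) and $\bigoplus_{j\ge 1}\Z(jq)[jm]$ when $m$ is even (the polynomial case).

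Granting this, part~(1) is immediate for $n\geq 2$: $\widetilde{M}(K(\Z(n),2n-1))_{\Q} = \Z(n)[2n-1] = \widetilde{M}(Q_{2n-1})_{\Q}$, and the fundamental-class map induces the identity of $\Z(n)[2n-1]$ under these identifications, so it is a rational $\aone$-homology isomorphism and hence a rational $\aone$-weak equivalence; for $n=1$, $K(\Z(1),1) = \gm{}$ and $Q_1 = \{xy=1\}\cong\gm{}$, so the fundamental-class map is an isomorphism of schemes. For part~(2), let $\iota,\tau$ denote the fundamental classes of bidegrees $(2n,n),(4n,2n)$; then $\widetilde{M}(K(\Z(n),2n))_{\Q} = \Q[\iota]$ and $\widetilde{M}(K(\Z(2n),4n))_{\Q} = \Q[\tau]$, and the squaring map realizes the subring inclusion $\Q[\tau] = \Q[\iota^2]\hookrightarrow\Q[\iota]$. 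Since $\Q[\iota]$ is free of rank two over $\Q[\iota^2]$, a motivic Eilenberg--Moore/Serre spectral-sequence argument collapses to give $\widetilde{M}(F)_{\Q} = \Q[\iota]\otimes_{\Q[\iota^2]}\Q = \Z(n)[2n] = \widetilde{M}(Q_{2n})_{\Q}$. The composite $Q_{2n}\to K(\Z(n),2n)\to K(\Z(2n),4n)$ is $\aone$-null because the square of the fundamental class of $\pone^{\wedge n}$ lies in $\widetilde{H}^{4n,2n}(\pone^{\wedge n}) = H^{2n,n}(\Spec k) = CH^n(\Spec k) = 0$, so there is a lift $Q_{2n}\to F$; it restricts to the fundamental class (hence an isomorphism) on bottom $\aone$-homology, so by Voevodsky's cancellation theorem the induced endomorphism of the common rational motive $\Z(n)[2n]$ is the identity, and the lift is a rational $\aone$-homology isomorphism, hence a rational $\aone$-weak equivalence. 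Together with Theorem~\ref{thm:Raonelocalizationoffibersequences} this produces the asserted rational $\aone$-fiber sequence.

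The main obstacle is the second-step input: computing the rational $\aone$-homology of the motivic Eilenberg--Mac Lane spaces and identifying it with the free graded-commutative algebra on the fundamental class. Classically this is the transgression computation of $H_*(K(\Z,m);\Q)$ via the Serre spectral sequence; here one needs both the identification of rational $\aone$-homology with rational motivic homology in a form applicable to these infinite-dimensional spaces---this is exactly where the non-formally-real hypothesis is indispensable, since without it $\aone$-homology differs genuinely from motivic homology and Witt-theoretic contributions appear---and a motivic Postnikov-tower/spectral-sequence computation of the rational motive of $K(\Z(q)[m])$, carefully tracking Tate twists and Koszul signs. Secondary points, routine given the machinery of Sections~\ref{s:nilpotenceinaonehomotopytheory}--\ref{s:nilpotenceaonehomology} but still to be verified, are the $\aone$-nilpotence of $F$ and the compatibility of the fundamental-class maps with the $\aone$-Hurewicz homomorphism on bottom homotopy sheaves.
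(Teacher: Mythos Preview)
Your strategy reverses the logical flow of the paper and assumes precisely the computation that the paper works hard to avoid. You propose to compute the rational $\aone$-homology (equivalently, the rational motive) of $K(\Z(q),m)$ as a free graded-commutative algebra on the fundamental class, and then read off both statements. But as you yourself flag, this is ``the main obstacle,'' and the paper explicitly explains (see the remarks after Theorem~\ref{thmintro:rationalizedspheres} and before Theorem~\ref{thm:fibersequenceevenspheres}) that no usable motivic Serre spectral sequence is available for this purpose. Your identification ${\mathrm D}^{\eff}_{\aone}(k)_{\Q}\simeq\DM^{\eff}(k,\Q)$ for $k$ not formally real is also not something you can take for granted unstably: Morel's result is about the $\pone$-stable category, and the $\aone$-derived category used here is neither $\pone$-stable nor known to agree with motives rationally in the generality you need (cf.\ the introductory remark that $\aone$-homology is not representable in $\SH(k)$ and differs from motivic homology). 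So both of your key second-step inputs are unestablished, and without them the argument does not go through.

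The paper's route is quite different and bypasses any direct analysis of $K(\Z(q),m)$. For part~(1), it first proves the Suslin splitting $\prod_{i=2}^n Q_{2i-1}\to SL_n$ is an $\aone$-equivalence after inverting $(n-1)!$ (Theorem~\ref{thm:suslinsplitting}), then uses the rational degeneration of the motivic spectral sequence to show $c:BGL\to\prod_n K(\Z(n),2n)$ is a $\Q$-$\aone$-equivalence (Proposition~\ref{prop:stablerationalequivalences}); combining these gives a $\Q$-$\aone$-equivalence $\prod_{n\geq 2}Q_{2n-1}\to\prod_{n\geq 2}K(\Z(n),2n-1)$, from which the individual map is extracted as a retract. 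For part~(2), the paper computes $\bpi_i^{\aone}(Q_{2n})_{\Q}$ via the $\aone$-EHP sequence (Proposition~\ref{prop:evenspheres}), computes $\bpi_i^{\aone}(\mathscr{F})_{\Q}$ from the long exact sequence together with part~(1), and matches the two summand by summand. In effect, the paper \emph{deduces} the cohomology of $K(\Z(n),2n-1)$ (Corollary~\ref{cor:rationalcohomologyofmotivicemspaces}) from Theorem~\ref{thm:rationalizedmotivicspheres}, rather than using it as input. If you want to salvage your approach, you would need an independent computation of $\widetilde M(K(\Z(q),m))_{\Q}$ and a careful justification of the $\aone$-homology/motivic-homology comparison at the unstable level; absent those, the paper's K-theory-and-Suslin-matrices route is the one that actually closes.
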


\begin{remintro}
The classical proof of the first result uses the Serre spectral sequence to understand the cohomology of the $(2n-1)$-fold connective cover of $S^{2n-1}$.  In the motivic context, we do not have a Serre spectral sequence of the necessary form.  Our proof is by necessity quite different from the classical proof and proceeds by appeal to Theorem~\ref{thmintro:suslinsplitting} and rational degeneration of the motivic spectral sequence linking algebraic K-theory and motivic cohomology.  Moreover, our result is an unstable analog of Morel's equivalence between the rationalized stable $\aone$-homotopy category and Voevodsky's category of motives (assuming $-1$ is a sum of squares in $k$).  Similar comments apply to the even-dimensional case.
\end{remintro}

After Theorem~\ref{thmintro:rationalizedspheres}, the motivic analog of the finite-generation of the cohomology of classical Eilenberg--Mac Lane spaces is closely related to the Beilinson--Soul\'e vanishing conjecture and the unstable $\aone$-homotopy theory of Voevodsky's motivic Eilenberg--Mac Lane spaces.  We refer the reader to Corollary~\ref{cor:beilinsonsoule} and the surrounding material for further discussion of this relationship.

With the exception of the case $n = 1$ of Theorem~\ref{thmintro:rationalizedspheres}(2), all of the results stated above could be established with a more restrictive version of $R$-localization (i.e., for ``simple" spaces).  Nevertheless, allowing nilpotent spaces yields considerable additional flexibility, e.g., good control of fiber sequences under $R$-localization (see Remark~\ref{rem:homotopyfiberofmapofsimplespacescanbenilpotent} for further explanations).  As a consequence, the bulk of this paper is devoted to developing localization in generality suitable for future geometric applications.  For us, key among these observations is the fact that $BGL_n$ has a ``well-behaved" rationalization, either if $n$ is odd or if $n$ is even and $-1$ is a sum of squares in the base field (combine Theorems~\ref{thm:bglnoddaonesimple}, \ref{thm:bglevenlocallyaonenilpotent} and \ref{thm:aonelocalizationofnilpotentspaces}); these results use the full strength of the techniques developed here.  A different set of applications will be presented in \cite{AsokFaselHopkinsplocal}, where we will construct low rank vector bundles on smooth affine varieties that are $\aone$-weakly equivalent to projective spaces.  We refer the reader to the introduction to each section for a more detailed description of its contents.

\subsubsection*{Acknowledgements}
The authors would like to thank Marc Hoyois for various discussions around the notion of nilpotence in $\aone$-homotopy theory and Fr{\' e}d{\' e}ric D{\'e}glise for discussions around the Beilinson--Soul{\'e} vanishing conjecture (in particular, for providing us with Lemma~\ref{lem:BSsubfields}).  The authors owe an intellectual debt to Peter Hilton, Guido Mislin and Joseph Roitberg: while there is very little explicit reference to the book \cite{HMR} elsewhere in this work, our presentation of nilpotence is inspired by theirs.  Finally, the authors would like to thank the referees for numerous corrections, questions and helpful comments that clarified the exposition.

\subsubsection*{Preliminaries/Notation}
Throughout this paper $k$ will denote a fixed base field and $R$ will be a commutative unital ring (typically a subring of the field of rational numbers $\Q$).  We remind the reader that a field $k$ is called {\em formally real} if $-1$ is not a sum of squares in $k$ and {\em not formally real} otherwise; these notions will reappear later in the paper and we refer the reader to \cite[\S 95]{EKM} for a discussion of some properties of formally real fields.

If $\mathbf{C}$ is a small category, then we use the following notation:
\begin{itemize}[noitemsep,topsep=1pt]
\item $\Ab(\mathbf{C})$ - the category of presheaves of abelian groups on $\mathbf{C}$;
\item $\Mod_R(\mathbf{C})$ - the category of presheaves of $R$-modules on $\mathbf{C}$;
\item $\Grp(\mathbf{C})$ - the category of presheaves of groups on $\mathbf{C}$;
\item $\sPre(\mathbf{C})$ for the category of simplicial presheaves on $\mathbf{C}$.
\end{itemize}
If $\mathbf{C}$ is equipped with a Grothendieck topology $\tau$, then we use the following notation:
\begin{itemize}[noitemsep,topsep=1pt]
\item $\Ab_{\tau}(\mathbf{C})$ - the full subcategory of $\Ab(\mathbf{C})$ consisting of $\tau$-sheaves of abelian groups;
\item $\Mod_{R,\tau}(\mathbf{C})$ - the full subcategory of $\Mod_R(\mathbf{C})$ consisting of $\tau$-sheaves of $R$-modules.
\item $\Grp_{\tau}(\mathbf{C})$ - the full subcategory of $\Grp(\mathbf{C})$ consisting of $\tau$-sheaves of groups.
\end{itemize}

We will typically use script letters $\mathscr{X},\mathscr{Y}$ for objects in $\sPre(\mathbf{C})$.  If $\mathbf{C}$ is a pointed category with terminal object $\ast$, (i.e., $\mathbf{C}$ has an initial object and the canonical map from the initial to the terminal object is an isomorphism), then we write
\begin{itemize}
\item $\sPre(\mathbf{C})_{\ast}$ for the category of pointed objects in $\sPre(\mathbf{C})$,
\end{itemize}
i.e., pairs $(\mathscr{X},x)$ where $\mathscr{X}$ and $x: \ast \to \mathscr{X}$ is a morphism.

If $(\mathbf{C},\tau)$ is a site, $\sPre(\mathbf{C})$ can be equipped with a $\tau$-local model structure (see Section \ref{ss:nilpotenceinlocalhomotopy} for more details); we write $\ho{\tau}(\mathbf{C})$ for the associated homotopy category and $\ho{\tau,\ast}(\mathbf{C})$ for the pointed homotopy category.  We write $\mathrm{R}_{\tau}$ for a fixed functorial fibrant replacement functor for the $\tau$-local model structure and given $\mathscr{X},\mathscr{Y} \in \sPre(\mathbf{C})$, we write $\Map(\mathscr{X},\mathscr{Y})$ for the associated derived mapping space.

If $\mathbf{G} \in \Grp_{\tau}(\mathbf{C})$, we write $B\mathbf{G} \in \sPre(\mathbf{C})$ for the associated classifying space; in brief, $\mathbf{G}$-torsors on spaces may be described in terms of maps in $\ho{\tau}(\mathbf{C})$; we refer the reader to \cite[Theorem 9.8]{Jardine} in particular, and \cite[Chapter 9]{Jardine} more generally for further discussion of this point.  We also write $B_{\tau}\mathbf{G}$ for a $\tau$-local fibrant replacement for $B\mathbf{G}$; we refer the reader to \cite[\S 2.3]{AHWII} for more discussion of this notation.  

Similarly, if $\mathbf{A} \in \Ab_{\tau}(\mathbf{C})$, and $n \geq 0$ is an integer, we write $K(\mathbf{A},n) \in \sPre(\mathbf{C})$ for the associated Eilenberg--Mac Lane space \cite[p. 212]{Jardine}; such objects represent sheaf cohomology with coefficients in $\mathbf{A}$ in $\ho{\tau}(\mathbf{C})$ \cite[Theorem 8.26]{Jardine}.  We will also use the notation $K(\mathbf{A},n)$ when $\mathbf{A}$ is a chain complex of $\tau$-sheaves of abelian groups situated in positive (homological degrees).  In this case, $K(\mathbf{A},n)$ represents hypercohomology with coefficients in $\mathbf{A}$; see \cite[Chapter 8]{Jardine} for more details.

We write $\Sm_k$ for the category of schemes that are separated, smooth and have finite type over $\Spec k$.  When $\mathbf{C} = \Sm_k$, we set $\Spc_k := \sPre(\Sm_k)$.  Typically, we will take $\mathbf{C} = \Sm_k$ equipped with the Nisnevich topology, and if no topology is mentioned, the word ``sheaf" should be taken to mean Nisnevich sheaf.  We set $\Ab_k := \Ab_{\Nis}(\Sm_k)$ and $\Grp_k := \Grp_{\Nis}(\Sm_k)$.  When $(\mathbf{C},\tau) = (\Sm_k,\Nis)$, the category $\ho{\Nis}(\Sm_k)$ ($\ho{\Nis,\ast}(\Sm_k)$) will be called the {\em (pointed) simplicial homotopy category}.

The Morel--Voeovodsky $\aone$-homotopy category $\ho{k}$ is obtained from $\ho{\Nis}(\Sm_k)$ by a further Bousfield localization (see Section \ref{ss:aonelocalgrouptheory} for more details); the pointed $\aone$-homotopy category is obtained similarly, beginning with the pointed simplicial homotopy category $\ho{\Nis,\ast}(\Sm_k)$ instead.  If $\mathscr{X},\mathscr{Y} \in \Spc_k$, we set
\[
[\mathscr{X},\mathscr{Y}]_{\aone} := \hom_{\ho{k}}(\mathscr{X},\mathscr{Y}),
\]
and similar notation will be used for pointed $\aone$-homotopy classes of maps.

\section{Nilpotence and $R$-localization in local homotopy theory}
\label{s:nilpotencelocalhomotopy}
Suppose $R \subset \Q$ is a subring.  The main goal of this section is to study a notion of $R$-localization of simplicial presheaves, paying close attention to the behavior of localization for ``nilpotent spaces" in a sense we introduce here.

In Section \ref{ss:nilpotentandrlocalgroups}, we begin by formulating sheaf-theoretic notions of nilpotent and $R$-local sheaves of groups.  In Section \ref{ss:nilpotenceinlocalhomotopy}, we review basic constructions related to the local homotopy theory of simplicial presheaves on a small Grothendieck site \cite{Jardine}.  In the case where the site has ``enough points", we introduce various notions of nilpotent spaces (or maps) in this context.  In Section \ref{ss:rlocalizationofsimplicialpresheaves}, we analyze functorial $R$-localizations for nilpotent spaces in the situation under consideration.

Presumably, much of what we discuss in this section could be formulated in an arbitrary $\infty$-topos \cite{Lurie}, but we restrict our attention to simplicial presheaves on a small Grothendieck site both to simplify the discussion and because that is all we will use in our applications to $\aone$-homotopy theory.

Versions of some facts here about $R$-localization were worked out in \cite[\S 3]{WickelgrenWilliams}, though they mainly analyze localization functors for the analogs of simply connected or, more generally, simple spaces.  Moreover, it was observed there that various inequivalent versions of the theory exist, just as in the classical situation.  Indeed, for non-nilpotent spaces, there are a number of inequivalent $R$-localization functors (see Remark~\ref{rem:inequivalentversions} for further discussion and references).

In any case, upon choosing a model of functorial $R$-localization for simplicial sets, one may then define $R$-localization for pointed simplicial presheaves sectionwise.  Then, we check that $R$-localization behaves well with respect to local weak equivalences.  Nothing in this section should be surprising to experts; we collected the relevant facts here for lack of a better reference.

\subsection{Nilpotent (pre)sheaves of groups}
\label{ss:nilpotentandrlocalgroups}
Suppose throughout this section that $\mathbf{C}$ is a small category.  If $\mathbf{C}$ is equipped with a Grothendieck topology, then we formulate notions of nilpotence for sheaves of groups and actions of sheaves of groups (see Definitions~\ref{defn:nilpotentsheaf} and \ref{defn:localnilpotence}) and study the basic properties of such sheaves of groups.  We then study $R$-local sheaves of groups when $R \subset \Q$ is a subring and discuss the interplay of nilpotence and the property of being $R$-local; we close with a definition of $R$-nilpotent sheaf of groups (see Definition~\ref{defn:Rnilpotentsheafofgroups}).

\subsubsection*{Nilpotent sheaves of groups and actions}
If $\mathbf{H},\mathbf{G} \in \Grp(\mathbf{C})$, then an action of $\mathbf{G}$ on $\mathbf{H}$ is a homomorphism of presheaves of groups $\mathbf{G} \to \Aut(\mathbf{H})$.  If $(\mathbf{C},\tau)$ is a site (i.e., $\tau$ is a Grothendieck topology on $\mathbf{C}$), then given a sheaf of groups $\mathbf{H}$, $\Aut(\mathbf{H})$ is automatically a sheaf of groups.  Therefore, an action of a sheaf of groups $\mathbf{G}$ on a sheaf of groups $\mathbf{H}$ is just an action of the underlying presheaves.  We now define a notion of nilpotence for actions.

\begin{defn}
\label{defn:nilpotentsheaf}
Assume $(\mathbf{C},\tau)$ is a site.  Suppose $\mathbf{G},\mathbf{H} \in \Grp(\mathbf{C})$.
\begin{enumerate}[noitemsep,topsep=1pt]
\item Given an action of $\mathbf{G}$ on $\mathbf{H}$, a {\em $\mathbf{G}$-central series for the given action on $\mathbf{H}$} is a finite decreasing filtration
    \[
    \mathbf{H} = \mathbf{H}_0 \supset \cdots \supset \mathbf{H}_n = 1
    \]
    of $\mathbf{H}$ by $\mathbf{G}$-stable normal subgroup presheaves such that
    \begin{enumerate}[noitemsep,topsep=1pt]
    \item the successive subquotients $\mathbf{H}_i/\mathbf{H}_{i+1}$ are presheaves of abelian groups, and
    \item the induced action of $\mathbf{G}$ on each subquotient is trivial.
    \end{enumerate}
\item An action of $\mathbf{G}$ on $\mathbf{H}$ is called {\em nilpotent} if there exists a $\mathbf{G}$-central series for the action.
\item If $\mathbf{G},\mathbf{H} \in \Grp_{\tau}(\mathbf{C})$, then an action of $\mathbf{G}$ on $\mathbf{H}$ is called {\em nilpotent} if the underlying action as presheaves is nilpotent.
\item A presheaf of groups $\mathbf{G}$ will be called nilpotent if the conjugation action of $\mathbf{G}$ on itself is a nilpotent action.
\item If $\mathbf{G} \in \Grp_{\tau}(\mathbf{C})$ then $\mathbf{G}$ will be called nilpotent if it is nilpotent as a presheaf.
\item Given a nilpotent action of $\mathbf{G}$ on $\mathbf{H}$ and a $\mathbf{G}$-central series for the action, the {\em length} of the series is the smallest integer $c$ such that $\mathbf{H}_{c'} = 1$ for all $c' \geq c$.
\item The {\em nilpotence class} of a nilpotent action is the minimum length of a $\mathbf{G}$-central series for the action.
\end{enumerate}
\end{defn}

\begin{rem}
\label{rem:stalksandnilpotence}
Suppose $(\mathbf{C},\tau)$ is a site with enough points.  If a sheaf of groups $\mathbf{G}$ acts on a sheaf of groups $\mathbf{H}$, then the action is trivial if and only if the action is stalkwise trivial since one may check that $\mathbf{G} \to \mathbf{Aut}(\mathbf{H})$ is the trivial homomorphism stalkwise.  It follows that one can check whether a given filtration of $\mathbf{H}$ by $\mathbf{G}$-stable subsheaves is a $\mathbf{G}$-central series stalkwise.
\end{rem}

In the sheaf-theoretic context, we also make the following definition.

\begin{defn}
\label{defn:localnilpotence}
Suppose $(\mathbf{C},\tau)$ is a site with enough points.  An action of a sheaf of groups $\mathbf{G}$ on a sheaf of groups $\mathbf{H}$ is {\em locally nilpotent} if there exists a decreasing filtration
\[
\mathbf{H} = \mathbf{H}_0 \supset \mathbf{H}_1 \supset \cdots
\]
by $\mathbf{G}$-stable normal subgroup sheaves such that for every point $s$ of $(\mathbf{C},\tau)$ the induced action of the group $s^*\mathbf{G}$ on the group $s^*\mathbf{H}$ is nilpotent (in particular, the induced filtration on each stalk is finite).  A filtration as above will be called a {\em locally finite $\mathbf{G}$-central series for the action of $\mathbf{G}$ on $\mathbf{H}$}.
\end{defn}

One interesting difference between the classical story for nilpotent actions and the sheaf-theoretic situation we consider is that local nilpotence need not imply nilpotence in general, essentially because the nilpotence class of the action can vary with the stalks.

\begin{ex}
\label{ex:pointwisenilpotent}
Consider the unramified Grothendieck--Witt sheaf $\mathbf{GW}$; this is a Zariski sheaf of rings on $\Sm_k$ (in fact it is already a Nisnevich sheaf).  This sheaf has a natural action by the sheaf $\gm{}$ of units.  Since we will only be interested in phenomena at the level of stalks, we mention that separable, finitely generated extensions of the base-field $k$ are examples of stalks in the Nisnevich topology on $\Sm_k$.

Fix a separable, finitely generated extension $L/k$.  We now describe the action of $\gm{}$ on $\mathbf{GW}$ at the level of $L$-points.  The abelian group of sections $\mathbf{GW}(L)$ coincides with $GW(L)$, the usual Grothendieck--Witt group of (stable) isomorphism classes of symmetric bilinear forms over $L$.  There is an evident morphism $\gm{}(L) \to \mathbf{GW}(L)$ that sends a unit $u$ to the (invertible) $1$-dimensional form $\langle u \rangle$.  The action of $\gm{}$ is  given by tensoring a given form with the $1$-dimensional form $\langle u \rangle$.

Consider the filtration of $GW(L)$ by powers of the fundamental ideal $I^n(L)$.  Note that $I(L)$ is additively generated by $1$-fold Pfister forms:
\[
\langle \langle u \rangle \rangle := 1 - \langle u \rangle, u \in L^{\times}
\]
by appeal to \cite[Corollary 4.9]{EKM}. Since $I(L)$ is generated by $1$-fold Pfister forms, it follows that $I^n(L)$ is additively generated by ($n$-fold) Pfister forms.  A straightforward calculation shows that multiplication by $\langle u \rangle$ is trivial on the successive subquotients $I^n(L)/I^{n+1}(L)$ \cite[Lemma E.1.3]{FaselChowWitt}.  It follows from these facts that the filtration $I^n(L)$ is the $\gm{}(L)$-lower central series for this action (see, e.g., \cite[\S I.4]{HMR} for the definition of the latter notion).

Assume furthermore that $k$ is not formally real and that $k$ has finite \'etale $2$-cohomological dimension.  The Arason--Pfister Hauptsatz shows that $I^n(L)$ is then trivial for $n$ large enough (depending on the \'etale $2$-cohomological dimension of $L$; see \cite[Theorem 6.18]{EKM} and \cite[Proposition 5.1]{AsokFaselThreefolds} for details).  Thus, as $L$ varies through the generic points of smooth $k$-schemes, we see that the length of the $\gm{}(L)$-lower central series varies.  Later, we will see that variants of this example arise naturally in geometry (see Example~\ref{ex:pointwiseaonenilpotent}, Proposition~\ref{prop:p2npointwiseaonenilpotent} and Theorem~\ref{thm:bglevenlocallyaonenilpotent} for further discussion).
\end{ex}

\subsubsection*{Permanence properties for (locally) nilpotent actions}
Suppose $(\mathbf{C},\tau)$ is a site.  If $\varphi: \mathbf{G}' \to \mathbf{G}$ is a homomorphism of $\tau$-sheaves of groups, and $\mathbf{H}$ is a sheaf of groups with a $\mathbf{G}$-action, then $\mathbf{H}$ inherits a $\mathbf{G}'$-action via precomposition with $\varphi$.

\begin{lem}
\label{lem:functorialityofnilpotence}
If $\varphi: \mathbf{G}' \to \mathbf{G}$ is a homomorphism of sheaves of groups, and if $\mathbf{H}$ carries a (locally) nilpotent $\mathbf{G}$-action, then the $\mathbf{G}'$-action induced by $\varphi$ is (locally) nilpotent as well.
\end{lem}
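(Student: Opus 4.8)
The plan is to observe that the \emph{same} filtration witnessing (local) nilpotence of the $\mathbf{G}$-action already witnesses (local) nilpotence of the induced $\mathbf{G}'$-action, so that nothing new has to be constructed. First I would treat the nilpotent case. Suppose $\mathbf{H} = \mathbf{H}_0 \supset \cdots \supset \mathbf{H}_n = 1$ is a $\mathbf{G}$-central series for the given action in the sense of Definition~\ref{defn:nilpotentsheaf}. Each $\mathbf{H}_i$ is a $\mathbf{G}$-stable normal subgroup presheaf; since $\mathbf{G}'$ acts on $\mathbf{H}$ by first applying $\varphi$ and then the $\mathbf{G}$-action, every $\mathbf{G}$-stable subgroup presheaf is a fortiori $\mathbf{G}'$-stable, so the $\mathbf{H}_i$ form a decreasing filtration by $\mathbf{G}'$-stable normal subgroup presheaves. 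Condition (a) of Definition~\ref{defn:nilpotentsheaf}, that each $\mathbf{H}_i/\mathbf{H}_{i+1}$ is a presheaf of abelian groups, is a property of the groups alone and is untouched. For condition (b), the action of $\mathbf{G}'$ on $\mathbf{H}_i/\mathbf{H}_{i+1}$ is the composite of $\varphi$ with the action of $\mathbf{G}$ on that subquotient, which is trivial by hypothesis; hence the $\mathbf{G}'$-action on the subquotient is trivial as well. Thus $\mathbf{H}_\bullet$ is a $\mathbf{G}'$-central series and the induced action is nilpotent. When $\mathbf{G}',\mathbf{G},\mathbf{H}$ are $\tau$-sheaves one works with the underlying presheaf actions throughout, as in parts (3) and (5) of Definition~\ref{defn:nilpotentsheaf}, so there is nothing further to check.

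For the locally nilpotent case I would argue in the same spirit, now stalkwise. Assume $(\mathbf{C},\tau)$ has enough points and let $\mathbf{H} = \mathbf{H}_0 \supset \mathbf{H}_1 \supset \cdots$ be a locally finite $\mathbf{G}$-central series as in Definition~\ref{defn:localnilpotence}. As above each $\mathbf{H}_i$ is $\mathbf{G}'$-stable. Fix a point $s$ of $(\mathbf{C},\tau)$. Since $s^*$ is exact and commutes with the formation of the relevant subgroups and quotients, $s^*\mathbf{H}_i$ is normal in $s^*\mathbf{H}_{i-1}$ with quotient $s^*(\mathbf{H}_{i-1}/\mathbf{H}_i)$, and the $s^*\mathbf{G}'$-action on $s^*\mathbf{H}$ is the composite of $s^*\varphi \colon s^*\mathbf{G}' \to s^*\mathbf{G}$ with the $s^*\mathbf{G}$-action on $s^*\mathbf{H}$. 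By hypothesis the latter action is nilpotent with the induced filtration $s^*\mathbf{H}_\bullet$ finite; applying the conclusion of the previous paragraph to the homomorphism $s^*\varphi$ of (discrete) groups --- equivalently, invoking the classical fact that a nilpotent action restricts along any group homomorphism to a nilpotent action --- shows the $s^*\mathbf{G}'$-action on $s^*\mathbf{H}$ is nilpotent, with the same finite filtration. As $s$ was arbitrary, $\mathbf{H}_\bullet$ is a locally finite $\mathbf{G}'$-central series, so the induced action is locally nilpotent.

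I do not expect any genuine obstacle here: the lemma is a formal consequence of the fact that ``acting through $\varphi$'' only shrinks the image of the acting group, so that $\mathbf{G}$-stability implies $\mathbf{G}'$-stability and triviality of the $\mathbf{G}$-action on a subquotient implies triviality of the $\mathbf{G}'$-action. The only point deserving a word of care is the stalkwise step, where one must note that exactness of $s^*$ preserves normality and the abelian quotients and that the $s^*\mathbf{G}$-action on these stalks is precisely the one appearing in Definition~\ref{defn:localnilpotence}; with that in hand the reduction to the group-theoretic statement is immediate, and no central series for $\mathbf{G}'$ other than the given one is ever needed.
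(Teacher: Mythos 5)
Your proposal is correct and follows exactly the paper's argument: the given $\mathbf{G}$-central series (resp. locally finite $\mathbf{G}$-central series) is itself a $\mathbf{G}'$-central series for the restricted action, since $\mathbf{G}$-stability and triviality on subquotients pass to the action through $\varphi$. The paper states this in one line; your elaboration, including the stalkwise check, is a faithful expansion of the same idea.
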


\begin{proof}
A (finite) $\mathbf{G}$-central series for $\mathbf{H}$ gives a (finite) $\mathbf{G}'$-central series as well by restriction.
\end{proof}

The next result is a sheaf-theoretic analog of \cite[II.4.2]{BK}.

\begin{lem}
\label{lem:shortexactsequencesofnilpotentactions}
Suppose $\mathbf{G}$ is a sheaf of groups, $\mathbf{H},\mathbf{H}'$ and $\mathbf{H}''$ are sheaves of groups equipped with an action of $\mathbf{G}$ and we are given a short exact sequence of the form
\[
1 \longrightarrow \mathbf{H}' \longrightarrow \mathbf{H} \longrightarrow \mathbf{H}'' \longrightarrow 1,
\]
where the homomorphisms in the exact sequence are $\mathbf{G}$-equivariant.  The action of $\mathbf{G}$ on $\mathbf{H}$ is (locally) nilpotent if and only if the actions of $\mathbf{G}$ on $\mathbf{H}'$ and $\mathbf{H}''$ are so.
\end{lem}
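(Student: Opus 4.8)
The plan is to reduce the statement to the classical extension property for nilpotent actions, \cite[II.4.2]{BK}, by passing to stalks. The preliminary step is a dictionary between nilpotence of a sheaf of groups $\mathbf{H}$ carrying a $\mathbf{G}$-action and nilpotence of its stalks: I would show that $\mathbf{H}$ is $\mathbf{G}$-nilpotent of class $\le c$ if and only if each stalk $s^{*}\mathbf{H}$ is $s^{*}\mathbf{G}$-nilpotent of class $\le c$, and that $\mathbf{H}$ is locally $\mathbf{G}$-nilpotent if and only if each $s^{*}\mathbf{H}$ is $s^{*}\mathbf{G}$-nilpotent (now with no uniform bound on the class). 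The tool is a sheafified ``fastest $\mathbf{G}$-central series'': put $\mathbf{H}^{(0)} = \mathbf{H}$ and let $\mathbf{H}^{(i+1)}$ be the normal subgroup sheaf of $\mathbf{H}$ generated by the commutators $[\mathbf{H}^{(i)},\mathbf{H}^{(i)}]$ together with the elements $\sigma(g)g^{-1}$ for $g$ a section of $\mathbf{H}^{(i)}$ and $\sigma$ a section of $\mathbf{G}$ (sheafifying the evident presheaf-level construction). One checks that each $\mathbf{H}^{(i)}$ is $\mathbf{G}$-stable, that $\mathbf{H}^{(\bullet)}$ is a $\mathbf{G}$-central series as soon as it terminates and is dominated termwise by any $\mathbf{G}$-central series of $\mathbf{H}$, and --- using that the stalk functors $s^{*}$ are exact and that sheafification does not change stalks --- that $s^{*}\mathbf{H}^{(i)} = (s^{*}\mathbf{H})^{(i)}$. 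Since a sheaf of groups with all stalks trivial is trivial, whether $\mathbf{H}^{(\bullet)}$ terminates at a given stage (resp.\ at some stage at each stalk) is detected stalkwise, and this yields both equivalences; in the local case the chain $\{\mathbf{H}^{(i)}\}_{i\ge 0}$ is itself the required locally finite $\mathbf{G}$-central series. (It is convenient to note at the outset that any $\mathbf{G}$-central series of a sheaf of groups by subgroup \emph{presheaves} may be sheafified to one by subgroup sheaves, so that Remark~\ref{rem:stalksandnilpotence} is directly available.)

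For the implication ``$\mathbf{H}$ (locally) $\mathbf{G}$-nilpotent $\Rightarrow$ $\mathbf{H}'$ and $\mathbf{H}''$ are'' one can argue directly. Given a $\mathbf{G}$-central series $\mathbf{H} = \mathbf{H}_{0} \supset \mathbf{H}_{1} \supset \cdots$, the intersections $\mathbf{H}' \cap \mathbf{H}_{i}$ (formed inside $\mathbf{H}$) are $\mathbf{G}$-stable normal subgroup sheaves of $\mathbf{H}'$ whose successive subquotients embed into those of $\mathbf{H}_{\bullet}$ (second isomorphism theorem, checked on stalks), hence give a $\mathbf{G}$-central series for $\mathbf{H}'$; dually, the image subsheaves of the $\mathbf{H}_{i}$ in $\mathbf{H}''$ have subquotients that are quotients of those of $\mathbf{H}_{\bullet}$ and give a $\mathbf{G}$-central series for $\mathbf{H}''$. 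Abelianness of the subquotients, triviality of the induced $\mathbf{G}$-action, and finiteness (or local finiteness) of the filtrations are all inherited.

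For the converse, ``$\mathbf{H}'$ and $\mathbf{H}''$ (locally) $\mathbf{G}$-nilpotent $\Rightarrow$ $\mathbf{H}$ is'', I would invoke the dictionary of the first paragraph to reduce to the corresponding statement about stalks: a short exact sequence of sheaves of groups induces, via the exact functors $s^{*}$, short exact sequences of ordinary groups $1 \to s^{*}\mathbf{H}' \to s^{*}\mathbf{H} \to s^{*}\mathbf{H}'' \to 1$, which are $s^{*}\mathbf{G}$-equivariant, and one applies \cite[II.4.2]{BK} to each of these. For the ``locally nilpotent'' conclusion the qualitative form of that result suffices; for the ``nilpotent'' conclusion one uses its quantitative form, whose bound on the nilpotence class of the middle group in terms of those of the outer groups supplies the uniform bound needed to reinvoke the first equivalence of the dictionary.

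The main obstacle is precisely the converse direction, and it is essentially the same one that \cite[II.4.2]{BK} must confront classically: one cannot simply pull back a $\mathbf{G}$-central series of $\mathbf{H}''$ along $\mathbf{H} \twoheadrightarrow \mathbf{H}''$ and then concatenate it with a $\mathbf{G}$-central series of $\mathbf{H}'$, because the terms of the latter need not be normal in $\mathbf{H}$ --- the conjugation action of $\mathbf{H}$ on $\mathbf{H}'$ may fail to preserve them. The role of the stalkwise reduction is exactly to let us quote the classical resolution of this point rather than reconstruct it; accordingly, I expect the genuine work to lie almost entirely in the dictionary of the first paragraph, namely the verification that (local) nilpotence of a sheaf of groups under a sheaf-of-groups action is detected on stalks, with everything else being formal or a direct appeal to \cite[II.4.2]{BK}.
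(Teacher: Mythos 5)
Your forward implication is the paper's: intersect a $\mathbf{G}$-central series for $\mathbf{H}$ with $\mathbf{H}'$ and push it forward to $\mathbf{H}''$. For the converse you part ways with the paper, and the divergence is driven by an obstacle that is not actually there. The paper's proof is precisely the concatenation you rule out: pull the series $\{\mathbf{H}''_i\}$ back to $\mathbf{H}_i := \mathbf{H}\times_{\mathbf{H}''}\mathbf{H}''_i$ (these are $\mathbf{G}$-stable, normal in $\mathbf{H}$, have the same subquotients as the $\mathbf{H}''_i$, and all contain $\mathbf{H}'$), then append a $\mathbf{G}$-central series of $\mathbf{H}'$. Your objection --- that the terms of the $\mathbf{H}'$-series need not be normal in $\mathbf{H}$ --- bites only if one reads ``normal subgroup presheaves'' in Definition~\ref{defn:nilpotentsheaf} as ``normal in $\mathbf{H}$''; under the reading matching \cite{BK}, where each term need only be normal in its predecessor, the concatenated filtration satisfies the definition verbatim and the converse is a two-line argument. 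Note in this connection that condition (b) of Definition~\ref{defn:nilpotentsheaf} demands triviality only of the induced $\mathbf{G}$-action on subquotients, not of the conjugation action of $\mathbf{H}$: with the stronger, central-series-style requirement the lemma would already be false for trivial $\mathbf{G}$ (consider $1\to\Z/3\to S_3\to\Z/2\to 1$ with constant sheaves). So any appeal to a ``classical resolution'' must be calibrated to the weak notion, and for the weak notion there is nothing to resolve.

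Your stalkwise dictionary is a legitimate alternative route, and the fastest-$\mathbf{G}$-central-series construction does commute with stalks for the reasons you give; it is in fact the right tool for making Remark~\ref{rem:stalksandnilpotence} and Definition~\ref{defn:localnilpotence} usable. But as a proof of this lemma it buys generality you must then pay for: the non-local half of the statement does not assume the site has enough points, whereas your argument needs that hypothesis throughout; and the reduction only closes if the group-level result you cite is proved for exactly the notion of nilpotent action in force here, including the normality convention --- a point your construction of $\mathbf{H}^{(i)}$ by normal closures in $\mathbf{H}$ quietly fixes one way, possibly not the way the cited source does. I would keep the dictionary, but prove the lemma itself by the direct filtration argument.
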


\begin{proof}
Choose a (locally finite) $\mathbf{G}$-central series $\{\mathbf{H}_i\}$ for $\mathbf{H}$.  The restriction $\mathbf{H}_i \cap \mathbf{H}'$ yields a (locally finite) $\mathbf{G}$-central series for $\mathbf{H}'$.  Likewise, the quotient sheaves $\mathbf{H}_i/(\mathbf{H}' \cap \mathbf{H}_i)$ yield a (locally finite) $\mathbf{G}$-central series for $\mathbf{H}''$.

Conversely, take a (locally finite) $\mathbf{G}$-central series $\{\mathbf{H}''_i\}$ for $\mathbf{H}''$.  The fiber product sheaves $\mathbf{H}_i := \mathbf{H} \times_{\mathbf{H}''} \mathbf{H}''_i$ for all exist and are automatically $\mathbf{G}$-stable as all the morphisms defining the fiber product are $\mathbf{G}$-equivariant.  Moreover, all of these sheaves contain $\mathbf{H}'$ by construction. If we take a (locally finite) $\mathbf{G}$-central series of $\mathbf{H}'$, say $\{\mathbf{H}'_j\}$ then we may build a (locally finite) $\mathbf{G}$-central series of $\mathbf{H}$ by reindexing.
\end{proof}

\subsection{Nilpotence in local homotopy theory}
\label{ss:nilpotenceinlocalhomotopy}
The main goal of this section is to study nilpotent spaces in the local homotopy theory of simplicial presheaves.  We begin by reviewing some terminology from local homotopy theory following \cite{Jardine}.  We then define a notion of nilpotent morphism of simplicial presheaves using the definitions of Section \ref{ss:nilpotentandrlocalgroups}.  We then deduce analogs of a number of ``permanence" properties for nilpotent morphisms of spaces that will be useful later.

\subsubsection*{Review of local homotopy theory}
Suppose $\mathbf{C}$ is a small category and write $\sPre(\mathbf{C})$ for the category of simplicial presheaves on $\mathbf{C}$.  We view $\sPre({\mathbf C})$ as a model category with the injective model structure: the weak equivalences are objectwise weak equivalences, the cofibrations are the monomorphisms, and the fibrations are determined by the right lifting property.  We write $\Map(\mathscr{F},\mathscr{G})$ for the derived mapping space between two objects of $\sPre(\mathbf{C})$.

The model category $\sPre(\mathbf{C})$ is known to be simplicial, proper and combinatorial \cite[Proposition A.2.8.2]{Lurie} and we may appeal to the machinery of Bousfield localization (see \cite{Hirschhorn} for a detailed treatment of localization).  Recall that if $S$ is a set of morphisms in $\sPre(\mathbf{C})$, then a simplicial presheaf $\mathscr{F}$ is $S$-local if for every $f: \mathscr{G} \to \mathscr{H}$ in $S$, the induced map $f^*: \Map(\mathscr{H},\mathscr{F}) \to \Map(\mathscr{G},\mathscr{F})$ is a weak equivalence.  A morphism $f: \mathscr{G} \to \mathscr{H}$ is an $S$-local equivalence if, for every $S$-local $\mathscr{F}$, the map $f^*$ is a weak equivalence.

The $S$-local equivalences are the weak equivalences for a new model structure $\sPre(\mathbf{C})[S^{-1}]$ called the $S$-local model structure: the cofibrations in $\sPre(\mathbf{C})[S^{-1}]$ are still monomorphisms, and the fibrant objects are the fibrant objects in $\sPre(\mathbf{C})$ that are $S$-local.  This new model structure is again left proper, simplicial and combinatorial (\cite[Proposition A.3.7.3]{Lurie}).  The identity functors form a Quillen adjunction
\[
\xymatrix{
\sPre(\mathbf{C}) \ar@<.3ex>[r] & \ar@<.3ex>[l] \sPre(\mathbf{C})[S^{-1}],
}
\]
and the associated Quillen derived functor $\mathrm{Ho}(\sPre(\mathbf{C})[S^{-1}]) \to \mathrm{Ho}(\sPre(\mathbf{C}))$ is fully faithful with essential image the subcategory of $S$-local objects.

If $(\mathbf{C},\tau)$ is a small Grothendieck site, then one introduces the notion of {\em $\tau$-local weak equivalence} \cite[p. 64]{Jardine}.  The Bousfield localization of $\sPre(\mathbf{C})$ with respect to the class of $\tau$-local weak equivalences yields the (injective) $\tau$-local model structure on $\sPre(\mathbf{C})$, which is again a combinatorial, proper, simplicial model category.  We will write $\mathrm{R}_{\tau}$ for a fibrant replacement functor in this category.  If $(\mathbf{C},\tau)$ has enough points, then the functor $\mathrm{R}_{\tau}$ may be assumed to commute with formation of finite products.  Indeed, this follows from \cite[\S 2 Theorem 1.66]{MV}, which shows that one may use a ``Godement resolution" functor for $R_{\tau}$.  Note that Morel--Voevodsky state this assertion under an auxiliary ``finite type" hypothesis \cite[\S 2 Definition 1.31]{MV}, which ensures that Postnikov towers converge.  However, in modern terminology this hypothesis is equivalent to a hypercompleteness hypothesis on the $\infty$-category attached to the injective model structure on $\sPre(\mathbf{C})$ \cite[p. 666-669]{Lurie}.  With that in mind, the hypothesis that $(\mathbf{C},\tau)$ has enough points is sufficient to guarantee hypercompleteness \cite[Remark 6.5.4.7]{Lurie}.  In any case, we write $\operatorname{Ho}_{\tau}(\mathbf{C})$ for the associated homotopy category.

If $\mathscr{X} \in \sPre(\mathbf{C})$ is a simplicial presheaf, then a base-point for $\mathscr{X}$ is a morphism $x$ from the final object $\ast$ to $\mathscr{X}$.  The category of pointed simplicial presheaves $\sPre(\mathbf{C})_{\ast}$ can be equipped with a $\tau$-local model structure as well: a map of pointed simplicial presheaves is a cofibration, weak equivalence or fibration in the $\tau$-local model structure if it is so after forgetting the base-point.  We use the same notation as above for the fibrant replacement functor in this context.  We write $\operatorname{Ho}_{\tau,\ast}(\mathbf{C})$ for the associated homotopy category.

\subsubsection*{Nilpotent simplicial presheaves}
Suppose $(\mathbf{C},\tau)$ is a small Grothendieck site (which we will assume has enough points when we speak of ``local" notions below).  Given an object $\mathscr{X} \in \sPre(\mathbf{C})$, we write $\bpi_0(\mathscr{X})$ for the $\tau$-sheaf associated with the presheaf $U \mapsto \pi_0(\mathscr{X}(U))$.  We will say that $\mathscr{X}$ is $\tau$-connected (or just simplicially connected if $\tau$ is clear from context) if the structure map $\mathscr{X} \to \ast$ induces an isomorphism on $\bpi_0$.

If $(\mathscr{X},x)$ is a pointed simplicial presheaf, we define $\tau$-homotopy sheaves $\bpi_i(\mathscr{X},x)$ as the $\tau$-sheaves associated with the presheaves
\[
U \mapsto \pi_i(\mathscr{X}(U),x).
\]
As usual, these are sheaves of groups for $i = 1$ and sheaves of abelian groups for $i \geq 2$.  For any integer $n > 0$, say that a pointed space $(\mathscr{X},x)$ is simplicially $n$-connected if $\mathscr{X}$ is simplicially connected, and $\bpi_i(\mathscr{X},x) = 0$ for $1 \leq i \leq n$, .

The standard action of the group $\pi_1(\mathscr{X}(U),x)$ on $\pi_i(\mathscr{X}(U),x)$ is functorial in $U$ and sheafification yields an action of the sheaf $\bpi_1(\mathscr{X},x)$ on $\bpi_i(\mathscr{X},x)$.  (Frequently, to unburden the notation, we will suppress base-points in homotopy sheaves of pointed spaces.)

If $\mathscr{F} \to \mathscr{E} \to \mathscr{B}$ is a fiber sequence of pointed simplicial presheaves (henceforth we will refer to such sequences as simplicial fiber sequences), then for any object $U \in \sPre(\mathbf{C})_{\ast}$ applying $\hom_{\operatorname{Ho}_{\tau,\ast}(\mathbf{C})}(U,-)$ to the above fiber sequence yields a long exact sequence \cite[Proposition 6.5.3]{Hovey}.  Sheafifying this exact sequence then gives a long exact sequence in homotopy sheaves associated with any simplicial fiber sequence.

If $f: \mathscr{E} \to \mathscr{B}$ is any morphism of pointed connected spaces, then we may build the simplicial homotopy fiber of $f$ exactly as one does classically.  In more detail, consider the map $\mathscr{B} \to \mathrm{R}_{\tau}\mathscr{B}$ and functorially factor the composite $\mathscr{E} \to \mathrm{R}_{\tau}\mathscr{B}$ as an acyclic cofibration $\mathscr{E} \to \mathscr{E}'$ followed by a fibration $\mathscr{E}' \to \mathrm{R}_{\tau}\mathscr{B}$.  If $\mathscr{\ast}$ is the base-point of $\mathrm{R}_{\tau}\mathscr{B}$, then we define $\operatorname{hofib}(f)$ to be the ordinary fiber of $\mathscr{E}' \to \mathrm{R}_{\tau}\mathscr{B}$ over $\ast$.  There is a simplicial fiber sequence of the form
\[
\operatorname{hofib}(f) \longrightarrow \mathscr{E} \longrightarrow \mathscr{B}
\]
by construction.  If $U \in \mathbf{C}$, then sheafifying the objectwise action of $\pi_1(\mathscr{E}(U))$ on $\pi_1(\operatorname{hofib}(f)(U))$ induces an action of the sheaf of groups $\bpi_1(\mathscr{E})$ on the sheaf $\bpi_i(\operatorname{hofib}(f))$.  Granted these facts, one makes the following definition.

\begin{defn}
\label{defn:nilpotentspace}
Suppose $f: \mathscr{E} \to \mathscr{B}$ is a morphism in $\sPre(\mathbf{C})_{\ast}$, and write $\mathscr{F}$ for the homotopy fiber of $f$.
\begin{enumerate}[noitemsep,topsep=1pt]
\item The morphism $f$ is {\em (locally) nilpotent} if $\mathscr{F}$ is simplicially connected and the action of $\bpi_1(\mathscr{E})$ on $\bpi_i(\mathscr{F})$ is (locally) nilpotent for every $i \geq 0$ (in the sense of \textup{Definition~\ref{defn:nilpotentsheaf}} or \textup{Definition~\ref{defn:localnilpotence}}).
\item A pointed space $(\mathscr{X},x)$ is {\em (locally) nilpotent} if the structure morphism is (locally) nilpotent.
\end{enumerate}
\end{defn}

\subsubsection*{Permanence properties of nilpotence}
\begin{prop}
\label{prop:2outof3fornilpotentmorphisms}
Suppose
\[
\xymatrix{
\mathscr{E}_2 \ar[r]^{q} & \mathscr{E}_1 \ar[r]^{p} & \mathscr{E}_0
}
\]
is a composable pair of morphisms in $\sPre(\mathbf{C})_{\ast}$.  Assume that the simplicial homotopy fibers of $p$, $q$ and $pq$ are all simplicially connected.  If any two elements of $\{p,q,pq\}$ are nilpotent morphisms, then so is the third.
\end{prop}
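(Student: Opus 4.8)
The plan is to derive all three implications from one auxiliary simplicial fiber sequence relating the three homotopy fibers, together with Lemmas~\ref{lem:functorialityofnilpotence} and \ref{lem:shortexactsequencesofnilpotentactions}. Write $F_p = \operatorname{hofib}(p)$, $F_q = \operatorname{hofib}(q)$ and $F_{pq} = \operatorname{hofib}(pq)$, and call the action of $\bpi_1(\mathscr{E})$ on $\bpi_i(\operatorname{hofib}(f))$ appearing in Definition~\ref{defn:nilpotentspace} the \emph{defining} action. Since $F_p = \mathscr{E}_1 \times^h_{\mathscr{E}_0} \ast$, composing homotopy pullbacks identifies $F_{pq} = \mathscr{E}_2 \times^h_{\mathscr{E}_0} \ast$ with $\mathscr{E}_2 \times^h_{\mathscr{E}_1} F_p$, so the canonical map $F_{pq} \to F_p$ is, up to weak equivalence, the base change of $q$ along $F_p \to \mathscr{E}_1$; as the base-point of $F_p$ lies over the base-point of $\mathscr{E}_1$, its homotopy fiber is $\operatorname{hofib}(q) = F_q$, and we obtain a simplicial fiber sequence
\[
F_q \longrightarrow F_{pq} \longrightarrow F_p .
\]
The two facts I would record about it are: (i) its long exact sequence of homotopy sheaves is equivariant for $\bpi_1(\mathscr{E}_2)$, acting on $\bpi_i(F_q)$ and $\bpi_i(F_{pq})$ through the defining actions and on $\bpi_i(F_p)$ through $q_\ast \colon \bpi_1(\mathscr{E}_2) \to \bpi_1(\mathscr{E}_1)$ followed by the defining action; and (ii) since $F_q$ is simplicially connected, exactness of the homotopy sheaf sequence of $F_q \to \mathscr{E}_2 \xrightarrow{q} \mathscr{E}_1$ at $\bpi_1(\mathscr{E}_1)$ forces $q_\ast$ to be an epimorphism of sheaves of groups.

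For (i): the maps $F_q \to F_{pq}$ and $F_{pq} \to F_p$ are induced on homotopy fibers by the two evident commutative squares expressing $pq = p \circ q$ (the first having the identity on total spaces, the second having $q$), so their effect on $\bpi_i$ is equivariant for the stated actions. The connecting homomorphism $\bpi_i(F_p) \to \bpi_{i-1}(F_q)$ is the one point that needs care: it factors as $\bpi_i(F_p) \to \bpi_i(\mathscr{E}_1) \to \bpi_{i-1}(F_q)$, where the first arrow is induced by the inclusion of the homotopy fiber of $p$ (and hence intertwines the defining $\bpi_1(\mathscr{E}_1)$-action on $\bpi_i(F_p)$ with the standard action on $\bpi_i(\mathscr{E}_1)$) and the second is the connecting homomorphism of $F_q \to \mathscr{E}_2 \to \mathscr{E}_1$ (whose long exact sequence is $\bpi_1(\mathscr{E}_2)$-equivariant); both are $\bpi_1(\mathscr{E}_2)$-equivariant, hence so is their composite.

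Granting (i) and (ii), each implication is bookkeeping with the long exact sequence. All three fibers being simplicially connected, the sequence terminates at $\bpi_1$ with vanishing $\bpi_0$-terms; for $i \geq 1$, splicing exhibits $\bpi_i(F_{pq})$ as a $\bpi_1(\mathscr{E}_2)$-equivariant extension of a subsheaf of $\bpi_i(F_p)$ by a quotient of $\bpi_i(F_q)$, exhibits $\bpi_i(F_q)$ as such an extension of a subsheaf of $\bpi_i(F_{pq})$ by a quotient of $\bpi_{i+1}(F_p)$, and exhibits $\bpi_i(F_p)$ as such an extension of a subsheaf of $\bpi_{i-1}(F_q)$ by a quotient of $\bpi_i(F_{pq})$ (for $i = 1$ one checks these subquotients are kernels of homomorphisms, hence normal subgroup sheaves, and that the quotients of $\bpi_2$-terms are abelian; for $i \geq 2$ everything is a sheaf of abelian groups). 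Now: if $p$ and $q$ are nilpotent, then by Lemma~\ref{lem:functorialityofnilpotence} the $\bpi_1(\mathscr{E}_2)$-action on $\bpi_i(F_p)$ is nilpotent, so by Lemma~\ref{lem:shortexactsequencesofnilpotentactions} it is nilpotent on the relevant subsheaf of $\bpi_i(F_p)$ and on the relevant quotient of $\bpi_i(F_q)$, hence on $\bpi_i(F_{pq})$, so $pq$ is nilpotent; if $p$ and $pq$ are nilpotent, the same two lemmas applied to the second extension show that $\bpi_1(\mathscr{E}_2)$ acts nilpotently on $\bpi_i(F_q)$, so $q$ is nilpotent; and if $q$ and $pq$ are nilpotent, they yield a nilpotent action of $\bpi_1(\mathscr{E}_2)$ on $\bpi_i(F_p)$ (through $q_\ast$), whereupon, since $q_\ast$ is an epimorphism of sheaves of groups by (ii), the images of $\bpi_1(\mathscr{E}_2)$ and $\bpi_1(\mathscr{E}_1)$ in $\Aut(\bpi_i(F_p))$ coincide, so any $\bpi_1(\mathscr{E}_2)$-central series for $\bpi_i(F_p)$ is a $\bpi_1(\mathscr{E}_1)$-central series and $p$ is nilpotent.

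The genuinely delicate point is (i), the $\bpi_1(\mathscr{E}_2)$-equivariance of the long exact sequence of $F_q \to F_{pq} \to F_p$ --- above all for the connecting homomorphism --- since it is precisely this that makes the three defining actions directly comparable; identifying the subquotients and checking their normality is then routine, and the conclusion is formal from the two permanence lemmas. The same argument, with those lemmas replaced by their ``locally finite'' counterparts and again using that $q_\ast$ is an epimorphism of sheaves, proves the corresponding statement for locally nilpotent morphisms.
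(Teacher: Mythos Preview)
Your proof is correct and follows essentially the same route as the paper's: both produce the fiber sequence $F_q \to F_{pq} \to F_p$ (you via iterated homotopy pullbacks, the paper by citing the octahedral axiom), observe that the long exact sequence in homotopy sheaves is $\bpi_1(\mathscr{E}_2)$-equivariant, splice it into short exact sequences, and then invoke Lemma~\ref{lem:shortexactsequencesofnilpotentactions} repeatedly. Your write-up is more explicit on two points the paper leaves implicit: the equivariance of the connecting homomorphism, and --- for the implication ``$q$ and $pq$ nilpotent $\Rightarrow$ $p$ nilpotent'' --- the use of surjectivity of $q_\ast\colon \bpi_1(\mathscr{E}_2) \to \bpi_1(\mathscr{E}_1)$ (forced by connectedness of $F_q$) to promote a $\bpi_1(\mathscr{E}_2)$-central series on $\bpi_i(F_p)$ to a $\bpi_1(\mathscr{E}_1)$-central series. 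This last step is genuinely needed and the paper glosses over it; indeed, in the $\aone$-analogue (Proposition~\ref{prop:2outof3foraonenilpotence}) the authors impose an extra $1$-connectedness hypothesis on $p$ precisely because the corresponding transfer is more delicate there.
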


\begin{proof}
Write $\mathscr{F}_p$ for the simplicial homotopy fiber of $p$, $\mathscr{F}_q$ for the simplicial homotopy fiber of $q$ and $\mathscr{F}_{pq}$ for the simplicial homotopy fiber of the composite.  In that case, by the ``octahedral axiom" in the pointed homotopy category (e.g., \cite[Proposition 6.3.6]{Hovey}) there is an associated simplicial fiber sequence of the form
\[
\mathscr{F}_q \longrightarrow \mathscr{F}_{pq} \longrightarrow \mathscr{F}_p,
\]
and we may consider the associated long exact sequence of homotopy sheaves, which takes the form:
\[
\cdots \longrightarrow \bpi_{n+1}(\mathscr{F}_p) \longrightarrow \bpi_n(\mathscr{F}_q) \longrightarrow \bpi_n(\mathscr{F}_{pq}) \longrightarrow \bpi_n(\mathscr{F}_q) \longrightarrow \cdots.
\]
By assumption, $\bpi_1(\mathscr{E}_2)$ acts on the homotopy sheaves of $\mathscr{F}_q$ and $\mathscr{F}_{pq}$ while $\bpi_1(\mathscr{E}_1)$ acts on the homotopy sheaves of $\mathscr{F}_p$.  Composing with the homomorphism $\bpi_1(\mathscr{E}_2) \to \bpi_1(\mathscr{E}_1)$ defines an action of $\bpi_1(\mathscr{E}_2)$ on the higher homotopy sheaves of $\mathscr{F}_p$ as well.  The long exact sequence of homotopy sheaves is then $\bpi_1(\mathscr{E}_2)$-equivariant with respect to these actions.

We may break the long exact sequence in homotopy sheaves above $\bpi_1(\mathscr{E}_2)$-equivariantly into short exact sequences.  The result then follows from repeated application of Lemma \ref{lem:shortexactsequencesofnilpotentactions}.
\end{proof}

Next, we observe that, under suitable hypotheses, (locally) nilpotent morphisms are stable under homotopy base change.

\begin{prop}
\label{prop:nilpotencebasechange}
Consider a homotopy Cartesian diagram
\[
\xymatrix{
\mathscr{E}' \ar[r]^-{g'}\ar[d]^-{f'} & \mathscr{E} \ar[d]^-{f} \\
\mathscr{B}' \ar[r]^-{g} & \mathscr{B}.
}
\]
in $\sPre(\mathbf{C})_{\ast}$.  If all spaces in the diagram are simplicially connected, and $f$ is (locally) nilpotent, then so is $f'$.
\end{prop}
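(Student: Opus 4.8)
The plan is to reduce the claim to the comparison of homotopy fibers. Write $\mathscr{F}$ for the simplicial homotopy fiber of $f$ over the base-point of $\mathscr{B}$, and $\mathscr{F}'$ for that of $f'$ over the base-point of $\mathscr{B}'$. Since the square is homotopy Cartesian, taking homotopy fibers of the vertical maps over compatible base-points yields a weak equivalence $\mathscr{F}' \isomto g^*\mathscr{F} = \mathscr{F}$; more precisely, the base-point of $\mathscr{B}'$ maps to the base-point of $\mathscr{B}$ under $g$, and the standard fact that homotopy fibers are invariant under homotopy base change (which follows by the usual diagram chase, or by the argument in \cite[\S II]{BK} transcribed to the local model structure, using that $\mathrm{R}_\tau$ preserves finite products) identifies $\mathscr{F}'$ with $\mathscr{F}$. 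In particular $\mathscr{F}'$ is simplicially connected because $\mathscr{F}$ is, which is the first requirement of Definition~\ref{defn:nilpotentspace}.

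Next I would track the $\bpi_1$-actions. The square of spaces induces, on homotopy fibers, a commutative diagram
\[
\xymatrix{
\bpi_1(\mathscr{E}') \ar[r]\ar[d] & \Aut(\bpi_i(\mathscr{F}')) \ar[d]^{\cong} \\
\bpi_1(\mathscr{E}) \ar[r] & \Aut(\bpi_i(\mathscr{F})),
}
\]
where the left vertical map is $g'_*$ and the right vertical isomorphism is induced by the equivalence $\mathscr{F}' \isomto \mathscr{F}$; commutativity holds because the identification $\mathscr{F}' \simeq \mathscr{F}$ is compatible with the fibration structures over $\mathscr{E}'$ and $\mathscr{E}$ respectively, so the monodromy actions correspond. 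Thus the action of $\bpi_1(\mathscr{E}')$ on $\bpi_i(\mathscr{F}')$ is obtained from the action of $\bpi_1(\mathscr{E})$ on $\bpi_i(\mathscr{F})$ by restriction along the homomorphism $g'_*: \bpi_1(\mathscr{E}') \to \bpi_1(\mathscr{E})$ (under the isomorphism $\bpi_i(\mathscr{F}') \cong \bpi_i(\mathscr{F})$). Since $f$ is (locally) nilpotent, the $\bpi_1(\mathscr{E})$-action on $\bpi_i(\mathscr{F})$ is (locally) nilpotent for all $i \geq 0$; applying Lemma~\ref{lem:functorialityofnilpotence} to the homomorphism $g'_*$ shows the induced $\bpi_1(\mathscr{E}')$-action on $\bpi_i(\mathscr{F}')$ is (locally) nilpotent as well. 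Hence $f'$ is (locally) nilpotent.

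The main obstacle is the bookkeeping in the first two steps: making precise the invariance of the homotopy fiber under homotopy base change in the $\tau$-local model structure, and—more delicately—checking that this equivalence is genuinely equivariant for the respective $\bpi_1$-actions, i.e. that the monodromy of the pulled-back fibration is the pullback of the monodromy. Both are standard in the classical setting, but care is needed because the actions are defined sectionwise and then sheafified; I would either cite the construction of these actions given earlier in Section~\ref{ss:nilpotenceinlocalhomotopy} and note that every step there is natural in the fibration, or else observe that one may check equivariance stalkwise (when $(\mathbf{C},\tau)$ has enough points) and invoke the classical statement pointwise. Once equivariance is in hand, the conclusion is immediate from Lemma~\ref{lem:functorialityofnilpotence}.
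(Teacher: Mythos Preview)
Your proof is correct and follows exactly the approach of the paper: identify the homotopy fiber of $f'$ with that of $f$ via the homotopy Cartesian square, observe that the $\bpi_1(\mathscr{E}')$-action is the restriction along $g'_*$ of the $\bpi_1(\mathscr{E})$-action, and conclude by Lemma~\ref{lem:functorialityofnilpotence}. The paper compresses all of this into a single sentence, treating the equivariance of the fiber identification as evident, so your more detailed bookkeeping is a faithful expansion rather than a different argument.
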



\begin{proof}
This follows immediately from Lemma~\ref{lem:functorialityofnilpotence}: the action of $\bpi_1(\mathscr{E}')$ on the homotopy fiber of $f'$, which coincides with the homotopy fiber of $f$ by assumption, is induced by the homomorphism $\bpi_1(\mathscr{E}') \to \bpi_1(\mathscr{E})$.
\end{proof}


\begin{prop}
\label{prop:enilpotentimpliesfnilpotent}
Suppose $g: \mathscr{E} \to \mathscr{B}$ is a morphism in $\sPre(\mathbf{C})_{\ast}$ with homotopy fiber $\mathscr{F}$, and assume $\mathscr{F},\mathscr{E}$ and $\mathscr{B}$ are all simplicially connected.  If $\mathscr{E}$ is (locally) nilpotent, then $\mathscr{F}$ is (locally) nilpotent.
\end{prop}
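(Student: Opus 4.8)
The plan is to compare the homotopy fiber $\mathscr{F}$ of $g$ with the whole space $\mathscr{E}$, using the fact that $\mathscr{E}$ is (locally) nilpotent as an input, and to run through the long exact sequence in homotopy sheaves associated to the simplicial fiber sequence $\mathscr{F} \to \mathscr{E} \to \mathscr{B}$. The key point is that the $\bpi_1(\mathscr{F})$-action on $\bpi_i(\mathscr{F})$ is not directly assumed to be (locally) nilpotent; what we know is that $\mathscr{E}$ is (locally) nilpotent, i.e. the $\bpi_1(\mathscr{E})$-action on $\bpi_i(\mathscr{E})$ is (locally) nilpotent. So the first step is to transfer this information from $\mathscr{E}$ to $\mathscr{F}$ by observing that the map $\mathscr{F} \to \mathscr{E}$ is $\bpi_1(\mathscr{F})$-equivariant where $\bpi_1(\mathscr{F})$ acts on $\bpi_i(\mathscr{E})$ through the homomorphism $\bpi_1(\mathscr{F}) \to \bpi_1(\mathscr{E})$, and then invoke Lemma~\ref{lem:functorialityofnilpotence} to conclude that the $\bpi_1(\mathscr{F})$-action on $\bpi_i(\mathscr{E})$ is (locally) nilpotent.

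Next I would set up the long exact sequence of homotopy sheaves for $\mathscr{F} \to \mathscr{E} \to \mathscr{B}$, namely
\[
\cdots \longrightarrow \bpi_{i+1}(\mathscr{B}) \longrightarrow \bpi_i(\mathscr{F}) \longrightarrow \bpi_i(\mathscr{E}) \longrightarrow \bpi_i(\mathscr{B}) \longrightarrow \cdots,
\]
and make it $\bpi_1(\mathscr{F})$-equivariant: $\bpi_1(\mathscr{F})$ acts on $\bpi_i(\mathscr{F})$ by the standard action, on $\bpi_i(\mathscr{E})$ through $\bpi_1(\mathscr{F}) \to \bpi_1(\mathscr{E})$, and on $\bpi_i(\mathscr{B})$ through $\bpi_1(\mathscr{F}) \to \bpi_1(\mathscr{E}) \to \bpi_1(\mathscr{B})$ — here I should note that the connecting and other maps in the sequence respect these actions, which is the sheafification of the classical equivariance statement. (One subtlety: the action of $\bpi_1(\mathscr{B})$ on $\bpi_i(\mathscr{B})$ used here is the standard one, and compatibility of the LES with the $\bpi_1(\mathscr{F})$-action is a routine but necessary check, done stalkwise by Remark~\ref{rem:stalksandnilpotence} in the local case.) Then I break the LES $\bpi_1(\mathscr{F})$-equivariantly into short exact sequences of $\bpi_1(\mathscr{F})$-modules and apply Lemma~\ref{lem:shortexactsequencesofnilpotentactions}.

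There is one genuine gap in the naive argument: to run the extraction of short exact sequences I need the $\bpi_1(\mathscr{F})$-action on $\bpi_i(\mathscr{B})$ to be (locally) nilpotent as well, and this is \emph{not} among the hypotheses. The fix is to work only with the relevant piece of the LES: since the image of $\bpi_i(\mathscr{E}) \to \bpi_i(\mathscr{B})$ and the image of $\bpi_{i+1}(\mathscr{B}) \to \bpi_i(\mathscr{F})$ are the two flanking pieces around $\bpi_i(\mathscr{F})$, I get a short exact sequence
\[
1 \longrightarrow \mathscr{K}_i \longrightarrow \bpi_i(\mathscr{F}) \longrightarrow \mathscr{Q}_i \longrightarrow 1,
\]
where $\mathscr{Q}_i \subset \bpi_i(\mathscr{E})$ is a $\bpi_1(\mathscr{F})$-stable subsheaf and $\mathscr{K}_i$ is a $\bpi_1(\mathscr{F})$-equivariant quotient of $\bpi_{i+1}(\mathscr{B})$; the latter is identified with a subquotient of $\bpi_i(\mathscr{F})$ further down the sequence? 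No — this risks circularity. The clean way is: $\mathscr{Q}_i$ is a subsheaf of $\bpi_i(\mathscr{E})$, hence carries a (locally) nilpotent $\bpi_1(\mathscr{F})$-action by Lemma~\ref{lem:shortexactsequencesofnilpotentactions} (``subobject'' direction) applied to the $\mathscr{E}$-action transferred above; and $\mathscr{K}_i$ is the image of $\bpi_{i+1}(\mathscr{B})$, which equals the image of the connecting map, which is a quotient of $\bpi_{i+1}(\mathscr{F}_g)$-type data — the robust statement here is that $\mathscr{K}_i$ is a \emph{central} subsheaf, i.e. the $\bpi_1(\mathscr{F})$-action on it is trivial, because it is hit from $\bpi_{i+1}(\mathscr{B})$ which already acts trivially under the relevant map... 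Actually the safest route, and the one I expect to be the main obstacle to write cleanly, is to recall the standard topological argument (as in \cite{HMR}): the action of $\bpi_1(\mathscr{F})$ on $\mathscr{K}_i = \mathrm{coker}(\bpi_{i+1}(\mathscr{E}) \to \bpi_{i+1}(\mathscr{B}))$-image is trivial because this piece is in the image of $\bpi_{i+1}(\mathscr{B})$ and $\bpi_1(\mathscr{F})$ maps into $\bpi_1(\mathscr{E})$ which acts compatibly; combined with the fact that a group with a central subgroup whose quotient is nilpotently acted upon is itself nilpotently acted upon, one more application of Lemma~\ref{lem:shortexactsequencesofnilpotentactions} finishes it. In the local case all of these sheaf-level statements are verified stalkwise using Remark~\ref{rem:stalksandnilpotence}, reducing to the classical result. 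The main obstacle, then, is bookkeeping the equivariance of the long exact sequence precisely enough to know that $\mathscr{K}_i$ is acted on trivially (equivalently, centrally) by $\bpi_1(\mathscr{F})$ — once that is in hand, the two-step short exact sequence plus Lemma~\ref{lem:shortexactsequencesofnilpotentactions} gives the claim.
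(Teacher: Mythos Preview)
Your proposal is correct and follows essentially the same route as the paper: break the long exact sequence in homotopy sheaves into short exact sequences, observe that the image $\mathscr{K}_i$ of $\bpi_{i+1}(\mathscr{B})$ in $\bpi_i(\mathscr{F})$ carries a trivial action (this is the classical fact from \cite{HMR} you flag as the main obstacle), note that the quotient $\mathscr{Q}_i = \ker(\bpi_i(\mathscr{E}) \to \bpi_i(\mathscr{B}))$ inherits a (locally) nilpotent action as a subsheaf of $\bpi_i(\mathscr{E})$, and conclude via Lemma~\ref{lem:shortexactsequencesofnilpotentactions}.

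The one organizational difference worth noting: the paper works throughout with the $\bpi_1(\mathscr{E})$-action rather than the $\bpi_1(\mathscr{F})$-action. It first shows that $g$ itself is a (locally) nilpotent morphism, i.e.\ that $\bpi_1(\mathscr{E})$ acts (locally) nilpotently on each $\bpi_i(\mathscr{F})$, and then invokes Proposition~\ref{prop:nilpotencebasechange} applied to the homotopy Cartesian square
\[
\xymatrix{
\mathscr{F} \ar[r]\ar[d] & \mathscr{E} \ar[d]^g \\
\ast \ar[r] & \mathscr{B}
}
\]
to conclude that $\mathscr{F} \to \ast$ is (locally) nilpotent. Your approach of restricting along $\bpi_1(\mathscr{F}) \to \bpi_1(\mathscr{E})$ via Lemma~\ref{lem:functorialityofnilpotence} accomplishes the same thing directly; the paper's detour through base change just packages that restriction step once and for all, and spares you from having to separately verify $\bpi_1(\mathscr{F})$-equivariance of the long exact sequence (since $\bpi_1(\mathscr{E})$-equivariance is the more natural statement for the fiber sequence $\mathscr{F} \to \mathscr{E} \to \mathscr{B}$).
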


\begin{proof}
Consider the homotopy commutative diagram:
\[
\xymatrix{
\mathscr{F} \ar[r]\ar[d] & \mathscr{E} \ar[d]^g \\
\ast \ar[r] & \mathscr{B}.
}
\]
By appeal to Proposition~\ref{prop:nilpotencebasechange} to check $\mathscr{F} \to \ast$ is nilpotent, it suffices to show that $g$ is (locally) nilpotent, i.e., that the $\bpi_1(\mathscr{E})$-action on $\bpi_i(\mathscr{F})$ is (locally) nilpotent.

To this end, observe that there is a $\bpi_1(\mathscr{E})$-equivariant long exact sequence of the form
\[
\bpi_{i+1}(\mathscr{B}) \longrightarrow \bpi_i(\mathscr{F}) \longrightarrow \bpi_i(\mathscr{E}) \longrightarrow \bpi_i(\mathscr{B}).
\]
While we do not know that the action of $\bpi_1(\mathscr{E})$ on $\bpi_{i+1}(\mathscr{B})$ is (locally) nilpotent, we do know that $\im \pi_{i+1}(\mathscr{B})$ in $\bpi_i(\mathscr{F})$ is central for $i \geq 1$ and carries a trivial action of $\bpi_1(\mathscr{E})$.
In other words, there are $\bpi_1(\mathscr{E})$-equivariant short exact sequences of the form:
\[
1 \longrightarrow \mathbf{M}_{i+1} \longrightarrow \bpi_i(\mathscr{F}) \longrightarrow \ker(\bpi_i(\mathscr{E}) \longrightarrow \bpi_i(\mathscr{B})) \longrightarrow 1,
\]
where $\mathbf{M}_{i+1}$ is the image of $\bpi_{i+1}(\mathscr{B})$ in $\bpi_i(\mathscr{F})$.  In fact, the action of $\bpi_1(\mathscr{E})$ on $\mathbf{M}_{i+1}$ is trivial (the classical proof of this result, e.g., \cite[Proof of Theorem 2.2]{HMR} discusses this further).  As a $\bpi_1(\mathscr{E})$-stable subsheaf of $\bpi_i(\mathscr{E})$, the kernel in the above statement carries a (locally) nilpotent $\bpi_1(\mathscr{E})$-action.  The nilpotence of the $\bpi_1(\mathscr{E})$-action on $\bpi_i(\mathscr{F})$ then follows by appeal to Lemma~\ref{lem:shortexactsequencesofnilpotentactions}.
\end{proof}

\subsection{$R$-localization of simplicial presheaves}
\label{ss:rlocalizationofsimplicialpresheaves}
We now discuss $R$-localization of simplicial presheaves.  Suppose $(\mathbf{C},\tau)$ is a small Grothendieck site and consider $\sPre(\mathbf{C})$ with the $\tau$-local model structure described above.  We use the following notation.

\begin{notation}
\label{notation:fractions}
Suppose $R$ is a subring of $\Q$.  The elements of $R$ are fractions with denominator divisible by elements of some (possibly empty) set $P$ of prime numbers, i.e., $R = \Z[P^{-1}]$.  Henceforth, given $R \subset \Q$, we will always write $P$ for this associated set of primes.
\end{notation}

After reviewing some aspects of $R$-locality for sheaves of groups, we then introduce a notion of $R$-local weak equivalence.  We choose definitions that are well-adapted to stalkwise analysis, following \cite{WickelgrenWilliams}.  We then analyze the interaction between nilpotence properties and $R$-localization for simplicial presheaves

\subsubsection*{$R$-localization for sheaves of groups}
If $\mathbf{G}$ is any presheaf of groups on $\mathbf{C}$, and $n$ is an integer, then we define the $n$-th power map $\mathbf{G} \to \mathbf{G}$ to be the sectionwise $n$-th power map of groups.  Of course, if $\mathbf{G}$ is not a presheaf of abelian groups, this needs not be a group homomorphism.

Assume $R \subset \Q$ is a subring, notation following \ref{notation:fractions}.  If $\mathbf{G} \in \Grp(\mathbf{C})$, we will say that $\mathbf{G}$ is $R$-local (or $P$-local) if the $p$-th power map is an isomorphism of presheaves for every $p \in P$.  Similarly, we will say that a sheaf of groups $\mathbf{G} \in \Grp_{\tau}(\mathbf{C})$ is $R$-local if the $p$-th power map is an isomorphism of sheaves for every $p \in P$.  The following result is immediate from the definitions.

\begin{lem}
Suppose $(\mathbf{C},\tau)$ is a site with enough points. A sheaf of groups $\mathbf{G} \in \Grp_{\tau}(\mathbf{C})$ is $R$-local if and only if for every point $s$ of $(\mathbf{C},\tau)$, the group $s^*\mathbf{G}$ is $R$-local. \qed
\end{lem}

One knows that an abelian group is $R$-local if and only if it is an $R$-module.  It follows immediately from the definitions that a (pre)sheaf of abelian groups is $R$-local if and only if it is a (pre)sheaf of $R$-modules.  More generally, a nilpotent group is $R$-local if and only if it has a finite central series $G = G_0 \supset \cdots \supset G_n = e$ such that each subquotient $G_j/G_{j+1}$ admits a (necessarily unique) $R$-module structure \cite[V.2.7]{BK}.  The next result follows from this fact by passing to stalks.

\begin{lem}
\label{lem:Rnilpotenceequivconditions}
Suppose $(\mathbf{C},\tau)$ is a site with enough points. The following conditions on a (locally) nilpotent sheaf of groups $\mathbf{G} \in \Grp_{\tau}(\mathbf{C})$ are equivalent.
\begin{enumerate}[noitemsep,topsep=1pt]
\item The sheaf $\mathbf{G}$ is $R$-local
\item The sheaf $\mathbf{G}$ has a (locally) finite central series with successive subquotients that are sheaves of $R$-modules.\qed
\end{enumerate}
\end{lem}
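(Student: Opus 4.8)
Here is my plan for proving Lemma~\ref{lem:Rnilpotenceequivconditions}.

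\medskip

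The plan is to reduce everything to the classical statement quoted just above (the characterization \cite[V.2.7]{BK} of $R$-local nilpotent groups via central series with $R$-module subquotients), using the fact that the site has enough points so that both $R$-locality and the existence of suitable central series can be detected stalkwise. First I would recall from the preceding Lemma that a sheaf of groups $\mathbf{G} \in \Grp_{\tau}(\mathbf{C})$ is $R$-local if and only if $s^*\mathbf{G}$ is $R$-local for every point $s$, and from Remark~\ref{rem:stalksandnilpotence} that a filtration by $\mathbf{G}$-stable subsheaves can be checked to be a $\mathbf{G}$-central series stalkwise. Since $s^*$ is exact and commutes with formation of subquotients, a sheaf of $R$-modules has stalks that are $R$-modules, and conversely a subquotient sheaf all of whose stalks are $R$-modules is itself a sheaf of $R$-modules (again because being an $R$-module is equivalent to the $p$-th power maps being isomorphisms for $p \in P$, a condition checked on stalks). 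Both directions of the equivalence therefore translate faithfully to a question about the stalks $s^*\mathbf{G}$ with their induced $s^*\mathbf{G}$-conjugation action.

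\medskip

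For the implication (2) $\Rightarrow$ (1): given a (locally) finite central series of $\mathbf{G}$ with subquotients that are sheaves of $R$-modules, applying $s^*$ yields, for each point $s$, a finite central series of the ordinary nilpotent group $s^*\mathbf{G}$ with $R$-module subquotients; by \cite[V.2.7]{BK} each $s^*\mathbf{G}$ is then $R$-local, and hence $\mathbf{G}$ is $R$-local by the previous Lemma. For the implication (1) $\Rightarrow$ (2): starting from the lower central series of $\mathbf{G}$ — or, in the merely locally nilpotent case, the lower central series as a descending filtration by $\mathbf{G}$-stable normal subgroup sheaves (this is the locally finite $\mathbf{G}$-central series whose existence is part of the hypothesis of local nilpotence, cf.\ Definition~\ref{defn:localnilpotence}) — I would observe that its successive subquotients $\mathbf{G}_i/\mathbf{G}_{i+1}$ are sheaves of abelian groups with trivial $\mathbf{G}$-action, and that their stalks, being subquotients of the lower central series of the $R$-local nilpotent group $s^*\mathbf{G}$, are $R$-modules by \cite[V.2.7]{BK}. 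Hence each $\mathbf{G}_i/\mathbf{G}_{i+1}$ is a sheaf of $R$-modules, and in the nilpotent case the series is finite while in the locally nilpotent case it is locally finite by hypothesis; this gives (2).

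\medskip

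The main obstacle, such as it is, is bookkeeping rather than substance: one must be slightly careful that in the locally nilpotent case the filtration one produces really is \emph{locally} finite, i.e.\ finite on each stalk but possibly of unbounded length globally — this is exactly the phenomenon highlighted in Example~\ref{ex:pointwisenilpotent}, and the lower central series of $\mathbf{G}$ (viewed as an infinite descending filtration) is the natural candidate. One also wants the $R$-module structures on the subquotients to be genuinely sheaf-theoretic rather than merely stalkwise; this is automatic because for a sheaf of abelian groups the $R$-module structure, when it exists, is unique and given by inverting the $p$-th power maps, a purely formal operation that commutes with sheafification. With these two points noted, the proof is a routine translation of \cite[V.2.7]{BK} through the exact stalk functors, and can be left essentially to the reader.
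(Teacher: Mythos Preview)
Your proposal is correct and follows essentially the same approach as the paper: the paper's proof is simply the sentence ``The next result follows from this fact by passing to stalks,'' referring to \cite[V.2.7]{BK}, and the lemma is marked with a \qed\ in the statement. Your write-up spells out the stalkwise reduction in more detail (in particular singling out the lower central series for the direction (1) $\Rightarrow$ (2)), but the underlying idea is identical.
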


\begin{defn}
\label{defn:Rnilpotentsheafofgroups}
Suppose $(\mathbf{C},\tau)$ is a site with enough points.  A sheaf of groups $\mathbf{G} \in \Grp_{\tau}(\mathbf{C})$ will be called (locally) $R$-nilpotent if it satisfies either of the equivalent conditions of \textup{Lemma~\ref{lem:Rnilpotenceequivconditions}}.
\end{defn}

\subsubsection*{$R$-local weak equivalences}
We continue with notation as in \ref{notation:fractions}.  Set $S^1_{rig} := B\Z$; this is a Kan complex (in fact a simplicial abelian group) weakly equivalent to  $\Delta^1/\partial \Delta^1$.  The multiplication by $n$ map induces a self-map $B\Z \to B\Z$ that we will call $\rho^1_n$.  For $r\geq 2$, let $S^{r}_{rig} = S^1_{rig} \sma (\Delta^{r-1}/\partial\Delta^{r})_+$.  Define $\rho^r_n: S^r_{rig} \to S^r_{rig}$ for $r \geq 2$ by the formula $\rho^r_n = \rho^1_n \sma id$.

For $(\mathbf{C},\tau)$ any small Grothendieck site, we set
\[
T_R := \{ \rho_n^r \times id_U | r \geq 1, n \in P, U \in \mathbf{C} \},
\]
where $\rho^n_r$ is now viewed as a morphism of constant simplicial presheaves.  The left Bousfield localization of the injective $\tau$-local model structure on $\sPre(\mathbf{C})$ with respect to the set of morphisms $T_R$ will be called the {\em $R$-local model structure} on $\sPre(\mathbf{C})$; since the site $(\mathbf{C},\tau)$ will be fixed, we hope that suppressing $\tau$ from the notation causes no confusion.  We write $\LR$ for the fibrant replacement functor on $\sPre(\mathbf{C})$ for the $R$-local model structure so $\LR \mathscr{X}$ is both $R$-local and $\tau$-fibrant.

\begin{rem}
\label{rem:inequivalentversions}
In the situation where $\mathbf{C}$ is the $1$-point site, this definition yields the form of $R$-localization on the model category of simplicial sets studied in \cite{CasacubertaPeschke}.  The {\em a priori} strange looking form of $T_R$ is necessary to insure that localization is compatible with the action of the fundamental group on higher homotopy groups, already in this classical situation.  Various different notions of $R$-localization are compared in \cite[Section 8]{CasacubertaPeschke}: these notions may disagree on non-nilpotent spaces.  More generally, suppose $(\mathbf{C},\tau)$ is a small Grothendieck site and $s$ is a point of this site.  In Proposition~\ref{prop:LRproperties}(1) below, we investigate the commutation of $\LR$ with $s^*$; here, we abuse notation and implicitly write $\LR$ both for localization of simplicial presheaves and of simplicial sets.
\end{rem}

The next result collects formal properties of $\LR$, which will be essential in the sequel; this result restates \cite[Lemma 3.3 and Propositions 3.4,3.8]{WickelgrenWilliams}, but our proof slightly corrects that of \cite[Lemma 3.3]{WickelgrenWilliams}.

\begin{prop}
\label{prop:LRproperties}
Suppose $(\mathbf{C},\tau)$ is a small Grothendieck site with enough points, and $\mathscr{X} \in \sPre(\mathbf{C})$.
\begin{enumerate}[noitemsep,topsep=1pt]
\item The $R$-localization functor $\LR$ commutes with taking stalks, i.e., if $s$ is a point of $(\mathbf{C},\tau)$, then $s^* \LR \mathscr{X} \cong \LR s^* \mathscr{X}$.
\item If $\mathscr{X}$ is fibrant, then $\mathscr{X}$ is $R$-local if and only if its stalks are all $R$-local for $s$ ranging through a conservative family of points for $(\mathbf{C},\tau)$.
\item The $R$-localization functor commutes with formation of finite products.
\end{enumerate}
\end{prop}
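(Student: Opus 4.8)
The plan is to prove the three statements in order, with the first being the crux and the other two following quickly.

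For part (1), the key point is that the set $T_R$ of localizing morphisms is built from \emph{constant} simplicial presheaves: each $\rho_n^r \times \mathrm{id}_U$ is the image under $U \mapsto U$ (or rather $U \mapsto $ constant presheaf) of a map between constant objects, smashed with a representable. First I would observe that the stalk functor $s^*$ is a left adjoint (its right adjoint is the pushforward $s_*$), hence commutes with all colimits and with smashing, and that on constant simplicial presheaves it acts as the identity (up to the canonical identification of stalks of constant sheaves). Consequently, the image of $T_R$ under $s^*$ is, up to equivalence, exactly the classical localizing set for $R$-localization of simplicial sets (the set from \cite{CasacubertaPeschke}): $s^*(\rho_n^r \times \mathrm{id}_U) \simeq \rho_n^r$. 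Now one argues that $s^*$ sends $R$-local objects of $\sPre(\mathbf{C})$ to $R$-local simplicial sets and $R$-local equivalences to $R$-local equivalences; the cleanest route is via the adjunction $(s^*, s_*)$ together with the fact that $s_*$ of an $R$-local simplicial set is $R$-local in $\sPre(\mathbf{C})$ (since $s_*$ commutes with the limits and the cotensors computing the relevant mapping spaces, and $\rho_n^r$-locality is tested by such mapping spaces). From this it follows that the canonical map $\mathscr{X} \to \LR \mathscr{X}$ becomes, after applying $s^*$, an $R$-local equivalence of simplicial sets with $R$-local target, hence \emph{the} $R$-localization; so $s^* \LR \mathscr{X} \cong \LR s^* \mathscr{X}$. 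The main obstacle is precisely the bookkeeping in this step: one must be careful that ``$R$-local'' for presheaves is not simply ``stalkwise $R$-local'' a priori, and that $s^*$ genuinely preserves the fibrancy/locality conditions --- this is where the proof of \cite[Lemma 3.3]{WickelgrenWilliams} apparently needed correction, and I would take care to phrase the argument through the right adjoint $s_*$ rather than attempting to check locality stalkwise in a circular way.

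Part (2) follows from part (1) together with standard facts about conservative families of points. If $\mathscr{X}$ is fibrant and each stalk $s^*\mathscr{X}$ is $R$-local, then by part (1) the map $\mathscr{X} \to \LR\mathscr{X}$ induces on each stalk the map $s^*\mathscr{X} \to \LR s^*\mathscr{X}$, which is an equivalence since $s^*\mathscr{X}$ is already $R$-local. Since the family of points is conservative, a morphism that is a stalkwise weak equivalence is a $\tau$-local weak equivalence; hence $\mathscr{X} \to \LR\mathscr{X}$ is a $\tau$-local weak equivalence between $\tau$-fibrant objects, so $\mathscr{X}$ is already $R$-local. The converse direction is immediate: if $\mathscr{X}$ is $R$-local then $\mathscr{X} \simeq \LR\mathscr{X}$, so $s^*\mathscr{X} \simeq s^*\LR\mathscr{X} \cong \LR s^*\mathscr{X}$ is $R$-local by part (1).

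Part (3) is then formal. Finite products of simplicial sets commute with $R$-localization in the classical setting (this is part of the Casacuberta--Peschke package, or follows from the fact that $\LR$ of a simplicial set is nilpotent-friendly and products of $R$-local spaces are $R$-local). To lift this to $\sPre(\mathbf{C})$: given $\mathscr{X}, \mathscr{Y}$, the object $\LR\mathscr{X} \times \LR\mathscr{Y}$ is $\tau$-fibrant (the $\tau$-fibrant replacement functor commutes with finite products, as recalled in the excerpt) and, by part (2), it is $R$-local since its stalks are $s^*\LR\mathscr{X} \times s^*\LR\mathscr{Y} \cong \LR s^*\mathscr{X} \times \LR s^*\mathscr{Y}$, which is $R$-local by the classical statement. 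Moreover $\mathscr{X}\times\mathscr{Y} \to \LR\mathscr{X}\times\LR\mathscr{Y}$ is a stalkwise $R$-local equivalence, hence an $R$-local equivalence; so the natural map $\LR(\mathscr{X}\times\mathscr{Y}) \to \LR\mathscr{X}\times\LR\mathscr{Y}$ is an equivalence. I expect no difficulty here beyond invoking parts (1) and (2) and the classical fact.
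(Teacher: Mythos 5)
There is a genuine gap in your treatment of part (1), and it sits exactly at the point you identify as the main obstacle. The adjunction $(s^*,s_*)$, together with your (correct) observation that $s_*$ of an $R$-local Kan complex is $R$-local in $\sPre(\mathbf{C})$, shows that $s^*$ is left Quillen for the $R$-local model structures and hence preserves \emph{$R$-local equivalences}: for $K$ $R$-local, $\Map(s^*f,K)\cong\Map(f,s_*K)$ is a weak equivalence. But it does \emph{not} show that $s^*$ preserves \emph{$R$-local objects} --- left Quillen functors do not preserve fibrant/local objects, and that is precisely the half of part (1) you need in order to say that $s^*\mathscr{X}\to s^*\LR\mathscr{X}$ is an $R$-local equivalence ``with $R$-local target, hence the $R$-localization.'' Routing the argument through $s_*$ does not substitute for this step.

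The actual difficulty, which the paper's proof is built around, is the following. One has $s^*\LR\mathscr{X}\cong\operatorname{colim}_{U\in Neib(s)}\LR\mathscr{X}(U)$, and locality is tested by $\Map(S^k_\tau,-)$ where $S^k_\tau$ is built from $B\Z$ and is therefore \emph{not} a compact simplicial set, so one cannot simply commute the mapping space past the filtered colimit. The fix is that $S^k_\tau$ receives a trivial cofibration from the compact object $S^1\sma(\Delta^{k}/\partial\Delta^{k})_+$; since $\LR\mathscr{X}(U)$ and the stalk are Kan complexes, the restriction maps between the two mapping spaces are acyclic fibrations, the colimit commutes with mapping out of the compact model, and one then identifies $\operatorname{colim}_U\Map(S^k_\tau,\LR\mathscr{X}(U))\cong\operatorname{colim}_U\Map(S^k_\tau\times U,\LR\mathscr{X})$, on which $\rho^k_n$ acts by weak equivalences because $\LR\mathscr{X}$ is $T_R$-local. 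Without this (or some equivalent compactness/ind-object argument) the claim that $s^*\LR\mathscr{X}$ is $R$-local is unproven. Your parts (2) and (3) are fine modulo part (1); note only that the paper proves (3) without passing to stalks, by using Ken Brown's lemma to see that $\mathscr{X}\times(-)$ preserves $R$-local weak equivalences, though your stalkwise route would also work once (1) is secured.
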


\begin{proof}
For Point (1), we first show that $s^* \LR \mathscr{X}$ is $R$-local.  Since $\LR \mathscr{X}$ is fibrant, so is $s^* \LR \mathscr{X}$.  Therefore, it suffices to show that if $\rho^k_n$ is an element of $T_R$, then the induced map
\[
\operatorname{Map}(S^k_{\tau},s^*\LR \mathscr{X}) \longrightarrow \operatorname{Map}(S^k_{\tau},s^*\LR \mathscr{X})
\]
is a weak equivalence of simplicial sets.  By definition, any point of the site $(\mathbf{C},\tau)$ may be realized as a filtered colimit over neighborhoods.  Explicitly, there is an isomorphism of simplicial sets of the form:
\[
\operatorname{Map}(S^k_{\tau},s^*\LR \mathscr{X}) \cong \operatorname{Map}(S^k_{\tau},\operatorname{colim}_{U \in Neib(s)}\LR\mathscr{X}(U)).
\]
The simplicial set $S^k_{\tau}$ is not a compact simplicial set (since $B\Z$ is not finite) so we cannot simply commute the colimit past the mapping space.  Instead, $S^k_{\tau}$ is ``homotopically small" in the sense that it is weakly equivalent to a compact object (this notion is related to, but different from, that of \cite[\S 2.2]{DwyerKan}), which allows us to proceed as follows.

Set $S^1 := \Delta^1/\partial \Delta^1$ and consider the map $S^1 \to S^1_{rig}$; this map is an acyclic cofibration.  There is an induced cofibration $S^1 \sma (\Delta^k/\partial \Delta^k_+) \to S^{k+1}_{rig}$.  By the universal property of colimits, this map induces a commutative square of the form
\[
\xymatrix{
\operatorname{colim}_{U \in Neib(s)} \operatorname{Map}(S^k_{rig},\LR\mathscr{X}(U)) \ar[r]\ar[d] & \operatorname{colim}_{U \in Neib(s)}\operatorname{Map}(S^1 \sma (\Delta^k/\partial \Delta^k_+)),\LR\mathscr{X}(U) \ar[d] \\
\operatorname{Map}(S^k_{rig},\operatorname{colim}_{U \in Neib(s)}\LR\mathscr{X}(U)) \ar[r]& \operatorname{Map}(S^1 \sma (\Delta^k/\partial \Delta^k_+),\operatorname{colim}_{U \in Neib(s)}\LR\mathscr{X}(U))
}
\]
Since $s^*\LR \mathscr{X}$ and $\LR \mathscr{X}(U)$ are fibrant simplicial set, it follows that the maps $\operatorname{Map}(S^k_{\tau},s^*\LR \mathscr{X}) \to \operatorname{Map}(S^1 \sma (\Delta^k/\partial \Delta^k_+),s^*\LR \mathscr{X})$ and 
\[\operatorname{Map}(S^k_{rig},\LR\mathscr{X}(U)) \to \operatorname{Map}(S^1 \sma (\Delta^k/\partial \Delta^k_+),\LR\mathscr{X}(U))
\]
are acyclic fibrations.  In particular, the bottom horizontal map in the square is a weak equivalence.  Since filtered colimits of weak equivalences are again weak equivalences, it follows that the top horizontal map is a weak equivalence as well.  Since $S^1 \sma (\Delta^k/\partial \Delta^k_+)$ is compact, the right hand vertical map is an isomorphism of simplicial sets.  Altogether, we conclude that the left hand vertical map is a weak equivalence of simplicial sets.

There is an isomorphism of simplicial sets of the form
\[
\operatorname{colim}_{U \in Neib(s)}\operatorname{Map}(S^k_{rig},\LR \mathscr{X}(U)) \cong \operatorname{colim}_{U \in Neib(s)}\operatorname{Map}(S^k_{rig} \times U,\LR \mathscr{X})
\]
As $\LR \mathscr{X}$ is $R$-local by assumption, $\rho^k_n$ induces a weak equivalence on $\operatorname{Map}(S^k_{rig} \times U,\LR \mathscr{X})$.

The functor $s^*$ preserves acyclic cofibrations and therefore the map $s^* \mathscr{X} \to s^* \LR \mathscr{X}$ is an acyclic cofibration with target an object that is fibrant in the $R$-local model structure on simplicial sets.  It follows that $s^* \LR \mathscr{X}$ is equivalent in the $R$-local model structure to any other $R$-local fibrant replacement of $s^* \mathscr{X}$, e.g., $\LR s^* \mathscr{X}$.

For Point (2), observe that the proof of Point (1) implies that if $\mathscr{X}$ is $R$-local, then so is $s^*\mathscr{X}$.  Conversely, $\mathscr{X}$ is $R$-local if and only if $\mathscr{X}$ is fibrant and the map $\mathscr{X} \to \LR\mathscr{X}$ is a simplicial weak equivalence.  However, the map $\mathscr{X} \to \LR\mathscr{X}$ is a simplicial weak equivalence if and only if it is after taking stalks.  By the proof of Point (1), this is the case if and only if $s^*\mathscr{X} \to \LR s^* \mathscr{X}$ is a weak equivalence for every point $s$.  Since $s^*\mathscr{X}$ is fibrant, this amounts to the assertion that $s^*\mathscr{X}$ is an $R$-local simplicial set.

For Point (3), it suffices to observe that if $\mathscr{X} \in \sPre(\mathbf{C})$, then $\mathscr{X} \times (-)$ preserves weak equivalences.  Indeed, this latter fact follows from Ken Brown's lemma \cite[Lemma 1.1.12]{Hovey} because all objects of $\sPre(\mathbf{C})$ are cofibrant and $\mathscr{X} \times (-)$ preserves acyclic cofibrations.  To conclude, one observes that $\LR \mathscr{X} \times \LR \mathscr{Y}$ is $R$-local and $\tau$-fibrant and $R$-locally weakly equivalent to $\mathscr{X} \times \mathscr{Y}$ and therefore also to $\LR (\mathscr{X} \times \mathscr{Y})$.
\end{proof}

\begin{lem}
\label{lem:homotopysheavesofRlocalspacesareRlocal}
Suppose $(\mathbf{C},\tau)$ is a small Grothendieck site with enough points, and $(\mathscr{X},x) \in \sPre(\mathbf{C})_*$ is a pointed $R$-local space.  For any integer $i \geq 0$, the homotopy sheaves $\bpi_i(\mathscr{X},x)$ are $R$-local sheaves of groups.
\end{lem}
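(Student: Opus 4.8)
The plan is to reduce the statement to the analogous fact about $R$-local simplicial sets by passing to stalks, using Proposition~\ref{prop:LRproperties}. First I would note that since $(\mathbf{C},\tau)$ has enough points, whether a sheaf of groups is $R$-local can be checked stalkwise (by the Lemma preceding Definition~\ref{defn:Rnilpotentsheafofgroups}), so it suffices to show that for every point $s$ of $(\mathbf{C},\tau)$ the group $s^* \bpi_i(\mathscr{X},x)$ is $R$-local. Next I would use the standard fact that taking stalks is exact and commutes with sheafification, so that $s^* \bpi_i(\mathscr{X},x) \cong \pi_i(s^*\mathscr{X}, s^*x)$ — the stalk of the homotopy sheaf is the homotopy group of the stalk (here one uses that $s^*$ preserves the relevant filtered colimits defining the stalk, and that homotopy groups of simplicial sets commute with filtered colimits).

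The key input is then that $s^*\mathscr{X}$ is an $R$-local simplicial set. Here I would invoke Proposition~\ref{prop:LRproperties}: since $\mathscr{X}$ is $R$-local, by Point (2) of that proposition (or by the argument in Point (1)) the stalk $s^*\mathscr{X}$ is $R$-local as a simplicial set. At this stage the problem is purely classical: one needs that the homotopy groups of an $R$-local simplicial set are $R$-local groups. For a pointed connected $R$-local space this is standard — it follows from the fact that the $R$-localization map induces a $\bpi_1$-equivariant $R$-localization on all homotopy groups, or one can cite \cite[V.3]{BK} or \cite[Section 8]{CasacubertaPeschke} directly. Since $\mathscr{X}$ need not be connected, I would argue componentwise: apply the connected case to each connected component of $s^*\mathscr{X}$, noting that $\pi_i$ for $i \geq 1$ only depends on the component of the basepoint, and that $\pi_0$ of an $R$-local space is unaffected (the relevant maps $\rho^r_n$ only affect higher homotopy), so $\bpi_0$ is automatically $R$-local in the trivial sense.

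The main obstacle I anticipate is bookkeeping rather than conceptual: making sure the commutation $s^*\bpi_i(\mathscr{X}) \cong \pi_i(s^*\mathscr{X})$ is justified with the correct basepoint conventions, and being careful that the classical statement ``homotopy groups of an $R$-local space are $R$-local'' is being applied in the form appropriate to the definition of $R$-local via the maps $\rho^r_n$ used in this paper (i.e., the Casacuberta--Peschke formulation), not merely to the more restrictive nilpotent setting. Once that translation is pinned down, the proof is a short chain: stalkwise check $\Rightarrow$ $s^*\mathscr{X}$ is $R$-local $\Rightarrow$ $\pi_i(s^*\mathscr{X})$ is $R$-local $\Rightarrow$ $\bpi_i(\mathscr{X})$ is $R$-local.
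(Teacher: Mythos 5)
Your proof is correct, but it takes a different route from the paper's. The paper never passes to stalks: it works sectionwise, observing that for each object $U$ the $p$-th power map on $\pi_i(\mathscr{X}(U),x)$ is the map induced by $\rho^i_p \times id_U$ on $\pi_0(\Map_*(S^i_{\tau}\times U,\LR\mathscr{X}))$, which is a bijection because $\rho^i_p\times id_U \in T_R$ and $\mathscr{X}$ is $R$-local; it then sheafifies. This shows the homotopy \emph{presheaves} are already $R$-local (a slightly stronger statement), is self-contained, and in fact makes no use of the ``enough points'' hypothesis. Your argument instead reduces to stalks via Proposition~\ref{prop:LRproperties}, identifies $s^*\bpi_i(\mathscr{X},x)$ with $\pi_i(s^*\mathscr{X},s^*x)$, and outsources the remaining step to the classical fact that homotopy groups of an $R$-local simplicial set (in the Casacuberta--Peschke sense) are $R$-local. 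That classical fact is itself proved by exactly the mapping-space computation the paper performs, so your route is essentially the same mechanism filtered through the stalkwise machinery; what it costs you is the dependence on enough points, the external citation, and the (acknowledged) bookkeeping that $s^*$ commutes with homotopy sheaves, which implicitly uses that an $R$-local object is sectionwise fibrant. Both arguments are valid; the paper's is shorter and more intrinsic to the presheaf setting.
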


\begin{proof}
We show that the homotopy presheaves are $R$-local, and the statement about homotopy sheaves follows by sheafifying.  Let $U$ be an object of $\mathbf{C}$.  Write $R = \Z[P^{-1}]$ for some set of primes $P$.  We will show that the $p$-th power map on $\pi_i(\LR\mathscr{X}(U))$ is a bijection for any $p \in P$.  To this end, observe that the $p$-th power map is the map induced by $\rho_p^i \times id_{U}$ on $\pi_0(\Map_{*}(S^i_{rig} \times U,\LR \mathscr{X}))$.  Since $\LR\mathscr{X}$ is $R$-local and $\rho^i_p \times id_{U}$ lies in $T_R$, this map is a bijection.
\end{proof}

\begin{rem}
\label{rem:conversetolemhomotopysheavesofRlocalspacesareRlocal}
Lemma~\ref{lem:homotopysheavesofRlocalspacesareRlocal} admits a converse in the following sense.  If $(\mathscr{X},x) \in \sPre(\mathbf{C})_*$ is fibrant in the $\tau$-local model structure and its homotopy sheaves are $R$-local, then $(\mathscr{X},x)$ is $R$-local as well.
\end{rem}

\begin{lem}
\label{lem:equivalentcharacterizationsofbeinganRequivalence}
Assume $(\mathbf{C},\tau)$ is a site.  If $f: \mathscr{X} \to \mathscr{Y}$ is a morphism in $\sPre(\mathbf{C})$, then the following conditions are equivalent:
\begin{enumerate}[noitemsep,topsep=1pt]
\item the morphism $f$ is an $R$-local weak equivalence;
\item for every $R$-local space $\mathscr{W}$, the map
    \[
    f^*: \hom_{\operatorname{Ho}_{\tau}(\mathbf{C})}(\mathscr{Y},\mathscr{W}) \longrightarrow \hom_{\operatorname{Ho}_{\tau}(\mathbf{C})}(\mathscr{X},\mathscr{W})
    \]
    is a bijection.
\end{enumerate}
\end{lem}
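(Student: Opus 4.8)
The plan is to derive both implications from the universal property of the Bousfield localization defining the $R$-local model structure, recalled in Section~\ref{ss:nilpotenceinlocalhomotopy}; the only subtlety is that condition (2) is phrased in terms of homotopy classes of maps, whereas ``$R$-local weak equivalence'' is defined through weak equivalences of derived mapping spaces, so one must be a little careful not to need more than is available.

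I would treat the implication (1) $\Rightarrow$ (2) first, as it is essentially formal. An $R$-local weak equivalence is by definition a $T_R$-local equivalence, and $T_R$-locality is invariant under $\tau$-local weak equivalence since each derived mapping space $\Map(\mathscr{A},-)$ is homotopy invariant; hence we may freely replace an $R$-local space $\mathscr{W}$ by its $\tau$-fibrant replacement $\mathrm{R}_{\tau}\mathscr{W}$, which is again $R$-local. The defining property of a $T_R$-local equivalence then gives that $f^*: \Map(\mathscr{Y},\mathrm{R}_{\tau}\mathscr{W}) \to \Map(\mathscr{X},\mathrm{R}_{\tau}\mathscr{W})$ is a weak equivalence of simplicial sets, and taking $\pi_0$, together with the identification $\hom_{\operatorname{Ho}_{\tau}(\mathbf{C})}(-,\mathscr{W}) = \pi_0\Map(-,\mathrm{R}_{\tau}\mathscr{W})$, yields (2). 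Applying this implication to the localization map $\mathscr{A} \to \LR\mathscr{A}$, which is an $R$-local weak equivalence, I record the consequence that for every $R$-local $\mathscr{W}$ the map
\[
\hom_{\operatorname{Ho}_{\tau}(\mathbf{C})}(\LR\mathscr{A},\mathscr{W}) \longrightarrow \hom_{\operatorname{Ho}_{\tau}(\mathbf{C})}(\mathscr{A},\mathscr{W})
\]
is a bijection, naturally in $\mathscr{A}$ --- this is exactly the universal property of $\LR$.

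For (2) $\Rightarrow$ (1) I would then proceed as follows. By the naturality just recorded, condition (2) for $f$ is equivalent to the assertion that $(\LR f)^*: \hom_{\operatorname{Ho}_{\tau}(\mathbf{C})}(\LR\mathscr{Y},\mathscr{W}) \to \hom_{\operatorname{Ho}_{\tau}(\mathbf{C})}(\LR\mathscr{X},\mathscr{W})$ is a bijection for every $R$-local $\mathscr{W}$. Since $\LR\mathscr{X}$ and $\LR\mathscr{Y}$ are themselves $R$-local, and the full subcategory of $\operatorname{Ho}_{\tau}(\mathbf{C})$ on the $R$-local objects is precisely the essential image of the fully faithful functor $\mathrm{Ho}(\sPre(\mathbf{C})[T_R^{-1}]) \to \operatorname{Ho}_{\tau}(\mathbf{C})$, the Yoneda lemma applied inside this subcategory shows this is in turn equivalent to $\LR f$ being an isomorphism in $\operatorname{Ho}_{\tau}(\mathbf{C})$, i.e.\ a $\tau$-local weak equivalence. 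Finally, $\LR f$ is a $\tau$-local weak equivalence if and only if $f$ is an $R$-local weak equivalence: the composite $\operatorname{Ho}_{\tau}(\mathbf{C}) \to \mathrm{Ho}(\sPre(\mathbf{C})[T_R^{-1}]) \to \operatorname{Ho}_{\tau}(\mathbf{C})$ is naturally isomorphic to $\LR$ (viewed as an endofunctor of $\operatorname{Ho}_{\tau}(\mathbf{C})$), its first arrow sends a morphism represented by a map of $\sPre(\mathbf{C})$ to an isomorphism precisely when that map is an $R$-local weak equivalence by construction of the $R$-local model structure, and its second arrow is fully faithful, hence reflects isomorphisms. Concatenating these equivalences gives (2) $\Leftrightarrow$ (1).

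The step I expect to demand the most care is not any single computation but the bookkeeping around the three homotopy categories involved: checking that no circularity creeps in (the universal property of $\LR$ invoked for (2) $\Rightarrow$ (1) is literally the already-established implication (1) $\Rightarrow$ (2) applied to the localization unit $\mathscr{A}\to\LR\mathscr{A}$), and keeping ``$R$-local space'' consistently interpreted as the homotopy-invariant notion of $T_R$-locality so that $\tau$-fibrant replacements may be inserted at will. Beyond the formal properties of left Bousfield localization, no genuinely hard input is needed.
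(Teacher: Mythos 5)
Your proof is correct, and the forward implication (take $\pi_0$ of the derived mapping space) is exactly what the paper does. For the converse, however, you take a genuinely different route. The paper's argument is a two-line direct one: it observes that if $\mathscr{W}$ is $R$-local then so is $\Omega^i\mathscr{W}$ for every $i \geq 0$, so that condition (2), applied to all these loop spaces, upgrades the bijection on $\pi_0\Map(-,\mathscr{W})$ to a weak equivalence of the full mapping spaces, which is the definition of an $R$-local ($T_R$-local) equivalence. Your argument instead routes everything through the localization functor $\LR$: you extract its universal property on hom-sets from the already-proved implication applied to the unit $\mathscr{A}\to\LR\mathscr{A}$, reduce (2) to $(\LR f)^*$ being bijective on hom-sets out of $R$-local targets, apply Yoneda inside the full subcategory of $R$-local objects to conclude $\LR f$ is an isomorphism in $\operatorname{Ho}_{\tau}(\mathbf{C})$, and finally use saturation of the localized model structure plus full faithfulness of the comparison functor to transfer this back to $f$. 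Both are sound. The paper's version is shorter and stays entirely at the level of mapping spaces, but it quietly leans on the identification of higher homotopy groups of $\Map(\mathscr{X},\mathscr{W})$ at arbitrary basepoints with sets of the form $\hom_{\operatorname{Ho}_{\tau}}(\mathscr{X},\Omega^i\mathscr{W})$, which takes some basepoint bookkeeping to make fully precise. Your version trades that for the formal machinery of left Bousfield localization (existence of $\LR$, full faithfulness, saturation), all of which the paper has already set up, and it has the merit of making explicit that (2) is literally the universal property characterizing $\LR\mathscr{X}\to\LR\mathscr{Y}$ as an isomorphism; your own remark that no circularity arises in deriving the universal property of $\LR$ from the first implication is the right point to flag.
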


\begin{proof}
The first statement implies the second by taking $\pi_0$ of the simplicial mapping space.  The second statement implies the first by observing that if $\mathscr{W}$ is $R$-local, then so is $\Omega^i \mathscr{W}$ for every $i \geq 0$.
\end{proof}

\begin{lem}
\label{lem:Rlocalizationnilpotentfiberlemma}
Suppose $p: \mathscr{E} \to \mathscr{B}$ is a (locally) nilpotent morphism of pointed simplicially connected spaces.  The morphism
\[
\LR p: \LR \mathscr{E} \longrightarrow \LR \mathscr{B}
\]
is also a (locally) nilpotent morphism, and the canonical map $\LR(\operatorname{hofib}(p)) \to \operatorname{hofib}(\LR p)$ is a $\tau$-local weak equivalence.
\end{lem}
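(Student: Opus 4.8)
The plan is to reduce everything to the stalkwise situation and then invoke the corresponding classical facts about $R$-localization of nilpotent maps of simplicial sets, together with the commutation of $\LR$ with stalks. First I would observe that since $(\mathbf{C},\tau)$ has enough points, both conclusions can be checked stalkwise: the statement that $\LR p$ is (locally) nilpotent is a statement about the $\bpi_1(\LR\mathscr{E})$-action on the homotopy sheaves of $\operatorname{hofib}(\LR p)$, which by Remark~\ref{rem:stalksandnilpotence} may be verified on stalks; and the statement that $\LR(\operatorname{hofib}(p)) \to \operatorname{hofib}(\LR p)$ is a $\tau$-local weak equivalence may likewise be checked on stalks, since a map of $\tau$-fibrant spaces is a $\tau$-local weak equivalence iff it is a stalkwise weak equivalence.

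Next I would pin down the stalkwise picture. Fix a point $s$. By Proposition~\ref{prop:LRproperties}(1), $s^*\LR\mathscr{E} \cong \LR s^*\mathscr{E}$ and $s^*\LR\mathscr{B} \cong \LR s^*\mathscr{B}$, and under these identifications $s^*(\LR p)$ is identified with $\LR(s^*p)$. The functor $s^*$ is exact and preserves homotopy fibers (it is a left adjoint preserving finite limits, and it preserves fibrations and trivial cofibrations), so $s^*\operatorname{hofib}(p) \simeq \operatorname{hofib}(s^*p)$, and $s^*$ also commutes with the formation of homotopy sheaves. Hence the hypothesis that $p$ is (locally) nilpotent translates, via Definition~\ref{defn:nilpotentspace} and Remark~\ref{rem:stalksandnilpotence}, into the statement that $s^*p$ is a nilpotent map of simplicial sets (for each $s$, in the "locally nilpotent" case; for each $s$ with uniformly bounded nilpotence class, in the "nilpotent" case). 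At this point the problem is purely classical: for a nilpotent map $f$ of simplicial sets, $\LR f$ is again nilpotent and the natural comparison $\LR\operatorname{hofib}(f) \to \operatorname{hofib}(\LR f)$ is a weak equivalence. This is the content of the classical theory of fiberwise $R$-localization of nilpotent fibrations (cf.\ \cite[\S II.4--5, V.3]{BK} or \cite[Ch.~II]{HMR}), combined with the identification of $\LR$ on simplicial sets with the functor considered there (Remark~\ref{rem:inequivalentversions}).

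Then I would assemble the pieces. For the weak equivalence statement: the canonical map $\LR(\operatorname{hofib}(p)) \to \operatorname{hofib}(\LR p)$, after applying $s^*$, is identified (using Proposition~\ref{prop:LRproperties}(1) and the commutation of $s^*$ with homotopy fibers) with the classical comparison map $\LR\operatorname{hofib}(s^*p) \to \operatorname{hofib}(\LR s^*p)$, which is a weak equivalence by the classical nilpotent fiber lemma; since this holds for all $s$ in a conservative family, the original map is a $\tau$-local weak equivalence. For the nilpotence statement: by the previous sentence $s^*\operatorname{hofib}(\LR p) \simeq \LR\operatorname{hofib}(s^*p)$, whose homotopy groups are the $R$-localizations of those of $\operatorname{hofib}(s^*p)$, carrying the $R$-localized (hence still nilpotent) action of $\pi_1(s^*\LR\mathscr{E}) = (\pi_1(s^*\mathscr{E}))_R$; this is again the classical statement that $R$-localization preserves nilpotence of the action. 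Passing back over all $s$ via Remark~\ref{rem:stalksandnilpotence} gives that $\LR p$ is (locally) nilpotent. I expect the main obstacle to be purely bookkeeping rather than conceptual: carefully justifying that $s^*$ commutes with $\operatorname{hofib}$ and with $\LR$ simultaneously and compatibly (so that $s^*$ of the canonical comparison map really is the canonical comparison map downstairs), and, in the genuinely (non-locally) nilpotent case, checking that the nilpotence class does not grow under $R$-localization so that a uniform bound persists stalkwise—this last point being exactly where the "locally nilpotent" hypothesis is the more robust one.
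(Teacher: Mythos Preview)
Your proposal is correct and follows essentially the same route as the paper: reduce both conclusions to stalks, use Proposition~\ref{prop:LRproperties}(1) to identify $s^*\LR$ with $\LR s^*$, and then invoke the classical Bousfield--Kan fiber lemma for nilpotent fibrations of simplicial sets (the paper cites \cite[II.4.8]{BK} together with \cite[Proposition 8.1]{CasacubertaPeschke} to identify $\LR$ with the Bousfield--Kan model). Your final worry about nilpotence class growing under $R$-localization is unfounded: tensoring a given finite $\mathbf{G}$-central series with $R$ termwise produces a $\mathbf{G}_R$-central series of the same length, so the bound persists uniformly.
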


\begin{proof}
Consider the morphism $\mathscr{B} \to \mathrm{R}_{\tau}\mathscr{B}$ and factor the composite $\mathscr{E} \to \mathrm{R}_\tau{\mathscr{B}}$ as an acyclic cofibration $\mathscr{E} \to \mathscr{E}'$ followed by a $\tau$-fibration $\mathscr{E}' \to \mathrm{R}_{\mathscr{B}}$ so that the homotopy fiber of $p$ is the actual fiber of $p': \mathscr{E}' \to \mathrm{R}_\tau{\mathscr{B}}$.  Both conditions in the statement may be checked stalkwise, and stalkwise the hypotheses imply that $p'$ is a nilpotent fibration.  

Next, recall that Bousfield--Kan construct the functor $R_{\infty}$ (\cite[I.4.2]{BK}) and show that $R_{\infty}$ models $R$-localization \cite[V.4.3]{BK}.  By \cite[Proposition 8.1]{CasacubertaPeschke} on the category of simplicial sets, $\LR$ models localization \`a la Bousfield--Kan.  Therefore, the results about preservation of nilpotent fibrations by $R_{\infty}$ also hold for $\LR$.  Granted these observations, the result follows directly from \cite[II.4.8]{BK}.
\end{proof}

\subsubsection*{Functorial $R$-localization for nilpotent sheaves of groups and spaces}
We now define a functorial $R$-localization for sheaves of groups; our goal is to show that this functor is well-behaved on the category of (locally) nilpotent sheaves of groups.

\begin{defn}
\label{defn:functorialRlocalization}
If $\mathbf{G}$ is a sheaf of groups, then we define
\[
\mathbf{G}_R := \bpi_1(\LR B\mathbf{G});
\]
the sheaf of groups $\mathbf{G}_R$ will be called the {\em $R$-localization} of $\mathbf{G}$.
\end{defn}

By definition, the assignment $\mathbf{G} \mapsto \mathbf{G}_R$ is functorial.  The properties of this functor upon restriction to the category of (locally) nilpotent sheaves of groups are recorded in the following result.

\begin{prop}
\label{prop:Rlocalizationofgroups}
Suppose $R \subset \Q$ is a ring.  The assignment $\mathbf{G} \mapsto \mathbf{G}_R$ enjoys the following properties:
\begin{enumerate}[noitemsep,topsep=1pt]
\item the functor $(-)_R$ is left adjoint to the forgetful functor from the category of (locally) $R$-local nilpotent sheaves of groups to the category of (locally) nilpotent sheaves of groups;
\item the functor $(-)_R$ preserves exact sequences of (locally) nilpotent sheaves of group; and
\item if $R' \subset \Q$ is another ring, then the natural transformation $(-)_{R \tensor_{\Z} R'} \to ((-)_R)_{R'}$ is an isomorphism of functors from the category of (locally) nilpotent sheaves of groups to the category of $R \tensor_{\Z} R'$-local (locally) nilpotent sheaves of groups.
\end{enumerate}
\end{prop}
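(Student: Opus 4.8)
\textit{Setup and the key preliminary step.} The plan is to reduce all three assertions to the corresponding classical facts about Bousfield--Kan $R$-localization of ordinary nilpotent groups, by passing to classifying spaces and then to stalks. The crucial preliminary observation is an identification of the functor of Definition~\ref{defn:functorialRlocalization}: for a (locally) nilpotent sheaf of groups $\mathbf{G}$, I claim $\LR B\mathbf{G}$ is a $K(\mathbf{G}_R,1)$, with $\mathbf{G}_R$ (locally) $R$-nilpotent. Indeed, since $B\mathbf{G} \to \ast$ is a (locally) nilpotent morphism, Lemma~\ref{lem:Rlocalizationnilpotentfiberlemma} shows $\LR B\mathbf{G}$ is a (locally) nilpotent space; by Lemma~\ref{lem:homotopysheavesofRlocalspacesareRlocal} its homotopy sheaves are $R$-local. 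To see the higher homotopy sheaves vanish and to identify $\mathbf{G}_R$ on stalks, use that $(\mathbf{C},\tau)$ has enough points: for a point $s$, Proposition~\ref{prop:LRproperties}(1) gives $s^*\LR B\mathbf{G} \cong \LR s^* B\mathbf{G} = \LR B(s^*\mathbf{G})$, and since $s^*\mathbf{G}$ is an honest nilpotent group, \cite[Proposition 8.1]{CasacubertaPeschke} together with \cite[Ch.~V]{BK} identifies this with $K((s^*\mathbf{G})_R,1)$. Since homotopy sheaves are detected on a conservative family of points, $\bpi_i(\LR B\mathbf{G}) = 0$ for $i \neq 1$ and $s^*\mathbf{G}_R \cong (s^*\mathbf{G})_R$ naturally in $\mathbf{G}$; in particular $\mathbf{G}_R$ is $R$-local, and being (locally) nilpotent it is (locally) $R$-nilpotent. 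Consequently $\LR B\mathbf{G} \simeq B\mathbf{G}_R$ in $\ho{\tau,\ast}(\mathbf{C})$, and the unit $\mathbf{G} \to \mathbf{G}_R$ is $\bpi_1$ of the localization map $B\mathbf{G} \to \LR B\mathbf{G}$.

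\textit{Point (1).} Given a (locally) $R$-nilpotent sheaf of groups $\mathbf{H}$, the space $B\mathbf{H}$ is $R$-local: a $\tau$-fibrant replacement of $B\mathbf{H}$ has $\bpi_1 = \mathbf{H}$ and vanishing higher homotopy sheaves, all $R$-local, so Remark~\ref{rem:conversetolemhomotopysheavesofRlocalspacesareRlocal} applies. Using the standard identification of pointed homotopy classes of maps between classifying spaces with group homomorphisms, $\hom_{\ho{\tau,\ast}(\mathbf{C})}(B\mathbf{G}',B\mathbf{H}) \cong \hom_{\Grp_\tau(\mathbf{C})}(\mathbf{G}',\mathbf{H})$ (see \cite[Ch.~9]{Jardine}), together with the pointed analog of Lemma~\ref{lem:equivalentcharacterizationsofbeinganRequivalence} applied to the $R$-local equivalence $B\mathbf{G} \to \LR B\mathbf{G}$, I obtain
\[
\hom(\mathbf{G}_R,\mathbf{H}) \cong \hom_{\ho{\tau,\ast}(\mathbf{C})}(B\mathbf{G}_R,B\mathbf{H}) \cong \hom_{\ho{\tau,\ast}(\mathbf{C})}(\LR B\mathbf{G},B\mathbf{H}) \cong \hom_{\ho{\tau,\ast}(\mathbf{C})}(B\mathbf{G},B\mathbf{H}) \cong \hom(\mathbf{G},\mathbf{H}),
\]
naturally in $\mathbf{G}$ and $\mathbf{H}$, and one checks the composite isomorphism is restriction along the unit $\mathbf{G} \to \mathbf{G}_R$; this is exactly the desired adjunction.

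\textit{Points (2) and (3).} Both follow by a stalkwise argument, using that the stalk functor $s^*$ is exact, that $s^*\mathbf{G}_R \cong (s^*\mathbf{G})_R$ naturally, and that exactness of a sequence of $\tau$-sheaves of groups, as well as a morphism of such being an isomorphism, may be checked on a conservative family of points. For (2), applying $s^*$ to an exact sequence $1 \to \mathbf{G}' \to \mathbf{G} \to \mathbf{G}'' \to 1$ of (locally) nilpotent sheaves of groups and invoking exactness of $R$-localization on ordinary nilpotent groups (\cite[Ch.~V]{BK}, \cite{HMR}) shows $1 \to s^*\mathbf{G}'_R \to s^*\mathbf{G}_R \to s^*\mathbf{G}''_R \to 1$ is exact for every $s$, whence $1 \to \mathbf{G}'_R \to \mathbf{G}_R \to \mathbf{G}''_R \to 1$ is exact; a longer exact sequence is handled by splicing short ones. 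For (3), write $R = \Z[P^{-1}]$, $R' = \Z[P'^{-1}]$; one checks (e.g. on stalks, via central series) that $(\mathbf{G}_R)_{R'}$ is $R \tensor_\Z R'$-local and (locally) nilpotent, so Point (1) yields a canonical natural map $\mathbf{G}_{R \tensor_\Z R'} \to (\mathbf{G}_R)_{R'}$ compatible with the units; on each stalk this is the classical comparison isomorphism $(s^*\mathbf{G})_{R \tensor_\Z R'} \isomto ((s^*\mathbf{G})_R)_{R'}$ (\cite[Ch.~V]{BK}), and hence it is an isomorphism of sheaves.

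\textit{Main obstacle.} The technical heart is the preliminary step — establishing $\LR B\mathbf{G} \simeq B\mathbf{G}_R$ with $\mathbf{G}_R$ (locally) $R$-nilpotent and $s^*\mathbf{G}_R \cong (s^*\mathbf{G})_R$. This is where the ``enough points'' hypothesis, the compatibility of $\LR$ with stalks (Proposition~\ref{prop:LRproperties}) and with nilpotent fibrations (Lemma~\ref{lem:Rlocalizationnilpotentfiberlemma}) all enter, and it is precisely the step that forces the stalkwise formulation: in the merely locally nilpotent case there is no uniform bound on the nilpotence class, so one genuinely cannot argue by induction on it. Once this identification is in hand, Points (1)--(3) are formal consequences of the classical theory of $R$-localization of nilpotent groups.
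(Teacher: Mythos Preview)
Your proof is correct and follows essentially the same strategy as the paper: reduce everything to stalks via Proposition~\ref{prop:LRproperties}(1) and invoke the classical Bousfield--Kan theory of $R$-localization for ordinary nilpotent groups. The main presentational difference is that you front-load the key identification $\LR B\mathbf{G} \simeq B\mathbf{G}_R$ (i.e.\ $\LR B\mathbf{G}$ is $1$-truncated, with $s^*\mathbf{G}_R \cong (s^*\mathbf{G})_R$) as an explicit preliminary step, and then treat Points~(2) and~(3) by a direct stalkwise argument; the paper instead handles Point~(2) by applying $\LR$ to the fiber sequence $B\mathbf{G}' \to B\mathbf{G} \to B\mathbf{G}''$ (via Lemma~\ref{lem:Rlocalizationnilpotentfiberlemma}) and deduces Point~(3) from Point~(2) by induction on nilpotence class starting from the abelian case. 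Your route is slightly more streamlined once the preliminary identification is in hand, while the paper's route for Point~(2) makes the role of the nilpotent fiber lemma more visible; both are valid and the differences are stylistic rather than substantive.
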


\begin{proof}
For the first point, $(-)_R$ is functorial by construction and it follows from Lemma~\ref{lem:homotopysheavesofRlocalspacesareRlocal} that $\mathbf{G}_R$ is an $R$-local sheaf of groups.  If $\mathbf{G}$ is (locally) nilpotent, then we claim $\mathbf{G}_R$ is (locally) nilpotent and $R$-local.  Since $\LR$ commutes with taking stalks, we may check this stalkwise, in which case it follows immediately from \cite[V.2.2]{BK}.

To complete the proof of (1), it suffices to prove that $\mathbf{G} \to \mathbf{G}_R$ is initial among maps from $\mathbf{G}$ to (locally) nilpotent $R$-local sheaves of groups.  If $\mathbf{H}$ is a (locally) nilpotent $R$-local sheaf of groups equipped with a homomorphism $\mathbf{G} \to \mathbf{H}$, then there is an evident map $\mathrm{R}_{\tau}B\mathbf{G} \to \mathrm{R}_{\tau}B\mathbf{H}$.  The space $\mathrm{R}_{\tau}B\mathbf{H}$ is $R$-local by Remark~\ref{rem:conversetolemhomotopysheavesofRlocalspacesareRlocal}, and therefore, it follows that $\mathrm{R}_{\tau}B\mathbf{G} \to \mathrm{R}_{\tau}B\mathbf{H}$ factors through $\LR B\mathbf{G}$.  Applying $\bpi_1$ establishes Point (1).

For Point (2), suppose
\[
1 \longrightarrow \mathbf{G}' \longrightarrow \mathbf{G} \longrightarrow \mathbf{G}'' \longrightarrow 1
\]
is a short exact sequence of (locally) nilpotent sheaves of groups.  In that case, we get a simplicial fiber sequence of the form
\[
B\mathbf{G}' \longrightarrow B\mathbf{G} \longrightarrow B\mathbf{G}''
\]
Applying $\LR$ to this fiber sequence yields a simplicial fiber sequence by appeal to Lemma~\ref{lem:Rlocalizationnilpotentfiberlemma}.  Exactness then follows by appeal to \cite[V.2.4]{BK}, since exactness may be checked stalkwise.

If $\mathbf{A}$ is an abelian sheaf of groups, then there is an evident isomorphism of functors $(R \tensor_{\Z} R') \tensor \mathbf{A} \isomto R \tensor (R' \tensor \mathbf{A})$.  Following \cite[V.2.9]{BK}, point (3) is immediate from (2) and a straightforward induction argument.
\end{proof}

\begin{cor}
\label{cor:Rlocalizationofnilpotentisnilpotent}
Suppose $(\mathscr{X},x)$ is a pointed locally nilpotent space.
\begin{enumerate}[noitemsep,topsep=1pt]
\item The space $\LR\mathscr{X}$ is an $R$-local locally nilpotent space;
and
\item For every integer $i \geq 1$, the canonical map $\bpi_i(\mathscr{X})_R \to \bpi_i(\LR\mathscr{X})$ is an isomorphism..
\end{enumerate}
\end{cor}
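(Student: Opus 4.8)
The plan is to reduce both statements to the corresponding facts about nilpotent simplicial sets via a stalkwise argument, leveraging the fact established in Proposition~\ref{prop:LRproperties}(1) that $\LR$ commutes with $s^*$, together with the classical Bousfield--Kan theory of $R$-localization of nilpotent spaces. For Point (1), I would argue as follows. The space $\LR\mathscr{X}$ is $R$-local and $\tau$-fibrant by construction, so it remains to show it is locally nilpotent, i.e., that its homotopy fiber over $\ast$ is simplicially connected and the $\bpi_1(\LR\mathscr{X})$-action on the $\bpi_i$ is locally nilpotent. Since $\LR\mathscr{X}$ is pointed and fibrant, its homotopy fiber over $\ast$ is just $\LR\mathscr{X}$ itself up to the relevant identifications, so the condition is that the $\bpi_1(\LR\mathscr{X})$-action on each $\bpi_i(\LR\mathscr{X})$ is locally nilpotent — which by Remark~\ref{rem:stalksandnilpotence} and Definition~\ref{defn:localnilpotence} may be checked on stalks. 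Applying $s^*$ and using $s^*\LR\mathscr{X} \cong \LR s^*\mathscr{X}$ from Proposition~\ref{prop:LRproperties}(1), we are reduced to the statement that if $Y$ is a nilpotent simplicial set then $\LR Y$ is a nilpotent simplicial set; since $\LR$ models the Bousfield--Kan localization $R_\infty$ on simplicial sets (as noted in the proof of Lemma~\ref{lem:Rlocalizationnilpotentfiberlemma}, via \cite[Proposition 8.1]{CasacubertaPeschke}), this is \cite[V.3.1, V.4.3]{BK}. One should also check that the homotopy fiber of $\LR\mathscr{X} \to \ast$ is simplicially connected; but $\bpi_0$ is computed stalkwise and $R_\infty$ preserves connectedness of nilpotent spaces, so this follows similarly.

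For Point (2), I would apply Lemma~\ref{lem:Rlocalizationnilpotentfiberlemma} to the structure morphism $\mathscr{X} \to \ast$: since $\mathscr{X}$ is locally nilpotent, this morphism is a locally nilpotent morphism of pointed simplicially connected spaces, and the lemma gives that $\LR(\mathscr{X}) \to \LR(\ast) = \ast$ is again locally nilpotent and that $\LR\mathscr{X}$ is the homotopy fiber. The content of Point (2) is then the identification of $\bpi_i(\LR\mathscr{X})$ with $\bpi_i(\mathscr{X})_R$ as defined in Definition~\ref{defn:functorialRlocalization}. I would first reduce to stalks: by Proposition~\ref{prop:LRproperties}(1), $s^*\bpi_i(\LR\mathscr{X}) = \pi_i(s^*\LR\mathscr{X}) = \pi_i(\LR s^*\mathscr{X})$, while by Proposition~\ref{prop:Rlocalizationofgroups}(1) and the fact that $(-)_R$ commutes with stalks (which follows from its definition via $\LR B(-)$ and Proposition~\ref{prop:LRproperties}(1) again), $s^*(\bpi_i(\mathscr{X})_R) = (s^*\bpi_i(\mathscr{X}))_R = (\pi_i(s^*\mathscr{X}))_R$. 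So the claim reduces to the statement that for a nilpotent simplicial set $Y$, the canonical map $\pi_i(Y)_R \to \pi_i(R_\infty Y)$ is an isomorphism, which is the main theorem of Bousfield--Kan localization for nilpotent spaces, \cite[V.3.1]{BK} (with $i=1$ requiring the non-abelian $R$-completion of a nilpotent group, also in \cite[Chapter V]{BK}). The only subtlety here is to check that the "canonical map" in the statement — presumably induced functorially from $\mathscr{X}\to\LR\mathscr{X}$ and the universal property in Proposition~\ref{prop:Rlocalizationofgroups}(1) — is the one that corresponds, on stalks, to the Bousfield--Kan comparison map; this is a diagram-chase using that both maps are characterized by the same universal property relative to $R$-local nilpotent sheaves of groups.

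I expect the main obstacle to be bookkeeping rather than any deep new idea: namely, carefully verifying the compatibilities that let one pass freely between (a) the sheaf-level functor $(-)_R$ of Definition~\ref{defn:functorialRlocalization}, (b) its stalkwise values, and (c) the classical group-completion functor of \cite[Chapter V]{BK} on stalks, and checking that the map in Point (2) is the one that makes all these identifications commute. Handling the $i=1$ case needs slightly more care than $i\geq 2$ because $\bpi_1$ is only a sheaf of (nilpotent) groups, not abelian, so one genuinely invokes the non-abelian $R$-completion machinery; but this was already set up in Proposition~\ref{prop:Rlocalizationofgroups}, so the ingredients are all in place.
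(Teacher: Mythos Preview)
Your proposal is correct and follows essentially the same approach as the paper: Point~(1) is obtained as the special case $p = (\mathscr{X} \to \ast)$ of Lemma~\ref{lem:Rlocalizationnilpotentfiberlemma} (you give both this and a direct stalkwise argument, which is a bit redundant but fine), and Point~(2) is reduced stalkwise via Proposition~\ref{prop:LRproperties}(1) to \cite[V.3.1]{BK}. The paper's proof is the two-line version of exactly what you wrote; your additional bookkeeping about compatibility of the canonical maps is reasonable caution but not singled out as an issue in the paper.
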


\begin{proof}
The first point is a special case of Lemma~\ref{lem:Rlocalizationnilpotentfiberlemma}.  The second point follows immediately from \cite[V.3.1]{BK} because the relevant isomorphism may be checked stalkwise.
\end{proof}

\section{Nilpotence in $\aone$-homotopy theory}
\label{s:nilpotenceinaonehomotopytheory}
In this section, we formulate and study a notion of nilpotence in $\aone$-homotopy theory.  Section~\ref{ss:aonelocalgrouptheory} recalls various results from Morel's foundational work \cite{MField}, in particular the notions of strongly and strictly $\aone$-invariant sheaves of groups (see Definition~\ref{defn:stronglyaoneinvariant}).  We then develop the basic properties of $\aone$-local group theory necessary to formulate suitable notions of nilpotence.  The main technical complication that arises is due to failure of existence of cokernels (see Remark~\ref{rem:nocokernels} for a precise statement). Section~\ref{ss:aonenilpotentgroups} then studies nilpotence in the context of $\aone$-local group theory.

Section~\ref{ss:aonenilpotentspaces} introduces various flavors of nilpotence for spaces and morphisms in $\aone$-homotopy theory (see Definition~\ref{defn:aonenilpotentspace}); we encourage the reader to pay attention to the notion of {\em weakly $\aone$-nilpotent} spaces which will play a prominent role later.  Moreover, we establish a collection of basic properties for such spaces (e.g., a characterization in terms of existence of Moore--Postnikov factorizations, i.e., Theorem~\ref{thm:moorepostnikovcharacterization}).  Finally, Section~\ref{ss:examplesofaonenilpotentspaces} constructs a host of spaces of geometric interest that exemplify the variant definitions we make.

\subsection{$\aone$-local group theory}
\label{ss:aonelocalgrouptheory}
F. Morel showed that the $\aone$-homotopy sheaves of a pointed space in degrees greater than $1$ have the fundamental property that their cohomology presheaves are themselves $\aone$-invariant.  After reviewing the basic definitions in $\aone$-homotopy theory, we study what one might call $\aone$-local group theory, building on the foundational results of F. Morel \cite{MField}.  In particular, we analyze functorial $\aone$-localization of groups and various categorical properties of the resulting category.

\subsubsection*{Preliminaries on $\aone$-homotopy theory}
Fix a base field $k$, and write $\Sm_k$ for the category of schemes that are separated, smooth and have finite type over $\Spec k$.  We view $\Sm_k$ as a site by equipping it with the Nisnevich topology. We set $\Spc_k := \sPre(\Sm_k)$ and view this as a site equipped with the injective Nisnevich-local model structure.  Similarly, we write $\Spc_{k,*}$ for the category of pointed simplicial presheaves on $\Sm_k$.

The (pointed) $\aone$-homotopy category $\ho{k}$ can be obtained as a left Bousfield localization of the injective Nisnevich-local model structure on $\Spc_k$ (resp. $\Spc_{k,*}$) with respect to the morphisms $\mathscr{X} \times \aone \to \mathscr{X}$, $\mathscr{X} \in \Spc_k$.

We write $\Laone: \Spc_k \to \Spc_k$ for the $\aone$-localization functor. This functor comes equipped with a natural transformation $\theta: id \to \Laone$ such that for any space $\mathscr{X}$ the following properties hold:
\begin{enumerate}[noitemsep,topsep=1pt]
\item the space $\Laone \mathscr{X}$ is fibrant in the injective Nisnevich local model structure and the map $\mathscr{X} \to \Laone \mathscr{X}$ is an $\aone$-weak equivalence;
\item if $\mathscr{Y}$ is any simplicially fibrant and $\aone$-local space, and $f: \mathscr{X} \to \mathscr{Y}$, then $f$ factors as $\mathscr{X} \to \Laone \mathscr{X} \to \mathscr{Y}$;
\item the functor $\Laone$ preserves finite limits.
\end{enumerate}

\begin{rem}
The properties of the $\aone$-localization construction described here are all explicitly contained in \cite{MV}, except the last one.  That the $\aone$-localization functor can be assumed to preserve finite limits follows from a slight modification of the construction described before \cite[\S 2 Lemma 3.20]{MV}.  There, the $\aone$-localization functor is constructed by iterated application of a fibrant resolution functor for the injective Nisnevich local model structure and the singular construction.  The singular construction preserves limits \cite[\S 2 p. 87]{MV}.  Taking the Godement resolution \cite[p. 66-70]{MV} as fibrant replacement functor for the injective Nisnevich local model structure it follows by appeal to \cite[\S 2 Theorem 1.66]{MV} that the fibrant resolution functor can be assumed to preserve finite limits as well.  
\end{rem}

\begin{rem}
In \cite{MV} the motivic homotopy category $\ho{k}$ is obtained by Bousfield localizing simplicial Nisnevich {\em sheaves} on $\Sm_k$.  By contrast, the model we used builds $\ho{k}$ as a localization of simplicial {\em presheaves} on $\Sm_k$.  The two constructions yield Quillen equivalent models of $\ho{k}$ by \cite[Theorem B.4 and B.6]{JardineSymmetric}.
\end{rem}

It will be important to remember that the category of $\aone$-local objects is closed under formation of homotopy limits and filtered homotopy colimits.  We now recall some results about how $\aone$-local objects behave in fiber sequences.

\begin{lem}
\label{lem:homotopyfiberaonelocal}
If $\mathscr{F} \to \mathscr{E} \to \mathscr{B}$ is a simplicial fiber sequence where $\mathscr{E}$ and $\mathscr{B}$ are $\aone$-local spaces, then $\mathscr{F}$ is $\aone$-local as well. \qed
\end{lem}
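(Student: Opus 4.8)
The plan is to exploit the key structural fact, recalled just above the statement, that the $\aone$-localization functor $\Laone$ commutes with the formation of finite limits (property (3) of $\Laone$), together with the fact that $\Laone$ preserves simplicial fiber sequences. First I would form the natural square obtained by applying the localization map $\theta$ to the fiber sequence:
\[
\xymatrix{
\mathscr{F} \ar[r]\ar[d] & \mathscr{E} \ar[r]\ar[d] & \mathscr{B} \ar[d] \\
\Laone\mathscr{F} \ar[r] & \Laone\mathscr{E} \ar[r] & \Laone\mathscr{B}.
}
\]
Here I use that a simplicial fiber sequence $\mathscr{F} \to \mathscr{E} \to \mathscr{B}$ can be encoded, after suitable fibrant replacement in the injective Nisnevich-local model structure, as an honest pullback $\mathscr{F} \simeq \ast \times_{\mathscr{B}} \mathscr{E}$; since $\Laone$ commutes with finite limits it also commutes with the homotopy pullback computing the homotopy fiber, so the bottom row $\Laone\mathscr{F} \to \Laone\mathscr{E} \to \Laone\mathscr{B}$ is again a simplicial fiber sequence, and the left vertical map $\mathscr{F} \to \Laone\mathscr{F}$ is identified with the homotopy fiber of the map between fiber sequences.

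Next I would observe that, by hypothesis, $\mathscr{E}$ and $\mathscr{B}$ are $\aone$-local, so the middle and right vertical maps $\mathscr{E} \to \Laone\mathscr{E}$ and $\mathscr{B} \to \Laone\mathscr{B}$ are simplicial weak equivalences (an $\aone$-local simplicially fibrant space is, by construction of $\theta$, equivalent to its $\aone$-localization). Comparing the long exact sequences of simplicial homotopy sheaves associated to the two fiber sequences — which are compatible via the vertical maps — and using that the maps on $\bpi_i(\mathscr{E})$ and $\bpi_i(\mathscr{B})$ are isomorphisms for all $i$, the five lemma (applied sheaf-theoretically, or stalkwise using that the Nisnevich site has enough points) forces the map $\bpi_i(\mathscr{F}) \to \bpi_i(\Laone\mathscr{F})$ to be an isomorphism for every $i \geq 0$, and likewise an isomorphism on $\bpi_0$. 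Hence $\mathscr{F} \to \Laone\mathscr{F}$ is a simplicial weak equivalence, which is exactly the statement that $\mathscr{F}$ is $\aone$-local (since $\Laone\mathscr{F}$ is $\aone$-local and simplicially fibrant).

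The main obstacle, and the point requiring care, is the precise sense in which "$\Laone$ commutes with finite limits" delivers "$\Laone$ preserves homotopy fiber sequences": one must be slightly careful to replace the original fiber sequence by a fibration of simplicially fibrant objects before taking $\Laone$, and to know that $\Laone$ of a simplicial fibration between $\aone$-local-or-not objects is again (equivalent to) a fibration with the expected fiber. This is where the hypothesis that $\Laone$ preserves finite limits, rather than just finite products, is essential. Once that compatibility is in hand, the homotopy-sheaf comparison and the five lemma are entirely routine. Alternatively — and perhaps more cleanly — one can avoid the explicit diagram chase by citing directly that the class of $\aone$-local objects is closed under homotopy limits (also recalled in the excerpt): the homotopy fiber $\mathscr{F}$ is the homotopy pullback of $\ast \to \mathscr{B} \leftarrow \mathscr{E}$, all three of which are $\aone$-local, so $\mathscr{F}$ is $\aone$-local. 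I would present this second argument as the proof, keeping the first as motivation.
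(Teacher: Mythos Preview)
Your proposal is correct, and your second argument is exactly the paper's approach: the lemma is stated with a \qed immediately after the remark that the category of $\aone$-local objects is closed under formation of homotopy limits, so the intended proof is precisely the one-line observation that $\mathscr{F}$ is the homotopy pullback of $\ast \to \mathscr{B} \leftarrow \mathscr{E}$ with all three corners $\aone$-local. Your first argument via $\Laone$ preserving finite limits and the five lemma is also fine but more than is needed here.
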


\begin{lem}[{\cite[Lemma 2.2.10]{AWW}}]
\label{lem:aww2210}
Suppose $\mathscr{F} \to \mathscr{E} \to \mathscr{B}$ is a simplicial fiber sequence of pointed spaces.  If $\mathscr{B}$ and $\mathscr{F}$ are both $\aone$-local, and $\mathscr{B}$ is simplicially connected, then $\mathscr{E}$ is $\aone$-local as well. \qed
\end{lem}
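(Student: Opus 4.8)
The plan is to exploit the fact that the class of $\aone$-local spaces is closed under homotopy limits, together with the fact that a fibration with $\aone$-local base and $\aone$-local fibers is built up from the base by a (possibly transfinite) tower of pullbacks of path fibrations, so long as we can control the "twisting" of the fiber by the fundamental group of the base. Concretely, I would first reduce to the case where $\mathscr{F}\to\mathscr{E}\to\mathscr{B}$ is an honest fibration (replace it functorially) and then consider the Postnikov tower of the fibration over $\mathscr{B}$: one obtains a tower $\cdots\to\mathscr{E}_{n}\to\mathscr{E}_{n-1}\to\cdots\to\mathscr{E}_0=\mathscr{B}$ whose inverse limit recovers $\mathscr{E}$ (using that $\mathscr{F}$, and hence the fibration, has homotopy sheaves concentrated in finitely many — or at least, Postnikov-truncatable — degrees, and that $\mathscr{E}=\holim_n \mathscr{E}_n$). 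Since $\holim$ preserves $\aone$-locality, it suffices to show each $\mathscr{E}_n$ is $\aone$-local, and by induction it suffices to show that the fiber of $\mathscr{E}_n\to\mathscr{E}_{n-1}$, together with the base, being $\aone$-local forces $\mathscr{E}_n$ to be $\aone$-local. The stage $\mathscr{E}_n\to\mathscr{E}_{n-1}$ is a principal fibration with fiber an Eilenberg--Mac Lane space $K(\bpi_n(\mathscr{F}),n)$ — here is where simplicial connectedness of $\mathscr{B}$ gets used, to make sense of the $\bpi_1(\mathscr{E})$-action and to ensure the relevant local systems are genuinely "constant enough" that the stages are classified by a map $\mathscr{E}_{n-1}\to K(\bpi_n(\mathscr{F}),n{+}1)$ (a $k$-invariant). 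Then $\mathscr{E}_n=\operatorname{hofib}$ of this $k$-invariant, i.e.\ a homotopy pullback of $\ast\to K(\bpi_n(\mathscr{F}),n{+}1)\leftarrow\mathscr{E}_{n-1}$.

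So the crux is the following: given a homotopy pullback square with $\mathscr{E}_{n-1}$ and $K(\mathbf{A},n{+}1)$ $\aone$-local (the latter because $\mathbf{A}=\bpi_n(\mathscr{F})$ is a strongly/strictly $\aone$-invariant sheaf of groups — this is where the full strength of Morel's theory enters, since $\bpi_n(\mathscr{F})$ is a homotopy sheaf of the $\aone$-local space $\mathscr{F}$), conclude that the homotopy pullback is $\aone$-local. But a homotopy pullback is a homotopy limit, so this is immediate from closure of $\aone$-local objects under homotopy limits — provided $K(\mathbf{A},n{+}1)$ really is $\aone$-local. That last point is the one genuine input: $K(\mathbf{A},m)$ is $\aone$-local precisely when $\mathbf{A}$ is strictly $\aone$-invariant (for $m\ge 2$) or strongly $\aone$-invariant (for $m=1$), and the homotopy sheaves of an $\aone$-local simplicially fibrant space are indeed strongly/strictly $\aone$-invariant by Morel's theorem. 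Thus one feeds in: $\mathscr{F}$ $\aone$-local $\Rightarrow$ $\bpi_i(\mathscr{F})$ strictly $\aone$-invariant $\Rightarrow$ each $K(\bpi_i(\mathscr{F}),i)$ $\aone$-local $\Rightarrow$ each stage of the Postnikov tower of the fibration is $\aone$-local $\Rightarrow$ $\mathscr{E}=\holim$ is $\aone$-local.

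The main obstacle I anticipate is \emph{constructing the Postnikov tower of the fibration $\mathscr{E}\to\mathscr{B}$ over the base $\mathscr{B}$ and verifying that $\mathscr{E}$ is its homotopy inverse limit} in the local model category of simplicial presheaves — this requires some care because convergence of Postnikov towers is not automatic in the local homotopy category, and one needs the $\bpi_1(\mathscr{E})$-equivariance of the whole construction to make the $k$-invariants well-defined. The cleanest route is probably to do this \emph{stalkwise}: over each point $s$ of the Nisnevich site, $s^*\mathscr{F}\to s^*\mathscr{E}\to s^*\mathscr{B}$ is an ordinary fibration of simplicial sets with simply/simplicially connected base, the classical Moore--Postnikov tower applies, and one checks $\aone$-locality of $\mathscr{E}$ by the standard criterion (a space is $\aone$-local iff the map to its $\aone$-localization is a local weak equivalence, checkable on stalks) after assembling the stalkwise towers. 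In fact, the quickest honest proof may bypass towers entirely: factor $\mathscr{B}\to\ast$ through its Postnikov tower, pull $\mathscr{E}\to\mathscr{B}$ back along $\ast\to\mathscr{B}$ to recover $\mathscr{F}$, and instead run the argument "upward" — build $\mathscr{E}$ from $\mathscr{F}$ by attaching the homotopy sheaves $\bpi_i(\mathscr{B})$ via the long exact sequence, again realizing $\mathscr{E}$ as an iterated homotopy pullback of $\aone$-local pieces — but the identification of the attaching maps is exactly the same subtlety. Either way, once the tower (or the iterated pullback presentation) is in hand, $\aone$-locality propagates for free by closure under homotopy limits.
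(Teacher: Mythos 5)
The paper gives no proof of this lemma --- it is imported verbatim from \cite[Lemma 2.2.10]{AWW} --- so your proposal can only be judged on its own terms, and as written it has a genuine gap at its central step. You assume that the Moore--Postnikov tower of the fibration $\mathscr{E}\to\mathscr{B}$ admits a principal refinement, i.e.\ that each stage $\mathscr{E}_n\to\mathscr{E}_{n-1}$ is the homotopy fiber of an \emph{untwisted} $k$-invariant $\mathscr{E}_{n-1}\to K(\bpi_n(\mathscr{F}),n+1)$. That is precisely what fails for a general fiber sequence: such a refinement exists exactly when the $\bpi_1$-action on the homotopy sheaves of the fiber is nilpotent (this is Theorem~\ref{thm:moorepostnikovcharacterization} of this very paper, and its classical antecedent), and the lemma carries no such hypothesis. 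Your assertion that simplicial connectedness of $\mathscr{B}$ makes the relevant local systems ``constant enough'' is incorrect: connectedness of the base says nothing about the monodromy action on the fiber. For instance, the fiber sequence ${\mathbb P}^{2n}\to BGL_{2n}\to BSL_{2n+1}$ has $\aone$-local, connected base and fiber, yet the $\gm{}$-action is not even locally nilpotent over formally real fields (Remark~\ref{rem:bgl2nnotlocallyaonenilpotent}), so no principal refinement exists there --- while the lemma must still apply to it. In addition, the bottom stage has fiber $B\bpi_1(\mathscr{F})$ with $\bpi_1(\mathscr{F})$ possibly non-abelian, so it is not of the form $\Omega K(\mathbf{A},n+1)$ at all.

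The twisted version of the tower does exist, but to deduce $\aone$-locality of its stages you would need the twisted Eilenberg--Mac Lane objects over $B\bpi_1(\mathscr{E})$ to be $\aone$-local, which presupposes that $\bpi_1(\mathscr{E})$ is strongly $\aone$-invariant --- part of the conclusion you are after, so the argument turns circular. A workable route avoids towers entirely: replace $p$ by an injective-fibrant fibration, recall that $\aone$-locality of a fibrant object means $\mathscr{E}(U)\to\mathscr{E}(\aone\times U)$ is a weak equivalence for every $U$, and compare the long exact sequences of the sectionwise fibrations over $\mathscr{B}(U)$ and $\mathscr{B}(\aone\times U)$. Here connectedness of $\mathscr{B}$ is used to identify, Nisnevich-locally on $U$, the fiber over an arbitrary vertex of $\mathscr{B}(U)$ with $\mathscr{F}$, so that the hypothesis on $\mathscr{F}$ applies at every basepoint. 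The ingredients you do invoke correctly (closure of $\aone$-local objects under homotopy limits; strict $\aone$-invariance of the $\bpi_i(\mathscr{F})$) are not sufficient to bridge the gap.
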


\subsubsection*{Consequences of Morel's unstable connectivity theorem}
If $\mathscr{X} \in \Spc_k$, then we set $\bpi_0^{\aone}(\mathscr{X}) := \bpi_0(\Laone \mathscr{X})$.  Similarly, if $(\mathscr{X},x) \in \Spc_{k,*}$, then we write $\bpi_i^{\aone}(\mathscr{X},x) := \bpi_i(\Laone \mathscr{X},x)$; these sheaves are called the $\aone$-homotopy sheaves of $\mathscr{X}$.   Morel established a number of key structural results for $\aone$-homotopy sheaves.  To state these results, we first recall the following definition.

\begin{defn}
\label{defn:stronglyaoneinvariant}
Assume $k$ is a base field.
\begin{enumerate}[noitemsep,topsep=1pt]
\item A Nisnevich sheaf of groups $\mathbf{G}$ on $\Sm_k$ is called {\em strongly $\aone$-invariant} if $B\mathbf{G}$ is $\aone$-local.
\item A Nisnevich sheaf of abelian groups $\mathbf{A}$ on $\Sm_k$ is called strictly $\aone$-invariant if $K(\mathbf{A},n)$ is $\aone$-local for every $n \geq 0$.
\end{enumerate}
\end{defn}

\begin{notation}
We write $\Grp^{\aone}_k$ for the full subcategory of Nisnevich sheaves of groups consisting of strongly $\aone$-invariant sheaves of groups.    We write $\Ab^{\aone}_k$ for the full subcategory of Nisnevich sheaves of abelian groups consisting of strictly $\aone$-invariant sheaves.
\end{notation}

\begin{ex}
\label{ex:finiteetalegroupschemes}
If $G$ is a finite \'etale group scheme, then the simplicial classifying space $BG$ is $\aone$-local by \cite[\S 4 Proposition 3.5]{MV}, i.e., $G$ is strongly $\aone$-invariant.  In particular, if $G$ is a finite group, the associated constant group scheme is strongly $\aone$-invariant.  Thus, sending a finite group to its associated constant group scheme defines a functor from the category of finite groups to the category of strongly $\aone$-invariant sheaves of groups.
\end{ex}

We recall some key results of Morel on the structure of the categories of strongly and strictly $\aone$-invariant sheaves of groups.

\begin{thm}[Morel]
\label{thm:unstableconnectivity}
Assume $(\mathscr{X},x) \in \Spc_{k,*}$.
\begin{enumerate}[noitemsep,topsep=1pt]
\item The sheaf $\bpi_1^{\aone}(\mathscr{X},x)$ is a strongly $\aone$-invariant sheaf of groups.
\item Any strongly $\aone$-invariant sheaf of abelian groups that is pulled back from a perfect subfield is strictly $\aone$-invariant.
 \end{enumerate}
\end{thm}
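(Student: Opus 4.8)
Both assertions are due to Morel \cite{MField}; the plan is to recall the architecture of his proof, whose engine is the \emph{unstable $\aone$-connectivity theorem}. \textbf{Step 1 (connectivity).} First I would prove that $\Laone$ preserves simplicial connectivity: if $(\mathscr{X},x)$ is simplicially $n$-connected then so is $\Laone\mathscr{X}$. For this, use an explicit model for $\Laone$, namely a transfinite iteration of the functor $\mathrm{R}_{\Nis}\circ\Singaone$, where $\Singaone$ sends $\mathscr{X}$ to the diagonal of the bisimplicial presheaf $[m]\mapsto\mathscr{X}(-\times\Delta^m_k)$. One then checks three elementary stability facts: (i) $\Singaone$ preserves sectionwise $n$-connectivity, since the diagonal of a bisimplicial set all of whose rows are $n$-connected is $n$-connected; (ii) Nisnevich-local fibrant replacement does not lower the connectivity of the associated homotopy sheaves, because for Nisnevich-locally fibrant $\mathscr{X}$ one has $\bpi_i(\mathscr{X})(U)=\pi_i(\mathscr{X}(U))$; and (iii) filtered colimits of $n$-connected sheaves are $n$-connected. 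Combining (i)--(iii) along the iteration gives the connectivity theorem.

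\textbf{Step 2 (part (1)).} Replacing $(\mathscr{X},x)$ by $\Laone\mathscr{X}$ and restricting to the connected component of the base point, I may assume $\mathscr{X}$ is $\aone$-local and simplicially connected, and I set $\mathbf{G}:=\bpi_1^{\aone}(\mathscr{X})=\bpi_1(\mathscr{X})$. Since $B\mathbf{G}$ has homotopy sheaves concentrated in degree $1$ (where it equals $\mathbf{G}$), the space $B\mathbf{G}$ is $\aone$-local if and only if, for every $U\in\Sm_k$, both $\mathbf{G}(U)\to\mathbf{G}(U\times\aone)$ and $H^1_{\Nis}(U,\mathbf{G})\to H^1_{\Nis}(U\times\aone,\mathbf{G})$ are bijective. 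The first of these is immediate: since $\mathscr{X}$ may be taken $\aone$-local and fibrant, the presheaf $U\mapsto\pi_1(\mathscr{X}(U))$ is already $\aone$-invariant, hence so is $\mathbf{G}$. The content is therefore the $\aone$-invariance of $U\mapsto H^1_{\Nis}(U,\mathbf{G})$. To get this I would use the universal-cover fiber sequence $\widetilde{\mathscr{X}}\to\mathscr{X}\to B\mathbf{G}$, in which $\widetilde{\mathscr{X}}$ is simplicially $1$-connected, hence (Step 1) $\Laone\widetilde{\mathscr{X}}$ is simplicially $1$-connected: one analyses the lifting of a map $U\to B\mathbf{G}$ through $\mathscr{X}\to B\mathbf{G}$ by obstruction theory, whose obstruction and indeterminacy groups lie in the Nisnevich cohomology of $U$ with coefficients in the higher homotopy sheaves $\bpi_i(\widetilde{\mathscr{X}})$, $i\ge2$. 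This is where part (1) and part (2) become intertwined: one runs a simultaneous induction up the Postnikov tower, using at each stage that the higher $\aone$-homotopy sheaves are strictly $\aone$-invariant (Step 3) to conclude that the relevant cohomology groups are $\aone$-invariant, and hence that $H^1_{\Nis}(-,\mathbf{G})$ is $\aone$-invariant. I would follow \cite[\S 6]{MField} for this bookkeeping.

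\textbf{Step 3 (part (2)).} The substantive content is Morel's theorem that over a \emph{perfect} base field every strongly $\aone$-invariant sheaf of abelian groups $\mathbf{A}$ is strictly $\aone$-invariant, i.e.\ the Nisnevich cohomology presheaves $U\mapsto H^i_{\Nis}(U,\mathbf{A})$ are $\aone$-invariant for all $i$. The plan for this is: exploit the residue/transfer structure automatically carried by a strongly $\aone$-invariant sheaf (in particular, its contractions $\mathbf{A}_{-1}$) to build the Rost--Schmid (Gersten-type) complex computing $H^{\ast}_{\Nis}(-,\mathbf{A})$ on smooth $k$-schemes, and then prove this complex is $\aone$-invariant; the proof of $\aone$-invariance of the complex rests on Gabber's geometric presentation lemma, and it is precisely this ingredient that forces the perfectness (or infiniteness) hypothesis. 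Granting the perfect-field case, the stated general assertion follows by base change: if $\mathbf{A}$ is the pullback of a strongly $\aone$-invariant sheaf $\mathbf{A}_0$ along $\Spec k\to\Spec k_0$ with $k_0$ perfect, then $K(\mathbf{A},n)$ is the pullback of the $\aone$-local space $K(\mathbf{A}_0,n)$, and pullback along $\Spec k\to\Spec k_0$ --- which is a filtered limit of smooth morphisms since $k_0$ is perfect --- preserves $\aone$-locality of Eilenberg--Mac Lane spaces, so $K(\mathbf{A},n)$ is $\aone$-local for all $n$.

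\textbf{Main obstacle.} The crux is the strong $\Rightarrow$ strict implication of Step 3: establishing $\aone$-invariance of the Rost--Schmid complex is the technical heart of Morel's theory, and is exactly what necessitates perfectness of the base. Step 2 is also more delicate than it first appears, because $\Laone$ does not preserve fiber sequences, so one cannot dispatch part (1) by a one-line diagram chase and must instead run it jointly with part (2) up the Postnikov tower, with the connectivity theorem as the essential input.
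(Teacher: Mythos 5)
Your proposal is correct and follows the same route as the paper, which simply cites Morel (\cite[Theorem 6.1]{MField} for part (1) and \cite[Theorem 5.46]{MField} for the perfect-field case of part (2)) and then deduces the general case of (2) by essentially-smooth base change along $\Spec k \to \Spec k_0$, exactly as in your final paragraph of Step 3. Your Steps 1--3 are a faithful outline of the internals of Morel's argument that the paper does not reproduce, so there is nothing to add beyond the minor caveat that Gabber's presentation lemma per se needs the field infinite, while perfectness enters elsewhere in the strong-implies-strict argument.
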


\begin{proof}[Comments on the proof.]
The first statement in Theorem~\ref{thm:unstableconnectivity} is \cite[Theorem 6.1]{MField}.  For the second statement, we proceed as follows.  Morel proves in \cite[Theorem 5.46]{MField} that if $k$ is a perfect field, then strongly $\aone$-invariant sheaves of abelian groups are strictly $\aone$-invariant.  Our claim follows from the Morel's assertion by standard base-change results (see, e.g., \cite[Lemmas A.2 and A.4]{HoyoisHM}).  A common application of the above result is that if $(\mathscr{X},x)$ is a space over a field $k$ that is pulled back from the prime field, then for any integer $i \geq 2$, the sheaf $\bpi_i^{\aone}(\mathscr{X},x)$ is strictly $\aone$-invariant.
\end{proof}

\begin{rem}
\label{rem:stronglyaoneinvariantimpliesunramified}
The condition that a sheaf of groups be strongly or strictly $\aone$-invariant implies many useful properties that will be used in the sequel.  For example, if $\mathbf{G}$ is a strongly $\aone$-invariant sheaf, then $\mathbf{G}$ is {\em unramified} in the sense of \cite[Definition 2.1]{MField}; this follows from \cite[Corollary 6.9(2)]{MField}.  In particular, this means that if $X$ is any irreducible smooth $k$-scheme with function field $k(X)$, then the restriction map
\[
\mathbf{G}(X) \longrightarrow \mathbf{G}(k(X))
\]
is injective.  In particular, Theorem~\ref{thm:unstableconnectivity} implies that all the higher homotopy sheaves of a pointed space $(\mathscr{X},x) \in \Spc_k$ that are pulled back from a perfect subfield are unramified.
\end{rem}

Theorem \ref{thm:unstableconnectivity} allows one to construct a left adjoint to the inclusion $\Grp_k^{\aone} \hookrightarrow \Grp_k$.

\begin{defn}
If $\mathbf{G}$ is a Nisnevich sheaf of groups on $\Sm_k$, then its $\aone$-localization is defined by
\[
\mathbf{G}_{\aone} := \bpi_1^{\aone}(B\mathbf{G}).
\]
\end{defn}

\begin{ex}
The evident homomorphism $\mathbf{G} \to \mathbf{G}_{\aone}$ need not be injective.  Take $\mathbf{G} = SL_n$ (or any $\aone$-connected sheaf of groups); in that case one can show that $\bpi_1^{\aone}(BSL_n) = 1$.
\end{ex}

The next result is a consequence of \cite[Theorem 6.1.8]{MStable} and \cite[Lemma 6.2.13]{MStable}.

\begin{thm}[F. Morel]
\label{thm:stableconnectivity}
The category $\Ab^{\aone}_k$ is abelian, and the forgetful functor $\Ab^{\aone}_k \to \Ab_k$ is an exact full embedding. \qed
\end{thm}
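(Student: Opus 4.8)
The assertion that the forgetful functor $\Ab^{\aone}_k \to \Ab_k$ is a full embedding is automatic, since $\Ab^{\aone}_k$ is by construction a \emph{full} subcategory of $\Ab_k$; the real content is that $\Ab^{\aone}_k$ is abelian and that this inclusion is exact. The plan is to reduce both assertions to the following standard piece of category theory: if $\mathcal{B}$ is a full subcategory of an abelian category $\mathcal{A}$ which contains a zero object, is closed under finite biproducts, and is closed under the kernel and the cokernel, computed in $\mathcal{A}$, of every morphism between two of its objects, then these kernels and cokernels also compute kernels and cokernels in $\mathcal{B}$; the canonical map from the coimage to the image of any morphism of $\mathcal{B}$ is then an isomorphism, since it is one in $\mathcal{A}$ and a fully faithful functor reflects isomorphisms, so $\mathcal{B}$ is abelian and $\mathcal{B} \hookrightarrow \mathcal{A}$ is exact. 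Thus it suffices to verify these three closure properties for $\Ab^{\aone}_k \subset \Ab_k$.

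The first two are the easy part. The zero sheaf is strictly $\aone$-invariant since $K(0,n) = \ast$ is $\aone$-local, and for strictly $\aone$-invariant $\mathbf{A}$ and $\mathbf{B}$ the Dold--Kan correspondence identifies $K(\mathbf{A} \oplus \mathbf{B}, n)$ with $K(\mathbf{A},n) \times K(\mathbf{B},n)$, which is $\aone$-local because the class of $\aone$-local spaces is stable under finite products (indeed under all homotopy limits). The substantive property is closure under kernels and cokernels, and here I would appeal to Morel's structure theory: by \cite[Theorem 6.1.8]{MStable} and \cite[Lemma 6.2.13]{MStable}, the class of strictly $\aone$-invariant Nisnevich sheaves is stable in $\Ab_k$ under passage to subsheaves, to quotient sheaves, and to extensions. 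Granting that, for any morphism $f \colon \mathbf{A} \to \mathbf{B}$ of $\Ab^{\aone}_k$ the sheaf $\ker_{\Ab_k} f$ is a subsheaf of the strictly $\aone$-invariant sheaf $\mathbf{A}$, while $\coker_{\Ab_k} f = \mathbf{B}/\im f$ is a quotient of the strictly $\aone$-invariant sheaf $\mathbf{B}$; hence both lie in $\Ab^{\aone}_k$, and likewise $\im f \subset \mathbf{B}$ is strictly $\aone$-invariant. Combined with the categorical fact above, this proves the theorem.

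It remains to say why the closure statement quoted from Morel is the crux and not a formality. The natural first attempt is to attach to a short exact sequence of Nisnevich sheaves of abelian groups $0 \to \mathbf{A}' \to \mathbf{A} \to \mathbf{A}'' \to 0$ the Puppe fiber sequence of Eilenberg--Mac Lane objects
\[ \cdots \longrightarrow K(\mathbf{A}',n) \longrightarrow K(\mathbf{A},n) \longrightarrow K(\mathbf{A}'',n) \longrightarrow K(\mathbf{A}',n+1) \longrightarrow \cdots, \]
and to propagate $\aone$-locality along it using Lemma~\ref{lem:homotopyfiberaonelocal} (and Lemma~\ref{lem:aww2210} where one must move up the sequence). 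This shows only that if \emph{two} of the three terms of such a sequence are strictly $\aone$-invariant, then so is the third; it does not, by itself, handle a subsheaf of a strictly $\aone$-invariant sheaf, since one knows nothing a priori about the corresponding quotient, and the two resulting assertions are mutually entangled. Breaking this circularity --- equivalently, showing that subsheaves and quotient sheaves of strictly $\aone$-invariant sheaves are again strictly $\aone$-invariant --- is exactly the content of Morel's results, which ultimately rest on his analysis of the homotopy $t$-structure (and, over an imperfect base field, on the base-change arguments behind Theorem~\ref{thm:unstableconnectivity}). This is the main obstacle, and I would simply quote \cite[Theorem 6.1.8]{MStable} and \cite[Lemma 6.2.13]{MStable} for it rather than reprove it.
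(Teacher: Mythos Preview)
Your proposal is correct and matches the paper's approach: the paper gives no proof beyond the prefatory sentence citing \cite[Theorem 6.1.8]{MStable} and \cite[Lemma 6.2.13]{MStable}, and you invoke exactly the same references while additionally spelling out the routine categorical reduction (a full additive subcategory of an abelian category closed under ambient kernels and cokernels is abelian with exact inclusion). One small caution on phrasing: you state that Morel's results give stability under \emph{arbitrary} subsheaves and quotients of strictly $\aone$-invariant sheaves, which is stronger than what you actually use or need---closure under kernels and cokernels of morphisms \emph{between} strictly $\aone$-invariant sheaves is what the cited results deliver and is all your argument requires.
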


\begin{rem}
As observed in \cite[Remark 6.2.14]{MStable}, the above result means, in particular, that any kernel or cokernel of a morphism of strictly $\aone$-invariant sheaves is again strictly $\aone$-invariant.  In fact, this consequence will be used repeatedly in the sequel.  In what follows, we will study corresponding questions for strongly $\aone$-invariant sheaves of groups where the situation is more complicated; see, e.g., Remark~\ref{rem:nocokernels} for further discussion of this point.
\end{rem}

\subsubsection*{Strongly $\aone$-invariant sheaves of groups}
We first examine how various group-theoretic constructions interact with the property of being $\aone$-local.

\begin{lem}
\label{lem:stronglyaoneinvariantsheavesofgroups}
Assume $k$ is a field.
\begin{enumerate}[noitemsep,topsep=1pt]
\item The kernel of a morphism in $\mathrm{Grp}^{\aone}_k$ is again a strongly $\aone$-invariant sheaf of groups.
\item Any extension of strongly $\aone$-invariant sheaves of groups is again strongly $\aone$-invariant.
\item If $\varphi: \mathbf{A} \to \mathbf{G}$ is any injective morphism of strongly $\aone$-invariant sheaves of groups, and $\mathbf{A}$ is i) pulled back from a perfect subfield of $k$ and ii) contained in the center of $\mathbf{G}$, then $\mathbf{G}/\mathbf{A}$ is strongly $\aone$-invariant as well.
\end{enumerate}
\end{lem}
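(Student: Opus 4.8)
The plan is to prove each of the three assertions by translating statements about sheaves of groups into statements about the $\aone$-locality of the associated classifying spaces, and then using the fiber-sequence machinery (Lemmas~\ref{lem:homotopyfiberaonelocal} and \ref{lem:aww2210}) together with Morel's structural results (Theorems~\ref{thm:unstableconnectivity} and \ref{thm:stableconnectivity}). Throughout, I will use that a Nisnevich sheaf of groups $\mathbf{G}$ is strongly $\aone$-invariant exactly when $B\mathbf{G}$ is $\aone$-local, and that $\bpi_1^{\aone}(B\mathbf{G}) = \mathbf{G}_{\aone}$; if $\mathbf{G}$ is already strongly $\aone$-invariant then $\mathbf{G} \to \mathbf{G}_{\aone}$ is an isomorphism.

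For Point (1): given $\varphi: \mathbf{G} \to \mathbf{H}$ in $\Grp^{\aone}_k$ with kernel $\mathbf{K}$, I would use the fact that $\mathbf{K} = \bpi_1$ (equivalently $\Omega$) of the simplicial homotopy fiber $\mathscr{F}$ of $B\mathbf{G} \to B\mathbf{H}$; indeed, from the long exact sequence in simplicial homotopy sheaves, $\mathscr{F}$ is connected with $\bpi_1(\mathscr{F}) = \mathbf{K}$ and $\bpi_i(\mathscr{F}) = 0$ for $i \geq 2$, so $\mathscr{F} \simeq B\mathbf{K}$. Since $B\mathbf{G}$ and $B\mathbf{H}$ are $\aone$-local, Lemma~\ref{lem:homotopyfiberaonelocal} gives that $\mathscr{F} = B\mathbf{K}$ is $\aone$-local, i.e., $\mathbf{K}$ is strongly $\aone$-invariant. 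For Point (2): given an extension $1 \to \mathbf{G}' \to \mathbf{G} \to \mathbf{G}'' \to 1$ with $\mathbf{G}'$ and $\mathbf{G}''$ strongly $\aone$-invariant, there is a simplicial fiber sequence $B\mathbf{G}' \to B\mathbf{G} \to B\mathbf{G}''$ with $B\mathbf{G}''$ simplicially connected; since $B\mathbf{G}'$ and $B\mathbf{G}''$ are $\aone$-local, Lemma~\ref{lem:aww2210} gives that $B\mathbf{G}$ is $\aone$-local, hence $\mathbf{G}$ is strongly $\aone$-invariant.

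For Point (3): given $\varphi: \mathbf{A} \hookrightarrow \mathbf{G}$ with $\mathbf{A}$ central, strongly $\aone$-invariant (hence abelian), and pulled back from a perfect subfield, I would first invoke Theorem~\ref{thm:unstableconnectivity}(2) to upgrade $\mathbf{A}$ to a \emph{strictly} $\aone$-invariant sheaf, so that $K(\mathbf{A},n)$ is $\aone$-local for all $n$. Because $\mathbf{A}$ is central in $\mathbf{G}$, there is a simplicial fiber sequence of classifying spaces $B\mathbf{A} \to B\mathbf{G} \to B(\mathbf{G}/\mathbf{A})$, and this deloops once more to a fiber sequence $B\mathbf{G} \to B(\mathbf{G}/\mathbf{A}) \to K(\mathbf{A},2)$ (here centrality is exactly what makes $B\mathbf{A} = K(\mathbf{A},1)$ act trivially and lets us take the classifying space of the extension). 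Now $B\mathbf{G}$ is $\aone$-local by hypothesis and $K(\mathbf{A},2)$ is $\aone$-local by strict $\aone$-invariance, and $K(\mathbf{A},2)$ is simplicially connected; applying Lemma~\ref{lem:aww2210} to this fiber sequence yields that $B(\mathbf{G}/\mathbf{A})$ is $\aone$-local, i.e., $\mathbf{G}/\mathbf{A}$ is strongly $\aone$-invariant. I expect the main obstacle to be Point (3): one must be careful that $\mathbf{G}/\mathbf{A}$ is genuinely a Nisnevich sheaf of groups (sheaf quotient) and that the delooping $B(\mathbf{G}/\mathbf{A}) \to K(\mathbf{A},2)$ classifying the central extension is constructed correctly at the level of simplicial presheaves — the hypothesis that $\mathbf{A}$ is pulled back from a perfect subfield is used precisely to ensure, via Theorem~\ref{thm:unstableconnectivity}(2), that the Eilenberg--Mac Lane spaces $K(\mathbf{A},n)$ are $\aone$-local, without which the argument collapses.
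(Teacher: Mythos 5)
Your proposal is correct and follows essentially the same route as the paper: identify the kernel as $\bpi_1$ of the simplicial homotopy fiber of $B\mathbf{G}\to B\mathbf{H}$ and invoke Lemma~\ref{lem:homotopyfiberaonelocal} for (1), apply Lemma~\ref{lem:aww2210} to $B\mathbf{G}'\to B\mathbf{G}\to B\mathbf{G}''$ for (2), and apply Lemma~\ref{lem:aww2210} to the delooped sequence $B\mathbf{G}\to B(\mathbf{G}/\mathbf{A})\to K(\mathbf{A},2)$, with perfectness feeding Theorem~\ref{thm:unstableconnectivity}(2), for (3). One small slip in (1): the fiber $\mathscr{F}$ need not be connected when $\varphi$ is not surjective (the long exact sequence gives $\bpi_0(\mathscr{F})\cong\mathbf{H}/\im\varphi$ as pointed sheaves), so $\mathscr{F}\not\simeq B\mathbf{K}$ in general; but this is harmless since $\mathscr{F}$ is still $\aone$-local and $\bpi_1(\mathscr{F},\ast)=\bpi_1^{\aone}(\mathscr{F},\ast)=\mathbf{K}$ is then strongly $\aone$-invariant by Theorem~\ref{thm:unstableconnectivity}(1), which is exactly how the paper concludes.
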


\begin{proof}
Suppose $\varphi: \mathbf{H} \to \mathbf{G}$ is a morphism of strongly $\aone$-invariant sheaves of groups.  There is an induced morphism $B\mathbf{H} \to B\mathbf{G}$.  By assumption, both $B\mathbf{H}$ and $B\mathbf{G}$ are $\aone$-local.  Write $\mathscr{F}$ for the simplicial homotopy fiber of $\varphi$.  Since $\bpi_2(B\mathbf{G})$ vanishes by assumption, the associated long exact sequence in homotopy sheaves takes the form
\[
0 \longrightarrow \bpi_1(\mathscr{F}) \longrightarrow \mathbf{H} \longrightarrow \mathbf{G} \longrightarrow \bpi_0(\mathscr{F}).
\]
Appealing to Lemma~\ref{lem:homotopyfiberaonelocal} allows us to conclude that $\mathscr{F}$ is already $\aone$-local, which means that $\bpi_1(\mathscr{F}) = \bpi_1^{\aone}(\mathscr{F})$ is strongly $\aone$-invariant by Theorem~\ref{thm:unstableconnectivity}.

For the second point, suppose $1 \to \mathbf{G}_1 \to \mathbf{G}_2 \to \mathbf{G}_3 \to 1$ is an exact sequence of Nisnevich sheaves of groups and $\mathbf{G}_3$ is strongly $\aone$-invariant.  In that case, there is a simplicial fiber sequence
\[
B\mathbf{G}_1 \longrightarrow B \mathbf{G}_2 \longrightarrow B \mathbf{G}_3.
\]
Note that $B\mathbf{G}_3$ is simplicially connected by construction.  If $\mathbf{G}_1$ and $\mathbf{G}_3$ are strongly $\aone$-invariant, then $B\mathbf{G}_1$ and $B\mathbf{G}_3$ are $\aone$-local by assumption.  In that case, it follows from Lemma~\ref{lem:aww2210} that $B\mathbf{G}_2$ must be $\aone$-local as well, i.e., $\mathbf{G}_2$ is strongly $\aone$-invariant.

For the third point, by assumption there is a central extension of the form
\[
1 \longrightarrow \mathbf{A} \longrightarrow \mathbf{G} \longrightarrow \mathbf{G}/\mathbf{A} \longrightarrow 1.
\]
This central extension is classified by a simplicial fiber sequence of the form:
\[
B\mathbf{G} \longrightarrow B(\mathbf{G}/\mathbf{A}) \longrightarrow K(\mathbf{A},2).
\]
Since $\mathbf{G}$ is strongly $\aone$-invariant, $B\mathbf{G}$ is $\aone$-local.  Likewise, since $\mathbf{A}$ is strongly $\aone$-invariant, abelian and pulled back from a perfect subfield of $k$ it is strictly $\aone$-invariant by appeal to Theorem~\ref{thm:unstableconnectivity}.  Therefore, $K(\mathbf{A},2)$ is $\aone$-local.  Since $K(\mathbf{A},2)$ is simplicially connected by construction, we conclude that $B\mathbf{G}$ is $\aone$-local by appeal to Lemma~\ref{lem:aww2210}.
\end{proof}

\begin{rem}
\label{rem:nocokernels}
In recent work, Choudhury and Hogadi have established that the kernel and image of a morphism of strongly $\aone$-invariant sheaves are again strongly $\aone$-invariant \cite[Theorem 1.5]{ChoudhuryHogadi}.
\end{rem}

We now analyze existence of various limits in the category $\Grp^{\aone}_k$.

\begin{lem}
\label{lem:fiberproducts}
The category $\Grp^{\aone}_k$ has all finite limits, and the forgetful functor $\Grp^{\aone}_k \to \Grp_k$ creates limits.  More precisely,
\begin{enumerate}[noitemsep,topsep=1pt]
\item the trivial group is strongly $\aone$-invariant and is a terminal object; and
\item the pullback of a diagram of strongly $\aone$-invariant sheaves is again strongly $\aone$-invariant (in particular, the intersection of strongly $\aone$-invariant sheaves of groups is again strongly $\aone$-invariant).
\end{enumerate}
\end{lem}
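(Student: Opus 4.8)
The plan is to reduce to the two displayed items by a formal reduction and then, for the pullback, to recognize the pullback sheaf of groups as the $\aone$-fundamental sheaf of an $\aone$-local space, so that Morel's Theorem~\ref{thm:unstableconnectivity}(1) applies directly.

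First recall that a category has all finite limits as soon as it has a terminal object and all pullbacks, and that $\Grp_k$ has all finite (indeed all small) limits, computed sectionwise: the forgetful functors to presheaves of groups and then to presheaves of sets preserve limits, and a sectionwise limit of Nisnevich sheaves is again a Nisnevich sheaf. Since $\Grp^{\aone}_k$ is a \emph{full} subcategory of $\Grp_k$, it follows that $\Grp^{\aone}_k$ has all finite limits and the forgetful functor creates them, provided the terminal object of $\Grp_k$ and every pullback in $\Grp_k$ of a cospan of strongly $\aone$-invariant sheaves again lie in $\Grp^{\aone}_k$; this is exactly items (1) and (2). Item (1) is immediate: the terminal object of $\Grp_k$ is the trivial group $1$, and $B1 = \ast$ is $\aone$-local.

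For item (2), let $\mathbf{G}_1 \to \mathbf{G}_3 \leftarrow \mathbf{G}_2$ be a cospan in $\Grp^{\aone}_k$ and set $P := \mathbf{G}_1 \times_{\mathbf{G}_3} \mathbf{G}_2$, the pullback formed in $\Grp_k$ (equivalently, sectionwise or stalkwise). I would form the homotopy pullback $\mathscr{P}$ of the induced diagram $B\mathbf{G}_1 \to B\mathbf{G}_3 \leftarrow B\mathbf{G}_2$ of pointed classifying spaces. As $B\mathbf{G}_1$, $B\mathbf{G}_2$, $B\mathbf{G}_3$ are $\aone$-local and the class of $\aone$-local spaces is closed under homotopy limits, $\mathscr{P}$ is $\aone$-local; alternatively, $\operatorname{hofib}(B\mathbf{G}_1 \to B\mathbf{G}_3)$ is $\aone$-local by Lemma~\ref{lem:homotopyfiberaonelocal}, and $\mathscr{P}$ is $\aone$-local by applying Lemma~\ref{lem:aww2210} to the fiber sequence $\operatorname{hofib}(B\mathbf{G}_1 \to B\mathbf{G}_3) \to \mathscr{P} \to B\mathbf{G}_2$ in which $B\mathbf{G}_2$ is simplicially connected. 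I would then compute the homotopy sheaves of $\mathscr{P}$ stalkwise: on a Nisnevich stalk $\mathscr{P}$ becomes the homotopy pullback of $K(G,1)$'s for the relevant stalk groups, and the fiber sequence $\mathbf{G}_3 \to \mathscr{P} \to B\mathbf{G}_1 \times B\mathbf{G}_2$ with discrete fiber yields $\bpi_i(\mathscr{P}) = 0$ for $i \geq 2$ and $\bpi_1(\mathscr{P}) = \mathbf{G}_1 \times_{\mathbf{G}_3}\mathbf{G}_2 = P$ (the boundary map $\mathbf{G}_1 \times \mathbf{G}_2 \to \mathbf{G}_3$ is only a map of pointed sheaves, but its fiber over the identity is the subgroup $P$). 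Finally, since $\mathscr{P}$ is $\aone$-local the map $\mathscr{P} \to \Laone\mathscr{P}$ is a simplicial weak equivalence, so $\bpi_1^{\aone}(\mathscr{P}) = \bpi_1(\mathscr{P}) = P$; by Theorem~\ref{thm:unstableconnectivity}(1) the sheaf $\bpi_1^{\aone}$ of any pointed space is strongly $\aone$-invariant, hence $P \in \Grp^{\aone}_k$. The parenthetical statement about intersections is the case in which both legs of the cospan are monomorphisms.

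The upshot is that there is no genuine obstacle, but there is a trap to avoid: one should \emph{not} try to exhibit $P$ as a subquotient of $\mathbf{G}_1 \times \mathbf{G}_2$ and argue directly, because controlling the requisite quotient runs straight into the issue flagged in Remark~\ref{rem:nocokernels}; the whole content of the argument is that $\bpi_1$ of a homotopy pullback of $\aone$-local spaces is automatically strongly $\aone$-invariant by Morel's theorem, so no cokernels enter. I would also remark that the lemma follows formally from the fact that $\Grp^{\aone}_k \hookrightarrow \Grp_k$ admits the left adjoint $\mathbf{G} \mapsto \mathbf{G}_{\aone}$: a full reflective subcategory of a category with (small) limits is closed under those limits and the inclusion creates them.
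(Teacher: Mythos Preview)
Your proof is correct and follows essentially the same approach as the paper: form the homotopy pullback $\mathscr{P}$ of the classifying spaces, use the fiber sequence $\mathbf{G}_3 \to \mathscr{P} \to B\mathbf{G}_1 \times B\mathbf{G}_2$ both to identify $\bpi_1(\mathscr{P})$ with the sheaf-theoretic pullback and (via Lemma~\ref{lem:aww2210}) to see that $\mathscr{P}$ is $\aone$-local, so that the pullback is $\bpi_1^{\aone}(\mathscr{P})$ and hence strongly $\aone$-invariant. Your closing remark that the lemma also follows formally from $\Grp^{\aone}_k \hookrightarrow \Grp_k$ being reflective (with left adjoint $\mathbf{G} \mapsto \mathbf{G}_{\aone}$) is a genuinely slicker alternative the paper does not mention.
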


\begin{proof}
For the first point, observe that the trivial group $1$ is evidently strongly $\aone$-invariant; that it is a terminal object in $\Grp^{\aone}_k$ follows from the fact that it is a terminal object in $\Grp_k$.

For the second point, suppose $\mathbf{G}$, $\mathbf{H}_1$ and $\mathbf{H}_2$ are sheaves of groups, and assume we are given morphisms $\varphi_1: \mathbf{H}_1 \to \mathbf{G}$ and $\varphi_2: \mathbf{H}_2 \to \mathbf{G}$.  In that case, the simplicial homotopy fiber product $B\mathbf{H}_1 \times^{h}_{B{\mathbf G}} B\mathbf{H}_2$ fits into a fiber sequence of the form
\[
\mathbf{G} \longrightarrow B\mathbf{H}_1 \times^{h}_{B{\mathbf G}} B\mathbf{H}_2 \longrightarrow B \mathbf{H}_1 \times B\mathbf{H}_2,
\]
By, e.g., \cite[Corollary 2.2.3]{MayPonto}, this description implies that $\bpi_1^s(B\mathbf{H}_1 \times^{h}_{B{\mathbf G}} B\mathbf{H}_2)$ coincides with the pullback of the diagram $\mathbf{H}_1 \to \mathbf{G} \leftarrow \mathbf{H}_2$ (because $B \mathbf{H}_1 \times B\mathbf{H}_2$ is $1$-truncated).

If $\mathbf{G}$,$\mathbf{H}_1$ and $\mathbf{H}_2$ are all strongly $\aone$-invariant, then the simplicial homotopy fiber product is $\aone$-local, again by Lemma~\ref{lem:aww2210} applied to the simplicial fiber sequence of the previous paragraph ($B \mathbf{H}_1 \times B\mathbf{H}_2$ is simplicially connected as before).  Thus,
\[
\mathbf{H}_1 \times_{\mathbf{G}} \mathbf{H}_2 = \bpi_1^s(B\mathbf{H}_1 \times^{h}_{B{\mathbf G}} B\mathbf{H}_2) = \bpi_1^{\aone}(B\mathbf{H}_1 \times^{h}_{B{\mathbf G}} B\mathbf{H}_2),
\]
which is precisely what we wanted to show.

Any category that has pullbacks and a terminal object necessarily possesses all finite limits \cite[Corollary V.2.1]{MacLane}.  Finally, the results above establish that the inclusion functor preserves the terminal object and pullbacks and therefore, creates finite limits as well.
\end{proof}

\subsubsection*{Contractions}
\begin{defn}
\label{defn:contraction}
If $\mathbf{G}$ is a strongly $\aone$-invariant sheaf of groups, the {\em contraction} of $\mathbf{G}$, denoted $\mathbf{G}_{-1}$, is defined by
\[
\mathbf{G}_{-1}(U) := \ker(\mathbf{G}(\gm{} \times U) \stackrel{ev_1}{\longrightarrow} \mathbf{G}(U)).
\]
\end{defn}

The next result summarizes the important formal properties of contraction.

\begin{prop}[Morel]
\label{prop:contractionisexact}
Assume $k$ is a field.
\begin{enumerate}[noitemsep,topsep=1pt]
\item The functor $\mathbf{G} \mapsto (\mathbf{G})_{-1}$ defines an endo-functor of $\Grp_k^{\aone}$;
\item if $k$ is furthermore perfect, then contraction restricts to an endo-functor of $\Ab^{\aone}_k$ to $\Ab^{\aone}_k$.
\item The contraction functor preserves short exact sequences.
\end{enumerate}
\end{prop}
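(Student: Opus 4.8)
The plan is to identify contraction with a ``$\mathbb{G}_m$-based loop space'' on classifying spaces and then run the associated fibre sequence. For a Nisnevich sheaf of groups $\mathbf{G}$, I would work with the internal mapping space $\underline{\Map}(\gm{},\mathrm{R}_{\Nis}B\mathbf{G})$, i.e., the simplicial presheaf $U\mapsto \mathrm{R}_{\Nis}B\mathbf{G}(\gm{}\times U)$, together with the evaluation map $ev_1\colon \underline{\Map}(\gm{},\mathrm{R}_{\Nis}B\mathbf{G})\to \mathrm{R}_{\Nis}B\mathbf{G}$ given by restriction along $1\colon \Spec k\to\gm{}$; this is a fibration since $\{1\}\hookrightarrow\gm{}$ is a monomorphism and $\mathrm{R}_{\Nis}B\mathbf{G}$ is fibrant. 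Write $\mathscr{F}_{\mathbf{G}}$ for its fibre over the base point, pointed by the constant map. Two preliminary observations drive everything. First, because $\gm{}$ is a smooth scheme, the functor $\underline{\Map}(\gm{},-)$ preserves $\aone$-local spaces (an immediate adjunction argument using $\Map(\mathscr{W},\underline{\Map}(\gm{},\mathscr{X}))=\Map(\mathscr{W}\times\gm{},\mathscr{X})$), so $\underline{\Map}(\gm{},\mathrm{R}_{\Nis}B\mathbf{G})$ is $\aone$-local when $\mathbf{G}$ is strongly $\aone$-invariant, and hence $\mathscr{F}_{\mathbf{G}}$ is $\aone$-local by Lemma~\ref{lem:homotopyfiberaonelocal}. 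Second, running the long exact homotopy sequence of $ev_1$ and using that $\mathrm{R}_{\Nis}B\mathbf{G}$ is $1$-truncated with $\bpi_1=\mathbf{G}$ and that $\pi_1$ of its sections over $\gm{}\times U$ at the trivial torsor is $\mathbf{G}(\gm{}\times U)$, one obtains a natural isomorphism $\bpi_1(\mathscr{F}_{\mathbf{G}})\cong\mathbf{G}_{-1}$, together with $\bpi_i(\mathscr{F}_{\mathbf{G}})=0$ for $i\ge 2$. It is also worth recording the split short exact sequence of sheaves of groups $1\to\mathbf{G}_{-1}\to\mathbf{G}(\gm{}\times-)\xrightarrow{ev_1}\mathbf{G}\to 1$, whose splitting comes from the $\aone$-invariance of $\mathbf{G}=H^0_{\Nis}(-,\mathbf{G})$; in particular $ev_1$ is a sheaf epimorphism.

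Points (1) and (2) are then immediate. Functoriality of $(-)_{-1}$ is clear from the definition. For (1): $\mathbf{G}_{-1}$ is a sheaf of groups (a kernel of a morphism of sheaves of groups), and since $\mathscr{F}_{\mathbf{G}}$ is $\aone$-local we have $\mathbf{G}_{-1}\cong\bpi_1(\mathscr{F}_{\mathbf{G}})=\bpi_1^{\aone}(\mathscr{F}_{\mathbf{G}})$, which is strongly $\aone$-invariant by Theorem~\ref{thm:unstableconnectivity}(1); this gives the endofunctor of $\Grp^{\aone}_k$. For (2): if $\mathbf{G}=\mathbf{A}$ is abelian then $\mathbf{A}_{-1}$ is abelian (a subsheaf of the abelian sheaf $\mathbf{A}(\gm{}\times-)$), it is strongly $\aone$-invariant by (1), and since $k$ is perfect it is (trivially) pulled back from the perfect subfield $k$, so Theorem~\ref{thm:unstableconnectivity}(2) upgrades it to a strictly $\aone$-invariant sheaf.

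For Point (3), let $1\to\mathbf{G}'\to\mathbf{G}\to\mathbf{G}''\to 1$ be a short exact sequence of strongly $\aone$-invariant sheaves of groups, and take the associated simplicial fibre sequence $B\mathbf{G}'\to B\mathbf{G}\to B\mathbf{G}''$. Applying $\underline{\Map}_*(\gm{},\mathrm{R}_{\Nis}(-))$ — which sends fibrations between fibrant objects to fibrations and preserves fibrant objects (because $\emptyset\hookrightarrow\gm{}$ and $\{1\}\hookrightarrow\gm{}$ are cofibrations), hence preserves homotopy fibre sequences — produces a simplicial fibre sequence $\mathscr{F}_{\mathbf{G}'}\to\mathscr{F}_{\mathbf{G}}\to\mathscr{F}_{\mathbf{G}''}$. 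Its long exact sequence in homotopy sheaves, together with the natural identification $\bpi_1(\mathscr{F}_{(-)})\cong(-)_{-1}$ and the vanishing $\bpi_2(\mathscr{F}_{\mathbf{G}''})=0$, yields exactness of $1\to\mathbf{G}'_{-1}\to\mathbf{G}_{-1}\to\mathbf{G}''_{-1}\xrightarrow{\partial}\bpi_0(\mathscr{F}_{\mathbf{G}'})$ as a sequence of sheaves of groups, so the only point that remains is that $\mathbf{G}_{-1}\to\mathbf{G}''_{-1}$ is an epimorphism of sheaves, equivalently that $\partial$ is trivial.

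I expect this surjectivity to be the main obstacle. It suffices to show $\mathscr{F}_{\mathbf{G}'}$ is simplicially connected, i.e., checking on stalks, that $H^1_{\Nis}(\gm{}\times\Spec\mathcal{O},\mathbf{H})=\ast$ for every strongly $\aone$-invariant sheaf $\mathbf{H}$ and every henselian local ring $\mathcal{O}$ of a smooth $k$-scheme; this is a consequence of Morel's $\aone$-invariance of $H^1$ together with the $\gm{}$-localization (Gysin) sequence for the pair $(\aone,\{0\})$. In the abelian case it is transparent: the same localization sequence gives a decomposition $H^i_{\Nis}(\gm{}\times\Spec\mathcal{O},\mathbf{A})\cong H^i_{\Nis}(\Spec\mathcal{O},\mathbf{A})\oplus H^i_{\Nis}(\Spec\mathcal{O},\mathbf{A}_{-1})$, which vanishes for $i\ge 1$, and, being natural in $\mathbf{A}$, splits the long exact cohomology sequence of $\gm{}\times U$ off into the sequence for $U$ and the ``$(-)_{-1}$-part'', forcing $0\to\mathbf{A}'_{-1}\to\mathbf{A}_{-1}\to\mathbf{A}''_{-1}\to 0$ to be exact. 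For the general, possibly non-central, case I would argue directly on a henselian local stalk $\mathcal{O}$: a section $s''\in\mathbf{G}''_{-1}(\mathcal{O})\subset\mathbf{G}''(\gm{}\times\Spec\mathcal{O})$ lifts to some $\tilde{s}\in\mathbf{G}(\gm{}\times\Spec\mathcal{O})$ because the obstruction lies in a (twisted form of) $H^1_{\Nis}(\gm{}\times\Spec\mathcal{O},\mathbf{G}')$, and then $\tilde{s}\cdot\overline{ev_1(\tilde{s})}^{\,-1}$ — where the bar denotes the constant-in-$\gm{}$ extension, a section of $\mathbf{G}'(\gm{}\times\Spec\mathcal{O})$ and hence trivial in $\mathbf{G}''$ — has $ev_1=1$, so lies in $\mathbf{G}_{-1}(\mathcal{O})$ and still maps to $s''$; when $\mathbf{G}'$ is central one needs only $H^2_{\Nis}(\gm{}\times\Spec\mathcal{O},\mathbf{G}')=0$, which is immediate from the abelian computation above.
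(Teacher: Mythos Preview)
Your argument for points (1) and (2) is correct and is essentially a fleshed-out version of what the paper does (the paper merely cites \cite[Lemma~2.32]{MField} for (1) and then combines with Theorem~\ref{thm:unstableconnectivity} for (2)). Your identification $\mathscr{F}_{\mathbf{G}}=\Omega_{\gm{}}B\mathbf{G}$ and the computation $\bpi_1(\mathscr{F}_{\mathbf{G}})\cong\mathbf{G}_{-1}$, $\bpi_i(\mathscr{F}_{\mathbf{G}})=0$ for $i\ge 2$, are clean and correct.

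For point (3) the paper again only gives a citation (\cite[Lemma~7.33]{MField}), so your approach is genuinely different in that you supply an argument. Your fibre-sequence setup $\mathscr{F}_{\mathbf{G}'}\to\mathscr{F}_{\mathbf{G}}\to\mathscr{F}_{\mathbf{G}''}$ is correct, as is the reduction to showing $\bpi_0(\mathscr{F}_{\mathbf{G}'})=\ast$, equivalently $H^1_{\Nis}(\gm{}\times\Spec\mathcal{O},\mathbf{G}')=\ast$ for henselian local $\mathcal{O}$. Your treatment of the abelian case via the localization/contraction splitting is fine, and the ``correct by $\overline{ev_1(\tilde s)}^{-1}$'' trick in the last paragraph is a nice way to pass from a lift in $\mathbf{G}(\gm{}\times\Spec\mathcal{O})$ to one in $\mathbf{G}_{-1}(\Spec\mathcal{O})$.

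The one genuine gap is your justification of $H^1_{\Nis}(\gm{}\times\Spec\mathcal{O},\mathbf{G}')=\ast$ for \emph{non-abelian} $\mathbf{G}'$: you invoke ``the $\gm{}$-localization (Gysin) sequence for the pair $(\aone,\{0\})$'', but there is no such sequence for non-abelian coefficients, so this step does not go through as written. Your direct lifting argument in the final sentence does not avoid the issue either, since the obstruction to lifting $s''$ is exactly a class in (a twisted form of) that $H^1$. The fix is immediate, however: the vanishing you need is precisely the statement that $\Omega_{\gm{}}B\mathbf{G}'$ is simplicially connected, and since you have already shown $\mathscr{F}_{\mathbf{G}'}$ is $\aone$-local, this is equivalent to $\aone$-connectedness of $\Omega_{\gm{}}B\mathbf{G}'$. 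That is exactly the content of Theorem~\ref{thm:contractionsandgmloops}(1) (Morel's \cite[Theorem~6.13]{MField}) applied to the $\aone$-connected space $B\mathbf{G}'$, so you can simply cite that result in place of the non-existent non-abelian Gysin sequence.
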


\begin{proof}
The assignment $\mathbf{G} \mapsto \mathbf{G}_{-1}$ is evidently functorial.  That $\mathbf{G}_{-1}$ is again strongly $\aone$-invariant is \cite[Lemma 2.32]{MField}.  The second statement follows from the first and Theorem~\ref{thm:unstableconnectivity}.  The third statement is \cite[Lemma 7.33]{MField}.
\end{proof}

The next result explains the geometric importance of the contraction construction.

\begin{thm}[{\cite[Theorem 6.13]{MField}}]
\label{thm:contractionsandgmloops}
Suppose $(\mathscr{X},x)$ is a pointed $\aone$-connected space. The following statements hold:
\begin{enumerate}[noitemsep,topsep=1pt]
\item the space $\Omega_{\gm{}}\mathscr{X}$ is again pointed and $\aone$-connected, and
\item $\bpi_i^{\aone}(\Omega_{\gm{}}\mathscr{X}) = \bpi_i^{\aone}(\mathscr{X})_{-1}$.
\end{enumerate}
\end{thm}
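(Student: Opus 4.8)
This is Morel's theorem, so I expect the paper simply to cite it; here is the route I would take to prove it directly. Write $\Omega_{\gm{}}\mathscr{X} := \underline{\Map}_\bullet((\gm{},1),\Laone\mathrm{R}_{\Nis}\mathscr{X})$ for the derived $\gm{}$-loop space. The two structural inputs I would isolate at the outset are: (i) $\gm{}$ is $\aone$-rigid, i.e. already $\aone$-local as a space, and, more generally, for any space $\mathscr{Z}$ the internal mapping object $\underline{\Map}(\mathscr{Z},\mathscr{W})$ is $\aone$-local whenever $\mathscr{W}$ is $\aone$-local and simplicially fibrant (because then $\mathscr{W}\to\underline{\Map}(\aone,\mathscr{W})$ is a weak equivalence, hence so is $\underline{\Map}(\mathscr{Z},\mathscr{W})\to\underline{\Map}(\mathscr{Z}\times\aone,\mathscr{W})$); and (ii) $\gm{}$ is simplicially discrete. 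By (i), $\Omega_{\gm{}}\mathscr{X}$ is $\aone$-local and simplicially fibrant, so its ordinary simplicial homotopy sheaves already compute $\bpi_i^{\aone}(\Omega_{\gm{}}\mathscr{X})$; by (ii), $\Omega_{\gm{}}$ introduces no simplicial-degree shift, so the content of the theorem is precisely that all of the "geometry'' of $\gm{}$ is absorbed into the contraction $(-)_{-1}$.

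The heart of the matter is the Eilenberg--Mac Lane case: for $\mathbf{G}$ strongly $\aone$-invariant, $\Omega_{\gm{}}B\mathbf{G}\simeq B\mathbf{G}_{-1}$, and for $\mathbf{A}$ strictly $\aone$-invariant, $\Omega_{\gm{}}K(\mathbf{A},n)\simeq K(\mathbf{A}_{-1},n)$ for every $n\ge 1$. Since $1\in\gm{}$ is a rational point, evaluation $\underline{\Map}(\gm{},\mathscr{X})\xrightarrow{ev_1}\mathscr{X}$ admits a section, so $\Omega_{\gm{}}\mathscr{X}$ is the homotopy fibre, a retract-summand of $\underline{\Map}(\gm{},\mathscr{X})$; its homotopy sheaves are therefore the complementary summands $\ker\!\big(\bpi_i(\underline{\Map}(\gm{},\mathscr{X}))\xrightarrow{ev_1}\bpi_i(\mathscr{X})\big)$. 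For $\mathscr{X}=B\mathbf{G}$ or $K(\mathbf{A},n)$ these are computed stalkwise: in the relevant bottom degree one gets exactly $\ker(\mathbf{G}(\gm{}\times U)\to\mathbf{G}(U))=\mathbf{G}_{-1}(U)$, resp. $\ker(\mathbf{A}(\gm{}\times U)\to\mathbf{A}(U))=\mathbf{A}_{-1}(U)$, by the very \textup{Definition~\ref{defn:contraction}} together with $\aone$-invariance of $\mathbf{A}$ (used to split off $\mathbf{A}(U)$); all the other homotopy sheaves vanish, because $\widetilde{H}^{j}_{\Nis}(\gm{}_L;\mathbf{A})=0$ for $j\ge 1$ and every field $L$ (Nisnevich cohomological dimension $\le 1$ together with surjectivity of residues on $\aone$-invariant theories, which is where $\aone$-rigidity of $\gm{}$ really enters), and similarly for $\mathbf{G}$-torsors on $\gm{}_L$ trivial at $t=1$. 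Finally \textup{Proposition~\ref{prop:contractionisexact}} guarantees that $\mathbf{G}_{-1}$ and $\mathbf{A}_{-1}$ are again strongly, resp. strictly, $\aone$-invariant, so that $B\mathbf{G}_{-1}$ and $K(\mathbf{A}_{-1},n)$ are genuinely $\aone$-local and the asserted identifications make sense.

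For a general pointed $\aone$-connected $\mathscr{X}$ I would run an induction along its $\aone$-Postnikov tower: $\mathscr{X}\simeq\operatorname{holim}_n X_{(n)}$ with $X_{(1)}=B\bpi_1^{\aone}(\mathscr{X})$, each $X_{(n)}$ $\aone$-local, and $K(\bpi_n^{\aone}(\mathscr{X}),n)\to X_{(n)}\to X_{(n-1)}$ a simplicial fibre sequence — legitimate thanks to \textup{Theorem~\ref{thm:unstableconnectivity}}, which makes $\bpi_1^{\aone}$ strongly and $\bpi_n^{\aone}$ ($n\ge2$) strictly $\aone$-invariant. Because $\Omega_{\gm{}}=\underline{\Map}_\bullet(\gm{},-)$ is right adjoint to $\gm{}\wedge(-)$ it preserves homotopy limits and simplicial fibre sequences, and by (i) it keeps everything $\aone$-local, so applying it produces simplicial fibre sequences $K(\bpi_n^{\aone}(\mathscr{X})_{-1},n)\to\Omega_{\gm{}}X_{(n)}\to\Omega_{\gm{}}X_{(n-1)}$ and $\Omega_{\gm{}}\mathscr{X}\simeq\operatorname{holim}_n\Omega_{\gm{}}X_{(n)}$. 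Feeding the long exact sequences into an induction on $n$ (using exactness of contraction, \textup{Proposition~\ref{prop:contractionisexact}}, and \textup{Lemmas~\ref{lem:homotopyfiberaonelocal} and \ref{lem:aww2210}} to keep the $\Omega_{\gm{}}X_{(n)}$ $\aone$-local) yields $\bpi_i^{\aone}(\Omega_{\gm{}}\mathscr{X})\cong\bpi_i^{\aone}(\mathscr{X})_{-1}$; and the bottom stage of this tower, namely $B(\bpi_1^{\aone}(\mathscr{X})_{-1})$ (or $K(\bpi_m^{\aone}(\mathscr{X})_{-1},m)$ for the smallest $m$ with $\bpi_m^{\aone}(\mathscr{X})\ne 0$), is simplicially connected, whence $\Omega_{\gm{}}\mathscr{X}$ is $\aone$-connected. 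The one genuinely hard step is the Eilenberg--Mac Lane computation above: one must verify that $\gm{}$-looping preserves simplicial degree and is implemented on homotopy sheaves by the contraction, and this is exactly the point at which one uses that $\gm{}$ is $\aone$-rigid rather than, say, $\aone$-contractible or a motivic sphere.
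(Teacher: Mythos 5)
You guessed right: the paper gives no proof of this statement — it is quoted as \cite[Theorem 6.13]{MField} and used as a black box, so there is nothing internal to compare your argument against. Your sketch is, in outline, Morel's own proof: the Eilenberg--Mac Lane computation $\Omega_{\gm{}}K(\mathbf{A},n)\simeq K(\mathbf{A}_{-1},n)$ via the (co)homology of $\gm{}$ with strictly $\aone$-invariant coefficients, followed by induction up the $\aone$-Postnikov tower using that $\Omega_{\gm{}}$ is a (derived) right adjoint and hence preserves fiber sequences and homotopy limits. Two points where I would press you to be more careful: the connectivity of $\Omega_{\gm{}}B\mathbf{G}$ for non-abelian strongly $\aone$-invariant $\mathbf{G}$ requires the triviality of $H^1_{\Nis}(\gm{}_L,\mathbf{G})$, which is itself a substantive theorem of Morel on non-abelian Nisnevich cohomology of curves rather than a formal consequence of cohomological dimension $\leq 1$; and ``computed stalkwise'' should really be ``vanishes on sections over fields, hence vanishes because the resulting homotopy sheaves are unramified,'' since Nisnevich stalks are henselian local rings rather than fields.
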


\subsubsection*{Strong $\aone$-invariance of group-theoretic constructions}
If $\mathbf{G}$ is a sheaf of groups, then we write $Z(\mathbf{G})$ for the kernel of the morphism $\mathbf{G} \to \mathbf{Aut}(\mathbf{G})$ (sectionwise, sending an element to its corresponding inner automorphism).  There is a natural homomorphism
\[
Z(\mathbf{G}) \times \mathbf{G} \longrightarrow \mathbf{G}
\]
given by the product.

More generally, suppose we are given a homomorphism of sheaves of groups $\varphi: \mathbf{H} \to \mathbf{G}$.  Consider the sheaf of pointed sets $\hom(\mathbf{H},\mathbf{G})$.  This sheaf of pointed sets comes equipped with an action of the sheaf $\mathbf{G}$ by conjugation on the target.  Viewing $\varphi$ as a global section of $\hom(\mathbf{H},\mathbf{G})$, we define $C_{\mathbf{G}}(\varphi)$ to be the sheaf-theoretic stabilizer of the section $\varphi \in \hom(\mathbf{H},\mathbf{G})$ under the conjugation action of $\mathbf{G}$.


The homomorphism $\varphi$ induces an evaluation homomorphism
\[
C_{\mathbf{G}}(\varphi) \times \mathbf{H} \longrightarrow \mathbf{G};
\]
when $\varphi$ is the identity homomorphism on $\mathbf{G}$, this homomorphism coincides with the one defined in the preceding paragraph.

With $\varphi: \mathbf{H} \to \mathbf{G}$ as above, write $\operatorname{Map}(B\mathbf{H},B\mathbf{G})_\varphi$ for the component of the internal mapping space containing $B\varphi$.  The homomorphism of the preceding paragraph induces a morphism
\[
BC_{{\mathbf G}}(\varphi) \longrightarrow \operatorname{Map}(B\mathbf{H},B\mathbf{G})_{\varphi},
\]
and applying $\bpi_1$ defines a homomorphism
\[
C_{{\mathbf G}}(\varphi) \longrightarrow \bpi_1(\operatorname{Map}(B\mathbf{H},B\mathbf{G})_{\varphi}).
\]
Again, in the special case where $\varphi$ is the identity map on $\mathbf{G}$, this reduces to a homomorphism $Z(\mathbf{G}) \to \bpi_1(\operatorname{Map}(B\mathbf{G},B\mathbf{G})_1)$.

\begin{prop}
\label{prop:strongaoneinvarianceofsomecentralizers}
Assume $k$ is a field, and $\varphi: \mathbf{H} \to \mathbf{G}$ is a morphism in $\Grp_k$.
\begin{enumerate}[noitemsep,topsep=1pt]
\item The morphism
\[
C_{{\mathbf G}}(\varphi) \longrightarrow \bpi_1(\operatorname{Map}(B\mathbf{H},B\mathbf{G})_{\varphi})
\]
is an isomorphism of sheaves of groups.
\item If $\mathbf{G}$ is strongly $\aone$-invariant, then so is $C_{{\mathbf G}}(\varphi)$.
\item If $k$ is a perfect field, then $Z({\mathbf{G}})$ is strictly $\aone$-invariant.
\end{enumerate}
\end{prop}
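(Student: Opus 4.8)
The plan is to reduce everything to Point~(1), which is the sheaf-theoretic incarnation of the classical identification $\pi_1(\operatorname{map}(BH,BG)_\varphi)\cong C_G(\varphi(H))$; granting it, Points~(2) and~(3) follow formally from Morel's unstable connectivity theorem (Theorem~\ref{thm:unstableconnectivity}). For Point~(1) I would first record two structural facts about the derived mapping spaces at issue. Since $B\mathbf{G}$ is simplicially $1$-truncated, so is $\Map(B\mathbf{H},B\mathbf{G})$; moreover a map into a $1$-truncated object factors through the $1$-truncation of its source, so for $i\geq 1$ we have $[\Sigma^i B\mathbf{H},B\mathbf{G}]_\ast = \ast$ because the simplicial suspension $\Sigma^i B\mathbf{H}$ is $1$-connected. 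Consequently the pointed mapping space $\Map_\ast(B\mathbf{H},B\mathbf{G})$ is simplicially discrete, and---using the description of $B\mathbf{G}$ as the classifying stack of $\mathbf{G}$, so that pointed maps $B\mathbf{H}\to B\mathbf{G}$ in the Nisnevich-local homotopy category are exactly homomorphisms $\mathbf{H}\to\mathbf{G}$---one gets $\bpi_0\Map_\ast(B\mathbf{H},B\mathbf{G})\cong\underline{\hom}(\mathbf{H},\mathbf{G})$, the sheaf of group homomorphisms, carrying its conjugation action of $\mathbf{G}$.

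I would then run the long exact sequence of simplicial homotopy sheaves, taken with base point $\varphi$, associated with the evaluation-at-base-point simplicial fiber sequence
\[
\Map_\ast(B\mathbf{H},B\mathbf{G})\longrightarrow\Map(B\mathbf{H},B\mathbf{G})\stackrel{ev}{\longrightarrow}B\mathbf{G}.
\]
Because the fiber is simplicially discrete and $B\mathbf{G}$ is simplicially connected, this sequence collapses to an exact sequence
\[
1\longrightarrow\bpi_1(\Map(B\mathbf{H},B\mathbf{G})_\varphi)\longrightarrow\mathbf{G}\stackrel{\partial}{\longrightarrow}\underline{\hom}(\mathbf{H},\mathbf{G}),
\]
where $\partial$ is the orbit map $g\mapsto g\varphi(-)g^{-1}$ of the conjugation action; its kernel is the stabilizer of $\varphi$, which is precisely $C_{\mathbf{G}}(\varphi)$. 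Finally I would check that this identification agrees with the morphism constructed just before the statement: evaluating the map $BC_{\mathbf{G}}(\varphi)\to\Map(B\mathbf{H},B\mathbf{G})_\varphi$ at the base point of $B\mathbf{H}$ shows that the composite $C_{\mathbf{G}}(\varphi)\to\bpi_1(\Map(B\mathbf{H},B\mathbf{G})_\varphi)\hookrightarrow\mathbf{G}$ is the inclusion, so---the second arrow being injective---the first is forced to be the inverse of the isomorphism just produced. This proves Point~(1), and incidentally that $\Map(B\mathbf{H},B\mathbf{G})_\varphi\simeq BC_{\mathbf{G}}(\varphi)$.

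Point~(2) is then formal. If $\mathbf{G}$ is strongly $\aone$-invariant, then $B\mathbf{G}$ is $\aone$-local, and the derived mapping space $\Map(B\mathbf{H},B\mathbf{G})$, being a mapping space into an $\aone$-local object, is again $\aone$-local (concretely, $\Map(B\mathbf{H},-)$ is right adjoint to the left Quillen endofunctor $-\times B\mathbf{H}$ of the $\aone$-local model structure). Pointing it at $\varphi$ and using that an $\aone$-equivalence between $\aone$-local spaces is a simplicial weak equivalence, we obtain $\bpi_1(\Map(B\mathbf{H},B\mathbf{G}),\varphi)\cong\bpi_1^{\aone}(\Map(B\mathbf{H},B\mathbf{G}),\varphi)$, which is strongly $\aone$-invariant by Theorem~\ref{thm:unstableconnectivity}(1); combined with Point~(1), $C_{\mathbf{G}}(\varphi)$ is strongly $\aone$-invariant. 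For Point~(3), apply Point~(2) with $\mathbf{H}=\mathbf{G}$ and $\varphi=\operatorname{id}_{\mathbf{G}}$: then $C_{\mathbf{G}}(\operatorname{id}_{\mathbf{G}})=Z(\mathbf{G})$ is strongly $\aone$-invariant, and being a sheaf of \emph{abelian} groups over a perfect field, it is strictly $\aone$-invariant by Theorem~\ref{thm:unstableconnectivity}(2).

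The step I expect to be the main obstacle---really, the only non-formal point---is Point~(1): the classical statement is immediate, but the two identifications above, namely $\bpi_0\Map_\ast(B\mathbf{H},B\mathbf{G})\cong\underline{\hom}(\mathbf{H},\mathbf{G})$ and $\ker(\partial)=C_{\mathbf{G}}(\varphi)$, require honest sheaf-theoretic bookkeeping (the stack description of $B\mathbf{G}$ and the compatibility of the long exact sequence of homotopy sheaves with the conjugation actions). One could alternatively verify the asserted isomorphism of sheaves of groups stalkwise---legitimate because strongly $\aone$-invariant sheaves are unramified (Remark~\ref{rem:stronglyaoneinvariantimpliesunramified})---reducing at each stalk to the classical computation; the only nuisance there, that $B\mathbf{H}$ is not a compact object, is neutralized by the fact that the $1$-truncated target $B\mathbf{G}$ only sees the $2$-skeleton of $B\mathbf{H}$.
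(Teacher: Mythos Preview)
Your treatment of Points~(2) and~(3) is identical to the paper's. For Point~(1) you and the paper diverge: the paper goes straight to the stalkwise check, reducing to the classical Dwyer--Zabrodsky computation for discrete groups---which is exactly the alternative you sketch at the end of your plan. Your primary route, via the evaluation fiber sequence $\Map_\ast(B\mathbf{H},B\mathbf{G})\to\Map(B\mathbf{H},B\mathbf{G})\to B\mathbf{G}$, is a legitimate and more structural argument, but two details need attention.

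First, your justification that $\Map_\ast(B\mathbf{H},B\mathbf{G})$ is simplicially discrete is incomplete: the vanishing $[\Sigma^i B\mathbf{H},B\mathbf{G}]_\ast=\ast$ only computes $\bpi_i$ at the \emph{constant} base point, not at a general $\psi$. To finish, observe that a based loop at $\psi$ corresponds to a pointed map $B(\Z\times\mathbf{H})\to B\mathbf{G}$ restricting to $\psi$ on $B\mathbf{H}$ and to the constant on $B\Z$; since $B\mathbf{G}$ is $1$-truncated, this forces the underlying homomorphism to be $(n,h)\mapsto\psi(h)$, hence unique, so $\bpi_1(\Map_\ast,\psi)=\ast$. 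Alternatively one simply checks discreteness stalkwise via the groupoid model of $B\mathbf{G}$, at which point you have essentially reproduced the paper's argument.

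Second, the aside that stalkwise verification is ``legitimate because strongly $\aone$-invariant sheaves are unramified'' is a non sequitur: checking that a morphism of sheaves is an isomorphism stalkwise is always legitimate on a site with enough points, and unramifiedness (injectivity of restriction to generic points) plays no role here. The genuine technical point in the stalkwise approach is the one you correctly flag afterward---that $B\mathbf{H}$ is not compact so commuting $\Map$ with the filtered colimit defining a stalk needs justification---and your fix (the $1$-truncated target only depends on a finite skeleton of $B\mathbf{H}$) is the right one; the paper glosses over this entirely.
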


\begin{proof}
The first statement is a sheaf-theoretic variant of a classical statement about discrete groups, which is essentially an exercise in covering space theory.  Indeed, we claim that there is an evaluation morphism $\operatorname{Map}(B\mathbf{H},B\mathbf{G})_{\varphi} \to B\mathbf{G}$ that fits into a simplicial fiber sequence of the form:
\[
\hom(\mathbf{H},\mathbf{G}) \longrightarrow \operatorname{Map}(B\mathbf{H},B\mathbf{G}) \longrightarrow B\mathbf{G},
\]
where $\hom(\mathbf{H},\mathbf{G})$ is viewed as a simplicially constant presheaf, i.e., all face and degeneracy maps are reduced to the identity.  Granting this, taking homotopy sheaves of this fiber sequence with base-point $\varphi$ yields a long exact sequence in homotopy sheaves takes the form
\[
1 \longrightarrow \bpi_1(\operatorname{Map}(B\mathbf{H},B\mathbf{G})_{\varphi}) \longrightarrow \mathbf{G} \longrightarrow \hom(\mathbf{H},\mathbf{G}) \longrightarrow \cdots
\]
and the first statement follows immediately from the fact that the action of $\mathbf{G}$ on $\hom(\mathbf{H},\mathbf{G})$ is induced by conjugation on the target, in conjunction with the definition of $C_{{\mathbf G}}(\varphi)$.

To produce this fiber sequence, recall that if $H$ and $G$ are (discrete) groups, we may form the internal hom-groupoid $\hom(BH,BG)$: objects are group homomorphisms $H \to G$ and given two objects $f: H \to G$ and $f': H \to G$, a morphism from $f$ to $f'$ is an element $g \in G$ conjugating $f$ to $f'$.  Equivalently, $\hom(BH,BG)$ is the ``action groupoid" attached to the set $\hom(H,G)$ viewed as a $G$-set with $G$ acting by conjugation on the target.  There is an associated morphism of groupoids $\hom(BH,BG) \to BG$: send an object $f: H \to G$ to the unique object of $BG$ and an element $g \in G$ conjugating $f$ to $f'$ to the element $g$.  In terms of action groupoids, this is functor corresponding to the morphism of $G$-sets $\hom(H,G) \to \ast$.  Covering space theory implies that the nerve of the groupoid $\hom(BH,BG)$ coincides with the mapping space $\operatorname{Map}(BH,BG)$ at which point one immediately deduces the existence of a fiber sequence as above in the category of simplicial sets. 

The general case reduces to the case just mentioned using the homotopy-theoretic interpretation of stacks in terms of $1$-truncated simplicial presheaves (i.e., having no homotopy in degrees $\geq 1$; see \cite{Hollander,JardineStacks}).  Indeed, consider the action of $\mathbf{G}$ on $\hom(\mathbf{H},\mathbf{G})$ by conjugation; the homotopy quotient of this action fits into a simplicial fiber sequence of the form
\[
\hom(\mathbf{H},\mathbf{G}) \longrightarrow \hom(\mathbf{H},\mathbf{G})_{h\mathbf{G}} \longrightarrow B\mathbf{G},
\]
$\hom(\mathbf{H},\mathbf{G})_{h\mathbf{G}}$ is modelled by the usual simplicial Borel construction.

To establish the original claim, it suffices to show that the middle term is Nisnevich locally weakly equivalent to $\operatorname{Map}(B\mathbf{H},B\mathbf{G})$, in which case the original statement follows from the discussion above by taking the fiber over the base-point corresponding to $\varphi$.  To see this, we use the following model for the mapping space.  The space $B\mathbf{G}$ is canonically simplicially weakly equivalent to the simplicial presheaf $B\mathrm{Tors}(\mathbf{G})$ that assigns to $U \in \Sm_k$ the nerve of the groupoid $B\mathrm{Tors}(\mathbf{G})(U)$ of $\mathbf{G}$-torsors on $U$ by \cite[Lemma 2.3.2(1)]{AHWII}.   

There is a map $\hom(\mathbf{H},\mathbf{G})_{h\mathbf{G}} \to \operatorname{Map}(B\mathbf{H},B\mathrm{Tors}(\mathbf{G}))$ induced by the classifying space functor: at the level of $0$-simplices: a homomorphism $\mathbf{H} \to \mathbf{G}$ defines a morphism $B\mathbf{H} \to B\mathbf{G}$ and conjugate homomorphisms induce the same map.  That this map is a Nisnevich local weak equivalence can be checked sectionwise; we treat the case of $0$-simplices as the general case is similar.  A section of $\operatorname{Map}(B\mathbf{H},B\mathrm{Tors}(\mathbf{G}))(U)$ is a $\mathbf{G}$-torsor over $B\mathbf{H} \times U$.  By picking a refinement $U' \to U$, the composite map $E\mathbf{H} \times U' \to B\mathbf{G}$ admits a trivialization.  In that case, a choice of such a trivialization then corresponds to a section of $\hom(\mathbf{H},\mathbf{G})$ over $U'$.  Unwinding the definitions, two local sections determine isomorphic torsors if they differ by conjugation.  This construction is evidently inverse to the one given by the ``classifying space" map above, which is thus a Nisnevich local weak equivalence.



For the second statement, it suffices to observe that $\operatorname{Map}(B\mathbf{H},B\mathbf{G})_\varphi$ is $\aone$-local.  To see this, observe that since $B\mathbf{G}$ is $\aone$-local, $\operatorname{Map}(\mathscr{Y},B\mathbf{G})$ is $\aone$-local for any $\mathscr{Y}$.  Likewise, any component of an $\aone$-local space is $\aone$-local, so it follows that $\operatorname{Map}(B\mathbf{H},B\mathbf{G})_\varphi$ is $\aone$-local is well.  In that case, $\bpi_1(\operatorname{Map}(B\mathbf{H},B\mathbf{G})_\varphi)$ is strongly $\aone$-invariant, and the strong $\aone$-invariance of $C_{{\mathbf G}}(\varphi)$ then follows immediately from Point (1).

The third statement follows from the second  by taking $\varphi$ to be the identity map on $\mathbf{G}$ and using the fact that $Z({\mathbf{G}})$ is a sheaf of abelian groups.
\end{proof}

\begin{rem}
	The assertion that the center of a strongly $\aone$-invariant sheaf is again strongly $\aone$-invariant has been recently established by different means by Choudhury and Hogadi \cite[Theorem 3.1]{ChoudhuryHogadi}.
\end{rem}

If $\mathbf{G}$ is a sheaf of groups, define a sequence of quotients of $\mathbf{G}$ inductively as follows.  Sets $\mathbf{G}_0 = \mathbf{G}$ and define $\mathbf{G}_i$, $i \geq 1$ inductively by setting $\mathbf{G}_i = \mathbf{G}_{i-1}/Z(\mathbf{G}_{i-1})$.  In that case, we define the $i$-th higher center $Z_i(\mathbf{G})$ as the kernel of the map $\mathbf{G} \to \mathbf{G}_i$.

\begin{prop}
\label{prop:uppercentralseries}
Suppose $\mathbf{G}$ is a strongly $\aone$-invariant sheaf of groups.  The following statements hold.
\begin{enumerate}[noitemsep,topsep=1pt]
\item The higher centers $Z_i(\mathbf{G})$ of $\mathbf{G}$ are all strongly $\aone$-invariant normal subsheaves of groups of $\mathbf{G}$;
\item each of these sheaves is {\em characteristic}, i.e., it is stable under the natural action of $\mathbf{Aut}(\mathbf{G})$; and
\item the quotients $\mathbf{G}/Z_i(\mathbf{G})$ are strongly $\aone$-invariant.
\end{enumerate}
\end{prop}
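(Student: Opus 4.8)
The plan is to prove all three assertions simultaneously by induction on $i$, with part~(3) at level $i-1$ (strong $\aone$-invariance of $\mathbf{G}_{i-1} = \mathbf{G}/Z_{i-1}(\mathbf{G})$) serving as the engine of the inductive step. The base case $i = 0$ is immediate: $Z_0(\mathbf{G}) = \ker(\mathrm{id}_{\mathbf{G}}) = 1$ is strongly $\aone$-invariant, normal and characteristic, while $\mathbf{G}/Z_0(\mathbf{G}) = \mathbf{G}$ is strongly $\aone$-invariant by hypothesis.

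For the inductive step I would assume (1)--(3) at level $i-1$. Since $\mathbf{G}_{i-1}$ is then strongly $\aone$-invariant, Proposition~\ref{prop:strongaoneinvarianceofsomecentralizers}(2) applied with $\varphi = \mathrm{id}_{\mathbf{G}_{i-1}}$ (for which $C_{\mathbf{G}_{i-1}}(\varphi) = Z(\mathbf{G}_{i-1})$) shows that $Z(\mathbf{G}_{i-1})$ is strongly $\aone$-invariant. As $Z_i(\mathbf{G})$ is by construction the preimage of $Z(\mathbf{G}_{i-1})$ under the sheaf epimorphism $\mathbf{G} \twoheadrightarrow \mathbf{G}_{i-1}$, it fits into a short exact sequence
\[
1 \longrightarrow Z_{i-1}(\mathbf{G}) \longrightarrow Z_i(\mathbf{G}) \longrightarrow Z(\mathbf{G}_{i-1}) \longrightarrow 1
\]
of Nisnevich sheaves of groups whose outer terms are strongly $\aone$-invariant; Lemma~\ref{lem:stronglyaoneinvariantsheavesofgroups}(2) then gives that $Z_i(\mathbf{G})$ is strongly $\aone$-invariant, and it is normal because it is a kernel, which establishes~(1). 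For~(2) I would use that the assignment $\mathbf{G} \mapsto \mathbf{G}_i$ is functorial for isomorphisms of sheaves of groups (the center is functorial on isomorphisms, as is passage to quotients), so that every automorphism of $\mathbf{G}$ induces an automorphism of $\mathbf{G}_i$ compatible with the projection $\mathbf{G} \to \mathbf{G}_i$ and therefore preserves $Z_i(\mathbf{G}) = \ker(\mathbf{G} \to \mathbf{G}_i)$.

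It remains to prove~(3) at level $i$, namely that $\mathbf{G}_i = \mathbf{G}_{i-1}/Z(\mathbf{G}_{i-1})$ is strongly $\aone$-invariant, and this is where I expect the main obstacle to sit. Arguing as in the proof of Lemma~\ref{lem:stronglyaoneinvariantsheavesofgroups}(3), the central extension $1 \to Z(\mathbf{G}_{i-1}) \to \mathbf{G}_{i-1} \to \mathbf{G}_i \to 1$ is classified by a simplicial fiber sequence
\[
B\mathbf{G}_{i-1} \longrightarrow B\mathbf{G}_i \longrightarrow K(Z(\mathbf{G}_{i-1}),2);
\]
here $B\mathbf{G}_{i-1}$ is $\aone$-local (as $\mathbf{G}_{i-1}$ is strongly $\aone$-invariant) and $K(Z(\mathbf{G}_{i-1}),2)$ is simplicially connected, so by Lemma~\ref{lem:aww2210} the claim reduces to the $\aone$-locality of $K(Z(\mathbf{G}_{i-1}),2)$, i.e.\ to strict $\aone$-invariance of $Z(\mathbf{G}_{i-1})$. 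A priori we only know the center to be strongly $\aone$-invariant, and the passage from strong to strict $\aone$-invariance for an abelian sheaf is precisely Morel's theorem (Theorem~\ref{thm:unstableconnectivity}(2), cf.\ Proposition~\ref{prop:strongaoneinvarianceofsomecentralizers}(3)); this is the step at which any perfectness or base-change hypothesis must be invoked, exactly as it enters Lemma~\ref{lem:stronglyaoneinvariantsheavesofgroups}(3). Granting it, Lemma~\ref{lem:aww2210} makes $B\mathbf{G}_i$ into an $\aone$-local space, the induction closes, and everything outside this strong-to-strict upgrade amounts to bookkeeping with the permanence properties already in hand.
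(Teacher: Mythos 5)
Your argument is correct and is essentially the proof in the paper: the inductive engine is the same combination of strict $\aone$-invariance of the center (Proposition~\ref{prop:strongaoneinvarianceofsomecentralizers}(3), i.e.\ Morel's strong-to-strict upgrade, which is indeed where perfectness enters) with Lemma~\ref{lem:stronglyaoneinvariantsheavesofgroups}(3) to get strong $\aone$-invariance of $\mathbf{G}_i$, and the characteristic-subsheaf claim is handled by the same reduction to classical group theory (stalkwise in the paper, via functoriality under isomorphisms in yours). The only cosmetic difference is that you obtain strong $\aone$-invariance of $Z_i(\mathbf{G})$ from the extension $1 \to Z_{i-1}(\mathbf{G}) \to Z_i(\mathbf{G}) \to Z(\mathbf{G}_{i-1}) \to 1$ via Lemma~\ref{lem:stronglyaoneinvariantsheavesofgroups}(2), whereas the paper reads it off directly as the kernel of $\mathbf{G} \to \mathbf{G}_i$ via part~(1) of that lemma.
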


\begin{proof}
The sheaf $\mathbf{G}_0 := \mathbf{G}$ is strongly $\aone$-invariant by assumption.  Since $\mathbf{G}_i = \mathbf{G}_{i-1}/Z(\mathbf{G}_{i-1})$, it follows inductively by combining Proposition~\ref{prop:strongaoneinvarianceofsomecentralizers}(3) and Lemma~\ref{lem:stronglyaoneinvariantsheavesofgroups}(3) that $\mathbf{G}_i$ is strongly $\aone$-invariant.  The fact that $Z_i(\mathbf{G})$ is strongly $\aone$-invariant follows from Lemma~\ref{lem:stronglyaoneinvariantsheavesofgroups}(1) since it is the kernel of a map of strongly $\aone$-invariant sheaves.  The final statement is immediate since $\mathbf{G}/Z_i(\mathbf{G}) = \mathbf{G}_i$ by definition.  The statement about being characteristic subgroup sheaves may be checked stalkwise, in which case it reduces to the corresponding fact in group theory.
\end{proof}

We also make one further remark, unrelated to the above discussion, for future use.

\begin{rem}
\label{rem:freegroups}
If $\mathscr{S}$ is a pointed sheaf of sets, then the free strongly $\aone$-invariant sheaf of groups on $\mathscr{S}$ is defined to be $\bpi_1^{\aone}(\Sigma \mathscr{S})$.  One may check that this construction defines a functor from the category of sheaves of sets to the category of strongly $\aone$-invariant sheaves of groups that is left adjoint to the evident forgetful functor \cite[Lemma 7.23]{MField}.   Morel writes $F_{\aone}(\mathscr{S})$ for this construction in \cite[p. 189]{MField}.  For our purposes, it will be important to remember that $\bpi_1^{\aone}(\pone) = F_{\aone}(\gm{})$; we will come back to this observation in Example~\ref{ex:pi1poneaonenilpotent}.
\end{rem}

\subsection{$\aone$-nilpotent groups}
\label{ss:aonenilpotentgroups}
We now define $\aone$-nilpotent actions and $\aone$-nilpotent sheaves of groups in parallel with the classical story: the latter are defined as iterated central extensions of strictly $\aone$-invariant sheaves of groups.

\begin{defn}
\label{defn:aonenilpotent}
Suppose $\mathbf{G}$ and $\mathbf{H}$ are strongly $\aone$-invariant sheaves of groups.
\begin{enumerate}[noitemsep,topsep=1pt]
\item An action $\varphi$ of $\mathbf{G}$ on $\mathbf{H}$ is said to be {\em $\aone$-nilpotent} (resp. {\em locally $\aone$-nilpotent}) if $\mathbf{H}$ has a finite (resp. locally finite) $\mathbf{G}$-central series (see \textup{Definitions~\ref{defn:nilpotentsheaf} and~\ref{defn:localnilpotence}})
    \[
    \mathbf{H} = \mathbf{H}_0 \supset \mathbf{H}_1 \cdots
    \]
    such that the successive subquotients $\mathbf{H}_i/\mathbf{H}_{i+1}$ are strongly $\aone$-invariant.
\item A $\mathbf{G}$-central series for $\mathbf{H}$ as in the preceding point will be called an {\em $\aone$-$\mathbf{G}$-central series}; the smallest integer $i$ such that $\mathbf{H}_j = 1$ for all $j \geq i$ will be called the length of the series.
\item We will say that $\mathbf{G}$ is {\em (locally) $\aone$-nilpotent} if the conjugation action of $\mathbf{G}$ on itself is (locally) $\aone$-nilpotent, i.e., $\mathbf{G}$ admits a decreasing filtration by normal subgroup sheaves with abelian strongly $\aone$-invariant subquotients and such that the induced action of $\mathbf{G}$ on successive subquotients is trivial; in this case the filtration will be called an {\em $\aone$-central series for $\mathbf{G}$}.
\end{enumerate}
\end{defn}

\begin{rem}
Given the above definition, a number of remarks are in order.
\begin{enumerate}[noitemsep,topsep=1pt]
\item Lemma~\ref{lem:stronglyaoneinvariantsheavesofgroups} states that kernels of morphisms of strongly $\aone$-invariant sheaves of groups are again strongly $\aone$-invariant.  Since $\mathbf{H}_{i+1}$ is the kernel of the morphism $\mathbf{H}_i \to \mathbf{H}_i/\mathbf{H}_{i+1}$, and the latter is strongly $\aone$-invariant by assumption, we conclude inductively that each $\mathbf{H}_i$ is a strongly $\aone$-invariant subsheaf of $\mathbf{H}$.
\item By Theorem \ref{thm:unstableconnectivity}, assuming $k$ is perfect, the condition $\mathbf{H}_i/\mathbf{H}_{i+1}$ is strongly $\aone$-invariant implies $\mathbf{H}_i/\mathbf{H}_{i+1}$ is strictly $\aone$-invariant: indeed, $\mathbf{H}_i/\mathbf{H}_{i+1}$ is abelian since it arises as a subquotient in a $\mathbf{G}$-central series.
\item If $\mathbf{G}$ is an $\aone$-nilpotent sheaf of groups, then the condition that $\mathbf{G}_i/\mathbf{G}_{i+1}$ is abelian is implied by the assumption that the conjugation action of $\mathbf{G}$ on $\mathbf{G}_i/\mathbf{G}_{i+1}$ is trivial.  As usual, if $\mathbf{G}$ is $\aone$-nilpotent, the extensions
    \[
    1 \longrightarrow \mathbf{G}_i/\mathbf{G}_{i+1} \longrightarrow \mathbf{G}/\mathbf{G}_{i+1} \longrightarrow \mathbf{G}/\mathbf{G}_i \longrightarrow 1
    \]
    are all central extensions of strongly $\aone$-invariant sheaves.
\end{enumerate}
\end{rem}

\begin{prop}
\label{prop:characterizationofaonenilpotent}
Assume $k$ is a perfect field, and $\mathbf{G}$ is a sheaf of groups.
\begin{enumerate}[noitemsep,topsep=1pt]
\item The sheaf $\mathbf{G}$ is $\aone$-nilpotent if and only if it is strongly $\aone$-invariant and the upper central series terminates in finitely many steps.
\item In particular, a sheaf of groups $\mathbf{G}$ is $\aone$-nilpotent if and only if $\mathbf{G}$ is strongly $\aone$-invariant and nilpotent as a sheaf of groups.
\end{enumerate}
\end{prop}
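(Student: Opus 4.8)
The plan is to reduce both statements to the classical fact that a group is nilpotent precisely when its upper central series exhausts it, with the only genuinely new content being the $\aone$-invariance bookkeeping, all of which is already supplied by Propositions~\ref{prop:uppercentralseries} and~\ref{prop:strongaoneinvarianceofsomecentralizers}. The first thing I would isolate is the purely sheaf-theoretic group-theory input: if a sheaf of groups $\mathbf{G}$ is nilpotent as a sheaf of groups, so that its conjugation action admits a finite $\mathbf{G}$-central series, say of length $c$, then $Z_c(\mathbf{G}) = \mathbf{G}$; conversely a terminating upper central series $1 = Z_0(\mathbf{G}) \subseteq \cdots \subseteq Z_c(\mathbf{G}) = \mathbf{G}$ is itself a finite $\mathbf{G}$-central series, since its subquotients $Z_{i+1}(\mathbf{G})/Z_i(\mathbf{G}) = Z(\mathbf{G}/Z_i(\mathbf{G}))$ are abelian and carry the trivial conjugation action of $\mathbf{G}$ by the very definition of the center. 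The nontrivial direction here is the first one, and I would prove the containment $\mathbf{F}_i \subseteq Z_i(\mathbf{G})$ by induction on $i$, applied to the increasing central series $1 = \mathbf{F}_0 \subseteq \cdots \subseteq \mathbf{F}_c = \mathbf{G}$ obtained from a decreasing $\mathbf{G}$-central series by reversing the indexing: the central series condition gives $[\mathbf{G},\mathbf{F}_{i+1}] \subseteq \mathbf{F}_i \subseteq Z_i(\mathbf{G})$, so the image of $\mathbf{F}_{i+1}$ in $\mathbf{G}/Z_i(\mathbf{G})$ is central, i.e.\ lies in $Z(\mathbf{G}/Z_i(\mathbf{G})) = Z_{i+1}(\mathbf{G})/Z_i(\mathbf{G})$, whence $\mathbf{F}_{i+1} \subseteq Z_{i+1}(\mathbf{G})$. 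This uses only that a surjection of sheaves of groups carries commutator subsheaves to commutator subsheaves (equivalently, one may verify the containment after passing to stalks).

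With this in hand I would prove part (1) as follows. If $\mathbf{G}$ is $\aone$-nilpotent then, forgetting the strong $\aone$-invariance of the subquotients, an $\aone$-central series is in particular a finite $\mathbf{G}$-central series, so $\mathbf{G}$ is nilpotent as a sheaf of groups, and $\mathbf{G}$ is strongly $\aone$-invariant because it is an iterated extension of its strongly $\aone$-invariant subquotients (Lemma~\ref{lem:stronglyaoneinvariantsheavesofgroups}(2)); by the previous paragraph its upper central series terminates. Conversely, suppose $\mathbf{G}$ is strongly $\aone$-invariant and $Z_c(\mathbf{G}) = \mathbf{G}$. By Proposition~\ref{prop:uppercentralseries} each $Z_i(\mathbf{G})$ is a strongly $\aone$-invariant normal (indeed characteristic) subsheaf and each $\mathbf{G}/Z_i(\mathbf{G})$ is strongly $\aone$-invariant; since $k$ is perfect, Proposition~\ref{prop:strongaoneinvarianceofsomecentralizers}(3) applied to the strongly $\aone$-invariant sheaf $\mathbf{G}/Z_i(\mathbf{G})$ shows that the abelian subquotient $Z_{i+1}(\mathbf{G})/Z_i(\mathbf{G}) = Z(\mathbf{G}/Z_i(\mathbf{G}))$ is strictly, hence strongly, $\aone$-invariant, and the conjugation action of $\mathbf{G}$ on it is trivial. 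Reindexing $1 = Z_0(\mathbf{G}) \subseteq \cdots \subseteq Z_c(\mathbf{G}) = \mathbf{G}$ as a decreasing filtration therefore produces an $\aone$-central series for $\mathbf{G}$, so $\mathbf{G}$ is $\aone$-nilpotent. (This is the only place perfectness of $k$ is used.)

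Part (2) then follows formally: the forward implication was established in the first half of the argument for (1), and if $\mathbf{G}$ is strongly $\aone$-invariant and nilpotent as a sheaf of groups, then the first paragraph shows its upper central series terminates and the second half of (1) upgrades this to $\aone$-nilpotence.

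I do not expect any serious obstacle; the proposition is a transcription of the standard equivalence between the descending-, ascending-, and iterated-central-extension characterizations of nilpotence, with Propositions~\ref{prop:uppercentralseries} and~\ref{prop:strongaoneinvarianceofsomecentralizers} absorbing all of the $\aone$-homotopical content. The one step that merits a line of care is the sheaf-theoretic version of the classical containment $\mathbf{F}_i \subseteq Z_i(\mathbf{G})$: one must know either that images of commutator subsheaves under quotient maps are the expected commutator subsheaves, or that formation of the upper central series commutes with passage to stalks (which is what allows one to invoke the classical statement verbatim); either way this is routine for a site with enough points, where $\mathbf{G} \to \mathbf{Aut}(\mathbf{H})$ and hence triviality of actions can be tested stalkwise (cf.\ Remark~\ref{rem:stalksandnilpotence}).
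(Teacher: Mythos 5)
Your proof is correct and follows essentially the same route as the paper's: reduce to the classical equivalence between possessing a finite central series and termination of the upper central series (valid stalkwise for sheaves of groups), and then invoke Proposition~\ref{prop:uppercentralseries} (resting on Proposition~\ref{prop:strongaoneinvarianceofsomecentralizers}(3) over a perfect field) to see that the terminating upper central series is in fact an $\aone$-central series. The paper's proof is just a terser version of this; your write-up merely fills in the stalkwise verification of the containment $\mathbf{F}_i \subseteq Z_i(\mathbf{G})$ and the strong $\aone$-invariance bookkeeping that the paper leaves implicit.
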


\begin{proof}
The standard proof from group theory that a group is nilpotent if and only if its upper central series terminates in finitely many steps works for sheaves of groups.  Next, observe that Proposition~\ref{prop:uppercentralseries} guarantees that if $\mathbf{G}$ is strongly $\aone$-invariant that the upper central series has terms that are strongly $\aone$-invariant with strictly $\aone$-invariant subquotients.  Therefore, if $\mathbf{G}$ has a finite upper central series, it has a finite $\aone$-central series.  Conversely, if $\mathbf{G}$ has a finite $\aone$-central series, then the upper central series must terminate in finitely many steps as well.  The second statement follows immediately from the first.
\end{proof}

\begin{rem}
If $\mathbf{G}$ is a strongly $\aone$-invariant sheaf of groups, then one may define its ``$\aone$-abelianization" $\mathbf{G}^{ab} := \mathbf{H}_1^{\aone}(B\mathbf{G})$ (see  Section~\ref{ss:sheafcohomology}); this sheaf is the initial strictly $\aone$-invariant sheaf of abelian groups admitting a morphism from $\mathbf{G}$.  Choudhury and Hogadi prove that the map $\mathbf{G} \to \mathbf{G}^{ab}$ is an epimorphism \cite[Theorem 1.1]{ChoudhuryHogadi}.  One may then inductively construct a lower $\aone$-central series for $\mathbf{G}$.
\end{rem}
	

\begin{ex}
\label{ex:pointwiseaonenilpotent}
The key example of a locally $\aone$-nilpotent sheaf of groups arises from the scaling action of $\gm{}$ on the first non-vanishing $\aone$-homotopy sheaf of ${\mathbb A}^{n} \setminus 0$, $n \geq 2$, at least if we work over a base field that is not formally real.  (Note: the action of $\gm{}$ on ${\mathbb A}^n \setminus 0$ does not preserve base-points, but this is irrelevant for the action on the {\em first} $\aone$-homotopy sheaf because it can be interpreted in terms of $\aone$-homology via the Hurewicz theorem \ref{thm:relativeHurewicz}.)  In this case $\bpi_{n-1}^{\aone}({\mathbb A}^n \setminus 0) = \K^{MW}_n$, which has a filtration by the subsheaves $\mathbf{I}^{j}$ for $j \geq n+1$, all of which are stable under multiplication by units.  The successive subquotients are isomorphic to either $\K^M_n$ or $\K^M_{j}/2$, $j \geq n+1$, with a trivial action of $\gm{}$ in either.  As in Example~\ref{ex:pointwisenilpotent}, the induced filtration on sections over any extension of the base field will be finite.
\end{ex}

\begin{lem}
\label{lem:aonenilpotentactionsarefunctorial}
Suppose $\varphi: \mathbf{G}' \to \mathbf{G}$ is a morphism of strongly $\aone$-invariant sheaves of groups.  If $\mathbf{H}$ is a strongly $\aone$-invariant sheaf of groups carrying a (locally) $\aone$-nilpotent action of $\mathbf{G}$, then the action of $\mathbf{G}'$ on $\mathbf{H}$ induced by precomposition with $\varphi$ is also (locally) $\aone$-nilpotent.
\end{lem}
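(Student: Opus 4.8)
The plan is to show that an (locally finite) $\aone$-$\mathbf{G}$-central series for $\mathbf{H}$, when the action is restricted along $\varphi$, is already an (locally finite) $\aone$-$\mathbf{G}'$-central series; this is the $\aone$-analogue of Lemma~\ref{lem:functorialityofnilpotence}, the only extra point being that the strong $\aone$-invariance conditions built into Definition~\ref{defn:aonenilpotent} are inherited for free.

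Concretely, fix an $\aone$-$\mathbf{G}$-central series $\mathbf{H} = \mathbf{H}_0 \supset \mathbf{H}_1 \supset \cdots$ in the sense of Definitions~\ref{defn:nilpotentsheaf}, \ref{defn:localnilpotence} and \ref{defn:aonenilpotent}, so that each $\mathbf{H}_i$ is a $\mathbf{G}$-stable normal subgroup sheaf, each subquotient $\mathbf{H}_i/\mathbf{H}_{i+1}$ is strongly $\aone$-invariant with trivial $\mathbf{G}$-action, and (in the local case) the induced filtration on each stalk is finite. First I would note that every $\mathbf{H}_i$ is automatically $\mathbf{G}'$-stable: the $\mathbf{G}'$-action is by definition the composite $\mathbf{G}' \to \mathbf{G} \to \mathbf{Aut}(\mathbf{H})$ with first map $\varphi$, and $\mathbf{H}_i$ is stable under the image of $\mathbf{G} \to \mathbf{Aut}(\mathbf{H})$. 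For the same reason the induced $\mathbf{G}'$-action on $\mathbf{H}_i/\mathbf{H}_{i+1}$ factors through the (trivial) $\mathbf{G}$-action, hence is trivial. The subquotients $\mathbf{H}_i/\mathbf{H}_{i+1}$ are literally unchanged, so they remain strongly $\aone$-invariant, and in the local case the induced filtration on each stalk is the same filtration as before, hence still finite. Thus the same filtration is an $\aone$-$\mathbf{G}'$-central series, which settles the (finite, non-local) case.

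For the local case I would additionally invoke that local nilpotence is checked stalkwise (Remark~\ref{rem:stalksandnilpotence} and Definition~\ref{defn:localnilpotence}): at each point $s$ of $(\Sm_k,\Nis)$, the homomorphism $s^*\varphi\colon s^*\mathbf{G}' \to s^*\mathbf{G}$ together with the nilpotent action of $s^*\mathbf{G}$ on $s^*\mathbf{H}$ yields a nilpotent action of $s^*\mathbf{G}'$ on $s^*\mathbf{H}$, by the classical fact that restriction of a nilpotent action along a group homomorphism is again nilpotent (this is exactly the stalkwise content of Lemma~\ref{lem:functorialityofnilpotence}); since the filtration of $s^*\mathbf{H}$ is unchanged and was finite, this completes the argument. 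I do not expect a genuine obstacle here, as the statement is essentially formal; the only subtlety worth flagging in the write-up is that ``strongly $\aone$-invariant subquotient'' is a condition on the sheaf $\mathbf{H}_i/\mathbf{H}_{i+1}$ alone and not on the group acting on it, so that it transfers verbatim from $\mathbf{G}$ to $\mathbf{G}'$.
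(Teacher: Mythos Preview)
Your proposal is correct and follows exactly the same approach as the paper: the same (locally finite) $\aone$-$\mathbf{G}$-central series for $\mathbf{H}$ is, upon restriction along $\varphi$, an $\aone$-$\mathbf{G}'$-central series, which is precisely what the paper records as a special case of Lemma~\ref{lem:functorialityofnilpotence}. Your write-up is simply a more explicit unpacking of that one-line argument.
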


\begin{proof}
Any (locally finite) $\mathbf{G}$-central series for $\mathbf{H}$ defines, by precomposition with $\varphi$, a (locally finite) $\mathbf{G}'$-central series for $\mathbf{H}$, i.e., this is a special case of Lemma \ref{lem:functorialityofnilpotence}.
\end{proof}

\begin{lem}
\label{lem:subsquotientsandcentralextensions}
Suppose $\mathbf{G} \in \mathrm{Grp}^{\aone}_k$.
\begin{enumerate}[noitemsep,topsep=1pt]
\item If $\mathbf{G}$ is a (locally) $\aone$-nilpotent group, then any strongly $\aone$-invariant subsheaf of groups of $\mathbf{G}$ is again (locally) $\aone$-nilpotent.
\item If $k$ is a perfect field, then any strongly $\aone$-invariant quotient of $\mathbf{G}$ is again $\aone$-nilpotent.
\item If $\mathbf{A}$ is a strongly $\aone$-invariant sheaf of abelian groups, and $\mathbf{G}$ is $\aone$-nilpotent, then a central extension of $\mathbf{G}$ by $\mathbf{A}$ is again $\aone$-nilpotent.
\item If $\mathbf{H}$ and $\mathbf{H}'$ are $\aone$-nilpotent sheaves of groups, and we are given morphisms $\mathbf{H} \to \mathbf{G}$ and $\mathbf{H}' \to \mathbf{G}$, then $\mathbf{H} \times_{\mathbf{G}} \mathbf{H}'$ is again $\aone$-nilpotent.
\end{enumerate}
In particular, the category of $\aone$-nilpotent sheaves of groups is a subcategory of $\Grp^{\aone}_k$ that is stable under formation of finite limits.
\end{lem}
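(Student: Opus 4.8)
The plan is to reduce the four assertions, as far as possible, to the classical fact that nilpotence of a sheaf of groups is a stalkwise condition (Remark~\ref{rem:stalksandnilpotence}) which is inherited by subgroup sheaves, quotients, and fibre products; the bridge is Proposition~\ref{prop:characterizationofaonenilpotent}, which says that a sheaf of groups is $\aone$-nilpotent precisely when it is strongly $\aone$-invariant and nilpotent as a sheaf of groups. Granted that bridge, the only real content of (1), (2) and (4) is the bookkeeping of strong $\aone$-invariance (hypothesized where convenient, otherwise extracted from Lemma~\ref{lem:fiberproducts}). Assertion (3) is the one place where I would argue directly, because there quotienting is unnecessary: I would lift an $\aone$-central series rather than project one.

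For (3), given a central extension $1 \to \mathbf{A} \to \mathbf{E} \to \mathbf{G} \to 1$ with $\mathbf{A}$ a strongly $\aone$-invariant sheaf of abelian groups and $\mathbf{G}$ $\aone$-nilpotent (hence strongly $\aone$-invariant), first note that $\mathbf{E}$ is strongly $\aone$-invariant by Lemma~\ref{lem:stronglyaoneinvariantsheavesofgroups}(2). Pick an $\aone$-central series $\mathbf{G} = \mathbf{G}_0 \supset \cdots \supset \mathbf{G}_c = 1$, let $\mathbf{E}_i \subseteq \mathbf{E}$ be the preimage of $\mathbf{G}_i$ for $0 \le i \le c$, and set $\mathbf{E}_{c+1} := 1$. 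Each $\mathbf{E}_i$ is normal in $\mathbf{E}$ as the preimage of a normal subsheaf, and for $i \le c$ it is an extension of the strongly $\aone$-invariant sheaf $\mathbf{G}_i$ (strongly $\aone$-invariant by the remarks following Definition~\ref{defn:aonenilpotent}) by $\mathbf{A}$, hence strongly $\aone$-invariant by Lemma~\ref{lem:stronglyaoneinvariantsheavesofgroups}(2) again. The successive subquotients are $\mathbf{E}_i/\mathbf{E}_{i+1} \cong \mathbf{G}_i/\mathbf{G}_{i+1}$ for $i < c$ and $\mathbf{E}_c/\mathbf{E}_{c+1} = \mathbf{A}$, all strongly $\aone$-invariant and abelian; and since $\mathbf{A}$ is central, the conjugation action of $\mathbf{E}$ on each subquotient factors through $\mathbf{E}/\mathbf{A} = \mathbf{G}$, so it coincides with the (trivial) action coming from the $\aone$-central series of $\mathbf{G}$. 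Thus $\{\mathbf{E}_i\}$ is an $\aone$-central series for $\mathbf{E}$, and no perfectness hypothesis is needed.

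For (1), (2) and (4), I would invoke Proposition~\ref{prop:characterizationofaonenilpotent}. For (1): $\mathbf{G}$ is nilpotent as a sheaf of groups, hence so is any subsheaf of groups $\mathbf{H}$ (checked on stalks); since $\mathbf{H}$ is strongly $\aone$-invariant by hypothesis, it is $\aone$-nilpotent, and an analogous stalkwise argument, intersecting a locally finite $\aone$-central series of $\mathbf{G}$ with $\mathbf{H}$ (the intersected terms being strongly $\aone$-invariant by Lemma~\ref{lem:fiberproducts}(2) and stalkwise finite), handles the locally nilpotent variant. For (2): a strongly $\aone$-invariant quotient of $\mathbf{G}$ is nilpotent as a sheaf of groups, being a quotient of a nilpotent one, and strongly $\aone$-invariant by hypothesis. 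For (4): $\mathbf{H} \times_{\mathbf{G}} \mathbf{H}'$ is strongly $\aone$-invariant by Lemma~\ref{lem:fiberproducts}(2), and it is a subsheaf of $\mathbf{H} \times \mathbf{H}'$, which is $\aone$-nilpotent via the componentwise product of $\aone$-central series of $\mathbf{H}$ and $\mathbf{H}'$; so (1) applies. Finally, for the ``in particular'': the category of $\aone$-nilpotent sheaves of groups sits inside $\Grp^{\aone}_k$ by the remarks after Definition~\ref{defn:aonenilpotent}, it has a terminal object (the trivial group is $\aone$-nilpotent), and it is closed under pullbacks by (4); a category with a terminal object and pullbacks has all finite limits, and these agree with the limits computed in $\Grp^{\aone}_k$ (and in $\Grp_k$) by Lemma~\ref{lem:fiberproducts}.

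The main obstacle is the propagation of strong $\aone$-invariance through subquotients. Because cokernels, equivalently quotients, of strongly $\aone$-invariant sheaves of groups are not understood in general (Remark~\ref{rem:nocokernels}), one cannot simply intersect a given $\aone$-central series $\{\mathbf{G}_i\}$ with a subsheaf $\mathbf{H}$ and conclude that the subquotients $(\mathbf{H} \cap \mathbf{G}_i)/(\mathbf{H} \cap \mathbf{G}_{i+1})$ — which are subsheaves of $\mathbf{G}_i/\mathbf{G}_{i+1}$ but also quotients of $\mathbf{H} \cap \mathbf{G}_i$ — are strongly $\aone$-invariant. This is exactly why assertions (1), (2) and (4) are routed through Proposition~\ref{prop:characterizationofaonenilpotent}, hence through perfectness of the base field (with a reduction to the perfect closure for a general field, using the base-change properties of strong $\aone$-invariance recalled in the proof of Theorem~\ref{thm:unstableconnectivity}), whereas (3) avoids the difficulty by lifting the $\aone$-central series of $\mathbf{G}$ instead of pushing one forward.
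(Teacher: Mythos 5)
Your treatment of (2), (3) and the ``in particular'' clause essentially coincides with the paper's: for (3) your preimage $\mathbf{E}_i$ of $\mathbf{G}_i$ is literally the fiber product $\mathbf{G}_i \times_{\mathbf{G}} \mathbf{E}$ that the paper uses, and justifying its strong $\aone$-invariance via Lemma~\ref{lem:stronglyaoneinvariantsheavesofgroups}(2) rather than Lemma~\ref{lem:fiberproducts} is an equally valid variant; you are also right that no perfectness is needed there. The divergence is in (1) (and hence in (4), which you derive from it): the paper intersects a given $\aone$-central series of $\mathbf{G}$ with $\mathbf{H}$ and invokes Lemma~\ref{lem:fiberproducts} for the strong $\aone$-invariance of the terms, in both the global and local cases, staying within the stated hypotheses; you instead route the global case through Proposition~\ref{prop:characterizationofaonenilpotent}, which requires $k$ perfect, whereas parts (1) and (4) of the lemma are stated over an arbitrary field.

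The gap is in your proposed repair of that mismatch. The ``reduction to the perfect closure'' you invoke is not available from the results you cite: Theorem~\ref{thm:unstableconnectivity}(2) and the base-change lemmas it rests on let one pull strong/strict $\aone$-invariance \emph{up} from a perfect subfield, not descend $\aone$-nilpotence (in particular, strong $\aone$-invariance of the terms and subquotients of the upper central series) back down along the purely inseparable extension $k^{\mathrm{perf}}/k$; nothing in the paper establishes such descent. Moreover, Proposition~\ref{prop:characterizationofaonenilpotent} says nothing about the \emph{locally} $\aone$-nilpotent variant, so for that case you fall back on the intersection argument --- and there you run into exactly the subquotient problem you yourself identify as ``the main obstacle'': $(\mathbf{H}\cap\mathbf{G}_i)/(\mathbf{H}\cap\mathbf{G}_{i+1})$ is a quotient of a strongly $\aone$-invariant sheaf, and Remark~\ref{rem:nocokernels} means its strong $\aone$-invariance is not automatic. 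Your diagnosis of that difficulty is a fair and perceptive reading of the definitions (the paper's own one-line proof of (1) is terse on this point), but your proposal does not actually resolve it; it trades it for an unproved descent statement. The safer course is to follow the paper: prove (1) by intersection in the stated generality, and reserve Proposition~\ref{prop:characterizationofaonenilpotent} for part (2), where perfectness is explicitly assumed.
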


\begin{proof}
Assume $\mathbf{G}$ is an $\aone$-nilpotent sheaf of groups.  For the first statement, suppose $\mathbf{H}$ is a strongly $\aone$-invariant subsheaf of groups of $\mathbf{G}$.  In that case, Lemma~\ref{lem:fiberproducts} guarantees that the intersection of the sheaves in an $\aone$-central series for $\mathbf{G}$ with $\mathbf{H}$ provides an $\aone$-central series for $\mathbf{H}$.  One argues in an identical way to establish local $\aone$-nilpotence of $\mathbf{H}$ given the corresponding statement for $\mathbf{G}$.

For the second statement, observe that any quotient of a nilpotent sheaf of groups is a nilpotent sheaf of groups.  Since the quotient is assumed strongly $\aone$-invariant, the result follows from Proposition~\ref{prop:characterizationofaonenilpotent}.

For the third statement, suppose we have a central extension $\mathbf{G}'$ of $\mathbf{G}$ by $\mathbf{A}$.  If $\mathbf{G}_i$ is a filtration of $\mathbf{G}$ witnessing $\aone$-nilpotence with $\mathbf{G}_n = 1$, then Lemma \ref{lem:fiberproducts} implies that $\mathbf{G}'_i := \mathbf{G}_i \times_{\mathbf{G}} \mathbf{G}'$ yields a sequence of strongly $\aone$-invariant normal subsheaves containing $\mathbf{A}$.  Setting $\mathbf{G}'_{n+1} = 1$, one checks that $\mathbf{G}'_i$ provides an $\aone$-central series for $\mathbf{G}'$ as in the corresponding statement in classical group theory.

For the final assertion, observe that if $\mathbf{H}$ and $\mathbf{H}'$ are $\aone$-nilpotent, then $\mathbf{H} \times \mathbf{H}'$ is also $\aone$-nilpotent: if $\mathbf{H}_i$ and $\mathbf{H}_i'$ are $\aone$-central series (of possibly different lengths) for each group, then $\mathbf{H}_i \times \mathbf{H}_i'$ is an $\aone$-central series for the product.  As the fiber product is strongly $\aone$-invariant by Lemma \ref{lem:fiberproducts}, the fact that $\mathbf{H} \times_{\mathbf{G}} \mathbf{H}'$ is $\aone$-nilpotent follows from Point (1).

That the category of $\aone$-nilpotent sheaves of groups is stable under formation of finite limits now follows from the fact that the terminal object in the category of groups (i.e., the trivial sheaf of groups) is $\aone$-nilpotent, and the fact that this category is closed under pullbacks.
\end{proof}

\begin{rem}
After Lemma \ref{lem:subsquotientsandcentralextensions}, the class of $\aone$-nilpotent groups can be described as the smallest subcategory of $\Grp^{\aone}_k$ containing all strongly $\aone$-invariant sheaves of abelian groups and closed under central extensions.  Arbitrary extensions of $\aone$-nilpotent sheaves of groups need not be $\aone$-nilpotent; this follows from Example~\ref{ex:finiteetalegroupschemes} since constant group schemes attached to finite groups are strongly $\aone$-invariant.
\end{rem}

\begin{defn}
\label{defn:aonenilpotenceclass}
If $\mathbf{G}$ is $\aone$-nilpotent, we will say that $\mathbf{G}$ has {\em $\aone$-nilpotence class $c$} if the minimum length of an $\aone$-central series is $c$.   Similarly, if $\mathbf{H}$ is a strongly $\aone$-invariant group that has an $\aone$-nilpotent action of a strongly $\aone$-invariant group $\mathbf{G}$, the {\em $\aone$-$\mathbf{G}$-nilpotence class} of $\mathbf{H}$ is the minimum length of an $\aone$-$\mathbf{G}$-central series for $\mathbf{H}$.
\end{defn}

\begin{lem}
If $k$ is a perfect field, and $\mathbf{G} \in \mathrm{Grp}^{\aone}_k$ is an $\aone$-nilpotent sheaf of groups, then the $\aone$-nilpotence class of $\mathbf{G}$ is the length of the upper central series for $\mathbf{G}$.
\end{lem}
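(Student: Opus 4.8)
The plan is to port the classical argument that a group's nilpotence class equals the length of its upper central series to the setting of sheaves of groups, letting the results recalled above absorb the $\aone$-decorations. Write $\ell$ for the length of the upper central series of $\mathbf{G}$ (the least integer with $Z_\ell(\mathbf{G}) = \mathbf{G}$; this is finite because $\mathbf{G}$ is $\aone$-nilpotent, by Proposition~\ref{prop:characterizationofaonenilpotent}(1)) and $c$ for the $\aone$-nilpotence class of $\mathbf{G}$, so that the goal is $\ell = c$.

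First I would prove $c \leq \ell$ by checking that the ``reversed'' upper central series $\mathbf{H}_i := Z_{\ell - i}(\mathbf{G})$, $0 \leq i \leq \ell$, is an $\aone$-central series for $\mathbf{G}$ in the sense of Definition~\ref{defn:aonenilpotent}. Indeed, it is a finite decreasing filtration by normal subgroup sheaves with $\mathbf{H}_0 = \mathbf{G}$ and $\mathbf{H}_\ell = 1$; by Proposition~\ref{prop:uppercentralseries} each $\mathbf{H}_i$ and each quotient $\mathbf{G}/\mathbf{H}_i$ is strongly $\aone$-invariant, whence $\mathbf{H}_i/\mathbf{H}_{i+1} = \ker(\mathbf{G}/\mathbf{H}_{i+1} \to \mathbf{G}/\mathbf{H}_i)$ is strongly $\aone$-invariant by Lemma~\ref{lem:stronglyaoneinvariantsheavesofgroups}(1); and by construction $\mathbf{H}_i/\mathbf{H}_{i+1}$ is the center of $\mathbf{G}/\mathbf{H}_{i+1}$, hence abelian and carrying a trivial conjugation action of $\mathbf{G}$ (that action factoring through $\mathbf{G}/\mathbf{H}_{i+1}$, in which $\mathbf{H}_i/\mathbf{H}_{i+1}$ is central). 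Minimality of $c$ then gives $c \leq \ell$.

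For the reverse inequality $\ell \leq c$, I would fix an $\aone$-central series $\mathbf{G} = \mathbf{K}_0 \supseteq \cdots \supseteq \mathbf{K}_c = 1$ of length $c$ and show, by induction on $i \in \{0,\dots,c\}$, that $\mathbf{K}_{c-i} \subseteq Z_i(\mathbf{G})$. The case $i = 0$ is $\mathbf{K}_c = 1 = Z_0(\mathbf{G})$. For the inductive step, triviality of the $\mathbf{G}$-action on $\mathbf{K}_{c-i-1}/\mathbf{K}_{c-i}$ says exactly that the commutator morphism $\mathbf{G} \times \mathbf{K}_{c-i-1} \to \mathbf{G}$, $(g,x) \mapsto gxg^{-1}x^{-1}$, factors through $\mathbf{K}_{c-i}$, which by the inductive hypothesis sits inside $Z_i(\mathbf{G})$; therefore the image of $\mathbf{K}_{c-i-1}$ under $\mathbf{G} \to \mathbf{G}/Z_i(\mathbf{G})$ is central, i.e.\ lies in $Z(\mathbf{G}/Z_i(\mathbf{G})) = Z_{i+1}(\mathbf{G})/Z_i(\mathbf{G})$, so $\mathbf{K}_{c-i-1} \subseteq Z_{i+1}(\mathbf{G})$. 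Taking $i = c$ yields $\mathbf{G} = \mathbf{K}_0 \subseteq Z_c(\mathbf{G})$, hence $Z_c(\mathbf{G}) = \mathbf{G}$ and $\ell \leq c$. Combining the two inequalities proves the lemma.

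The argument is essentially a transcription of the group-theoretic one, so I do not expect a conceptual obstacle; the points that need a little care are purely formal. On the one hand, one must verify that the reversed upper central series satisfies every clause of Definition~\ref{defn:aonenilpotent} --- the only substantive $\aone$-input being finiteness of the upper central series from Proposition~\ref{prop:characterizationofaonenilpotent} and strong $\aone$-invariance of its terms and subquotients from Proposition~\ref{prop:uppercentralseries} (which uses perfectness of $k$ via Proposition~\ref{prop:strongaoneinvarianceofsomecentralizers}(3)). On the other hand, one must check that the sheaf-theoretic reformulations used in the induction --- the conjugation-triviality hypothesis rephrased as a factorization of the commutator morphism, ``centrality in a sheaf quotient'', and the identification $Z_{i+1}(\mathbf{G})/Z_i(\mathbf{G}) = Z(\mathbf{G}/Z_i(\mathbf{G}))$ implicit in the definition of the higher centers --- are all legitimate, which they are since commutators, quotients by normal subgroup sheaves, and images are all formed within the category of sheaves.
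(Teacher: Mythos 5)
Your proof is correct and is exactly the argument the paper has in mind: its own proof is a one-line deferral to Proposition~\ref{prop:characterizationofaonenilpotent} together with "the proof of the corresponding fact in group theory," and what you have written is precisely that classical two-inequality argument, with the $\aone$-decorations supplied by Proposition~\ref{prop:uppercentralseries} and Lemma~\ref{lem:stronglyaoneinvariantsheavesofgroups}(1) just as the paper intends.
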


\begin{proof}
This fact follows immediately from the characterization of $\aone$-nilpotent sheaves of groups given in Proposition~\ref{prop:characterizationofaonenilpotent}, and the proof of the corresponding fact in group theory.
\end{proof}

\begin{ex}
\label{ex:pi1poneaonenilpotent}
The most basic example of an $\aone$-nilpotent sheaf of groups is $\bpi_1^{\aone}({\mathbb P}^1)$; this is a non-abelian group which Morel shows is a central extension of $\gm{}$ by $\K^{\MW}_2$ \cite[Theorem 7.29 and Remark 7.31]{MField}.  This example presents the interesting situation that the free strongly $\aone$-invariant sheaf of groups on a (non-trivial) sheaf of sets $\mathscr{S}$ may be $\aone$-nilpotent (see Remark~\ref{rem:freegroups} for discussion of the free strongly $\aone$-invariant sheaf of groups on a sheaf of sets; in this case $\mathscr{S} = \gm{}$).  This example will be discussed in greater detail in Remark~\ref{rem:nonabelianaonenilpotent}.
\end{ex}

\subsubsection*{More on $\aone$-nilpotent actions}
Momentarily, we will define $\aone$-nilpotent spaces.  We consider in more detail $\aone$-nilpotent actions of a strongly $\aone$-invariant sheaf of groups.  The following result will be a key technical tool; it is the analog for $\aone$-nilpotent groups of Lemma~\ref{lem:shortexactsequencesofnilpotentactions}, though we note that the statement differs slightly from that one because of the behavior of limits and colimits in the category of strongly $\aone$-invariant sheaves.

\begin{prop}
\label{prop:extensionsofactions}
Assume $k$ is a perfect field, $\mathbf{G} \in \Grp^{\aone}_k$,  $\mathbf{H},\mathbf{H}'$ and $\mathbf{H}''$ are strongly $\aone$-invariant sheaves with an action of $\mathbf{G}$, and
\[
1 \longrightarrow \mathbf{H}' \longrightarrow \mathbf{H} \longrightarrow \mathbf{H}'' \longrightarrow 1
\]
is a $\mathbf{G}$-equivariant short exact sequence of strongly $\aone$-invariant sheaves.
\begin{enumerate}[noitemsep,topsep=1pt]
\item If the actions of $\mathbf{G}$ on $\mathbf{H}'$ and $\mathbf{H}''$ are (locally) $\aone$-nilpotent, then the action on $\mathbf{H}$ is (locally) $\aone$-nilpotent.
\item If the action of $\mathbf{G}$ on $\mathbf{H}$ is (locally) $\aone$-nilpotent, then the action of $\mathbf{G}$ on $\mathbf{H}'$ is (locally) $\aone$-nilpotent, and, if the sequence is furthermore a central extension, then the action of $\mathbf{G}$ on $\mathbf{H}''$ is also (locally) $\aone$-nilpotent.
\end{enumerate}
\end{prop}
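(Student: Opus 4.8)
The plan is to reduce the purely group-theoretic content to Lemma~\ref{lem:shortexactsequencesofnilpotentactions} and then, on top of the $\mathbf{G}$-central series it produces, keep track of strong $\aone$-invariance of the successive subquotients. Since $k$ is perfect, strong $\aone$-invariance of an abelian sheaf coincides with strict $\aone$-invariance (Theorem~\ref{thm:unstableconnectivity}), so the subquotients of any $\aone$-$\mathbf{G}$-central series lie in the abelian category $\Ab^{\aone}_k$ of Theorem~\ref{thm:stableconnectivity}, and one may freely take kernels and cokernels of morphisms between them. The ``locally $\aone$-nilpotent'' variants will follow by running the same arguments stalkwise: all constructions used below (fibre products, intersections, quotients by central subsheaves) commute with the pullbacks $s^*$ attached to the points of the site, and whether a filtration is a $\mathbf{G}$-central series may be checked stalkwise (Remark~\ref{rem:stalksandnilpotence}), so it suffices to treat the global statements.

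For Point (1), I would reuse the construction from the proof of Lemma~\ref{lem:shortexactsequencesofnilpotentactions}: choose an $\aone$-$\mathbf{G}$-central series $\{\mathbf{H}''_i\}_{i=0}^{n}$ for $\mathbf{H}''$ and form the preimages $\mathbf{H}_i := \mathbf{H}\times_{\mathbf{H}''}\mathbf{H}''_i$. These are $\mathbf{G}$-stable, normal in $\mathbf{H}$, each contains $\mathbf{H}' = \mathbf{H}_n$, and --- as $\mathbf{H}$, $\mathbf{H}''$ and $\mathbf{H}''_i$ are strongly $\aone$-invariant --- are themselves strongly $\aone$-invariant by Lemma~\ref{lem:fiberproducts}. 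Since $\mathbf{H}'\subseteq\mathbf{H}_{i+1}$, the surjection $\mathbf{H}_i\to\mathbf{H}''_i$ induces an isomorphism $\mathbf{H}_i/\mathbf{H}_{i+1}\cong\mathbf{H}''_i/\mathbf{H}''_{i+1}$, so each of these subquotients is abelian, strongly $\aone$-invariant and carries the trivial $\mathbf{G}$-action. Splicing an $\aone$-$\mathbf{G}$-central series of $\mathbf{H}'$ below $\mathbf{H}' = \mathbf{H}_n$ --- its subquotients being strongly $\aone$-invariant by hypothesis --- produces a filtration of $\mathbf{H}$ with strongly $\aone$-invariant subquotients; that it is genuinely a $\mathbf{G}$-central series (in particular that the appended terms are normal in $\mathbf{H}$) is precisely the assertion of Lemma~\ref{lem:shortexactsequencesofnilpotentactions}, whose ``reindexing'' I am importing wholesale.

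For the assertion about $\mathbf{H}'$ in Point (2), I would argue by induction on the $\aone$-$\mathbf{G}$-nilpotence class of the action on $\mathbf{H}$. If this class is $\leq 1$, then $\mathbf{H}$ is abelian with trivial $\mathbf{G}$-action, so $\mathbf{H}'$ --- abelian as a subsheaf of an abelian sheaf, strongly (hence strictly) $\aone$-invariant by hypothesis and perfectness of $k$, with trivial $\mathbf{G}$-action --- is an $\aone$-$\mathbf{G}$-central series of itself. For the step, let $Z\subseteq Z(\mathbf{H})$ be the $\mathbf{G}$-fixed part of the centre; by Proposition~\ref{prop:strongaoneinvarianceofsomecentralizers}(3) the centre $Z(\mathbf{H})$ is strictly $\aone$-invariant, and $Z$ is cut out of it as the kernel of a morphism between strictly $\aone$-invariant sheaves built from $Z(\mathbf{H})$, hence strictly $\aone$-invariant. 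Then $\mathbf{H}'\cap Z$ is strongly $\aone$-invariant (Lemma~\ref{lem:fiberproducts}), central in $\mathbf{H}'$, with trivial $\mathbf{G}$-action, while $\mathbf{H}'/(\mathbf{H}'\cap Z)$ is a central quotient of the strongly $\aone$-invariant sheaf $\mathbf{H}'$ by a strongly $\aone$-invariant central subsheaf, hence strongly $\aone$-invariant by Lemma~\ref{lem:stronglyaoneinvariantsheavesofgroups}(3); moreover it embeds into $\mathbf{H}/Z$, whose $\aone$-$\mathbf{G}$-nilpotence class is strictly smaller (its upper $\mathbf{G}$-central series is inherited from that of $\mathbf{H}$, which has strongly $\aone$-invariant terms by the action version of Proposition~\ref{prop:uppercentralseries}). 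By induction the $\mathbf{G}$-action on $\mathbf{H}'/(\mathbf{H}'\cap Z)$ is (locally) $\aone$-nilpotent, and Point (1) applied to the $\mathbf{G}$-equivariant sequence $1\to\mathbf{H}'\cap Z\to\mathbf{H}'\to\mathbf{H}'/(\mathbf{H}'\cap Z)\to 1$ finishes this case. For $\mathbf{H}''$ under the extra hypothesis $\mathbf{H}'\subseteq Z(\mathbf{H})$, I would start from an $\aone$-$\mathbf{G}$-central series $\{\mathbf{H}_i\}$ of $\mathbf{H}$ and set $\mathbf{H}''_i := \mathbf{H}_i/(\mathbf{H}_i\cap\mathbf{H}')$, the image of $\mathbf{H}_i$ in $\mathbf{H}''$: here $\mathbf{H}_i\cap\mathbf{H}'$ is strongly $\aone$-invariant and central in $\mathbf{H}_i$ (being contained in $Z(\mathbf{H})$), so $\mathbf{H}''_i$ is strongly $\aone$-invariant by Lemma~\ref{lem:stronglyaoneinvariantsheavesofgroups}(3), and $\mathbf{H}''_i/\mathbf{H}''_{i+1}$ is a cokernel in $\Ab^{\aone}_k$ of the image of the central quotient $(\mathbf{H}_i\cap\mathbf{H}')/(\mathbf{H}_{i+1}\cap\mathbf{H}')$ inside $\mathbf{H}_i/\mathbf{H}_{i+1}$, hence strictly $\aone$-invariant by Theorem~\ref{thm:stableconnectivity}; this filtration is the desired $\aone$-$\mathbf{G}$-central series for $\mathbf{H}''$.

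The step I expect to be the real obstacle is the strong $\aone$-invariance bookkeeping, not the group theory: the naive subquotients attached to $\mathbf{H}'$ (namely $(\mathbf{H}_i\cap\mathbf{H}')/(\mathbf{H}_{i+1}\cap\mathbf{H}')$) and to $\mathbf{H}''$ are not visibly strongly $\aone$-invariant, because consecutive terms of a $\mathbf{G}$-central series need not be mutually central and cokernels of morphisms of strongly $\aone$-invariant sheaves are not known to be strongly $\aone$-invariant (Remark~\ref{rem:nocokernels}). The device that gets around this is to route everything through \emph{central} subsheaves --- the $\mathbf{G}$-centre of $\mathbf{H}$ when handling $\mathbf{H}'$, and the subsheaf $\mathbf{H}'$ itself when handling $\mathbf{H}''$ --- for which Lemma~\ref{lem:stronglyaoneinvariantsheavesofgroups}(3) does yield strongly $\aone$-invariant quotients; this is exactly the point at which perfectness of $k$, and for the $\mathbf{H}''$ statement the centrality of the extension, become indispensable.
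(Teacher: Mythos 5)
Your Point (1) and the $\mathbf{H}''$ half of Point (2) follow the paper's proof essentially verbatim: for (1) the paper also just combines Lemma~\ref{lem:shortexactsequencesofnilpotentactions} with closure of strong $\aone$-invariance under extensions and fiber products (Lemmas~\ref{lem:stronglyaoneinvariantsheavesofgroups}(2) and~\ref{lem:fiberproducts}), and for $\mathbf{H}''$ it likewise pushes the series forward, using that $\mathbf{H}_i\cap\mathbf{H}'$ is central in $\mathbf{H}_i$ so that Lemma~\ref{lem:stronglyaoneinvariantsheavesofgroups}(3) applies to the images.

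Where you diverge is the $\mathbf{H}'$ half of Point (2), and there your replacement argument has genuine gaps. (For the record, the paper's own argument here is a one-liner: restrict the given series to $\mathbf{H}'$ and invoke Lemma~\ref{lem:fiberproducts} for the terms $\mathbf{H}_i\cap\mathbf{H}'$; your worry about the subquotients is not unreasonable, but the fix you propose does not close.) First, you assert that the $\mathbf{G}$-fixed part $Z$ of $Z(\mathbf{H})$ is strictly $\aone$-invariant because it is ``cut out as a kernel,'' but it is not the kernel of a morphism between sheaves that the paper's results identify as strictly $\aone$-invariant: the natural description is as an equalizer indexed by all sections of $\mathbf{G}$, i.e.\ an infinite intersection of kernels, or as the kernel of a map into an internal Hom out of $\mathbf{G}$, and neither closure property is available from Theorem~\ref{thm:stableconnectivity} or Lemma~\ref{lem:fiberproducts}. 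Second, and more seriously, your induction needs the $\aone$-$\mathbf{G}$-nilpotence class of $\mathbf{H}/Z$ to drop, i.e.\ the last nonzero term of a minimal $\mathbf{G}$-central series must be contained in $Z$. Under Definition~\ref{defn:nilpotentsheaf} a $\mathbf{G}$-central series is only required to have abelian subquotients with trivial induced $\mathbf{G}$-action; its last nonzero term is a normal, abelian, $\mathbf{G}$-fixed subsheaf of $\mathbf{H}$ but need not be central in $\mathbf{H}$ (stalkwise: the trivial group acts nilpotently on $S_3$ via $S_3\supset A_3\supset 1$ in this sense, yet $Z(S_3)=1$). So $Z$ can be trivial while the action is nilpotent, the class need not drop, and the induction stalls. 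To salvage your route you would have to either work with a strengthened notion of $\mathbf{G}$-central series (one that is simultaneously a central series of $\mathbf{H}$, as in the $\mathbf{G}$-lower-central-series of \cite[\S I.4]{HMR}) or argue directly that the subquotients of the restricted filtration $(\mathbf{H}_i\cap\mathbf{H}')/(\mathbf{H}_{i+1}\cap\mathbf{H}')$ are strictly $\aone$-invariant.
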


\begin{proof}
For the first statement, it suffices to observe that Lemma~\ref{lem:stronglyaoneinvariantsheavesofgroups}(2) implies that strongly $\aone$-invariant sheaves of groups are stable by extensions; the result then follows from the corresponding sheaf-theoretic statement, i.e., Lemma~\ref{lem:shortexactsequencesofnilpotentactions}.

For the second statement, we proceed as follows.  We prove the statement for locally $\aone$-nilpotent actions; the proof in the $\aone$-nilpotent case follows by simply dropping the word locally.  Suppose we have a locally finite $\aone$-$\mathbf{G}$-central series for $\mathbf{H}$.  Lemma~\ref{lem:fiberproducts} guarantees that the restriction of the corresponding filtration to $\mathbf{H}'$ provides a locally finite $\mathbf{G}$-central series of $\mathbf{H}'$.

We claim that, if the exact sequence is, furthermore, a central extension, then the image of the locally finite $\aone$-$\mathbf{G}$-central series for $\mathbf{H}$ in $\mathbf{H}''$ is also a locally finite $\aone$-$\mathbf{G}$-central series for the target.  To see this, it suffices to show that the image of each term in $\mathbf{H}''$ is again strongly $\aone$-invariant.

To this end, let $\mathbf{H}_i$ be an arbitrary term in the locally finite $\aone$-$\mathbf{G}$-central series for $\mathbf{H}$.  We already know that $\mathbf{H}_i \cap \mathbf{H}' =: \mathbf{H}'_i$ is strongly $\aone$-invariant.  However, since the exact sequence in the statement is a central extension, we conclude that $\mathbf{H}'_i$ is contained in the center of $\mathbf{H}_i$ in which case the quotient is strongly $\aone$-invariant subsheaf $\mathbf{H}''_i \subset \mathbf{H}''$ by appeal to Lemma~\ref{lem:stronglyaoneinvariantsheavesofgroups}(3).
\end{proof}

\begin{cor}
\label{cor:fiberproducts}
Suppose $\mathbf{G}$ is a strongly $\aone$-invariant sheaf of groups acting on strongly $\aone$-invariant groups $\mathbf{H}$, $\mathbf{H}'$ and $\mathbf{H}''$ and suppose we are given $\mathbf{G}$-equivariant morphisms $\mathbf{H}' \to \mathbf{H}$ and $\mathbf{H}'' \to \mathbf{H}$.  If the $\mathbf{G}$-actions on $\mathbf{H}'$ and $\mathbf{H}''$ are (locally) $\aone$-nilpotent, then the induced $\mathbf{G}$-action on $\mathbf{H}' \times_{\mathbf{H}} \mathbf{H}''$ is also (locally) $\aone$-nilpotent.
\end{cor}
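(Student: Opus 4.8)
The plan is to run the same argument that proves Lemma~\ref{lem:subsquotientsandcentralextensions}(4), but ``relativized'' to an external $\mathbf{G}$-action in place of the conjugation action. First I would observe that $\mathbf{H}' \times_{\mathbf{H}} \mathbf{H}''$ is a subsheaf of groups of $\mathbf{H}' \times \mathbf{H}''$ and that it is stable under the diagonal $\mathbf{G}$-action: since the structure morphisms $\mathbf{H}' \to \mathbf{H}$ and $\mathbf{H}'' \to \mathbf{H}$ are $\mathbf{G}$-equivariant, if a section $(h',h'')$ lies in the fiber product then so does $(g\cdot h', g\cdot h'')$ for every section $g$ of $\mathbf{G}$. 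By Lemma~\ref{lem:fiberproducts}, this fiber product is also strongly $\aone$-invariant.

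Next I would show that the diagonal action of $\mathbf{G}$ on $\mathbf{H}' \times \mathbf{H}''$ is (locally) $\aone$-nilpotent. Choosing (locally finite) $\aone$-$\mathbf{G}$-central series $\{\mathbf{H}'_i\}$ and $\{\mathbf{H}''_i\}$ for $\mathbf{H}'$ and $\mathbf{H}''$, I claim the termwise products $\{\mathbf{H}'_i \times \mathbf{H}''_i\}$ form an $\aone$-$\mathbf{G}$-central series for $\mathbf{H}' \times \mathbf{H}''$: each term is a normal, $\mathbf{G}$-stable subsheaf, strongly $\aone$-invariant by Lemma~\ref{lem:stronglyaoneinvariantsheavesofgroups}(2), with successive subquotient canonically isomorphic to $(\mathbf{H}'_i/\mathbf{H}'_{i+1}) \times (\mathbf{H}''_i/\mathbf{H}''_{i+1})$ --- abelian and carrying the trivial $\mathbf{G}$-action --- and in the local case the filtration is finite on every stalk because both component filtrations are. (Equivalently, one may apply Proposition~\ref{prop:extensionsofactions}(1) to the $\mathbf{G}$-equivariant split exact sequence $1 \to \mathbf{H}' \to \mathbf{H}' \times \mathbf{H}'' \to \mathbf{H}'' \to 1$; I prefer the explicit series since it avoids appealing to the perfectness hypothesis of that proposition, which does not appear in the present statement.)

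Finally I would intersect this central series with the $\mathbf{G}$-stable strongly $\aone$-invariant subsheaf $\mathbf{H}' \times_{\mathbf{H}} \mathbf{H}''$. By Lemma~\ref{lem:fiberproducts} the intersections $(\mathbf{H}'_i \times \mathbf{H}''_i) \cap (\mathbf{H}' \times_{\mathbf{H}} \mathbf{H}'')$ remain strongly $\aone$-invariant, while normality, $\mathbf{G}$-stability, and the abelian/trivial-action property of the subquotients pass to the subsheaf by restriction --- this is exactly the first half of the argument proving Proposition~\ref{prop:extensionsofactions}(2), and it uses no perfectness. One thereby obtains a (locally finite) $\aone$-$\mathbf{G}$-central series for $\mathbf{H}' \times_{\mathbf{H}} \mathbf{H}''$, as desired. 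There is no genuine obstacle; the only places demanding care are the two appeals to Lemma~\ref{lem:fiberproducts} certifying that products and intersections of strongly $\aone$-invariant sheaves stay strongly $\aone$-invariant (the feature with no purely group-theoretic counterpart), and, in the ``locally'' case, the bookkeeping that the two component filtrations --- possibly of different stalkwise lengths --- still produce a filtration that is finite on every stalk after forming products and then intersecting with the fiber product.
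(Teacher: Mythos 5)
Your argument is correct and is essentially the paper's: the paper also views $\mathbf{H}' \times_{\mathbf{H}} \mathbf{H}''$ as a $\mathbf{G}$-stable, strongly $\aone$-invariant (by Lemma~\ref{lem:fiberproducts}) subsheaf of the product with the diagonal action, and takes the central series $\mathbf{H}'_i \times_{\mathbf{H}} \mathbf{H}''_i$, verified via Proposition~\ref{prop:extensionsofactions}(1). Your two-step presentation (product series, then intersection) produces literally the same filtration, since $(\mathbf{H}'_i \times \mathbf{H}''_i) \cap (\mathbf{H}' \times_{\mathbf{H}} \mathbf{H}'') = \mathbf{H}'_i \times_{\mathbf{H}} \mathbf{H}''_i$.
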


\begin{proof}
As before, the fiber product is strongly $\aone$-invariant by Lemma \ref{lem:fiberproducts}.  Moreover, the fiber product is equipped with a $\mathbf{G}$-action: it is a $\mathbf{G}$-stable subsheaf of the product $\mathbf{H} \times \mathbf{H}'$ equipped with the diagonal $\mathbf{G}$-action.  If $\mathbf{H}'_i$ and $\mathbf{H}''_i$ are (locally finite) $\aone$-$\mathbf{G}$-central series, then $\mathbf{H}'_i \times_{\mathbf{H}} \mathbf{H}''_i$ yields a $\mathbf{G}$-central series for the fiber product by appeal to Proposition \ref{prop:extensionsofactions}(1).
\end{proof}

\subsection{$\aone$-nilpotent spaces and $\aone$-fiber sequences}
\label{ss:aonenilpotentspaces}
We now define various notions of $\aone$-nilpotence for morphisms of spaces by analogy with the situation in classical topology; we defer the explicit construction of examples to Section~\ref{ss:examplesofaonenilpotentspaces}.  Here, we discuss some of the basic formal properties of such morphisms and give some useful criteria for checking nilpotence.  We also analyze the interaction between nilpotence and $\aone$-fiber sequences.

\begin{defn}
\label{defn:aonenilpotentspace}
Suppose $f: (\mathscr{E},e) \to (\mathscr{B},b)$ is a morphism of spaces with $\aone$-homotopy fiber $\mathscr{F}$.
\begin{enumerate}[noitemsep,topsep=1pt]
\item The morphism $f$ will be called a {\em (locally) $\aone$-nilpotent morphism} if $\mathscr{F}$ is $\aone$-connected, and for any choice of base-point $\ast$ in $\mathscr{F}$ the action of $\bpi_1^{\aone}(\mathscr{E},e)$ on $\bpi_i^{\aone}(\mathscr{F},\ast)$ is (locally) $\aone$-nilpotent.
\item A pointed space $(\mathscr{X},x)$ will be called {\em (locally) $\aone$-nilpotent} if the structure morphism $\mathscr{X} \to \ast$ is (locally) $\aone$-nilpotent,
\item {\em weakly $\aone$-nilpotent} if $\mathscr{X}$ is locally $\aone$-nilpotent and $\bpi_1^{\aone}(\mathscr{X})$ is $\aone$-nilpotent; and
\item {\em $\aone$-simple} if $\bpi_1^{\aone}(\mathscr{X},x)$ is abelian and acts trivially on the higher $\aone$-homotopy sheaves of $\mathscr{X}$.
\end{enumerate}
\end{defn}

\begin{prop}
\label{prop:2outof3foraonenilpotence}
Assume $k$ is a perfect field, and suppose $q: \mathscr{E}_2 \to \mathscr{E}_1$ and $p: \mathscr{E}_1 \to \mathscr{E}_0$ are pointed morphisms in $\Spc_k$ such that the $\aone$-homotopy fibers of $p$, $q$ and $pq$ are all connected.  
\begin{enumerate}[noitemsep,topsep=1pt]
\item If $p$ and $q$ are (locally) $\aone$-nilpotent, then so is $pq$; if $pq$ and $p$ are (locally) $\aone$-nilpotent, so is $q$.
\item If $pq$ and $q$ are (locally) $\aone$-nilpotent, and $p$ is $1$-connected, then $p$ is (locally) $\aone$-nilpotent as well.
\end{enumerate}
\end{prop}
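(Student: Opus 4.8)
The plan is to transcribe the proof of Proposition~\ref{prop:2outof3fornilpotentmorphisms} into the $\aone$-local setting, replacing its permanence input Lemma~\ref{lem:shortexactsequencesofnilpotentactions} by the $\aone$-analogue Proposition~\ref{prop:extensionsofactions}. The essential new wrinkle is that Proposition~\ref{prop:extensionsofactions} propagates $\aone$-nilpotence across a quotient only when the extension in question is \emph{central}; this is the shadow of the cokernel problem of Remark~\ref{rem:nocokernels} and is precisely what forces the extra hypothesis in part~(2). So first I would apply $\Laone$ to reduce to the case where $p$, $q$, $pq$ are maps of $\aone$-local spaces with simplicial homotopy fibres $\mathscr{F}_p$, $\mathscr{F}_q$, $\mathscr{F}_{pq}$; the octahedral axiom in the pointed simplicial homotopy category (as in Proposition~\ref{prop:2outof3fornilpotentmorphisms}) produces a simplicial fibre sequence $\mathscr{F}_q \to \mathscr{F}_{pq} \to \mathscr{F}_p$, which is an $\aone$-fibre sequence since all three spaces are $\aone$-local (Lemma~\ref{lem:homotopyfiberaonelocal}). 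Exactly as in the proof of Proposition~\ref{prop:2outof3fornilpotentmorphisms}, the associated long exact sequence of $\aone$-homotopy sheaves is equivariant for $\mathbf{E}:=\bpi_1^{\aone}(\mathscr{E}_2)$, which acts on the homotopy sheaves of $\mathscr{F}_q$ and $\mathscr{F}_{pq}$ via the two fibration structures and on those of $\mathscr{F}_p$ through $q_*\colon \bpi_1^{\aone}(\mathscr{E}_2)\to\bpi_1^{\aone}(\mathscr{E}_1)$ (since $\mathscr{F}_q$ is $\aone$-connected, $q_*$ is an epimorphism of sheaves, so for the homotopy sheaves of $\mathscr{F}_p$ the $\mathbf{E}$-nilpotence assertions below are equivalent to the $\bpi_1^{\aone}(\mathscr{E}_1)$-nilpotence assertions that part~(2) actually requires).

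Next comes the strong-$\aone$-invariance bookkeeping that lets us invoke Proposition~\ref{prop:extensionsofactions}. Since $k$ is perfect, every $\aone$-homotopy sheaf above is strongly $\aone$-invariant (Theorem~\ref{thm:unstableconnectivity}), hence so is the kernel of any map among them (Lemma~\ref{lem:stronglyaoneinvariantsheavesofgroups}(1)); rewriting images as kernels by exactness shows every image is too. So the long exact sequence breaks up $\mathbf{E}$-equivariantly into short exact sequences of strongly $\aone$-invariant sheaves of the three shapes
\[
1\to A_n\to\bpi_n^{\aone}(\mathscr{F}_q)\to B_n\to 1,\qquad 1\to B_n\to\bpi_n^{\aone}(\mathscr{F}_{pq})\to C_n\to 1,\qquad 1\to C_n\to\bpi_n^{\aone}(\mathscr{F}_p)\to A_{n-1}\to 1,
\]
with $A_n=\ker(\bpi_n^{\aone}(\mathscr{F}_q)\to\bpi_n^{\aone}(\mathscr{F}_{pq}))$, $B_n=\ker(\bpi_n^{\aone}(\mathscr{F}_{pq})\to\bpi_n^{\aone}(\mathscr{F}_p))$, $C_n=\ker(\bpi_n^{\aone}(\mathscr{F}_p)\to\bpi_{n-1}^{\aone}(\mathscr{F}_q))$, and $A_0=1$ (as $\mathscr{F}_q$ is $\aone$-connected). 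The one fact to note is that $A_n$, being the image of the connecting homomorphism into the fibre $\mathscr{F}_q$, is \emph{central} in $\bpi_n^{\aone}(\mathscr{F}_q)$ (the classical argument recalled in the proof of Proposition~\ref{prop:enilpotentimpliesfnilpotent}), and that for $n\geq 2$ all the sheaves are abelian, so their subsheaves are automatically central.

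For part~(1): if $p$ and $q$ are (locally) $\aone$-nilpotent then the $\mathbf{E}$-action on each $\bpi_n^{\aone}(\mathscr{F}_q)$ is (locally) $\aone$-nilpotent by hypothesis and, via Lemma~\ref{lem:aonenilpotentactionsarefunctorial}, so is the $\mathbf{E}$-action on each $\bpi_n^{\aone}(\mathscr{F}_p)$. Proposition~\ref{prop:extensionsofactions} then gives: from the first sequence, (locally) $\aone$-nilpotent actions on $A_n$ (a subsheaf) and on $B_n$ (a \emph{central} quotient, by the last paragraph); from the third, a (locally) $\aone$-nilpotent action on $C_n$ (a subsheaf); and then Proposition~\ref{prop:extensionsofactions}(1) applied to the middle sequence yields a (locally) $\aone$-nilpotent action on $\bpi_n^{\aone}(\mathscr{F}_{pq})$, i.e.\ $pq$ is (locally) $\aone$-nilpotent. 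The other direction of part~(1), where $pq$ and $p$ are (locally) $\aone$-nilpotent, is the mirror image: $B_n$ is now a subsheaf of $\bpi_n^{\aone}(\mathscr{F}_{pq})$, the third sequence (re-indexed) exhibits $A_n$ as the quotient of the abelian sheaf $\bpi_{n+1}^{\aone}(\mathscr{F}_p)$ by $C_{n+1}$ (hence a central quotient for $n+1\geq 2$, while $A_0=1$), so $A_n$ and $B_n$ inherit (locally) $\aone$-nilpotent actions and the first sequence together with Proposition~\ref{prop:extensionsofactions}(1) gives one on $\bpi_n^{\aone}(\mathscr{F}_q)$. The ``locally'' variants are identical.

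For part~(2) — $pq$ and $q$ (locally) $\aone$-nilpotent implying the same for $p$ — the identical argument works for $n\geq 2$ (all sheaves abelian, so the relevant quotients are central), but at $n=1$ it needs a (locally) $\aone$-nilpotent action on $C_1$, which the middle sequence presents as a quotient of $\bpi_1^{\aone}(\mathscr{F}_{pq})$ by $B_1=\ker(\bpi_1^{\aone}(\mathscr{F}_{pq})\to\bpi_1^{\aone}(\mathscr{F}_p))$, and $B_1$ is only normal, not central. This single quotient step is the sole place the argument stalls, and it is exactly the manifestation of Remark~\ref{rem:nocokernels} — so isolating it as the unique obstruction is the crux of the proof. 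The hypothesis that $p$ is $1$-connected — equivalently, that $\mathscr{F}_p$ is $\aone$-$1$-connected, so that $\bpi_1^{\aone}(\mathscr{F}_p)=1$ — removes it: then $C_1=1$ with trivial action, so the $n=1$ case is vacuous, and the $n\geq 2$ argument (together with the epimorphism observation from the first paragraph, to pass from the $\mathbf{E}$-action back to the $\bpi_1^{\aone}(\mathscr{E}_1)$-action) completes the proof.
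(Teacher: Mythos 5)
Your argument is correct and is essentially the paper's own proof: both use the octahedral axiom to produce the fiber sequence $\mathscr{F}_q \to \mathscr{F}_{pq} \to \mathscr{F}_p$, break the $\bpi_1^{\aone}(\mathscr{E}_2)$-equivariant long exact sequence of $\aone$-homotopy sheaves into short exact sequences of strongly $\aone$-invariant sheaves, and then apply Proposition~\ref{prop:extensionsofactions} repeatedly, exploiting centrality of the image of the connecting map at the level of $\bpi_1^{\aone}$. Your write-up is in fact more explicit than the paper's in isolating the single non-central quotient step at $n=1$ in part (2) (which the paper dispatches in a parenthetical) and in noting the surjectivity of $q_*$ needed to pass between $\bpi_1^{\aone}(\mathscr{E}_2)$- and $\bpi_1^{\aone}(\mathscr{E}_1)$-nilpotence.
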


\begin{proof}
The proof parallels that of Proposition~\ref{prop:2outof3fornilpotentmorphisms}.  In particular, by the ``octahedral axiom" in the $\aone$-homotopy category, there is an $\aone$-fiber sequence of the form
\[
\mathscr{F}_q \longrightarrow \mathscr{F}_{pq} \longrightarrow \mathscr{F}_p,
\]
where each term is the $\aone$-homotopy fiber of the map in the subscript.  All the spaces in this $\aone$-fiber sequence are $\aone$-connected by assumption.  There is an induced action of $\bpi_1^{\aone}(\mathscr{E}_2)$ on all of the homotopy sheaves of each of these spaces, and a $\bpi_1^{\aone}(\mathscr{E}_2)$-equivariant long exact sequence of $\aone$-homotopy sheaves which ends as follows:
\[
\cdots \longrightarrow \bpi_2^{\aone}(\mathscr{F}_p) \longrightarrow \bpi_1^{\aone}(\mathscr{F}_q) \longrightarrow \bpi_1^{\aone}(\mathscr{F}_{pq}) \longrightarrow \bpi_1^{\aone}(\mathscr{F}_p) \longrightarrow 1.
\]

The kernel $\mathbf{K}$ of $\bpi_1^{\aone}(\mathscr{F}_q) \to \bpi_1^{\aone}(\mathscr{F}_{pq})$, which coincides with the image of $\bpi_2^{\aone}(\mathscr{F}_p)$ in $\bpi_1^{\aone}(\mathscr{F}_q)$ by exactness, is strongly $\aone$-invariant by Lemma~\ref{lem:stronglyaoneinvariantsheavesofgroups} and $\bpi_1^{\aone}(\mathscr{E}_2)$-stable by construction.  Moreover, $\mathbf{K}$ is contained in the center of $\bpi_1^{\aone}(\mathscr{F}_q)$.

We may then break the above long exact sequence of sheaves $\bpi_1^{\aone}(\mathscr{E}_2)$-equivariant short exact sequences of strongly $\aone$-invariant sheaves.  In particular, we obtain the two $\bpi_1(\mathscr{E}_2)$-equivariant exact sequences:
\[
\begin{split}
1 \longrightarrow \mathbf{K} \longrightarrow \bpi_1^{\aone}(\mathscr{F}_q) \longrightarrow &\ker(\bpi_1^{\aone}(\mathscr{F}_{pq}) \longrightarrow \bpi_1^{\aone}(\mathscr{F}_p)) \longrightarrow 1 \\
1 \longrightarrow \ker(\bpi_1^{\aone}(\mathscr{F}_{pq}) \longrightarrow \bpi_1^{\aone}(\mathscr{F}_p)) \longrightarrow &\bpi_1^{\aone}(\mathscr{F}_{pq}) \longrightarrow \bpi_1^{\aone}(\mathscr{F}_p) \longrightarrow 1,
\end{split}
\]
which we now analyze.

If $pq$ and $p$ are (locally) $\aone$-nilpotent, then we conclude by appeal to Proposition~\ref{prop:extensionsofactions} that the action of $\bpi_1^{\aone}(\mathscr{E}_2)$ on $\bpi_1(\mathscr{F}_q)$ is (locally) $\aone$-nilpotent.  If $p$ and $q$ are (locally) $\aone$-nilpotent, then observe that since the first exact sequence above is a $\bpi_1^{\aone}(\mathscr{E}_2)$-equviariant {\em central} extension we may conclude that the action of $\bpi_1^{\aone}(\mathscr{E}_2)$ on $\ker(\bpi_1^{\aone}(\mathscr{F}_{pq}) \to \bpi_1^{\aone}(\mathscr{F}_p))$ is (locally) $\aone$-nilpotent, again by appeal to Proposition~\ref{prop:extensionsofactions}.  Another appeal to this result then implies that the $\bpi_1^{\aone}(\mathscr{E}_2)$-action on $\bpi_1^{\aone}(\mathscr{F}_{pq})$ is $\aone$-nilpotent.
The statements about higher homotopy sheaves are established similarly, by repeated appeal to Proposition~\ref{prop:extensionsofactions} (since all sheaves are strictly $\aone$-invariant; this also treats the case where $pq$ and $q$ are (locally) $\aone$-nilpotent and $p$ is $1$-connected.
\end{proof}

If $\mathscr{X}$ is any pointed $\aone$-connected space, then we write $\widetilde{\mathscr{X}}$ for the $\aone$-universal cover of $\mathscr{X}$ in the sense of \cite[\S 7.1]{MField}.  If $\bpi := \bpi_1^{\aone}(\mathscr{X})$, then there is an $\aone$-fiber sequence of the form
\[
\widetilde{\mathscr{X}} \longrightarrow \mathscr{X} \longrightarrow B\bpi
\]
and $\widetilde{\mathscr{X}}$ is $\aone$-$1$-connected.  The action of $\bpi$ on $\widetilde{\mathscr{X}}$ is by ``deck transformations", which do not preserve base-points.  Nevertheless, analysis of this action in conjunction with Proposition~\ref{prop:2outof3foraonenilpotence} can be used to produce a  computationally useful characterization of $\aone$-nilpotence.

\begin{cor}
\label{cor:checkingaonenilpotence}
Assume $(\mathscr{X},x)$ is a pointed $\aone$-connected space with $\bpi := \bpi_1^{\aone}(\mathscr{X},x)$ an $\aone$-nilpotent sheaf of groups.  The following conditions are equivalent:
\begin{enumerate}[noitemsep,topsep=1pt]
\item the space $\mathscr{X}$ is (locally) $\aone$-nilpotent;
\item the action of $\bpi$ on $\bpi_i^{\aone}(\widetilde{\mathscr{X}})$ is (locally) $\aone$-nilpotent for every $i > 1$.
\end{enumerate}
\end{cor}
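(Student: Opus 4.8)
The plan is to deduce the equivalence from the two-out-of-three principle for (locally) $\aone$-nilpotent morphisms, Proposition~\ref{prop:2outof3foraonenilpotence}, applied to the composite $\mathscr{X} \xrightarrow{q} B\bpi \xrightarrow{q'} \ast$, where $q$ is the projection in the $\aone$-fiber sequence $\widetilde{\mathscr{X}} \to \mathscr{X} \to B\bpi$ recalled above and $q'$ is the structure morphism. First I would check that Proposition~\ref{prop:2outof3foraonenilpotence} is applicable here: the $\aone$-homotopy fibers of $q$, $q'$ and $q'q$ are, respectively, $\widetilde{\mathscr{X}}$ (which is $\aone$-$1$-connected, hence $\aone$-connected), $B\bpi$ (which is simplicially connected, hence $\aone$-connected), and $\mathscr{X}$ itself (which is $\aone$-connected by hypothesis).

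The key observation is that $q' \colon B\bpi \to \ast$ is always a (locally) $\aone$-nilpotent morphism. Indeed, $\bpi$ is strongly $\aone$-invariant (Theorem~\ref{thm:unstableconnectivity}), so $B\bpi$ is $\aone$-local with $\bpi_1^{\aone}(B\bpi) = \bpi$ and $\bpi_i^{\aone}(B\bpi) = 0$ for $i \geq 2$; since $\bpi$ is $\aone$-nilpotent, the conjugation action of $\bpi$ on itself carries a finite (a fortiori locally finite) $\aone$-central series, which is exactly the assertion that $q'$ is a (locally) $\aone$-nilpotent morphism. Granting this, Proposition~\ref{prop:2outof3foraonenilpotence}(1) supplies both implications needed to conclude that $q'q$ is (locally) $\aone$-nilpotent if and only if $q$ is. Now $q'q$ is the structure morphism of $\mathscr{X}$, so the first condition is precisely~(1). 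Unwinding Definition~\ref{defn:aonenilpotentspace}, the second condition says that the action of $\bpi_1^{\aone}(\mathscr{X}) = \bpi$ on $\bpi_i^{\aone}(\widetilde{\mathscr{X}})$ coming from the fiber sequence is (locally) $\aone$-nilpotent for all $i$; because $\widetilde{\mathscr{X}}$ is $\aone$-$1$-connected the conditions for $i \leq 1$ are vacuous, so this is condition~(2) — provided one knows that the action coming from the fiber sequence agrees with the deck-transformation action of $\bpi$ on $\bpi_i^{\aone}(\widetilde{\mathscr{X}})$ referenced in the statement.

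The main obstacle is exactly this identification of the two $\bpi$-actions: the deck-transformation action on $\widetilde{\mathscr{X}}$ does not preserve base-points, whereas the action entering Definition~\ref{defn:nilpotentspace} is obtained by sheafifying the objectwise action of $\pi_1(\mathscr{X}(U))$ on $\pi_i(\widetilde{\mathscr{X}}(U))$ determined by the fibration. I would handle this just as in the classical setting (cf.\ the discussion of universal covers in \cite[Ch.~II]{HMR}): since $\widetilde{\mathscr{X}}$ is $\aone$-$1$-connected, for $i \geq 2$ the map $\widetilde{\mathscr{X}} \to \mathscr{X}$ induces a canonical isomorphism $\bpi_i^{\aone}(\widetilde{\mathscr{X}}) \cong \bpi_i^{\aone}(\mathscr{X})$, and under this isomorphism both actions become the standard action of $\bpi_1^{\aone}(\mathscr{X})$ on $\bpi_i^{\aone}(\mathscr{X})$; this may be checked stalkwise, where it reduces to the corresponding statement for Kan complexes. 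Everything else is a formal consequence of Proposition~\ref{prop:2outof3foraonenilpotence}.
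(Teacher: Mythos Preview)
Your proposal is correct and is essentially the same argument as the paper's: both apply Proposition~\ref{prop:2outof3foraonenilpotence} to the composite $\mathscr{X}\to B\bpi\to\ast$, using that $B\bpi\to\ast$ is (locally) $\aone$-nilpotent because $\bpi$ is $\aone$-nilpotent, and then identify the fibration action of $\bpi$ on $\bpi_i^{\aone}(\widetilde{\mathscr{X}})$ with the standard action on $\bpi_i^{\aone}(\mathscr{X})$. Your write-up is in fact a bit more careful than the paper's in verifying the connectedness hypotheses and in spelling out why both implications come from part~(1) of Proposition~\ref{prop:2outof3foraonenilpotence}.
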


\begin{proof}
Since $B\bpi$ is $\aone$-nilpotent by assumption, and $\widetilde{\mathscr{X}}$ is $\aone$-simply connected, the statements involving the $\aone$-nilpotence hypothesis follow immediately by applying Proposition~\ref{prop:2outof3foraonenilpotence} to the composite morphism $\mathscr{X} \to B\bpi \to \ast$.  In fact, essentially the same proof works assuming the hypothesis of local $\aone$-nilpotence because $B\bpi$ is $1$-truncated.  Indeed, both the forward and reverse implications follow from this observation because the $\bpi$-action on the higher homotopy sheaves of $\widetilde{\mathscr{X}}$ coincides with the $\bpi$-action on the higher homotopy sheaves of $\mathscr{X}$.
\end{proof}

\begin{rem}
\label{rem:trivialityofactionofpi1}
Just as in classical topology, if the action of $\bpi$ on $\widetilde{\mathscr{X}}$ is null-$\aone$-homotopic, then the induced action on higher $\aone$-homotopy sheaves will be trivial.  Because the homotopy sheaves $\bpi_i^{\aone}(\mathscr{X})$ are all strictly $\aone$-invariant, they are unramified (see Remark~\ref{rem:stronglyaoneinvariantimpliesunramified}), and thus triviality of the necessary actions may be checked on sections over finitely generated extensions of the base field.
\end{rem}

\begin{prop}
\label{prop:basechangeaonenilpotence}
Assume we are given a homotopy Cartesian diagram of pointed $\aone$-connected spaces of the form
\[
\xymatrix{
\mathscr{E}' \ar[r]^{g'}\ar[d]^{f'} & \mathscr{E} \ar[d]^{f} \\
\mathscr{B}' \ar[r]^{g} & \mathscr{B}.
}
\]
If $f$ is (locally) $\aone$-nilpotent, then so is $f'$.
\end{prop}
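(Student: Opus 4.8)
The plan is to mimic the proof of Proposition~\ref{prop:nilpotencebasechange}, with Lemma~\ref{lem:aonenilpotentactionsarefunctorial} playing the role that Lemma~\ref{lem:functorialityofnilpotence} played there. Write $\mathscr{F}$ for the $\aone$-homotopy fiber of $f$ and $\mathscr{F}'$ for that of $f'$. Since the given diagram is homotopy Cartesian, pasting the defining (homotopy pullback) square for $\mathscr{F}'$ on top of it exhibits $\mathscr{F}'$ as the $\aone$-homotopy fiber of the composite $\ast \to \mathscr{B}' \to \mathscr{B} \leftarrow \mathscr{E}$, that is, as the $\aone$-homotopy fiber of $f$; here one uses the pasting law for homotopy pullbacks together with the fact that $\Laone$ commutes with finite limits. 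Thus there is a canonical $\aone$-weak equivalence $\mathscr{F}' \simeq \mathscr{F}$, and in particular $\mathscr{F}'$ is $\aone$-connected because $f$ is (locally) $\aone$-nilpotent.

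Next I would identify the relevant actions. Under the equivalence $\mathscr{F}' \simeq \mathscr{F}$, the action of $\bpi_1^{\aone}(\mathscr{E}')$ on $\bpi_i^{\aone}(\mathscr{F}')$ corresponds to the action of $\bpi_1^{\aone}(\mathscr{E}')$ on $\bpi_i^{\aone}(\mathscr{F})$ obtained by restricting the $\bpi_1^{\aone}(\mathscr{E})$-action along the homomorphism $g'_{\ast}\colon\bpi_1^{\aone}(\mathscr{E}') \to \bpi_1^{\aone}(\mathscr{E})$ induced by $g'$. This compatibility is the one point requiring genuine verification rather than bookkeeping. Since $\Sm_k$ equipped with the Nisnevich topology has enough points, and since the fundamental-group action on a homotopy fiber is, by construction, the sheafification of the sectionwise action, it may be checked on stalks, where it becomes the classical naturality statement for the action of the fundamental group on a homotopy fiber under base change.

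Granting this, the proof concludes at once. The sheaves $\bpi_1^{\aone}(\mathscr{E}')$ and $\bpi_1^{\aone}(\mathscr{E})$ are strongly $\aone$-invariant by Theorem~\ref{thm:unstableconnectivity}(1), and each $\bpi_i^{\aone}(\mathscr{F})$ is strongly $\aone$-invariant as well, this being built into the hypothesis that $f$ is a (locally) $\aone$-nilpotent morphism (cf. Definitions~\ref{defn:aonenilpotent} and~\ref{defn:aonenilpotentspace}). By hypothesis the $\bpi_1^{\aone}(\mathscr{E})$-action on each $\bpi_i^{\aone}(\mathscr{F})$ is (locally) $\aone$-nilpotent, so Lemma~\ref{lem:aonenilpotentactionsarefunctorial} applied with $\varphi = g'_{\ast}$ shows that the restricted $\bpi_1^{\aone}(\mathscr{E}')$-action on $\bpi_i^{\aone}(\mathscr{F}') \cong \bpi_i^{\aone}(\mathscr{F})$ is (locally) $\aone$-nilpotent. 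Together with the $\aone$-connectivity of $\mathscr{F}'$, this is precisely the assertion that $f'$ is a (locally) $\aone$-nilpotent morphism. The main obstacle is thus the middle step: tracking base-points and confirming that the $\bpi_1^{\aone}(\mathscr{E}')$-action on the fiber really is the restriction of the $\bpi_1^{\aone}(\mathscr{E})$-action; everything else is formal.
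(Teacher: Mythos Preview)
Your proposal is correct and takes essentially the same approach as the paper: identify the $\aone$-homotopy fibers of $f$ and $f'$, recognize the $\bpi_1^{\aone}(\mathscr{E}')$-action as the restriction along $g'_*$, and apply Lemma~\ref{lem:aonenilpotentactionsarefunctorial}. The only cosmetic difference is that the paper first $\Laone$-localizes the diagram and checks (via Lemma~\ref{lem:aww2210}) that the resulting simplicial homotopy pullback is already $\aone$-local, so as to literally invoke Proposition~\ref{prop:nilpotencebasechange}, whereas you run that argument directly in the $\aone$-homotopy category via the pasting law.
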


\begin{proof}
Up to $\aone$-weak equivalence, we may replace the diagram in question by $\Laone f$ and $\Laone g$.  In that case, $\Laone \mathscr{E} \times^h_{\Laone \mathscr{B}} \Laone \mathscr{B}'$ is $\aone$-local since there is a simplicial homotopy fiber sequence of the form:
\[
\Omega \Laone \mathscr{B} \longrightarrow \Laone \mathscr{E} \times^h_{\Laone \mathscr{B}} \Laone \mathscr{B}' \longrightarrow \Laone \mathscr{E} \times \Laone \mathscr{B}'.
\]
Since $\mathscr{E}$ and $\mathscr{B}'$ are $\aone$-connected by assumption, $\Laone \mathscr{E} \times \Laone \mathscr{B}'$ is simplicially connected as well, so appeal to Lemma~\ref{lem:aww2210} allows us to conclude the middle term is $\aone$-local.  It also follows that the induced map $\mathscr{E}' \to \Laone \mathscr{E} \times^h_{\Laone \mathscr{B}} \Laone \mathscr{B}'$ is an $\aone$-weak equivalence since all spaces are connected.  In that case, the result follows from the corresponding result for simplicial presheaves, i.e., by appeal to Proposition \ref{prop:nilpotencebasechange} (replacing appeal to Lemma~\ref{lem:functorialityofnilpotence} by appeal to Lemma~\ref{lem:aonenilpotentactionsarefunctorial}).
\end{proof}

\begin{thm}
\label{thm:aonenilpotenthomotopyfiber}
Assume $k$ is a perfect field.  Suppose $\mathscr{F} \to \mathscr{E} \to \mathscr{B}$ is an $\aone$-fiber sequence of pointed $\aone$-connected spaces.  If $\mathscr{E}$ is (locally) $\aone$-nilpotent, then $\mathscr{F}$ is (locally) $\aone$-nilpotent.
\end{thm}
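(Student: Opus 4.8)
The plan is to run the $\aone$-analog of the proof of Proposition~\ref{prop:enilpotentimpliesfnilpotent}, replacing Proposition~\ref{prop:nilpotencebasechange} by Proposition~\ref{prop:basechangeaonenilpotence} and Lemma~\ref{lem:shortexactsequencesofnilpotentactions} by Proposition~\ref{prop:extensionsofactions}, while paying close attention at each step to strong $\aone$-invariance of the sheaves of groups that occur. First I would observe that the square with top edge $\mathscr{F} \to \mathscr{E}$, bottom edge $\ast \to \mathscr{B}$, and left edge the structure map $\mathscr{F} \to \ast$ is homotopy Cartesian, since $\mathscr{F}$ is by definition the $\aone$-homotopy fiber of $\mathscr{E} \to \mathscr{B}$; all four corners are $\aone$-connected by hypothesis. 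By Proposition~\ref{prop:basechangeaonenilpotence} it therefore suffices to prove that $\mathscr{E} \to \mathscr{B}$ is itself (locally) $\aone$-nilpotent. Its $\aone$-homotopy fiber is $\mathscr{F}$, which is $\aone$-connected, so the task reduces to showing that for every $i \geq 1$ the action of $\bpi_1^{\aone}(\mathscr{E})$ on $\bpi_i^{\aone}(\mathscr{F})$ is (locally) $\aone$-nilpotent (the case $i=0$ being vacuous since $\mathscr{F}$ is $\aone$-connected).

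Next I would invoke the $\bpi_1^{\aone}(\mathscr{E})$-equivariant long exact sequence of $\aone$-homotopy sheaves of the $\aone$-fiber sequence, in which $\bpi_1^{\aone}(\mathscr{E})$ acts on the sheaves $\bpi_\ast^{\aone}(\mathscr{B})$ via the homomorphism $\bpi_1^{\aone}(\mathscr{E}) \to \bpi_1^{\aone}(\mathscr{B})$; note this homomorphism is surjective because $\bpi_0^{\aone}(\mathscr{F}) = \ast$. Exactness produces, for each $i \geq 1$, a $\bpi_1^{\aone}(\mathscr{E})$-equivariant short exact sequence
\[
1 \longrightarrow \mathbf{M}_{i+1} \longrightarrow \bpi_i^{\aone}(\mathscr{F}) \longrightarrow \mathbf{K}_i \longrightarrow 1,
\]
where $\mathbf{M}_{i+1} := \ker(\bpi_i^{\aone}(\mathscr{F}) \to \bpi_i^{\aone}(\mathscr{E}))$ is the image of $\bpi_{i+1}^{\aone}(\mathscr{B})$ and $\mathbf{K}_i := \ker(\bpi_i^{\aone}(\mathscr{E}) \to \bpi_i^{\aone}(\mathscr{B}))$. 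Since all the $\aone$-homotopy sheaves appearing are strongly $\aone$-invariant by Theorem~\ref{thm:unstableconnectivity}, both $\mathbf{M}_{i+1}$ and $\mathbf{K}_i$ are strongly $\aone$-invariant by Lemma~\ref{lem:stronglyaoneinvariantsheavesofgroups}(1). Exactly as in the classical case (cf.\ the proof of Proposition~\ref{prop:enilpotentimpliesfnilpotent} and \cite[Proof of Theorem~2.2]{HMR}), $\mathbf{M}_{i+1}$ is central in $\bpi_i^{\aone}(\mathscr{F})$ and carries the trivial $\bpi_1^{\aone}(\mathscr{E})$-action; being central it is abelian, so the filtration $\mathbf{M}_{i+1} \supset 1$ is an $\aone$-$\bpi_1^{\aone}(\mathscr{E})$-central series and the action on $\mathbf{M}_{i+1}$ is $\aone$-nilpotent. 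Granted that the $\bpi_1^{\aone}(\mathscr{E})$-action on $\mathbf{K}_i$ is (locally) $\aone$-nilpotent, Proposition~\ref{prop:extensionsofactions}(1) applied to the displayed sequence then gives that the action on $\bpi_i^{\aone}(\mathscr{F})$ is (locally) $\aone$-nilpotent, which finishes the argument.

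What remains is the (local) $\aone$-nilpotence of the $\bpi_1^{\aone}(\mathscr{E})$-action on $\mathbf{K}_i$, and I expect the bookkeeping of strong $\aone$-invariance here to be the main obstacle. For $i \geq 2$ everything is abelian: the image of $\bpi_i^{\aone}(\mathscr{E}) \to \bpi_i^{\aone}(\mathscr{B})$ lies in $\Ab^{\aone}_k$ by Theorem~\ref{thm:stableconnectivity}, so $0 \to \mathbf{K}_i \to \bpi_i^{\aone}(\mathscr{E}) \to \im(\bpi_i^{\aone}(\mathscr{E}) \to \bpi_i^{\aone}(\mathscr{B})) \to 0$ is a $\bpi_1^{\aone}(\mathscr{E})$-equivariant short exact sequence of strictly $\aone$-invariant sheaves; since the $\bpi_1^{\aone}(\mathscr{E})$-action on $\bpi_i^{\aone}(\mathscr{E})$ is (locally) $\aone$-nilpotent — this being part of the hypothesis that $\mathscr{E}$ is (locally) $\aone$-nilpotent — Proposition~\ref{prop:extensionsofactions}(2) shows the action on $\mathbf{K}_i$ is (locally) $\aone$-nilpotent. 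For $i = 1$ one cannot in general control the cokernel of a map of strongly $\aone$-invariant sheaves of groups (Remark~\ref{rem:nocokernels}), and this is precisely where $\aone$-connectedness of $\mathscr{F}$ enters: because $\bpi_1^{\aone}(\mathscr{E}) \to \bpi_1^{\aone}(\mathscr{B})$ is surjective, $1 \to \mathbf{K}_1 \to \bpi_1^{\aone}(\mathscr{E}) \to \bpi_1^{\aone}(\mathscr{B}) \to 1$ is a short exact sequence of strongly $\aone$-invariant sheaves of groups, equivariant for the conjugation actions of $\bpi_1^{\aone}(\mathscr{E})$; since $\bpi_1^{\aone}(\mathscr{E})$ is (locally) $\aone$-nilpotent, Proposition~\ref{prop:extensionsofactions}(2) again yields that the conjugation action of $\bpi_1^{\aone}(\mathscr{E})$ on $\mathbf{K}_1$ is (locally) $\aone$-nilpotent. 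The hard part is thus not any single computation but verifying that every quotient fed into Proposition~\ref{prop:extensionsofactions} is genuinely strongly $\aone$-invariant; the two facts that make this work are the surjectivity of $\bpi_1^{\aone}(\mathscr{E}) \to \bpi_1^{\aone}(\mathscr{B})$ and the exactness of the embedding $\Ab^{\aone}_k \hookrightarrow \Ab_k$.
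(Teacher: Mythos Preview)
Your proof is correct and follows essentially the same approach as the paper's: reduce via Proposition~\ref{prop:basechangeaonenilpotence} to showing $\mathscr{E}\to\mathscr{B}$ is (locally) $\aone$-nilpotent, then break the equivariant long exact sequence into central short exact sequences and assemble the result from the trivial action on the image of $\bpi_{i+1}^{\aone}(\mathscr{B})$ and the inherited nilpotent action on $\ker(\bpi_i^{\aone}(\mathscr{E})\to\bpi_i^{\aone}(\mathscr{B}))$. The only cosmetic difference is that you split into cases $i=1$ and $i\geq 2$ to ensure the quotient is strongly $\aone$-invariant before invoking Proposition~\ref{prop:extensionsofactions}(2), whereas the paper simply notes that the kernel is a strongly $\aone$-invariant $\bpi_1^{\aone}(\mathscr{E})$-stable subsheaf of $\bpi_i^{\aone}(\mathscr{E})$ and appeals to Lemma~\ref{lem:subsquotientsandcentralextensions} (whose proof---intersecting a central series with the subsheaf---works verbatim for actions and does not require the quotient to be strongly $\aone$-invariant).
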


\begin{proof}
The proof is essentially identical to that of Proposition~\ref{prop:enilpotentimpliesfnilpotent}.  By assumption, there is an $\aone$-homotopy Cartesian square of the form
\[
\xymatrix{
\mathscr{F} \ar[r] \ar[d] & \mathscr{E} \ar[d]^{f} \\
\ast \ar[r] & \mathscr{B}.
}
\]
By Proposition~\ref{prop:basechangeaonenilpotence} to check that $\mathscr{F} \to \ast$ is (locally) $\aone$-nilpotent, it suffices to show that $f$ is (locally) $\aone$-nilpotent, i.e., the action of $\bpi_1^{\aone}(\mathscr{E})$ on $\bpi_i(\mathscr{F})$ is (locally) $\aone$-nilpotent.

There is a $\bpi_1^{\aone}(\mathscr{E})$-equivariant long exact sequence in $\aone$-homotopy sheaves associated with the above fibration, which may be $\bpi_1^{\aone}(\mathscr{E})$-equivariantly broken into short exact sequences of the form:
\[
1 \longrightarrow \mathbf{K}_{i+1} \longrightarrow \bpi_i^{\aone}(\mathscr{F}) \longrightarrow \ker(\bpi_i^{\aone}(\mathscr{E}) \longrightarrow \bpi_i^{\aone}(\mathscr{B})) \longrightarrow 1,
\]
In this case, by exactness of the sequence, both $\mathbf{K}_{i+1}$ and $\ker(\bpi_i^{\aone}(\mathscr{E}) \longrightarrow \bpi_i^{\aone}(\mathscr{B}))$ are strongly $\aone$-invariant by appeal to Lemma~\ref{lem:stronglyaoneinvariantsheavesofgroups}.  Moreover, as a homotopy exact sequence, the above short exact sequence is actually a central extension of strongly $\aone$-invariant sheaves.  Now, by assumption, $\bpi_1^{\aone}(\mathscr{E})$ acts in a (locally) $\aone$-nilpotent fashion on $\bpi_i^{\aone}(\mathscr{E})$ and therefore in a (locally) $\aone$-nilpotent fashion on the kernel in the above short exact sequence by appeal to Lemma~\ref{lem:subsquotientsandcentralextensions}.  Likewise, the $\bpi_1^{\aone}(\mathscr{E})$-action on $\mathbf{K}_{i+1}$ is trivial and it follows again by appeal to Lemma~\ref{lem:subsquotientsandcentralextensions} that the extension is (locally) $\bpi_1^{\aone}(\mathscr{E})$-nilpotent.
\end{proof}

We record the following special case of the above for later use.

\begin{cor}
\label{cor:weaklyaonenilpotentfibersequences}
If $\mathscr{F} \to \mathscr{E} \to \mathscr{B}$ is an $\aone$-fiber sequence of connected spaces, and $\mathscr{E}$ and $\mathscr{B}$ are weakly $\aone$-nilpotent, then $\mathscr{F}$ is weakly $\aone$-nilpotent.
\end{cor}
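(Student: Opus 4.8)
The plan is to verify the two defining clauses of weak $\aone$-nilpotence for $\mathscr{F}$ separately: that $\mathscr{F}$ is locally $\aone$-nilpotent, and that $\bpi_1^{\aone}(\mathscr{F})$ is an $\aone$-nilpotent sheaf of groups. Throughout I would work, as in Theorem~\ref{thm:aonenilpotenthomotopyfiber}, over a perfect base field, since the argument leans on the structural results available there. The first clause is immediate: weak $\aone$-nilpotence of $\mathscr{E}$ entails in particular that $\mathscr{E}$ is locally $\aone$-nilpotent, so Theorem~\ref{thm:aonenilpotenthomotopyfiber}, applied to the given $\aone$-fiber sequence of $\aone$-connected spaces, yields that $\mathscr{F}$ is locally $\aone$-nilpotent.

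For the second clause I would use the $\bpi_1^{\aone}(\mathscr{E})$-equivariant long exact sequence of $\aone$-homotopy sheaves of the fibration, which terminates with
\[
\bpi_2^{\aone}(\mathscr{B}) \longrightarrow \bpi_1^{\aone}(\mathscr{F}) \longrightarrow \bpi_1^{\aone}(\mathscr{E}) \longrightarrow \bpi_1^{\aone}(\mathscr{B}) \longrightarrow 1,
\]
the final term being trivial because $\mathscr{F}$ is $\aone$-connected. Exactly as in the case $i=1$ of the proof of Theorem~\ref{thm:aonenilpotenthomotopyfiber}, this breaks off a central extension of strongly $\aone$-invariant sheaves
\[
1 \longrightarrow \mathbf{K}_2 \longrightarrow \bpi_1^{\aone}(\mathscr{F}) \longrightarrow \ker(\bpi_1^{\aone}(\mathscr{E}) \longrightarrow \bpi_1^{\aone}(\mathscr{B})) \longrightarrow 1,
\]
where $\mathbf{K}_2$ is the image of $\bpi_2^{\aone}(\mathscr{B})$ in $\bpi_1^{\aone}(\mathscr{F})$; being central, $\mathbf{K}_2$ is abelian, hence strictly $\aone$-invariant by Theorem~\ref{thm:unstableconnectivity}. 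The quotient $\ker(\bpi_1^{\aone}(\mathscr{E}) \to \bpi_1^{\aone}(\mathscr{B}))$ is a strongly $\aone$-invariant subsheaf of $\bpi_1^{\aone}(\mathscr{E})$, which is $\aone$-nilpotent because $\mathscr{E}$ is weakly $\aone$-nilpotent; so it is $\aone$-nilpotent by Lemma~\ref{lem:subsquotientsandcentralextensions}(1). Thus $\bpi_1^{\aone}(\mathscr{F})$ is a central extension of an $\aone$-nilpotent sheaf of groups by a strongly $\aone$-invariant sheaf of abelian groups, hence $\aone$-nilpotent by Lemma~\ref{lem:subsquotientsandcentralextensions}(3). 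Combining the two clauses gives that $\mathscr{F}$ is weakly $\aone$-nilpotent.

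I do not expect a genuine obstacle: the substantive content is already packaged in Theorem~\ref{thm:aonenilpotenthomotopyfiber} and Lemma~\ref{lem:subsquotientsandcentralextensions}, and the corollary is a matter of assembling them. The only point requiring a little care is to read off from the $i=1$ stage of the long exact sequence that $\bpi_1^{\aone}(\mathscr{F})$ lies in a \emph{central} extension whose quotient embeds in $\bpi_1^{\aone}(\mathscr{E})$ — but this is precisely what the proof of Theorem~\ref{thm:aonenilpotenthomotopyfiber} already supplies. It is worth remarking that the weak $\aone$-nilpotence hypothesis on $\mathscr{B}$ enters only through strict $\aone$-invariance of $\bpi_2^{\aone}(\mathscr{B})$, which is automatic over a perfect field; what is actually used is local $\aone$-nilpotence of $\mathscr{E}$ together with $\aone$-nilpotence of $\bpi_1^{\aone}(\mathscr{E})$.
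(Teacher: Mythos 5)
Your proof is correct. Note that the paper records this corollary without proof, introducing it only as a ``special case'' of Theorem~\ref{thm:aonenilpotenthomotopyfiber}; strictly speaking it is not, since that theorem delivers only local $\aone$-nilpotence of $\mathscr{F}$, whereas weak $\aone$-nilpotence additionally demands that $\bpi_1^{\aone}(\mathscr{F})$ be $\aone$-nilpotent (not merely that its conjugation action be locally $\aone$-nilpotent). Your argument supplies exactly the missing step: the end of the equivariant long exact sequence exhibits $\bpi_1^{\aone}(\mathscr{F})$ as a central extension of $\ker(\bpi_1^{\aone}(\mathscr{E}) \to \bpi_1^{\aone}(\mathscr{B}))$ --- a strongly $\aone$-invariant subsheaf of the $\aone$-nilpotent sheaf $\bpi_1^{\aone}(\mathscr{E})$, hence $\aone$-nilpotent by Lemma~\ref{lem:subsquotientsandcentralextensions}(1) --- by the strongly $\aone$-invariant abelian sheaf $\mathbf{K}_2$, and Lemma~\ref{lem:subsquotientsandcentralextensions}(3) closes the argument. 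Your closing observations are also accurate: the perfectness of $k$ is implicitly required (it is a hypothesis of Theorem~\ref{thm:aonenilpotenthomotopyfiber} and of the lemmas on $\aone$-nilpotent groups), and the weak $\aone$-nilpotence of $\mathscr{B}$ is never actually used beyond $\aone$-connectedness, so the corollary holds under the weaker hypothesis that only $\mathscr{E}$ is weakly $\aone$-nilpotent.
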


\begin{prop}
\label{prop:totalspaceaonenilpotent}
Assume $k$ is a perfect field.  Suppose $\mathscr{F} \to \mathscr{E} \to \mathscr{B}$ is an $\aone$-fiber sequence of pointed $\aone$-connected spaces.  If $\mathscr{B}$ is $\aone$-simply connected, and $\mathscr{F}$ is (locally) $\aone$-nilpotent (resp. $\aone$-simple), then $\mathscr{E}$ is (locally) $\aone$-nilpotent (resp. $\aone$-simple) as well.
\end{prop}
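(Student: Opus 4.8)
\emph{Plan.} The idea is to factor the structure map $\mathscr{E} \to \ast$ as $\mathscr{E} \xrightarrow{p} \mathscr{B} \to \ast$ and apply the two-out-of-three property, Proposition~\ref{prop:2outof3foraonenilpotence}(1), to this composable pair. The $\aone$-homotopy fibers of $p$, of $\mathscr{B} \to \ast$, and of $\mathscr{E}\to\ast$ are $\mathscr{F}$, $\mathscr{B}$, $\mathscr{E}$ respectively, all $\aone$-connected by hypothesis, so that proposition applies once we know that $p$ and $\mathscr{B}\to\ast$ are (locally) $\aone$-nilpotent. Since $\mathscr{B}$ is $\aone$-simply connected, $\bpi_1^{\aone}(\mathscr{B}) = 1$ acts trivially on every $\bpi_i^{\aone}(\mathscr{B})$, so $\mathscr{B}$ is $\aone$-simple and hence (locally) $\aone$-nilpotent, i.e.\ $\mathscr{B} \to \ast$ is (locally) $\aone$-nilpotent. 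It therefore suffices to prove that $p \colon \mathscr{E}\to\mathscr{B}$ is (locally) $\aone$-nilpotent; granting this, Proposition~\ref{prop:2outof3foraonenilpotence}(1) gives that $\mathscr{E}\to\ast$ is (locally) $\aone$-nilpotent, which is the assertion in the $\aone$-nilpotent case.

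\emph{The map $p$ is (locally) $\aone$-nilpotent.} Its $\aone$-homotopy fiber $\mathscr{F}$ is $\aone$-connected, so we must show that, for one (equivalently, any) base-point of $\mathscr{F}$, the action of $\bpi := \bpi_1^{\aone}(\mathscr{E})$ on $\bpi_i^{\aone}(\mathscr{F})$ is (locally) $\aone$-nilpotent for every $i$. Because $\mathscr{B}$ is $\aone$-simply connected, the long exact sequence of $\aone$-homotopy sheaves of $\mathscr{F}\to\mathscr{E}\to\mathscr{B}$ shows that the fiber inclusion induces an epimorphism of sheaves of groups $\bpi_1^{\aone}(\mathscr{F}) \twoheadrightarrow \bpi$. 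Moreover, restricting the $\bpi$-action on $\bpi_i^{\aone}(\mathscr{F})$ along this epimorphism recovers the intrinsic action of $\bpi_1^{\aone}(\mathscr{F})$ on $\bpi_i^{\aone}(\mathscr{F})$ (conjugation when $i=1$), which is (locally) $\aone$-nilpotent because $\mathscr{F}$ is (locally) $\aone$-nilpotent. Now fix a (locally finite) $\aone$-$\bpi_1^{\aone}(\mathscr{F})$-central series for $\bpi_i^{\aone}(\mathscr{F})$. Since the $\bpi_1^{\aone}(\mathscr{F})$-action factors through the epimorphism onto $\bpi$, a subsheaf is $\bpi_1^{\aone}(\mathscr{F})$-stable if and only if it is $\bpi$-stable, and $\bpi_1^{\aone}(\mathscr{F})$ acts trivially on a subquotient if and only if $\bpi$ does; the subquotients, their normality, and their strong $\aone$-invariance are intrinsic, and all of this persists after passing to stalks. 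Hence this filtration is also a (locally finite) $\aone$-$\bpi$-central series, so the $\bpi$-action is (locally) $\aone$-nilpotent and $p$ is (locally) $\aone$-nilpotent. This completes the $\aone$-nilpotent case.

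\emph{The $\aone$-simple case and the main obstacle.} Suppose now $\mathscr{F}$ is $\aone$-simple. In particular it is (locally) $\aone$-nilpotent, so by the above $\mathscr{E}$ is (locally) $\aone$-nilpotent; it remains to check that $\bpi_1^{\aone}(\mathscr{E})$ is abelian and acts trivially on each $\bpi_i^{\aone}(\mathscr{E})$. The former is immediate, as $\bpi_1^{\aone}(\mathscr{E})$ is an epimorphic image of the abelian sheaf $\bpi_1^{\aone}(\mathscr{F})$. For the latter, the long exact sequence of $\mathscr{F}\to\mathscr{E}\to\mathscr{B}$ is $\bpi_1^{\aone}(\mathscr{E})$-equivariant; $\bpi_1^{\aone}(\mathscr{E})$ acts trivially on each $\bpi_j^{\aone}(\mathscr{B})$ (the action factors through $\bpi_1^{\aone}(\mathscr{B})=1$) and on each $\bpi_j^{\aone}(\mathscr{F})$ (restricting along the epimorphism $\bpi_1^{\aone}(\mathscr{F})\twoheadrightarrow\bpi_1^{\aone}(\mathscr{E})$ recovers the trivial action of the $\aone$-simple space $\mathscr{F}$, and an action that becomes trivial after pullback along an epimorphism is already trivial). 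Breaking the sequence into $\bpi_1^{\aone}(\mathscr{E})$-equivariant short exact sequences $1 \to \mathbf{L}_i \to \bpi_i^{\aone}(\mathscr{E}) \to \mathbf{M}_i \to 1$, with $\mathbf{L}_i$ a quotient of $\bpi_i^{\aone}(\mathscr{F})$ and $\mathbf{M}_i$ a subsheaf of $\bpi_i^{\aone}(\mathscr{B})$, the $\bpi_1^{\aone}(\mathscr{E})$-action on $\mathbf{L}_i$ and on $\mathbf{M}_i$ is trivial. \textbf{The main obstacle} is that this only forces the action on $\bpi_i^{\aone}(\mathscr{E})$ to be $\aone$-nilpotent of class at most two, not trivial: one must still kill the ``twisting'' homomorphism $\bpi_1^{\aone}(\mathscr{E}) \to \operatorname{Hom}(\mathbf{M}_i,\mathbf{L}_i)$ measuring the extension. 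I would resolve this via the classical observation (compatible with the treatment of nilpotent fibrations in \cite{HMR}) that, when the fiber of a fibration is simple, every element of $\bpi_1^{\aone}$ of the total space represented by a loop lying in the fiber acts trivially on the higher $\aone$-homotopy sheaves of the total space; since $\mathscr{B}$ is $\aone$-simply connected, the surjectivity established above shows that every element of $\bpi_1^{\aone}(\mathscr{E})$ is of this form. As all the $\aone$-homotopy sheaves in play are strictly $\aone$-invariant, hence unramified (Remark~\ref{rem:stronglyaoneinvariantimpliesunramified}), this triviality can be checked stalkwise, where it reduces to the corresponding statement for fibrations of simplicial sets.
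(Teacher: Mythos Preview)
Your treatment of the (locally) $\aone$-nilpotent case is correct and is organized differently from the paper's proof. The paper works directly with the long exact sequence of the fibration: it first shows $\bpi_1^{\aone}(\mathscr{E})$ is $\aone$-nilpotent (as a quotient of $\bpi_1^{\aone}(\mathscr{F})$ by a central, strongly $\aone$-invariant subsheaf), and then analyzes the $\bpi_1^{\aone}(\mathscr{E})$-action on $\bpi_n^{\aone}(\mathscr{E})$ via the short exact sequence $0 \to \im(\bpi_n^{\aone}(\mathscr{F})) \to \bpi_n^{\aone}(\mathscr{E}) \to (\text{sub of }\bpi_n^{\aone}(\mathscr{B})) \to 0$, invoking Proposition~\ref{prop:extensionsofactions}. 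You instead reduce to Proposition~\ref{prop:2outof3foraonenilpotence} by first showing that $p:\mathscr{E}\to\mathscr{B}$ is (locally) $\aone$-nilpotent. Both routes rest on the same key point---that an $\aone$-$\bpi_1^{\aone}(\mathscr{F})$-central series for $\bpi_i^{\aone}(\mathscr{F})$ is automatically an $\aone$-$\bpi_1^{\aone}(\mathscr{E})$-central series because the epimorphism $\bpi_1^{\aone}(\mathscr{F})\twoheadrightarrow\bpi_1^{\aone}(\mathscr{E})$ identifies the two actions---but your packaging via two-out-of-three is clean and avoids rehandling the extension at each $n$.

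For the $\aone$-simple case, you have correctly isolated a genuine subtlety: a trivial action on a sub and on a quotient only yields $\aone$-nilpotence of class $\leq 2$, not triviality, so the appeal to Proposition~\ref{prop:extensionsofactions} does not by itself give simplicity (the paper's sentence ``the argument of the preceding paragraph also shows\ldots'' elides exactly this point). However, your proposed resolution is not a proof. The ``classical observation'' you invoke---that a loop in the fiber of a fibration with simple fiber acts trivially on $\pi_n$ of the \emph{total space}---is not the standard compatibility statement (which concerns the action on $\pi_n$ of the \emph{fiber}) and is not established in \cite{HMR} in that form. What is immediate is that $\iota_*(\gamma)$ acts trivially on $\im(\bpi_n^{\aone}(\mathscr{F}))$ and on the quotient in $\bpi_n^{\aone}(\mathscr{B})$; what is \emph{not} immediate is that it acts trivially on classes in $\bpi_n^{\aone}(\mathscr{E})$ not coming from the fiber, and reducing to stalks does not change this. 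If you want to close the gap, one route is to exploit that, since $\bpi_1^{\aone}(\mathscr{E})$ acts trivially on each $\bpi_i^{\aone}(\mathscr{F})$, the Moore--Postnikov tower of $p$ is a tower of \emph{principal} fibrations $\tau_{i}p=\operatorname{hofib}\bigl(\tau_{i-1}p\to K(\bpi_i^{\aone}(\mathscr{F}),i+1)\bigr)$; at the first stage the loop space $K(\bpi_1^{\aone}(\mathscr{F}),1)$ is a connected group acting on $\tau_1 p$, so loops in the fiber act by maps $\aone$-homotopic to the identity and $\tau_1 p$ is $\aone$-simple. One must then propagate simplicity up the tower, and this inductive step requires its own argument (the fiber at stage $i\geq 2$ is simply connected, so the principal-action trick no longer supplies elements of $\bpi_1$). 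As written, your simple-case argument is incomplete.
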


\begin{proof}
Under the hypotheses, $\bpi_1^{\aone}(\mathscr{E})$ is a quotient of $\bpi_1^{\aone}(\mathscr{F})$, and since all spaces have a $\bpi_1^{\aone}(\mathscr{E})$-action, they inherit an action of $\bpi_1^{\aone}(\mathscr{F})$ by restriction.  Now, the image of $\bpi_2^{\aone}(\mathscr{B})$ in $\bpi_1^{\aone}(\mathscr{F})$ is a central subsheaf.  Moreover, by exactness of the long exact sequence, the image of this subsheaf is again strongly $\aone$-invariant as it is identified with the kernel of the map $\bpi_1^{\aone}(\mathscr{F}) \to \mathscr{E}$.  Then, since $\bpi_1^{\aone}(\mathscr{F})$ is (locally) $\aone$-nilpotent, it follows that (locally) $\bpi_1^{\aone}(\mathscr{E})$ is also $\aone$-nilpotent by appeal to Lemma~\ref{lem:subsquotientsandcentralextensions}(2).

Next, consider the portion of the long exact sequence in $\aone$-homotopy sheaves:
\[
\cdots \longrightarrow \bpi_n^{\aone}(\mathscr{F}) \longrightarrow \bpi_n^{\aone}(\mathscr{E}) \longrightarrow \bpi_n^{\aone}(\mathscr{B}) \longrightarrow \cdots.
\]
Since $k$ is perfect and $n \geq 2$, the homotopy sheaves in question are strictly $\aone$-invariant (so all images and cokernels are strictly $\aone$-invariant as well by appeal to Theorem~\ref{thm:stableconnectivity}).  Moreover, by assumption $\bpi_n^{\aone}(\mathscr{B})$ carries a trivial action of $\bpi_1^{\aone}(\mathscr{B})$, so the quotient $\bpi_n^{\aone}(\mathscr{E})/\im(\bpi_n^{\aone}(\mathscr{F}))$ carries a trivial action of $\bpi_1^{\aone}(\mathscr{E})$, which is in particular (locally) $\aone$-nilpotent.

Thus, to conclude, it suffices by Proposition \ref{prop:extensionsofactions} to show that $\im(\bpi_n^{\aone}(\mathscr{F}))$ carries a (locally) $\aone$-nilpotent action of $\bpi_n^{\aone}(\mathscr{E})$.  However, $\im(\bpi_n^{\aone}(\mathscr{F}))$ carries a (locally) $\aone$-nilpotent action of $\bpi_1^{\aone}(\mathscr{F})$ by assumption.  If we pick a (locally finite) $\aone$-$\bpi_1^{\aone}(\mathscr{F})$-central series for $\bpi_n^{\aone}(\mathscr{F})$, it induces a (locally finite) $\aone$-$\bpi_1^{\aone}(\mathscr{F})$-central series $\im(\bpi_n^{\aone}(\mathscr{F}))$; this is also a (locally finite) $\aone$-$\bpi_1^{\aone}(\mathscr{E})$-central series by restriction, which is what we wanted to show.

The statement about $\aone$-simplicity is established similarly.  If $\bpi_1^{\aone}(\mathscr{F})$ is abelian, then $\bpi_1^{\aone}(\mathscr{E})$ is necessarily abelian as well.  If $\bpi_1^{\aone}(\mathscr{F})$ acts trivially on the higher $\aone$-homotopy sheaves of $\mathscr{F}$, then the argument of the preceding paragraph also shows that $\bpi_1^{\aone}(\mathscr{E})$ acts trivially on the higher $\aone$-homotopy sheaves of $\mathscr{E}$.
\end{proof}

\subsubsection*{$\aone$-nilpotence and Moore--Postnikov factorizations}
A pointed morphism $f:(\mathscr{E},e)\to (\mathscr{B},b)$ is an $\aone$-principal fibration if there exists a pointed space $(\mathscr{C},c)$ and a pointed ``classifying" map $w: \mathscr{B} \to \mathscr{C}$ such that $\mathscr{E}$ is the homotopy pullback of the path-loop fibration along $w$.  We may consider factorizations of a pointed morphism as a tower of principal fibrations in a sense we now make precise; we largely follow the discussion of \cite[II.2]{HMR}.

\begin{defn}
\label{defn:factorizationasatower}
Suppose $f: (\mathscr{E},e) \to (\mathscr{B},b)$ is a morphism of pointed $\aone$-connected spaces.   A {\em factorization of $f$ as a tower of $\aone$-fibrations} consists of a sequence of pointed spaces $\tau_{\leq i}f$, $i \geq 0$, morphisms $p_i: \tau_{\leq i}f \to \tau_{\leq i-1}f$, $\mathscr{E} \to \tau_{\leq i}f$ and $\tau_{\leq i}f \to \mathscr{B}$ fitting into a commutative diagram of the form:
\[
\xymatrix{
&& \ar[dl]\mathscr{E}\ar[d]\ar[dr] && \\
\cdots \ar[r]& \tau_{\leq i+1}f \ar[r]^{p_{i+1}}\ar[dr]& \tau_{\leq _i}f\ar[r]^{p_i} \ar[d]& \tau_{\leq i-1} f \ar[dl]\ar[r]^{p_{i-1}}& \cdots \\
&& \mathscr{B} &&
}
\]
having the following properties:
\begin{enumerate}[noitemsep,topsep=1pt]
\item $\tau_{\leq 0}f = \mathscr{B}$ (thus, the composite morphisms $\mathscr{E} \to \tau_{\leq i}f \to \mathscr{B}$ coincide with $f$);
\item the induced morphism ${\mathscr E} \to \operatorname{holim}_i \tau_{\leq i}f$ is an $\aone$-weak equivalence.
\end{enumerate}
If, furthermore,
\begin{enumerate}[noitemsep,topsep=1pt]
\item[3.] for each integer $i \geq 0$, the morphism $p_i$ is an $\aone$-principal fibration in the sense mentioned above, then
\end{enumerate}
we will say that the given factorization of $f$ is a {\em factorization of $f$ as a tower of $\aone$-principal fibrations}.
\end{defn}

\begin{ex}
\label{ex:aonemoorepostnikovfactorization}
Suppose $f: (\mathscr{E},e) \to (\mathscr{B},b)$ is a morphism of pointed $\aone$-connected spaces.  If the morphism $\bpi_1^{\aone}(\mathscr{E}) \to \bpi_1^{\aone}(\mathscr{B})$ is surjective, then $f$ admits a standard factorization as a tower of $\aone$-fibrations:  the $\aone$-Moore--Postnikov factorization.  Indeed, this is a factorization of $f$ as a tower of $\aone$-fibrations where, in addition to the statements in Definition~\ref{defn:factorizationasatower}, the following statements hold:
\begin{enumerate}[noitemsep,topsep=1pt]
\item the morphisms $\mathscr{E} \to \tau_{\leq i}f$ induce epimorphisms on $\aone$-homotopy sheaves in degrees $\leq i+1$ that are furthermore isomorphisms in degrees $ \leq i$;
\item the map on $\aone$-homotopy sheaves induced by the morphisms $\tau_{\leq i}f \to \mathscr{B}$ induce are isomorphisms in degrees $>i+1$, and a monomorphism in degree $i+1$.
\end{enumerate}
\end{ex}

\begin{defn}
\label{defn:principalrefinement}
Suppose $f: (\mathscr{E},e) \to (\mathscr{B},b)$ is a morphism of pointed $\aone$-connected spaces equipped with a factorization as a tower of $\aone$-fibrations as in \textup{Definition~\ref{defn:factorizationasatower}}.  We will say that this factorization admits an {\em $\aone$-principal refinement} if for each $n \geq 1$, there exists an integer $c$ (which will depend on $n$) such that
\begin{enumerate}[noitemsep,topsep=1pt]
\item the morphism $p_n: \tau_{\leq n}f \to \tau_{\leq n-1}f$ factors as follows:
    \[
    \tau_{\leq n} f = \tau_{\leq n,c} f \stackrel{p_{n,c}}{\longrightarrow} \tau_{\leq n,c-1} f \longrightarrow \cdots \longrightarrow \tau_{\leq n,1} f \stackrel{p_{n,0}}{\longrightarrow} \tau_{\leq n,0}f = \tau_{\leq n-1} f
    \]
    and,
\item the new tower $\tau_{\leq n,j}f$ is a factorization of $f$ by $\aone$-principal fibrations.
\end{enumerate}
\end{defn}

\begin{rem}
The statement that the $\aone$-Moore--Postnikov factorization \ref{ex:aonemoorepostnikovfactorization} admits an $\aone$-principal refinement amounts to asking that for each $n,i$ as in Definition~\ref{defn:principalrefinement}, the morphism $p_{n,i}$ is an $\aone$-principal fibration in the sense that there exist a strictly $\aone$-invariant sheaf $\mathbf{G}_{n,i}$ and an $\aone$-fiber sequence of the form
\[
\tau_{\leq n,i}f \stackrel{p_{n,i}}{\longrightarrow} \tau_{\leq n,i-1}f \longrightarrow K(\mathbf{G}_{n,i},n+1).
\]
\end{rem}

The following result is the $\aone$-homotopic analog of one of the classical characterizations of nilpotent spaces.

\begin{thm}
\label{thm:moorepostnikovcharacterization}
Suppose $f: \mathscr{E} \to \mathscr{B}$ is a morphism of pointed $\aone$-connected spaces such that the induced morphism $\bpi_1^{\aone}(\mathscr{E}) \to \bpi_1^{\aone}(\mathscr{B})$ is an epimorphism.  The morphism $f$ is $\aone$-nilpotent if and only if the $\aone$-Moore--Postnikov tower of $f$ admits an $\aone$-principal refinement.
\end{thm}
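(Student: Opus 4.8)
The plan is to reduce the equivalence, one stage of the $\aone$-Moore--Postnikov tower at a time, to the study of a single $\aone$-fibration whose $\aone$-homotopy fiber is an $\aone$-Eilenberg--Mac Lane space. First I would fix notation: let $\mathscr{F}$ be the $\aone$-homotopy fiber of $f$ and $\bpi := \bpi_1^{\aone}(\mathscr{E})$. Since $\bpi_1^{\aone}(\mathscr{E}) \to \bpi_1^{\aone}(\mathscr{B})$ is an epimorphism, $\mathscr{F}$ is $\aone$-connected and the $\aone$-Moore--Postnikov tower $\{\tau_{\leq n}f\}$ of Example~\ref{ex:aonemoorepostnikovfactorization} exists. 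Comparing $\aone$-homotopy sheaves via the octahedral axiom applied to $\mathscr{E} \to \tau_{\leq n}f \to \tau_{\leq n-1}f$, one records the standard facts: $\operatorname{hofib}(p_n) \simeq K(\bpi_n^{\aone}(\mathscr{F}),n)$ for $n \geq 2$ and $\simeq B\bpi_1^{\aone}(\mathscr{F})$ for $n = 1$; the morphism $\mathscr{E} \to \tau_{\leq n}f$ induces an isomorphism on $\bpi_1^{\aone}$ for $n \geq 1$, so $\bpi$ acts on all the relevant homotopy sheaves; and the monodromy of $p_n$ on $\bpi_n^{\aone}(\mathscr{F})$ is the given $\bpi$-action. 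Because an $\aone$-principal refinement of the tower is precisely a factorization of each $p_n$ as a finite tower of $\aone$-principal fibrations, both implications reduce to a per-stage statement.

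For the ``if'' direction I would argue directly. Suppose $\{\tau_{\leq n,j}f\}$ is an $\aone$-principal refinement. By the Remark after Definition~\ref{defn:principalrefinement}, each $p_{n,j}$ is, up to $\aone$-weak equivalence, the base change of the $\aone$-fiber sequence $K(\mathbf{G}_{n,j},n) \to \ast \to K(\mathbf{G}_{n,j},n+1)$ along some $\tau_{\leq n,j-1}f \to K(\mathbf{G}_{n,j},n+1)$, with $\mathbf{G}_{n,j}$ strictly $\aone$-invariant. Since $K(\mathbf{G}_{n,j},n+1)$ is $\aone$-$1$-connected, such a base change has trivial monodromy (it is pulled back along $\bpi \to \bpi_1^{\aone}(K(\mathbf{G}_{n,j},n+1)) = 1$; cf.\ Lemma~\ref{lem:functorialityofnilpotence} and Proposition~\ref{prop:basechangeaonenilpotence}), so $\bpi$ acts trivially on $\mathbf{G}_{n,j}$. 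Now take $\aone$-homotopy fibers over $\mathscr{B}$ throughout the refined tower: for fixed $n$ this produces a finite tower of $\aone$-principal fibrations interpolating $\operatorname{hofib}(\tau_{\leq n-1}f \to \mathscr{B})$ and $\operatorname{hofib}(\tau_{\leq n}f \to \mathscr{B})$ whose $j$-th fiber is $K(\mathbf{G}_{n,j},n)$, concentrated in degree $n$. Hence the $\bpi$-equivariant long exact $\aone$-homotopy sheaf sequences break into $\bpi$-equivariant short exact sequences
\[
1 \longrightarrow \mathbf{G}_{n,j} \longrightarrow \bpi_n^{\aone}(\operatorname{hofib}(\tau_{\leq n,j}f \to \mathscr{B})) \longrightarrow \bpi_n^{\aone}(\operatorname{hofib}(\tau_{\leq n,j-1}f \to \mathscr{B})) \longrightarrow 1
\]
of strongly $\aone$-invariant sheaves (strong $\aone$-invariance by Lemma~\ref{lem:stronglyaoneinvariantsheavesofgroups}). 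Concatenating over $j$ equips $\bpi_n^{\aone}(\mathscr{F})$ with a $\bpi$-stable filtration by strongly $\aone$-invariant subsheaves whose successive quotients are the strictly $\aone$-invariant sheaves $\mathbf{G}_{n,j}$, carrying trivial $\bpi$-action --- that is, an $\aone$-$\bpi$-central series. So $\bpi$ acts $\aone$-nilpotently on every $\bpi_n^{\aone}(\mathscr{F})$, i.e., $f$ is $\aone$-nilpotent.

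For the ``only if'' direction, assume $f$ is $\aone$-nilpotent and refine each $p_n$. Fix $n$, set $\mathbf{A} := \bpi_n^{\aone}(\mathscr{F})$, and fix an $\aone$-$\bpi$-central series $\mathbf{A} = \mathbf{A}^{(0)} \supset \cdots \supset \mathbf{A}^{(c)} = 1$ with subquotients $\mathbf{B}_j := \mathbf{A}^{(j-1)}/\mathbf{A}^{(j)}$ strictly $\aone$-invariant and with trivial $\bpi$-action. I would prove, by induction on $c$, that $p_n$ factors as a tower of $c$ $\aone$-principal fibrations whose $j$-th $\aone$-fiber is $K(\mathbf{B}_j,n)$; splicing these refinements for all $n$ gives an $\aone$-principal refinement of the whole tower. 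If $c \leq 1$ the $\bpi$-action on $\mathbf{A}$ is trivial, so $p_n$ has trivial monodromy and is itself an $\aone$-principal fibration, classified by $\tau_{\leq n-1}f \to K(\mathbf{A},n+1)$. If $c > 1$, put $\mathbf{C} := \mathbf{A}^{(c-1)}$, a $\bpi$-stable strongly $\aone$-invariant normal subsheaf of $\mathbf{A}$ on which $\bpi$ acts trivially, and construct the relative $\aone$-Postnikov factorization $\tau_{\leq n}f \to \mathscr{Z} \to \tau_{\leq n-1}f$ of $p_n$ obtained by killing $\mathbf{C}$ in the degree-$n$ $\aone$-homotopy sheaf of the fibers, so that $\operatorname{hofib}(\mathscr{Z} \to \tau_{\leq n-1}f) \simeq K(\mathbf{A}/\mathbf{C},n)$ and $\operatorname{hofib}(\tau_{\leq n}f \to \mathscr{Z}) \simeq K(\mathbf{C},n)$ (for $n = 1$ read $B(-)$ for $K(-,1)$, using Lemma~\ref{lem:stronglyaoneinvariantsheavesofgroups}(3) to keep the quotient strongly $\aone$-invariant). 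Then $\mathscr{Z}$ stays $\aone$-local (Lemmas~\ref{lem:homotopyfiberaonelocal} and~\ref{lem:aww2210}); the fibration $\tau_{\leq n}f \to \mathscr{Z}$ has $\aone$-fiber $K(\mathbf{C},n)$ with trivial monodromy, hence is $\aone$-principal; and $\mathscr{Z} \to \tau_{\leq n-1}f$ has $\aone$-fiber $K(\mathbf{A}/\mathbf{C},n)$ with an $\aone$-nilpotent $\bpi$-action of class $\leq c-1$ (via the induced series $\{\mathbf{A}^{(i)}/\mathbf{C}\}$), so the inductive hypothesis refines it and prepending $\tau_{\leq n}f \to \mathscr{Z}$ finishes the step.

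The hard part will be making this last construction rigorous: producing the relative $\aone$-Postnikov factorization $\tau_{\leq n}f \to \mathscr{Z} \to \tau_{\leq n-1}f$ and proving $\tau_{\leq n}f \to \mathscr{Z}$ is genuinely an $\aone$-\emph{principal} fibration. Classically this amounts to the statement that a fibration with Eilenberg--Mac Lane fiber and trivial monodromy is classified by a single untwisted cohomology class; here one must verify that the corresponding $\aone$-obstruction class lies in $H^{n+1}_{\Nis}(\mathscr{Z};\mathbf{C})$ --- representable by $K(\mathbf{C},n+1)$ since $\mathbf{C}$ is strictly $\aone$-invariant --- and carry $\aone$-locality through the construction with Lemmas~\ref{lem:homotopyfiberaonelocal} and~\ref{lem:aww2210}. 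The $n = 1$ stage needs the most care, since there the fibers are classifying spaces of possibly non-abelian strongly $\aone$-invariant sheaves of groups and one must invoke Lemma~\ref{lem:stronglyaoneinvariantsheavesofgroups}(3) repeatedly to ensure that the intermediate quotients remain strongly $\aone$-invariant, and hence that the interpolating spaces remain $\aone$-local.
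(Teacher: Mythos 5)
Your ``if'' direction is essentially the paper's argument: the principal fibrations give $\bpi_1^{\aone}(\mathscr{E})$-equivariant fiber sequences with trivial action on the strictly $\aone$-invariant subquotients $\mathbf{G}_{n,j}$, and repeated application of Proposition~\ref{prop:extensionsofactions} (your hands-on splicing of short exact sequences is the same thing) yields $\aone$-nilpotence of $f$.

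The ``only if'' direction, however, has a genuine gap, and it sits exactly where you flagged it. Your induction requires two things you do not supply: (i) the existence of the intermediate space $\mathscr{Z}$ realizing the ``relative $\aone$-Postnikov factorization'' that kills the central subsheaf $\mathbf{C}$, and (ii) the assertion that the resulting fibration $\tau_{\leq n}f \to \mathscr{Z}$, having Eilenberg--Mac Lane fiber $K(\mathbf{C},n)$ with trivial monodromy, is classified by a single map $\mathscr{Z} \to K(\mathbf{C},n+1)$, i.e., is genuinely $\aone$-principal. Point (ii) is not a formality in this setting: the paper never proves a classification of fibrations with Eilenberg--Mac Lane fiber, and an obstruction-theoretic argument would itself have to be built. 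The paper's route is different and avoids the issue entirely: the reverse implication is deferred to Corollary~\ref{cor:MoorePostnikovfactorizationconclusion}, where the relative $\aone$-Hurewicz theorem (Theorem~\ref{thm:relativeHurewicz}) is applied to $\tau_{\leq i}f \to \tau_{\leq i-1}f$. That theorem simultaneously produces a canonical strictly $\aone$-invariant quotient of $\bpi_i^{\aone}(\mathscr{F})$ with trivial $\bpi_1^{\aone}(\mathscr{E})$-action, namely $\H_{i+1}^{\aone}$ of the homotopy cofiber, \emph{and} the classifying map, as the composite $\tau_{\leq i-1}f \to \operatorname{hocofib} \to K(\H_{i+1}^{\aone}(\operatorname{hocofib}),i+1)$; the first stage of the refinement is then literally defined as the homotopy pullback of this map, and one iterates (peeling the filtration from the top, i.e., starting with the maximal trivial-action quotient, rather than from the bottom as you propose). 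Your $i=1$ discussion also elides that Lemma~\ref{lem:stronglyaoneinvariantsheavesofgroups}(3) and the whole reverse implication require $k$ perfect (as in Corollary~\ref{cor:MoorePostnikovfactorizationconclusion}); the paper handles $i=1$ via the $\aone$-upper central series and Proposition~\ref{prop:uppercentralseries}. To repair your proof you would either need to prove the principal-fibration classification statement you invoke, or replace your construction of $\mathscr{Z}$ and its $k$-invariant by the Hurewicz-theoretic one.
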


We give the proof of the forward implication below.  We will defer the proof of the reverse implication to Corollary~\ref{cor:MoorePostnikovfactorizationconclusion}.  Note: Theorem~\ref{thm:gmloopsaonenilpotent} below depends on this characterization of $\aone$-nilpotence, but is placed in this section (as opposed to after Corollary~\ref{cor:MoorePostnikovfactorizationconclusion}) for stylistic reasons.

\begin{proof}
See \cite[Theorem II.2.14]{HMR} for the corresponding statement in classical topology, which we essentially follow.  Observe that the hypotheses on $f$ imply that the $\aone$-homotopy fiber $\mathscr{F}$ of $f$ is $\aone$-connected.  We show that if the $\aone$-Moore--Postnikov filtration admits an $\aone$-principal refinement, then $f$ is $\aone$-nilpotent.

We want to show that the action of $\bpi_1^{\aone}(\mathscr{E})$ on all the higher homotopy sheaves of $\mathscr{F}$ is $\aone$-nilpotent.  Assuming $p_n$ admits a factorization as in the statement, there is an induced action of $\bpi_1^{\aone}(\mathscr{E})$ on $\bpi_i^{\aone}(\tau_{\leq n}f)$ for every $n$ and therefore an induced action on $\tau_{\leq n,i}f$ for every $i$ making
\[
\tau_{\leq n,i}f \stackrel{p_{n,i}}{\longrightarrow} \tau_{\leq n,i-1}f \longrightarrow K(\mathbf{G}_{n,i},n+1)
\]
into a fiber sequence where all the morphisms are $\bpi_1^{\aone}(\mathscr{E})$-equivariant.  Repeated application of Proposition~\ref{prop:extensionsofactions} then yields the result.
\end{proof}

\subsubsection*{$\aone$-nilpotence and mapping spaces}
\begin{defn}
\label{defn:cohomologicalfinitenessassumptions}
Assume $k$ is a field.  We will say that $\mathscr{X} \in \Spc_k$ has {\em $\aone$-cohomological dimension $\leq d$} if for every strictly $\aone$-invariant sheaf $\mathbf{A}$, and every integer $i > d$, the group $H^i_{\Nis}(\mathscr{X},\mathbf{A}) = 0$, and {\em $\aone$-cohomological dimension $d$} if $\mathscr{X}$ has $\aone$-cohomological dimension $\leq d$ but not $\leq d-1$.  We will say that $\mathscr{X}$ has {\em finite $\aone$-cohomological dimension}, if it has $\aone$-cohomological dimension $\leq d$ for some integer $d$, and {\em strongly finite $\aone$-cohomological dimension} if for every extension $L/k$, the base change $\mathscr{X}_L \in \Spc_L$ has $\aone$-cohomological dimension $\leq d$ for some integer $d$, independent of $L$.
\end{defn}

\begin{ex}
\label{ex:stronglyfinitecohomologicaldimension}
If $X$ is a smooth $k$-scheme of dimension $d$, then $X$ has strongly finite $\aone$-cohomological dimension $\leq d$.  From this and the suspension isomorphism in Nisnevich cohomology, one deduces that $\Sigma^j X_+$ also has strongly finite $\aone$-cohomological dimension.  More generally, if $\mathscr{X}$ is any pointed space for which there exist integers $i,j$ such that $\Sigma^i \mathscr{X}$ has the $\aone$-homotopy type of $\Sigma^j X_+$ for some smooth scheme $X_+$, then $\mathscr{X}$ has strongly finite $\aone$-cohomological dimension.  In particular, this last statement holds for any motivic sphere $S^i \sma \gm{\sma j}$ since it holds for $\gm{\sma j}$ (use ${\mathbb A}^j \setminus 0$).
\end{ex}

\begin{thm}
\label{thm:mappingspaces}
Assume $k$ is a perfect field.  If $\mathscr{X}$ is any pointed $\aone$-nilpotent space, and $\mathscr{Y}$ is a pointed space having strongly finite $\aone$-cohomological dimension, then for any fixed pointed map $f: \mathscr{Y} \to \mathscr{X}$, the $\aone$-connected component of the mapping space $\Map(\mathscr{Y},\mathscr{X})_f$ containing $f$ is again $\aone$-nilpotent.
\end{thm}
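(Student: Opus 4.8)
The plan is to combine the characterization of $\aone$-nilpotent maps via principal refinements of $\aone$-Moore--Postnikov towers (Theorem~\ref{thm:moorepostnikovcharacterization}) with the fact that the $\aone$-homotopy fiber of a map out of an $\aone$-nilpotent space is again $\aone$-nilpotent (Theorem~\ref{thm:aonenilpotenthomotopyfiber}). Replacing $\mathscr{X}$ by $\Laone \mathscr{X}$ we may assume $\mathscr{X}$ is $\aone$-local, and throughout $\Map(\mathscr{Y},-)$ denotes the (internal, derived) mapping space; since $\mathscr{Y}$ is fixed this functor preserves homotopy limits, and it sends $\aone$-local spaces to $\aone$-local spaces and hence $\aone$-fiber sequences to $\aone$-fiber sequences. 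Because $\mathscr{X}$ is $\aone$-nilpotent, the structure map $\mathscr{X}\to\ast$ is an $\aone$-nilpotent morphism, so by Theorem~\ref{thm:moorepostnikovcharacterization} its $\aone$-Moore--Postnikov tower admits an $\aone$-principal refinement. Writing $\mathscr{X}_n := \tau_{\leq n}(\mathscr{X}\to\ast)$ and $\mathscr{X}_{n,j} := \tau_{\leq n,j}(\mathscr{X}\to\ast)$, this means $\mathscr{X}_0 = \ast$, $\mathscr{X}\simeq\operatorname{holim}_n \mathscr{X}_n$, and for every $n\geq 1$ there is a finite factorization $\mathscr{X}_n = \mathscr{X}_{n,c(n)}\to\cdots\to\mathscr{X}_{n,0} = \mathscr{X}_{n-1}$ in which each stage sits in an $\aone$-fiber sequence
\[
\mathscr{X}_{n,j}\longrightarrow\mathscr{X}_{n,j-1}\longrightarrow K(\mathbf{G}_{n,j},n+1)
\]
for a strictly $\aone$-invariant sheaf of abelian groups $\mathbf{G}_{n,j}$ (strictly, not merely strongly, since $k$ is perfect and these are abelian subquotients of the $\aone$-homotopy sheaves of $\mathscr{X}$; cf.\ Theorem~\ref{thm:unstableconnectivity}); in particular each $K(\mathbf{G}_{n,j},n+1)$ is $\aone$-local.

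Applying $\Map(\mathscr{Y},-)$ to this refinement and passing to the $\aone$-connected components through $f$ and its images --- and, where necessary, to connected covers of base components, as in the classical analysis of mapping spaces (\cite{HMR}, \cite{MayPonto}) --- produces a tower of $\aone$-connected spaces
\[
\cdots\longrightarrow\Map(\mathscr{Y},\mathscr{X}_n)_{f_n}\longrightarrow\cdots\longrightarrow\Map(\mathscr{Y},\mathscr{X}_0) = \ast
\]
whose homotopy limit is $\Map(\mathscr{Y},\mathscr{X})_f$, in which each map $\Map(\mathscr{Y},\mathscr{X}_n)_{f_n}\to\Map(\mathscr{Y},\mathscr{X}_{n-1})_{f_{n-1}}$ is refined through the finitely many stages $\Map(\mathscr{Y},\mathscr{X}_{n,j})_{\bullet}\to\Map(\mathscr{Y},\mathscr{X}_{n,j-1})_{\bullet}$, each fitting into an $\aone$-fiber sequence of $\aone$-connected spaces
\[
\Map(\mathscr{Y},\mathscr{X}_{n,j})_{\bullet}\longrightarrow\Map(\mathscr{Y},\mathscr{X}_{n,j-1})_{\bullet}\longrightarrow B_{n,j},
\]
where $B_{n,j}$ is an appropriate $\aone$-connected space, namely a component or a connected cover of a component of $\Map(\mathscr{Y},K(\mathbf{G}_{n,j},n+1))$. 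I would then show by induction on $n$, and for fixed $n$ by induction on $j$, that every $\Map(\mathscr{Y},\mathscr{X}_{n,j})_{\bullet}$ is $\aone$-nilpotent: the case $\mathscr{X}_0 = \ast$ is trivial, and in the inductive step the total space $\Map(\mathscr{Y},\mathscr{X}_{n,j-1})_{\bullet}$ of the displayed $\aone$-fiber sequence is $\aone$-nilpotent by hypothesis, so its $\aone$-homotopy fiber $\Map(\mathscr{Y},\mathscr{X}_{n,j})_{\bullet}$ is $\aone$-nilpotent by Theorem~\ref{thm:aonenilpotenthomotopyfiber} (here $k$ perfect is used). This step requires nothing about $B_{n,j}$ beyond $\aone$-connectedness, and iterating over $j$ shows $\Map(\mathscr{Y},\mathscr{X}_n)_{f_n}$ is $\aone$-nilpotent for every $n$.

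It remains to control the homotopy limit. For $\mathbf{A}$ strictly $\aone$-invariant, $K(\mathbf{A},m)$ is $\aone$-local and represents Nisnevich cohomology with coefficients in $\mathbf{A}$, so $\Map(\mathscr{Y},K(\mathbf{A},m))$ is $\aone$-local with $\aone$-homotopy sheaves given by suitable Nisnevich cohomology sheaves of $\mathscr{Y}$ with coefficients in $\mathbf{A}$; since $\mathscr{Y}$ has finite $\aone$-cohomological dimension $d$, these sheaves vanish in degree $i$ once $m-i>d$, so the spaces $B_{n,j}$ become arbitrarily highly $\aone$-connected as $n\to\infty$. Consequently, for each fixed $i$ the long exact sequences of the fiber sequences above show that $\bpi_i^{\aone}\Map(\mathscr{Y},\mathscr{X}_n)_{f_n}\to\bpi_i^{\aone}\Map(\mathscr{Y},\mathscr{X}_{n-1})_{f_{n-1}}$ is an isomorphism for $n\gg 0$; hence the tower $\{\Map(\mathscr{Y},\mathscr{X}_n)_{f_n}\}$ is eventually constant on each $\aone$-homotopy sheaf, the relevant $\lim^1$ terms vanish, and $\bpi_i^{\aone}\Map(\mathscr{Y},\mathscr{X})_f \cong \bpi_i^{\aone}\Map(\mathscr{Y},\mathscr{X}_N)_{f_N}$ for $N\gg 0$, compatibly with the $\bpi_1^{\aone}$-actions. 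Since $\Map(\mathscr{Y},\mathscr{X}_N)_{f_N}$ is $\aone$-nilpotent by the previous paragraph, it follows that $\bpi_1^{\aone}\Map(\mathscr{Y},\mathscr{X})_f$ is an $\aone$-nilpotent sheaf of groups acting $\aone$-nilpotently on every $\bpi_i^{\aone}\Map(\mathscr{Y},\mathscr{X})_f$, i.e.\ $\Map(\mathscr{Y},\mathscr{X})_f$ is $\aone$-nilpotent.

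The main obstacle I expect is the convergence input of the last paragraph: one must establish the connectivity/vanishing estimate for $\Map(\mathscr{Y},K(\mathbf{A},m))$ starting only from the hypothesis that $\mathscr{Y}$ has \emph{finite} --- rather than strongly finite --- $\aone$-cohomological dimension, and one must carry out carefully the classical but delicate bookkeeping of connected components (and covers of base components) of mapping spaces so that the sequences above are genuine $\aone$-fiber sequences of $\aone$-connected spaces. Once these are in place, the structural backbone --- the principal refinement of Theorem~\ref{thm:moorepostnikovcharacterization}, preservation of $\aone$-fiber sequences by $\Map(\mathscr{Y},-)$, and Theorem~\ref{thm:aonenilpotenthomotopyfiber} --- is routine.
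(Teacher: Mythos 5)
Your proposal is correct and follows essentially the same route as the paper: the base case of Eilenberg--Mac Lane targets controlled by the cohomological dimension hypothesis, induction up the $\aone$-principal refinement furnished by Theorem~\ref{thm:moorepostnikovcharacterization} using that $\Map(\mathscr{Y},-)$ preserves $\aone$-fiber sequences together with the fact that $\aone$-nilpotence passes to homotopy fibers, and then stabilization of $\aone$-homotopy sheaves in the limit over Postnikov stages. The worry you flag about finite versus \emph{strongly} finite $\aone$-cohomological dimension is well taken --- the paper's own proof in fact invokes the strongly finite hypothesis (to identify the homotopy sheaves of $\Map(\mathscr{Y},K(\mathbf{A},n))_f$, whose sections over a finitely generated extension $L/k$ are $\tilde{H}^{n-i}_{\Nis}(\mathscr{Y}_L,\mathbf{A}_L)$) even though the statement only assumes finiteness, so your argument closes under the same reading of the hypothesis that the paper itself uses.
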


\begin{proof}
First, suppose $\mathscr{X} = K(\mathbf{A},n)$ for some integer $n \geq 1$ and a strictly $\aone$-invariant sheaf of groups $\mathbf{A}$.  In that case, consider the space
\[
\Map(\mathscr{Y},K(\mathbf{A},n))_f,
\]
for some map $f \in H^n(\mathscr{Y},\mathbf{A})$.  Observe that $\Map(\mathscr{Y},K(\mathbf{A},n))$ is itself a (possibly $\aone$-disconnected) $\aone$-$h$-space.  Indeed, the space $K(\mathbf{A},n)$ is $\aone$-local by assumption, so the mapping space $\Map(\mathscr{Y},K(\mathbf{A},n))$ is $\aone$-local as well.  Now, any $\aone$-connected component of an $\aone$-local space is again $\aone$-local, so $\Map(\mathscr{Y},K(\mathbf{A},n))_f$ is $\aone$-local also.  The identity component  $\Map(\mathscr{Y},K(\mathbf{A},n))_0$ is itself an $\aone$-$h$-space and therefore $\aone$-simple.  On the other hand, multiplication by $f$ induces an $\aone$-weak equivalence between $\Map(\mathscr{Y},K(\mathbf{A},n))_0$ and $\Map(\mathscr{Y},K(\mathbf{A},n))_f$, so we conclude the latter is $\aone$-simple as well.  Explicitly, $\bpi_i^{\aone}(\Map(\mathscr{Y},K(\mathbf{A},n))_f)$ coincides with $\bpi_i((\Map(\mathscr{Y},K(\mathbf{A},n))_f))$; for any $i > 0$, this latter sheaf, which is necessarily strictly $\aone$-invariant, has sections $\tilde{H}^{n-i}_{\Nis}(\mathscr{Y}_L,\mathbf{A}_L)$ over a finitely generated extension $L/k$.  


Next, assume that the $\mathscr{X}$ is $\aone$-$n$-truncated and $\aone$-nilpotent for some integer $n$, i.e., there exists an integer $n$ such that $\bpi_i^{\aone}(\mathscr{X},x)$ vanishes for $i > n$.  In that case, by appeal to Theorem~\ref{thm:moorepostnikovcharacterization} we know that the $\aone$--Postnikov factorization of the structure morphism admits an $\aone$-principal refinement, and furthermore that the resulting factorization is a finite tower; in this case, we will say that $\mathscr{X}$ has a finite $\aone$-principal series.  We then proceed by induction.  Since applying mapping spaces preserves $\aone$-fiber sequences, we reduce inductively to the following situation: there are $\aone$-fiber sequences of the form
\[
\Map(\mathscr{Y},\mathscr{X})_f \longrightarrow \Map(\mathscr{Y},\mathscr{X}')_f \longrightarrow \Map(\mathscr{Y},K(\mathbf{A},n))_f.
\]
where $\mathscr{X}$ admits a finite $\aone$-principal series and $\mathscr{X}'$ admits a finite $\aone$-principal series of shorter length.  By the discussion of the preceding paragraph $\Map(\mathscr{Y},K(\mathbf{A},n))_f$ is $\aone$-simple, and one assumes that $\Map(\mathscr{Y},\mathscr{X}')_f$ is $\aone$-nilpotent by induction.  In that case, $\Map(\mathscr{Y},\mathscr{X})_f$ is necessarily also $\aone$-nilpotent by appeal to Corollary~\ref{cor:weaklyaonenilpotentfibersequences}.  The discussion of the preceding paragraph also implies, by appeal to the long exact sequence in $\aone$-homotopy sheaves of a fibration, that $\bpi_i^{\aone}(\Map(\mathscr{Y},\mathscr{X})_f)$ only depends on $\bpi_i^{\aone}(\mathscr{X})$ for $n \leq d+i+1$, if $\mathscr{Y}$ has strongly finite $\aone$-cohomological dimension $\leq d$.

Now, assume that $\mathscr{X}$ is a general $\aone$-nilpotent space.  To check that $\Map(\mathscr{Y},\mathscr{X})_f$ is $\aone$-nilpotent, we need to know that $\bpi_1^{\aone}(\Map(\mathscr{Y},\mathscr{X})_f)$ is $\aone$-nilpotent and that this sheaf of groups acts $\aone$-nilpotently on $\bpi_i^{\aone}(\Map(\mathscr{Y},\mathscr{X})_f)$ for all $i > 0$.  Thus, $\Map(\mathscr{Y},\mathscr{X})_f$ is $\aone$-nilpotent if and only if all its finite Postnikov sections are $\aone$-nilpotent.  We know that $\mathscr{X}$ is the homotopy limit of its finite $\aone$-Postnikov sections.  The map $\mathscr{X} \to \tau_{\leq n} \mathscr{X}$ induces a map
\[
\tau_{\leq m}(\Map(\mathscr{Y},\mathscr{X})_f) \longrightarrow \tau_{\leq m}(\Map(\mathscr{Y},\tau_{\leq n}\mathscr{X})).
\]
Because $\mathscr{Y}$ has strongly finite $\aone$-cohomological dimension, it follows that for any $m$, the map of the preceding display induces an $\aone$-weak equivalence as $n \to \infty$ by the final comment of the preceding paragraph.  Indeed, the map on homotopy sheaves is an isomorphism for $n$ sufficiently large depending on $m$ and $d$.
\end{proof}

The above results have consequences for the contraction construction from Definition~\ref{defn:contraction}.  For example, we may analyze $\aone$-nilpotence of iterated $\gm{}$-loop spaces of $\aone$-nilpotent spaces.  As a corollary of Theorem~\ref{thm:mappingspaces}, we deduce the following result, which shows that $\gm{}$-loop spaces of $\aone$-nilpotent spaces are $\aone$-nilpotent.  Unlike the situation in classical homotopy theory, $\gm{}$-loop spaces need not automatically have an $h$-space structure since $\gm{}$ has no co-group structure in $\ho{k}$.

\begin{thm}
\label{thm:gmloopsaonenilpotent}
Assume $(\mathscr{X},x)$ is a pointed $\aone$-nilpotent space, then $\Omega_{\gm{\sma n}}\mathscr{X}$ is $\aone$-nilpotent for every $n \geq 0$.  In particular, if $\mathbf{G}$ is an $\aone$-nilpotent sheaf of groups, then $\mathbf{G}_{-n}$ is again an $\aone$-nilpotent sheaf of groups for every $n \geq 0$.
\end{thm}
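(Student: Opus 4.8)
The plan is to read off both assertions from Theorem~\ref{thm:mappingspaces}, using Theorem~\ref{thm:contractionsandgmloops} to control $\aone$-connectivity and the relevant $\aone$-homotopy sheaves.

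\textit{The space statement.} By the smash--loops adjunction, $\Omega_{\gm{\sma n}}\mathscr{X}$ is the pointed mapping space out of $\gm{\sma n}$, and since $\Omega_{\gm{\sma n}}$ is an $n$-fold iterate of $\Omega_{\gm{}}$ (as $\gm{\sma n} = \gm{} \sma \gm{\sma(n-1)}$), applying Theorem~\ref{thm:contractionsandgmloops}(1) $n$ times shows $\Omega_{\gm{\sma n}}\mathscr{X}$ is pointed and $\aone$-connected; in particular it has a single $\aone$-connected component. Now $\gm{\sma n}$ has (strongly) finite $\aone$-cohomological dimension by Example~\ref{ex:stronglyfinitecohomologicaldimension}, and $\mathscr{X}$, being $\aone$-nilpotent, is $\aone$-connected; hence Theorem~\ref{thm:mappingspaces} applies with $\mathscr{Y} = \gm{\sma n}$ and, say, $f$ the constant map at the base-point of $\mathscr{X}$, and tells us that the $\aone$-connected component of $\Map(\gm{\sma n},\mathscr{X})$ containing $f$ is $\aone$-nilpotent. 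As that component is all of $\Omega_{\gm{\sma n}}\mathscr{X}$, this proves the space statement. If one prefers to interpret $\Map$ in Theorem~\ref{thm:mappingspaces} as an unpointed mapping space, one instead uses the evaluation-at-base-point $\aone$-fibration $\Omega_{\gm{\sma n}}\mathscr{X} \to \Map(\gm{\sma n},\mathscr{X})_c \to \mathscr{X}$: all three terms are $\aone$-connected (the middle one because base and fibre are), so the middle term is $\aone$-nilpotent by Theorem~\ref{thm:mappingspaces} and then the fibre is $\aone$-nilpotent by Theorem~\ref{thm:aonenilpotenthomotopyfiber}. Alternatively one may induct on $n$ via $\Omega_{\gm{\sma n}}\mathscr{X} = \Omega_{\gm{}}(\Omega_{\gm{\sma(n-1)}}\mathscr{X})$, keeping the inner loop space pointed and $\aone$-connected via Theorem~\ref{thm:contractionsandgmloops}(1), so that only the case $\mathscr{Y} = \gm{}$, a smooth affine curve of $\aone$-cohomological dimension $\leq 1$, is needed.

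\textit{The group statement.} First note that $\mathbf{G}$ is $\aone$-nilpotent as a sheaf of groups if and only if $B\mathbf{G}$ is $\aone$-nilpotent as a pointed space: since $\mathbf{G}$ is strongly $\aone$-invariant, $B\mathbf{G}$ is $\aone$-local and $\aone$-connected with $\bpi_1^{\aone}(B\mathbf{G}) = \mathbf{G}$ and $\bpi_i^{\aone}(B\mathbf{G}) = 0$ for $i \geq 2$, and the action of $\bpi_1^{\aone}$ on the $\aone$-homotopy sheaves of the $\aone$-homotopy fibre of $B\mathbf{G} \to \ast$ is just the conjugation action of $\mathbf{G}$ on itself in degree $1$ (trivial otherwise), which is $\aone$-nilpotent exactly when $\mathbf{G}$ admits an $\aone$-central series, i.e.\ is $\aone$-nilpotent in the sense of Definition~\ref{defn:aonenilpotent}(3). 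Applying the space statement to $\mathscr{X} = B\mathbf{G}$ gives that $\Omega_{\gm{\sma n}}B\mathbf{G}$ is $\aone$-nilpotent, and iterating Theorem~\ref{thm:contractionsandgmloops}(2) identifies $\bpi_1^{\aone}(\Omega_{\gm{\sma n}}B\mathbf{G}) = \bpi_1^{\aone}(B\mathbf{G})_{-n} = \mathbf{G}_{-n}$, all higher $\aone$-homotopy sheaves vanishing. Since an $\aone$-nilpotent pointed space has, by Definition~\ref{defn:aonenilpotentspace}, an $\aone$-nilpotent $\aone$-fundamental sheaf of groups, we conclude that $\mathbf{G}_{-n}$ is $\aone$-nilpotent.

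\textit{Expected obstacle.} None of this is deep: the substance has been isolated in Theorems~\ref{thm:mappingspaces} and \ref{thm:contractionsandgmloops}. The points that require care are (i) verifying the finite $\aone$-cohomological dimension hypothesis of Theorem~\ref{thm:mappingspaces} for $\gm{\sma n}$ (via Example~\ref{ex:stronglyfinitecohomologicaldimension}, or via the inductive reduction to $\gm{}$); (ii) the pointed-versus-unpointed mapping space bookkeeping, handled by the evaluation fibration; and (iii) the passage between $\aone$-nilpotence of $B\mathbf{G}$ and of $\mathbf{G}$ together with the identification of $\mathbf{G}_{-n}$ with $\bpi_1^{\aone}$ of the $n$-fold $\gm{}$-loop space of $B\mathbf{G}$.
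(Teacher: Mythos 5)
Your argument is correct and follows essentially the same route as the paper: $\aone$-connectedness of the $\gm{}$-loop space via Theorem~\ref{thm:contractionsandgmloops}(1), strongly finite $\aone$-cohomological dimension of $\gm{\sma n}$ via Example~\ref{ex:stronglyfinitecohomologicaldimension}, then Theorem~\ref{thm:mappingspaces} with $\mathscr{Y} = \gm{\sma n}$, and finally the identification $\bpi_1^{\aone}(\Omega_{\gm{\sma n}}B\mathbf{G}) = \mathbf{G}_{-n}$ for the group statement. The extra care you take with the pointed-versus-unpointed mapping space and with the equivalence between $\aone$-nilpotence of $\mathbf{G}$ and of $B\mathbf{G}$ is sound but not a different method.
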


\begin{proof}
By appealing to Theorem~\ref{thm:contractionsandgmloops}(1) we conclude that $\Omega_{\gm{\sma n}} \mathscr{X}$ is always $\aone$-connected.  Furthermore, in Example~\ref{ex:stronglyfinitecohomologicaldimension} we observed that $\gm{\sma n}$ has strongly finite $\aone$-cohomological dimension.  Granted those two facts, the first statement is a consequence of Theorem~\ref{thm:mappingspaces}: take $\mathscr{Y} = \gm{\sma n}$.  The second statement follows from the first by repeated application of Theorem~\ref{thm:contractionsandgmloops}: $\bpi_1^{\aone}(\Omega_{\gm{\sma n}}B\mathbf{G}) = \mathbf{G}_{-n}$.
\end{proof}

\subsection{Explicit examples}
\label{ss:examplesofaonenilpotentspaces}
In this section, we collect a number of examples of spaces as in Definition~\ref{defn:aonenilpotentspace}.  After reviewing some basic examples (see Example~\ref{ex:basicexamples}), we show that ${\mathbb P}^{2n+1}_k$ and $BGL_{2n+1}$ are $\aone$-simple (see Proposition~\ref{prop:p2n+1aonesimple}, and Theorem~\ref{thm:bglnoddaonesimple}).  Then, we show that generalized flag varieties are typically $\aone$-nilpotent but not $\aone$-simple (see Theorem~\ref{thm:flagmanifoldaonenilpotent} and Remark~\ref{rem:nonabelianaonenilpotent}).  Finally, we observe that ${\mathbb P}^{2n}_k$ and $BGL_{2n}$ are not locally $\aone$-nilpotent over general fields $k$, but are weakly $\aone$-nilpotent if $k$ is a field that is not formally real (see Proposition~\ref{prop:p2npointwiseaonenilpotent}, Theorem~\ref{thm:bglevenlocallyaonenilpotent} and the discussion of Remarks~\ref{rem:p2nnotaonenilpotent} and \ref{rem:bgl2nnotlocallyaonenilpotent}).

\subsubsection*{$\aone$-simple spaces}
If $(\mathscr{X},e)$ is an $h$-space in $\sPre(\Sm_k)$, then $\Laone \mathscr{X}$ is also an $h$-space since $\Laone$ preserves finite products.  If $(\mathscr{X},e)$ is an $\aone$-connected $h$-space, then it follows from the usual Eckmann--Hilton argument that $\bpi_1^{\aone}(\mathscr{X},e)$ is abelian and that this sheaf of groups furthermore acts trivially on the higher $\aone$-homotopy sheaves since this is true stalkwise.  Thus, if $\mathscr{X}$ is an $\aone$-connected $h$-space, it follows that $\mathscr{X}$ is $\aone$-simple.

\begin{ex}
\label{ex:basicexamples}
The following are well-known examples of $\aone$-simple spaces (see Definition~\ref{defn:aonenilpotentspace}).
\begin{enumerate}[noitemsep,topsep=1pt]
\item split, simply connected, semi-simple algebraic groups (the hypotheses guarantee that the resulting space is $\aone$-connected and the result is evident for any $\aone$-connected sheaf of groups);
\item the space $B_{\Nis}A$ for any abelian sheaf of groups $A$;
\item the stable group $BGL$ (this is an $\aone$-connected $h$-space);
\item the motivic Eilenberg--Mac Lane space $K(\Z(n),2n)$ or $K(\Z/m(n),2n)$ for any integers $m, n > 0$ (the hypotheses guarantee that the resulting space is $\aone$-connected; see Section~\ref{ss:rationalmotivicspheres} for further discussion of this example).
\end{enumerate}
\end{ex}

\begin{rem}
In classical algebraic topology, the odd-dimensional real projective spaces $\mathbb{RP}^{2n+1}$ are all {\em simple} spaces (i.e., have abelian fundamental group and the action on higher homotopy groups is trivial), not just nilpotent.  In contrast, $\mathbb{P}^1_k$ is not $\aone$-simple because $\bpi_1^{\aone}(\pone_k)$ is not even a sheaf of abelian groups.  Nevertheless, the $\aone$-fundamental sheaves of groups of ${\mathbb P}^n_k$ for $n \geq 2$ are isomorphic to $\gm{}$, which is abelian.
\end{rem}

The $\aone$-fiber sequences produced by the following result will be essential in our construction of (locally) $\aone$-nilpotent spaces to follow.

\begin{thm}
\label{thm:buildingfibersequences}
Assume $k$ is a field and $G$ is a split, simply connected, semi-simple $k$-group scheme.  If $H \subset G$ is a closed sub-group scheme, such that the $H$-torsor $G \to G/H$ is Nisnevich locally trivial, then there is an $\aone$-fiber sequence of the form
\[
G/H \longrightarrow BH \longrightarrow BG.
\]
\end{thm}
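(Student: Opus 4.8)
The plan is to produce the sequence first in the Nisnevich-local (simplicial) homotopy category and then to promote it to an $\aone$-fiber sequence using that $\Laone$ commutes with finite limits.

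First I would record the simplicial statement: the closed immersion $i \colon H \hookrightarrow G$ induces a morphism $Bi \colon BH \to BG$, and the hypothesis that the $H$-torsor $G \to G/H$ is Nisnevich-locally trivial yields a simplicial homotopy fiber sequence
\[
G/H \longrightarrow BH \stackrel{Bi}{\longrightarrow} BG
\]
in $\ho{\Nis,\ast}(\Sm_k)$. To see this, one uses that a Nisnevich-locally trivial $H$-torsor is classified by a map to $BH$ (see \cite[Chapter 9]{Jardine}), so $G \to G/H$ is classified by a morphism $\alpha \colon G/H \to BH$ with $\operatorname{hofib}(\alpha) \simeq G$; the composite $Bi \circ \alpha$ classifies the associated $G$-torsor obtained from $G \to G/H$ by extension of structure group, which is globally trivial (via $[g_1, g_2] \mapsto (g_1 H,\, g_1 g_2)$), hence $Bi \circ \alpha$ is null and one gets a comparison map $G/H \to \operatorname{hofib}(Bi)$. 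This comparison map is a Nisnevich-local weak equivalence: $\operatorname{hofib}(Bi)$ is a homotopy quotient of $G$ by the translation action of $H$, and the canonical map from this homotopy quotient to the scheme-theoretic quotient $G/H$ has Nisnevich-locally contractible fibers precisely because $G \to G/H$ can be trivialized in the Nisnevich topology. This last point is the one place where the hypothesis is essential: over an \'etale-local trivialization one would obtain only an \'etale-local weak equivalence, which does not suffice.

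Second, I would apply the $\aone$-localization functor $\Laone$ to the resulting homotopy Cartesian square
\[
\xymatrix{
G/H \ar[r]\ar[d] & BH \ar[d]^-{Bi} \\
\ast \ar[r] & BG.
}
\]
Since $\Laone$ commutes with finite limits, preserves weak equivalences, and sends the terminal object to the terminal object, its image is again a homotopy Cartesian square, now with corners $\Laone(G/H)$, $\Laone BH$, $\ast$, $\Laone BG$. That is exactly the statement that $G/H \to BH \to BG$ is an $\aone$-fiber sequence.

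The step I expect to be the main obstacle is the first one, and specifically the identification of $\operatorname{hofib}(Bi)$ with $G/H$ in the \emph{Nisnevich}-local homotopy category; it is precisely for this that one must assume $G \to G/H$ is Nisnevich-locally trivial. I note that the remaining hypotheses — that $G$ is split, simply connected and semisimple — play no role in the argument above; they enter only in the intended applications, where one wants, for instance, $G$ to be $\aone$-connected and $\Laone BG$ and $G/H$ to have their expected $\aone$-homotopy types.
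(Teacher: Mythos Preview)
Your first step is fine and indeed more detailed than the paper's treatment, which simply cites \cite[Lemma 2.4.1]{AHWII} for the simplicial fiber sequence $G/H \to B_{\Nis}H \to B_{\Nis}G$.

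The gap is in your second step. The assertion that $\Laone$ commutes with finite limits is a statement about strict (1-categorical) limits for Morel--Voevodsky's explicit model; it does \emph{not} imply that $\Laone$ preserves homotopy pullbacks. Even granting that $\Laone$ carries a strict pullback square to a strict pullback square, the resulting map $\Laone BH \to \Laone BG$ has no reason to be a fibration, so its strict fiber need not compute its homotopy fiber in either model structure. Passing from a simplicial fiber sequence to an $\aone$-fiber sequence is a genuine problem, which is why criteria such as \cite[Theorem 2.2.5]{AHWII} (compare also Lemma~\ref{lem:aww2210} here) are developed. The paper applies exactly that criterion: it invokes \cite[Theorem 2.4]{AHWIII} to verify that $\pi_0(B_{\Nis}G)$ is $\aone$-invariant on affines, and \emph{this} is where the hypotheses that $G$ be split, simply connected, and semisimple enter. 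Your closing remark that these hypotheses ``play no role'' is therefore a red flag rather than an incidental observation: were your step two valid as written, it would promote \emph{every} simplicial fiber sequence to an $\aone$-fiber sequence.
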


\begin{proof}
Under these hypotheses, there is a simplicial fiber sequence of the form
\[
G/H \longrightarrow B_{\Nis}H \longrightarrow B_{\Nis}G.
\]
by \cite[Lemma 2.4.1]{AHWII}.  The space $B_{\Nis}G$ is Nisnevich local by definition and it satisfies Nisnevich excision by \cite[Theorem 3.2.5]{AHW}.  Furthermore, $\pi_0(B_{\Nis}G)$ is $\aone$-invariant on affines by \cite[Theorem 2.4]{AHWIII}.  Therefore, by appeal to \cite[Theorem 2.2.5]{AHWII}, we conclude that the above simplicial fiber sequence is an $\aone$-fiber sequence.
\end{proof}

\begin{prop}
\label{prop:p2n+1aonesimple}
If $k$ is a field, then for every integer $n > 0$, the space ${\mathbb P}^{2n+1}_k$ is $\aone$-simple.
\end{prop}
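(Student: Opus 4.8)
The plan is to realise $\mathbb{P}^{2n+1}_k$ as the total space of an $\aone$-fibration over $B\gm{}$ whose fibre is $\aone$-$1$-connected, and then to show that the associated monodromy acts trivially on $\aone$-homotopy sheaves. The tautological $\gm{}$-torsor $\mathbb{A}^{2n+2}\setminus 0\to\mathbb{P}^{2n+1}$ (the complement of the zero section in $\mathcal{O}(-1)$) is Zariski-locally trivial, and since $\gm{}$ is strongly $\aone$-invariant the space $B\gm{}$ is $\aone$-local; by the argument of Theorem~\ref{thm:buildingfibersequences} (cf. \cite{MField}) this yields an $\aone$-fibre sequence
\[
\mathbb{A}^{2n+2}\setminus 0\longrightarrow\mathbb{P}^{2n+1}\longrightarrow B\gm{}.
\]
Since $n\geq 1$ the space $\mathbb{A}^{2n+2}\setminus 0$ is $\aone$-$2n$-connected, hence $\aone$-$1$-connected; the long exact sequence in $\aone$-homotopy sheaves then gives $\bpi_1^{\aone}(\mathbb{P}^{2n+1})\cong\bpi_1^{\aone}(B\gm{})=\gm{}$, which is abelian, and isomorphisms $\bpi_i^{\aone}(\mathbb{P}^{2n+1})\cong\bpi_i^{\aone}(\mathbb{A}^{2n+2}\setminus 0)$ for $i\geq 2$. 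In particular $\mathbb{A}^{2n+2}\setminus 0$ is the $\aone$-universal cover of $\mathbb{P}^{2n+1}$, and the action of $\bpi_1^{\aone}(\mathbb{P}^{2n+1})=\gm{}$ on the higher $\aone$-homotopy sheaves corresponds, under this isomorphism, to the scaling (deck-transformation) action of $\gm{}$ on $\mathbb{A}^{2n+2}\setminus 0$.

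It remains to show that the scaling action of $\gm{}$ on each $\bpi_i^{\aone}(\mathbb{A}^{2n+2}\setminus 0)$ is trivial. Since these sheaves are strictly $\aone$-invariant, hence unramified, triviality may be checked on sections over finitely generated extensions $L/k$ (cf. Remark~\ref{rem:trivialityofactionofpi1}); so fix $u\in L^\times$. Using that $SL_{2n+2}(L)$ is generated by elementary matrices, every element of $SL_{2n+2}(L)$ induces a self-map of $(\mathbb{A}^{2n+2}\setminus 0)_L$ that is naively $\aone$-homotopic to the identity; writing $uI_{2n+2}=\operatorname{diag}(u^{2n+2},1,\dots,1)\cdot m$ with $m=\operatorname{diag}(u^{-(2n+1)},u,\dots,u)\in SL_{2n+2}(L)$, we conclude that scaling by $u$ induces on each $\aone$-homotopy sheaf the same automorphism as the diagonal matrix $\operatorname{diag}(u^{2n+2},1,\dots,1)$. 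Its determinant $u^{2n+2}=(u^{n+1})^2$ is a perfect square, and the $GL_{2n+2}$-action on the strictly $\aone$-invariant sheaves $\bpi_i^{\aone}(\mathbb{A}^{2n+2}\setminus 0)$ factors through $\det$ and then through multiplication by the $\aone$-degree $\langle\det\rangle\in\mathbf{GW}$; since $\langle(u^{n+1})^2\rangle=\langle 1\rangle$, the induced automorphism is the identity. Hence $\bpi_1^{\aone}(\mathbb{P}^{2n+1})$ acts trivially on all higher $\aone$-homotopy sheaves, so $\mathbb{P}^{2n+1}$ is $\aone$-simple.

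The step I expect to be the main obstacle is the final one: that a diagonal matrix of square determinant acts trivially on \emph{every} $\aone$-homotopy sheaf of $\mathbb{A}^{m}\setminus 0$, not merely on the bottom sheaf $\bpi_{m-1}^{\aone}(\mathbb{A}^m\setminus 0)=\K^{\MW}_m$, where this is immediate from the $\K^{\MW}_\ast$-relations. This relies on the description of the $GL_m$-equivariant structure of the $\aone$-homotopy sheaves of the motivic sphere $\mathbb{A}^m\setminus 0\simeq S^{m-1}\wedge\gm{\wedge m}$ — the action being ``twisted by $\det$'', so that on homotopy sheaves (which are $\mathbf{GW}$-modules) it becomes multiplication by $\langle\det\rangle$; via the smash decomposition one reduces it to the assertion that the $S^1$-suspension of multiplication-by-$v^2$ on $\gm{}$ is $\aone$-homotopic to the identity of $\pone$, which is a statement about $\aone$-degrees of automorphisms of $\pone$. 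I would import this from Morel's computations or establish it through $\aone$-degree theory. A direct $\aone$-homotopy from the scaling self-map to the identity cannot be naive — the determinant of $uI_{2n+2}$ obstructs it, unless $u^{2n+2}=1$ — so an input of this kind seems genuinely necessary; this is precisely the phenomenon that fails for $\mathbb{P}^{2n}$, where the relevant determinant $u^{2n+1}$ need not be a square.
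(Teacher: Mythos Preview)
Your overall strategy is exactly the paper's: use the $\aone$-fibre sequence $\mathbb{A}^{2n+2}\setminus 0\to\mathbb{P}^{2n+1}\to B\gm{}$, identify $\gm{}$ with $\bpi_1^{\aone}$, and show that the scaling action on $\bpi_i^{\aone}(\mathbb{A}^{2n+2}\setminus 0)$ is trivial by checking on sections over field extensions $L/k$.

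The place where you take a detour---and where you yourself flag the obstacle---is precisely where the paper's argument is shorter. You decompose $uI_{2n+2}$ into an $SL_{2n+2}$-part and a diagonal remainder, and then need to argue that a diagonal matrix of square determinant acts trivially on \emph{every} $\aone$-homotopy sheaf. You correctly note that this requires knowing the $GL_m$-action on all $\bpi_i^{\aone}$ is via multiplication by $\langle\det\rangle$, which is not immediate. The paper sidesteps the decomposition entirely: for $m\ge 2$, Morel's theorem gives an \emph{isomorphism} $\operatorname{deg}:[\mathbb{A}^m\setminus 0,\mathbb{A}^m\setminus 0]_{\aone}\xrightarrow{\ \sim\ } GW(L)$, so two self-maps are $\aone$-homotopic if and only if they have the same motivic Brouwer degree. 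The degree of the scalar map $(x_1,\ldots,x_{2n+2})\mapsto(ux_1,\ldots,ux_{2n+2})$ is computed directly (e.g.\ via \cite[Remark~2.6]{Fasel}) to be $\langle u\rangle^{2n+2}=\langle 1\rangle$. Hence scaling by $u$ is $\aone$-homotopic to the identity \emph{as a self-map of} $\mathbb{A}^{2n+2}\setminus 0$, and therefore acts trivially on all homotopy sheaves at once. Your $SL$-reduction is correct but buys nothing: the obstacle you isolate in your final paragraph is already the whole content of the argument, and Morel's degree isomorphism is what discharges it---applied to $uI_{2n+2}$ directly rather than to a diagonal residue.
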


\begin{proof}
We suppress $k$ from the notation.  We proceed by analogy to the argument establishing nilpotence of odd-dimensional real projective spaces in classical topology.  In other words, we will establish the equivalent conditions of Corollary \ref{cor:checkingaonenilpotence}.  To this end, let us recall that there is an $\aone$-fiber sequence of the form
\[
{\mathbb A}^{2n+2}\setminus 0 \longrightarrow {\mathbb P}^{2n+1} \longrightarrow B\gm{}
\]
classifying the tautological $\gm{}$-torsor over ${\mathbb P}^{2n+1}$ \cite[Lemma 7.5]{MField}.  In particular the first map is the quotient of the standard scaling action of $\gm{}$ on ${\mathbb A}^{2n+2} \setminus 0$.

For any $n > 0$, ${\mathbb A}^{2n+2} \setminus 0$ is $\aone$-$1$-connected, and thus is a model for the $\aone$-universal covering space of ${\mathbb P}^{2n+1}$ \cite[Theorem 7.8]{MField}.  Thus, it suffices to show that the action of $\gm{}$ on the higher $\aone$-homotopy sheaves of ${\mathbb A}^{2n+2} \setminus 0$ is $\aone$-nilpotent.

Following the observation of Remark~\ref{rem:trivialityofactionofpi1}, we will show that the scaling $\gm{}$-action on ${\mathbb A}^{2n+2} \setminus 0$ is null $\aone$-homotopic; we prove this in a fashion analogous to \cite[Lemma 4.8]{AsokFaselThreefolds}.  Since the homotopy sheaves of ${\mathbb A}^{2n+2} \setminus 0$ are unramified, to show that the action of $\gm{}$ is trivial, it suffices to show that, for every finitely generated separable extension $L$ of the base field $k$ the action of $\gm{}(L)$ is null $\aone$-homotopic.

Thus, it suffices to show that for every unit $u \in L^{\times}$, the endomorphism of ${\mathbb A}^{2n+2} \setminus 0$ induced by scaling via $u$ has trivial motivic Brouwer degree.  Indeed, the motivic Brouwer degree of the map $(x_1,\ldots,x_{2n+2}) \to (ux_1,ux_2,\ldots,ux_{2n+2})$ can be computed via the procedure described in \cite[Remark 2.6]{Fasel}.  Following this procedure, one sees that the Brouwer degree of the above map is $\langle u \rangle^{2n+2}$, which, as a square class, is trivial in $GW(L)$.
\end{proof}

\begin{rem}
One can also establish the preceding result as follows.  One shows that there is an $\aone$-fiber sequence of the form
\[
\pone_k \longrightarrow {\mathbb P}^{2n+1}_k \longrightarrow \mathrm{HP}^n_k
\]
where $\mathrm{HP}^n$ is the Panin--Walter model for quaternionic projective space \cite[\S 3]{PaninWalterPontryaginClasses}.  By construction, $\mathrm{HP}^n = Sp_{2n+2}/(Sp_{2n} \times Sp_2)$, and the map ${\mathbb P}^{2n+1} \to \mathrm{HP}^n$ is the map $Sp_{2n+2}/(Sp_{2n} \times \gm{}) \to Sp_{2n+2}/(Sp_{2n} \times Sp_2)$ induced by the inclusion of $\gm{} \hookrightarrow Sp_2$ as a maximal torus.  Here $Sp_{2n+2}/(Sp_{2n} \times \gm{})$ is a model for the standard Jouanolou device over ${\mathbb P}^{2n+1}$ for every $n \geq 0$.  One may show that $\mathrm{HP}^n$ is $\aone$-$1$-connected.  Then, since $\pone$ is $\aone$-nilpotent by Theorem \ref{thm:flagmanifoldaonenilpotent} below one may conclude by appealing to Proposition \ref{prop:totalspaceaonenilpotent}.
\end{rem}

The next result is an analog in $\aone$-algebraic topology of the classical fact that $BO(2n+1)$ is a simple space.

\begin{thm}
\label{thm:bglnoddaonesimple}
If $k$ is a field, then for every integer $n \geq 0$, the $k$-space $BGL_{2n+1}$ is $\aone$-simple.
\end{thm}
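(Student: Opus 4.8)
The plan is to realize $BGL_{2n+1}$ as the total space of an $\aone$-fiber sequence with $\aone$-simply connected base and $\aone$-simple fiber, and then invoke Proposition~\ref{prop:totalspaceaonenilpotent}. The case $n=0$ is immediate: $BGL_1 = B\gm{}$ is $\aone$-simple since $\bpi_1^{\aone}(B\gm{}) = \gm{}$ is abelian and its higher $\aone$-homotopy sheaves vanish. For $n \geq 1$, I would take $G = SL_{2n+2}$ (split, simply connected, semi-simple) and let $P \subset SL_{2n+2}$ be the parabolic subgroup stabilizing a line in the standard representation, so that $SL_{2n+2}/P \cong {\mathbb P}^{2n+1}_k$ and the $P$-torsor $SL_{2n+2} \to SL_{2n+2}/P$ is Zariski, hence Nisnevich, locally trivial. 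Theorem~\ref{thm:buildingfibersequences} then yields an $\aone$-fiber sequence
\[
{\mathbb P}^{2n+1}_k \longrightarrow BP \longrightarrow BSL_{2n+2}.
\]

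Next I would identify $BP$ with $BGL_{2n+1}$ up to $\aone$-weak equivalence. The parabolic $P$ fits into a (split) short exact sequence of sheaves of groups $1 \to U \to P \to GL_{2n+1} \to 1$, where $U \cong \ga^{2n+1}$ is the (abelian) unipotent radical and the Levi quotient is $GL_{2n+1}$. Since $B\ga$ is $\aone$-contractible, so is $BU$, and hence the simplicial fiber sequence $BU \to BP \to BGL_{2n+1}$ shows, via the long exact sequence of $\aone$-homotopy sheaves together with the fact that $BP$ and $BGL_{2n+1}$ are simplicially connected, that $BP \to BGL_{2n+1}$ is an $\aone$-weak equivalence. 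Combining this with the previous fiber sequence, we obtain an $\aone$-fiber sequence
\[
{\mathbb P}^{2n+1}_k \longrightarrow BGL_{2n+1} \longrightarrow BSL_{2n+2}.
\]

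It then remains to verify the hypotheses of Proposition~\ref{prop:totalspaceaonenilpotent} (whose perfectness hypothesis we dispose of at the outset by base-changing to the perfect closure of $k$: every scheme and sheaf in sight is pulled back from the prime field, and $\aone$-simplicity is insensitive to this base change by the results recalled after Theorem~\ref{thm:unstableconnectivity}). Both ${\mathbb P}^{2n+1}_k$ and $BSL_{2n+2}$ are $\aone$-connected, and exactness of the fiber sequence at $\bpi_0^{\aone}$ forces $BGL_{2n+1}$ to be $\aone$-connected as well; the base $BSL_{2n+2}$ is $\aone$-simply connected because $SL_{2n+2}$ is $\aone$-connected, so that $\bpi_1^{\aone}(BSL_{2n+2}) = 1$; and the fiber ${\mathbb P}^{2n+1}_k$ is $\aone$-simple by Proposition~\ref{prop:p2n+1aonesimple}, which applies precisely because $n \geq 1$. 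Proposition~\ref{prop:totalspaceaonenilpotent} then gives that $BGL_{2n+1}$ is $\aone$-simple. The step I expect to be most delicate is the identification $BP \simeq BGL_{2n+1}$: one must check that collapsing the unipotent radical really is an $\aone$-weak equivalence of \emph{classifying spaces} (not merely of the underlying group schemes), being careful with the low-degree bookkeeping since $BGL_{2n+1}$ is not a priori $\aone$-local.
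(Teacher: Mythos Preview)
Your proposal is correct and follows essentially the same strategy as the paper: build an $\aone$-fiber sequence with base $BSL_{2n+2}$ and fiber (a model of) ${\mathbb P}^{2n+1}$, then invoke Proposition~\ref{prop:totalspaceaonenilpotent} together with Proposition~\ref{prop:p2n+1aonesimple}.

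The one organizational difference is where the unipotent radical gets absorbed. The paper bypasses the parabolic $P$ entirely: it embeds $GL_{2n+1}$ directly into $SL_{2n+2}$ as the Levi subgroup via $X \mapsto \operatorname{diag}(X,\det X^{-1})$ and applies Theorem~\ref{thm:buildingfibersequences} with $H = GL_{2n+1}$. The resulting fiber $SL_{2n+2}/GL_{2n+1}$ is then the standard Jouanolou device over ${\mathbb P}^{2n+1}$, hence $\aone$-weakly equivalent to it. In your version the unipotent radical lives on the total-space side, forcing you to argue that $BP \to BGL_{2n+1}$ is an $\aone$-weak equivalence; in the paper's version it lives on the fiber side, where the affine-bundle projection $SL_{2n+2}/GL_{2n+1} \to {\mathbb P}^{2n+1}$ is visibly an $\aone$-weak equivalence. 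The paper's packaging thus sidesteps exactly the step you flagged as delicate, but the two arguments are otherwise the same. Your remark about disposing of the perfectness hypothesis in Proposition~\ref{prop:totalspaceaonenilpotent} by base change to the prime field is reasonable; the paper does not comment on this explicitly.
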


\begin{proof}
Again, we suppress $k$ from the notation.  For $n = 0$, the assertion is that $B\gm{}$ is $\aone$-simple, which follows from the fact that $B\gm{}$ admits an $\aone$-$h$-space structure.  Therefore, we assume $n > 0$.

We begin by considering the homomorphism $GL_m \to SL_{m+1}$ defined at the level of points by sending an invertible $m \times m$-matrix $X$ to the block matrix $(X,\det X^{-1})$.  This homomorphism identifies $GL_m$ as the Levi factor of a parabolic subgroup in $SL_{m+1}$.  The quotient morphism $SL_{m+1} \to SL_{m+1}/GL_{m}$ is Zariski and hence Nisnevich locally trivial, so Theorem~\ref{thm:buildingfibersequences} yields an $\aone$-fiber sequence of the form
\[
SL_{m+1}/GL_m \longrightarrow BGL_m \longrightarrow BSL_{m+1}.
\]
The fiber here is identified as the standard Jouanolou device over ${\mathbb P}^{m}$ (i.e., the complement of the incidence divisor in the product of ${\mathbb P}^m \times {\mathbb P}^m$, where the second copy of projective space is viewed as the dual of the first).

If $m = 2n+1$ is odd, then it follows from Proposition~\ref{prop:p2n+1aonesimple} and the identifications described in the preceding paragraph that $SL_{2n+2}/GL_{2n+1}$ is $\aone$-simple.  Since $BSL_{m+1}$ is always $\aone$-$1$-connected, the result follows from Proposition~\ref{prop:totalspaceaonenilpotent}.
\end{proof}

\subsubsection*{Examples of $\aone$-nilpotent spaces}
Examples of $\aone$-nilpotent spaces may be built from those in Example~\ref{ex:basicexamples} by taking homotopy fibers of maps.

\begin{ex}
\label{ex:gems}
Any finite product of ($\aone$-connected) spaces of the form $K(\mathbf{A},i)$ is also an $\aone$-simple space and thus $\aone$-nilpotent.  By an $\aone$-polyGEM, we will mean a space contained in the smallest class of spaces that i) contains finite products of Eilenberg--Mac Lane spaces, and ii) is closed under the operation of taking homotopy fibers of morphisms of $\aone$-polyGEMs.  A straightforward induction argument appealing to Corollary~\ref{cor:weaklyaonenilpotentfibersequences} guarantees that such spaces are automatically $\aone$-nilpotent.  For example, any finite Postnikov truncation of an $\aone$-nilpotent space is an $\aone$-polyGEM.
\end{ex}

More interesting examples of $\aone$-nilpotent spaces are constructed in the following result.

\begin{thm}
\label{thm:flagmanifoldaonenilpotent}
Suppose $k$ is a perfect field, and assume $G$ is a split, semi-simple, simply connected $k$-group scheme, and let $B$ be a split Borel subgroup.  For every integer $n \geq 0$, the space $\Omega_{\gm{\sma n}}G/B$ is $\aone$-nilpotent.
\end{thm}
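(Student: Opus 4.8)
The plan is to reduce to the case $n=0$, i.e., to show that $G/B$ itself is $\aone$-nilpotent, and then invoke Theorem~\ref{thm:gmloopsaonenilpotent} to pass to the iterated $\gm{}$-loop spaces $\Omega_{\gm{\sma n}}G/B$. So the heart of the matter is the statement: for $G$ split, semi-simple, simply connected over a perfect field $k$, and $B$ a split Borel subgroup, the flag variety $G/B$ is $\aone$-nilpotent. First I would recall that since $B$ is split solvable, $B = T \ltimes U$ with $U$ split unipotent and $T$ a split torus; in particular $B$ is $\aone$-homotopy equivalent to $T$ (the unipotent radical $U$ is an iterated $\ga$-bundle, hence $\aone$-contractible, so $BU$ is $\aone$-contractible and $BB \simeq BT \simeq B\gm{\sma r}$ where $r$ is the rank). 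Since $G$ is split semi-simple simply connected, the $B$-torsor $G \to G/B$ is Zariski (hence Nisnevich) locally trivial, so Theorem~\ref{thm:buildingfibersequences} gives an $\aone$-fiber sequence
\[
G/B \longrightarrow BB \longrightarrow BG.
\]
Here $BG$ is $\aone$-$1$-connected (as $G$ is split semi-simple simply connected, $\bpi_1^{\aone}(BG) = \bpi_0^{\aone}(G) = 1$ and $\bpi_2^{\aone}(BG) = \bpi_1^{\aone}(G) = 1$ by Morel's computations).

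**Key steps.** The next step is to identify the $\aone$-homotopy type of the fiber more carefully and to set up the right long exact sequence. From the $\aone$-fiber sequence above, $\bpi_1^{\aone}(G/B)$ surjects onto $\ker(\bpi_1^{\aone}(BB) \to \bpi_1^{\aone}(BG)) = \bpi_1^{\aone}(BB) = \bpi_0^{\aone}(B) = \bpi_0^{\aone}(T) = T$ (using $\aone$-$1$-connectivity of $BG$), and the kernel of this surjection is the image of $\bpi_2^{\aone}(BG)$ which vanishes; hence $\bpi_1^{\aone}(G/B) \cong T = \gm{\sma r}$, an abelian (indeed strictly $\aone$-invariant) sheaf of groups, so in particular $\bpi_1^{\aone}(G/B)$ is $\aone$-nilpotent. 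Thus to conclude $G/B$ is $\aone$-nilpotent it suffices, by Definition~\ref{defn:aonenilpotentspace} together with Corollary~\ref{cor:checkingaonenilpotence}, to show that the action of $\bpi_1^{\aone}(G/B) \cong T$ on the higher $\aone$-homotopy sheaves of the $\aone$-universal cover $\widetilde{G/B}$ is $\aone$-nilpotent. The natural candidate for $\widetilde{G/B}$ is the total space of the pullback of $G \to G/T$ (equivalently $\Laone$ of a suitable $T$-torsor): concretely, the $\aone$-universal cover of $G/B$ should be $\aone$-weakly equivalent to $G$ itself — indeed $G \to G/B$ is a $B$-torsor, $B \simeq_{\aone} T$, and $G$ is $\aone$-$1$-connected, so $G \to G/B$ presents $G$ as the $\aone$-$T$-covering space. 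Then the $T$-action on $\widetilde{G/B} \simeq G$ is by right translation (deck transformations), which does not preserve base-points but whose effect on homotopy sheaves is what we must control. Following Remark~\ref{rem:trivialityofactionofpi1}, it suffices to check on sections over finitely generated separable field extensions $L/k$ that the action of each $t \in T(L) = (L^\times)^r$ is null-$\aone$-homotopic; but right translation by an element of $G(L)$ on $G$ is $\aone$-homotopic to the identity precisely when... here one uses that $G$ is $\aone$-connected, so any two $L$-points of $G$ are connected by an $\aone$-chain, whence right translation by $t$ is $\aone$-homotopic to right translation by the identity, i.e., to $\mathrm{id}_G$. Therefore the $T$-action on the higher $\aone$-homotopy sheaves of $G$ is trivial, so in particular $\aone$-nilpotent, and $G/B$ is $\aone$-nilpotent.

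**Finishing.** Finally, I would apply Theorem~\ref{thm:gmloopsaonenilpotent}: since $G/B$ is a pointed $\aone$-nilpotent space (it is $\aone$-connected as $\bpi_0^{\aone}(G/B) = 1$, and we have just shown $\aone$-nilpotence), the iterated contraction $\Omega_{\gm{\sma n}}(G/B)$ is $\aone$-nilpotent for every $n \geq 0$. This completes the proof.

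**Main obstacle.** The step I expect to be the crux is the precise identification of the $\aone$-universal cover of $G/B$ with $G$ (more precisely, showing $\widetilde{G/B} \simeq_{\aone} G$) and the verification that right translation by a rational point of $G$ acts trivially on $\aone$-homotopy sheaves. The first point requires knowing $G$ is $\aone$-$1$-connected for $G$ split semi-simple simply connected (Morel), and that $B \simeq_{\aone} T$ so that the $B$-torsor $G \to G/B$ really does realize the $T$-covering. The second point — triviality of the translation action — is the genuine content: one must argue that $\aone$-connectedness of $G$ forces right-translation-by-$t$ to be $\aone$-homotopic to the identity, which is the motivic analog of the classical fact that a path-connected topological group acts trivially on homotopy groups of itself by translation, but it needs the observation that an $\aone$-chain of points in $G(L)$ connecting $t$ to $e$ induces an $\aone$-homotopy of the corresponding translations. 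An alternative, possibly cleaner route to this last point is to use Proposition~\ref{prop:totalspaceaonenilpotent} applied to a fiber sequence with $\aone$-simply connected base whose fiber is manifestly $\aone$-nilpotent (e.g., iterating the partial flag fibrations $G/B \to G/P$ with fibers smaller flag varieties, ultimately bottoming out at $\pone = \bpi_1^{\aone}$-model cases), building $G/B$ up by a tower of fibrations — but that requires more care about which intermediate quotients are $\aone$-simply connected, so the universal cover approach is likely the most direct.
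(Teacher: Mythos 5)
There is a genuine error in the middle of your argument. The claim $\bpi_2^{\aone}(BG) = \bpi_1^{\aone}(G) = 1$ is false: ``simply connected'' in the algebraic-group sense does not imply $\aone$-simply connected. For instance $\bpi_1^{\aone}(SL_2) \cong \K^{MW}_2$, and in general $\bpi_2^{\aone}(BG)$ is $\K^M_2$ or $\K^{MW}_2$ depending on the Dynkin type (see Remark~\ref{rem:nonabelianaonenilpotent}). Consequently $\bpi_1^{\aone}(G/B)$ is \emph{not} isomorphic to $T$: it is a central extension of $T$ by $\bpi_2^{\aone}(BG)$, which is non-abelian in general --- already for $G = SL_2$, where $\bpi_1^{\aone}(\pone)$ is a non-abelian central extension of $\gm{}$ by $\K^{MW}_2$ (Example~\ref{ex:pi1poneaonenilpotent}). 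For the same reason $G$ is not the $\aone$-universal cover of $G/B$ (it is not $\aone$-$1$-connected), so the deck-transformation argument is applied to the wrong space. As written, your argument would prove that $G/B$ is $\aone$-\emph{simple}, which is false; the entire point of this theorem is that $G/B$ is nilpotent despite having non-abelian $\bpi_1^{\aone}$.

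The repair is short and is what the paper does: you already have the $\aone$-fiber sequence $G/B \to BT \to BG$ (obtained from the $\aone$-weak equivalence $G/T \to G/B$, Nisnevich-local triviality of $G \to G/T$, and Theorem~\ref{thm:buildingfibersequences}). Now simply invoke Theorem~\ref{thm:aonenilpotenthomotopyfiber}: all three spaces are $\aone$-connected and the total space $BT$ is $\aone$-simple, hence $\aone$-nilpotent, so the fiber $G/B$ is $\aone$-nilpotent. No identification of $\bpi_1^{\aone}(G/B)$ or of the universal cover is needed. Your reduction to the case $n=0$ and the concluding appeal to Theorem~\ref{thm:gmloopsaonenilpotent} are exactly right.
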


\begin{proof}
We first treat the case $n = 0$.  Fix a split maximal torus $T \subset G$ and observe that there are induced maps
\[
G/T \longrightarrow G/B.
\]
The above morphism is Zariski locally trivial with affine space fibers and thus an $\aone$-weak equivalence.  Moreover the $T$-torsor $G \to G/T$ is Zariski and hence Nisnevich locally trivial (and thus Nisnevich locally split).  Since the map $G/T \to G/B$ is an $\aone$-weak equivalence, Theorem~\ref{thm:buildingfibersequences} allows us to conclude that there is an $\aone$-fiber sequence of the form
\[
G/B \longrightarrow BT \longrightarrow BG.
\]
Since $BT$ is $\aone$-nilpotent ($\aone$-simple in fact) and all spaces are $\aone$-connected, the result follows from Theorem \ref{thm:aonenilpotenthomotopyfiber}.  For $n \geq 0$, we conclude by appeal to Theorem \ref{thm:gmloopsaonenilpotent}.
\end{proof}

\begin{rem}
\label{rem:nonabelianaonenilpotent}
In the simplest case $G = SL_2$ the above result shows that $\pone$ is $\aone$-nilpotent.  Indeed, in that case $\bpi_1^{\aone}(\pone)$ is non-abelian by Example~\ref{ex:pi1poneaonenilpotent}.  More generally, the fiber sequence in the statement together with the facts that $\bpi_1^{\aone}(BT) = T$ and $BT$ is $\aone$-$1$-truncated yield a central extension of the form
\[
1 \longrightarrow \bpi_2^{\aone}(BG) \longrightarrow \bpi_1^{\aone}(G/B) \longrightarrow T \longrightarrow 1.
\]
For simplicity, assume $G$ is a split, simple, simply-connected $k$-group scheme.  In that case, the description of $\bpi_2^{\aone}(BG)$ depends on the Dynkin classification of $G$.  For example, if $G$ is not of type $C_n$, then $\bpi_2^{\aone}(BG)$ is $\K^M_2$, while if $G$ is of type $C_n$, then $\bpi_2^{\aone}(BG)$ is $\K^{MW}_2$.  For $G = SL_n$, this result appears in \cite[Theorem 7.20]{MField}.  For $k$ infinite and general $G$, this appears in \cite{VoelkelWendt}.  A uniform proof, without assumption on cardinality of $k$ appears in \cite[Theorem 1]{MorelSawant}.  Moreover, it is known that the resulting extension is non-trivial in general and $\bpi_1^{\aone}(G/B)$ is a non-abelian sheaf of groups.
\end{rem}

\subsubsection*{Examples of locally $\aone$-nilpotent spaces}
In contrast to the situation for $BGL_{2n+1}$ discussed in Theorem~\ref{thm:bglnoddaonesimple}, one knows that $\bpi_1^{\aone}(BGL_{2n}) = \gm{}$ can act non-trivially on $\bpi_i^{\aone}(BGL_{2n})$ (e.g., for $n = 1$, see \cite[Corollary 4.9]{AsokFaselThreefolds}).  Thus, $BGL_{2n}$ is not $\aone$-simple.  Nevertheless, we will see that under appropriate assumptions on the base field, $BGL_{2n}$ is locally $\aone$-nilpotent.  To show this, we first establish a local $\aone$-nilpotence result for even-dimensional projective spaces.  As discussed in Remark~\ref{rem:p2nnotaonenilpotent}, ${\mathbb P}^{2n}_k$ is {\em not} $\aone$-nilpotent for general fields $k$.  We will furthermore see that $BGL_{2n}$ is also {\em not} $\aone$-nilpotent for general fields $k$.

\begin{prop}
\label{prop:p2npointwiseaonenilpotent}
Suppose $n \geq 1$ is an integer.  If $k$ is not formally real, then ${\mathbb P}^{2n}_k$ is weakly $\aone$-nilpotent.
\end{prop}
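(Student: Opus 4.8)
The plan is to verify directly the two conditions defining \emph{weakly $\aone$-nilpotent} in Definition~\ref{defn:aonenilpotentspace} for the space $\mathbb{P}^{2n}_k$. First, since $n\geq 1$ we have $\bpi_1^{\aone}(\mathbb{P}^{2n}_k)\cong\gm{}$ (part of Morel's computation of $\aone$-fundamental sheaves of projective spaces; compare the discussion around Example~\ref{ex:pi1poneaonenilpotent}), and $\gm{}$, being a strongly $\aone$-invariant sheaf of abelian groups, is $\aone$-nilpotent (the filtration $\gm{}\supset 1$ is an $\aone$-central series). So it remains to show $\mathbb{P}^{2n}_k$ is locally $\aone$-nilpotent, and here I would invoke Corollary~\ref{cor:checkingaonenilpotence}: since $\bpi:=\bpi_1^{\aone}(\mathbb{P}^{2n})=\gm{}$ is $\aone$-nilpotent, $\mathbb{P}^{2n}$ is locally $\aone$-nilpotent if and only if $\bpi$ acts locally $\aone$-nilpotently on $\bpi_i^{\aone}(\widetilde{\mathbb{P}^{2n}})$ for all $i>1$. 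The $\aone$-universal cover $\widetilde{\mathbb{P}^{2n}}$ is $\mathbb{A}^{2n+1}\setminus 0$: the tautological $\gm{}$-torsor gives an $\aone$-fiber sequence $\mathbb{A}^{2n+1}\setminus 0\to\mathbb{P}^{2n}\to B\gm{}$ (as recalled in the proof of Proposition~\ref{prop:p2n+1aonesimple}), and $\mathbb{A}^{2n+1}\setminus 0$ is $\aone$-$(2n-1)$-connected, hence $\aone$-simply connected for $n\geq 1$. Under this identification the relevant $\bpi$-action is the scaling action of $\gm{}$ on $\mathbb{A}^{2n+1}\setminus 0$.

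Next I would analyze this scaling action. For a finitely generated separable extension $L/k$ and $u\in L^{\times}$, scaling by $u$ is a self-map of $\mathbb{A}^{2n+1}\setminus 0$ of motivic Brouwer degree $\langle u\rangle^{2n+1}=\langle u\rangle$ (the computation of the proof of Proposition~\ref{prop:p2n+1aonesimple}, except that the exponent $2n+1$ is now odd, so the square class $\langle u\rangle$ survives). Since $n\geq 1$, $\mathbb{A}^{2n+1}\setminus 0\simeq S^{2n}_s\sma\gm{\sma(2n+1)}$ is a twofold simplicial suspension, hence a motivic co-$H$-space, so its $\aone$-homotopy sheaves are naturally modules over the Grothendieck--Witt sheaf $\mathbf{GW}=\K^{\MW}_0$, with scaling by $u$ acting as multiplication by $\langle u\rangle$ (for the first nonvanishing sheaf $\bpi_{2n}^{\aone}=\K^{\MW}_{2n+1}$ this follows from the Hurewicz theorem as in Example~\ref{ex:pointwiseaonenilpotent}, and for the higher ones one argues that the $\mathbf{GW}$-module structure together with the degree computation pins down the action). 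Writing $\mathbf{I}\subset\mathbf{GW}$ for the fundamental ideal sheaf and using $\langle u\rangle=1-\langle\langle u\rangle\rangle$ with $\langle\langle u\rangle\rangle\in\mathbf{I}$, the decreasing filtration
\[
\bpi_i^{\aone}(\mathbb{A}^{2n+1}\setminus 0)\supseteq\mathbf{I}\cdot\bpi_i^{\aone}(\mathbb{A}^{2n+1}\setminus 0)\supseteq\mathbf{I}^2\cdot\bpi_i^{\aone}(\mathbb{A}^{2n+1}\setminus 0)\supseteq\cdots
\]
consists of $\gm{}$-stable subsheaves, the induced $\gm{}$-action on each successive subquotient is trivial, and each term is strongly (even strictly) $\aone$-invariant, being an image or kernel of morphisms of strictly $\aone$-invariant sheaves (Theorem~\ref{thm:stableconnectivity}).

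Finally I would check that this filtration is finite on every stalk, which, given the triviality of the action on subquotients, shows the action is locally $\aone$-nilpotent in the sense of Definition~\ref{defn:localnilpotence}. On a stalk one may work over a finitely generated field extension $L/k$; since $k$ is not formally real, so is $L$, so the Witt ring $W(L)$ is torsion, and combining this with the Milnor conjecture ($\mathbf{I}^j/\mathbf{I}^{j+1}\cong\K^M_j/2$) and the Arason--Pfister Hauptsatz ($\bigcap_j I^j(L)=0$) one deduces $I^j(L)=0$ for $j$ large; hence the filtration becomes finite over $L$, as in Example~\ref{ex:pointwisenilpotent}. This shows $\mathbb{P}^{2n}_k$ is locally $\aone$-nilpotent, hence weakly $\aone$-nilpotent.

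The main obstacle is the middle step: precisely identifying the scaling action of $\gm{}$ on the \emph{higher} $\aone$-homotopy sheaves of $\mathbb{A}^{2n+1}\setminus 0$ (for the bottom sheaf Hurewicz reduces it to the Milnor--Witt $K$-theory computation of Example~\ref{ex:pointwiseaonenilpotent}), together with the bookkeeping needed to see that the fundamental-ideal filtration is a genuine locally finite $\gm{}$-central series by strongly $\aone$-invariant subsheaves.
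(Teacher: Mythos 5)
Your overall strategy coincides with the paper's: identify $\bpi_1^{\aone}({\mathbb P}^{2n})=\gm{}$ from the fiber sequence ${\mathbb A}^{2n+1}\setminus 0\to{\mathbb P}^{2n}\to B\gm{}$, reduce via Corollary~\ref{cor:checkingaonenilpotence} to the scaling action of $\gm{}$ on the universal cover ${\mathbb A}^{2n+1}\setminus 0$, and compute that a unit $u$ acts with motivic Brouwer degree $\langle u\rangle^{2n+1}=\langle u\rangle$. The divergence --- and the gap --- is in your final finiteness step. You claim that for a finitely generated extension $L/k$ of a non-formally-real field, torsionness of $W(L)$ together with the Arason--Pfister Hauptsatz forces $I^j(L)=0$ for $j$ large, so that the filtration of $\bpi_i^{\aone}({\mathbb A}^{2n+1}\setminus 0)$ by $\mathbf{I}^j$ is finite on stalks. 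That deduction is not valid: Arason--Pfister only gives $\bigcap_j I^j(L)=0$, and eventual vanishing of the powers of the fundamental ideal genuinely requires an extra hypothesis such as finite \'etale $2$-cohomological dimension --- this is precisely why Example~\ref{ex:pointwisenilpotent} adds that hypothesis before concluding $I^n(L)=0$ for $n$ large. For instance, if $k=\Q(i)(t_1,t_2,\dots)$, then $k$ is not formally real but $I^j(L)\neq 0$ for every $j$ and every finitely generated $L/k$ containing enough of the $t_i$ (the Pfister forms $\langle\langle t_1,\dots,t_j\rangle\rangle$ stay anisotropic), so the $\mathbf{I}^j$-filtration you propose is not stalkwise finite in general.

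The paper's proof avoids this by arguing unit-by-unit rather than uniformly: since $L$ is not formally real, $I(L)$ is the nilradical of $GW(L)$ by Pfister's theorem \cite[Proposition 31.4]{EKM}, hence $1-\langle u\rangle$ is nilpotent in $GW(L)$ for each individual $u\in L^{\times}$, with nilpotency degree allowed to depend on $u$ (and on $L$); this per-unit unipotence is what is meant there by the scaling action being ``homotopically nilpotent,'' and it is the correct statement available over an arbitrary non-formally-real base. You should replace your uniform vanishing claim by this per-unit statement (or restrict to fields of finite \'etale $2$-cohomological dimension if you want the $\mathbf{I}^j$-filtration itself to terminate). The remainder of your argument --- in particular passing from the degree computation to the action on all higher homotopy sheaves via the $\mathbf{GW}$-module structure --- is consistent with what the paper does implicitly in the proof of Proposition~\ref{prop:p2n+1aonesimple}.
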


\begin{proof}
The $\aone$-fiber sequence
\[
{\mathbb A}^{2n+1} \setminus 0 \longrightarrow {\mathbb P}^{2n} \longrightarrow B\gm{}
\]
shows that $\bpi_1^{\aone}({\mathbb P}^{2n}) = \gm{}$, which is, in particular, $\aone$-nilpotent.  To establish local $\aone$-nilpotence, we again appeal to Corollary~\ref{cor:checkingaonenilpotence}.  As in the proof of Proposition \ref{prop:p2n+1aonesimple}, we reduce to showing the action of $\gm{}$ on ${\mathbb A}^{2n+1} \setminus 0$ is homotopically nilpotent.  If $L/k$ is an extension field, and $u \in L^{\times}$ is a unit, then the scaling action by $u$ has motivic degree $\langle u^{2n+1} \rangle = \langle u \rangle$.  On the other hand, $1 - \langle u \rangle$ lies in $I(L)$.  Since $k$ is not formally real, so is $L$ and a well-known result of Pfister \cite[Proposition 31.4]{EKM} asserts that $I(L)$ is the nilradical of $GW(L)$.  Therefore, some power of $1 - \langle u \rangle$ is zero and we conclude. 
\end{proof}

\begin{rem}
\label{rem:p2nnotaonenilpotent}
If $k = {\mathbb R}$ or, more generally, any formally real field, then ${\mathbb P}^{2n}_k$ is not locally nilpotent.  Indeed, under this hypothesis, the filtration discussed in Proposition~\ref{prop:p2npointwiseaonenilpotent} remains infinite stalkwise.  In fact, the lower central series for the $\bpi_1^{\aone}({\mathbb P}^{2n}_k)$-action on the homotopy sheaf $\bpi_{2n}^{\aone}({\mathbb P}^{2n}) = \K^{MW}_{2n+1}$ is not stalkwise finite.  One may write down the $\aone$-$\gm{}$-lower central series for this sheaf as follows.  The kernel of the canonical epimorphism $\K^{MW}_{2n+1} \to \K^M_{2n+1}$ is $\mathbf{I}^{2n+2}$; the induced $\gm{}$-action on $\K^M_{2n+1}$ is trivial.  The induced action of $\gm{}$ on $\mathbf{I}^{2n+2}$ is, upon taking sections over fields, the action by multiplication by units.  As a consequence, the powers of $\mathbf{I}^j \subset \mathbf{I}^{2n+2}$, $j \geq 2n+2$ then yield a filtration for which $\gm{}$-acts trivially on the successive subquotients.
\end{rem}

\begin{thm}
\label{thm:bglevenlocallyaonenilpotent}
If $k$ is not formally real, then the $k$-space $BGL_{2n}$ is weakly $\aone$-nilpotent.
\end{thm}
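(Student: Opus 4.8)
The plan is to follow the proof of Theorem~\ref{thm:bglnoddaonesimple} almost verbatim, feeding in the even-dimensional input Proposition~\ref{prop:p2npointwiseaonenilpotent} in place of the odd-dimensional Proposition~\ref{prop:p2n+1aonesimple}. Embedding $GL_{2n} \hookrightarrow SL_{2n+1}$ as the Levi factor of a parabolic subgroup (via $X \mapsto (X,\det X^{-1})$), the quotient $SL_{2n+1} \to SL_{2n+1}/GL_{2n}$ is Zariski, hence Nisnevich, locally trivial, so Theorem~\ref{thm:buildingfibersequences} supplies an $\aone$-fiber sequence
\[
SL_{2n+1}/GL_{2n} \longrightarrow BGL_{2n} \longrightarrow BSL_{2n+1},
\]
in which, exactly as recalled in the proof of Theorem~\ref{thm:bglnoddaonesimple}, the fiber $SL_{2n+1}/GL_{2n}$ is the standard Jouanolou device over $\mathbb{P}^{2n}_k$ and is therefore $\aone$-weakly equivalent to $\mathbb{P}^{2n}_k$.

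First I would note that, since $k$ is not formally real, Proposition~\ref{prop:p2npointwiseaonenilpotent} gives that $\mathbb{P}^{2n}_k$ is weakly $\aone$-nilpotent, hence in particular locally $\aone$-nilpotent; as local $\aone$-nilpotence depends only on the $\aone$-homotopy type, the fiber $SL_{2n+1}/GL_{2n}$ is locally $\aone$-nilpotent (and $\aone$-connected). The base $BSL_{2n+1}$ is $\aone$-$1$-connected because $SL_{2n+1}$ is split, simply connected and semi-simple, hence $\aone$-connected. All three spaces in the fiber sequence being pointed and $\aone$-connected, Proposition~\ref{prop:totalspaceaonenilpotent}, applied with the ``locally $\aone$-nilpotent'' option, yields that $BGL_{2n}$ is locally $\aone$-nilpotent. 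It then remains, by Definition~\ref{defn:aonenilpotentspace}(3), to see that $\bpi_1^{\aone}(BGL_{2n})$ is an $\aone$-nilpotent sheaf of groups; but $\bpi_1^{\aone}(BGL_{2n}) = \gm{}$, which is abelian and therefore $\aone$-nilpotent. Hence $BGL_{2n}$ is weakly $\aone$-nilpotent, as claimed.

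I do not anticipate a genuine obstacle here: the substantive content has already been isolated in Proposition~\ref{prop:p2npointwiseaonenilpotent} — whose heart is Pfister's theorem that $I(L)$ is the nilradical of $GW(L)$ whenever $L$ is not formally real — and in the permanence statement Proposition~\ref{prop:totalspaceaonenilpotent}, so the argument above is essentially bookkeeping of their hypotheses. The one point deserving attention is the standing perfectness assumption in Proposition~\ref{prop:totalspaceaonenilpotent} (and, for that matter, in Proposition~\ref{prop:p2npointwiseaonenilpotent}): if $k$ is not perfect one should either re-examine that permanence result over an arbitrary base — the higher $\aone$-homotopy sheaves in play above are strictly $\aone$-invariant, which is what its proof uses — or pass to the perfect closure $k^{\mathrm{perf}}$ and descend, using that base change to $k^{\mathrm{perf}}$ preserves and reflects the relevant $\aone$-homotopy-sheaf data.
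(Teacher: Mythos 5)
Your proof is correct and follows the same route as the paper: the $\aone$-fiber sequence $\mathbb{P}^{2n}\simeq SL_{2n+1}/GL_{2n}\to BGL_{2n}\to BSL_{2n+1}$, Proposition~\ref{prop:p2npointwiseaonenilpotent} for the fiber, and Proposition~\ref{prop:totalspaceaonenilpotent} for the total space. Your explicit check that $\bpi_1^{\aone}(BGL_{2n})=\gm{}$ is $\aone$-nilpotent (needed for the ``weakly'' part of Definition~\ref{defn:aonenilpotentspace}) and your caveat about the perfectness hypothesis are sensible additions that the paper leaves implicit.
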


\begin{proof}
As in the proof of Theorem~\ref{thm:bglnoddaonesimple}, there is an $\aone$-fiber sequence of the form
\[
{\mathbb P}^{2n} \longrightarrow BGL_{2n} \longrightarrow BSL_{2n+1}
\]
where the base is $\aone$-simply connected.  Since the fiber is locally $\aone$-nilpotent by appeal to Proposition~\ref{prop:p2npointwiseaonenilpotent}, the result then follows by appeal to Proposition~\ref{prop:totalspaceaonenilpotent}.
\end{proof}

\begin{rem}
\label{rem:bgl2nnotlocallyaonenilpotent}
Again, if $k = {\mathbb R}$ (or, more generally, any formally real field), then the space $BGL_{2n}$ fails to be locally $\aone$-nilpotent.  Indeed, along the lines of Remark~\ref{rem:p2nnotaonenilpotent} one can explicitly show that $\bpi_{2n}^{\aone}(BGL_{2n})$ does not have a stalkwise finite $\gm{}$-central series.  To see this, begin by observing that $\bpi_{2n}^{\aone}(BGL_{2n})$ is described in \cite[Theorem 1.1]{AsokFaselSpheres}: it is an extension of $\K^Q_{2n}$ by a sheaf $\mathbf{T}_{2n+1}$ that is the cokernel of a certain morphism $\K^{Q}_{2n+1} \to \K^{MW}_{2n+1}$.  The composite morphism $\K^Q_{2n+1} \to \mathbf{I}^{2n+1}$ is trivial by \cite[Lemma 3.13]{AsokFaselSpheres}, and $\mathbf{T}_{2n+1}$ is a fiber product of $\mathbf{I}^{2n+1}$ and another sheaf $\mathbf{S}_{2n+1}$ whose precise description is unimportant.  Appealing to the long exact sequence in $\aone$-homotopy sheaves attached to the $\aone$-fiber sequence appearing in the proof of Theorem~\ref{thm:bglevenlocallyaonenilpotent}, one sees that $\mathbf{T}_{2n+1}$ is precisely the image of $\bpi_{2n}^{\aone}({\mathbb P}^{2n})$ in $\bpi_{2n}^{\aone}(BGL_n)$.  The $\gm{}$-action on the copy of $\mathbf{I}^{2n+1}$ in $\mathbf{T}^{2n+1}$ thus does not have a stalkwise finite $\aone$-$\gm{}$-central series by the conclusion of Remark~\ref{rem:p2nnotaonenilpotent}.
\end{rem}

\section{Nilpotence, $R$-localization and $\aone$-homology}
\label{s:nilpotenceaonehomology}
In this section, we study $R$-localization in $\aone$-homotopy theory.  We introduce a notion of $R$-homology equivalence in $\aone$-homotopy theory, and show that $R$-localization is well-behaved on locally $\aone$-nilpotent spaces.  Section~\ref{ss:sheafcohomology} introduces $\aone$-homology (following F. Morel) and studies its various properties.  Section~\ref{ss:relativehurewicz} considers various refinements of Hurewicz theorems in $\aone$-homotopy theory; in particular, we finish the proof that $\aone$-nilpotent spaces have $\aone$-Postnikov towers that admit principal refinements (see Theorem \ref{thm:moorepostnikovcharacterization}).  Finally, Section~\ref{ss:rlocalizationnilpotentspaces} studies $R$-localization for $\aone$-nilpotent spaces; Theorems~\ref{thm:aonelocalizationofnilpotentspaces} and \ref{thm:Raonelocalizationoffibersequences} show that $R$-localization has good properties for (locally) $\aone$-nilpotent spaces.

\subsection{Sheaf cohomology and $\aone$-homology}
\label{ss:sheafcohomology}
In this section, we study $\aone$-homology theory and establish analogs of classical results about the $\aone$-homology of nilpotent spaces.  We establish generalizations of the known relative Hurewicz theorems in $\aone$-homotopy theory (keeping track of actions of the $\aone$-fundamental group).

\subsubsection*{Sheaf cohomology}
We briefly recall some notation related to sheaf cohomology in the context of local homotopy theory; we refer the reader to \cite[Chapter 8]{Jardine} for a detailed treatment of this circle of ideas, which goes back to the foundational work of Brown--Gersten.  Suppose $(\mathbf{C},\tau)$ is a site with enough points.  We write $\mathrm{Ch}_{\mathbf{C}}$ for the abelian category of chain complexes (i.e., differential of degree $-1$) of presheaves of abelian groups on $\mathbf{C}$ situated in degrees $\geq 0$. The category $\mathrm{Ch}_{\mathbf{C}}$ can be equipped with an injective local model structure: cofibrations are monomorphisms, weak equivalences are Nisnevich local quasi-isomorphisms (i.e., morphisms that induce isomorphisms on homology sheaves), and fibrations are defined by the lifting property.  We write $\D(\Ab(\mathbf{C}))$ for the homotopy category of $\mathrm{Ch}_{\mathbf{C}}$, i.e., the derived category of (bounded below complexes of) presheaves of abelian groups on ${\mathbf{C}}$.

If $\mathscr{X} \in \sPre(\mathbf{C})$ is a space, the presheaf $\Z[\mathscr{X}]$ is a simplicial abelian group.  We abuse terminology and write $\Z[\mathscr{X}]$ for the associated normalized chain complex which is an object of $\mathrm{Ch}_{\mathbf{C}}$ (the Dold--Kan correspondence).  Likewise, if $A$ is a chain complex in non-negative degree, then we write $K(A) \in \sPre(\mathbf{C})$ for the associated Eilenberg--Mac Lane space \cite[p. 212]{Jardine}.

We write $\H_i(\mathscr{X})$ for the associated homology sheaves of $\Z[\mathscr{X}]$, and we write $\tilde{\H}_i(\mathscr{X})$ for the associated reduced homology sheaves of $\mathscr{X}$ (i.e., the homology sheaves of the kernel $\tilde{\Z}[\mathscr{X}]$ of $\Z[\mathscr{X}] \to \Z[\ast]$).  Note that taking (reduced) homology sheaves commutes with taking stalks, i.e., if $s$ is a point of $(\mathbf{C},\tau)$, then $s^*\H_i(\mathscr{X}) = H_i(s^*\mathscr{X})$ (the latter being homology with integral coefficients of a simplicial set); we use this fact freely in the sequel.

\subsubsection*{$\aone$-homology}
We now recall $\aone$-homology and Morel's (effective) $\aone$-derived category.  The $\aone$-derived category $\Daone{k}$ is obtained by Nisnevich and $\aone$-localizing the derived category of presheaves of abelian groups on $\Sm_k$ (this is the construction described in \cite[\S 6.2]{MField}, but our notation for the category contains the superscript $\eff$ to distinguish from a corresponding ``$\gm{}$-stabilized" version).  We write $\Laone^{ab}$ for the endo-functor that effects this localization; this is an exact endo-functor on the category of chain complexes of presheaves of abelian groups on $\Sm_k$.

If $f: \mathscr{X} \to \mathscr{Y}$ is an $\aone$-weak equivalence in $\Spc_k$, then one knows the induced map $\Z[\mathscr{X}] \to \Z[\mathscr{Y}]$ is an isomorphism in $\Daone{k}$.  Thus, the assignment $\mathscr{X} \to \Z[\mathscr{X}]$ passes to a functor $\ho{k} \to \Daone{k}$.  We set
\[
\Z_{\aone}[\mathscr{X}] := \Laone^{ab}\Z[\mathscr{X}]
\]
and define $\aone$-homology sheaves by the formula
\[
\H_i^{\aone}(\mathscr{X}) := \H_i(\Z_{\aone}[\mathscr{X}]).
\]
As usual, one may also define analogs of reduced homology, and following the standard notation, we write $\tilde{H}_i^{\aone}(\mathscr{X})$ for the reduced $\aone$-homology sheaves of $\mathscr{X}$.

Morel's stable $\aone$-connectivity theorem \cite[Theorem 3]{MStable} implies that if $\mathscr{X}$ is a space, then $\Z_{\aone}[\mathscr{X}]$ is a $(-1)$-connected object of the derived category of Nisnevich sheaves of abelian groups.  In particular, $\H_i^{\aone}(\mathscr{X}) = 0$ for $i < 0$.

If $(\mathscr{X},x)$ is a pointed space, then the Hurewicz homomorphism is induced by a morphism
\[
\mathscr{X} \longrightarrow K(\Z_{\aone}[\mathscr{X}]).
\]
It follows that there are induced actions of $\bpi_1^{\aone}(\mathscr{X})$ on all the $\aone$-homology sheaves.

\begin{rem}
\label{rem:motives}
Voevodsky's derived category of motives is constructed by starting with the ``presheaves with transfers" and then Nisnevich and $\aone$-localizing.  Write $\Sm\Cor_k$ for the category whose objects are smooth schemes and where morphisms are finite correspondences.  One writes $\mathrm{PST}_k$ for the category of contravariant additive functors on $\Sm\Cor_k$; this is called the category of presheaves with transfers.  There is a functor $\Sm_k \to \Sm\Cor_k$ that induces a restriction functor from presheaves with transfers to presheaves of abelian groups.  Voevodsky's derived category of motives $\DM^{\eff}_{k,\Z}$ is constructed by localizing the derived category of chain complexes of presheaves with transfers with respect to Nisnevich and $\aone$-local weak equivalences.  Write $\Z_{tr}[X]$ for the representable functor on $\Sm\Cor_k$ determined by a smooth scheme.

The left Kan extension of the assignment $\Z[X] \to \Z_{tr}[X]$ yields a functor $\Daone{k} \to \DM^{\eff}_{k,\Z}$ (usually called the functor of ``adding transfers").  It follows that any morphism of spaces $f: \mathscr{X} \to \mathscr{Y}$ that induces an isomorphism in the $\aone$-derived category also induces an isomorphism in $\DM^{\eff}_{k,\Z}$.  Nevertheless, the two categories are typically quite far from each other.
\end{rem}

\subsubsection*{$\aone$-homology equivalences and cohomology equivalences}
Suppose $R \subset \Q$ is a subring.  We write $R_{\aone}[\mathscr{X}]$ for $R \tensor \Laone^{ab} \Z[\mathscr{X}]$.  Since tensoring with $R$ is exact and $\Laone^{ab}$ is exact, the order in which we localize and tensor is irrelevant.  We set $\H_i^{\aone}(\mathscr{X},R) := \H_i^{\aone}(R_{\aone}[\mathscr{X}])$.  We now give a number of equivalent characterizations of morphisms inducing an isomorphism on $\aone$-homology.

\begin{prop}
\label{prop:homologicalandcohomologicalequivalences}
The following conditions on a morphism $f: \mathscr{X} \to \mathscr{Y}$ of spaces are equivalent.
\begin{enumerate}[noitemsep,topsep=1pt]
\item The morphism $R_{\aone}[f]: R_{\aone}[\mathscr{X}] \to R_{\aone}[\mathscr{Y}]$ is a quasi-isomorphism.
\item For every integer $i \geq 0$, the morphism $f_*: \H_i^{\aone}(\mathscr{X},R) \to \H_i^{\aone}(\mathscr{Y},R)$ is an isomorphism.
\item For any strictly $\aone$-invariant sheaf $\mathbf{M}$ of $R$-modules and any integer $i \geq 0$, the map $f^*: H^i(\mathscr{Y},\mathbf{M}) \to H^i(\mathscr{X},\mathbf{M})$ is an isomorphism.
\end{enumerate}
\end{prop}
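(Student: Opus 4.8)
The plan is to prove the cycle of implications $(1) \Rightarrow (2) \Rightarrow (3) \Rightarrow (1)$, exploiting the exactness of the functor $\Laone^{ab}$, the exactness of $-\otimes_\Z R$, and the representability of Nisnevich (hyper)cohomology with coefficients in strictly $\aone$-invariant sheaves by motivic Eilenberg--Mac Lane spaces.

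First, the equivalence of $(1)$ and $(2)$ is essentially a definitional unwinding: $R_\aone[f]$ is a quasi-isomorphism in $\Daone{k}$ if and only if it induces isomorphisms on all homology sheaves, and by definition $\H_i^{\aone}(\mathscr{X},R) = \H_i(R_\aone[\mathscr{X}])$, so a morphism of complexes in the $\aone$-derived category is an isomorphism precisely when it is an isomorphism on each $\H_i^{\aone}(-,R)$. (Here one uses that quasi-isomorphism in the injective local model structure on $\mathrm{Ch}_{\mathbf{C}}$ means exactly isomorphism on homology sheaves, and that $\Laone^{ab}$ already lands in $\aone$-local objects, so homology computed there agrees with $\aone$-homology.) This step is routine.

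For $(1) \Rightarrow (3)$: if $R_\aone[f]$ is a quasi-isomorphism, then for any strictly $\aone$-invariant sheaf $\mathbf{M}$ of $R$-modules, $H^i(\mathscr{Y},\mathbf{M}) = [\mathscr{Y}, K(\mathbf{M},i)]_{\ho{k}}$ (since $K(\mathbf{M},i)$ is $\aone$-local because $\mathbf{M}$ is strictly $\aone$-invariant) is computed as $\hom_{\Daone{k}}(\Z_\aone[\mathscr{Y}], \mathbf{M}[i])$; since $\mathbf{M}$ is an $R$-module, $\Z$-linear maps from $\Z_\aone[\mathscr{Y}]$ are the same as $R$-linear maps from $R_\aone[\mathscr{Y}]$, so this equals $\hom_{\D_R}(R_\aone[\mathscr{Y}], \mathbf{M}[i])$, and $R_\aone[f]$ being an iso there gives the isomorphism on $H^i$. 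The reverse implication $(3) \Rightarrow (1)$ is the crux and requires a Whitehead-type argument in the $\aone$-derived category: one shows that if $f^*$ is an isomorphism on cohomology with all strictly $\aone$-invariant $R$-module coefficients, then the cone $C$ of $R_\aone[f]$ is acyclic. By Morel's stable $\aone$-connectivity theorem the cone is $(-1)$-connected; if it were nonzero, let $n$ be minimal with $\H_n^{\aone}(C) \neq 0$. This homology sheaf is strictly $\aone$-invariant (by Morel, as recalled in Theorem~\ref{thm:stableconnectivity} and the surrounding remarks), and it is an $R$-module sheaf because everything was $R$-linearized. Mapping $C$ to $K(\H_n^{\aone}(C)/(\text{something}), n)$ — more precisely, using the truncation and a fundamental class — produces a nonzero cohomology class that contradicts $f^*$ being an isomorphism; one must be slightly careful because cohomology sees the dual, so the right move is to invoke the universal coefficient/representability statement $\hom_{\Daone{k}}(C, \mathbf{M}[n])$ and choose $\mathbf{M} = \H_n^{\aone}(C)$ together with the tautological class $C \to K(\H_n^{\aone}(C), n)$ coming from the Postnikov truncation, which is nonzero, while pulling back to $R_\aone[\mathscr{X}]$ or comparing along $f$ it must vanish.

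The main obstacle is precisely this last step: making the Whitehead/obstruction argument work in $\Daone{k}$ requires knowing that the lowest nonvanishing $\aone$-homology sheaf of an object is strictly $\aone$-invariant and that cohomology with such coefficients detects it, i.e., that the relevant $\mathrm{Ext}$/$\hom$-group in the $\aone$-derived category is nonzero. The first is Morel's theorem; the second follows from the fact that for a $(-1)$-connected complex $C$ with lowest homology $\H_n$, the natural map $C \to \H_n[n]$ (Postnikov truncation in $\Daone{k}$) induces a surjection $\hom(\H_n[n], \H_n[n]) \to$ onto a nonzero group inside $\hom(C, \H_n[n])$, witnessing a nonzero class. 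One then checks naturality in $f$ to derive the contradiction. The bookkeeping with reduced vs.\ unreduced homology and with pointedness is harmless and can be suppressed.
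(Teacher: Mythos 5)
Your proposal is correct and follows essentially the same route as the paper: the equivalence of (1) and (2) is definitional, the implication to (3) is representability of cohomology with strictly $\aone$-invariant $R$-module coefficients (the paper phrases this via the universal coefficient spectral sequence, but the content is the same), and the converse rests on Morel's stable $\aone$-connectivity theorem together with the homotopy $t$-structure on $\Daone{k}$. Your explicit obstruction argument on the lowest nonvanishing homology sheaf of the cone is just the unpacked form of the paper's one-line appeal to the fact that the positive part of the homotopy $t$-structure is generated under extensions by the objects $\mathbf{M}[n]$.
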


\begin{proof}
That (1) $\Leftrightarrow$ (2) is immediate.  That (2) $\Rightarrow$ (3) follows from the universal coefficient spectral sequence.  Indeed, there is a, functorial, strongly convergent spectral sequence of the form
\[
E^{p,q}_2 = Ext^p_{\Ab_k}(\H_q(R_{\aone}[\mathscr{X}]),\mathbf{M}) \Longrightarrow \hom_{D(\Ab_k)}(R_{\aone}[\mathscr{X}],\mathbf{M}[p+q]),
\]
see \cite[Lemma 8.30]{Jardine} for the spectral sequence; the strong convergence follows from the fact that the homology sheaves of $R_{\aone}[\mathscr{X}]$ vanish in degrees $< 0$, which is the stable $\aone$-connectivity theorem.

For the implication (3) $\Rightarrow$ (2), we proceed as follows.  If $\mathbf{C}$ is a $i-1$-connected and $\aone$-local chain complex of $R$-modules, then for any strictly $\aone$-invariant sheaf of $R$-modules $\mathbf{M}$, there is a canonical isomorphism of the form
\[
\hom(H_i^{\aone}(\mathbf{C},R),\mathbf{M}) \cong \mathrm{RHom}(\mathbf{C},\mathbf{M}[i]).
\]  
In particular, we conclude that if $\mathrm{RHom}(\mathbf{C},\mathbf{M}[i])$ vanishes for all $\mathbf{M}$, then $H_i^{\aone}(\mathbf{C})$ vanishes by the Yoneda lemma.  Applying this observation inductively to the cone of the map $f$, the hypothesis in (3) implies inductively that the cone of of $f$ has vanishing $\aone$-homology sheaves in all degrees, and the result follows. 
\end{proof}

\begin{rem}
Extending Remark~\ref{rem:motives}, if $f$ is a morphism of spaces such that $R_{\aone}[f]$ is a quasi-isomorphism, then the induced map of motives with $R$-coefficients is an isomorphism as well.
\end{rem}

\subsubsection*{A homological connectivity lemma}
The next result analyzes how homological connectivity behaves under $\aone$-localization.

\begin{lem}
\label{lem:homologicalconnectivity}
Assume $\mathscr{X}$ is a simplicially connected space and $n \geq 1$ is an integer.  If $\mathscr{X}$ is homologically $(n-1)$-connected, i.e., $\tilde{\H}_i(\mathscr{X})$ vanishes for $i < n$, then the following statements hold:
\begin{enumerate}[noitemsep,topsep=1pt]
\item the sheaves $\tilde{\H}_i^{\aone}(\mathscr{X})$ vanish for $i < n$, and
\item the morphism $\tilde{\H}_n(\mathscr{X}) \to \tilde{\H}_n^{\aone}(\mathscr{X})$ is the initial morphism from $\tilde{\H}_n(\mathscr{X})$ to a strictly $\aone$-invariant sheaf.
\end{enumerate}
\end{lem}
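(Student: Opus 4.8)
The plan is to argue in the $\aone$-derived category, exploiting two features of the $\aone$-localization functor $\Laone^{ab}$: it is exact, and, by Morel's stable $\aone$-connectivity theorem \cite[Theorem 3]{MStable} in the form recalled above, it carries $(-1)$-connected complexes of sheaves of abelian groups to $(-1)$-connected complexes, i.e.\ it preserves $\D(\Ab_k)_{\geq 0}$. Using exactness and shifting, $\Laone^{ab}$ then preserves $\D(\Ab_k)_{\geq m}$ for every integer $m$: if $C \in \D(\Ab_k)_{\geq m}$ then $C[-m] \in \D(\Ab_k)_{\geq 0}$, hence $(\Laone^{ab}C)[-m] \cong \Laone^{ab}(C[-m]) \in \D(\Ab_k)_{\geq 0}$, i.e.\ $\Laone^{ab}C \in \D(\Ab_k)_{\geq m}$. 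Since the homology sheaves of $\tilde{\Z}[\mathscr{X}]$ are by definition the reduced homology sheaves $\tilde{\H}_i(\mathscr{X})$, the hypothesis says $\tilde{\Z}[\mathscr{X}] \in \D(\Ab_k)_{\geq n}$; therefore $\tilde{\Z}_{\aone}[\mathscr{X}] := \Laone^{ab}\tilde{\Z}[\mathscr{X}] \in \D(\Ab_k)_{\geq n}$, which is exactly the statement that $\tilde{\H}_i^{\aone}(\mathscr{X}) = \H_i(\tilde{\Z}_{\aone}[\mathscr{X}])$ vanishes for $i < n$. This establishes (1).

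For (2), recall first that the $\aone$-homology sheaf $\tilde{\H}_n^{\aone}(\mathscr{X})$ is strictly $\aone$-invariant: the homology sheaves of any $\aone$-local complex are strictly $\aone$-invariant, by Morel's theory of the homotopy $t$-structure on $\Daone{k}$ (cf.\ the discussion around Proposition~\ref{prop:homologicalandcohomologicalequivalences}). Now fix an arbitrary strictly $\aone$-invariant sheaf $\mathbf{N}$. Unwinding that $\aone$-locality of an object of $\D(\Ab_k)$ is tested against the maps $\Z[X \times \aone] \to \Z[X]$ in all cohomological degrees, the object $\mathbf{N}[n]$ is $\aone$-local precisely because $\mathbf{N}$ is strictly $\aone$-invariant; moreover $\mathbf{N}[n] \in \D(\Ab_k)_{\leq n}$. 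Since $\tilde{\Z}[\mathscr{X}]$ and $\tilde{\Z}_{\aone}[\mathscr{X}]$ both lie in $\D(\Ab_k)_{\geq n}$ (by hypothesis and by (1), respectively), a morphism from either of them into $\mathbf{N}[n]$ factors uniquely through the top truncation, giving natural identifications
\begin{gather*}
\hom_{\D(\Ab_k)}(\tilde{\Z}[\mathscr{X}],\mathbf{N}[n]) \cong \hom_{\Ab_k}(\tilde{\H}_n(\mathscr{X}),\mathbf{N}), \\
\hom_{\D(\Ab_k)}(\tilde{\Z}_{\aone}[\mathscr{X}],\mathbf{N}[n]) \cong \hom_{\Ab_k}(\tilde{\H}_n^{\aone}(\mathscr{X}),\mathbf{N}).
\end{gather*}
Under these identifications, restriction along the localization morphism $\theta\colon \tilde{\Z}[\mathscr{X}] \to \tilde{\Z}_{\aone}[\mathscr{X}]$ corresponds to precomposition with the morphism $\tilde{\H}_n(\mathscr{X}) \to \tilde{\H}_n^{\aone}(\mathscr{X})$ that $\theta$ induces on $n$-th homology sheaves — this follows from the naturality, applied to $\theta$, of the top-truncation maps $\tilde{\Z}[\mathscr{X}] \to \tilde{\H}_n(\mathscr{X})[n]$ and $\tilde{\Z}_{\aone}[\mathscr{X}] \to \tilde{\H}_n^{\aone}(\mathscr{X})[n]$. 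On the other hand, because $\mathbf{N}[n]$ is $\aone$-local and $\theta$ is an $\aone$-localization, the universal property of $\Laone^{ab}$ (the adjunction with the inclusion $\Daone{k} \hookrightarrow \D(\Ab_k)$) makes $\theta^*$ a bijection on the two left-hand groups. Combining, precomposition with $\tilde{\H}_n(\mathscr{X}) \to \tilde{\H}_n^{\aone}(\mathscr{X})$ is a bijection $\hom_{\Ab_k}(\tilde{\H}_n^{\aone}(\mathscr{X}),\mathbf{N}) \isomto \hom_{\Ab_k}(\tilde{\H}_n(\mathscr{X}),\mathbf{N})$ for every strictly $\aone$-invariant $\mathbf{N}$, which is exactly the assertion that $\tilde{\H}_n(\mathscr{X}) \to \tilde{\H}_n^{\aone}(\mathscr{X})$ is the initial morphism from $\tilde{\H}_n(\mathscr{X})$ to a strictly $\aone$-invariant sheaf.

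The steps that are merely bookkeeping are the connectivity argument in (1) and the two standard inputs in (2) (that $\mathbf{A}[n]$ is $\aone$-local exactly when $\mathbf{A}$ is strictly $\aone$-invariant, and that $\aone$-homology sheaves are strictly $\aone$-invariant). The one point I would write out most carefully — and the only place the argument is not purely formal — is the compatibility claim in (2): that the two $t$-structure identifications of Hom-groups fit into a commutative square with $\theta^*$ along one side and precomposition with $\tilde{\H}_n(\mathscr{X}) \to \tilde{\H}_n^{\aone}(\mathscr{X})$ along the other. This is handled by tracking the canonical truncation maps and observing that $\theta$ intertwines them. Alternatively, one can bypass the adjunction and compute concretely: apply the exact functor $\Laone^{ab}$ to the truncation triangle $\tau_{\geq n+1}\tilde{\Z}[\mathscr{X}] \to \tilde{\Z}[\mathscr{X}] \to \tilde{\H}_n(\mathscr{X})[n]$, note that the first term remains $n$-connected to identify $\tilde{\H}_n^{\aone}(\mathscr{X}) \cong \H_0(\Laone^{ab}\tilde{\H}_n(\mathscr{X}))$, and then invoke the universal property of the $\aone$-localization of a sheaf concentrated in degree $0$.
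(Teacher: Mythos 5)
Your argument is correct, but it runs along a genuinely different track from the paper's. You work entirely in the derived category: part (1) is the shift-invariance of Morel's stable connectivity theorem applied to $\tilde{\Z}[\mathscr{X}]$, and part (2) combines the $t$-structure computation $\hom_{\D(\Ab_k)}(C,\mathbf{N}[n]) \cong \hom_{\Ab_k}(\H_n(C),\mathbf{N})$ for $C \in \D(\Ab_k)_{\geq n}$ with the universal property of $\theta\colon \tilde{\Z}[\mathscr{X}] \to \tilde{\Z}_{\aone}[\mathscr{X}]$ against the $\aone$-local object $\mathbf{N}[n]$. The paper instead passes to the unstable category: it forms $K(\tilde{\Z}[\mathscr{X}])$, observes via \cite[Proposition 6.25, Corollary 6.27]{MField} that $\Laone K(\tilde{\Z}[\mathscr{X}]) \to K(\tilde{\Z}_{\aone}[\mathscr{X}])$ is a simplicial weak equivalence, deduces (1) from Morel's \emph{unstable} connectivity theorem, and gets (2) from \cite[Corollary 6.60]{MField}, which identifies $\bpi_n(\mathscr{Y}) \to \bpi_n(\Laone\mathscr{Y})$ as the initial map to a strictly $\aone$-invariant sheaf for a simplicially $(n-1)$-connected $\mathscr{Y}$. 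Your route is shorter and avoids the Eilenberg--Mac Lane detour, at the cost of invoking two facts the paper's version keeps packaged inside Morel's unstable corollaries: that $\mathbf{N}[n]$ is $\aone$-local in $\D(\Ab_k)$ precisely when $\mathbf{N}$ is strictly $\aone$-invariant, and that the $\aone$-homology sheaves of a bounded-below complex are strictly $\aone$-invariant (the heart of Morel's homotopy $t$-structure on $\Daone{k}$, which the paper only uses implicitly in Proposition~\ref{prop:homologicalandcohomologicalequivalences}). Both proofs ultimately rest on the same foundational results of Morel, and your compatibility check between $\theta^*$ and the truncation identifications is exactly the point that needed care; it is handled correctly.
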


\begin{proof}
While the first statement is an immediate consequence of the stable $\aone$-connectivity theorem, we give a different proof that will be useful in setting up the notation to establish the second statement as well.  By assumption $\mathscr{X}$ is simplicially connected.  Therefore, the reduced singular chain complex $\tilde{\Z}[\mathscr{X}]$ is a chain complex that has vanishing homology sheaves in negative degrees and is thus a $0$-connected chain complex.  By Morel's stable $\aone$-connectivity theorem, the complex $\tilde{\Z}_{\aone}[\mathscr{X}] := \Laone^{ab} \tilde{\Z}[\mathscr{X}]$ is thus also $0$-connected.

Since $\tilde{\Z}[\mathscr{X}]$ is $0$-connected, by \cite[Proposition 6.25]{MField}, the space $K(\tilde{\Z}_{\aone}[\mathscr{X}])$ is $\aone$-local.  As a consequence, the map $K(\tilde{\Z}[\mathscr{X}]) \to K(\tilde{\Z}_{\aone}[\mathscr{X}])$ arising by functoriality factors through a morphism
\[
\Laone K(\tilde{\Z}[\mathscr{X}]) \longrightarrow K(\tilde{\Z}_{\aone}[\mathscr{X}]).
\]
By \cite[Corollary 6.27]{MField} the morphism of the previous display is a simplicial weak equivalence.

By construction, the homotopy sheaves of $K(\tilde{\Z}[\mathscr{X}])$ are the (reduced) homology sheaves of $\mathscr{X}$.  Our assumption on the homological connectivity of $\mathscr{X}$ thus implies that the space $K(\tilde{\Z}[\mathscr{X}])$ is simplicially $(n-1)$-connected.  Therefore, Morel's unstable connectivity theorem guarantees that $\Laone K(\tilde{\Z}[\mathscr{X}])$ is again simplicially $(n-1)$-connected.  The simplicial weak equivalence of the previous paragraph thus implies $K(\tilde{\Z}_{\aone}[\mathscr{X}])$ is also simplicially $(n-1)$-connected.  However, the homotopy sheaves of $K(\tilde{\Z}_{\aone}[\mathscr{X}])$ are precisely the reduced $\aone$-homology sheaves of $\mathscr{X}$ and thus vanish in degrees $< n$ as well.

For the final statement, observe that \cite[Corollary 6.60]{MField} applied to the simplicially $(n-1)$-connected space $K(\tilde{\Z}[\mathscr{X}])$ shows that the morphism
\[
\tilde{\H}_n(\mathscr{X}) \cong \bpi_n(K(\tilde{\Z}[\mathscr{X}])) \longrightarrow \bpi_n(\Laone K(\tilde{\Z}[\mathscr{X}])) \cong \bpi_n(K(\tilde{\Z}_{\aone}[\mathscr{X}])) \cong \tilde{\H}_n^{\aone}(\mathscr{X})
\]
has the desired universal property.
\end{proof}

\subsection{Relative Hurewicz theorems revisited}
\label{ss:relativehurewicz}
In this section, we refine the Hurewicz theorems in $\aone$-homotopy theory by keeping track of the action of the $\aone$-fundamental sheaf of groups.  In particular, we refine the strong form of the relative Hurewicz theorem, which follows from the Blakers--Massey theorem established in \cite[Theorem 4.1]{AFComparison} (and independently in \cite[Theorem 2.3.8]{Strunk}).

\subsubsection*{The Relative Hurewicz theorem and coinvariants}
Suppose $f: \mathscr{E} \to \mathscr{B}$ is a morphism of pointed, connected simplicial presheaves with homotopy fiber $\mathscr{F}$ and homotopy cofiber $\mathscr{C}$.  In that case, there is a morphism of simplicial fiber sequences
\[
\xymatrix{
\mathscr{F} \ar[r]\ar[d] & \mathscr{E} \ar[r]\ar[d] & \mathscr{B} \ar[d] \\
\Omega \mathscr{C} \ar[r] & \ast \ar[r]& \mathscr{C}.
}
\]
The sheaf $\bpi_1(\mathscr{E})$ acts on the higher homotopy sheaves $\bpi_i(\mathscr{F})$.  By functoriality there is also an action of $\bpi_1(\mathscr{E})$ on the higher homotopy of sheaves in the bottom row, though this action is necessarily trivial since $\ast$ is certainly simplicially $1$-connected.

The relative Hurewicz theorem analyzes the connectivity of the morphism $\mathscr{F} \to \Omega \mathscr{C}$ under suitable hypotheses on the connectivity of $f$.  More precisely, there is an evident morphism $\bpi_i(\mathscr{F}) \to \bpi_i(\Omega \mathscr{C})$.  The counit of the loop suspension adjunction yields a morphism $\Sigma \Omega \mathscr{C} \to \mathscr{C}$.  This morphism yields, in turn the composite:
\[
\bpi_i(\Omega\mathscr{C}) \longrightarrow \H_i(\Omega \mathscr{C}) \cong \H_{i+1}(\Sigma \Omega \mathscr{C}) \longrightarrow \H_{i+1}(\mathscr{C}),
\]
where the isomorphism is the suspension isomorphism.  Thus, we obtain a morphism
\[
\bpi_i(\mathscr{F}) \longrightarrow \H_{i+1}(\mathscr{C}).
\]
that we will call the relative Hurewicz homomorphism.

Note that, even if $\mathscr{E}$ and $\mathscr{B}$ are $\aone$-local, $\mathscr{C}$ need not be.  Nevertheless, the morphism $\mathscr{C} \to \Laone \mathscr{C}$ is an isomorphism on $\aone$-homology sheaves, and models the $\aone$-homotopy cofiber of $\mathscr{E} \to \mathscr{B}$. The canonical map $\mathscr{C} \to \Laone \mathscr{C}$ yields a map $\Omega \mathscr{C} \to \Omega \Laone \mathscr{C}$ and the latter is already $\aone$-local.  Composing with the map $\mathscr{F} \to \Omega \mathscr{C}$ described above, we obtain a map
\[
\mathscr{F} \longrightarrow \Omega \Laone \mathscr{C}.
\]
The counit of the loop-suspension adjunction yields a morphism $\Sigma \Omega \Laone \mathscr{C} \to \Laone \mathscr{C}$.  Since the target of this morphism is $\aone$-local (even though the source need not be), there is a factorization
\[
\Sigma \Omega \Laone \mathscr{C} \longrightarrow \Laone \Sigma \Omega \Laone \mathscr{C} \longrightarrow \Laone \mathscr{C}
\]
In a fashion analogous to that described above, we obtain morphisms $\bpi_i(\Omega \Laone \mathscr{C}) \to \H_{i+1}^{\aone}(\mathscr{C})$ and thus a relative Hurewicz homomorphism
\[
\bpi_i^{\aone}(\mathscr{F}) \longrightarrow \H_{i+1}^{\aone}(\mathscr{C})
\]
that we now analyze.

\begin{thm}
\label{thm:relativeHurewicz}
Suppose $f: \mathscr{E} \to \mathscr{B}$ is a morphism of pointed, $\aone$-local and simplicially connected spaces.  Write $\mathscr{F}$ for the simplicial homotopy fiber of $f$ and $\mathscr{C}$ for the simplicial homotopy cofiber of $f$.  If $\mathscr{F}$ is $\aone$-$(n-1)$-connected for some integer $n \geq 1$, then the following statements hold.
\begin{enumerate}[noitemsep,topsep=1pt]
    \item The sheaves $\H_i^{\aone}(\mathscr{C})$ vanish for $i \leq n$.
    \item The relative Hurewicz homomorphism
        \[
        \bpi_n^{\aone}(\mathscr{F}) \longrightarrow \H_{n+1}^{\aone}(\mathscr{C})
        \]
        is the initial morphism from $\bpi_n^{\aone}(\mathscr{F})$ to a strictly $\aone$-invariant sheaf on which $\bpi_1^{\aone}(\mathscr{E})$ acts trivially.
    \item If $n = 1$ and $\bpi_1(\mathscr{F})$ is strictly $\aone$-invariant, or if $n \geq 2$, then the morphism of the previous point induces an isomorphism
        \[
        \bpi_n^{\aone}(\mathscr{F})_{\bpi_1^{\aone}(\mathscr{E})} \isomto \H_{n+1}^{\aone}(\mathscr{C}),
        \]
        where the source is the sheaf of coinvariants.
\end{enumerate}
\end{thm}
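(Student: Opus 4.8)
The plan is to deduce everything from the classical relative Hurewicz theorem applied pointwise, and to control the effect of $\aone$-localization on the homology cofiber through Lemma~\ref{lem:homologicalconnectivity}. The crucial preliminary observation is that, since $\mathscr{E}$ and $\mathscr{B}$ are $\aone$-local, Lemma~\ref{lem:homotopyfiberaonelocal} makes $\mathscr{F}$ $\aone$-local too; hence $\bpi_i^{\aone}(\mathscr{F}) = \bpi_i(\mathscr{F})$, the hypothesis that $\mathscr{F}$ is $\aone$-$(n-1)$-connected becomes simplicial $(n-1)$-connectedness, and $\bpi_1^{\aone}(\mathscr{E}) = \bpi_1(\mathscr{E})$. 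Since formation of simplicial homotopy fibers, homotopy cofibers, homotopy sheaves, and of coinvariants (a cokernel) all commute with taking stalks, at every point $s$ of the site $s^*\mathscr{F}\to s^*\mathscr{E}\to s^*\mathscr{B}$ is a fiber sequence of connected simplicial sets with $s^*\mathscr{F}$ being $(n-1)$-connected and $s^*\mathscr{C}$ its homotopy cofiber; when $n=1$ the hypothesis that $\bpi_1(\mathscr{F})$ is strictly $\aone$-invariant forces each $\pi_1(s^*\mathscr{F})$ to be abelian.

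With this reduction in hand, parts (1) and (2) are formal. The classical relative Hurewicz theorem at each stalk gives $\tilde H_i(s^*\mathscr{C})=0$ for $i\le n$, hence $\tilde\H_i(\mathscr{C})=0$ for $i\le n$, and then Lemma~\ref{lem:homologicalconnectivity}(1) gives (1). The same classical input says the relative Hurewicz homomorphism $\pi_n(s^*\mathscr{F})\to\tilde H_{n+1}(s^*\mathscr{C})$ is the initial morphism to an abelian group with trivial $\pi_1(s^*\mathscr{E})$-action; equivalently it identifies $\tilde H_{n+1}(s^*\mathscr{C})$ with $\pi_n(s^*\mathscr{F})_{\pi_1(s^*\mathscr{E})}$. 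Sheafifying, $\bpi_n(\mathscr{F})\to\tilde\H_{n+1}(\mathscr{C})$ exhibits $\tilde\H_{n+1}(\mathscr{C})$ as the sheaf of coinvariants $\bpi_n(\mathscr{F})_{\bpi_1(\mathscr{E})}$ and is initial among morphisms of sheaves of abelian groups with trivial $\bpi_1(\mathscr{E})$-action; composing with the morphism $\tilde\H_{n+1}(\mathscr{C})\to\tilde\H_{n+1}^{\aone}(\mathscr{C})$ of Lemma~\ref{lem:homologicalconnectivity}(2), which is initial among morphisms to strictly $\aone$-invariant sheaves, yields (2), and realizes this composite as the $\aone$-relative Hurewicz homomorphism of the theorem.

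For part (3), the previous paragraph already gives $\bpi_n^{\aone}(\mathscr{F})_{\bpi_1^{\aone}(\mathscr{E})} = \bpi_n(\mathscr{F})_{\bpi_1(\mathscr{E})}\cong\tilde\H_{n+1}(\mathscr{C})$ and tells us $\tilde\H_{n+1}^{\aone}(\mathscr{C})$ is the universal strictly $\aone$-invariant sheaf receiving a morphism from it; so it suffices to show that the sheaf of coinvariants $\bpi_n^{\aone}(\mathscr{F})_{\bpi_1^{\aone}(\mathscr{E})}$ is \emph{already} strictly $\aone$-invariant, for then the universal morphism to $\tilde\H_{n+1}^{\aone}(\mathscr{C})$ must be an isomorphism, induced by the relative Hurewicz homomorphism by construction. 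Write $A:=\bpi_n^{\aone}(\mathscr{F})$, strictly $\aone$-invariant (by hypothesis if $n=1$, by Morel's theorems if $n\ge 2$), and $\mathbf{G}:=\bpi_1^{\aone}(\mathscr{E})$. Then $A_{\mathbf{G}}$ is the cokernel of $\phi\colon \tilde\Z[\mathbf{G}]\otimes_{\Z}A\to A$, $(g)\otimes a\mapsto ga-a$, where $\tilde\Z[\mathbf{G}]=\ker(\Z[\mathbf{G}]\to\Z)$. The source of $\phi$ is not strictly $\aone$-invariant, but the target $A$ is, so $\phi$ factors through the canonical morphism $\tilde\Z[\mathbf{G}]\otimes_{\Z}A\to\H_0^{\aone}(\tilde\Z[\mathbf{G}]\otimes_{\Z}A)$; this canonical morphism is an epimorphism of Nisnevich sheaves (for any sheaf of abelian groups $\mathbf{M}$, $\mathbf{M}\to\H_0^{\aone}(\mathbf{M})$ is a filtered colimit of sheafified presheaf-level quotient maps coming from the $\aone$-singular construction), and $\H_0^{\aone}(\tilde\Z[\mathbf{G}]\otimes_{\Z}A)$ lies in the heart of the homotopy $t$-structure on $\Daone{k}$, hence is strictly $\aone$-invariant. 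Consequently $\im\phi$ coincides with the image of $\H_0^{\aone}(\tilde\Z[\mathbf{G}]\otimes_{\Z}A)\to A$, so $A_{\mathbf{G}}=\coker\big(\H_0^{\aone}(\tilde\Z[\mathbf{G}]\otimes_{\Z}A)\to A\big)$ is a cokernel of a morphism of strictly $\aone$-invariant sheaves, hence strictly $\aone$-invariant by Theorem~\ref{thm:stableconnectivity}.

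The main obstacle is precisely the last step. Unlike kernels (Lemma~\ref{lem:stronglyaoneinvariantsheavesofgroups}), the honest sheaf of coinvariants of a strictly $\aone$-invariant sheaf under a strongly $\aone$-invariant group action is not visibly strictly $\aone$-invariant, and the obvious presentation of the coinvariants involves the non-invariant term $\tilde\Z[\mathbf{G}]\otimes_{\Z}A$; the device above works only because that term maps to the strictly $\aone$-invariant sheaf $A$, which lets one replace it by its $\aone$-homology without disturbing the relevant image, after which Theorem~\ref{thm:stableconnectivity} applies. A secondary technical point, to be handled as usual, is the input that $\bpi_n^{\aone}(\mathscr{F})$ and the homology sheaves $\H_i^{\aone}(\mathscr{C})$ are strictly (not merely strongly) $\aone$-invariant, which over a general field is where one invokes a perfectness hypothesis or reduces to the perfect case by base change.
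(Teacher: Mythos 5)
Your treatment of parts (1) and (2) --- reduce to stalks using Lemma~\ref{lem:homotopyfiberaonelocal}, apply the classical relative Hurewicz theorem pointwise, and then pass through Lemma~\ref{lem:homologicalconnectivity} --- is exactly the paper's argument and is fine. The problem is in part (3), specifically your claim that the canonical map $\mathbf{M} \to \H_0^{\aone}(\mathbf{M})$ is an epimorphism of Nisnevich sheaves for an arbitrary sheaf of abelian groups $\mathbf{M}$ (you need this for $\mathbf{M} = \tilde{\Z}[\mathbf{G}]\otimes_{\Z}A$). This is not a formal consequence of the singular construction: $\Laone^{ab}$ is a transfinite alternation of $\Singaone$ and Nisnevich-local fibrant replacement, and after the first Nisnevich localization the complex acquires nonzero homology presheaves in negative degrees (hypercohomology contributions from the higher naive $\aone$-homology presheaves created by the previous application of $\Singaone$), so the next application of $\Singaone$ can produce degree-zero classes that do not lift to $\mathbf{M}$; the stages are therefore not quotient maps of sheaves. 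Indeed, the paper explicitly flags this surjectivity as unknown: the remark following Proposition~\ref{prop:characterizationofaonenilpotent} states that one does not know whether the map from $\mathbf{G}$ to its ``$\aone$-abelianization'' $\H_1^{\aone}(B\mathbf{G})$ --- which, by Lemma~\ref{lem:homologicalconnectivity}(2), is the composite of the surjection $\mathbf{G}\to\mathbf{G}^{ab}$ with the initial morphism $\mathbf{G}^{ab}\to\H_0^{\aone}(\mathbf{G}^{ab})$ --- is a surjective morphism of sheaves. Your lemma would settle that question, so it cannot be routine; as written, your proof that the coinvariants sheaf is strictly $\aone$-invariant does not go through.

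The paper runs the argument in the opposite direction so as to avoid this point. Under the hypotheses of (3) both the source $\bpi_n^{\aone}(\mathscr{F})$ and the target $\H_{n+1}^{\aone}(\mathscr{C})$ of the relative Hurewicz morphism are strictly $\aone$-invariant, so by Theorem~\ref{thm:stableconnectivity} its kernel $\mathbf{K}$ and its image $\bpi_n^{\aone}(\mathscr{F})/\mathbf{K}$ are strictly $\aone$-invariant; the universal property established in part (2) then forces the monomorphism $\bpi_n^{\aone}(\mathscr{F})/\mathbf{K}\hookrightarrow\H_{n+1}^{\aone}(\mathscr{C})$ to be an isomorphism, and the identification of this quotient with the coinvariants is supplied by the classical stalkwise computation of $\H_{n+1}(\mathscr{C})$. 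In other words, strict $\aone$-invariance is extracted from the Hurewicz morphism itself, whose target is already known to be strictly $\aone$-invariant, rather than proved for the abstractly presented coinvariants. To salvage your route you would need either the epimorphism statement above or a direct argument that $\mathbf{K}$ coincides with the augmentation subsheaf.
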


\begin{proof}
Since $\mathscr{E}$ and $\mathscr{B}$ are simplicially connected and $\aone$-local by assumption, the simplicial homotopy fiber $\mathscr{F}$ is then $\aone$-local by Lemma \ref{lem:homotopyfiberaonelocal} and thus $\bpi_i^{\aone}(\mathscr{F}) = \bpi_i(\mathscr{F})$ so $\mathscr{F}$ is simplicially $(n-1)$-connected.

The simplicial homotopy cofiber $\operatorname{hocofib} f$ need not be $\aone$-local, so we consider $\Laone \operatorname{hocofib} f$ and we have a commutative diagram of simplicial fiber sequences
\[
\xymatrix{
\mathscr{F} \ar[r]\ar[d] & \mathscr{E} \ar[r]\ar[d] & \mathscr{B} \ar[d] \\
\Omega \Laone \operatorname{hocofib} f \ar[r] & \ast \ar[r] & \Laone \operatorname{hocofib} f.
}
\]
where $\Omega \Laone \operatorname{hocofib} f$ is already $\aone$-local.

Consider the action of $\bpi_1(\mathscr{E})$ on all of the spaces in the diagram.  Since we saw above that $\mathscr{F}$ is simplicially $(n-1)$-connected, the classical relative Hurewicz theorem \cite[Theorem IV.7.2]{Whitehead} applied stalkwise implies that the map $\bpi_i(\mathscr{F}) \to \H_{i+1}(\operatorname{hocofib} f)$ is the map ``factoring out the action of $\bpi_1(\mathscr{E})$."  More precisely, we conclude that $\H_{i+1}(\operatorname{hocofib} f)$ vanishes in degrees $\leq n$ and that $\H_{n+1}(\operatorname{hocofib} f)$ is the $\bpi_1(\mathscr{E})$ coinvariants of $\bpi_n(\mathscr{F})$.  Since $\operatorname{hocofib} f$ is homologically $n$-connected, it follows by appeal to Lemma~\ref{lem:homologicalconnectivity}(1) that $\tilde{\H}_{i+1}^{\aone}(\operatorname{hocofib} f)$ also vanishes for $i \leq n$, which establishes the first point of the theorem.

To establish the second point of the theorem, observe that Lemma~\ref{lem:homologicalconnectivity}(2) implies that $\H_{n+1}^{\aone}(\operatorname{hocofib} f)$ is the initial strictly $\aone$-invariant sheaf admitting a morphism from $\H_{n+1}(\operatorname{hocofib} f)$.  We observed in the preceding paragraph that $\bpi_1(\mathscr{E})$-action on the latter sheaf is already trivial, so the induced action on $\H_{n+1}^{\aone}(\operatorname{hocofib} f)$ is also trivial.

To see that the morphism in question is an isomorphism if either $n = 1$ and $\bpi_1^{\aone}(\mathscr{F})$ is strictly $\aone$-invariant or for $n \geq 2$, we proceed as follows.  Let $\mathbf{K}$ be the kernel of the morphism $\bpi_n^{\aone}(\mathscr{F}) \to \H_{n+1}^{\aone}(\operatorname{hocofib} f)$, then $\mathbf{K}$ and the quotient $\bpi_n^{\aone}(\mathscr{F})/\mathbf{K}$ are strictly $\aone$-invariant by appeal to Theorem~\ref{thm:stableconnectivity}.  By universality, it follows that the identity map on $\bpi_n^{\aone}(\mathscr{F})/\mathbf{K}$ factors through an epimorphism $\bpi_n^{\aone}(\mathscr{F})/\mathbf{K} \to \H_{n+1}^{\aone}(\operatorname{hocofib} f)$.  Then, as observed above, the ordinary Hurewicz theorem identifies $\bpi_n^{\aone}(\mathscr{F})/\mathbf{K}$ with $\H_{n+1}(\operatorname{hocofib} f)$ and the latter is precisely the $\bpi_1(\mathscr{E})$-coinvariants of $\bpi_n(\mathscr{F})$.
\end{proof}

\subsubsection*{The $\aone$-Whitehead theorem for $\aone$-simple spaces}
The relative Hurewicz theorem above has a number of consequences, including the following result that generalizes other ``homological" forms of the $\aone$-Whitehead theorem in the literature (e.g., \cite[Corollary 2.22]{WickelgrenWilliams}).

\begin{thm}
\label{thm:whiteheadtheoremforsimplespaces}
If $f: \mathscr{X} \to \mathscr{Y}$ is a morphism of $\aone$-simple spaces (see \textup{Definition~\ref{defn:aonenilpotentspace}}) and $f_*$ is an isomorphism $\Z[\mathscr{X}] \to \Z[\mathscr{Y}]$ in $\Daone{k}$, then $f$ is an $\aone$-weak equivalence.
\end{thm}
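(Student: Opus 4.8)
The plan is to imitate the classical homology Whitehead theorem for simple spaces (cf.\ \cite{HMR}): replace $f$ by a fibration and show its fiber is $\aone$-weakly contractible, by an induction on connectivity powered by the relative Hurewicz theorem (Theorem~\ref{thm:relativeHurewicz}). First I would reduce to the case where $\mathscr{X}$ and $\mathscr{Y}$ are $\aone$-local, by replacing $f$ with $\Laone f$, and then factor $\Laone f$ as an acyclic cofibration followed by a simplicial fibration with fiber $\mathscr{F}$. Since $\mathscr{X}$ and $\mathscr{Y}$ are $\aone$-local and ($\aone$-)connected, $\mathscr{F}$ is $\aone$-local by Lemma~\ref{lem:homotopyfiberaonelocal}, so $\bpi_i(\mathscr{F}) = \bpi_i^{\aone}(\mathscr{F})$, and it suffices to prove $\bpi_i^{\aone}(\mathscr{F}) = 0$ for all $i$. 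Writing $\mathscr{C}$ for the simplicial homotopy cofiber of $f$, the complex $\tilde{\Z}_{\aone}[\mathscr{C}]$ is quasi-isomorphic to the cone on $\tilde{\Z}_{\aone}[\mathscr{X}] \to \tilde{\Z}_{\aone}[\mathscr{Y}]$, so the hypothesis gives $\tilde{\H}_i^{\aone}(\mathscr{C}) = 0$ for every $i \geq 0$.

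The low-degree input is that $f$ induces an isomorphism on $\bpi_1^{\aone}$. For any $\aone$-simple space $\mathscr{Z}$ the first Postnikov stage $\mathscr{Z} \to B\bpi_1^{\aone}(\mathscr{Z})$ (with $B\bpi_1^{\aone}(\mathscr{Z})$ $\aone$-local, since $\bpi_1^{\aone}(\mathscr{Z})$ is strongly $\aone$-invariant by Theorem~\ref{thm:unstableconnectivity}) has $\aone$-$1$-connected homotopy fiber, hence induces an isomorphism on $\H_1^{\aone}$ by Theorem~\ref{thm:relativeHurewicz}; and since $\bpi_1^{\aone}(\mathscr{Z})$ is abelian and strictly $\aone$-invariant (Theorem~\ref{thm:unstableconnectivity}; over a non-perfect base one first base-changes to the perfect closure, which is the one point where perfectness enters), one has $\H_1^{\aone}(B\bpi_1^{\aone}(\mathscr{Z})) = \bpi_1^{\aone}(\mathscr{Z})$, so the Hurewicz map $\bpi_1^{\aone}(\mathscr{Z}) \to \H_1^{\aone}(\mathscr{Z})$ is an isomorphism. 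Applying this to $\mathscr{X}$ and $\mathscr{Y}$ and using that $\H_1^{\aone}(f)$ is an isomorphism shows $\bpi_1^{\aone}(f)$ is an isomorphism. In particular $\bpi_0^{\aone}(\mathscr{F}) = \ast$, so $\mathscr{F}$ is $\aone$-connected, and reading the $\bpi_1^{\aone}(\mathscr{X})$-equivariant long exact sequence of the fibration $\mathscr{F} \to \mathscr{X} \to \mathscr{Y}$ gives $\bpi_1^{\aone}(\mathscr{F}) = \operatorname{coker}(\bpi_2^{\aone}(f))$, which is abelian and strictly $\aone$-invariant by Theorem~\ref{thm:stableconnectivity}.

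Now I would induct on $n \geq 1$, the statement being ``$\mathscr{F}$ is $\aone$-$(n-1)$-connected''; the case $n=1$ was just established. For the inductive step, apply Theorem~\ref{thm:relativeHurewicz} to $f$: since $\mathscr{F}$ is $\aone$-$(n-1)$-connected, part~(3) identifies the relative Hurewicz map with an isomorphism $\bpi_n^{\aone}(\mathscr{F})_{\bpi_1^{\aone}(\mathscr{X})} \isomto \H_{n+1}^{\aone}(\mathscr{C})$ — this is automatic for $n \geq 2$, and for $n = 1$ it uses that $\bpi_1^{\aone}(\mathscr{F})$ is strictly $\aone$-invariant. But $\H_{n+1}^{\aone}(\mathscr{C}) = 0$, so the sheaf of coinvariants $\bpi_n^{\aone}(\mathscr{F})_{\bpi_1^{\aone}(\mathscr{X})}$ vanishes. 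The crucial observation is that the $\bpi_1^{\aone}(\mathscr{X})$-action on $\bpi_n^{\aone}(\mathscr{F})$ is $\aone$-nilpotent of class $\leq 2$: the $\bpi_1^{\aone}(\mathscr{X})$-equivariant long exact sequence realizes $\bpi_n^{\aone}(\mathscr{F})$ as an equivariant extension $1 \to \mathbf{M} \to \bpi_n^{\aone}(\mathscr{F}) \to \mathbf{N} \to 1$ with $\mathbf{M} = \ker(\bpi_n^{\aone}(\mathscr{F}) \to \bpi_n^{\aone}(\mathscr{X}))$ and $\mathbf{N} = \ker(\bpi_n^{\aone}(\mathscr{X}) \to \bpi_n^{\aone}(\mathscr{Y}))$, both strongly $\aone$-invariant by Lemma~\ref{lem:stronglyaoneinvariantsheavesofgroups} and both carrying the trivial $\bpi_1^{\aone}(\mathscr{X})$-action (because $\mathscr{X}$, $\mathscr{Y}$ are $\aone$-simple and $\bpi_1^{\aone}(f)$ is an isomorphism); so $\{\bpi_n^{\aone}(\mathscr{F}) \supset \mathbf{M} \supset 1\}$ is a $\bpi_1^{\aone}(\mathscr{X})$-central series. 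A sheaf of groups admitting a $\mathbf{G}$-central series with vanishing $\mathbf{G}$-coinvariants must be trivial, since if it were nontrivial its top nonzero subquotient would have trivial $\mathbf{G}$-action and be a quotient of the coinvariants; hence $\bpi_n^{\aone}(\mathscr{F}) = 0$, completing the induction. Therefore $\mathscr{F}$ is $\aone$-weakly contractible and, via the long exact sequence again, $f$ is an $\aone$-weak equivalence.

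I expect the main obstacle to be the careful low-degree bookkeeping: pinning down that $\bpi_1^{\aone}$ of an $\aone$-simple space is strictly $\aone$-invariant (so that both the $n=1$ case of Theorem~\ref{thm:relativeHurewicz}(3) and the identification $\H_1^{\aone}(f) \leftrightarrow \bpi_1^{\aone}(f)$ go through), handled over arbitrary fields by base change to the perfect closure; and the (short but indispensable) verification that $\bpi_1^{\aone}(\mathscr{X})$ acts $\aone$-nilpotently on the homotopy sheaves of $\operatorname{hofib}(f)$, which is precisely the place where the $\aone$-simplicity hypothesis is consumed and without which ``coinvariants vanish'' would not force the sheaf to vanish.
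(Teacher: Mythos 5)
Your argument is correct and follows essentially the same route as the paper's proof: establish that the Hurewicz map $\bpi_1^{\aone}\to\H_1^{\aone}$ is an isomorphism for $\aone$-simple spaces, deduce that $f_*$ is an isomorphism on $\bpi_1^{\aone}$ and that $\bpi_1^{\aone}(\mathscr{F})$ is strictly $\aone$-invariant from the long exact sequence, and then induct on the connectivity of the fiber using Theorem~\ref{thm:relativeHurewicz}(3). The paper compresses the last step into ``a straightforward induction''; your explicit verification that the $\bpi_1^{\aone}(\mathscr{X})$-action on $\bpi_n^{\aone}(\mathscr{F})$ is nilpotent of class $\leq 2$, so that vanishing of the coinvariants forces vanishing of the sheaf, is exactly the content being elided there, and it is where the $\aone$-simplicity hypothesis is consumed.
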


\begin{proof}
Since $\mathscr{X}$ and $\mathscr{Y}$ are $\aone$-simple, we know their $\aone$-fundamental sheaves of groups are strictly $\aone$-invariant.  For any $\aone$-simple space, the canonical map $\bpi_1^{\aone}(\mathscr{X}) \longrightarrow \H_1^{\aone}(\mathscr{X})$ is an isomorphism.  Indeed, the latter is the initial strictly $\aone$-invariant sheaf of (abelian) groups admitting a map from $\bpi_1(\mathscr{X})$, but since the former is strictly $\aone$-invariant already, this map must be an isomorphism.  Let $\mathscr{F}$ be the $\aone$-homotopy fiber of $f$ and consider the associated long exact sequence of homotopy sheaves:
\[
\bpi_2^{\aone}(\mathscr{X}) \longrightarrow
\bpi_2^{\aone}(\mathscr{Y}) \longrightarrow \bpi_1^{\aone}(\mathscr{F}) \longrightarrow \bpi_1^{\aone}(\mathscr{X}) \stackrel{f_*}{\longrightarrow} \bpi_1^{\aone}(\mathscr{Y}) \longrightarrow 1;
\]
here the map $f_*$ coincides with the map on $\aone$-homology sheaves induced by $f$ by the discussion above.  Therefore, our assumption on $f$ guarantees that the map $f_*$ is an isomorphism.  Thus, we conclude that $\bpi_1^{\aone}(\mathscr{F})$ is a cokernel of a morphism of strictly $\aone$-invariant sheaves and thus itself strictly $\aone$-invariant.  Granted that observation, the result follows from Theorem~\ref{thm:relativeHurewicz}(3) and a straightforward induction.
\end{proof}

\begin{rem}
Theorem~\ref{thm:whiteheadtheoremforsimplespaces} applies to show that a map of pointed, $\aone$-connected, $\aone$-$h$-spaces is an $\aone$-weak equivalence if and only if the map is an $\aone$-homology equivalence.
\end{rem}

\subsubsection*{Nilpotence and Moore--Postnikov factorizations}
With the relative Hurewicz theorem in hand, we may establish the reverse implication in Theorem \ref{thm:moorepostnikovcharacterization}.

\begin{cor}
\label{cor:MoorePostnikovfactorizationconclusion}
Assume $k$ is a perfect field.  If $f: \mathscr{E} \to \mathscr{B}$ an $\aone$-nilpotent morphism of pointed $\aone$-connected spaces, then the $\aone$-Moore--Postnikov tower admits a principal refinement (see \textup{Definition~\ref{defn:principalrefinement}}).
\end{cor}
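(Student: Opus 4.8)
The plan is to prove the reverse implication of Theorem~\ref{thm:moorepostnikovcharacterization} following, \emph{mutatis mutandis}, the classical argument of \cite[Theorem~II.2.14]{HMR}, with the relative Hurewicz theorem (Theorem~\ref{thm:relativeHurewicz}) supplying the construction of the $\aone$-$k$-invariants. First I would note that the $\aone$-Moore--Postnikov tower of $f$ is available: being an $\aone$-nilpotent morphism, the $\aone$-homotopy fiber $\mathscr{F}$ of $f$ is $\aone$-connected, so the long exact sequence in $\aone$-homotopy sheaves forces $\bpi_1^{\aone}(\mathscr{E}) \to \bpi_1^{\aone}(\mathscr{B})$ to be an epimorphism, and Example~\ref{ex:aonemoorepostnikovfactorization} applies. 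Write $\bpi := \bpi_1^{\aone}(\mathscr{E})$. Since $f$ is $\aone$-nilpotent, for each $n \geq 1$ the $\bpi$-action on $\bpi_n^{\aone}(\mathscr{F})$ is $\aone$-nilpotent; fix for each $n$ a $\bpi$-central series
\[
\bpi_n^{\aone}(\mathscr{F}) = \mathbf{N}^{(n)}_0 \supseteq \mathbf{N}^{(n)}_1 \supseteq \cdots \supseteq \mathbf{N}^{(n)}_{c_n} = 1
\]
whose successive subquotients are strongly $\aone$-invariant abelian sheaves carrying the trivial $\bpi$-action; because $k$ is perfect these subquotients are in fact strictly $\aone$-invariant (Theorem~\ref{thm:unstableconnectivity}, and Theorem~\ref{thm:stableconnectivity} when $n \geq 2$).

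Next I would refine the tower one stage at a time. Fix $n \geq 1$ and consider $p_n \colon \tau_{\leq n}f \to \tau_{\leq n-1}f$; the defining properties of the $\aone$-Moore--Postnikov factorization identify its $\aone$-homotopy fiber with $K(\bpi_n^{\aone}(\mathscr{F}),n)$, and (for $n \geq 2$; the stage $n=1$, whose base is $\mathscr{B}$ itself, is treated in the final paragraph) give $\bpi_1^{\aone}(\tau_{\leq n-1}f) \cong \bpi$, acting on the fiber through the chosen nilpotent action. Peeling the central series from the bottom---fiberwise collapsing, inside $K(\bpi_n^{\aone}(\mathscr{F}),n)$, the sub-Eilenberg--Mac Lane space attached to the smallest term, exactly as in classical topology---produces a factorization
\[
\tau_{\leq n}f = \tau_{\leq n,c_n}f \longrightarrow \cdots \longrightarrow \tau_{\leq n,1}f \longrightarrow \tau_{\leq n,0}f = \tau_{\leq n-1}f
\]
in which the $\aone$-homotopy fiber of $p_{n,j} \colon \tau_{\leq n,j}f \to \tau_{\leq n,j-1}f$ is $K(\mathbf{G}_{n,j},n)$ with $\mathbf{G}_{n,j} := \mathbf{N}^{(n)}_{j-1}/\mathbf{N}^{(n)}_{j}$ a strictly $\aone$-invariant sheaf on which $\bpi$ acts trivially. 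One must check that this collapse can be carried out inside $\aone$-local spaces and keeps the intervening sheaves strictly $\aone$-invariant, for which Lemma~\ref{lem:stronglyaoneinvariantsheavesofgroups} and Theorem~\ref{thm:stableconnectivity} are the relevant tools.

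It then remains to see that each $p_{n,j}$ is an $\aone$-principal fibration in the sense of the remark following Definition~\ref{defn:principalrefinement}, i.e.\ extends to an $\aone$-fiber sequence $\tau_{\leq n,j}f \to \tau_{\leq n,j-1}f \to K(\mathbf{G}_{n,j},n+1)$. This is where the relative Hurewicz theorem enters: applying Theorem~\ref{thm:relativeHurewicz} to $p_{n,j}$ (whose $\aone$-homotopy fiber $K(\mathbf{G}_{n,j},n)$ is $\aone$-$(n-1)$-connected) shows that its $\aone$-homotopy cofiber $\mathscr{C}_{n,j}$ is $\aone$-$n$-connected with $\H_{n+1}^{\aone}(\mathscr{C}_{n,j}) \cong (\mathbf{G}_{n,j})_{\bpi} = \mathbf{G}_{n,j}$, the last equality since $\bpi$ acts trivially. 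The $\aone$-Hurewicz map for $\mathscr{C}_{n,j}$ then gives a morphism $\tau_{\leq n,j-1}f \to \mathscr{C}_{n,j} \to K(\mathbf{G}_{n,j},n+1)$, and a comparison of $\aone$-homotopy sheaves (using the long exact sequence and Theorem~\ref{thm:relativeHurewicz}(3) once more) identifies the $\aone$-homotopy fiber of this composite with $\tau_{\leq n,j}f$, exhibiting $p_{n,j}$ as principal. Splicing the refined towers over all $n$ yields the desired $\aone$-principal refinement.

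Finally, two points deserve care and constitute the main obstacle. The $n=1$ stage is the subtlest: there the fiber of $p_1$ is $B\bpi_1^{\aone}(\mathscr{F})$ with $\bpi_1^{\aone}(\mathscr{F})$ possibly non-abelian, so the relevant filtration is an $\aone$-central series of the \emph{group} $\bpi_1^{\aone}(\mathscr{F})$ (Definition~\ref{defn:aonenilpotent}(3), legitimate because the morphism is $\aone$-nilpotent), and the fiberwise quotients $B\bpi_1^{\aone}(\mathscr{F}) \to B(\bpi_1^{\aone}(\mathscr{F})/\mathbf{N})$ must be handled using that each successive quotient is central, so that Lemma~\ref{lem:stronglyaoneinvariantsheavesofgroups}(3) keeps the quotient sheaves strongly $\aone$-invariant (here $k$ perfect is essential). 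The second delicate point is the explicit fiberwise-collapse construction of the intermediate spaces $\tau_{\leq n,j}f$ together with the verification that it stays within $\aone$-local spaces; this is routine in the classical setting but requires bookkeeping in the present sheaf-theoretic context. Everything else is formal, driven by the relative Hurewicz theorem and the permanence properties of strongly and strictly $\aone$-invariant sheaves already established.
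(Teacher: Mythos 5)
Your overall strategy---refine stage by stage using a central series supplied by the $\aone$-nilpotence of $f$, with the relative Hurewicz theorem (Theorem~\ref{thm:relativeHurewicz}) supplying the $k$-invariants---is the same as the paper's, but you have inverted the logical order in a way that leaves the crucial step unjustified. You propose to first \emph{construct} the intermediate spaces $\tau_{\leq n,j}f$ by ``fiberwise collapsing'' sub-Eilenberg--Mac Lane spaces inside the fiber $K(\bpi_n^{\aone}(\mathscr{F}),n)$, and only afterwards \emph{verify} that each $p_{n,j}$ is principal. But fiberwise collapse is not an available primitive here (nor, really, in classical topology): producing a fibration over $\tau_{\leq n,j-1}f$ whose fiber is the quotient Eilenberg--Mac Lane space is precisely the problem of exhibiting a $k$-invariant, i.e., it presupposes the principality you then set out to check. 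Calling this ``bookkeeping'' understates it; it is the entire content of the argument, and your subsequent Hurewicz verification is only sound conditional on it.

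The paper resolves this by running the construction the other way. For $i \geq 2$ one applies the relative Hurewicz theorem to $p_i$ itself (and then iteratively to the remaining maps $\tau_{\leq i}f \to \tau_{\leq i,j}f$): the cofiber $\mathscr{C}$ has first nonvanishing homology $\H_{i+1}^{\aone}(\mathscr{C})$ equal to the \emph{canonical} (initial) strictly $\aone$-invariant quotient of $\bpi_i^{\aone}(\mathscr{F})$ with trivial $\bpi_1^{\aone}(\mathscr{E})$-action, and the composite $\tau_{\leq i-1}f \to \mathscr{C} \to K(\H_{i+1}^{\aone}(\mathscr{C}),i+1)$ is an honest (untwisted) map to an Eilenberg--Mac Lane space. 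The next intermediate stage is then \emph{defined} as the homotopy pullback of this map, so it is principal by construction; termination holds because universality forces the kernel of the canonical quotient into the first term of the chosen central series. Note that the resulting subquotients are these canonical Hurewicz quotients, not your prescribed $\mathbf{N}^{(n)}_{j-1}/\mathbf{N}^{(n)}_{j}$ (the paper remarks afterwards that the refinement need not be ``maximal'' for exactly this reason). Your treatment of the stage $n=1$ is close to the paper's, which uses the $\aone$-upper central series (characteristic, hence $\bpi_1^{\aone}(\mathscr{E})$-stable, with strong $\aone$-invariance of the quotients coming from Proposition~\ref{prop:uppercentralseries} and Lemma~\ref{lem:stronglyaoneinvariantsheavesofgroups}(3)); but the same reordering issue affects that stage as well.
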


\begin{proof}
Consider the morphism $\tau_{\leq i}f \to \tau_{\leq i-1}f$.  We treat two cases: $i = 1$ and $i \geq 2$.  For $i = 1$, we want to show that $\bpi_1^{\aone}(\mathscr{F})$ admits a $\bpi_1^{\aone}(\tau_{\leq 1}f) = \bpi_1^{\aone}(\mathscr{E})$-invariant central series.  Since $k$ is perfect, by Proposition~\ref{prop:characterizationofaonenilpotent} we know the $\aone$-upper central series witnesses the $\aone$-nilpotence of $\bpi_1^{\aone}(\mathscr{F})$; it is furthermore $\bpi_1^{\aone}(\mathscr{E})$-invariant by definition of the action and the fact that the terms of the upper central series are characteristic subgroup sheaves (i.e., stable by automorphisms of $\mathbf{G}$).  By induction, one sees that the subquotients of the $\aone$-upper central series provide the necessary factorization.

For $i \geq 2$, the result follows directly from the relative $\aone$-Hurewicz theorem as in the classical argument \cite[Theorem II.2.9 and II.2.14]{HMR}.  Arguing inductively, the $\aone$-relative Hurewicz theorem above yields a canonical strictly $\aone$-invariant quotient of the $i$-th homotopy sheaf of the $\aone$-homotopy fiber equipped with a trivial action of $\bpi_1^{\aone}(\tau_{\leq i}f) = \bpi_1^{\aone}(\mathscr{E})$; in fact, as the sheaf of coinvariants this quotient is the maximal strictly $\aone$-invariant quotient with trivial action of $\bpi_1^{\aone}(\mathscr{E})$.  We define the first stage of the refinement to be the pullback of $\tau_{\leq i-1}f \to K(\H_i^{\aone}(f),i+1)$ and iterate this procedure.  By assumption $\bpi_1^{\aone}(\tau_{\leq i}f)$ is an $\aone$-$\bpi_1^{\aone}(\mathscr{E})$-nilpotent sheaf, so has a finite $\bpi_1^{\aone}(\mathscr{E})$-central series.  The $\bpi_1^{\aone}(\mathscr{E})$-central series obtained by iteratively taking coinvariants is thus finite and the iterative procedure just sketched necessarily terminates. 
\end{proof}

\begin{rem}
In contrast to the standard proof of this result, for $n = 1$, the $\aone$-principal refinement constructed above need not be ``maximal" because of the form of the relative $\aone$-Hurewicz theorem \ref{thm:relativeHurewicz}.  Indeed, the first quotient of $\bpi_1^{\aone}(\mathscr{F})$ described above is just {\em some} strictly $\aone$-invariant sheaf on which $\bpi_1^{\aone}(\mathscr{E})$ acts trivially: {\em a priori} there is no reason this quotient has to coincide with the coinvariant subsheaf in general since we do not know that the latter sheaf is actually strictly $\aone$-invariant.  Moreover, because we use the $\aone$-upper central series, the above principal refinement fails to be a {\em functorial} principal refinement.
\end{rem}

\subsection{$R$-localization for $\aone$-nilpotent spaces}
\label{ss:rlocalizationnilpotentspaces}
Finally, in this section, we discuss $R$-localization of motivic spaces.  We show that locally $\aone$-nilpotent spaces have well-behaved localizations and that localization preserves various fiber sequences.

\subsubsection*{The basic definitions}
\begin{defn}
\label{defn:plocalweakequivalence}
Suppose $f: \mathscr{X} \to \mathscr{Y}$ is a morphism in $\Spc_k$.  If $R \subset \Q$ is a subring, then we will say that $f$ is an {\em $R$-local $\aone$-weak equivalence} or {\em $R$-$\aone$-weak equivalence} if the induced morphism $R_{\aone}[\mathscr{X}] \to R_{\aone}[\mathscr{Y}]$ is an isomorphism in $\Daone{k}$.  When $R = \Z_{(P)}$ for a set of primes $P$, we will call $f$ a $P$-local $\aone$-weak equivalence and a rational $\aone$-weak equivalence if $P$ is empty; when $R = \Z[\frac{1}{n}]$, we will say that $f$ is an $\aone$-weak equivalence after inverting $n$.
\end{defn}

\begin{rem}
We have defined the notion $R$-local $\aone$-weak equivalence above using homology in contrast to our earlier definition of $R$-local equivalence using self-maps of the circle Section~\ref{ss:rlocalizationofsimplicialpresheaves}.  One reason for this disjunction is simply convenience of referencing.  In Theorem~\ref{thm:aonelocalizationofnilpotentspaces} we will see that, for weakly $\aone$-nilpotent spaces, in essence, $R$-$\aone$-localization can be modeled by first $\aone$-localizing and then applying the $R$-localization functor studied previously.  
\end{rem}

Granted this definition, we may construct the $R$-$\aone$-local homotopy category by techniques of Bousfield localization.

\begin{defn}
\label{defn:plocalhomotopycategory}
We write ${\ho{k}}_{R}$ for the $R$-$\aone$-local homotopy category, i.e., the category obtained by left Bousfield localizing $\Spc_k$ at the set of $R$-$\aone$-weak equivalences.  Likewise, if $n \neq 0$, we write $\ho{k}[\frac{1}{n}]$ for the category ${\ho{k}}_{\Z[\frac{1}{n}]}$.
\end{defn}

\begin{defn}
\label{defn:Raonelocalspaces}
A space $\mathscr{Z}$ is {\em $R$-$\aone$-local} if for every $R$-$\aone$-weak equivalence $f: \mathscr{X} \to \mathscr{Y}$, the map $\operatorname{Map}(\mathscr{Y},\mathscr{Z}) \to \operatorname{Map}(\mathscr{X},\mathscr{Z})$ of derived mapping spaces is a weak equivalence.
\end{defn}

The next lemma summarizes some basic facts about $R$-$\aone$-local spaces.

\begin{lem}
\label{lem:propertiesofraonelocalspaces}
The following statements hold.
\begin{enumerate}[noitemsep,topsep=1pt]
\item If $\mathbf{A}$ is an $R$-local strictly $\aone$-invariant sheaf of abelian groups, then for any integer $n \geq 0$, $K(\mathbf{A},n)$ is $R$-$\aone$-local.
\item If $\mathscr{F} \to \mathscr{E} \to \mathscr{B}$ is a simplicial fiber sequence of pointed, $\aone$-connected spaces and both $\mathscr{E}$ and $\mathscr{B}$ are $R$-$\aone$-local, then $\mathscr{F}$ is $R$-$\aone$-local as well; the $R$-$\aone$-model structure is right proper.
\item If $\mathbf{G}$ is an $R$-local $\aone$-nilpotent sheaf of groups, then $B\mathbf{G}$ is $R$-$\aone$-local.
\item If $\mathscr{X} \in \Spc_k$ is a connected $\aone$-nilpotent space such that $\bpi_i^{\aone}(\mathscr{X})$ is $R$-local for every $i > 0$, then $\mathscr{X}$ is $R$-$\aone$-local.
\end{enumerate}
\end{lem}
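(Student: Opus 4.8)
The plan is to present $\mathscr{X}$, up to $\aone$-weak equivalence, as the homotopy limit of a tower of $\aone$-principal fibrations whose successive fibers are Eilenberg--Mac Lane spaces on $R$-local strictly $\aone$-invariant sheaves, and then to conclude using parts (1)--(3) already proved together with closure of $R$-$\aone$-local spaces under homotopy limits.

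First I would note that, since $\mathscr{X}$ is $\aone$-nilpotent, applying the definition to the structure map forces $\bpi := \bpi_1^{\aone}(\mathscr{X})$ to be an $\aone$-nilpotent sheaf of groups and forces $\bpi$ to act $\aone$-nilpotently on each $\bpi_i^{\aone}(\mathscr{X})$, $i\geq 2$. As $\bpi$ is moreover $R$-local by hypothesis, part (3) gives that $B\bpi$ is $R$-$\aone$-local. Now consider the canonical map $c\colon \mathscr{X}\to B\bpi$: it is an isomorphism on $\bpi_1^{\aone}$, its $\aone$-homotopy fiber is the $\aone$-simply connected $\aone$-universal cover $\widetilde{\mathscr{X}}$, and the $\aone$-nilpotence of $\mathscr{X}$ in particular makes $c$ an $\aone$-nilpotent morphism. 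Hence, by Theorem~\ref{thm:moorepostnikovcharacterization} (and Corollary~\ref{cor:MoorePostnikovfactorizationconclusion}, which is why $k$ is assumed perfect), the $\aone$-Moore--Postnikov tower of $c$ admits an $\aone$-principal refinement: a tower of $\aone$-fibrations
\[
\cdots \longrightarrow \tau_{\leq n,i}\,c \stackrel{p_{n,i}}{\longrightarrow} \tau_{\leq n,i-1}\,c \longrightarrow \cdots \longrightarrow \tau_{\leq 0}\,c = B\bpi,
\]
with $\mathscr{X}\to \operatorname{holim}\tau_{\leq n,i}\,c$ an $\aone$-weak equivalence, in which each $p_{n,i}$ fits into an $\aone$-fiber sequence
\[
\tau_{\leq n,i}\,c \stackrel{p_{n,i}}{\longrightarrow} \tau_{\leq n,i-1}\,c \longrightarrow K(\mathbf{G}_{n,i},n+1)
\]
for strictly $\aone$-invariant sheaves of abelian groups $\mathbf{G}_{n,i}$; here $n\geq 2$ since the $\aone$-homotopy fiber $\widetilde{\mathscr{X}}$ of $c$ is $\aone$-simply connected. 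Chasing the construction of the refinement in the proof of Corollary~\ref{cor:MoorePostnikovfactorizationconclusion}, the $\mathbf{G}_{n,i}$ are the successive subquotients of the (terminating) lower central series of $\bpi_n^{\aone}(\widetilde{\mathscr{X}}) = \bpi_n^{\aone}(\mathscr{X})$ viewed as a module over $\bpi$; since that sheaf is $R$-local, each term of this series is again a sheaf of $R$-modules (a generator $(g-1)x$ of an augmentation submodule of a uniquely $p$-divisible module is $p$-divisible there, cf.\ \cite[\S V.2]{BK}), so each $\mathbf{G}_{n,i}$ is an $R$-local strictly $\aone$-invariant sheaf, and part (1) then yields that $K(\mathbf{G}_{n,i},n+1)$ is $R$-$\aone$-local.

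It then remains to run an induction up the refined tower, ordered lexicographically in $(n,i)$. The base case $\tau_{\leq 0}\,c = B\bpi$ is $R$-$\aone$-local as noted. For the inductive step one applies part (2) to the simplicial fiber sequence underlying
\[
\tau_{\leq n,i}\,c \longrightarrow \tau_{\leq n,i-1}\,c \longrightarrow K(\mathbf{G}_{n,i},n+1),
\]
all of whose terms are $\aone$-connected ($B\bpi$ is connected and $n+1\geq 3$): the middle term is $R$-$\aone$-local by the inductive hypothesis and the base is $R$-$\aone$-local by part (1), so the fiber $\tau_{\leq n,i}\,c$ is $R$-$\aone$-local. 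Thus every stage of the tower is $R$-$\aone$-local. Since the $R$-$\aone$-local spaces are the fibrant objects of the $R$-$\aone$-local model structure (Definition~\ref{defn:plocalhomotopycategory}), they are closed under homotopy limits — equivalently, $\operatorname{Map}(\mathscr{Y},-)$ carries homotopy limits to homotopy limits and a homotopy limit of weak equivalences is a weak equivalence — so $\operatorname{holim}\tau_{\leq n,i}\,c$ is $R$-$\aone$-local. Finally, $R$-$\aone$-locality depends only on the $\aone$-weak equivalence type of the target (the derived mapping space $\operatorname{Map}(\mathscr{Y},-)$ preserves $\aone$-weak equivalences), and $\mathscr{X}\simeq\operatorname{holim}\tau_{\leq n,i}\,c$; hence $\mathscr{X}$ is $R$-$\aone$-local.

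The only genuinely non-formal ingredient is the construction of this principal Postnikov-type tower with $R$-local building blocks: one must invoke $\aone$-nilpotence of $\mathscr{X}$ to get the $\aone$-principal refinement (Theorem~\ref{thm:moorepostnikovcharacterization}) and then identify the coefficient sheaves $\mathbf{G}_{n,i}$ as subquotients of the $R$-local sheaves $\bpi_n^{\aone}(\mathscr{X})$, hence again $R$-local; everything after that is formal once parts (1)--(3) are available. A variant avoiding the separate treatment of $B\bpi$ would use the full $\aone$-Moore--Postnikov refinement of $\mathscr{X}\to\ast$, whose level-one building blocks are the subquotients of the $\aone$-upper central series of $\bpi$ (which are $R$-local because an $R$-local nilpotent group has $R$-module upper central quotients), but I prefer the version above since part (3) packages that step cleanly.
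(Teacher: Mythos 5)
Your proposal only addresses part (4), taking (1)--(3) as given; for that part it is correct and follows exactly the route the paper takes, since the paper's own proof of (4) is the one-sentence remark that it is ``immediate from Theorem~\ref{thm:moorepostnikovcharacterization} and the preceding points'' --- you are supplying the induction up the principal refinement that the authors leave implicit. Two small remarks. First, the coefficient sheaves $\mathbf{G}_{n,i}$ produced by Corollary~\ref{cor:MoorePostnikovfactorizationconclusion} are obtained from the relative $\aone$-Hurewicz theorem (a certain strictly $\aone$-invariant quotient with trivial $\bpi_1^{\aone}(\mathscr{E})$-action, not literally the lower central series subquotients you name), but your $R$-locality conclusion survives: whatever they are, they are kernels and cokernels of morphisms of strictly $\aone$-invariant sheaves of $R$-modules, hence sheaves of $R$-modules by Theorem~\ref{thm:stableconnectivity} together with the (unique) divisibility argument you sketch. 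Second, your closing appeal to invariance of $R$-$\aone$-locality under $\aone$-weak equivalence of the target is only valid if one computes derived mapping spaces $\aone$-locally (equivalently, if $\mathscr{X}$ is replaced by $\Laone\mathscr{X}$); this is the same level of precision as the paper, which applies the lemma only to $\aone$-local spaces, so it is not a real defect of your argument.
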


\begin{proof}
Since $K(\mathbf{A},n)$ is $\aone$-local, for any space $\mathscr{X}$ there are identifications of the form:
\[
[\mathscr{X},K(\mathbf{A},n)]_s  = [\mathscr{X},K(\mathbf{A},n)]_{\aone} = H^n_{\Nis}(\mathscr{X},\mathbf{A}).
\]
Granted these identifications, the fact that $K(\mathbf{A},n)$ is $R$-$\aone$-local is an immediate consequence of the equivalent conditions listed in Proposition~\ref{prop:homologicalandcohomologicalequivalences}.

For the second statement, observe that applying $\mathrm{Map}(\mathscr{X},-)$ preserves fiber sequences. If $\mathscr{Y} \to \mathscr{X}$ is an $R$-$\aone$-equivalence, then there is a morphism of simplicial fiber sequences
\[
\xymatrix{
\mathrm{Map}(\mathscr{X},\mathscr{F}) \ar[r]\ar[d] & \mathrm{Map}(\mathscr{X},\mathscr{E}) \ar[r]\ar[d] & \mathrm{Map}(\mathscr{X},\mathscr{B}) \ar[d]\\
\mathrm{Map}(\mathscr{Y},\mathscr{F}) \ar[r] & \mathrm{Map}(\mathscr{Y},\mathscr{F}) \ar[r] & \mathrm{Map}(\mathscr{Y},\mathscr{B})
}
\]
By, e.g., considering the associated long exact sequence in homotopy, one checks that the map $\mathrm{Map}(\mathscr{X},\mathscr{F}) \to \mathrm{Map}(\mathscr{Y},\mathscr{F})$ in the above diagram is again a weak equivalence.  The right properness of the $R$-$\aone$-local model structure is now immediate from this observation.

For the third statement, consider $B\mathbf{G}$ where $\mathbf{G}$ is $R$-local and $\aone$-nilpotent.  In that case, $\mathbf{G}$ may be written as an iterated central extension by strictly $\aone$-invariant sheaves of $R$-modules.  Given such a central extension, there is an associated $\aone$-fiber sequence of the form
\[
B\mathbf{G} \longrightarrow B\mathbf{G}' \longrightarrow K(\mathbf{A},2).
\]
Point (1) guarantees that $K(\mathbf{A},2)$ is $R$-$\aone$-local, and we may inductively assume that $B\mathbf{G}'$ is $R$-$\aone$-local, so we conclude that $B\mathbf{G}$ is $R$-$\aone$-local as well.

The final statement is immediate from Theorem \ref{thm:moorepostnikovcharacterization} and the preceding points.
\end{proof}

\subsubsection*{$R$-$\aone$-localization of $\aone$-nilpotent groups}
\begin{lem}
\label{lem:Raonelocalizationabeliancase}
If $\mathbf{A}$ is a strictly $\aone$-invariant sheaf of groups, then for every integer $n > 0$,
\[
\LR \mathrm{R}_{\Nis} K(\mathbf{A},n) \cong K(R \tensor \mathbf{A},n)
\]
is a functorial $R$-$\aone$-localization of $K(\mathbf{A},n)$.
\end{lem}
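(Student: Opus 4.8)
The plan is to verify directly the two defining properties of an $R$-$\aone$-localization of $K(\mathbf{A},n)$: that the natural map $K(\mathbf{A},n)\to\LR\mathrm{R}_{\Nis}K(\mathbf{A},n)$ is an $R$-$\aone$-weak equivalence, and that $\LR\mathrm{R}_{\Nis}K(\mathbf{A},n)$ is $R$-$\aone$-local; functoriality is then automatic because $\mathrm{R}_{\Nis}$ and $\LR$ are functorial. The natural map is the composite $K(\mathbf{A},n)\to\mathrm{R}_{\Nis}K(\mathbf{A},n)\to\LR\mathrm{R}_{\Nis}K(\mathbf{A},n)$.

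For the first property I would argue, for an arbitrary space $\mathscr{X}$, that $\mathscr{X}\to\LR\mathrm{R}_{\Nis}\mathscr{X}$ is an $R$-$\aone$-weak equivalence. The first arrow $\mathscr{X}\to\mathrm{R}_{\Nis}\mathscr{X}$ is a Nisnevich-local weak equivalence, so it induces an isomorphism on integral homology sheaves; the second arrow is an $R$-local (i.e., $T_R$-local) weak equivalence of simplicial presheaves, which stalkwise is a $T_R$-local weak equivalence of simplicial sets and hence an isomorphism on homology with $R$-coefficients — indeed each generating map $\rho^r_n$ is an $R$-homology isomorphism, so every $R$-homology-local simplicial set is $T_R$-local, and therefore $T_R$-local equivalences are $R$-homology isomorphisms. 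Consequently the composite induces a quasi-isomorphism $R\tensor\Z[\mathscr{X}]\to R\tensor\Z[\LR\mathrm{R}_{\Nis}\mathscr{X}]$ in $\D(\Ab_k)$, and applying the exact functor $\Laone^{ab}$ shows the induced map on $R_{\aone}[-]$ is an isomorphism, which is exactly Definition~\ref{defn:plocalweakequivalence}.

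Next I would identify the target. The space $\mathrm{R}_{\Nis}K(\mathbf{A},n)$ is simplicially fibrant and is a nilpotent space in the sense of Definition~\ref{defn:nilpotentspace} (simply connected for $n\geq2$, and $B\mathbf{A}$ with $\mathbf{A}$ abelian for $n=1$), so Corollary~\ref{cor:Rlocalizationofnilpotentisnilpotent} applies: $\LR\mathrm{R}_{\Nis}K(\mathbf{A},n)$ is connected and $\bpi_i(\LR\mathrm{R}_{\Nis}K(\mathbf{A},n))\cong\bpi_i(\mathrm{R}_{\Nis}K(\mathbf{A},n))_R$, which is $\mathbf{A}_R=R\tensor\mathbf{A}$ (using that $\mathbf{G}_R=R\tensor\mathbf{G}$ for an abelian sheaf $\mathbf{G}$) for $i=n$ and vanishes otherwise. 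Since a pointed connected space with homotopy sheaves concentrated in a single degree $n$ (and abelian in degree $1$) is weakly equivalent to the corresponding Eilenberg--Mac Lane space, this gives $\LR\mathrm{R}_{\Nis}K(\mathbf{A},n)\simeq K(R\tensor\mathbf{A},n)$; alternatively one may check this stalkwise using Proposition~\ref{prop:LRproperties}(1) together with the classical identification $\LR K(A,n)\simeq K(A\tensor R,n)$ of simplicial sets \cite[V.3.1]{BK}.

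It then remains to see that $K(R\tensor\mathbf{A},n)$ is $R$-$\aone$-local, for which I would invoke Lemma~\ref{lem:propertiesofraonelocalspaces}(1): the sheaf $R\tensor\mathbf{A}$ is $R$-local because it is a sheaf of $R$-modules, and it is strictly $\aone$-invariant because $R\tensor\mathbf{A}=\colim(\mathbf{A}\xrightarrow{p_1}\mathbf{A}\xrightarrow{p_2}\cdots)$ is a filtered colimit (along multiplications by primes $p_i\in P$) of strictly $\aone$-invariant sheaves and $\Ab^{\aone}_k$ is closed under filtered colimits — equivalently, $\Laone^{ab}$ commutes with filtered colimits and fixes each copy of $\mathbf{A}$. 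I expect the only genuine subtleties to be these two background closure facts — that $T_R$-local equivalences of simplicial sets are $R$-homology isomorphisms, and that strict $\aone$-invariance is preserved by the filtered colimits computing $R\tensor\mathbf{A}$ — together with the mild bookkeeping in the $n=1$ case; the rest is routine.
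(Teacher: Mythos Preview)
Your proof is correct and follows essentially the same approach as the paper. The paper's argument is a terser version: it directly exhibits the map $K(\mathbf{A},n)\to K(R\tensor\mathbf{A},n)$, notes the target is $R$-$\aone$-local by Lemma~\ref{lem:propertiesofraonelocalspaces}(1), and checks the map is an $R$-local simplicial weak equivalence stalkwise via \cite[V.3.2]{BK}; you instead first show the general localization map $\mathscr{X}\to\LR\mathrm{R}_{\Nis}\mathscr{X}$ is an $R$-$\aone$-weak equivalence and then identify the target via homotopy sheaves, and you additionally spell out why $R\tensor\mathbf{A}$ remains strictly $\aone$-invariant --- a point the paper leaves implicit.
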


\begin{proof}
The space $K(R \tensor \mathbf{A},n)$ is $R$-$\aone$-local by Lemma \ref{lem:propertiesofraonelocalspaces}(1).  The canonical morphism $\mathbf{A} \to R \tensor \mathbf{A}$ yields a morphism $K(\mathbf{A},n) \to K(R \tensor \mathbf{A},n)$.  We want to show that this morphism is an $R$-$\aone$-weak equivalence.  In fact, the morphism $\mathrm{R}_{\Nis} K(\mathbf{A},n) \to \mathrm{R}_{\Nis} K(R \tensor \mathbf{A},n)$ is an $R$-local simplicial weak equivalence.  Indeed, this can be checked stalkwise, in which case it follows from \cite[Chapter V.3.2]{BK}.
\end{proof}

We may now define a functorial $R$-$\aone$-localization for $\aone$-nilpotent sheaves of groups.

\begin{thm}
\label{thm:localizationofaonenilpotentgroups}
Assume $k$ is a perfect field, and suppose $\mathbf{G} \in \Grp^{\aone}_k$ is an $\aone$-nilpotent sheaf of groups.
\begin{enumerate}[noitemsep,topsep=1pt]
\item The space $\LR B\mathbf{G}$ is $R$-$\aone$-local.
\item The functor $\Ab^{\aone}_k \to \Mod^{\aone}_R$ sending a strictly $\aone$-invariant sheaf $\mathbf{A}$ to $R \tensor \mathbf{A}$ extends to a functor
    \[
    \mathbf{G} \to \bpi_1(\LR B_{\Nis}\mathbf{G}) =: \mathbf{G}_R
    \]
    from the category of $\aone$-nilpotent sheaves of groups to the category of $R$-local $\aone$-nilpotent sheaves of groups.
\item The functor of (2) preserves short exact sequences of $\aone$-nilpotent sheaves of groups.
\item For any $\aone$-nilpotent sheaf of groups $\mathbf{G}$, there is a canonical isomorphism $(\mathbf{G}_{-1})_{R} \cong (\mathbf{G}_R)_{-1}$ (see \textup{Definition~\ref{defn:contraction}}).
\end{enumerate}
\end{thm}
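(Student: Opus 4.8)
\textbf{Proof proposal for Theorem~\ref{thm:localizationofaonenilpotentgroups}(4).}

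The plan is to prove the isomorphism $(\mathbf{G}_{-1})_R \cong (\mathbf{G}_R)_{-1}$ by induction on the $\aone$-nilpotence class of $\mathbf{G}$, using the interplay between the contraction functor and $R$-localization established in the preceding parts of the theorem together with Proposition~\ref{prop:contractionisexact}. First I would dispose of the abelian case: if $\mathbf{A}$ is a strictly $\aone$-invariant sheaf of abelian groups, then by Lemma~\ref{lem:Raonelocalizationabeliancase} we have $\mathbf{A}_R \cong R \tensor \mathbf{A}$, and the contraction functor on $\Ab^{\aone}_k$ commutes with tensoring by the flat $\Z$-module $R$ (this can be checked directly from Definition~\ref{defn:contraction}, since $\mathbf{A}_{-1}(U) = \ker(\mathbf{A}(\gm{}\times U) \to \mathbf{A}(U))$ and $- \tensor_{\Z} R$ is exact). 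Thus $(R \tensor \mathbf{A})_{-1} \cong R \tensor (\mathbf{A}_{-1})$, which gives the base case $(\mathbf{A}_{-1})_R \cong (\mathbf{A}_R)_{-1}$.

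For the inductive step, write $\mathbf{G}$ as a central extension
\[
1 \longrightarrow \mathbf{A} \longrightarrow \mathbf{G} \longrightarrow \mathbf{G}'' \longrightarrow 1
\]
with $\mathbf{A}$ a strictly $\aone$-invariant sheaf of abelian groups (the last term of an $\aone$-central series) and $\mathbf{G}''$ of smaller $\aone$-nilpotence class. Applying the contraction functor, Proposition~\ref{prop:contractionisexact}(3) yields a short exact sequence $1 \to \mathbf{A}_{-1} \to \mathbf{G}_{-1} \to \mathbf{G}''_{-1} \to 1$, which is again a central extension of strongly $\aone$-invariant sheaves (centrality is inherited since $\mathbf{A}_{-1} \subset \mathbf{G}_{-1}$ maps into the center; one checks this stalkwise or from the naturality of contraction applied to the conjugation action). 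By Theorem~\ref{thm:gmloopsaonenilpotent}, $\mathbf{G}_{-1}$ is $\aone$-nilpotent, so the functor $(-)_R$ of part (2) applies; by part (3) it preserves this short exact sequence, giving
\[
1 \longrightarrow (\mathbf{A}_{-1})_R \longrightarrow (\mathbf{G}_{-1})_R \longrightarrow (\mathbf{G}''_{-1})_R \longrightarrow 1.
\]
On the other side, $R$-localizing the original extension via part (3) gives $1 \to \mathbf{A}_R \to \mathbf{G}_R \to \mathbf{G}''_R \to 1$ (still central, by the analogous stalkwise check), and applying contraction via Proposition~\ref{prop:contractionisexact}(3) yields $1 \to (\mathbf{A}_R)_{-1} \to (\mathbf{G}_R)_{-1} \to (\mathbf{G}''_R)_{-1} \to 1$. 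The base case identifies $(\mathbf{A}_{-1})_R \cong (\mathbf{A}_R)_{-1}$ and the inductive hypothesis identifies $(\mathbf{G}''_{-1})_R \cong (\mathbf{G}''_R)_{-1}$, so a five-lemma-type argument (together with naturality, to produce a map of the two extensions in the first place) completes the step.

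The main obstacle I anticipate is constructing a \emph{canonical} comparison map $(\mathbf{G}_{-1})_R \to (\mathbf{G}_R)_{-1}$ functorially, rather than merely showing the two sheaves are abstractly isomorphic — the induction as sketched proves an isomorphism exists on subquotients but assembling them requires a genuine natural transformation of functors. The cleanest route is probably to produce this map \emph{before} the induction: the natural map $B\mathbf{G} \to \LR B\mathbf{G}$ induces, after applying $\Omega_{\gm{}}$ and $\bpi_1^{\aone}$ (using $\bpi_1^{\aone}(\Omega_{\gm{}} B\mathbf{G}) = \mathbf{G}_{-1}$ from Theorem~\ref{thm:contractionsandgmloops} and $\bpi_1^{\aone}(\Omega_{\gm{}} \LR B\mathbf{G}) = (\mathbf{G}_R)_{-1}$, the latter using that $\Omega_{\gm{}}$ preserves $R$-$\aone$-local spaces since $\gm{}$ has strongly finite $\aone$-cohomological dimension, cf.\ Example~\ref{ex:stronglyfinitecohomologicaldimension} and the proof of Theorem~\ref{thm:gmloopsaonenilpotent}), a homomorphism $\mathbf{G}_{-1} \to (\mathbf{G}_R)_{-1}$; since the target is $R$-local and $\aone$-nilpotent, this factors through $(\mathbf{G}_{-1})_R$ by the universal property in part (2). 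The induction above then verifies this canonical map is an isomorphism by checking it on the subquotients of a central series.
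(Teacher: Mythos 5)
Your proposal only addresses part (4), taking parts (1)--(3) as given; the bulk of the paper's actual proof is the inductive construction over central extensions establishing (1)--(3), which you do not attempt. For part (4) itself, your argument is correct and is essentially the paper's: the paper also produces the canonical map $(\mathbf{G}_{-1})_R \to (\mathbf{G}_R)_{-1}$ by contracting $\mathbf{G}\to\mathbf{G}_R$ and invoking the universal property (it does so directly via functoriality of $(-)_{-1}$ on $\Grp_k^{\aone}$ from Proposition~\ref{prop:contractionisexact}(1), rather than through $\Omega_{\gm{}}$ of classifying spaces, but the resulting map is the same), checks the abelian case from the definition of contraction, and then inducts on the $\aone$-nilpotence class. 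Your write-up usefully fills in the details the paper leaves implicit --- the choice of central extension, exactness of both functors on it, naturality, and the five lemma --- and these details all go through with the tools available (Theorem~\ref{thm:gmloopsaonenilpotent} for $\aone$-nilpotence of contractions, Lemma~\ref{lem:stronglyaoneinvariantsheavesofgroups}(3) for the quotient, Proposition~\ref{prop:contractionisexact}(3) and part (3) of the theorem for exactness). The one point you assert without justification is that $(\mathbf{G}_R)_{-1}$ is $R$-local, which is needed for the factorization; this follows from Lemma~\ref{lem:Rnilpotenceequivconditions} since contraction preserves a central series with $R$-module subquotients, and the paper's proof flags exactly this point.
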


\begin{proof}
If $\mathbf{A}$ is strictly $\aone$-invariant, then we know that $\LR B\mathbf{A}$ is $R$-$\aone$-local by appeal to Lemma~\ref{lem:Raonelocalizationabeliancase} and $\bpi_1(\LR B\mathbf{A}) \cong \mathbf{A} \tensor R =: \mathbf{A}_R$.

To treat the general case, suppose we have a central extension of the form
\[
1 \longrightarrow \mathbf{A} \longrightarrow \mathbf{G} \longrightarrow \mathbf{G}' \longrightarrow 1
\]
where $\mathbf{A}$ is strictly $\aone$-invariant and $\mathbf{G}$ and $\mathbf{G}'$ are $\aone$-nilpotent.  In that case, there is a fiber sequence of the form
\[
B\mathbf{G} \longrightarrow B\mathbf{G}' \longrightarrow K(\mathbf{A},2)
\]
in which each term is $\aone$-local (i.e., this is an $\aone$-fiber sequence).  We claim that the sequence
\begin{equation}
\label{eqn:inductionstep}
\LR B\mathbf{G} \longrightarrow \LR B\mathbf{G}' \longrightarrow \LR K(\mathbf{A},2)
\end{equation}
is a simplicial fiber sequence of $\aone$-local spaces.  To this end, observe that $\LR K(\mathbf{A},2)$ is simplicially $1$-connected and $\aone$-local by appeal to Lemma~\ref{lem:Raonelocalizationabeliancase}.  Next, observe that $B\mathbf{G}'$ and $B\mathbf{G}$ are $\aone$-nilpotent and therefore nilpotent in the classical sense.  It follows from Lemma~\ref{lem:Rlocalizationnilpotentfiberlemma} that the canonical map from the $R$-localization of $B\mathbf{G}$ to the homotopy fiber of $\LR B\mathbf{G}' \to \LR K(\mathbf{A},2)$ is a simplicial weak equivalence.

We claim that by induction $\LR B\mathbf{G}$ is $R$-$\aone$-local.  The base case is Lemma~\ref{lem:Raonelocalizationabeliancase}: in particular, we may assume inductively that $\LR B\mathbf{G}'$ is simplicially connected, $\aone$-local, has $\bpi_1(\LR B\mathbf{G}')$ an $R$-local strongly $\aone$-invariant sheaf of groups and is $1$-truncated, i.e., all higher homotopy sheaves vanish.  In that case, Lemma~\ref{lem:propertiesofraonelocalspaces} guarantees that $\LR B\mathbf{G}'$ is $R$-$\aone$-local.  Then, combining Lemma~\ref{lem:Raonelocalizationabeliancase} and Lemma~\ref{lem:propertiesofraonelocalspaces} with the fiber sequence of \eqref{eqn:inductionstep} implies that $\LR B\mathbf{G}$ is $R$-$\aone$-local as well.

Point (2) is immediate from (1).  Point (3) follows immediately from Lemma~\ref{prop:Rlocalizationofgroups}.  For point (4), consider the map $\mathbf{G} \to \mathbf{G}_R$.  Taking contractions yields a morphism $\mathbf{G}_{-1} \to (\mathbf{G}_R)_{-1}$.  The construction of $R$-localization, together with the fact that $(-)_{-1}$ preserves exact sequences implies that $(\mathbf{G}_R)_{-1}$ is an $R$-local $\aone$-nilpotent sheaf of groups and therefore, there is an induced morphism $(\mathbf{G}_{-1})_R \to (\mathbf{G}_R)_{-1}$.  By definition of contraction, this map is an isomorphism if $\mathbf{G}$ is abelian, and one deduces it is an isomorphism in general by induction on the $\aone$-nilpotence class of $\mathbf{G}$.
\end{proof}

\subsubsection*{A technical result}
Before moving on to analyze $R$-$\aone$-localization of $\aone$-nilpotent spaces, we establish a useful technical result here about the interaction between $\aone$-localization and $R$-localization in the sense of Section~\ref{ss:rlocalizationofsimplicialpresheaves}.

\begin{prop}
\label{prop:lrxisaonelocal}
Suppose $(\mathscr{X},x)$ is a pointed, connected space.  If $\mathscr{X}$ is a weakly $\aone$-nilpotent and $\aone$-local space (see \textup{Definition \ref{defn:aonenilpotentspace}} for the former notion), then $\LR \mathscr{X}$ is again $\aone$-local.  Moreover, for each integer $i \geq 1$, there are identifications of the form:
\[
\bpi_i(\LR \mathscr{X}) \cong \bpi_i(\mathscr{X})_R
\]
and $\LR \mathscr{X}$ is again a weakly $\aone$-nilpotent space.
\end{prop}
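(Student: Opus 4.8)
The plan is to reduce everything to the group-theoretic statements already proved for $\aone$-nilpotent sheaves of groups (Theorem~\ref{thm:localizationofaonenilpotentgroups}) together with the basic properties of $\LR$ from Section~\ref{ss:rlocalizationofsimplicialpresheaves}, by resolving $\mathscr{X}$ into pieces built from $B\bpi$ and the $\aone$-universal cover. First I would use the $\aone$-fiber sequence $\widetilde{\mathscr{X}} \to \mathscr{X} \to B\bpi$ with $\bpi := \bpi_1^{\aone}(\mathscr{X})$, where $\bpi$ is $\aone$-nilpotent by the weak $\aone$-nilpotence hypothesis and $\widetilde{\mathscr{X}}$ is $\aone$-$1$-connected; since $\mathscr{X}$ is $\aone$-local, so is $\widetilde{\mathscr{X}}$ (Lemma~\ref{lem:homotopyfiberaonelocal}), and $\widetilde{\mathscr{X}}$ is locally $\aone$-nilpotent by Corollary~\ref{cor:checkingaonenilpotence} combined with Theorem~\ref{thm:aonenilpotenthomotopyfiber}. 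The key point is that $\widetilde{\mathscr{X}}$ is simply connected with strictly $\aone$-invariant homotopy sheaves, so its Postnikov tower (in the $\aone$-local category) is a tower of $\aone$-principal fibrations with fibers $K(\mathbf{A}_j, j)$ for $\mathbf{A}_j$ strictly $\aone$-invariant; this is essentially the content of Theorem~\ref{thm:moorepostnikovcharacterization} since a simply connected space is automatically $\aone$-nilpotent.

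The main engine is then an inductive argument up this Postnikov tower. For each principal $\aone$-fibration $\tau_{\leq j}\widetilde{\mathscr{X}} \to \tau_{\leq j-1}\widetilde{\mathscr{X}} \to K(\mathbf{A}_j,j+1)$ of $\aone$-local spaces, all three spaces are classically nilpotent (even simple or simply connected), so Lemma~\ref{lem:Rlocalizationnilpotentfiberlemma} applies: $\LR$ preserves the fiber sequence and commutes with homotopy fibers up to weak equivalence. By Lemma~\ref{lem:Raonelocalizationabeliancase}, $\LR \mathrm{R}_{\Nis} K(\mathbf{A}_j, j+1) \cong K(R \tensor \mathbf{A}_j, j+1)$ is $\aone$-local (indeed $R$-$\aone$-local); hence by Lemma~\ref{lem:aww2210} applied to the $\LR$-ed fiber sequence, if $\LR \tau_{\leq j-1}\widetilde{\mathscr{X}}$ is $\aone$-local then so is $\LR \tau_{\leq j}\widetilde{\mathscr{X}}$, and its homotopy sheaves are the $R$-localizations $\bpi_i(\tau_{\leq j}\widetilde{\mathscr{X}})_R = \bpi_i(\tau_{\leq j}\widetilde{\mathscr{X}}) \tensor R$ by the exact-sequence-preservation in Theorem~\ref{thm:localizationofaonenilpotentgroups}(3) and Corollary~\ref{cor:Rlocalizationofnilpotentisnilpotent}(2). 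Passing to the homotopy limit, $\LR$ commutes with the relevant $\operatorname{holim}$ over the tower (using that $\LR$ commutes with stalks, Proposition~\ref{prop:LRproperties}(1), and a $\lim^1$/Milnor-sequence bookkeeping on the stalks), so $\LR \widetilde{\mathscr{X}}$ is $\aone$-local with $\bpi_i(\LR\widetilde{\mathscr{X}}) \cong \bpi_i(\widetilde{\mathscr{X}})_R$.

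Finally I would reassemble: applying $\LR$ to $\widetilde{\mathscr{X}} \to \mathscr{X} \to B\bpi$, Lemma~\ref{lem:Rlocalizationnilpotentfiberlemma} (stalkwise this is a nilpotent fibration) gives a fiber sequence $\LR\widetilde{\mathscr{X}} \to \LR\mathscr{X} \to \LR B\bpi$; by Theorem~\ref{thm:localizationofaonenilpotentgroups}(1) the base $\LR B\bpi$ is $R$-$\aone$-local (in particular $\aone$-local), it is simplicially connected, and the fiber is $\aone$-local by the previous paragraph, so Lemma~\ref{lem:aww2210} forces $\LR\mathscr{X}$ to be $\aone$-local. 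The homotopy-sheaf identification $\bpi_i(\LR\mathscr{X}) \cong \bpi_i(\mathscr{X})_R$ follows from the long exact sequence of this fibration together with the identifications on fiber and base and the exactness properties of $(-)_R$ in Theorem~\ref{thm:localizationofaonenilpotentgroups}(3) (and Proposition~\ref{prop:Rlocalizationofgroups} for the non-abelian $\bpi_1$ level). For weak $\aone$-nilpotence of $\LR\mathscr{X}$: $\bpi_1(\LR\mathscr{X}) = \bpi_R$ is $\aone$-nilpotent by Theorem~\ref{thm:localizationofaonenilpotentgroups}(2), and local $\aone$-nilpotence is inherited because $\LR$ commutes with stalks (Proposition~\ref{prop:LRproperties}(1)) and $R$-localization of a nilpotent group is nilpotent (cf.\ the proof of Corollary~\ref{cor:Rlocalizationofnilpotentisnilpotent}); alternatively one can invoke Corollary~\ref{cor:checkingaonenilpotence} using that the $\bpi_R$-action on each $\bpi_i(\LR\widetilde{\mathscr{X}}) = \bpi_i(\widetilde{\mathscr{X}})_R$ is obtained by $R$-localizing a locally $\aone$-nilpotent action, hence again locally $\aone$-nilpotent. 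The step I expect to be the main obstacle is controlling the interaction of $\LR$ with the infinite homotopy limit over the Postnikov tower of $\widetilde{\mathscr{X}}$ --- i.e.\ justifying that $\LR \operatorname{holim}_j \tau_{\leq j}\widetilde{\mathscr{X}} \to \operatorname{holim}_j \LR \tau_{\leq j}\widetilde{\mathscr{X}}$ is an $\aone$-weak equivalence; this is where one must work stalkwise, invoke Proposition~\ref{prop:LRproperties}(1), and appeal to the classical Bousfield--Kan convergence results for $R$-localization of nilpotent towers (as in \cite[\S V.3]{BK}), since $\LR$ is a fibrant replacement and need not a priori commute with arbitrary homotopy limits.
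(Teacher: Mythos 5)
Your overall skeleton — reduce to the fiber sequence $\widetilde{\mathscr{X}} \to \mathscr{X} \to B\bpi$, handle the base via Theorem~\ref{thm:localizationofaonenilpotentgroups}, and reassemble with Lemma~\ref{lem:Rlocalizationnilpotentfiberlemma} and Lemma~\ref{lem:aww2210} — matches the paper's. But your "main engine," the induction up the Postnikov tower of $\widetilde{\mathscr{X}}$ followed by passage to the homotopy limit, is not how the paper proceeds, and the step you yourself flag as the obstacle is a genuine gap: $\LR$ is a left Bousfield localization (fibrant replacement for a left localization), so it has no reason to commute with an infinite $\operatorname{holim}$, and working stalkwise does not rescue this since taking stalks is a filtered colimit and also does not commute with homotopy limits. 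The $\lim^1$ bookkeeping you allude to is exactly the hard convergence question for localization of towers, and nothing in Proposition~\ref{prop:LRproperties} or the quoted Bousfield--Kan results hands it to you; as written, the $\aone$-locality of $\LR\widetilde{\mathscr{X}}$ is not established.

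The paper sidesteps the tower entirely by using a criterion you never invoke: by \cite[Lemma 2.2.11(2)]{AWW}, a connected space is $\aone$-local as soon as its $\bpi_1$ is strongly $\aone$-invariant and its higher homotopy sheaves are strictly $\aone$-invariant. So one never needs to exhibit $\LR\widetilde{\mathscr{X}}$ or $\LR\mathscr{X}$ as a limit of $\aone$-local pieces; it suffices to \emph{compute} the homotopy sheaves. Corollary~\ref{cor:Rlocalizationofnilpotentisnilpotent} (a purely stalkwise Bousfield--Kan statement) gives $\bpi_i(\LR\widetilde{\mathscr{X}}) \cong \bpi_i(\mathscr{X})_R = \bpi_i(\mathscr{X}) \tensor R$ for $i \geq 2$, which is strictly $\aone$-invariant because tensoring with $R$ preserves strict $\aone$-invariance; Theorem~\ref{thm:localizationofaonenilpotentgroups} gives that $\bpi_1(\LR\mathscr{X}) \cong \bpi_1(\LR B\bpi)$ is $R$-local and strongly $\aone$-invariant. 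The locality of $\LR\mathscr{X}$ then drops out of the criterion, and the weak $\aone$-nilpotence follows, as you say at the end, by tensoring a locally finite central series with the flat ring $R$. I would encourage you to rewrite the argument around this homotopy-sheaf criterion: it eliminates the Postnikov induction and the holim commutation entirely.
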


\begin{proof}
By \cite[Lemma 2.2.11(2)]{AWW}, to check a connected space $\mathscr{Y}$ is $\aone$-local, it suffices to show that $\bpi_1(\mathscr{Y})$ is strongly $\aone$-invariant and $\bpi_i(\mathscr{Y})$ is strictly $\aone$-invariant for $i \geq 2$.

Replacing $\mathscr{X}$ by $\Laone \mathscr{X}$ if necessary, we may assume that $\mathscr{X}$ is simplicially fibrant and $\aone$-local.  Set $\bpi := \bpi_1(\mathscr{X},x)$ and consider the first stage of the $\aone$-Postnikov tower for $\mathscr{X}$, i.e., the morphism $\mathscr{X} \to B\bpi$.  Write $\widetilde{\mathscr{X}}$ for the simplicial homotopy fiber of $\mathscr{X} \to B\bpi$.  Since $\bpi$ is strongly $\aone$-invariant, $B\bpi$ is $\aone$-local as well, so $\widetilde{\mathscr{X}}$ is $\aone$-local by appeal to Lemma~\ref{lem:homotopyfiberaonelocal}.  Furthermore $\widetilde{\mathscr{X}}$ is simplicially $1$-connected by assumption.  Of course, $\bpi_i(\widetilde{\mathscr{X}}) \cong \bpi_i(\mathscr{X})$ for $i \geq 2$ as well.

The morphism $\mathscr{X} \to B\bpi$ is a locally $\aone$-nilpotent morphism of simplicially connected spaces by the identifications of the preceding paragraph.  If we apply $\LR$ to this simplicial fiber sequence, then Lemma~\ref{lem:Rlocalizationnilpotentfiberlemma} guarantees that there is again a simplicial fiber sequence of the form
\[
\LR \widetilde{\mathscr{X}} \longrightarrow \LR \mathscr{X} \longrightarrow \LR B\bpi.
\]
Since the simplicial connectivity of $\LR \widetilde{\mathscr{X}}$ may be checked stalkwise, and $\LR$ commutes with formation of stalks, since stalkwise $R$-localization preserves connectivity, we conclude that $\LR \widetilde{\mathscr{X}}$ is again simplicially $1$-connected.

Corollary~\ref{cor:Rlocalizationofnilpotentisnilpotent} allows us to conclude that $\bpi_i(\LR \widetilde{\mathscr{X}}) \cong \bpi_i(\mathscr{X})_R$.  Since $\bpi_i(\mathscr{X})$ is strictly $\aone$-invariant as $\mathscr{X}$ is $\aone$-local, and we know that tensoring with $R$ preserves strict $\aone$-invariance, we conclude that $\bpi_i(\LR \mathscr{X}) \cong \bpi_i(\LR \widetilde{\mathscr{X}})$ is strictly $\aone$-invariant for all $i \geq 2$.  Likewise, from Theorem~\ref{thm:localizationofaonenilpotentgroups}, we conclude that $\bpi_1(\LR \mathscr{X}) \cong \bpi_1(\LR B\bpi)$ is an $R$-local strongly $\aone$-invariant sheaf of groups.

We already know that $\bpi_1(\mathscr{X})_R$ is an $R$-local $\aone$-nilpotent sheaf of groups. To establish the local $\aone$-nilpotence of the higher homotopy sheaves, simply observe that because $R$-localization is given by tensoring with $R$ and $R$ is flat as a $\Z$-module, if we take any locally finite $\bpi$-central series for $\bpi_i(\mathscr{X})$, then tensoring that series with $R$ provides a locally finite $\bpi_R$-central series for $\bpi_i(\mathscr{X})_R$.
\end{proof}

\subsubsection*{$R$-$\aone$-localizations of $\aone$-nilpotent spaces}
We may now construct a convenient, functorial model for the $R$-$\aone$-localization of a weakly $\aone$-nilpotent spaces.

\begin{thm}
\label{thm:aonelocalizationofnilpotentspaces}
Suppose $k$ is a perfect field and $\mathscr{X} \in \Spc_k$ is a pointed, connected, $\aone$-local space.  If $\mathscr{X}$ is weakly $\aone$-nilpotent (see \textup{Definition~\ref{defn:aonenilpotentspace}}), then the following statements hold.
\begin{enumerate}[noitemsep,topsep=1pt]
\item The canonical map $\bpi_i^{\aone}(\mathscr{X})_R \to \bpi_i^{\aone}(\LR \mathscr{X})$ is an isomorphism for every $i \geq 1$.  Moreover, $\LR \mathscr{X}$ is connected, $\bpi_1^{\aone}(\LR \mathscr{X})$ is an $R$-local $\aone$-nilpotent sheaf of groups, and $\LR \mathscr{X}$ is again locally $\aone$-nilpotent.
\item The canonical map $\mathscr{X} \to \LR \mathscr{X}$ is an $R$-$\aone$-weak equivalence.
\item The $R$-$\aone$-localization functor commutes with formation of finite products for weakly $\aone$-nilpotent spaces.
\end{enumerate}
\end{thm}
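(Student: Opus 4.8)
The plan is to obtain (1) directly from the structural results already in hand, to prove (2) by a stalkwise reduction to the classical theory, and to deduce (3) by first promoting $\LR$ to an honest functorial model for $R$-$\aone$-localization on weakly $\aone$-nilpotent spaces. For (1): since $\mathscr{X}$ is $\aone$-local one has $\bpi_i^{\aone}(\mathscr{X})=\bpi_i(\mathscr{X})$, and Proposition~\ref{prop:lrxisaonelocal} gives that $\LR\mathscr{X}$ is connected and $\aone$-local with $\bpi_i(\LR\mathscr{X})\cong\bpi_i(\mathscr{X})_R$ for $i\geq 1$, and that it is again weakly $\aone$-nilpotent (hence locally $\aone$-nilpotent); combining this with Theorem~\ref{thm:localizationofaonenilpotentgroups}(2), which identifies $\bpi_1^{\aone}(\LR\mathscr{X})$ with $\bpi_1^{\aone}(\mathscr{X})_R$ as an $R$-local $\aone$-nilpotent sheaf of groups, yields every assertion. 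For (2): because $(\Sm_k,\Nis)$ has enough points, $\Laone^{ab}$ is exact, and taking stalks commutes with homology sheaves, it suffices to check that $\Z[s^\ast\mathscr{X}]\otimes R\to\Z[s^\ast\LR\mathscr{X}]\otimes R$ is a quasi-isomorphism of complexes of abelian groups for every point $s$. Here $s^\ast\LR\mathscr{X}\cong\LR s^\ast\mathscr{X}$ by Proposition~\ref{prop:LRproperties}(1), on simplicial sets $\LR$ is Bousfield--Kan $R$-completion (as recalled in the proof of Lemma~\ref{lem:Rlocalizationnilpotentfiberlemma}), and the hypotheses force $s^\ast\mathscr{X}$ to be a connected nilpotent simplicial set (its homotopy groups are the stalks of the $\aone$-homotopy sheaves, and the stalkwise local $\aone$-nilpotence conditions say exactly that $\pi_1$ is nilpotent and acts nilpotently on the higher $\pi_i$); hence the completion map is an $H_\ast(-;R)$-isomorphism. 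Applying the exact functor $\Laone^{ab}$ then shows $R_{\aone}[\mathscr{X}]\to R_{\aone}[\LR\mathscr{X}]$ is an isomorphism in $\Daone{k}$, which is (2).

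For (3), I would first prove the key claim that \emph{$\LR$ sends a weakly $\aone$-nilpotent $\aone$-local space to an $R$-$\aone$-local space}. Granting it, $\mathscr{Z}\mapsto\LR\Laone\mathscr{Z}$ is a functorial $R$-$\aone$-localization on weakly $\aone$-nilpotent spaces (by (2) and the claim, the canonical map $\mathscr{Z}\to\LR\Laone\mathscr{Z}$ is an $R$-$\aone$-weak equivalence with $R$-$\aone$-local target). The class of weakly $\aone$-nilpotent spaces is closed under finite products: the $\aone$-homotopy sheaves of a product are the products of the $\aone$-homotopy sheaves, products of $\aone$-nilpotent sheaves of groups are $\aone$-nilpotent, and products of locally $\aone$-nilpotent actions are locally $\aone$-nilpotent (interleave the $\aone$-central series, as in the proof of Lemma~\ref{lem:subsquotientsandcentralextensions}). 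Since $\Laone$ and $\LR$ both preserve finite products (the latter by Proposition~\ref{prop:LRproperties}(3)), one gets $\LR\Laone(\mathscr{X}\times\mathscr{Y})\cong\LR\Laone\mathscr{X}\times\LR\Laone\mathscr{Y}$, which is (3).

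The key claim is the main obstacle: for a space that is only weakly (not honestly) $\aone$-nilpotent the structure morphism need not be $\aone$-nilpotent, so its $\aone$-Moore--Postnikov tower need not admit a principal refinement, and the argument for Lemma~\ref{lem:propertiesofraonelocalspaces}(4) does not apply verbatim. To get around this I would split off the fundamental group. Writing $\bpi:=\bpi_1^{\aone}(\mathscr{X})$, the $\aone$-fiber sequence $\widetilde{\mathscr{X}}\to\mathscr{X}\to B\bpi$ has $\aone$-simply connected (hence $\aone$-simple) fiber, and $\mathscr{X}\to B\bpi$ is a locally nilpotent morphism, so Lemma~\ref{lem:Rlocalizationnilpotentfiberlemma} produces an $\aone$-fiber sequence $\LR\widetilde{\mathscr{X}}\to\LR\mathscr{X}\to B(\bpi_R)$. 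Its base is $R$-$\aone$-local by Theorem~\ref{thm:localizationofaonenilpotentgroups} and Lemma~\ref{lem:propertiesofraonelocalspaces}(3), and its fiber $\LR\widetilde{\mathscr{X}}$ is $\aone$-simply connected with $R$-local strictly $\aone$-invariant homotopy sheaves, hence $R$-$\aone$-local by Lemma~\ref{lem:propertiesofraonelocalspaces}(4). Now $\bpi_R$ \emph{is} $\aone$-nilpotent, so $B(\bpi_R)\to\ast$ does admit a finite $\aone$-principal refinement $B(\bpi_R)=Y_c\to\cdots\to Y_0=\ast$ with $\aone$-fiber sequences $Y_j\to Y_{j-1}\to K(\mathbf{A}_j,2)$, the $\mathbf{A}_j$ being $R$-local strictly $\aone$-invariant sheaves (these come from the $\aone$-upper central series of $\bpi_R$, as in the $i=1$ case of Corollary~\ref{cor:MoorePostnikovfactorizationconclusion}). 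Pulling this tower back along $\LR\mathscr{X}\to B(\bpi_R)$ and applying the octahedral axiom gives $\aone$-fiber sequences $\mathscr{E}_j\to\mathscr{E}_{j-1}\to K(\mathbf{A}_j,1)$ with $\mathscr{E}_c=\LR\widetilde{\mathscr{X}}$ and $\mathscr{E}_0=\LR\mathscr{X}$. Since each base $K(\mathbf{A}_j,1)$ is an $\aone$-$h$-space, the fibrations of derived mapping spaces that occur in the proof of Lemma~\ref{lem:propertiesofraonelocalspaces}(2) are homogeneous, so that argument applies in the ``total space from fiber and base'' direction as well; a downward induction starting from the $R$-$\aone$-local space $\mathscr{E}_c$ then shows that $\LR\mathscr{X}=\mathscr{E}_0$ is $R$-$\aone$-local, establishing the claim. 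The subtle point I expect to cost the most work is exactly this reverse implication of Lemma~\ref{lem:propertiesofraonelocalspaces}(2) and the bookkeeping of the pulled-back principal tower.
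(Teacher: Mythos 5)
Your treatments of (1) and (2) are correct and essentially identical to the paper's: (1) is exactly Proposition~\ref{prop:lrxisaonelocal} (the extra appeal to Theorem~\ref{thm:localizationofaonenilpotentgroups}(2) is harmless), and for (2) the paper likewise observes that $\mathscr{X} \to \LR\mathscr{X}$ is a stalkwise $R$-homology isomorphism by Bousfield--Kan applied to the nilpotent stalks, hence an isomorphism in the derived category of sheaves of $R$-modules, and then applies the exact functor $\Laone^{ab}$.

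For (3) you have taken a genuinely different and more ambitious route, and it is there that the gap sits. The paper deduces (3) directly from (1), (2), the product-preservation of $\LR$ and $\Laone$ (Proposition~\ref{prop:LRproperties}(3)), and the closure of weakly $\aone$-nilpotent spaces under finite products — i.e., (3) is read as a statement about the functor $\LR\Laone$, and no claim that $\LR\mathscr{X}$ is $R$-$\aone$-local in the sense of Definition~\ref{defn:Raonelocalspaces} is made or needed. Your ``key claim'' is therefore not required for the statement as used in the rest of the paper. More importantly, your proof of it does not close: in the tower $\mathscr{E}_j \to \mathscr{E}_{j-1} \to K(\mathbf{A}_j,1)$ the space you are inducting toward, $\mathscr{E}_{j-1}$, is the \emph{total space} of a fibration with local fiber $\mathscr{E}_j$ and local base, and Lemma~\ref{lem:propertiesofraonelocalspaces}(2) only yields locality of the \emph{fiber} from locality of total space and base. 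The appeal to homogeneity of $\Map(\mathscr{Y},\mathscr{E}_{j-1}) \to \Map(\mathscr{Y},K(\mathbf{A}_j,1))$ does not by itself repair this: to compare this fibration with its analogue for an $R$-$\aone$-equivalence $\mathscr{Y}' \to \mathscr{Y}$ one must control surjectivity on $\pi_0$ of the middle map and identify the fibers over \emph{all} corresponding components (the fiber over a non-null component is a space of lifts, not a priori $\Map(\mathscr{Y},\mathscr{E}_j)$), which is exactly the content of a principal-fibration or fiberwise-localization argument that you have not supplied. So either drop the key claim and argue (3) as the paper does, or, if you want the stronger statement, you must genuinely establish closure of $R$-$\aone$-local spaces under these principal extensions rather than gesture at it.
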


\begin{proof}
The first point is precisely Proposition~\ref{prop:lrxisaonelocal}.
To check that $\mathscr{X} \to \LR\mathscr{X}$ is an $R$-$\aone$-weak equivalence, we proceed as follows.  Observe that $\mathscr{X} \to \LR\mathscr{X}$ is an $R$-local simplicial weak equivalence since it is so stalkwise.  Therefore, appeal to \cite[Proposition V.3.2]{BK} implies that the map $R[\mathscr{X}] \to R[\LR \mathscr{X}]$ is stalkwise a weak-equivalence, which means it is an isomorphism in the derived category of Nisnevich sheaves of $R$-modules.  It follows that the map remains an isomorphism after applying $\Laone^{ab}$, so the map $\mathscr{X} \to \LR \mathscr{X}$ is an $R$-$\aone$-equivalence as well.

For the final point, note that both $\LR$ and $\Laone$ commute with formation of finite products by construction (see Proposition~\ref{prop:LRproperties} for the former).  Since products of weakly $\aone$-nilpotent spaces are weakly $\aone$-nilpotent, the conclusion of third point follows from the first two.
\end{proof}

The above theorem has the following very useful consequence.

\begin{cor}
\label{cor:raoneweakequivalencesaredetectedonhomotopysheaves}
Suppose $k$ is a perfect field.  A morphism $f: \mathscr{X} \to \mathscr{Y}$ of pointed, weakly $\aone$-nilpotent spaces is an $R$-$\aone$-weak equivalence if and only if the induced maps $f_*: \bpi_i^{\aone}(\mathscr{X})_R \to \bpi_i^{\aone}(\mathscr{Y})_R$ are isomorphisms for every $i \geq 1$.
\end{cor}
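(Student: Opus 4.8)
The plan is to reduce the statement to Theorem~\ref{thm:aonelocalizationofnilpotentspaces} together with a suitable $\aone$-Whitehead-type statement for $R$-$\aone$-local weakly $\aone$-nilpotent spaces. First I would dispose of the easy direction: if $f\colon\mathscr{X}\to\mathscr{Y}$ is an $R$-$\aone$-weak equivalence, then $R_{\aone}[f]$ is an isomorphism in $\Daone{k}$; applying Theorem~\ref{thm:aonelocalizationofnilpotentspaces}(2) we see that $\mathscr{X}\to\LR\mathscr{X}$ and $\mathscr{Y}\to\LR\mathscr{Y}$ are both $R$-$\aone$-weak equivalences, so $\LR f\colon\LR\mathscr{X}\to\LR\mathscr{Y}$ is also an $R$-$\aone$-weak equivalence between $R$-$\aone$-local spaces. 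Since both source and target are $R$-$\aone$-local (Theorem~\ref{thm:aonelocalizationofnilpotentspaces}(1) combined with Lemma~\ref{lem:propertiesofraonelocalspaces}(4), as $\LR\mathscr{X}$ is locally $\aone$-nilpotent with $R$-local homotopy sheaves and $\bpi_1$ $\aone$-nilpotent), an $R$-$\aone$-weak equivalence between $R$-$\aone$-local spaces is an $\aone$-weak equivalence — hence induces isomorphisms on all $\bpi_i^{\aone}$. By Theorem~\ref{thm:aonelocalizationofnilpotentspaces}(1), $\bpi_i^{\aone}(\LR\mathscr{X})\cong\bpi_i^{\aone}(\mathscr{X})_R$ naturally, so $f_*\colon\bpi_i^{\aone}(\mathscr{X})_R\to\bpi_i^{\aone}(\mathscr{Y})_R$ is an isomorphism for every $i\geq 1$.

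For the converse, suppose $f_*\colon\bpi_i^{\aone}(\mathscr{X})_R\to\bpi_i^{\aone}(\mathscr{Y})_R$ is an isomorphism for all $i\geq 1$. Under the naturality of the identifications in Theorem~\ref{thm:aonelocalizationofnilpotentspaces}(1), this says that $\LR f\colon\LR\mathscr{X}\to\LR\mathscr{Y}$ induces an isomorphism on every $\bpi_i^{\aone}$; since both spaces are $\aone$-local (and connected, with the stated nilpotence properties), $\LR f$ is an $\aone$-weak equivalence, hence in particular $R_{\aone}[\LR f]$ is a quasi-isomorphism. Now I would run through the commutative square
\[
\xymatrix{
R_{\aone}[\mathscr{X}] \ar[r]\ar[d] & R_{\aone}[\mathscr{Y}] \ar[d] \\
R_{\aone}[\LR\mathscr{X}] \ar[r] & R_{\aone}[\LR\mathscr{Y}]
}
\]
in $\Daone{k}$. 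The two vertical maps are isomorphisms by Theorem~\ref{thm:aonelocalizationofnilpotentspaces}(2) (each $\mathscr{X}\to\LR\mathscr{X}$ is an $R$-$\aone$-weak equivalence, i.e.\ induces an isomorphism on $R_{\aone}[-]$), and the bottom map is an isomorphism by the previous sentence, so the top map $R_{\aone}[f]$ is an isomorphism in $\Daone{k}$, i.e.\ $f$ is an $R$-$\aone$-weak equivalence.

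The one technical point that needs care — and which I expect to be the main obstacle — is the naturality of the isomorphism $\bpi_i^{\aone}(\mathscr{X})_R\cong\bpi_i^{\aone}(\LR\mathscr{X})$ in $\mathscr{X}$, and more precisely the claim that this isomorphism identifies $\LR(f)_*$ on $\aone$-homotopy sheaves with $(f_*)_R$. This amounts to checking that the canonical map $\bpi_i^{\aone}(\mathscr{X})\to\bpi_i^{\aone}(\LR\mathscr{X})$ factors through the localization $\bpi_i^{\aone}(\mathscr{X})_R$ functorially; for $i\geq 2$ the sheaves are strictly $\aone$-invariant and $R$-localization is simply $R\tensor(-)$, so functoriality is transparent, while for $i=1$ one invokes the functoriality built into Theorem~\ref{thm:localizationofaonenilpotentgroups}(2) and Proposition~\ref{prop:lrxisaonelocal}. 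Granting this naturality — which is implicit in the constructions of Section~\ref{ss:rlocalizationnilpotentspaces} — the argument above goes through without further difficulty. One should also record that, since $\mathscr{X}$ and $\mathscr{Y}$ need only be weakly $\aone$-nilpotent and not necessarily $\aone$-local, all statements are to be applied to $\Laone\mathscr{X}$ and $\Laone\mathscr{Y}$, using that $\Laone$ preserves weak $\aone$-nilpotence and that $R_{\aone}[-]$ is unchanged under $\aone$-localization.
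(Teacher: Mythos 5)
Your argument is structurally the intended one: the corollary is meant to be read off from Theorem~\ref{thm:aonelocalizationofnilpotentspaces} essentially as you do, and your converse direction is complete --- isomorphisms on $R$-localized homotopy sheaves give, via Theorem~\ref{thm:aonelocalizationofnilpotentspaces}(1) and naturality, that $\LR f$ induces isomorphisms on all homotopy sheaves of connected $\aone$-local spaces, hence is an $\aone$-weak equivalence, and the commutative square together with Theorem~\ref{thm:aonelocalizationofnilpotentspaces}(2) then forces $R_{\aone}[f]$ to be an isomorphism. Your attention to the naturality of $\bpi_i^{\aone}(\mathscr{X})_R \cong \bpi_i^{\aone}(\LR\mathscr{X})$ and to replacing $\mathscr{X},\mathscr{Y}$ by $\Laone\mathscr{X},\Laone\mathscr{Y}$ is exactly the right bookkeeping.

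The one step that outruns what you cite is in the forward direction, where you invoke Lemma~\ref{lem:propertiesofraonelocalspaces}(4) to conclude that $\LR\mathscr{X}$ and $\LR\mathscr{Y}$ are $R$-$\aone$-local. That lemma is stated for connected \emph{$\aone$-nilpotent} spaces, and its proof runs through Theorem~\ref{thm:moorepostnikovcharacterization}, i.e., through a \emph{finite} principal refinement of each Postnikov stage. For a space that is only weakly $\aone$-nilpotent, the relevant central series are merely stalkwise finite, so the refinement of a Postnikov stage is an infinite tower of principal fibrations; to conclude $R$-$\aone$-locality one must additionally check that the stage is recovered as the homotopy limit of that tower (using stalkwise finiteness to control $\lim$ and $\lim^1$) and then use that $R$-$\aone$-local objects are closed under homotopy limits. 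I do not expect this step to fail, but it is not covered by the citation as given, and it is the only non-formal input into the forward implication --- which is, after all, the ``homology implies homotopy'' half of a Whitehead-type theorem for weakly $\aone$-nilpotent spaces. Either supply that extension of Lemma~\ref{lem:propertiesofraonelocalspaces}(4) to the weakly $\aone$-nilpotent case, or record explicitly that the forward direction requires it.
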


We conclude this section by observing that our model for $R$-$\aone$-localization behaves well in fiber sequences.

\begin{thm}
\label{thm:Raonelocalizationoffibersequences}
Suppose
\[
\mathscr{F} \longrightarrow \mathscr{E} \longrightarrow \mathscr{B}
\]
is an $\aone$-fiber sequence of pointed $\aone$-connected spaces.  If $\mathscr{E}$ and $\mathscr{B}$ are weakly $\aone$-nilpotent, then
\[
\LR \mathscr{F} \longrightarrow \LR \mathscr{E} \longrightarrow \LR \mathscr{B}
\]
is again an $\aone$-fiber sequence.
\end{thm}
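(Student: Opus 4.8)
The plan is to reduce the statement to Lemma~\ref{lem:Rlocalizationnilpotentfiberlemma} by exhibiting $\mathscr{E} \to \mathscr{B}$ as a locally nilpotent morphism of simplicial presheaves, and then to invoke Proposition~\ref{prop:lrxisaonelocal} to check that $\LR$ keeps all three spaces $\aone$-local. As elsewhere in this section, $k$ is assumed perfect and $\mathscr{E}$, $\mathscr{B}$ are $\aone$-local; then the simplicial homotopy fiber $\operatorname{hofib}(\mathscr{E} \to \mathscr{B})$ is $\aone$-local by Lemma~\ref{lem:homotopyfiberaonelocal}, so we may take $\mathscr{F} = \operatorname{hofib}(\mathscr{E}\to\mathscr{B})$ and work with an honest simplicial fiber sequence of $\aone$-local spaces.

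First I would record, using Corollary~\ref{cor:weaklyaonenilpotentfibersequences}, that $\mathscr{F}$ is itself weakly $\aone$-nilpotent, so that \emph{all three} spaces in the sequence are weakly $\aone$-nilpotent; this will be essential at the last step. Writing $p\colon \mathscr{E} \to \mathscr{B}$ for the given morphism, I would then apply Proposition~\ref{prop:2outof3foraonenilpotence}(1) to the composite $\mathscr{E} \xrightarrow{p} \mathscr{B} \to \ast$: its three relevant $\aone$-homotopy fibers are $\mathscr{F}$, $\mathscr{B}$, and $\mathscr{E}$, all of which are $\aone$-connected, while the structure maps of $\mathscr{E}$ and $\mathscr{B}$ are locally $\aone$-nilpotent since these spaces are weakly $\aone$-nilpotent; hence $p$ is a locally $\aone$-nilpotent morphism. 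Since $\mathscr{E}$ and $\mathscr{F}$ are $\aone$-local, their simplicial homotopy sheaves agree with their $\aone$-homotopy sheaves, so the locally $\aone$-nilpotent action of $\bpi_1^{\aone}(\mathscr{E})$ on the $\bpi_i^{\aone}(\mathscr{F})$ is in particular a locally nilpotent action; thus $p$ is a locally nilpotent morphism of pointed, simplicially connected spaces in the sense of Definition~\ref{defn:nilpotentspace}.

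Now Lemma~\ref{lem:Rlocalizationnilpotentfiberlemma} applies directly to $p$ and yields that the canonical map $\LR\mathscr{F} = \LR(\operatorname{hofib}(p)) \to \operatorname{hofib}(\LR p)$ is a Nisnevich-local weak equivalence; in particular $\operatorname{hofib}(\LR p) \to \LR\mathscr{E} \to \LR\mathscr{B}$ is a simplicial fiber sequence with $\LR\mathscr{F}$ as a model for its fiber. By Proposition~\ref{prop:lrxisaonelocal}, since $\mathscr{E}$, $\mathscr{B}$ and $\mathscr{F}$ are all weakly $\aone$-nilpotent and $\aone$-local, the spaces $\LR\mathscr{E}$, $\LR\mathscr{B}$ and $\LR\mathscr{F}$ are again $\aone$-local; hence $\operatorname{hofib}(\LR p)$, being Nisnevich-locally equivalent to $\LR\mathscr{F}$, is $\aone$-local too. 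A simplicial fiber sequence all of whose terms are $\aone$-local is an $\aone$-fiber sequence (applying $\Laone$ leaves each term unchanged up to equivalence), so $\LR\mathscr{F} \to \LR\mathscr{E} \to \LR\mathscr{B}$ is an $\aone$-fiber sequence, as claimed.

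The main point requiring care is the combination of the two nilpotence inputs: one must verify that $p$ is a locally $\aone$-nilpotent morphism (so that Lemma~\ref{lem:Rlocalizationnilpotentfiberlemma} is available), and one must have applied Corollary~\ref{cor:weaklyaonenilpotentfibersequences} at the outset so that the \emph{fiber} $\mathscr{F}$, and not merely $\mathscr{E}$ and $\mathscr{B}$, is known to be weakly $\aone$-nilpotent --- without this Proposition~\ref{prop:lrxisaonelocal} cannot be applied to $\mathscr{F}$ and one loses control of whether $\operatorname{hofib}(\LR p)$ stays $\aone$-local.
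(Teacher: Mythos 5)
Your proof is correct and follows essentially the same route as the paper's: reduce to a simplicial fiber sequence of $\aone$-local spaces, use Corollary~\ref{cor:weaklyaonenilpotentfibersequences} to control $\mathscr{F}$, apply Lemma~\ref{lem:Rlocalizationnilpotentfiberlemma} to identify $\LR\mathscr{F}$ with $\operatorname{hofib}(\LR\mathscr{E}\to\LR\mathscr{B})$, and use Proposition~\ref{prop:lrxisaonelocal} (equivalently Theorem~\ref{thm:aonelocalizationofnilpotentspaces}) to see the localized terms remain $\aone$-local. Your explicit verification via Proposition~\ref{prop:2outof3foraonenilpotence} that $\mathscr{E}\to\mathscr{B}$ is a locally nilpotent morphism is a detail the paper leaves implicit, and is a welcome addition.
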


\begin{proof}
By applying $\Laone$ to the fiber sequence, we may assume that the sequence in question is a simplicial fiber sequence of pointed simplicially connected and $\aone$-local spaces.  Moreover, Corollary~\ref{cor:weaklyaonenilpotentfibersequences} implies that $\mathscr{F}$ is weakly $\aone$-nilpotent.

The canonical map $\LR \mathscr{F} \to \operatorname{hofib}(\LR \mathscr{E} \to \LR \mathscr{B})$ is a simplicial weak equivalence by appeal to Lemma~\ref{lem:Rlocalizationnilpotentfiberlemma}. On the other hand, Theorem~\ref{thm:aonelocalizationofnilpotentspaces} guarantees that the canonical maps $\mathscr{F} \to \LR \mathscr{F}$, $\mathscr{E} \to \LR \mathscr{E}$ and $\mathscr{B} \to \LR \mathscr{B}$ are $R$-$\aone$-weak equivalences.  In total, we conclude that the map
\[
\mathscr{F} \longrightarrow \operatorname{hofib}(\LR \mathscr{E} \to \LR \mathscr{B})
\]
is an $R$-$\aone$-weak equivalence.
\end{proof}

\begin{rem}
\label{rem:homotopyfiberofmapofsimplespacescanbenilpotent}
As in the classical situation, even if $f: \mathscr{E} \to \mathscr{B}$ is a morphism of $\aone$-{\em simple} spaces, the homotopy fiber of $f$ may be $\aone$-nilpotent.  Here is a rather geometric example: over any field $k$, the proof of Theorem~\ref{thm:bglnoddaonesimple} in the case where $n = 0$ yields an $\aone$-fiber sequence of the form
\[
\pone \longrightarrow B\gm{} \longrightarrow BSL_2,
\]
where we have identified $\pone$ with $SL_2/\gm{}$.  The middle term is $\aone$-simple and $BSL_2$ is even $\aone$-simply connected.  As mentioned in Remark~\ref{rem:nonabelianaonenilpotent}, $\bpi_1^{\aone}(\pone)$ is non-abelian and $\aone$-nilpotent.
\end{rem}

\section{Applications}
\label{s:applications}
H. Hopf observed that compact Lie groups have the same real cohomology as products of spheres \cite{Hopf} and J.-P. Serre \cite[V.4]{Serre} established the same results after inverting smaller sets of primes. More precisely, suppose one is given a compact Lie group $G$.  Serre called a prime $p$ {\em regular for $G$} if there exists a product of spheres $X = \prod_{i = 1}^{\ell} S^{n_i}$ and a map $f: X \to G$ such that $f_*: H_*(X,\Z/p) \to H_*(G,\Z/p)$ is an isomorphism.  If $p$ is regular for $G$, then $G$ is $p$-locally a product of spheres.  Later, B. Harris \cite{Harris} observed that similar $p$-local decompositions exist for various homogeneous spaces of compact Lie groups.  In this section, we study motivic analogs of these and related $p$-local decompositions.

\subsection{Self-equivalences of motivic spheres}
\label{ss:Suslin}
We study self-maps of motivic spheres that are weak equivalences after suitable localization.  To construct such maps, we review some constructions of Suslin and then analyze computations of motivic Brouwer degree for the relevant self-maps.  We begin by recalling some facts about Grothendieck-Witt rings and motivic Brouwer degrees.

Fix an arbitrary field $k$.  We use the notation $Q_{2n-1}$ for the odd-dimensional smooth affine quadric defined by the equation $\sum_{i=1}^n x_iy_i = 1$ in ${\mathbb A}^{2n}$ with coordinates $x_1,\ldots,x_n,y_1,\ldots,y_n$.  Projection onto the $x$-variables determines a morphism $Q_{2n-1} \to {\mathbb A}^n \setminus 0$ that is a torsor under a vector bundle, in particular an $\aone$-weak equivalence; we also refer to this morphism as the ``universal unimodular row".  For any integer $n \geq 1$, the quadric $Q_{2n-1}$ is $\aone$-weakly equivalent to $\Sigma^{n-1}\gm{\sma n}$.  We also use the notation $Q_{2n}$ for the smooth affine quadric defined by the equation $\sum_{i=1}^n x_iy_i = z(1-z)$ in ${\mathbb A}^{2n+1}$ with the evident coordinates.  By \cite[Theorem 2]{ADF}, this quadric is $\aone$-weakly equivalent to ${\pone}^{\sma n}$ (in fact, all of these assertions hold over $\Spec \Z$).

\subsubsection*{Grothendieck-Witt groups and motivic Brouwer degree}
Morel described the ring of homotopy endomorphisms of motivic spheres by establishing the following result.

\begin{thm}[Morel]
Suppose $k$ is a field.  For any integer $n \geq 2$, there is an isomorphism of rings:
\[
\operatorname{deg}: [Q_{2n-1},Q_{2n-1}]_{\aone} \longrightarrow GW(k).
\]
\end{thm}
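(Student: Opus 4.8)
The plan is to exploit the fact that for $n \geq 2$ the quadric $Q_{2n-1}$ is, up to $\aone$-weak equivalence, a \emph{simplicial suspension}: as recalled above, $Q_{2n-1} \simeq {\mathbb A}^n \setminus 0 \simeq \Sigma^{n-1}\gm{\sma n}$, so $Q_{2n-1}$ is $\aone$-$(n-2)$-connected and, by Morel's computation, $\bpi_{n-1}^{\aone}(Q_{2n-1}) \cong \K^{\MW}_n$ (cf. Example~\ref{ex:pointwiseaonenilpotent}). Recall that $\deg$ is defined by sending a self-map $f$ to the induced endomorphism $f_*$ of $\bpi_{n-1}^{\aone}(Q_{2n-1}) = \K^{\MW}_n$, composed with the identification $\hom_{\Ab_k}(\K^{\MW}_n,\K^{\MW}_n) \cong GW(k)$. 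First I would establish this identification, together with the ring-homomorphism property. Since $\K^{\MW}_n \cong \tilde{\H}_{n-1}^{\aone}(\Sigma^{n-1}\gm{\sma n})$ by the $\aone$-Hurewicz theorem (Theorem~\ref{thm:relativeHurewicz} applied to $\ast \to Q_{2n-1}$) and the target is $\aone$-$(n-2)$-connected, the universal coefficient computation gives $\hom_{\Ab_k}(\K^{\MW}_n,\mathbf{A}) \cong \tilde{H}^{n-1}_{\Nis}(Q_{2n-1},\mathbf{A}) \cong \tilde{H}^0_{\Nis}(\gm{\sma n},\mathbf{A}) \cong \mathbf{A}_{-n}(k)$ for every strictly $\aone$-invariant $\mathbf{A}$, using the simplicial suspension isomorphism and the standard computation of the reduced cohomology of $\gm{\sma n}$ (contract away the $n$ copies of $\gm{}$; no higher cohomology appears). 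Taking $\mathbf{A} = \K^{\MW}_n$ and using $(\K^{\MW}_n)_{-n} = \K^{\MW}_0 = \mathbf{GW}$ yields $\hom_{\Ab_k}(\K^{\MW}_n,\K^{\MW}_n) \cong GW(k)$; one checks this respects the $\mathbf{GW}$-module structure, so composition of endomorphisms corresponds to multiplication in $GW(k)$, and hence $\deg$ is a ring homomorphism.

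The main work is to show $f \mapsto f_*$ is a \emph{bijection} $[Q_{2n-1},Q_{2n-1}]_{\aone} \isomto \hom_{\Ab_k}(\K^{\MW}_n,\K^{\MW}_n)$. For this I would run $\aone$-Postnikov obstruction theory for maps out of $Q_{2n-1}$: the space $Q_{2n-1}$ is $\aone$-nilpotent for $n\geq 2$ (for $n \geq 3$ it is $\aone$-$1$-connected; for $n=2$, $\bpi_1^{\aone}({\mathbb A}^2\setminus 0) = \K^{\MW}_2$ is abelian and acts trivially on the higher sheaves), so by Theorem~\ref{thm:moorepostnikovcharacterization} its $\aone$-Postnikov tower is a tower of principal fibrations with first stage $\tau_{\leq n-1}Q_{2n-1} = K(\K^{\MW}_n,n-1)$ and fibers $K(\bpi_j^{\aone}(Q_{2n-1}),j)$ for $j \geq n$. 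The crucial input is that the reduced $\aone$-cohomology of $Q_{2n-1} \simeq \Sigma^{n-1}\gm{\sma n}$ with coefficients in \emph{any} strictly $\aone$-invariant sheaf is concentrated in degree $n-1$ (same suspension computation; see also Example~\ref{ex:stronglyfinitecohomologicaldimension} for the relevant finiteness). Hence every map $Q_{2n-1} \to K(\K^{\MW}_n,n-1)$ lifts through each stage of the tower — the obstructions lie in $\tilde{H}^{j+1}_{\Nis}(Q_{2n-1},\bpi_j^{\aone}(Q_{2n-1}))$ with $j\geq n$, so $j+1 > n-1$ — and the lift is unique at each stage, the indeterminacy lying in $\tilde{H}^{j}_{\Nis}(Q_{2n-1},\bpi_j^{\aone}(Q_{2n-1})) = 0$. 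The transition maps are therefore bijections for $j \geq n$, so $\lim^1$ vanishes and $[Q_{2n-1},Q_{2n-1}]_{\aone} \isomto [Q_{2n-1},K(\K^{\MW}_n,n-1)]_{\aone} = \tilde{H}^{n-1}_{\Nis}(Q_{2n-1},\K^{\MW}_n)$. Tracing definitions, this composite equals $f \mapsto f_*$ followed by the identification of the first paragraph; combining the two paragraphs shows $\deg$ is a ring isomorphism. As a consistency check, surjectivity already follows because the coordinate-scaling maps $(x_1,\ldots,x_n)\mapsto (ux_1,x_2,\ldots,x_n)$ realize $\langle u\rangle \in GW(k)$, as computed in the earlier part of the paper.

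The main obstacle — and the only place where genuine content sits — is Morel's identification $\bpi_{n-1}^{\aone}({\mathbb A}^n\setminus 0)\cong\K^{\MW}_n$ together with the resulting computation $\hom_{\Ab_k}(\K^{\MW}_n,\K^{\MW}_n)\cong GW(k)$; granted these, everything reduces to the formal Postnikov manipulation above, which works precisely because $Q_{2n-1}$ behaves like an $(n-1)$-dimensional "sphere" in $\aone$-homotopy theory. The hypothesis $n\geq 2$ is essential: for $n=1$ one has $Q_1 \cong \gm{}$, which is $\aone$-rigid rather than $\aone$-connected, and $[\gm{},\gm{}]_{\aone}$ is strictly larger than $GW(k)$, so the statement genuinely fails there.
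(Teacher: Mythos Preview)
Your proposal is correct and amounts to an explicit reconstruction of what the paper packages as citations to \cite{MField}: the paper simply invokes Morel's Corollary~6.43 (for $n\geq 3$), Theorem~7.20 (for $n=2$), Lemma~3.10, and Theorem~7.35, whereas you spell out the underlying obstruction-theoretic mechanism that reduces $[Q_{2n-1},Q_{2n-1}]_{\aone}$ to $\tilde{H}^{n-1}_{\Nis}(Q_{2n-1},\K^{\MW}_n)\cong(\K^{\MW}_n)_{-n}(k)\cong GW(k)$. Both routes rest on the same hard input, namely $\bpi_{n-1}^{\aone}({\mathbb A}^n\setminus 0)\cong\K^{\MW}_n$, and your Postnikov argument is precisely how one proves the cited corollary.

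One small point deserves a sentence of justification: for $n=2$ you assert that $\K^{\MW}_2=\bpi_1^{\aone}({\mathbb A}^2\setminus 0)$ acts trivially on the higher homotopy sheaves, but this is not automatic for a simplicial suspension. The quickest fix is to note that $Q_3\cong SL_2/SL_1=SL_2$ as schemes, so $Q_3$ is an $\aone$-connected $h$-group and hence $\aone$-simple; this both supplies the triviality claim and, more to the point, licenses the unrefined principal Postnikov tower you use. (Alternatively, $\aone$-nilpotence alone would suffice for your argument, since after refining via Theorem~\ref{thm:moorepostnikovcharacterization} the obstructions and indeterminacies still land in $\tilde{H}^{j}_{\Nis}(Q_3,-)$ with $j\geq 2$, which vanish --- but you would still owe a reason why $Q_3$ is $\aone$-nilpotent.) The paper's separate citation of Theorem~7.20 for $n=2$ reflects exactly this wrinkle.
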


\begin{proof}
For a casual exposition, see \cite[\S 1]{MICM} and see \cite{Cazanave} for a very explicit ``classical" description of the above degree map.  The result as stated follows by combining a number of statements from \cite{MField}.  We observed above that for any integer $n \geq 1$, $Q_{2n-1}$ is $\aone$-weakly equivalent to $\Sigma^{n-1}\gm{\sma n}$.  Appealing to \cite[Corollary 6.43]{MField}, one knows that there are isomorphisms of abelian groups of the form $\bpi_{n-1,n}^{\aone}(Q_{2n-1}) \cong \K^{MW}_0(k)$ for $n \geq 3$; the corresponding isomorphism for $n = 2$ follows from \cite[Theorem 7.20]{MField}.  By \cite[Lemma 3.10]{MField}, one knows that $\K^{MW}_0(k) \cong GW(k)$ as abelian groups.  The fact that the ring structure on homotopy endomorphism induced by composition corresponds to the ring structure in the Grothendieck--Witt group follows from \cite[Theorem 7.35]{MField}.
\end{proof}

We will refer to the isomorphism $\operatorname{deg}$ as the {\em motivic Brouwer degree}.  In order to apply our localization techniques, we will need to observe that $GW(k)$ simplifies after inverting primes, at least under suitable assumptions on the base field.

\begin{lem}
\label{lem:gwwhen2isinvertible}
Suppose $A$ is a ring which is $2$-divisible and $k$ is a field that is not formally real.  The rank map $GW(k) \to \Z$ induces an isomorphism $GW(k) \tensor A \isomt A$.
\end{lem}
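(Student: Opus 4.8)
The key structural fact is that $GW(k)$ sits in a fundamental exact sequence relating it to the Witt ring $W(k)$ and the integers: there is an exact sequence
\[
0 \longrightarrow \mathbf{I}(k) \longrightarrow GW(k) \stackrel{\operatorname{rk}}{\longrightarrow} \Z \longrightarrow 0,
\]
where $\mathbf{I}(k)$ is the fundamental ideal (the kernel of the rank map mod the hyperbolic plane is $W(k)$, and one has a compatible statement before quotienting; concretely $GW(k) \cong \Z \times_{\Z/2} W(k)$ via rank and the Witt class). Since $k$ is not formally real, a theorem of Pfister (already invoked in the excerpt, see \cite[Proposition 31.4]{EKM}) says that $I(k)$ is the nilradical of $GW(k)$, so in particular every element of the kernel of $\operatorname{rk}\colon GW(k)\to\Z$ is nilpotent.

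\textbf{Main steps.} First I would tensor the rank exact sequence with the ring $A$; since $\Z$ is a flat $\Z$-module this gives a short exact sequence
\[
0 \longrightarrow \ker(\operatorname{rk}) \tensor_{\Z} A \longrightarrow GW(k) \tensor_{\Z} A \stackrel{\operatorname{rk} \tensor \operatorname{id}}{\longrightarrow} A \longrightarrow 0
\]
of $A$-modules, where I write $\ker(\operatorname{rk})$ for the kernel of the rank map on $GW(k)$. So it suffices to prove that $\ker(\operatorname{rk}) \tensor_{\Z} A = 0$. Second, I observe that $\ker(\operatorname{rk})$ is generated as an abelian group by elements of the form $1 - \langle u \rangle$ for $u \in k^{\times}$ (these generate the fundamental ideal $I(k)$ by \cite[Corollary 4.9]{EKM}, and the rank-zero elements of $GW(k)$ are precisely $I(k)$ under the identification $GW(k) = \Z \times_{\Z/2} W(k)$). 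Third, I use that $2\langle u\rangle = \langle u \rangle + \langle u \rangle$ and the relation $\langle u\rangle + \langle u \rangle = \langle 1 \rangle + \langle u^2 \rangle$... actually cleaner: for each generator $x = 1 - \langle u\rangle$ of $\ker(\operatorname{rk})$, Pfister's theorem gives an integer $N$ with $x^N = 0$ in $GW(k)$; but more to the point, $x$ satisfies $x^2 = -2x + (\text{rank-zero correction})$ — precisely, $\langle u \rangle^2 = \langle u^2 \rangle = 1$ in $W(k)$, hence in $GW(k)$ one computes $x^2 = (1-\langle u\rangle)^2 = 1 - 2\langle u\rangle + \langle u^2\rangle = 2 - 2\langle u \rangle = 2x$ (using $\langle u^2 \rangle = \langle 1 \rangle$ since $u^2$ is a square). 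Thus every generator $x$ of $\ker(\operatorname{rk})$ satisfies $x^2 = 2x$, so $x(x-2) = 0$. Since $A$ is $2$-divisible, $2$ acts invertibly... wait, $2$-divisible does not mean $2$ is a unit. Here I must be careful: $A$ $2$-divisible means multiplication by $2$ is surjective on $A$; combined with the nilpotence of $\ker(\operatorname{rk})$ this will still force the tensor product to vanish.

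\textbf{The real content.} Let me spell out the final argument, which is the step I expect to require the most care. Write $J := \ker(\operatorname{rk}) = I(k) \subset GW(k)$. By Pfister's theorem $J$ is a nil ideal, and since $GW(k)$ is Noetherian when $k$ has the relevant finiteness (but we don't want to assume that) — instead use: $J$ is generated as an ideal by the Pfister-type elements, each satisfying $x^2 = 2x$, equivalently each generator is killed by multiplication by $2$ after passing to $J/J^2$ in a suitable sense. Concretely, I claim $2J \tensor_{\Z} A \to J \tensor_{\Z} A$ is surjective: indeed for a generator $x$ with $x^2 = 2x$, in $GW(k)$ we have $2x = x^2 \in J^2$, so $J \subseteq 2J + J^2$, whence by induction $J \subseteq 2^n J + J^{n+1}$ for all $n$. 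Tensoring with $A$: the image of $J\tensor A$ in $GW(k)\tensor A$ lies in $2^n(J\tensor A) + (J^{n+1}\tensor A)$. Now since $J$ is nil, for the finitely generated ideal... here I need $J$ to be nilpotent (not just nil) — this holds when $k$ has finite 2-cohomological dimension (Arason–Pfister / Arason–Elman–Jacob), but \emph{not} in general. So the cleanest route avoiding that hypothesis: observe directly that $J \tensor_{\Z} A$ is generated as an $A$-module by the classes $x\tensor 1$ with $x^2 = 2x$. Given $x\tensor a$, use $2$-divisibility of $A$ to write $a = 2b$; then $x \tensor a = x \tensor 2b = 2x \tensor b = x^2 \tensor b = x\cdot(x\tensor b)$, and iterating, $x\tensor a \in x^n(J\tensor A)$ for all $n$, i.e. $x \tensor a = x^n \cdot(x\tensor b_n)$ where $x^n$ is nilpotent in $GW(k)\tensor A$ (image of a nilpotent, using Pfister). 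Once $x^n = 0$ in $GW(k)$ we would get $x\tensor a = 0$. The subtlety is that $x\tensor a$ lives in $J\tensor_{\Z}A$, not in $(GW(k)\tensor A)$-module generated by powers; but the $GW(k)$-module structure on $J\tensor_{\Z}A$ is compatible with the inclusion into $GW(k)\tensor_{\Z}A$, and $J\tensor_{\Z}A \to GW(k)\tensor_{\Z}A$ is injective by flatness of $\Z$ over $\Z$ in the rank sequence — so it is enough to show the image vanishes, and $x\tensor a = x^n\cdot(\text{something})$ with $x^n$ nilpotent hence $= 0$ in $GW(k)$, giving $x\tensor a = 0$ in $GW(k)\tensor A$ and so in $J\tensor A$. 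Therefore $J \tensor_{\Z} A = 0$ and $\operatorname{rk}\tensor\operatorname{id}_A$ is an isomorphism. The main obstacle is precisely the bookkeeping making the relation $x^2 = 2x$ interact correctly with mere $2$-divisibility (rather than invertibility of $2$) and with nilpotence rather than nilpotency of $J$; the identity $x\tensor 2b = x^2\tensor b$ is what makes $2$-divisibility suffice.
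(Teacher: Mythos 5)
Your proof is correct and rests on the same input as the paper's: Pfister's theorem for fields that are not formally real (\cite[Proposition 31.4]{EKM}) together with the observation that $2$-divisibility of $A$ kills $2$-primary torsion under tensoring. The paper simply cites that $W(k)$ is a $2$-primary torsion group and reads off the claim from $GW(k)\cong \Z\times_{\Z/2}W(k)$, whereas you re-derive the relevant torsion statement for $\ker(\operatorname{rk})\cong I(k)$ from the identity $x^2=2x$ for the additive generators $x=1-\langle u\rangle$ combined with their nilpotence. Your final paragraph can be streamlined considerably: $x^2=2x$ gives $x^n=2^{n-1}x$, so $x^N=0$ yields $2^{N-1}x=0$, i.e.\ each generator of $I(k)$ is $2$-power torsion; then $x\otimes a=x\otimes 2^{N-1}b=2^{N-1}x\otimes b=0$ directly in $I(k)\otimes_{\Z}A$, with no need for the injectivity detour into $GW(k)\otimes_{\Z}A$ or the discussion of nil versus nilpotent ideals.
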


\begin{proof}
There is an isomorphism of rings $GW(k) \cong \Z \times_{\Z/2} W(k)$.  If $k$ is a field that is not formally real, then $W(k)$ is a $2$-primary torsion group \cite[Proposition 31.4(6)]{EKM}.  It follows that $GW(k) \tensor A \cong A$ as $A$ is $2$-divisible.
\end{proof}

\begin{rem}
If $k$ is formally real the situation is rather different. For example, when $k = \real$, then $GW(k) \cong \Z \oplus \Z$.
\end{rem}

\subsubsection*{Suslin matrices and motivic degree}
Recall that there is a quotient morphism
\[
q: SL_n \to SL_n/SL_{n-1}
\]
where the latter is isomorphic to $Q_{2n-1}$ by the map sending a matrix to the first row times the first column of its inverse.  This morphism factors the ``first row" morphism $SL_n \to {\mathbb A}^n \setminus 0$, where the induced map $SL_n/SL_{n-1} \to {\mathbb A}^n \setminus 0$ is Zariski locally trivial with affine space fibers and thus an $\aone$-weak equivalence.

Let $R$ be any commutative ring, and let $a = (a_1,\ldots,a_n)$ be a unimodular row in $R$.  Such a row corresponds to a morphism of schemes $\Spec R \to {\mathbb A}^n \setminus 0$.  In this case, we can always find $b = (b_1,\ldots,b_n)$ such that $a b^t = 1$.  Suslin showed \cite[Theorem 2]{Suslinstablyfree} (see also \cite[p. 489 Point (b)]{Suslinstablyfree}), that there exists $S_n(a,b) \in SL_n(R)$ whose first row is equal to $(a_1,a_2,a_3^2,\ldots,a_n^{n-1})$.  The ``universal" $S_n$ corresponds to a morphism
\[
S_n: Q_{2n-1} \to SL_n;
\]
we refer to such morphisms as {\em Suslin matrices}.  The composite map
\[
\phi_n := q \circ S_n : Q_{2n-1} \longrightarrow Q_{2n-1}
\]
has a motivic Brouwer degree, which we now compute.

\begin{lem}
\label{lem:degreeofsuslin}
Suppose $k$ is a field.  For any integer $n \geq 2$, the following formula holds in $GW(k)$:
\[
\operatorname{deg}(\phi_n) = \begin{cases} \langle 1 \rangle & \text{ if } n = 2, \\
\frac{(n-1)!}{2}h & \text{ if } n > 2; \end{cases}
\]
here $h$ is the class of the hyperbolic form.
\end{lem}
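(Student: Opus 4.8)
The plan is to compute $\deg(\phi_n) = \deg(q \circ S_n)$ by reducing to a computation of motivic Brouwer degree on affine space via an explicit local description, following the recipe of \cite[Remark 2.6]{Fasel} (cf. also the computation of degrees of scaling maps in the proofs of Proposition~\ref{prop:p2n+1aonesimple} and Proposition~\ref{prop:p2npointwiseaonenilpotent}). The key point is that the composite $\phi_n \colon Q_{2n-1} \to Q_{2n-1}$ is, up to the $\aone$-weak equivalence $Q_{2n-1} \isomto {\mathbb A}^n \setminus 0$ (projection onto the $x$-coordinates, i.e.\ the ``first row'' map), identified with the map ${\mathbb A}^n \setminus 0 \to {\mathbb A}^n \setminus 0$ given on points by $(a_1,\dots,a_n) \mapsto (a_1, a_2, a_3^2, \dots, a_n^{n-1})$ — this is exactly the first row of the Suslin matrix $S_n(a,b)$, and composing with $q$ and then the Zariski-local equivalence $SL_n/SL_{n-1} \isomto {\mathbb A}^n \setminus 0$ extracts precisely this row. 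So the first step is to set up these identifications carefully and reduce the claim to computing the degree of the monomial map $\psi_n(a_1,\dots,a_n) = (a_1,a_2,a_3^2,\dots,a_n^{n-1})$ on ${\mathbb A}^n \setminus 0 \simeq \Sigma^{n-1}\gm{\sma n}$.

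Next I would compute $\deg(\psi_n)$ by working one coordinate at a time and using multiplicativity of the degree under smash products. The map $\psi_n$ is a ``product'' of the maps $a_i \mapsto a_i^{i-1}$ on the various $\gm{}$-factors (with $a_1, a_2$ untouched, contributing $\langle 1\rangle$). The degree of the $m$-th power map $\gm{} \to \gm{}$, $t \mapsto t^m$, in $GW(k) = \K^{MW}_0(k)$ is the classical ``quadratic count'' $m_\epsilon := \sum_{j=0}^{m-1} \langle (-1)^j \rangle$ (this is the motivic degree of $z \mapsto z^m$; see the references on motivic Brouwer degree, e.g.\ the discussion surrounding \cite{Cazanave}, \cite{MField}). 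Hence $\deg(\psi_n) = \prod_{i=3}^n (i-1)_\epsilon$, and the remaining task is purely algebraic: show that in $GW(k)$ one has $\prod_{i=3}^{n} (i-1)_\epsilon = \prod_{m=2}^{n-1} m_\epsilon = \frac{(n-1)!}{2} h$ for $n > 2$ (and the empty product $\langle 1\rangle$ for $n=2$), where $h = \langle 1\rangle + \langle -1\rangle$ is the hyperbolic form.

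The main obstacle — and really the only substantive step — is the identity $\prod_{m=2}^{N} m_\epsilon = \tfrac{N!}{2} h$ in $GW(k)$ for $N \geq 2$. Here one uses that $m_\epsilon = \tfrac{m}{2} h$ when $m$ is even and $m_\epsilon = \tfrac{m-1}{2} h + \langle 1\rangle$ when $m$ is odd, together with the fundamental relation $\langle u \rangle h = h$ for all units $u$ (so $h$ is ``absorbing'' up to rank, and $h^2 = 2h$, $\langle 1 \rangle h = h$). Since among $m = 2,\dots,N$ with $N \geq 2$ at least one factor is even, the product lands in the ideal generated by $h$, on which the ring structure collapses: multiplying out, $\big(\prod m_\epsilon\big)$ is determined by its rank, which is $\prod_{m=2}^N m = N!$, and any element of rank $r$ in the subgroup $\Z \cdot h \subset GW(k)$ is $\tfrac{r}{2} h$ (note $\operatorname{rank}(h) = 2$). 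This forces $\prod_{m=2}^N m_\epsilon = \tfrac{N!}{2} h$. I would present this as a short lemma, proved by induction on $N$: the base case $N=2$ gives $2_\epsilon = h$, and the inductive step multiplies a multiple of $h$ by $(N+1)_\epsilon$, using $h \cdot (N+1)_\epsilon = (N+1) h$ (which follows from $h\langle u\rangle = h$ applied termwise) to pass from $\tfrac{N!}{2} h$ to $\tfrac{N!}{2}(N+1) h = \tfrac{(N+1)!}{2} h$. Finally, specializing $N = n-1$ yields the displayed formula, and the $n=2$ case is immediate since then $\psi_2 = \mathrm{id}$ and there are no nontrivial power maps, giving $\deg(\phi_2) = \langle 1\rangle$.
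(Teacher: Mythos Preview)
Your proposal is correct and follows essentially the same approach as the paper: both reduce to computing the degree of the monomial map $(a_1,\dots,a_n)\mapsto(a_1,a_2,a_3^2,\dots,a_n^{n-1})$ on ${\mathbb A}^n\setminus 0$ and then use that the degree of $t\mapsto t^m$ is $m_\epsilon$ (Morel's \cite[Lemma~3.14]{MField}). The only cosmetic difference is in the final algebra: the paper invokes the identity $r_\epsilon s_\epsilon=(rs)_\epsilon$ to collapse the product directly to $((n-1)!)_\epsilon$ and then observes that $r_\epsilon=\tfrac{r}{2}h$ for $r$ even, whereas you run an equivalent induction using $h\cdot m_\epsilon=mh$.
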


\begin{proof}
The degree of the element in question coincides with the degree of the map ${\mathbb A}^n \setminus 0 \to {\mathbb A}^n \setminus 0$ given in coordinates by $(a_1,\ldots,a_n) \to (a_1,a_2,a_3^2\ldots,a_n^{n-1})$.  As explained in the proof of \cite[Theorem 4.10]{AsokFaselSpheres}, the degree of this map can be computed as follows.  By elementary manipulations, we can view the element $[a_1,a_2,a_3^2\ldots,a_n^{n-1}]$ as a symbol in $\K^{MW}_n(k)$.  If $n = 2$, the map in question is the identity map.

If $n > 2$, then we proceed as follows.  Following \cite[p. 51]{MField} write $\epsilon = - \langle -1 \rangle$, then $h = 1 - \epsilon$.  If $r$ is any positive integer, set $r_{\epsilon} = \sum_{i=1}^r \langle (-1)^{i-1} \rangle$.  A straightforward computation shows that $r_{\epsilon}s_{\epsilon} = (rs)_{\epsilon}$.  Appealing to \cite[Lemma 3.14]{MField}, one knows that $[a^m]=m_\epsilon [a]$.  Therefore, for any integer $n \geq 3$.
\[
[a_1,a_2,a_3^2\ldots,a_n^{n-1}] = ((n-1)!)_{\epsilon}[a_1,a_2,\ldots,a_n].
\]
Since $n \geq 3$, $(n-1)!$ is even, and we conclude that $((n-1)!)_{\epsilon} = \frac{(n-1)!}{2}h$ as asserted.
\end{proof}

\subsubsection*{Self equivalences of motivic spheres}
\begin{prop}
\label{prop:suslinisalocalweakequivalence}
If $k$ is a field that is not formally real, then the map $\phi_n$ is an $\aone$-weak equivalence after inverting $(n-1)!$.
\end{prop}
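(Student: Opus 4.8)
The strategy is to apply the homological characterization of $R$-local $\aone$-weak equivalences from Proposition~\ref{prop:homologicalandcohomologicalequivalences} together with the degree computation of Lemma~\ref{lem:degreeofsuslin} and the simplification of $GW(k)$ after inverting $2$ from Lemma~\ref{lem:gwwhen2isinvertible}. Set $R := \Z[\frac{1}{(n-1)!}]$. I want to show $\phi_n: Q_{2n-1} \to Q_{2n-1}$ becomes an isomorphism in $\Daone{k}$ after tensoring with $R$; by Definition~\ref{defn:plocalweakequivalence} this is exactly the assertion that $\phi_n$ is an $\aone$-weak equivalence after inverting $(n-1)!$. For $n = 2$, Lemma~\ref{lem:degreeofsuslin} gives $\deg(\phi_2) = \langle 1\rangle$, so $\phi_2$ is an honest $\aone$-weak equivalence by Morel's degree theorem and there is nothing to invert; so assume $n \geq 3$.

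First I would recall that $Q_{2n-1}$ is $\aone$-weakly equivalent to $\Sigma^{n-1}\gm{\sma n}$, so it is $\aone$-$(n-2)$-connected with first nonvanishing $\aone$-homology sheaf $\tilde{\H}_{n-1}^{\aone}(Q_{2n-1}) \cong \K^{MW}_0 = \mathbf{GW}$ (using $\H_*^{\aone}$ of a motivic sphere, via Morel's computation, e.g.\ \cite[Corollary 6.43]{MField} combined with the Hurewicz theorem \ref{thm:relativeHurewicz}), and all higher reduced $\aone$-homology sheaves are contractions/shifts of $\mathbf{GW}$ — in any case strictly $\aone$-invariant sheaves built from $\K^{MW}_*$. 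The key point is that the endomorphism $\deg: [Q_{2n-1},Q_{2n-1}]_{\aone} \isomto GW(k)$ is a ring isomorphism, and the action of $\phi_n$ on $\tilde{\H}_{n-1}^{\aone}(Q_{2n-1})(k) = GW(k)$ is multiplication by $\deg(\phi_n) = \tfrac{(n-1)!}{2}h$. Since $h = 1 + \langle -1\rangle$ and $GW(k) \otimes R \cong R$ via the rank map by Lemma~\ref{lem:gwwhen2isinvertible} (as $R$ is $2$-divisible and $k$ is not formally real), the image of $\tfrac{(n-1)!}{2}h$ in $GW(k)\otimes R$ has rank $(n-1)!$, which is a unit in $R$. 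Hence multiplication by $\deg(\phi_n)$ is an isomorphism on $GW(k) \otimes R$.

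Next I would promote this from the level of sections over $k$ to the level of $R$-localized $\aone$-homology sheaves. The cleanest route: the self-map $\phi_n$ induces an endomorphism of $R_{\aone}[Q_{2n-1}] \in \Daone{k}$; I want it to be an isomorphism there. Equivalently, by the implication (3) $\Rightarrow$ (1) of Proposition~\ref{prop:homologicalandcohomologicalequivalences}, it suffices to check that $\phi_n^*$ is an isomorphism on $H^i(Q_{2n-1},\mathbf{M})$ for every strictly $\aone$-invariant sheaf $\mathbf{M}$ of $R$-modules and every $i$. Because $Q_{2n-1} \simeq \Sigma^{n-1}\gm{\sma n}$, these cohomology groups are computed by the motivic stable homotopy groups of spheres contracted appropriately; in fact the group $[\Sigma^{n-1}\gm{\sma n},\Sigma^{n-1}\gm{\sma n}]_{\aone}$ acts and the induced endomorphism of $H^i$ is again via the degree, i.e.\ via the $GW(k)$-module structure (each $H^i(Q_{2n-1},\mathbf{M})$ is a $\K^{MW}_0(k) = GW(k)$-module, and $\phi_n$ acts as multiplication by $\deg(\phi_n)$ by naturality of the degree pairing and \cite[Theorem 7.35]{MField}). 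After tensoring $\mathbf{M}$ (already an $R$-module sheaf) the relevant groups are $R$-modules on which $\deg(\phi_n) = \tfrac{(n-1)!}{2}h$ acts invertibly, by the rank computation of the previous paragraph. Therefore $\phi_n^*$ is an isomorphism on all such $H^i$, so $R_{\aone}[\phi_n]$ is a quasi-isomorphism, which is the definition of $\phi_n$ being an $\aone$-weak equivalence after inverting $(n-1)!$.

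\textbf{Main obstacle.} The delicate point is the claim that $\phi_n$ acts on $\aone$-(co)homology "by multiplication by its degree." This is straightforward for the first nonvanishing homology sheaf (where the Hurewicz map identifies things with $\K^{MW}_0$ and the degree is literally defined this way), but to conclude the whole complex $R_{\aone}[Q_{2n-1}]$ is affected isomorphically one must either (a) argue that $Q_{2n-1}$, being a motivic sphere $\Sigma^{n-1}\gm{\sma n}$, has $R_{\aone}[Q_{2n-1}]$ equivalent to a single sheaf placed in degree $n-1$ — which is false integrally but the structure of its homology is still entirely governed by the $GW(k) = \K^{MW}_0(k)$-module structure through Morel's computations — or (b) invoke the $GW(k)$-linearity of everything in sight and reduce to inverting the rank. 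I would go with route (b): establish that $R_{\aone}[Q_{2n-1}]$ is, as an object of $\Daone{k}$ with its $\K^{MW}_0(k)$-module structure, one on which $\deg(\phi_n)$ acts invertibly once $\tfrac{(n-1)!}{2}h$ becomes a unit times a rank-$(n-1)!$ element, and then the definitional criterion (Definition~\ref{defn:plocalweakequivalence}) closes the argument. Everything else is the bookkeeping already assembled in Lemmas~\ref{lem:degreeofsuslin} and~\ref{lem:gwwhen2isinvertible}.
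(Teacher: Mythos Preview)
Your proposal is correct and follows essentially the same approach as the paper: both arguments reduce to showing that precomposition with $\phi_n$ acts on the relevant $GW(k)\otimes R$-modules as multiplication by $\deg(\phi_n)$, which becomes the unit $(n-1)!$ under the rank isomorphism of Lemma~\ref{lem:gwwhen2isinvertible}. The only cosmetic difference is that the paper tests against arbitrary $R$-$\aone$-local targets $\mathscr{W}$ (via the definition of $R$-$\aone$-local spaces and Lemma~\ref{lem:equivalentcharacterizationsofbeinganRequivalence}) rather than only Eilenberg--Mac~Lane targets via Proposition~\ref{prop:homologicalandcohomologicalequivalences}, and the paper also first reduces to a perfect, not-formally-real subfield by base change---a technical step you omit but which does not affect the core argument.
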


\begin{proof}
The maps $\phi_n$ are all defined over $\Spec \Z$.  By standard base-change results, it suffices to prove that $\phi_n$ is an $\aone$-weak equivalence over a non-formally real perfect subfield (e.g., the prime field if $k$ has positive characteristic or $\Q[i]$ if $k$ has characteristic $0$).

Set $R = \Z[\frac{1}{(n-1)!}]$.  By the equivalent characterizations of what it means for a map to be an $R$-$\aone$-equivalence (see Definition~\ref{defn:Raonelocalspaces} and apply the variant of Lemma~\ref{lem:equivalentcharacterizationsofbeinganRequivalence} in that context), it suffices to show that if $\mathscr{W}$ is an arbitrary $R$-$\aone$-local space, then the map induced by $\phi_n$
\[
[{\mathbb A}^n \setminus 0,\mathscr{W}]_{\aone} \longrightarrow [{\mathbb A}^n \setminus 0,\mathscr{W}]_{\aone}
\]
is a bijection.  Because ${\mathbb A}^n \setminus 0$ is $\aone$-connected, we can assume without loss of generality that $\mathscr{W}$ is pointed and connected.  In that case, the canonical map from pointed to free homotopy classes of maps is a surjection.  Thus, it suffices to prove that the map of pointed sets is a bijection.

However, the sets in question are, by definition $\bpi_{n-1,n}^{\aone}(\mathscr{W})(k)$.  For any integer $n \geq 2$, the group $\bpi_{n-1,n}^{\aone}(\mathscr{W})(k)$ is a $GW(k)$-module with module structure induced by precomposition.  However, we know that $\bpi_{n-1,n}^{\aone}(\mathscr{W})(k)$ is an $R$-module, which means that the $GW(k)$-module structure factors through an action of $GW(k) \tensor R$.  Using Lemma \ref{lem:gwwhen2isinvertible}, we conclude that $GW(k) \tensor R \cong R$.

By appeal to Lemma~\ref{lem:degreeofsuslin} the self-map $\phi_n$ has motivic degree $(n-1)!$ in $GW(k) \tensor R \cong R$.  Since $(n-1)!$ is invertible in $R$ by assumption, the result follows.
\end{proof}

\subsection{Unstable splittings of special groups and associated homogeneous spaces}
\label{ss:unstablesplittings}
We now analyze splittings of classical split reductive groups using the results of the preceding section.  In particular, we construct unstable splittings of $SL_n$ and $Sp_n$ (Theorem~\ref{thm:suslinsplitting} and Corollary~\ref{cor:symplecticsuslinsplitting}) as well as splittings of associated symmetric spaces (see Corollary~\ref{cor:symmetricspacesplitting}).  We then use these results to deduce splittings for all the stable classical groups (see Proposition~\ref{prop:splittingofstableclassicalgroupsI} and Corollary~\ref{cor:splittingofstableclassicalgroupsII}).

\subsubsection*{Splittings for the special linear group}
If $m < n$, then we may consider the composite morphism:
\[
\Phi_m: Q_{2m-1} \stackrel{S_m}{\longrightarrow} SL_{m} \longrightarrow SL_n
\]
where the second morphism is given as the inclusion $X \mapsto diag(X,Id_{n-m})$.  Then, we obtain a morphism
\[
\Phi: Q_{3} \times \cdots \times Q_{2n-1} \longrightarrow SL_n
\]
by taking the composite of $\prod_{m \leq n} \Phi_m$ and the product map $SL_n^{\times n-1} \longrightarrow SL_n$.  The following result about the morphism $\Phi$ corresponds to Theorem~\ref{thmintro:suslinsplitting} from the introduction.

\begin{thm}
\label{thm:suslinsplitting}
Assume $k$ is a field that is not formally real.  The morphism
\[
\Phi: Q_3 \times \cdots \times Q_{2n-1} \longrightarrow SL_n
\]
becomes an $\aone$-weak equivalence after inverting $(n-1)!$.
\end{thm}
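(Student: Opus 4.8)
The plan is to build the equivalence by induction on $n$, peeling off one motivic sphere factor at a time using the fibration $SL_{n-1} \to SL_n \to Q_{2n-1}$ (coming from Theorem~\ref{thm:buildingfibersequences} after identifying $SL_n/SL_{n-1} \simeq Q_{2n-1}$) and the section-up-to-localization provided by the Suslin matrix $S_n$. First I would set $R = \Z[\tfrac{1}{(n-1)!}]$ and observe that, since $\phi_n = q \circ S_n$ is an $R$-$\aone$-weak equivalence by Proposition~\ref{prop:suslinisalocalweakequivalence}, the composite $Q_{2n-1} \xrightarrow{S_n} SL_n \xrightarrow{q} Q_{2n-1}$ is an $R$-$\aone$-equivalence; hence after $R$-localization the fibration $SL_{n-1} \to SL_n \xrightarrow{q} Q_{2n-1}$ admits a section, so that $\LR SL_n \simeq \LR SL_{n-1} \times \LR Q_{2n-1}$ in the $R$-$\aone$-local homotopy category, at least once we know the fibration $R$-localizes well.

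**Key steps, in order.** (1) Reduce to a non-formally-real perfect subfield by standard base change, as in the proof of Proposition~\ref{prop:suslinisalocalweakequivalence}, so that Theorem~\ref{thm:Raonelocalizationoffibersequences} and the perfect-field hypotheses of Section~\ref{s:nilpotenceinaonehomotopytheory} are available. (2) Verify that $SL_n$ is weakly $\aone$-nilpotent: it is $\aone$-connected (split, simply connected, semisimple) hence $\aone$-simple by Example~\ref{ex:basicexamples}(1), and $Q_{2n-1} \simeq \Sigma^{n-1}\gm{\sma n}$ is $\aone$-connected, so inductively $SL_{n-1}$ and $SL_n$ and all the $Q_{2j-1}$ are weakly $\aone$-nilpotent, and so are the relevant products and homotopy fibers by Corollary~\ref{cor:weaklyaonenilpotentfibersequences}. (3) Apply Theorem~\ref{thm:Raonelocalizationoffibersequences} to the $\aone$-fiber sequence $SL_{n-1} \to SL_n \to Q_{2n-1}$ to get an $\aone$-fiber sequence $\LR SL_{n-1} \to \LR SL_n \to \LR Q_{2n-1}$. (4) Since $\LR S_n \colon \LR Q_{2n-1} \to \LR SL_n$ composed with $\LR q$ is an $\aone$-weak equivalence (this is Proposition~\ref{prop:suslinisalocalweakequivalence} together with the commutation of $\LR$ with the equivalence $\LR Q_{2n-1} \to \LR Q_{2n-1}$), it is a section of $\LR q$ up to self-equivalence of the base; composing the section with the product map exhibits $\LR SL_n \simeq \LR SL_{n-1} \times \LR Q_{2n-1}$. (5) Induct: the base case $n = 2$ is $SL_2 \simeq Q_3$ via $S_2$, which is the identity on degrees by Lemma~\ref{lem:degreeofsuslin}; the inductive hypothesis gives $\LR SL_{n-1} \simeq \LR(Q_3 \times \cdots \times Q_{2n-3})$, and unwinding the definition of $\Phi$ (built from the $\Phi_m$ and the product map) shows the splitting map is exactly $\LR\Phi$. (6) Finally, use that $\Phi$ is an $R$-$\aone$-weak equivalence together with Corollary~\ref{cor:raoneweakequivalencesaredetectedonhomotopysheaves}, or simply that $\LR\Phi$ is an isomorphism between two $R$-$\aone$-local spaces, to conclude that $\Phi$ itself is an $\aone$-weak equivalence after inverting $(n-1)!$, i.e. over $R$.

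**Main obstacle.** The delicate point is step (4): one needs that $\LR q \circ \LR S_n$ being an $\aone$-equivalence actually promotes $\LR S_n$ to a genuine splitting of the $\aone$-fiber sequence $\LR SL_{n-1} \to \LR SL_n \to \LR Q_{2n-1}$ — i.e. that having a section of the base map (after composing with a self-equivalence of $Q_{2n-1}$, which can be absorbed) forces the total space to split as a product with the fiber. Classically this is automatic when the relevant homotopy sets are groups acting appropriately; here one must be careful because $\bpi_1^{\aone}$ of these spaces, while $\aone$-nilpotent, need not be abelian, so the "product decomposition from a section" argument must be run at the level of $\aone$-homotopy sheaves using the long exact sequence and the fact (from Theorem~\ref{thm:aonelocalizationofnilpotentspaces}) that $\bpi_i^{\aone}(\LR SL_n) \cong \bpi_i^{\aone}(SL_n)_R$, checking that the sequence of $R$-localized homotopy sheaves splits compatibly. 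A clean way to organize this is to note $SL_n$ is $\aone$-simple, so all the homotopy sheaves in play are strictly $\aone$-invariant (hence the extensions are extensions of $R$-modules), making the splitting of the long exact sequence purely a matter of the existence of the section $\LR S_n$; then Corollary~\ref{cor:raoneweakequivalencesaredetectedonhomotopysheaves} finishes the job. I would also double-check that the Suslin matrix identity $(S_m$ has first row $(a_1,a_2,a_3^2,\ldots,a_n^{n-1}))$ is compatible with the block inclusion $SL_m \hookrightarrow SL_n$ in the way needed so that $\Phi$ restricted to each factor really is $\Phi_m$ and the degree computation of Lemma~\ref{lem:degreeofsuslin} applies factor by factor.
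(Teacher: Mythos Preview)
Your proposal is correct and follows essentially the same route as the paper: build the $\aone$-fiber sequence $SL_{n-1} \to SL_n \to Q_{2n-1}$, invoke Theorem~\ref{thm:Raonelocalizationoffibersequences} to preserve it after $R$-localization, use Proposition~\ref{prop:suslinisalocalweakequivalence} to see that $S_n$ provides a section after inverting $(n-1)!$, split, and induct. Your ``main obstacle'' is somewhat overstated: since $SL_n$ is an $\aone$-connected group (hence $\aone$-simple with abelian $\bpi_1^{\aone}$) and the $Q_{2j-1}$ are at least $\aone$-connected, all $\aone$-fundamental sheaves in sight are abelian, so the section-implies-product-splitting step goes through exactly as in the classical case via the group multiplication on $SL_n$ and the long exact sequence in homotopy sheaves---indeed you reach this resolution yourself at the end of your discussion.
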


\begin{proof}
The map sending $X \in SL_n$ to its first row and the first column of its inverse defines a morphism $SL_n \to Q_{2n-1}$.  The quotient $SL_n/SL_{n-1}$ exists as a smooth scheme, and the morphism just described defines an isomorphism $SL_n/SL_{n-1} \isomt Q_{2n-1}$.  By Theorem~\ref{thm:buildingfibersequences}, there is an $\aone$-fiber sequence of the form $Q_{2n-1} \to BSL_{n-1} \to BSL_{n-1}$, which yields an $\aone$-fiber sequence of the form:
\[
SL_{n-1} \longrightarrow SL_n \longrightarrow Q_{2n-1}
\]
by simplicially looping.  By appeal to Theorem~\ref{thm:Raonelocalizationoffibersequences}, this fiber sequence remains a fiber sequence after $R$-$\aone$-localization for any $R \subset \Q$.

Under the hypothesis on $k$, the composite map $S_n: Q_{2n-1} \to SL_n \to Q_{2n-1}$ becomes an $\aone$-weak equivalence after inverting $(n-1)!$ by appeal to Proposition~\ref{prop:suslinisalocalweakequivalence}.  Therefore, taking $R = \Z[\frac{1}{(n-1)!}]$, the fiber sequence is split after $R$-$\aone$-localization, and the product map $Q_{2n-1} \times SL_{n-1} \to SL_n$ is an $\aone$-weak equivalence after inverting $(n-1)!$.  The result then follows by a straightforward induction.
\end{proof}

\begin{rem}
Since $R$-localization commutes with the formation of finite products, and since $Q_{2n-1}$ is a retract of $SL_n$ after inverting $(n-1)!$, we conclude that $Q_{2n-1}$ is an $\aone$-$h$-space after inverting $(n-1)!$.
\end{rem}

\subsubsection*{Splittings for the symplectic group}
In \cite[Proposition 3.3.3]{AsokFaselKO}, we observed that a slight modification of the Suslin matrix can be viewed as a morphism $P_n: Q_{4n-1} \to Sp_{2^{n-1}}$.

\begin{lem}
\label{lem:ishomotopic}
The morphism $P_n: Q_{4n-1} \to Sp_{2^{n-1}}$ lifts uniquely up to $\aone$-homotopy to a morphism $Q_{4n-1} \to Sp_{2n}$ in such a way that the composite map $Q_{4n-1} \to Sp_{2n} \hookrightarrow SL_{2n}$ is $\aone$-homotopic to the morphism $S_{2n}$ considered above.
\end{lem}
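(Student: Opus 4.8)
The plan is to run an obstruction-theoretic argument organized around the $\aone$-fiber sequences relating symplectic groups to motivic spheres, exploiting that $Q_{4n-1}$ has reduced Nisnevich cohomology concentrated in a single degree. First I would record the relevant fiber sequences. For each $m$ the inclusion $Sp_{2m-2}\subset Sp_{2m}$ is a closed subgroup, and since $Sp_{2m-2}$ is a special group the torsor $Sp_{2m}\to Sp_{2m}/Sp_{2m-2}$ is Zariski--, hence Nisnevich--, locally trivial; thus Theorem~\ref{thm:buildingfibersequences} produces an $\aone$-fiber sequence $Sp_{2m}/Sp_{2m-2}\to BSp_{2m-2}\to BSp_{2m}$, and simplicially looping gives $Sp_{2m-2}\to Sp_{2m}\to Sp_{2m}/Sp_{2m-2}$. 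The homogeneous space $Sp_{2m}/Sp_{2m-2}$ is an affine-space bundle over $\A^{2m}\setminus 0$ (the quotient by the unipotent radical of the stabilizer of a nonzero vector), hence $\aone$-weakly equivalent to $\A^{2m}\setminus 0\simeq Q_{4m-1}$, which is $\aone$-$(2m-2)$-connected with $\bpi_{2m-1}^{\aone}(\A^{2m}\setminus 0)\cong\K^{\MW}_{2m}$; consequently the stabilization $Sp_{2m}\hookrightarrow Sp_{2m+2}$ induces an isomorphism on $\bpi_i^{\aone}$ for $i\leq 2m-1$.

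Next I would establish existence and uniqueness of the lift by computing $[Q_{4n-1},Sp_{2m}]_{\aone}$. Since $Q_{4n-1}$ is $\aone$-weakly equivalent to $\Sigma^{2n-1}\gm{\sma 2n}$, and $\gm{\sma 2n}$ has reduced Nisnevich cohomology only in degree $0$, one has $\tilde{H}^j_{\Nis}(Q_{4n-1},\mathbf{A})=0$ for every strictly $\aone$-invariant sheaf $\mathbf{A}$ and every $j\neq 2n-1$ (while $\tilde{H}^{2n-1}_{\Nis}(Q_{4n-1},\mathbf{A})\cong\mathbf{A}_{-2n}(k)$). Feeding this into the principal $\aone$-Postnikov tower of the $\aone$-simple, $\aone$-connected space $Sp_{2m}$ (which exists by the results of Section~\ref{ss:relativehurewicz}), one checks stage by stage that $[Q_{4n-1},\tau_{\leq 2n-2}Sp_{2m}]_{\aone}=\ast$, and then that the ``first contributing homotopy sheaf'' comparison is a bijection
\[
[Q_{4n-1},Sp_{2m}]_{\aone}\;\isomto\;\tilde{H}^{2n-1}_{\Nis}\bigl(Q_{4n-1},\bpi_{2n-1}^{\aone}(Sp_{2m})\bigr)\;\cong\;\bigl(\bpi_{2n-1}^{\aone}(Sp_{2m})\bigr)_{-2n}(k);
\]
here $[Q_{4n-1},-]_{\aone}$ is a group because $Q_{4n-1}$ is a simplicial suspension, so the long exact sequences of the Postnikov fibrations are exact sequences of groups. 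By the stability range of the previous paragraph, $\bpi_{2n-1}^{\aone}(Sp_{2m})$ is independent of $m$ for $m\geq n$; hence, for $n$ with $2^{n-1}\geq 2n$ (the remaining small values being read off directly from \cite[Proposition~3.3.3]{AsokFaselKO}), the stabilization $Sp_{2n}\hookrightarrow Sp_{2^{n-1}}$ induces a bijection $[Q_{4n-1},Sp_{2n}]_{\aone}\isomto[Q_{4n-1},Sp_{2^{n-1}}]_{\aone}$, so $P_n$ has a unique preimage $\tilde{P}_n\colon Q_{4n-1}\to Sp_{2n}$ in the $\aone$-homotopy category.

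Finally I would identify the composite $Q_{4n-1}\xrightarrow{\tilde{P}_n}Sp_{2n}\hookrightarrow SL_{2n}$ with the Suslin matrix $S_{2n}\colon Q_{4n-1}\to SL_{2n}$ (the completion of the universal length-$2n$ unimodular row). The same Postnikov-tower computation, now applied to the $\aone$-simple, $\aone$-connected space $SL_{2n}$, gives $[Q_{4n-1},SL_{2n}]_{\aone}\cong(\bpi_{2n-1}^{\aone}(SL_{2n}))_{-2n}(k)$, so it suffices to show that the composite and $S_{2n}$ induce the same homomorphism $\K^{\MW}_{2n}=\bpi_{2n-1}^{\aone}(Q_{4n-1})\to\bpi_{2n-1}^{\aone}(SL_{2n})$. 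This is a computation with the explicit matrices: the symplectic Suslin matrix $P_n$ and the completion matrix $S_{2n}$ are built from the same universal unimodular row and agree after passage to the special linear group --- this is one of the conclusions of \cite[Proposition~3.3.3]{AsokFaselKO}, together with Suslin's matrix identities \cite{Suslinstablyfree}. Alternatively, one can pin the homomorphism down by composing with the projection $SL_{2n}\to SL_{2n}/SL_{2n-1}\simeq Q_{4n-1}$ and using the degree computation of Lemma~\ref{lem:degreeofsuslin} together with the relative $\aone$-Hurewicz theorem (Theorem~\ref{thm:relativeHurewicz}).

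The step I expect to be the main obstacle is the connectivity and stability bookkeeping for the symplectic (and linear) groups in the middle paragraph: one must ensure that the stabilization maps are $\aone$-connected enough relative to the simplicial dimension of $Q_{4n-1}$ to yield genuine bijections, not merely surjections, at the bottom of the stable range, and that the final identification of $\tilde{P}_n$ with $S_{2n}$ is not disturbed by the (in general non-stable) parts of $\bpi_{2n-1}^{\aone}(SL_{2n})$ and $\bpi_{2n-1}^{\aone}(Sp_{2n})$. This is where one is forced to use the explicit computations of these low-degree $\aone$-homotopy sheaves (Morel; Asok--Fasel) and the explicit Suslin matrix formulas, rather than soft connectivity estimates alone.
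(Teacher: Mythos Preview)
Your approach is essentially the same as the paper's: both arguments exploit the $\aone$-fiber sequences $Sp_{2m-2}\to Sp_{2m}\to Q_{4m-1}$ together with the fact that $Q_{4n-1}$ has reduced Nisnevich cohomology concentrated in degree $2n-1$, so that obstruction theory forces existence and uniqueness of the lift. Your stability bookkeeping is correct and in fact slightly more explicit than the paper's, which runs the obstruction argument layer by layer rather than packaging it as a bijection $[Q_{4n-1},Sp_{2n}]_{\aone}\isomto[Q_{4n-1},Sp_{2^{n-1}}]_{\aone}$.

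The one place where the paper is more efficient is the identification of the composite $Q_{4n-1}\to Sp_{2n}\hookrightarrow SL_{2n}$ with the Suslin map. You route this through a second Postnikov computation of $[Q_{4n-1},SL_{2n}]_{\aone}$ and then invoke the explicit matrix formulas; the paper simply observes that the modified Suslin matrix $P_n$ is obtained from the ordinary Suslin matrix by left and right multiplication by elementary matrices, which are naively $\aone$-homotopic to the identity. That single sentence replaces your entire third paragraph (and your ``alternative'' via Lemma~\ref{lem:degreeofsuslin} and the relative Hurewicz theorem). There is no error in your version, but you are working harder than necessary: once you know the lift is unique, the elementary-matrix observation pins down its image in $SL_{2n}$ without any further obstruction-theoretic input.
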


\begin{proof}
The modified Suslin matrices are obtained from usual Suslin matrices by left and right multiplying by matrices that are elementary and thus naively $\aone$-homotopic to the identity.  It follows from this observation that the morphism $P_n$ is $\aone$-homotopic to a morphism $Q_{4n-1} \to SL_{2^{n-1}}$ defined by the appropriate Suslin matrix.  Therefore, if we can lift $P_n$ through $Sp_{2n}$, it follows immediately that the composite with the inclusion $Sp_{2n} \to SL_{2n}$ is $\aone$-homotopic to $S_{2n}$.

To that end, recall that there is a Cartesian square of closed-immersion group homomorphisms of the form:
\begin{equation}\label{eqn:cartesian}
\xymatrix{
Sp_{2n-2} \ar[r]\ar[d] &SL_{2n-1} \ar[d] \\
Sp_{2n} \ar[r] &SL_{2n};
}
\end{equation}
here the vertical maps are the usual stabilization maps for symplectic and special linear groups respectively.  This Cartesian square defines an isomorphism of quotients $Sp_{2n}/Sp_{2n-2} \isomt SL_{2n}/SL_{2n-1}$ and thus an isomorphism $Sp_{2n}/Sp_{2n-2} \cong Q_{4n-1}$.

Now, there is an $\aone$-weak equivalence $Q_{4n-1} \to {\mathbb A}^{2n} \setminus 0$, and we know that ${\mathbb A}^{2n} \setminus 0$ is at least $\aone$-$(2n-2)$-connected.  Appealing to Theorem~\ref{thm:buildingfibersequences} and looping, we deduce that there is an $\aone$-fiber sequence of the form
\[
Sp_{2n-2} \longrightarrow Sp_{2n} \longrightarrow Q_{4n-1},
\]
i.e., the $\aone$-homotopy fiber of $Sp_{2n-2} \to Sp_{2n}$ is $\Omega Q_{4n-1}$, which is at least $\aone$-$(2n-3)$-connected.

We now proceed by induction.  Given a morphism $Q_{4n-1} \to Sp_{2N}$, we can analyze inductively the obstructions to lifting along the inclusion $Sp_{2N-2}$.  These obstructions live in $H^i(Q_{4n-1},\bpi_i^{\aone}({\mathbb A}^{2N} \setminus 0))$.  The group $H^i(Q_{4n-1},\bpi_i^{\aone}({\mathbb A}^{2N} \setminus 0))$ vanishes unless $i = 0$ or $2n-1$ \cite[Lemma 4.5]{AsokFaselSpheres}, and the sheaf $\bpi_i^{\aone}({\mathbb A}^{2N} \setminus 0))$ is trivial if $i \leq 2N-2$ by the connectivity observation of the previous paragraph.  If all of these obstructions vanish, then a lift necessarily exists.  The result then follows by a straightforward induction.
\end{proof}

As was the case with the special linear group, these modified Suslin matrices yield a morphism
\[
\Phi: Q_{3} \times Q_{7} \times \cdots \times Q_{4n-1} \longrightarrow Sp_{2n}.
\]
We then obtain a splitting analogous to that of Theorem~\ref{thm:suslinsplitting}.

\begin{cor}
\label{cor:symplecticsuslinsplitting}
Assume $k$ is a field that is not formally real.  The morphism:
\[
\Phi: Q_{3} \times Q_{7} \times \cdots \times Q_{4n-1} \longrightarrow Sp_{2n}
\]
becomes an isomorphism in $\ho{k}$ after inverting $(2n-1)!$.
\end{cor}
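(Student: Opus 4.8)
The plan is to mimic the proof of Theorem~\ref{thm:suslinsplitting} exactly, replacing the special linear stabilization by the symplectic one and using the compatibility established in Lemma~\ref{lem:ishomotopic}. First I would recall that the Cartesian square \eqref{eqn:cartesian} identifies $Sp_{2n}/Sp_{2n-2}$ with $SL_{2n}/SL_{2n-1} \cong Q_{4n-1}$, and that the $Sp_{2n-2}$-torsor $Sp_{2n} \to Sp_{2n}/Sp_{2n-2}$ is Zariski (hence Nisnevich) locally trivial since it is a quotient by a Levi-type subgroup; thus Theorem~\ref{thm:buildingfibersequences} applied to the split semisimple simply connected group $Sp_{2n}$ and the closed subgroup $Sp_{2n-2}$ gives an $\aone$-fiber sequence $Q_{4n-1} \to BSp_{2n-2} \to BSp_{2n}$, and simplicially looping yields
\[
Sp_{2n-2} \longrightarrow Sp_{2n} \longrightarrow Q_{4n-1}.
\]

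Next I would set $R = \Z[\tfrac{1}{(2n-1)!}]$ and apply Theorem~\ref{thm:Raonelocalizationoffibersequences} to see that this fiber sequence remains an $\aone$-fiber sequence after $R$-$\aone$-localization; here one must check the weak $\aone$-nilpotence hypotheses for $Sp_{2n}$ and $Q_{4n-1}$: the total space $Sp_{2n}$ is a split simply connected semisimple group, hence $\aone$-connected and even $\aone$-simple (so weakly $\aone$-nilpotent), and $Q_{4n-1} \simeq \Sigma^{2n-1}\gm{\sma 2n}$ is $\aone$-simply connected (in fact $\aone$-$(2n-2)$-connected), hence also weakly $\aone$-nilpotent. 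Then I would invoke Lemma~\ref{lem:ishomotopic}: the lifted morphism $P_n \colon Q_{4n-1} \to Sp_{2n}$ composes with $Sp_{2n} \to Q_{4n-1}$ to a map that agrees up to $\aone$-homotopy with the composite $Q_{4n-1} \to SL_{2n} \to Q_{4n-1}$, which (after the standard identification $Q_{4n-1} \simeq {\mathbb A}^{2n}\setminus 0$) is the self-map $S_n$ of degree $(2n-1)!/2\cdot h$ computed via Lemma~\ref{lem:degreeofsuslin} with $n$ there replaced by $2n$. By Proposition~\ref{prop:suslinisalocalweakequivalence} (with the field not formally real, so $GW \tensor R \cong R$ by Lemma~\ref{lem:gwwhen2isinvertible}, and $(2n-1)!$ invertible), this self-map is an $\aone$-weak equivalence after inverting $(2n-1)!$. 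Hence the fiber sequence is split after $R$-$\aone$-localization, and the product map $Q_{4n-1}\times Sp_{2n-2} \to Sp_{2n}$ is an $\aone$-weak equivalence after inverting $(2n-1)!$.

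Finally I would run the induction: by the same argument applied to $Sp_{2n-2}$ one inverts $(2n-3)!$, which divides $(2n-1)!$, so working throughout with $R = \Z[\tfrac{1}{(2n-1)!}]$ and splitting off one factor $Q_{4m-1}$ at each stage $m = n, n-1, \dots, 1$ produces the asserted $\aone$-weak equivalence
\[
Q_{3}\times Q_{7}\times\cdots\times Q_{4n-1} \longrightarrow Sp_{2n}
\]
after inverting $(2n-1)!$; since an $R$-$\aone$-weak equivalence between $R$-$\aone$-local spaces is an honest $\aone$-weak equivalence, and inverting $(2n-1)!$ makes all spaces in sight $R$-$\aone$-local, this is an isomorphism in $\ho{k}$ after inverting $(2n-1)!$. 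I expect the one genuinely delicate point to be the precise bookkeeping in Lemma~\ref{lem:ishomotopic}'s degree identification — i.e., making sure that the modified Suslin matrix $P_n$, after lifting along $Sp_{2n-2}\hookrightarrow Sp_{2n}$, really does induce on $SL_{2n}/SL_{2n-1}$ the same map as $S_n$ does, so that the degree computation of Lemma~\ref{lem:degreeofsuslin} genuinely applies; everything else is a formal transcription of the $SL_n$ argument, with the only numerical change being that the relevant factorial is $(2n-1)!$ rather than $(n-1)!$ because the largest quadric appearing is $Q_{4n-1} \simeq {\mathbb A}^{2n}\setminus 0$.
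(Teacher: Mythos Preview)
Your proposal is correct and follows essentially the same approach as the paper's proof: use the $\aone$-fiber sequence $Sp_{2n-2} \to Sp_{2n} \to Q_{4n-1}$, invoke Lemma~\ref{lem:ishomotopic} to identify the composite $Q_{4n-1} \to Sp_{2n} \to Q_{4n-1}$ with $\phi_{2n}$, and then split the sequence via Proposition~\ref{prop:suslinisalocalweakequivalence} and induct. Your write-up is in fact more explicit than the paper's terse sketch (which, incidentally, writes ``$(n-1)!$'' where it should write $(2n-1)!$); the only notational slip on your side is calling the self-map ``$S_n$'' rather than $\phi_{2n} = q \circ S_{2n}$, but your degree computation and the rest of the argument are right.
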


\begin{proof}
We repeat the proof of Theorem~\ref{thm:suslinsplitting} appealing to the $\aone$-fiber sequence
\[
Sp_{2n-2} \longrightarrow Sp_{2n} \longrightarrow Q_{4n-1}.
\]
Then, we simply observe that the morphism $Q_{4n-1} \to Sp_{2n}$ uniquely lifting $P_n$ when composed with the map $Sp_{2n} \to Q_{4n-1}$ gives an $\aone$-homotopy endomorphism of ${\mathbb A}^{2n} \setminus 0$ with degree $(n-1)!$ after Lemma \ref{lem:ishomotopic}.
\end{proof}

\subsubsection*{Splittings for some homogeneous spaces}
B. Harris generalized some of Serre's splittings for compact Lie groups to some classes of homogeneous spaces \cite{Harris}.  We now establish some analogs of the splitting results he established.  Let $X_n = SL_{2n}/Sp_{2n}$.  Using the isomorphism $X_{n+1} \cong SL_{2n+1}/Sp_{2n}$ obtained from Diagram \eqref{eqn:cartesian}, one sees that there is an $\aone$-fiber sequence of the form
\[
X_n \longrightarrow X_{n+1} \longrightarrow Q_{4n+1}.
\]
The Suslin morphism $S_{2n+1}: Q_{4n+1} \to SL_{2n+1}$ thus yields a morphism $Q_{2n+1} \to X_{n+1}$.

\begin{cor}
\label{cor:symmetricspacesplitting}
Assume $k$ is a field that is not formally real.  There is an $\aone$-weak equivalence
\[
X_{n+1} \cong Q_5 \times Q_9 \times \cdots \times Q_{4n+1}
\]
after inverting $(2n-1)!$.
\end{cor}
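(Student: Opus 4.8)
The plan is to imitate the proof of Theorem~\ref{thm:suslinsplitting}, replacing the fiber sequence for $SL_n$ by the fiber sequence $X_n \to X_{n+1} \to Q_{4n+1}$ and using the symplectic Suslin morphism already constructed in Lemma~\ref{lem:ishomotopic}. First I would fix $R = \Z[\tfrac{1}{(2n-1)!}]$ and recall that, by Diagram~\eqref{eqn:cartesian} and the induced isomorphism $Sp_{2n}/Sp_{2n-2} \cong Q_{4n-1}$, one has the $\aone$-fiber sequence $X_n \to X_{n+1} \to Q_{4n+1}$ displayed just before the statement. I would then note that all spaces in sight are $\aone$-connected: $X_{n+1} = SL_{2n+2}/Sp_{2n+2}$ is $\aone$-connected (quotient of an $\aone$-connected group), and $Q_{4n+1} \simeq {\mathbb A}^{2n+1}\setminus 0$ is $\aone$-connected; an easy induction on $n$, using the long exact sequence in $\aone$-homotopy sheaves of the fiber sequence, gives $\aone$-connectedness of $X_n$ for all $n$. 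Granted this, Theorem~\ref{thm:Raonelocalizationoffibersequences} applies once we know $X_{n+1}$ and $Q_{4n+1}$ are weakly $\aone$-nilpotent; $Q_{4n+1}\simeq {\mathbb A}^{2n+1}\setminus 0$ is $\aone$-simply connected hence $\aone$-simple (it is a motivic sphere, in fact an $\aone$-$h$-space-like object after localization), and $X_{n+1}$, being $\aone$-connected with nilpotent — indeed, by the inductive fiber sequences, an iterated extension of $\aone$-simple pieces — is weakly $\aone$-nilpotent. So the sequence stays an $\aone$-fiber sequence after $R$-$\aone$-localization.

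Next I would produce the splitting map. The Suslin morphism $S_{2n+1}\colon Q_{4n+1} \to SL_{2n+1}$ composed with $SL_{2n+1}\to X_{n+1}$ gives $\sigma_{n+1}\colon Q_{4n+1} \to X_{n+1}$, and I must check that the composite $Q_{4n+1} \xrightarrow{\sigma_{n+1}} X_{n+1} \to Q_{4n+1}$ is an $\aone$-weak equivalence after inverting $(2n-1)!$. The map $X_{n+1}\to Q_{4n+1}$ is, via the identifications, the projection $SL_{2n+1}/Sp_{2n} \to SL_{2n+1}/SL_{2n} \cong Q_{4n+1}$, which on the level of $SL_{2n+1}$ is the ``first row / first column of the inverse'' map; so the composite $Q_{4n+1}\to Q_{4n+1}$ is exactly $\phi_{2n+1} = q\circ S_{2n+1}$ in the notation of Section~\ref{ss:Suslin} (up to $\aone$-homotopy, using that $SL_{2n+1}/SL_{2n}\to {\mathbb A}^{2n+1}\setminus 0$ is an $\aone$-weak equivalence). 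By Lemma~\ref{lem:degreeofsuslin}, $\deg(\phi_{2n+1}) = \tfrac{(2n)!}{2}h$ for $2n+1 > 2$, and since $(2n)! = (2n)\cdot (2n-1)!$ and $GW(k)\tensor R \cong R$ by Lemma~\ref{lem:gwwhen2isinvertible} (here $k$ not formally real), this becomes a unit in $GW(k)\tensor R$: indeed $\tfrac{(2n)!}{2}h$ maps under the rank to $(2n)! = (2n)(2n-1)!$, which is invertible in $R = \Z[\tfrac{1}{(2n-1)!}]$ (the extra factor $2n$ is handled by Lemma~\ref{lem:gwwhen2isinvertible}, as $R$ is automatically $2$-divisible once $(2n-1)!$ — which for $n\geq 1$ contains a factor of $2$ when $n\ge 2$, and for $n=1$ one treats $X_2 = SL_2/Sp_2 = \mathrm{pt}$ trivially — is inverted; more carefully, arrange the statement so that inverting $(2n-1)!$ makes $2$ and $2n$ invertible). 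Hence by Proposition~\ref{prop:suslinisalocalweakequivalence}, or rather the argument there, $\sigma_{n+1}$ followed by the projection is an $R$-$\aone$-weak equivalence, and since all spaces are $\aone$-connected weakly $\aone$-nilpotent it is an $\aone$-weak equivalence after inverting $(2n-1)!$.

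Therefore, after $R$-$\aone$-localization the fiber sequence $X_n \to X_{n+1} \to Q_{4n+1}$ is split by $\sigma_{n+1}$, and the product map $Q_{4n+1}\times X_n \to X_{n+1}$ is an $R$-$\aone$-weak equivalence (one checks this on $\aone$-homotopy sheaves via the split long exact sequence and Corollary~\ref{cor:raoneweakequivalencesaredetectedonhomotopysheaves}, using that all three spaces are weakly $\aone$-nilpotent). The result then follows by a straightforward induction on $n$, taking $X_1 = \mathrm{pt}$ as the base case, so that
\[
X_{n+1} \cong Q_5 \times Q_9 \times \cdots \times Q_{4n+1}
\]
after inverting $(2n-1)!$. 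The main obstacle I anticipate is bookkeeping around exactly which factorial one must invert: one needs the degree $\tfrac{(2n)!}{2}h$ to become a unit, and since $GW(k)\tensor\Z[\tfrac{1}{(2n-1)!}]$ may still see the factor $2n$, one must verify that $2n$ divides some power of $(2n-1)!$ after also using $2$-divisibility — equivalently, that the odd part of $2n$ already divides $(2n-1)!$ (true for $n\ge 2$) and the $2$-part is killed by Lemma~\ref{lem:gwwhen2isinvertible}. Getting this divisibility argument airtight, and handling the small cases $n=1$ separately, is the only delicate point; everything else is a direct transcription of the $SL_n$ argument.
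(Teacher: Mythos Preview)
Your approach is correct but genuinely different from the paper's. You use the fiber sequence $X_n \to X_{n+1} \to Q_{4n+1}$ and induct, splitting each stage via the Suslin map $\sigma_{n+1}\colon Q_{4n+1} \xrightarrow{S_{2n+1}} SL_{2n+1} \to X_{n+1}$. The paper instead uses the fiber sequence $Sp_{2n} \to SL_{2n} \to X_n$: after inverting $(2n-1)!$, both $Sp_{2n}$ and $SL_{2n}$ are already known to split as products of odd quadrics (Theorem~\ref{thm:suslinsplitting} and Corollary~\ref{cor:symplecticsuslinsplitting}), and Lemma~\ref{lem:ishomotopic} identifies the map $Sp_{2n} \to SL_{2n}$ with the inclusion of the factors $Q_3 \times Q_7 \times \cdots \times Q_{4n-1}$ into $Q_3 \times Q_5 \times \cdots \times Q_{4n-1}$; the base $X_n$ is then the product of the complementary factors $Q_5 \times Q_9 \times \cdots \times Q_{4n-3}$. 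The paper's route is shorter because it recycles the two splitting theorems wholesale, but it requires the compatibility statement of Lemma~\ref{lem:ishomotopic}. Your route is more self-contained: it never touches the symplectic Suslin matrices, only $S_{2n+1}$, and directly parallels the proof of Theorem~\ref{thm:suslinsplitting}.

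Your divisibility worry is easily settled, and your phrasing of it is slightly off. Lemma~\ref{lem:gwwhen2isinvertible} does not ``kill the $2$-part'' of the degree; it says that once $2$ is invertible the rank map $GW(k)\otimes R \to R$ is an isomorphism. So you still need the rank $(2n)!$ of $\tfrac{(2n)!}{2}h$ to be a unit in $R = \Z[\tfrac{1}{(2n-1)!}]$. For $n \geq 2$ this holds because $2n \geq 4$ is composite, hence every prime $p \leq 2n$ actually satisfies $p \leq 2n-1$ and so is inverted in $R$; thus $(2n)! \in R^\times$ and also $2 \in R^\times$ so the lemma applies. The case $n=1$ is the base case $X_1 = SL_2/Sp_2 = \ast$, giving $X_2 \simeq Q_5$ directly from the fiber sequence with no localization needed. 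Finally, the nilpotence hypotheses for Theorem~\ref{thm:Raonelocalizationoffibersequences} are trivially satisfied: an easy induction on the same fiber sequences shows each $X_m$ is $\aone$-simply connected.
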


\begin{proof}
There is an $\aone$-fiber sequence of the form
\[
Sp_{2n} \longrightarrow SL_{2n} \longrightarrow X_{n}.
\]
Appealing to the proofs of Theorem~\ref{thm:suslinsplitting} and Corollary~\ref{cor:symplecticsuslinsplitting} and recalling Lemma~\ref{lem:ishomotopic}, we conclude that after inverting $(2n-1)!$ the left-hand morphism is split and corresponds to the inclusion $Q_3 \times Q_7 \times \cdots \times Q_{4n-1} \to \prod_{2 \leq m \leq n} Q_{2m-1}$.
\end{proof}

\subsubsection*{Infinite products}
We need to make some comments about infinite products of spaces.

\begin{lem}
\label{lem:infiniteproducts}
Suppose $\mathscr{X}_n \in \Spc_k$, $n \in {\mathbb N}$, is a sequence of pointed fibrant, simplicially connected and $\aone$-local spaces.  Suppose for each positive integer $m$ there exists an integer $c(m)$ such that $\mathscr{X}_N$ is at least $\aone$-$m$-connected for every $N > c(m)$.  If we consider the system of finite products $\prod_{i=0}^n \mathscr{X}_n$ as  a directed system with respect to the evident inclusion maps arising from the base-point, then the induced map
\[
\operatorname{colim}_n \prod_{i=0}^n \mathscr{X}_i \longrightarrow \prod_{i \in {\mathbb N}} \mathscr{X}_i
\]
is an isomorphism on $\aone$-homotopy sheaves and thus is an $\aone$-weak equivalence.
\end{lem}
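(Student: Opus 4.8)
The plan is to reduce the statement to a degreewise check on $\aone$-homotopy sheaves, exploiting the fact that the colimit on the left is a filtered colimit and that filtered colimits commute with finite $\aone$-homotopy sheaves (since filtered homotopy colimits of $\aone$-local spaces are again $\aone$-local, and $\bpi_i^{\aone}$ commutes with filtered colimits stalkwise). Fix an integer $i \geq 0$ and a base point, and compare $\bpi_i^{\aone}$ of the two sides. On the right, $\bpi_i^{\aone}(\prod_{n \in \N} \mathscr{X}_n) \cong \prod_{n \in \N}\bpi_i^{\aone}(\mathscr{X}_n)$, because homotopy sheaves commute with products (an ordinary product of fibrant spaces is the homotopy product, and sheafification of a product of presheaves of groups is the product of the sheafifications). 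By the connectivity hypothesis, there is an integer $c(i)$ such that $\mathscr{X}_N$ is $\aone$-$i$-connected for $N > c(i)$, so $\bpi_i^{\aone}(\mathscr{X}_N) = 0$ for such $N$; hence this infinite product is actually finite: $\prod_{n \in \N}\bpi_i^{\aone}(\mathscr{X}_n) = \prod_{n=0}^{c(i)}\bpi_i^{\aone}(\mathscr{X}_n)$.

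On the left, $\bpi_i^{\aone}(\operatorname{colim}_n \prod_{j=0}^n \mathscr{X}_j) \cong \operatorname{colim}_n \bpi_i^{\aone}(\prod_{j=0}^n \mathscr{X}_j) \cong \operatorname{colim}_n \prod_{j=0}^n \bpi_i^{\aone}(\mathscr{X}_j)$, where the first isomorphism uses that $\bpi_i^{\aone}$ commutes with the relevant filtered colimit and the second that $\bpi_i^{\aone}$ commutes with finite products. The transition maps in this directed system of finite products are the inclusions induced by the base point, i.e. $\prod_{j=0}^n \bpi_i^{\aone}(\mathscr{X}_j) \hookrightarrow \prod_{j=0}^{n+1}\bpi_i^{\aone}(\mathscr{X}_j)$, and since $\bpi_i^{\aone}(\mathscr{X}_N) = 0$ for $N > c(i)$, all transition maps are isomorphisms once $n \geq c(i)$; thus the colimit is $\prod_{j=0}^{c(i)}\bpi_i^{\aone}(\mathscr{X}_j)$. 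Comparing, the canonical map identifies both sides with $\prod_{j=0}^{c(i)}\bpi_i^{\aone}(\mathscr{X}_j)$, and one checks the canonical map realizes precisely this identification (it is compatible with the projections to each factor). Since this holds for every $i$, the map is an isomorphism on all $\aone$-homotopy sheaves; because both sides are $\aone$-local (the left by stability of $\aone$-local objects under filtered homotopy colimits, recalled in Section~\ref{ss:aonelocalgrouptheory}; the right by stability under homotopy limits), a map inducing isomorphisms on all $\aone$-homotopy sheaves between $\aone$-connected $\aone$-local spaces is an $\aone$-weak equivalence.

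The one point requiring a little care — and the step I'd flag as the main obstacle — is the commutation $\bpi_i^{\aone}(\operatorname{colim}_n -) \cong \operatorname{colim}_n \bpi_i^{\aone}(-)$ for the specific filtered colimit of finite products. This is not automatic for an arbitrary homotopy colimit, but here the colimit is taken along a sequence of cofibrations (monomorphisms of simplicial presheaves), so it is already a homotopy colimit; moreover one should check that $\Laone$ commutes with this filtered colimit, which follows from the fact that the class of $\aone$-local objects is closed under filtered homotopy colimits (so $\operatorname{colim}_n \prod_{j=0}^n \mathscr{X}_j$ is already $\aone$-local, being a filtered colimit of $\aone$-local spaces) together with the fact that, for $\aone$-local spaces, $\bpi_i^{\aone} = \bpi_i$ and simplicial homotopy sheaves commute with filtered colimits along cofibrations (check stalkwise, where filtered colimits of Kan complexes behave well). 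Once this commutation is in hand, everything else is the bookkeeping described above. I would also remark that the hypotheses are exactly tailored so that for each fixed $i$ only finitely many factors contribute, which is what makes the infinite product on the right coincide with a finite one at the level of each homotopy sheaf.
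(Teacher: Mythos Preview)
Your argument is correct and follows essentially the same approach as the paper: both compute $\bpi_j^{\aone}$ of each side degreewise, observe that the connectivity hypothesis forces all but finitely many factors to contribute, and conclude that the comparison map is an isomorphism on homotopy sheaves. The paper's proof is more compressed---it notes the stabilization of the $j$-th homotopy sheaf of the finite products (via the projection $\bpi_j(\prod_{i=1}^{N}\mathscr{X}_i)\to\bpi_j(\prod_{i=1}^{c(j)}\mathscr{X}_i)$) and then cites \cite[IV.3.1]{BK} to finish, whereas you spell out the commutation of $\bpi_i^{\aone}$ with the filtered colimit and with the infinite product explicitly; but the underlying idea is the same.
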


\begin{proof}
Under the stated connectivity hypotheses, for any given integer $j$, the $j$-th $\aone$-homotopy sheaf of the left hand side stabilizes at a finite stage: more precisely, the map $\bpi_j(\prod_{i=1}^N \mathscr{X_i}) \to \bpi_j(\prod_{i=1}^{c(j)} \mathscr{X}_i)$ induced by the projection is an isomorphism for all $N > c(j)$.  It follows from \cite[Theorem IX.3.1]{BK} that the map in the statement induces an isomorphism on homotopy sheaves and is thus an $\aone$-weak equivalence.
\end{proof}

\begin{ex}
\label{ex:increasingconnectivity}
If $i < n$, then the spaces $K(\Z(n),2n-i)$ are at least $\aone$-$(n-i)$-connected.  Likewise, the spaces ${\mathbb A}^n \setminus 0$ are at least $\aone$-$(n-2)$-connected.  In the sequel, we will use Lemma~\ref{lem:infiniteproducts} with these sequences of spaces.
\end{ex}

\subsubsection*{Rational splittings of the stable linear and symplectic groups}
Using the facts about infinite products from the preceding section, we now establish splitting results for stable linear and symplectic groups.  Before Theorem~\ref{thm:suslinsplitting}, we constructed morphisms
\[
\prod_{i=2}^n Q_{2i-1} \longrightarrow SL_n
\]
for each integer $n$.  If we take the colimit of the maps on the left hand side with respect to inclusions of base-points, and the maps on the right hand side with respect to the inclusions $SL_n \hookrightarrow SL_{n+1}$ described before Theorem~\ref{thm:suslinsplitting}, we obtain a morphism
\[
\colim_n \prod_{i=2}^n Q_{2i-1} \longrightarrow SL.
\]
The same construction can be performed for the symplectic groups (see the discussion preceding Corollary~\ref{cor:symplecticsuslinsplitting}) to obtain a morphism
\[
\colim_n \prod_{i=2}^n Q_{4i-1} \longrightarrow Sp.
\]
We use these morphisms in the next statement.

\begin{prop}
\label{prop:splittingofstableclassicalgroupsI}
There are diagrams of the form:
\[
\begin{split}
\prod_{i\geq 2} Q_{2i-1} \longleftarrow &\colim_n \prod_{2 \leq i \leq n} Q_{2i-1} \longrightarrow SL, \text{ and } \\
\prod_{i\geq 2} Q_{4i-1} \longleftarrow &\colim_n \prod_{2 \leq i \leq n} Q_{4i-1} \longrightarrow Sp,
\end{split}
\]
where all morphisms are $\Q$-$\aone$-weak equivalences.
\end{prop}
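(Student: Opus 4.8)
The plan is to handle the linear and symplectic cases together, since the two rows of the diagram are assembled in exactly the same fashion. For the left-pointing morphisms, I would apply Lemma~\ref{lem:infiniteproducts} to the sequence $\mathscr{X}_i := \Laone Q_{2i-1}$, $i \geq 2$ (respectively $\Laone Q_{4i-1}$): each $\mathscr{X}_i$ is a pointed, fibrant, simplicially connected, $\aone$-local space, and by Example~\ref{ex:increasingconnectivity} the quadric $Q_{2i-1}$ is $\aone$-$(i-2)$-connected (respectively $Q_{4i-1}$ is $\aone$-$(2i-2)$-connected), so the increasing-connectivity hypothesis of that lemma holds with, say, $c(m) = m+2$. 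The lemma then shows that $\colim_n \prod_{2 \leq i \leq n} \mathscr{X}_i \to \prod_{i \geq 2} \mathscr{X}_i$ is an $\aone$-weak equivalence, and this is exactly the left-pointing morphism of the proposition once one computes the infinite product as a homotopy product (i.e.\ replaces each quadric by a fibrant $\aone$-local model): for finite products this replacement is a simplicial, hence $\aone$-weak, equivalence because $X \times (-)$ preserves weak equivalences, and the transition maps are base-point inclusions, hence cofibrations, so the sequential colimits are homotopy colimits and the replacement passes to the colimit. Thus the left-pointing morphisms are $\aone$-weak equivalences, a fortiori $\Q$-$\aone$-weak equivalences.

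For the right-pointing morphisms I would first verify compatibility with stabilization. In the linear case the morphisms $\Phi_n : \prod_{2 \leq i \leq n} Q_{2i-1} \to SL_n$ constructed just before Theorem~\ref{thm:suslinsplitting} commute strictly with the base-point inclusions of the products and the inclusions $SL_n \hookrightarrow SL_{n+1}$: restricting $\Phi_{n+1}$ to the smaller product collapses the $Q_{2n+1}$-factor and, factor by factor, becomes $Q_{2m-1} \xrightarrow{S_m} SL_m \to SL_{n+1}$, $X \mapsto \mathrm{diag}(X,\mathrm{Id})$, which is $SL_n \hookrightarrow SL_{n+1}$ applied to the corresponding factor of $\Phi_n$. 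Hence the $\Phi_n$ form a morphism of sequential diagrams and induce $\colim_n \prod_{2 \leq i \leq n} Q_{2i-1} \to \colim_n SL_n = SL$. By Theorem~\ref{thm:suslinsplitting} each $\Phi_n$ becomes an $\aone$-weak equivalence after inverting $(n-1)!$, so is in particular a $\Q$-$\aone$-weak equivalence; since these colimits are filtered and formed along cofibrations, hence are homotopy colimits, and the $\aone$-chain functor $\Q_{\aone}[\mathscr{X}] = \Q \tensor \Laone^{ab}\Z[\mathscr{X}]$ commutes with filtered colimits, the induced colimit map is again a $\Q$-$\aone$-weak equivalence. The symplectic case is formally identical, using Corollary~\ref{cor:symplecticsuslinsplitting} and the morphisms $Q_{4n-1} \to Sp_{2n}$ furnished by Lemma~\ref{lem:ishomotopic}.

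The step I expect to need the most care is the symplectic one: the lifts $Q_{4n-1} \to Sp_{2n}$ of Lemma~\ref{lem:ishomotopic} are determined only up to $\aone$-homotopy, so I would either choose them compatibly along the stabilization tower (using the uniqueness clause of that lemma) or replace the strict sequential colimit by the homotopy colimit of the resulting homotopy-coherent diagram; either way the $\Q$-$\aone$-weak equivalence statement is unchanged. The remaining, more routine, point to be careful about is the model-categorical bookkeeping in the last two paragraphs --- confirming that the relevant sequential colimits genuinely model homotopy colimits (true since all transition maps are monomorphisms, hence cofibrations in the injective model structures) and that $\Q$-$\aone$-weak equivalences are stable under them. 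The infinite-product half of the statement, by contrast, is essentially immediate once Lemma~\ref{lem:infiniteproducts} and Example~\ref{ex:increasingconnectivity} are in place.
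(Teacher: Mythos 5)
Your proposal is correct and follows essentially the same route as the paper: Lemma~\ref{lem:infiniteproducts} together with the connectivity estimates of Example~\ref{ex:increasingconnectivity} for the left-pointing arrows, and Theorem~\ref{thm:suslinsplitting} (resp.\ Corollary~\ref{cor:symplecticsuslinsplitting}) at each finite stage followed by passage to the filtered colimit for the right-pointing arrows. The extra care you take with the homotopy-colimit bookkeeping and with choosing the symplectic lifts of Lemma~\ref{lem:ishomotopic} compatibly along the stabilization tower is sound and in fact fills in details the paper leaves implicit.
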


\begin{proof}
Theorem~\ref{thm:suslinsplitting} implies that the maps $\prod_{2 \leq i \leq n} Q_{2i-1} \longrightarrow SL_n$ become $\aone$-weak equivalences after inverting $(n-1)!$ and thus are all $\Q$-$\aone$-weak equivalences.  By definition, $SL = \colim_n SL_n$ where the colimit is taken with respect to the standard inclusions as block diagonal matrices.  Since filtered colimits of $\aone$-weak equivalences are $\aone$-weak equivalences \cite[\S Lemma 2.2.12]{MV}, it follows that the induced morphism
\[
\colim_n \prod_{2 \leq i \leq n} Q_{2i-1} \longrightarrow SL
\]
is a $\Q$-$\aone$-weak equivalence as well.  That the map
\[
\colim_n \prod_{2 \leq i \leq n} Q_{2i-1} \longrightarrow \prod_{i\geq 2} Q_{2i-1}
\]
is an $\aone$-weak equivalence is an immediate consequence of Lemma~\ref{lem:infiniteproducts}.  The argument for $Sp$ is formally identical to that for $SL$: one makes appeal to Corollary \ref{cor:symplecticsuslinsplitting} instead of Theorem~\ref{thm:suslinsplitting}.
\end{proof}

\subsubsection*{Rational splittings of the stable orthogonal group}
Assume through the remainder of this paragraph (i.e., until the beginning of Section \ref{ss:rationalmotivicspheres}) that one works over a base field $k$ having characteristic not equal to $2$ (this assumption will be made explicit in theorem statements below, and it simplifies the discussion).  We may use the above results to give a corresponding decomposition for the stable orthogonal group as well.  Let $O_n$ (resp. $SO_n$) be the usual split (special) orthogonal group, i.e., the (special) orthogonal group of the split quadratic form in $n$ variables.

There are closed immersion group homomorphisms $O_n \to GL_n$ and $H_n': GL_n \to O_{2n}$ ({mapping a matrix $M$ to the block-diagonal matrix $\mathrm{diag}(M,(M^{-1})^t)$); we will also use the hyperbolic morphism $H_n: GL_n \to Sp_{2n}$ defined in an analogous way.  Explicitly, if $X$ is a smooth affine scheme, then the composite $X \to BGL_n \to B_{et}O_{2n}$ sends a vector bundle $\mathscr{V}$ on $X$ to the bundle $\mathscr{V} \oplus \mathscr{V}^{\vee}$ equipped with its canonical symmetry automorphism: if $c: \mathscr{V} \to \mathscr{V}^{\vee\vee}$ is the canonical isomorphism, then the symmetric space structure is given by the matrix \begin{tiny}$\begin{pmatrix} 0 & 1 \\ c & 0 \end{pmatrix}$\end{tiny}.  Likewise, the composite $X \to BGL_n \to BSp_{2n}$ sends a vector bundle $\mathscr{V}$ on $X$ to $\mathscr{V} \oplus \mathscr{V}^{\vee}$ equipped with its canonical antisymmetry automorphism.

There are stabilization homomorphisms $O_n \to O_{n+1}$ for every integer $n \geq 2$.  The composite homomorphisms
\[
\begin{split}
O_n \longrightarrow &\; GL_n \longrightarrow Sp_{2n}, \text{ and } \\
Sp_{2n} \longrightarrow &\; GL_{2n} \longrightarrow O_{4n}
\end{split}
\]
are compatible with the aforementioned stabilization maps in the sense that the diagrams:
\[
\xymatrix{
O_n \ar[r]\ar[d] & Sp_{2n} \ar[d] \\
O_{n+1}\ar[r] & Sp_{2n+2}
} \text{ and }
\xymatrix{
Sp_{2n} \ar[r]\ar[d] & O_{4n} \ar[d] \\
Sp_{2n+2}\ar[r] & O_{4n+4}
}
\]
are $\aone$-homotopy commutative.  One therefore obtains stable homomorphisms
\[
O \longrightarrow Sp \;\;\;\text{ and } Sp \longrightarrow O
\]
that are well-defined in the $\aone$-homotopy category.  We would like to show that these homomorphisms are $\aone$-weak equivalences after inverting suitable primes, but our statements are slightly complicated because of connectivity issues: the stable symplectic group $Sp$ is $\aone$-connected, but as the next remark explains, the stable orthogonal group is not.

\begin{rem}
Since $2$ is invertible in the base field, the stable orthogonal group $O$ has $2$ connected components corresponding to elements of determinant $\pm 1$.  As a consequence we do not know that $O$ has a ``well-behaved" $\aone$-localization because Theorem~\ref{thm:aonelocalizationofnilpotentspaces} does not necessarily apply.  In the analogous situation in topology, this problem can be rectified by passing to the special orthogonal group, which is connected as a topological space.  However, in the algebro-geometric setting, while the special orthogonal group $SO$ is the component of $O$ corresponding to elements of determinant $1$, it is {\em not} $\aone$-connected.  Indeed, there is an $\aone$-fiber sequence of the form
\[
Spin \longrightarrow SO \longrightarrow B_{\et}\mu_2
\]
classifying the spin double cover.  As a consequence of Lemma~\ref{lem:spinisaoneconnected}, it follows that there is an injective morphism $\bpi_0^{\aone}(SO) \to \bpi_0^{\aone}(B_{\et}\mu_2)$.  The composite morphism $SO \to \bpi_0^{\aone}(B_{\et}\mu_2)$ is a sheafification of the spinor norm homomorphism (see, e.g., \cite{Bass}; it coincides with this homomorphism on sections over essentially smooth local rings), which is non-trivial in general.
\end{rem}

\begin{lem}
\label{lem:spinisaoneconnected}
Assume $k$ is a field. The stable group $Spin$ is $\aone$-connected and $\aone$-simple.  Moreover, the map $Spin \to SO \to O$ induces an isomorphism on $\aone$-homotopy sheaves in degrees $\geq 1$ (pointed by the identity element).
\end{lem}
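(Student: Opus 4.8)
The plan is to establish the three assertions in turn, working first with the unstable groups $Spin_n$, $SO_n$, $O_n$ and then passing to the stable colimits.

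\textbf{$\aone$-connectedness and $\aone$-simplicity.} For each $n\geq 3$ the $k$-group scheme $Spin_n$ is split, semisimple and simply connected, hence $\aone$-connected by Example~\ref{ex:basicexamples}(1). Since $Spin=\colim_n Spin_n$ along the closed immersions $Spin_n\hookrightarrow Spin_{n+1}$, and $\bpi_0^{\aone}$ commutes with filtered colimits (filtered colimits preserve $\aone$-weak equivalences, and $\Laone$ commutes with them; compare the proof of Lemma~\ref{lem:infiniteproducts}), the space $Spin$ is $\aone$-connected. Moreover $Spin$ is a group object in $\Spc_k$, being a filtered colimit of group schemes, so $\Laone Spin$ is an $h$-space because $\Laone$ preserves finite products; an $\aone$-connected $h$-space is $\aone$-simple by the Eckmann--Hilton argument recalled at the beginning of Section~\ref{ss:examplesofaonenilpotentspaces}. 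This gives the first assertion.

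\textbf{Reduction of $O$ to $SO$.} As a $k$-scheme $O_n$ is the disjoint union of $SO_n$ with the coset $SO_n\cdot r$ for a fixed reflection $r$, and left translation by $r$ is an isomorphism of $k$-schemes between the two copies; passing to colimits, $O\cong SO\sqcup SO$ with the identity element $e$ lying in the first summand. The higher $\aone$-homotopy sheaves of a space pointed at $e$ depend only on the $\aone$-connected component of $e$, so the inclusion $SO\hookrightarrow O$ induces isomorphisms $\bpi_i^{\aone}(SO,e)\isomto\bpi_i^{\aone}(O,e)$ for every $i\geq 1$. It therefore suffices to analyze the map $Spin\to SO$.

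\textbf{Comparison of $Spin$ and $SO$ in degrees $\geq 2$.} For each $n\geq 3$ the central isogeny $Spin_n\to SO_n$ has kernel $\mu_2$, a finite \'etale $k$-group scheme (recall $\mathrm{char}\,k\neq 2$); hence $B_{\et}\mu_2$ is $\aone$-local by Example~\ref{ex:finiteetalegroupschemes}, with $\aone$-homotopy sheaves concentrated in degrees $\leq 1$, and the isogeny gives rise to an $\aone$-fiber sequence $Spin_n\to SO_n\to B_{\et}\mu_2$. Its long exact sequence in $\aone$-homotopy sheaves yields $\bpi_i^{\aone}(Spin_n)\isomto\bpi_i^{\aone}(SO_n)$ for every $i\geq 2$, naturally in $n$. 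Since $\bpi_i^{\aone}$ commutes with the filtered colimits $Spin=\colim_n Spin_n$ and $SO=\colim_n SO_n$, we conclude $\bpi_i^{\aone}(Spin)\isomto\bpi_i^{\aone}(SO)$ for every $i\geq 2$. (If needed, one can instead pass through the highly $\aone$-connected quotients $Spin_{n+1}/Spin_n\cong SO_{n+1}/SO_n$ — the split affine quadric $\{q=1\}$ of rank $n+1$, which is $\aone$-weakly equivalent to a motivic sphere of growing connectivity, cf. the $Q_{2m-1},Q_{2m}$ discussion in Section~\ref{ss:Suslin} — using Theorem~\ref{thm:buildingfibersequences} to reduce all the sheaves in a given degree to a finite stage.)

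\textbf{The main obstacle: degree one.} The genuinely hard point, which I expect to be the crux, is the isomorphism $\bpi_1^{\aone}(Spin)\isomto\bpi_1^{\aone}(SO)$. This is \emph{not} formal from the $\mu_2$-fibration: classically $Spin(n)\to SO(n)$ is a connected double cover, so $\pi_1(Spin(n))\to\pi_1(SO(n))$ fails to be surjective, and the motivic statement must instead exploit that the spin cover is detected by the Nisnevich-local $\aone$-homotopy category in degree $0$ — through the non-triviality of $\bpi_0^{\aone}(SO)$ and the spinor-norm map, cf. the remark following the statement — rather than in degree $1$. I would approach this by a stalkwise analysis of the classifying map $SO\to B_{\et}\mu_2$ of the spin cover, combined with the known computations of $\bpi_1^{\aone}$ of split semisimple groups; the precise claim to be checked is that the $\mu_2$-discrepancy between $\bpi_1^{\aone}(Spin_n)$ and $\bpi_1^{\aone}(SO_n)$ is reconciled in the stable colimit. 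Once this is in place, it combines with the isomorphisms in degrees $\geq 2$ and the reduction $SO\hookrightarrow O$ to complete the proof.
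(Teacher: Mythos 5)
Your first three steps are fine and, where the paper gives an argument at all, essentially match it: the paper proves $\aone$-connectedness of $Spin$ by reducing to sections over finitely generated field extensions and invoking Matsumoto's theorem that $Spin_n(L)$ is elementarily generated for $n\geq 5$ (i.e., it re-derives the content of Example~\ref{ex:basicexamples}(1) for $Spin_n$ rather than citing it), and it gets $\aone$-simplicity from the $h$-group structure exactly as you do. Your reduction from $O$ to $SO$ and your degree $\geq 2$ comparison via the central isogeny are also sound.

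The genuine problem is the degree-one step, and your hoped-for escape route cannot work. You suggest that the $\mu_2$-discrepancy is "detected in degree $0$" via the spinor norm rather than in degree $1$; but once you have established that $\bpi_0^{\aone}(Spin)=\ast$ (the first clause of the lemma), the long exact sequence of the $\aone$-fiber sequence $Spin \to SO \to B_{\et}\mu_2$ asserted in the remark following the lemma reads
\[
0 = \bpi_2^{\aone}(B_{\et}\mu_2) \longrightarrow \bpi_1^{\aone}(Spin) \longrightarrow \bpi_1^{\aone}(SO) \longrightarrow \mu_2 \longrightarrow \bpi_0^{\aone}(Spin)=\ast,
\]
and exactness at $\mu_2$ forces $\bpi_1^{\aone}(SO)\to\mu_2$ to be an epimorphism, so $\bpi_1^{\aone}(Spin)\to\bpi_1^{\aone}(SO)$ is injective with cokernel $\mu_2$ — exactly the classical picture $\pi_1(Spin(n))=1$, $\pi_1(SO(n))=\Z/2$. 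The spinor norm concerns the segment $\bpi_0^{\aone}(SO)\to\bpi_0^{\aone}(B_{\et}\mu_2)$ and does not absorb this $\mu_2$. So the degree-one isomorphism does not follow from any stalkwise analysis of the classifying map, and you should not expect to "reconcile the discrepancy in the stable colimit," since the obstruction is uniform in $n$. Be aware also that the paper's own proof of this lemma addresses only connectedness and simplicity and is entirely silent on the "Moreover" clause, so there is no argument there to recover; what your proposal actually establishes is the clause in degrees $\geq 2$, and the degree-one case requires either an argument incompatible with the displayed exact sequence or a weakening of the statement.
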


\begin{proof}
Granted $\aone$-connectedness, $\aone$-simplicity is immediate since all spaces in question are $\aone$-$h$-groups.  To see that $Spin$ is $\aone$-connected, we proceed as follows.  To check that a space $\mathscr{X}$ is $\aone$-connected, it suffices by \cite[Lemma 6.1.3]{MStable} to check that $\pi_0(\Singaone \mathscr{X}(L))$ consists of a single element for every finitely generated separable extension field $L/k$.  Since $Spin$ is, by construction, a filtered colimit, every element of $\Singaone Spin(L)$ lies in $\Singaone Spin_n(L)$ for some integer $n$.  Thus, it suffices to show that $\pi_0(\Singaone Spin_n (L))$ is trivial for $n$ sufficiently large.  To to check the latter statement, it suffices to know that every element of $Spin_n(L)$ is $\aone$-homotopic to the base-point.  Recall from that if $G$ is a split, reductive $k$-group scheme, and $R$ is a $k$-algebra, then the elementary subgroup $E(R)$ is the subgroup of $G(R)$ generated by root subgroups (see \cite[Definition 1.5]{Abe} for a more precise definition).  Any element of the elementary subgroup is $\aone$-homotopic to the identity.  It follows from \cite[Corollary 2]{Matsumoto}, that $Spin_n(L)$ coincides with its elementary subgroup if $n \geq 5$ (which guarantees it has rank $\geq 2$) and we conclude that $\pi_0(\Singaone Spin_n(L)) = \ast$ in that range as well.
\end{proof}

\begin{rem}
While the blanket assumption that $k$ has characteristic not equal to $2$ is in place in the preceding statement, the assertion that $Spin_n$ is $\aone$-connected for any $n \geq 5$ holds even without that assumption.  Thus, the assertion that the stable group $Spin$ is $\aone$-connected also holds without that assumption; for this reason, we have not included any hypothesis on the field explicitly in the statement.
\end{rem}

We consider the composite map $Spin \to SO \to Sp$.  The homomorphism $Sp_{2n} \to O_{4n}$ discussed above lifts uniquely through a morphism $Sp_{2n} \to Spin_{4n}$ (use \cite[Exercise 6.5.2(iii)]{Conrad}) and these lifts are compatible with stabilization maps.  Therefore, we obtain a morphism $Sp \to Spin$ factoring the map $Sp \to O$ described above.

\begin{thm}
\label{thm:orthogonalsymplectic}
Suppose $k$ is a field that is not formally real and has characteristic not equal to $2$, and $R \subset \Q$ is a subring in which $2$ is invertible.  The morphisms $Spin \to Sp$ and $Sp \to Spin$ just defined are mutually inverse $\aone$-weak equivalences after inverting $2$.
\end{thm}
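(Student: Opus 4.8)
The plan is to reduce, via Corollary~\ref{cor:raoneweakequivalencesaredetectedonhomotopysheaves}, to a statement about $\aone$-homotopy sheaves after inverting $2$, and there to recognize the relevant composites as multiplication by a hyperbolic class in $GW(k)$, which becomes a unit once $2$ is invertible. First I would reduce to the case where $k$ is a perfect field that is not formally real: the split groups $Spin_n$, $Sp_{2n}$, $O_n$ and all the homomorphisms between them -- including the unique lift $Sp_{2n} \to Spin_{4n}$, whose uniqueness is preserved under base change -- are defined over $\Z[\tfrac{1}{2}]$, so by the same base-change reductions used in the proof of Proposition~\ref{prop:suslinisalocalweakequivalence} it suffices to treat the prime field (in positive characteristic) or $\Q[i]$ (in characteristic zero), both of which are perfect and not formally real. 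Write $f\colon Spin\to Sp$ and $g\colon Sp\to Spin$ for the two morphisms and set $R=\Z[\tfrac{1}{2}]$.

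Next I would record that $Sp$ and $Spin$ are $\aone$-connected, $\aone$-simple $\aone$-$h$-spaces, hence weakly $\aone$-nilpotent: for $Spin$ this is Lemma~\ref{lem:spinisaoneconnected}, while $Sp=\colim_n Sp_{2n}$ is a filtered colimit of split, simply connected, semi-simple groups (each $\aone$-connected by Example~\ref{ex:basicexamples}), hence $\aone$-connected, and $\aone$-simplicity follows from the Eckmann--Hilton argument since $Sp$ is an $\aone$-$h$-group. By Corollary~\ref{cor:raoneweakequivalencesaredetectedonhomotopysheaves}, applied over the perfect field from the previous step, a morphism of weakly $\aone$-nilpotent spaces is an $R$-$\aone$-weak equivalence exactly when it induces isomorphisms on all $\bpi_i^{\aone}(-)_R$; so it suffices to analyze $f$ and $g$ on $\aone$-homotopy sheaves after $\otimes R$. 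Moreover it is enough to show that the composites $g\circ f$ and $f\circ g$ are $R$-$\aone$-weak equivalences, for then $f$ and $g$ are $R$-$\aone$-weak equivalences and become mutually inverse isomorphisms in $\ho{k}[\frac{1}{2}]$.

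Now for the key point. By Lemma~\ref{lem:spinisaoneconnected} the map $Spin\to SO\to O$ is an isomorphism on $\aone$-homotopy sheaves in all degrees $\geq 1$, so on $\aone$-homotopy sheaves $f$ is identified with the hyperbolic map $\bpi_i^{\aone}(O)\to\bpi_i^{\aone}(Sp)$ induced by $O_n\to GL_n\to Sp_{2n}$, and $g$ with the hyperbolic map $\bpi_i^{\aone}(Sp)\to\bpi_i^{\aone}(O)$ induced by $Sp_{2n}\to GL_{2n}\to O_{4n}$ (this is exactly why the statement is phrased using $Spin$ rather than the disconnected $O$ and $SO$). Consequently $(g\circ f)_*$ and $(f\circ g)_*$ are the two composites of these hyperbolic maps; at the level of the hermitian $\aone$-homotopy sheaves these composites are multiplication by a hyperbolic form in $GW(k)$ -- using the $GW(k)$-module structure on the symplectic and orthogonal $\aone$-homotopy sheaves together with the standard relation that a hyperbolic functor followed by a forgetful functor is multiplication by $1+\langle -1\rangle$. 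Since $2$ is invertible in $R$, Lemma~\ref{lem:gwwhen2isinvertible} gives $GW(k)\otimes R\cong R$, under which any class of positive even rank, in particular the hyperbolic form, maps to a unit. Hence $(g\circ f)_*$ and $(f\circ g)_*$ are isomorphisms after $\otimes R$, so $g\circ f$ and $f\circ g$ are $R$-$\aone$-weak equivalences by Corollary~\ref{cor:raoneweakequivalencesaredetectedonhomotopysheaves}, and the theorem follows.

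I expect the main obstacle to be the identification in the last paragraph: verifying precisely that the composite of the two hyperbolic/forgetful maps acts as multiplication by a hyperbolic class on the orthogonal and symplectic $\aone$-homotopy sheaves, and that this is compatible with the identifications furnished by Lemma~\ref{lem:spinisaoneconnected}. This draws on the module structure of hermitian $\aone$-homotopy sheaves over $GW(k)$ and the standard relations of hermitian $K$-theory (in the spirit of Schlichting's work and of Asok--Fasel); once the effect on homotopy sheaves is pinned to an element of $GW(k)$ of positive even rank, the remaining arguments are formal consequences of the localization machinery developed above.
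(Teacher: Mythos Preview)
Your strategy is essentially the paper's: show that the two composites $Spin\to Sp\to Spin$ and $Sp\to Spin\to Sp$ act on $\aone$-homotopy sheaves as multiplication by an element of $GW(k)$ of positive even rank, then invoke Lemma~\ref{lem:gwwhen2isinvertible} to conclude this is a unit after inverting~$2$. Two points of comparison are worth recording. First, the relevant class is $\langle 1,1,-1,-1\rangle$ (rank~$4$), not $h=\langle 1,-1\rangle$: the composite $O_n\to Sp_{2n}\to O_{4n}$ is \emph{two} iterations of ``forget then hyperbolic'', and tracing a form $(\mathscr{V},\varphi)$ through gives $H_{\mathrm{orth}}(\mathscr{V}\oplus\mathscr{V}^{\vee})\cong \langle 1,1,-1,-1\rangle\cdot(\mathscr{V},\varphi)$; also note that the relation you cite is $H\circ F=h\cdot(-)$ on $GW$, not $F\circ H$. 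This slip is harmless for the conclusion, since any even-rank class becomes a unit in $GW(k)\otimes R$. Second, and more substantively, the paper handles exactly the obstacle you flag by working at the level of classifying spaces rather than invoking an external $GW(k)$-module structure on hermitian homotopy sheaves: it computes the self-map $\psi\colon B_{\et}O\to B_{\et}O$ directly on bundles over smooth affine $X$ as $[(\mathscr{V},\varphi)]\mapsto\langle 1,1,-1,-1\rangle[(\mathscr{V},\varphi)]$, then uses that $B_{\Nis}Spin$ is the $\aone$-$1$-connected cover of $B_{\et}O$ (via \cite{SchlichtingTripathi} and Lemma~\ref{lem:spinisaoneconnected}) to lift $\psi$ uniquely to a self-map of $B_{\Nis}Spin$ which, after looping, recovers $g\circ f$. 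This makes the identification on homotopy sheaves a formal consequence of a concrete bundle-level computation, rather than an appeal to module structures whose compatibility with the maps $f,g$ would otherwise need separate justification.
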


\begin{proof}
The composite $O_n \to O_{4n}$ induces a morphism $B_{\et}O_n \to B_{et}O_{4n}$ that stabilizes to a morphism $\psi: B_{\et}O \to B_{\et}O$. For any smooth affine $k$-scheme $X$, if $(\mathscr{V},\varphi)$ is a symmetric bilinear space, then the formulas for the hyperbolic morphisms described above show that the composite $X \to B_{\et}O \to B_{\et}O$ of the classifying map of $(\mathscr{V},\varphi)$ with $\psi$ sends $[(\mathscr{V},\varphi)] \mapsto \langle 1, 1, -1,-1 \rangle [(\mathscr{V},\varphi)]$ and therefore induces multiplication by this class at the level of $\aone$-homotopy sheaves.  Likewise, one shows that the other composite $BSp \to BSp$ induced by $Sp_{2n} \to Sp_{8n}$ also coincides with multiplication by $\langle 1,1,-1,-1\rangle$ and thus induces multiplication by this class in homotopy sheaves as well.

We now use the observation above to deduce corresponding statements about the induced self-maps of the stable group $Spin$ by passing to suitable connected covers.  The proof of \cite[Theorem 5]{SchlichtingTripathi} shows that $B_{\Nis}O$ is the $\aone$-connected component of the trivial torsor in $B_{\et}O$.  Likewise, $B_{\Nis}Spin$ is $\aone$-$1$-connected by appeal to Lemma~\ref{lem:spinisaoneconnected}.  The map $Spin \to O$ induces a morphism $B_{\Nis}Spin \to B_{\Nis}O \to B_{\et}O$ which makes $B_{\Nis}Spin$ into the $\aone$-$1$-connected cover of $B_{\et}O$.  Therefore, the composite of $B_{\Nis}Spin \to B_{\et}O$ and the map $B_{et}O \to B_{\et}O$ described in the previous paragraph lifts uniquely up to $\aone$-homotopy through a morphism $B_{\Nis}Spin \to B_{\Nis}Spin$.  By the uniqueness of this lift, this map coincides up to $\aone$-homotopy with the self-map $Spin \to Spin$ defined as the composite $Spin \to Sp \to Spin$ after taking loop spaces.

It follows from the discussion above that the composite map $Spin \to Spin$ induces multiplication by the class of $\langle 1,1,-1,-1\rangle$ in $GW(k)$ after passing to $\aone$-homotopy sheaves.  If $k$ is not formally real, then Lemma~\ref{lem:gwwhen2isinvertible} shows that $GW(k) \tensor R \cong R$ via the rank map.  Since $\langle 1,1,-1,-1\rangle$ has rank $4$ we conclude that the induced map on homotopy sheaves is an isomorphism after inverting $2$.  Likewise, the map $Sp \to Sp$ is an isomorphism on $\aone$-homotopy sheaves after inverting $2$.
\end{proof}

\begin{rem}
The assumption that $k$ has characteristic not equal to $2$ in Theorem~\ref{thm:orthogonalsymplectic} arises implicitly in our appeal to \cite{SchlichtingTripathi} where this assumption is used to analyze the higher Grothendieck--Witt space.
\end{rem}

The next result, which follows by combining Theorem~\ref{thm:orthogonalsymplectic} and Proposition~\ref{prop:splittingofstableclassicalgroupsI}, is an unstable variant of a result originally announced by F. Morel \cite{MRational}; it completes the description of rational $\aone$-homotopy groups of stable classical groups (away from fields of characteristic $2$).

\begin{cor}
\label{cor:splittingofstableclassicalgroupsII}
If $k$ is a field that is not formally real and has characteristic not equal to $2$, then the composite morphism:
\[
\prod_{i \geq 1} Q_{4i-1} \longrightarrow Sp \longrightarrow Spin,
\]
where the first morphism is that from \textup{Proposition~\ref{prop:splittingofstableclassicalgroupsI}} and the second is the morphism constructed just prior to \textup{Theorem~\ref{thm:orthogonalsymplectic}}, is a $\Q$-$\aone$-weak equivalence.
\end{cor}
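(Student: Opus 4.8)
The plan is to recognize the stated composite as a concatenation of two $\Q$-$\aone$-weak equivalences and then to use that $\Q$-$\aone$-weak equivalences are stable under composition. Recall from Definition~\ref{defn:plocalweakequivalence} that a morphism $f$ of spaces is a $\Q$-$\aone$-weak equivalence exactly when $\Q_{\aone}[f]$ is an isomorphism in $\Daone{k}$; since $\mathscr{X}\mapsto\Q_{\aone}[\mathscr{X}]$ sends $\aone$-weak equivalences to isomorphisms, it descends to a functor on $\ho{k}$, so a composite of two morphisms that each become isomorphisms also becomes an isomorphism. It therefore suffices to treat the two factors of the composite in turn.

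For the first factor $\prod_{i\geq1}Q_{4i-1}\to Sp$, I would invoke the symplectic half of Proposition~\ref{prop:splittingofstableclassicalgroupsI}. It provides, through an intermediate filtered colimit of finite products of the quadrics $Q_{4i-1}$, an $\aone$-weak equivalence from that colimit onto $\prod_{i\geq1}Q_{4i-1}$ (this identification is Lemma~\ref{lem:infiniteproducts}, which applies because of the increasing connectivity recorded in Example~\ref{ex:increasingconnectivity}) together with a $\Q$-$\aone$-weak equivalence from the same colimit to $Sp$ (each finite-stage morphism $Q_3\times\cdots\times Q_{4n-1}\to Sp_{2n}$ is an $\aone$-weak equivalence after inverting $(2n-1)!$ by Corollary~\ref{cor:symplecticsuslinsplitting}, and $\aone$-weak equivalences are preserved under filtered colimits). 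Inverting the former arrow exhibits the ``first morphism'' of the corollary as a morphism in $\ho{k}$ that is a $\Q$-$\aone$-weak equivalence (indeed an isomorphism in ${\ho{k}}_{\Q}$).

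For the second factor $Sp\to Spin$, I would apply Theorem~\ref{thm:orthogonalsymplectic} with $R=\Z[\tfrac12]$: under our hypotheses ($k$ not formally real, $\operatorname{char}k\neq2$, and $2$ invertible in $\Z[\tfrac12]$), the morphism $Sp\to Spin$ constructed just before that theorem is an $\aone$-weak equivalence after inverting $2$, i.e.\ a $\Z[\tfrac12]$-$\aone$-weak equivalence. Since $\Z[\tfrac12]\hookrightarrow\Q$ is a localization and hence flat, and since $\aone$-homology with $\Q$-coefficients is obtained from $\aone$-homology with $\Z[\tfrac12]$-coefficients by base change along this flat map, a $\Z[\tfrac12]$-$\aone$-weak equivalence is automatically a $\Q$-$\aone$-weak equivalence. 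Concatenating the two factors then gives the corollary.

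The argument is essentially bookkeeping; the step requiring the most care — and the only real (if mild) obstacle — is checking that the zig-zag furnished by Proposition~\ref{prop:splittingofstableclassicalgroupsI} really represents the ``first morphism'' named in the statement, so that the composite in the corollary literally equals the composite of the two morphisms treated above, and that the $\aone$-homotopy commutativity of the stabilization squares established just before Theorem~\ref{thm:orthogonalsymplectic} indeed furnishes a well-defined morphism $Sp\to Spin$ in $\ho{k}$ compatible with the splitting of $Sp$. All of this is already in place in the text, so no new ingredient beyond assembling the cited results is needed.
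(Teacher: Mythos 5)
Your proposal is correct and matches the paper's argument: the corollary is obtained exactly by combining Proposition~\ref{prop:splittingofstableclassicalgroupsI} (first factor) with Theorem~\ref{thm:orthogonalsymplectic} (second factor) and using that $\Q$-$\aone$-weak equivalences are closed under composition. Your extra remarks on the zig-zag and on passing from $\Z[\tfrac12]$- to $\Q$-coefficients are sound bookkeeping that the paper leaves implicit.
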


\subsection{On rational homotopy sheaves of motivic spheres}
\label{ss:rationalmotivicspheres}
In this section, we combine the results of the previous section with some results about rationalized K-theory to deduce information about the structure of unstable $\aone$-homotopy sheaves of spheres.  We will freely use facts about Voevodsky's motivic Eilenberg--Mac Lane spaces; we refer the reader to \cite[\S 3]{VMEM} and the references therein for more discussion about these objects.  For any integer $n \geq 0$, we write $\Z(n)$ for a model of Voevodsky motivic complex (see, e.g., \cite[Definition 3.1]{MVW}).  Viewing $\Z(n)$ as a chain complex of Nisnevich sheaves of abelian groups, its homology sheaves are concentrated in degrees $\geq -n$.  If $m \geq n$, we use the notation
\[
K(\Z(n),m)
\]
to denote the Eilenberg--Mac Lane space attached to the complex $\Z(n)[m]$; this space represents motivic cohomology in $\ho{k}$ (see \cite[\S 2]{VRed} for a summary and \cite{Deligne} for details).

\subsubsection*{Rationalized $K$-theory}
The motivic cohomology of $BGL_n$ is generated as a module over motivic cohomology of a point by Chern classes $c_1,\ldots,c_n$ in bi-degrees $(2i,i)$ \cite{Pushin}.  As a consequence, for every integer $n$, and every integer $i < n$, the $i$-th Chern class corresponds to an $\aone$-homotopy class of maps
\[
\begin{split}
c_i: &BGL_n \longrightarrow K(\Z(i),2i), \\
c_i: &BGL \longrightarrow K(\Z(i),2i)
\end{split}
\]
Since $BGL_1$ is already $\aone$-local \cite[\S 4 Proposition 3.8]{MV}, and $\Z(1) = \gm{}[-1]$, the map $c_1: BGL_1 \to K(\Z(1),2)$ is an $\aone$-weak equivalence essentially by definition.

Taking products of Chern classes, there are induced morphisms $BGL_n \to \prod_{i=1}^n K(\Z(i),2i)$ and a morphism
\[
c: BGL \longrightarrow \prod_{n \geq 1} K(\Z(n),2n).
\]
Likewise, Chern classes induce maps $BSL_n \to \prod_{i=2}^n K(\Z(i),2i)$ and a morphism
\[
c': BSL \longrightarrow \prod_{n \geq 2} K(\Z(n),2n).
\]
Direct sum of vector bundles equips the classifying spaces $BGL$ and $BSL$ of the stable general and special linear groups with the structure of $\aone$-$h$-spaces; these spaces are thus $\aone$-simple.  We know that $SL_n$ and $SL$ are all $\aone$-connected spaces.  Taking simplicial loops, one obtains a morphism
\[
\Omega c': SL \longrightarrow \prod_{n \geq 2} K(\Z(n),2n-1).
\]
The next result analyzes these morphisms.

\begin{prop}
\label{prop:stablerationalequivalences}
The following maps are $\Q$-$\aone$-weak equivalences:
\begin{enumerate}[noitemsep,topsep=1pt]
\item the map $c: BGL \to \prod_{n \geq 1} K(\Z(n),2n)$,
\item the map $c': BSL \to \prod_{n \geq 2} K(\Z(n),2n)$, and
\item the map $\Omega c': SL \to \prod_{n \geq 2} K(\Z(n),2n-1)$.
\end{enumerate}
\end{prop}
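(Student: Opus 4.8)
The plan is to prove part (1) by a direct computation on rationalized $\aone$-homotopy sheaves, and then to deduce (2) and (3) from (1) using the fiber-sequence results of Section~\ref{ss:rlocalizationnilpotentspaces} together with simplicial looping. First I would record that all the spaces in sight are weakly $\aone$-nilpotent: $BGL$, $BSL$, and the infinite products $\prod_{n\geq 1}K(\Z(n),2n)$ and $\prod_{n\geq 2}K(\Z(n),2n)$ are $\aone$-connected $\aone$-$h$-spaces, hence $\aone$-simple; $B\gm{}\simeq K(\Z(1),2)$ is $\aone$-simple with $\bpi_1^{\aone}=\gm{}$, which is an $\aone$-nilpotent sheaf of groups; and the infinite products are interpreted via Lemma~\ref{lem:infiniteproducts} and the increasing-connectivity observation of Example~\ref{ex:increasingconnectivity}, so that they are filtered colimits of $\aone$-local spaces and hence $\aone$-local. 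By Corollary~\ref{cor:raoneweakequivalencesaredetectedonhomotopysheaves} it then suffices, for each of (1)--(3), to show that the map in question induces an isomorphism on $\bpi_i^{\aone}(-)\otimes\Q$ for every $i\geq 1$.

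For (1): Morel--Voevodsky representability of algebraic $K$-theory identifies $\bpi_i^{\aone}(BGL)$ with the Nisnevich sheaf $\K_i$ associated to Quillen $K$-theory for $i\geq 1$; and since $K(\Z(n),2n)$ is $\aone$-local and represents motivic cohomology, $\bpi_i^{\aone}\bigl(\prod_{n\geq1}K(\Z(n),2n)\bigr)$ is the product $\prod_n\H^{2n-i}(\Z(n))$ of strictly $\aone$-invariant motivic cohomology sheaves, a \emph{finite} product since $\H^{p}(\Z(n))$ vanishes for $p<0$ and for $p>n$. Under these identifications, the map induced by $c$ on $\bpi_i^{\aone}$ is the sheafification of the total Chern class $K_i(-)\to\bigoplus_n H^{2n-i}(-,\Z(n))$; this is a group homomorphism on $K_i$ for $i\geq 1$ because the cross-terms of the Whitney sum formula land in the wrong motivic weight. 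After $\otimes\Q$ it agrees, up to the usual rational normalizing constants, with the Chern character, which is an isomorphism by the rational degeneration of the spectral sequence from motivic cohomology to algebraic $K$-theory. Sheafifying, $c_*\otimes\Q$ is an isomorphism in every degree $\geq 1$, so $c$ is a $\Q$-$\aone$-weak equivalence.

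For (2) I would use the $\aone$-fiber sequence $BSL\to BGL\xrightarrow{B(\det)}B\gm{}$, which is an $\aone$-fiber sequence by Lemma~\ref{lem:homotopyfiberaonelocal} (apply the colimit of the classifying-space fiber sequences attached to $1\to SL_n\to GL_n\to\gm{}\to1$, all terms being $\aone$-local), together with the $\aone$-fiber sequence $\prod_{n\geq2}K(\Z(n),2n)\to\prod_{n\geq1}K(\Z(n),2n)\xrightarrow{\mathrm{pr}_1}K(\Z(1),2)$ and the identification $B\gm{}\simeq K(\Z(1),2)$. Since $\mathrm{pr}_1\circ c=c_1$ is exactly $B(\det)$, the map $c$ fits into a map of $\aone$-fiber sequences lying over the identity of $K(\Z(1),2)$ and restricting to $c'$ on the fibers; all spaces here are weakly $\aone$-nilpotent, so Theorem~\ref{thm:Raonelocalizationoffibersequences} shows $\LR$ carries the two rows to $\aone$-fiber sequences, and since $\LR c$ and $\LR c_1=\mathrm{id}$ are $\aone$-weak equivalences, so is $\LR c'$ (equivalently, run the five lemma on the two long exact sequences of $\aone$-homotopy sheaves after $\otimes\Q$). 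Hence $c'$ is a $\Q$-$\aone$-weak equivalence. For (3), $SL$ and $\prod_{n\geq2}K(\Z(n),2n-1)$ are $\aone$-connected (the latter because $K(\Z(n),2n-1)$ is $\aone$-$(n-1)$-connected) $\aone$-simple spaces, and under the identification $SL\simeq\Omega BSL$ in $\ho{k}$ the morphism $\Omega c'$ induces on $\bpi_i^{\aone}$ precisely the map $c'$ induces on $\bpi_{i+1}^{\aone}$; by the case (2) just treated this is an isomorphism after $\otimes\Q$ for all $i\geq1$, so $\Omega c'$ is a $\Q$-$\aone$-weak equivalence by Corollary~\ref{cor:raoneweakequivalencesaredetectedonhomotopysheaves}. (Parts (2) and (3) could also be obtained by the same direct homotopy-sheaf computation as (1), using that the weight-one summand of $K_i(F)\otimes\Q$ vanishes for $i\geq2$.)

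The main obstacle is Step (1): one must correctly match the external input — the rational Chern character isomorphism, i.e.\ the rational degeneration of the motivic-to-$K$-theory spectral sequence — against the Chern class maps $c_n$ as they are defined in this paper, getting the bidegrees right ($K_i\to\H^{2n-i}(\Z(n))$), checking additivity of $c_n$ on $K_i$ for $i\geq1$, and verifying that the identification is compatible with Nisnevich sheafification. The remaining points — weak $\aone$-nilpotence of the spaces, the fiber-sequence arguments for (2) and (3), and the infinite-product identifications — are routine given the machinery already developed.
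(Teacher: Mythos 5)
Your proposal is correct and follows essentially the same route as the paper: reduce to checking an isomorphism on rationalized $\aone$-homotopy sheaves, identify those sheaves via Morel--Voevodsky representability of $K$-theory and the infinite-product Lemma~\ref{lem:infiniteproducts}, invoke the rational degeneration of the motivic spectral sequence for (1), and deduce (2) and (3) from the map of fiber sequences over $B\gm{} \simeq K(\Z(1),2)$ and simplicial looping. (One small aside: your stated reason for additivity of $c_n$ on $K_i$, $i \geq 1$ --- that cross-terms of the Whitney formula ``land in the wrong motivic weight'' --- is not right, since $c_a(x)\cup c_b(y)$ has the correct weight $a+b=n$; additivity instead comes from the vanishing of cup products of positive-degree classes on the suspension $\Sigma^i\wedge U_+$, but this does not affect the argument, and the paper itself does not dwell on it.)
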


\begin{proof}
For the first statement, it suffices to show that the induced map on homotopy sheaves after rationalization is an isomorphism.  The map of the statement induces a morphism of presheaves on $\Sm_k$ of the form:
\[
[\Sigma^i U_+,BGL]_{\aone} \longrightarrow [\Sigma^i U_+, \prod_{n \geq 1} K(\Z(n),2n)]_{\aone}
\]
The left hand side is precisely $K_i(U)$ by appeal to \cite[\S 4 Theorem 3.13]{MV} (see \cite[\S 8 Remark 2 p. 1162]{SchlichtingTripathi} for some corrections).  Since $[\Sigma^i U_+, K(\Z(n),2n)]_{\aone} = H^{2n-i,n}(U)$ the universal property of a product allows us to identify the right hand side with $\prod_{n \geq 1} [\Sigma^i U_+, K(\Z(n),2n)]_{\aone}$ and thus $\prod_{n \geq 1} H^{2n-i,n}(U)$.  Now, one knows that, after tensoring the groups on the both sides with $\Q$, this map is an isomorphism; this is the degeneration of the motivic spectral sequence, which identifies the graded pieces for the $\gamma$-filtration in K-theory with Bloch's higher Chow groups (this is originally due to Bloch \cite{Bloch}, but see \cite{LevineQ} for a corrected version).  In the context of motivic homotopy theory, see \cite[Theorem 5.3.10]{Riou} and \cite[Remark 6.2.3.10]{Riou}.

For the second statement, observe that there is a commutative diagram
\[
\xymatrix{
BGL \ar[d]\ar[r]^-c & \prod_{i \geq 1} K(\Z(i),2i) \ar[d] \\
B\gm{} \ar[r]^-{c_1}& K(\Z(1),2),
}
\]
where the left hand vertical map is induced by the determinant and the right hand vertical map is projection onto the first factor.  As remarked above, the bottom morphism here is an $\aone$-weak equivalence.  The left hand vertical map fits into an $\aone$-fiber sequence where the $\aone$-homotopy fiber is $BSL$.  The $\aone$-homotopy fiber of the right hand vertical map is $\prod_{i \geq 2} K(\Z(i),2i)$.  The induced map on homotopy fibers is the morphism $c'$ by construction.  Since the morphisms $c$ and $c_1$ are $\Q$-$\aone$-weak equivalences, the latter by the preceding point, one concludes that $c'$ is a $\Q$-$\aone$-weak equivalence as well.  The final statement follows by looping the previous one.
\end{proof}

\begin{rem}
\label{rem:finitechernvsinfinitechern}
Looping the map $BSL_n \to \prod_{i=2}^n K(\Z(i),2i)$ one obtains a morphism $SL_n \to \prod_{i=2}^n K(\Z(i),2i-1)$.  Taking colimits on both sides, one obtains a morphism
\[
\colim_n SL_n \longrightarrow \colim_n \prod_{i=2}^n K(\Z(i),2i-1).
\]
The colimit on the left hand side is $SL$, and there is thus a morphism
\[
SL \longrightarrow \colim_n \prod_{n \geq 2} K(\Z(n),2n-1) \longrightarrow \prod_{n \geq 2} K(\Z(n),2n-1).
\]
By appeal to Lemma~\ref{lem:infiniteproducts} the right hand map is an $\aone$-weak equivalence. Since $S^1$ is a compact object, taking simplicial loops commutes with formation of filtered (homotopy) colimits.  It follows that the morphism described in the previous display coincides with $\Omega c'$.
\end{rem}

\subsubsection*{Rational homotopy of motivic spheres: odd-dimensional quadrics}
Another classical result of Serre is that the morphism $S^{2n-1} \to K(\Z,2n-1)$ corresponding to the fundamental class (alternatively, the first non-trivial stage of the Postnikov tower) is a rational weak equivalence.  Because of the weak equivalence ${\mathbb A}^n \setminus 0 \cong \Sigma^{n-1}\gm{\sma n}$, we conclude that there is an isomorphism $\mathbf{M}({\mathbb A}^n \setminus 0) \cong \Z \oplus \Z(n)[2n-1]$ in Voevodsky triangulated category of motives \cite[Corollary 15.3]{MVW}.  Thus, for any integer $n \geq 2$, $H^{2n-1,n}({\mathbb A}^n \setminus 0,\Z) \cong \Z$.  A choice $\xi$ of generator can thus be realized by an $\aone$-homotopy class of maps ${\mathbb A}^n \setminus 0 \to K(\Z(n),2n-1)$.  Above, we described an $\aone$-weak equivalence $Q_{2n-1} \to {\mathbb A}^n \setminus 0$.  So the preceding map also defines a morphism
\begin{equation}
\label{eqn:generatoranminus0}
Q_{2n-1} \longrightarrow K(\Z(n),2n-1).
\end{equation}
We now establish an analog of this result in $\aone$-homotopy theory, which corresponds to the first part of Theorem~\ref{thmintro:rationalizedspheres}.

\begin{thm}
\label{thm:rationalizedmotivicspheres}
If $k$ is a field that is not formally real, then for every integer $n > 1$, the map
\[
Q_{2n-1} \longrightarrow K(\Z(n),2n-1)
\]
of \eqref{eqn:generatoranminus0} is a $\Q$-$\aone$-weak equivalence.
\end{thm}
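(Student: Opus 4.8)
The plan is to reduce the statement to a computation about the stable special linear group via the splitting results of Section~\ref{ss:unstablesplittings}, combined with the rational K-theory computation of Proposition~\ref{prop:stablerationalequivalences}. The key point is that $Q_{2n-1}$ appears both as a retract of $SL_n$ (after inverting $(n-1)!$, by Theorem~\ref{thm:suslinsplitting} and the following remark) and, for the stable group, as one factor in the product decomposition $\colim_n \prod_{2 \leq i \leq n} Q_{2i-1} \isomto SL$ which is a $\Q$-$\aone$-weak equivalence. On the other hand, Proposition~\ref{prop:stablerationalequivalences}(3) identifies $SL$ rationally with $\prod_{n \geq 2} K(\Z(n),2n-1)$. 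Matching up the two product decompositions factor by factor should force the map $Q_{2n-1} \to K(\Z(n),2n-1)$ coming from the fundamental class to be a $\Q$-$\aone$-weak equivalence.

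First I would set $R = \Q$ and observe that all spaces in sight are $\aone$-simple: $Q_{2n-1}$ becomes an $\aone$-$h$-space after inverting $(n-1)!$ (hence after rationalizing) by the remark following Theorem~\ref{thm:suslinsplitting}, $K(\Z(n),2n-1)$ is $\aone$-simple (it is $\aone$-connected for $n \geq 2$, being the loop space of the $\aone$-connected $K(\Z(n),2n)$, and is an $\aone$-$h$-space), and $SL$, $BSL$ are $\aone$-$h$-spaces. Thus by Corollary~\ref{cor:raoneweakequivalencesaredetectedonhomotopysheaves} (or Theorem~\ref{thm:whiteheadtheoremforsimplespaces}), it suffices to check that the map induces an isomorphism on rationalized $\aone$-homotopy sheaves. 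Next I would assemble the following commutative diagram: the composite $\prod_{i \geq 2} Q_{2i-1} \xleftarrow{\sim_\Q} \colim_n \prod_{2 \leq i \leq n} Q_{2i-1} \xrightarrow{\sim_\Q} SL \xrightarrow{\Omega c', \sim_\Q} \prod_{n \geq 2} K(\Z(n),2n-1)$, where the outer maps are $\Q$-$\aone$-weak equivalences by Proposition~\ref{prop:splittingofstableclassicalgroupsI} and Proposition~\ref{prop:stablerationalequivalences}(3). The crucial verification is that, under this chain of equivalences, the inclusion of the $Q_{2n-1}$-factor on the left is carried to (a rational multiple of) the projection onto the $K(\Z(n),2n-1)$-factor on the right, and that the resulting self-map $Q_{2n-1} \to K(\Z(n),2n-1)$ agrees rationally with the fundamental-class map of \eqref{eqn:generatoranminus0}. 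This is where I would need to trace through the construction of the Suslin splitting: the factor $Q_{2n-1} \to SL_n \to SL$ is the Suslin matrix $S_n$ stabilized, and composing with the Chern class $c_n$ (equivalently, $\Omega c'$ restricted to the $n$-th coordinate) detects the $n$-th Chern class of the tautological bundle pulled back along $S_n$, which is the same (up to a nonzero rational scalar coming from the motivic spectral sequence degeneration and the $(n-1)!$ factors) as the fundamental class in $H^{2n-1,n}(Q_{2n-1},\Q)$.

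The main obstacle will be precisely this last identification — showing that the composite $Q_{2n-1} \xrightarrow{S_n} SL_n \xrightarrow{c_n} K(\Z(n),2n-1)$ is a nonzero rational multiple of the fundamental-class map, and not the zero map or a map hitting the wrong cohomology group. To handle it I would use that $H^{*,*}(Q_{2n-1},\Q)$ is one-dimensional in the relevant bidegree $(2n-1,n)$ (from $\mathbf{M}(Q_{2n-1}) \cong \Z \oplus \Z(n)[2n-1]$), so the composite is automatically some scalar $\lambda \in \Q$ times $\xi$; one then must show $\lambda \neq 0$. For this I would either appeal to complex realization (as suggested in Remark~\ref{remintro:suslinsplitting}, the Suslin matrix realizes to the standard generator of $\pi_{2n-1}(SU(n)) \to \pi_{2n-1}(SU) \xrightarrow{c_n} \pi_{2n-1}(K(\Z,2n-1))$, which is classically nonzero), or argue directly that if $\lambda = 0$ then the rational homotopy type of $SL$ would have the wrong Poincaré series, contradicting Proposition~\ref{prop:stablerationalequivalences}(3) together with Proposition~\ref{prop:splittingofstableclassicalgroupsI}. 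The cleanest route is likely the second: since the chain of maps above is a $\Q$-$\aone$-weak equivalence and the source and target both decompose as products indexed by $n \geq 2$ with the $n$-th factors being $\aone$-$(2n-3)$-connected with $\bpi_{2n-2}^{\aone}$ a specific rank-one (over $\K^{MW}_0$) sheaf, a dimension count forces each factor map to be a rational equivalence, hence $\lambda \neq 0$, and then rescaling the generator $\xi$ finishes the proof.
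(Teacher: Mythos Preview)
Your proposal is correct and follows essentially the same route as the paper: assemble the $\Q$-$\aone$-weak equivalences $\prod_{i\geq 2} Q_{2i-1}\simeq SL$ (Proposition~\ref{prop:splittingofstableclassicalgroupsI}) and $SL\simeq \prod_{i\geq 2} K(\Z(i),2i-1)$ (Proposition~\ref{prop:stablerationalequivalences}(3)), then extract the $n$-th factor as a retract. The one place the paper is more concrete than your sketch is the identification of the scalar $\lambda$: rather than a dimension count or realization, the paper invokes Suslin's theorem that the Suslin matrix generates $SK_1(Q_{2n-1})\cong\Z$ together with Williams' computation of $H^{2n-1,n}(SL_n,\Z)$ to pin down $\lambda=(n-1)!$ exactly, and then observes that the fundamental-class map \eqref{eqn:generatoranminus0} is literally a retract of the product equivalence, so it is itself a $\Q$-$\aone$-weak equivalence.
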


\begin{proof}
After Proposition~\ref{prop:splittingofstableclassicalgroupsI} there is a $\Q$-$\aone$-weak equivalence of the form $\prod_{n \geq 2} Q_{2n-1} \isomt SL$.  Composing this $\Q$-$\aone$-weak with $\Omega c'$, we conclude that there is an induced $\Q$-$\aone$-weak equivalence of the form
\[
\prod_{n \geq 2} Q_{2n-1} \longrightarrow \prod_{n \geq 2} K(\Z(n),2n-1)
\]
and we now give an alternative identification of the component maps.

The map $Q_{2n-1} \to SL_n$ is given by a Suslin morphism, while the map $SL_n \to K(\Z(n),2n-1)$ is given by the loops on the $n$-th Chern class.  Since $Q_{2n-1} \cong \Sigma^{n-1} \gm{\sma n}$, it follows from \cite[\S 4 Theorem 3.13]{MV} that $SK_1(Q_{2n-1}) \cong K_0(k) \cong \Z$.  Suslin showed that the class of the composite map $Q_{2n-1} \to SL_n \to SL$ in $SK_1(Q_{2n-1})$ is a generator \cite[Theorem 2.3 and Corollary 2.7]{SuslinMennicke}.

Since rationally the motivic cohomology of $Q_{2n-1}$ coincides with the rationalized K-theory, we conclude that the Suslin matrix is a rational multiple of the class $\xi$.  In fact, we can be slightly more precise.  The motivic cohomology of $SL_n$ is computed in \cite{Williams}.  In particular, one knows $H^{2n-1,n}(SL_n,\Z) \cong \Z$ and the relevant generator is the image via pullback along the map $SL_n \to {\mathbb A}^n \setminus 0$ of the corresponding generator of $H^{2n-1,n}({\mathbb A}^n \setminus 0)$.  In particular, the composite with the Suslin morphism corresponds to $(n-1)!$ times the generator.  Thus, after inverting $(n-1)!$ this morphism coincides with the map $Q_{2n-1} \to K(\Z(n),2n-1)$ from \eqref{eqn:generatoranminus0}.  Therefore, we conclude that the map
\[
\prod_{n \geq 2} Q_{2n-1} \longrightarrow \prod_{n \geq 2} K(\Z(n),2n-1)
\]
defined by taking colimits of the finite products of the morphisms in \eqref{eqn:generatoranminus0} is also a $\Q$-$\aone$-weak equivalence (see Remark~\ref{rem:finitechernvsinfinitechern} for related discussion).

Finally, for every integer $n \geq 2$, the map of \eqref{eqn:generatoranminus0} is, by construction, a retract of the map displayed in the previous paragraph.  Therefore, we conclude that \eqref{eqn:generatoranminus0} is a $\Q$-$\aone$-weak equivalence as well, which is what we wanted to show.
\end{proof}

\begin{rem}
If $k$ is a subfield of $\cplx$, then the conclusion of Theorem~\ref{thm:rationalizedmotivicspheres} is compatible with complex realization.  Indeed, the complex realization of $K(\Z(n),2n-1)$ is homotopy equivalent to $K(\Z,2n-1)$ \cite[Corollary 3.48]{VMEM} while the complex realization of ${\mathbb A}^n \setminus 0$ or $Q_{2n-1}$ is homotopy equivalent to $S^{2n-1}$.  The induced map on realizations yields the map $S^{2n-1} \to K(\Z,2n-1)$.
\end{rem}

\begin{rem}
Theorem \ref{thm:rationalizedmotivicspheres} is an unstable variant of a result due originally to F. Morel in the stable case \cite[Theorem 5.2.2]{Mpi0} or \cite{MRational}: if $-1$ is a sum of squares in $k$, then the rationalized motivic sphere spectrum is simply the rational motivic Eilenberg--Mac Lane spectrum (see \cite[Theorem 16.2.13]{CisinskiDeglise} for a proof).
\end{rem}

Theorem~\ref{thm:rationalizedmotivicspheres} may be used to reformulate the Beilinson--Soul\'e vanishing conjecture (see, e.g., \cite[II.4.3.4]{Kahn} for a statement of the conjecture).  Before stating the reformulation, we include the following result explained to us by F. D\'eglise.

\begin{lem}
\label{lem:BSsubfields}
If $E/k$ is an arbitrary field extension, then the map
\[
H^{p,q}(k,\Q) \longrightarrow H^{p,q}(E,\Q)
\]
is injective.  In particular, if the Beilinson--Soul\'e vanishing conjecture holds for $E$, then it holds for $k$ as well.
\end{lem}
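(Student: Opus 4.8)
The plan is to prove injectivity of the restriction map $\mathrm{res}\colon H^{p,q}(k,\Q)\to H^{p,q}(E,\Q)$ by a sequence of reductions, landing in a case governed by homotopy invariance and the localization sequence for motivic cohomology.

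\emph{Step 1: reduce to $E/k$ finitely generated.} Write $E$ as the filtered union of its finitely generated subextensions $E_i$ over $k$; continuity of motivic cohomology gives $H^{p,q}(E,\Q)\cong\colim_i H^{p,q}(E_i,\Q)$. Granting the finitely generated case, a nonzero class $\alpha\in H^{p,q}(k,\Q)$ has nonzero image in each $H^{p,q}(E_i,\Q)$, hence (the colimit being filtered) nonzero image in $H^{p,q}(E,\Q)$.

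\emph{Step 2: reduce to one-variable purely transcendental and to finite extensions.} For $E/k$ finitely generated, pick a transcendence basis $t_1,\dots,t_d$ and factor
\[
k\subseteq k(t_1)\subseteq k(t_1,t_2)\subseteq\cdots\subseteq k(t_1,\dots,t_d)\subseteq E,
\]
the last inclusion being finite. A composite of injections is injective, so it suffices to treat (a) a purely transcendental extension $K(t)/K$ in one variable, and (b) a finite extension $E'/F$. Case (b) follows from a transfer argument: the finite morphism $\pi\colon\Spec E'\to\Spec F$ admits a proper pushforward $\pi_*$ on motivic cohomology, and the projection formula gives $\pi_*\circ\pi^*=[E':F]\cdot\mathrm{id}$; since $[E':F]$ is invertible in $\Q$, the restriction $\pi^*$ is split injective. (This also covers purely inseparable extensions, so no perfectness hypothesis on $k$ is needed.)

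\emph{Step 3: the purely transcendental case.} By homotopy invariance, pullback along the projection $\aone_K\to\Spec K$ is an isomorphism $H^{p,q}(K,\Q)\isomto H^{p,q}(\aone_K,\Q)$ whose inverse is restriction $i_0^{*}$ along the zero section $i_0\colon\Spec K\hookrightarrow\aone_K$; by continuity, $H^{p,q}(K(t),\Q)\cong\colim_U H^{p,q}(U,\Q)$ over nonempty open $U\subseteq\aone_K$. It therefore suffices to show that a class $\alpha\in H^{p,q}(\aone_K,\Q)$ that dies in $H^{p,q}(U,\Q)$ for some $U=\aone_K\setminus S$, $S$ a finite set of closed points, is already zero. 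The localization sequence exhibits $\ker\bigl(H^{p,q}(\aone_K,\Q)\to H^{p,q}(U,\Q)\bigr)$ as the image of $\bigoplus_{x\in S}H^{p-2,q-1}(\kappa(x),\Q)$ under the sum of the Gysin pushforwards $(i_x)_{*}$ (the possible inseparability of $\kappa(x)/K$ is harmless, since the normal bundle of $x$ in $\aone_K$ is a trivial line bundle). Writing $\alpha=\sum_{x\in S}(i_x)_{*}(\beta_x)$ and applying $i_0^{*}$ kills every term: for $x\neq 0$ because the supports are disjoint, and for $x=0$ by the self-intersection formula, the Euler class of the trivial normal bundle being zero. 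Hence $\alpha=i_0^{*}\alpha=0$, and $K(t)/K$ is done. The Beilinson--Soul\'e consequence is then immediate: if $H^{p,q}(E,\Q)$ vanishes in the range predicted by the conjecture, then so does $H^{p,q}(k,\Q)$, being a subgroup.

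I expect the main obstacle to be Step~3 --- keeping the localization sequence, the purity identification of the local cohomology $H^{*}_{x}(\aone_K,-)$, and the vanishing $i_0^{*}(i_x)_{*}=0$ correct over a possibly imperfect base $K$ with possibly inseparable residue fields $\kappa(x)$; this is perhaps cleanest to verify directly on Bloch's cycle complexes, where the cycles in question meet the fiber over $0$ either in the empty set (for $x\neq0$) or with Euler class zero (for $x=0$).
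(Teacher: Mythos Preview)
Your argument is correct and follows the same overall strategy as the paper: reduce to finitely generated extensions by continuity, factor a finitely generated extension as a tower of one-variable purely transcendental extensions followed by a finite extension, and treat the finite case by the transfer argument $\pi_*\pi^*=[E':F]\cdot\mathrm{id}$.

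The only difference is in the purely transcendental step. The paper simply quotes D\'eglise's decomposition
\[
H^{p,q}(k(t),\Q)\;\cong\;H^{p,q}(k,\Q)\;\oplus\;\bigoplus_{x\in(\aone_k)^{(1)}}H^{p-1,q-1}(\kappa_x,\Q),
\]
in which the restriction map is the inclusion of the first summand. You instead derive the injectivity from first principles via homotopy invariance, the localization sequence, and the base-change/self-intersection identities $i_0^*(i_x)_*=0$; this is exactly the mechanism underlying D\'eglise's formula, so your argument is a self-contained proof of the piece of that result actually needed here. One small simplification: rather than invoking the self-intersection formula for the case $x=0$, you can avoid it entirely by choosing your reference section $i_a$ at a $K$-rational point $a\notin S$ (always possible when $K$ is infinite; for finite $K$ your Euler-class argument handles the remaining case cleanly).
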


\begin{proof}
One may reduce to the case where $E/k$ is a finitely generated extension by a continuity argument; this follows from, e.g., \cite[Proposition 1.24]{DegliseFinCor}.  To treat the case of finitely generated extensions, it suffices to treat two special cases: $E/k$ is finite and $E = k(t)$.

If $E/k$ is finite, the result follows by existence of transfers in motivic cohomology: indeed, there is a pushforward map $H^{p,q}(E,\Q) \to H^{p,q}(k,\Q)$ such that the composite of pushforward and pullback is multiplication by the degree of the extension; injectivity follows immediately.  Finally, we treat the case where $E = k(t)$.  In that case, by \cite[Proposition 6.1.1 and Corollaire 6.1.3]{Deglise} we know that
\[
H^{p,q}(k(t),\Q) = H^{p,q}(k) \oplus_{x \in ({\mathbb A}^1_k)^{(1)}} H^{p-1,q-1}(\kappa_x,\Q);
\]
the sum is taken over the codimension $1$ points of ${\mathbb A}^1_k$, and the pullback map corresponding to the inclusion $k \hookrightarrow k(t)$ is the inclusion of the first summand on the right.

The injectivity assertion in the case $E = k(t)$ is actually ``softer" than the more precise assertions above, as observed by the referees.  Indeed, if $k$ is infinite, which we may assume without loss of generality, injectivity can be deduced as follows.  The group $H^{p,q}(k(t),\Q)$ is a colimit of $H^{p,q}(U,\Q)$ where $U$ ranges over the non-empty open subschemes of the affine line.  Since non-empty open subchemes over an infinite field have a rational point, the pullback along the inclusion of that rational point provides the splitting of the statement.
\end{proof}

As a consequence, we obtain the following reformulation of the Beilinson--Soul\'e vanishing conjecture.

\begin{cor}
\label{cor:beilinsonsoule}
Suppose $k$ is a field that is not formally real, and assume $n > 1$ is an integer.  If $L/k$ is an extension, then the following statements are equivalent.
\begin{enumerate}[noitemsep,topsep=1pt]
\item the Beilinson--Soul\'e vanishing conjecture holds for every subfield of $L$ in weight $n$;
\item the group $\bpi_i^{\aone}({\mathbb A}^n \setminus 0)_{\Q}(L) = 0$ for $i \geq 2n-1$.
\item the group $\H_i^{\aone}({\mathbb A}^n \setminus 0,\Q)(L) = 0$ for $i \geq 2n-1$.
\end{enumerate}
\end{cor}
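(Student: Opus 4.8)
The plan is to prove Corollary~\ref{cor:beilinsonsoule} by relating the three conditions through a chain of implications, using Theorem~\ref{thm:rationalizedmotivicspheres} as the central tool. First I would observe that the equivalence of (2) and (3) is essentially formal: since $k$ is not formally real, Theorem~\ref{thm:rationalizedmotivicspheres} gives a $\Q$-$\aone$-weak equivalence $Q_{2n-1} \to K(\Z(n),2n-1)$ for $n > 1$, and since $Q_{2n-1}$ is $\aone$-weakly equivalent to ${\mathbb A}^n \setminus 0$, the space $({\mathbb A}^n \setminus 0)_{\Q}$ (meaning the $\Q$-$\aone$-localization) has the $\aone$-homotopy type of $K(\Q(n),2n-1)$. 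Because $K(\Q(n),2n-1)$ is $\aone$-simple (it is an $\aone$-$h$-space, being a motivic Eilenberg--Mac Lane space), its rational $\aone$-homotopy sheaves coincide with its rational $\aone$-homology sheaves by the $\aone$-Whitehead/Hurewicz machinery (Theorem~\ref{thm:whiteheadtheoremforsimplespaces} and Theorem~\ref{thm:relativeHurewicz}); more directly, for an Eilenberg--Mac Lane space one reads off both from the defining complex. In particular $\bpi_i^{\aone}({\mathbb A}^n \setminus 0)_{\Q} \cong \H_i^{\aone}({\mathbb A}^n \setminus 0,\Q)$ for all $i$, which gives (2) $\Leftrightarrow$ (3) once we know ${\mathbb A}^n\setminus 0$ is weakly $\aone$-nilpotent (it is $\aone$-$(n-2)$-connected, hence $\aone$-simply connected for $n\geq 3$, and for $n=2$ one uses that $\bpi_1^{\aone}({\mathbb A}^2\setminus 0) = \K^{MW}_2$ is strictly $\aone$-invariant and abelian, so ${\mathbb A}^2\setminus 0$ is $\aone$-simple; apply Corollary~\ref{cor:raoneweakequivalencesaredetectedonhomotopysheaves}).

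Next I would identify the rational homotopy sheaves concretely. Since $({\mathbb A}^n\setminus 0)_{\Q} \simeq K(\Q(n),2n-1)$, the sheaf $\bpi_i^{\aone}({\mathbb A}^n\setminus 0)_{\Q}$ is the $\aone$-homotopy sheaf $\bpi_i(K(\Q(n),2n-1))$, whose sections over a finitely generated extension $L/k$ are $H^{2n-1-i}(\Spec L, \Q(n)) = H^{2n-1-i,n}(L,\Q)$ — the (rational) motivic cohomology of $L$ in weight $n$, shifted. Therefore the vanishing $\bpi_i^{\aone}({\mathbb A}^n\setminus 0)_{\Q}(L) = 0$ for all $i \geq 2n-1$ translates exactly into $H^{j,n}(L,\Q) = 0$ for all $j \leq 0$ — which (together with the automatic vanishing in negative weights and the trivial range) is precisely the Beilinson--Soul\'e vanishing statement in weight $n$ for the field $L$. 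This establishes the equivalence of (2) (equivalently (3)) with the Beilinson--Soul\'e conjecture for $L$ itself in weight $n$.

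It remains to bridge the gap between ``Beilinson--Soul\'e for $L$'' and ``Beilinson--Soul\'e for every subfield of $L$,'' which is condition (1). Here I would invoke Lemma~\ref{lem:BSsubfields}: for any extension $E/k$ the pullback $H^{p,q}(k,\Q) \to H^{p,q}(E,\Q)$ is injective, so if Beilinson--Soul\'e vanishing holds for $E$ it holds for $k$. Applying this with $E = L$ and $k$ ranging over subfields of $L$ shows that validity for $L$ implies validity for every subfield of $L$; conversely (1) trivially implies validity for $L = L$. Assembling: (1) $\Rightarrow$ Beilinson--Soul\'e for $L$ in weight $n$ $\Leftrightarrow$ (2) $\Leftrightarrow$ (3), and (2) (applied with the field $L$, noting ${\mathbb A}^n\setminus 0$ base-changes well) $\Rightarrow$ Beilinson--Soul\'e for $L$ $\Rightarrow$ Beilinson--Soul\'e for every subfield $\Rightarrow$ (1).

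I expect the main obstacle to be bookkeeping around the case $n = 2$ and making the identification $\bpi_i^{\aone}(({\mathbb A}^n\setminus 0)_{\Q}) \cong \bpi_i^{\aone}(K(\Q(n),2n-1))$ fully rigorous: one must be careful that the $\Q$-$\aone$-localization functor of Section~\ref{ss:rlocalizationnilpotentspaces} applies (which requires weak $\aone$-nilpotence of ${\mathbb A}^n\setminus 0$), that the equivalence of Theorem~\ref{thm:rationalizedmotivicspheres} is compatible with this localization (Corollary~\ref{cor:raoneweakequivalencesaredetectedonhomotopysheaves}), and that the homotopy sheaves of the motivic Eilenberg--Mac Lane space $K(\Q(n),2n-1)$ are correctly indexed so that the degree-$i$ homotopy sheaf has sections $H^{2n-1-i,n}(-,\Q)$. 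The base-change subtlety — condition (2) concerns sections over $L$, and one should check the conclusion of Theorem~\ref{thm:rationalizedmotivicspheres} passes to $L$ — is handled by the standard base-change arguments already used in the proof of Proposition~\ref{prop:suslinisalocalweakequivalence}, since $L$ is again not formally real. Everything else is a matter of chaining the cited results in the right order.
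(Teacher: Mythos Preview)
Your handling of (1) $\Leftrightarrow$ (2) is correct and matches the paper: identify $\bpi_i^{\aone}({\mathbb A}^n\setminus 0)_{\Q}(L)$ with $H^{2n-1-i,n}(L,\Q)$ via Theorem~\ref{thm:rationalizedmotivicspheres}, then use Lemma~\ref{lem:BSsubfields} to pass between ``Beilinson--Soul\'e for $L$'' and ``Beilinson--Soul\'e for every subfield of $L$.''

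There is, however, a genuine gap in your argument for (2) $\Leftrightarrow$ (3). You assert $\bpi_i^{\aone}({\mathbb A}^n\setminus 0)_{\Q} \cong \H_i^{\aone}({\mathbb A}^n\setminus 0,\Q)$ for \emph{all} $i$, citing ``Whitehead/Hurewicz machinery'' and the claim that ``for an Eilenberg--Mac Lane space one reads off both from the defining complex.'' Neither justification holds. The Hurewicz theorem (Theorem~\ref{thm:relativeHurewicz}) only matches homotopy and homology in the lowest nonvanishing degree, and the Whitehead theorem (Theorem~\ref{thm:whiteheadtheoremforsimplespaces}) is a statement about \emph{maps} of simple spaces, not a degreewise identification of the sheaves themselves. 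As for the second claim: for $K(A)$ with $A$ a chain complex, the \emph{homotopy} sheaves are indeed read off from $A$, but the $\aone$-homology sheaves are the homology of $\Q_{\aone}[K(A)]$, a quite different object (compare the classical fact that $H_*(K(\Q,2n);\Q)$ is a polynomial algebra, not concentrated in one degree). There is no a priori reason these two towers of sheaves coincide; indeed, Remark~\ref{rem:otherreformulations} makes the point that the high-degree vanishing of $\H_i^{\aone}({\mathbb A}^n\setminus 0,\Q)$ is itself a nontrivial conjecture of Morel.

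The paper argues (2) $\Leftrightarrow$ (3) by applying the Whitehead theorem to a specific map rather than trying to identify the sheaves: namely the Postnikov truncation ${\mathbb A}^n\setminus 0 \to \tau_{\leq 2n-2}({\mathbb A}^n\setminus 0)$. Condition (2) says precisely that this map is a $\Q$-$\aone$-equivalence; since both source and target are $\aone$-simple, Theorem~\ref{thm:whiteheadtheoremforsimplespaces} converts this into the statement that the map is a $\Q$-$\aone$-homology equivalence, which is (3). The point you are missing is that Whitehead must be fed a map, and the truncation map is the natural choice.
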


\begin{proof}
Note that
\[
\bpi_i^{\aone}(K(\Z(n),2n-1)(L) = [\Sigma^i \Spec L_+,K(\Z(n),2n-1)]_{\aone} = H^{2n-1-i,n}(\Spec L,\Z).
\]
The Beilinson--Soul\'e vanishing conjecture for fields in weight $n$ is equivalent to:
\[
H^{2n-1-i,n}(\Spec L,\Z) = 0 \text{ if } i \geq 2n-1.
\]
The equivalence of the first two statements is then immediate from Theorem~\ref{thm:rationalizedmotivicspheres} and Lemma~\ref{lem:BSsubfields}.

The equivalence of the second and third statements is an immediate consequence of the Whitehead theorem.  Indeed, consider the map ${\mathbb A}^n \setminus 0 \longrightarrow \tau_{\leq 2n-2}{\mathbb A}^n \setminus 0$ from punctured affine $n$-space to its $(2n-2)$nd $\aone$--Postnikov truncation.  The vanishing of rational homotopy in (2) is equivalent to this map being a $\Q$-$\aone$-equivalence.  Since both spaces in question are simple, this map is a $\Q$-$\aone$-equivalence if and only if it is a $\Q$-$\aone$-homology equivalence by the Whitehead theorem (see Theorem~\ref{thm:whiteheadtheoremforsimplespaces}).
\end{proof}

\begin{rem}
\label{rem:otherreformulations}
Since the Beilinson-Soul\'e vanishing conjecture is known to hold for finite fields, totally imaginary number fields or function fields of curves over finite fields, Corollary~\ref{cor:beilinsonsoule} can also be viewed as a rational computation of the sections of motivic homotopy sheaves of ${\mathbb A}^n \setminus 0$ over such fields.

On the other hand, the vanishing of the integral $\aone$-homology of ${\mathbb A}^n \setminus 0$ in degrees $> 2n-1$ is a special case of a conjecture of F. Morel \cite[Conjecture 6.34]{MField}.  At least with rational coefficients, and over fields in which $-1$ is a sum of squares, Morel's vanishing conjecture in the case of ${\mathbb A}^n \setminus 0$ is thus {\em equivalent} to the Beilinson--Soul\'e vanishing conjecture over all finitely generated separable extensions of the base-field.
\end{rem}

\begin{cor}
\label{cor:rationalcohomologyofmotivicemspaces}
Assume $k$ is a field that is not formally real.  If $\mathbf{M}$ is any strictly $\aone$-invariant sheaf of $\Q$-vector spaces, then for any integer $n \geq 2$, the following statement holds:
\[
H^i(K(\Z(n),2n-1),\mathbf{M}) = \begin{cases}\mathbf{M}(k) & \text{ if } i = 0, \\ 0 & \text{ if } 1 \leq i \leq n-2, \\ \mathbf{M}_{-n}(k) & \text{ if } i = n-1, \text{ and } \\ 0 & \text{ if } i > n-1.\end{cases}
\]
\end{cor}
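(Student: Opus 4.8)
The plan is to exploit Theorem~\ref{thm:rationalizedmotivicspheres}, which identifies $K(\Z(n),2n-1)$ with the $\Q$-$\aone$-localization of $Q_{2n-1} \cong {\mathbb A}^n \setminus 0$. Since $\mathbf{M}$ is a strictly $\aone$-invariant sheaf of $\Q$-vector spaces, it is in particular $R$-local for $R = \Q$, so cohomology with coefficients in $\mathbf{M}$ cannot distinguish $\Q$-$\aone$-equivalent spaces; concretely, the map $Q_{2n-1} \to K(\Z(n),2n-1)$ being a $\Q$-$\aone$-weak equivalence induces, by the implication (1)$\Rightarrow$(3) of Proposition~\ref{prop:homologicalandcohomologicalequivalences} (applied with the subring $\Q$ and the strictly $\aone$-invariant sheaf of $\Q$-modules $\mathbf{M}$), an isomorphism
\[
H^i(K(\Z(n),2n-1),\mathbf{M}) \isomto H^i(Q_{2n-1},\mathbf{M})
\]
for every $i \geq 0$. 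So the problem reduces to computing $H^i(Q_{2n-1},\mathbf{M})$.

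For that, I would use the $\aone$-weak equivalences $Q_{2n-1} \cong {\mathbb A}^n \setminus 0 \cong \Sigma^{n-1}\gm{\sma n}$ recalled at the beginning of Section~\ref{ss:Suslin}, so that $\tilde{H}^i(Q_{2n-1},\mathbf{M}) \cong \tilde{H}^i(\Sigma^{n-1}\gm{\sma n},\mathbf{M})$. Applying the suspension isomorphism in Nisnevich cohomology $n-1$ times gives $\tilde{H}^i(\Sigma^{n-1}\gm{\sma n},\mathbf{M}) \cong \tilde{H}^{i-n+1}(\gm{\sma n},\mathbf{M})$. Now one computes the reduced cohomology of the $\gm{}$-smash power iteratively: by definition of the contraction (Definition~\ref{defn:contraction}) and the standard splitting $\gm{\sma j} \wedge \gm{} $, one has $\tilde{H}^j(\gm{\sma n},\mathbf{M}) \cong \tilde{H}^{j-1}(\gm{\sma(n-1)},\mathbf{M}_{-1}) \cong \cdots \cong \tilde{H}^{j-n}(\Spec k, \mathbf{M}_{-n}) \oplus (\text{lower contraction contributions})$; more precisely, using $\gm{} \simeq \So \vee (S^1 \wedge \gm{})$ pointwise one gets $\tilde{H}^j(\gm{\sma n},\mathbf{M})$ is nonzero only for $j = n$, where it equals $\mathbf{M}_{-n}(k)$, since each smash with $\gm{}$ shifts cohomological degree up by one and replaces $\mathbf{M}$ by $\mathbf{M}_{-1}$, and the Eilenberg--Mac Lane space $K(\mathbf{M}_{-j},0)$ has cohomology concentrated in degree $0$ with value $\mathbf{M}_{-j}(k)$ (recall contractions of strictly $\aone$-invariant sheaves are strictly $\aone$-invariant, Proposition~\ref{prop:contractionisexact}(2), so this makes sense). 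Threading the degree bookkeeping: $\tilde{H}^i(Q_{2n-1},\mathbf{M})$ is nonzero precisely when $i - (n-1) = n$, i.e. $i = 2n-1$... wait—that is not matching the claim, so the bookkeeping must instead be $\tilde{H}^i(Q_{2n-1},\mathbf{M}) \cong \tilde{H}^{i-n+1}(\gm{\sma n},\mathbf{M})$ vanishes unless $i - n + 1 = n$ does not give $n-1$; rather one should remember $\gm{\sma n}$ is a \emph{co}homological object built from $\gm{}$, so $\tilde{H}^j(\gm{\sma n},\mathbf{M})$ concentrated in $j = n$ gives nonvanishing reduced cohomology of $Q_{2n-1}$ in degree $i$ with $i - (n-1) = n$ — the resolution of this discrepancy is the main computational point and is exactly the contraction shift that must be tracked carefully.

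Assembling: adding back the basepoint contribution gives $H^0 = \mathbf{M}(k)$; the reduced cohomology vanishes in intermediate degrees $1 \leq i \leq n-2$; in degree $i = n-1$ it equals $\mathbf{M}_{-n}(k)$; and it vanishes for $i > n-1$. Putting these together with the isomorphism $H^i(K(\Z(n),2n-1),\mathbf{M}) \cong H^i(Q_{2n-1},\mathbf{M})$ from the first paragraph yields the four-case formula in the statement.

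\textbf{Main obstacle.} The genuinely delicate step is the second paragraph: getting the degree bookkeeping for the iterated $\gm{}$-contraction exactly right, i.e. verifying that $\tilde{H}^i(Q_{2n-1},\mathbf{M})$ is supported in the single cohomological degree $i = n-1$ with value $\mathbf{M}_{-n}(k)$, rather than in degree $2n-1$ or elsewhere. One must carefully combine the $\aone$-weak equivalence ${\mathbb A}^n\setminus 0 \cong \Sigma^{n-1}\gm{\sma n}$, the $(n-1)$-fold Nisnevich suspension isomorphism, and the relation between $\tilde{H}^\bullet(\gm{}\wedge -,\mathbf{M})$ and $\tilde{H}^\bullet(-,\mathbf{M}_{-1})$ (which is essentially the definition of contraction together with the splitting $\gm{}\simeq \So\vee(S^1\wedge\gm{})$ after base change), iterating $n$ times; the bookkeeping is where all the index-matching happens and must be checked against the claimed answer. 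The reduction via Proposition~\ref{prop:homologicalandcohomologicalequivalences} and Theorem~\ref{thm:rationalizedmotivicspheres} in the first paragraph, by contrast, is formal.
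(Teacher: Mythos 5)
Your first paragraph reproduces the paper's reduction exactly: the corollary is obtained by combining Proposition~\ref{prop:homologicalandcohomologicalequivalences} (applied to the $\Q$-$\aone$-weak equivalence of Theorem~\ref{thm:rationalizedmotivicspheres}) with the computation of $H^i({\mathbb A}^n\setminus 0,\mathbf{M})$ for $\mathbf{M}$ strictly $\aone$-invariant. The paper simply cites that computation (\cite[Lemma 4.5]{AsokFaselSpheres}), whereas you attempt to reprove it; that is where your argument breaks down, and you yourself flag the discrepancy without resolving it.

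The error is in the contraction bookkeeping. You assert that each smash with $\gm{}$ ``shifts cohomological degree up by one and replaces $\mathbf{M}$ by $\mathbf{M}_{-1}$,'' which double-counts and lands the answer in degree $2n-1$ instead of $n-1$. The correct rule is that smashing with $\gm{}$ does \emph{not} shift the simplicial/cohomological degree at all: by the definition of contraction (Definition~\ref{defn:contraction}) and Theorem~\ref{thm:contractionsandgmloops}(2) (note that $\bpi_i^{\aone}(\Omega_{\gm{}}\mathscr{X}) = \bpi_i^{\aone}(\mathscr{X})_{-1}$ with the \emph{same} index $i$ on both sides), one has $\Omega_{\gm{}}K(\mathbf{M},j)\simeq K(\mathbf{M}_{-1},j)$ and hence $\tilde H^j(\gm{}\wedge\mathscr{X},\mathbf{M})\cong \tilde H^j(\mathscr{X},\mathbf{M}_{-1})$. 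Iterating, $\tilde H^j(\gm{\sma n},\mathbf{M})\cong H^j(\Spec k,\mathbf{M}_{-n})$, concentrated in $j=0$; the only degree shift comes from the simplicial suspension $S^{n-1}$, giving $\tilde H^i(Q_{2n-1},\mathbf{M})\cong \tilde H^{i-(n-1)}(\gm{\sma n},\mathbf{M})$, which is $\mathbf{M}_{-n}(k)$ for $i=n-1$ and zero otherwise — matching the statement. Also note that the ``splitting $\gm{}\simeq \So\vee(S^1\wedge\gm{})$'' you invoke is not a valid motivic statement ($\gm{}$ is not simplicially suspended); the relevant splitting is $(\gm{}\times U)_+\simeq \gm{}\wedge U_+\vee \gm{}{}_+\vee U_+$ modulo basepoint identifications, which is just a repackaging of the definition of $\mathbf{M}_{-1}$. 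With the corrected rule your second paragraph closes up and recovers the cited lemma; as written, the key computational step is wrong.
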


\begin{proof}
This follows by combining \cite[Lemma 4.5]{AsokFaselSpheres} and Proposition~\ref{prop:homologicalandcohomologicalequivalences}.
\end{proof}

\subsubsection*{Rational homotopy of motivic spheres: even dimensional quadrics}
The motivic analog of even-dimensional spheres are the quadrics $Q_{2n}$, which are $\aone$-homotopy equivalent to ${\pone}^{\sma n}$ (see the beginning of Section~\ref{ss:Suslin}).  Indeed, the complex realization of such spheres is homotopy equivalent to $S^{2n}$.  We also obtain the following motivic analog of a result of a classical result of Serre \cite[IV.4 Corollaire 2]{Serre}.  In order to formulate the statement, we use the $\aone$-EHP sequence of \cite{WickelgrenWilliams}.

\begin{prop}
\label{prop:evenspheres}
Suppose $k$ is a field that is not formally real and $R$ is a subring of $\Q$ in which $2$ is invertible.
\begin{enumerate}[noitemsep,topsep=1pt]
\item There is an $\aone$-fiber sequence of the form:
\[
Q_{2n-1} \longrightarrow \Omega Q_{2n} \longrightarrow \Omega Q_{4n-1}.
\]
\item The sequence from (1) splits to yield an $R$-$\aone$-equivalence of the form
\[
\Omega Q_{2n} \isomto Q_{2n-1} \times \Omega Q_{4n-1}.
\]
\end{enumerate}
\end{prop}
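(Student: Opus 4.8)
The plan is to establish the two parts in order, building on the $\aone$-EHP sequence and the Suslin-matrix machinery. For part (1), I would invoke the $\aone$-EHP sequence of \cite{WickelgrenWilliams}: for the motivic sphere $\pone^{\sma n} \simeq Q_{2n}$ there is an $\aone$-fiber sequence relating $Q_{2n-1}$, $\Omega Q_{2n}$ and $\Omega Q_{4n-1}$ via the James-type construction, exactly as in the classical case where $\Omega S^{2n}$ fits into a fibration with fiber $S^{2n-1}$ and base $\Omega S^{4n-1}$. The point is that the relevant quadrics have the same $\aone$-homotopy type as the smash powers of $\pone$ (and $\gm{}$-suspensions of spheres), so the $\aone$-EHP machinery applies verbatim. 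I would then identify the first map $Q_{2n-1} \to \Omega Q_{2n}$ with the canonical suspension-type map, and the second with the $\aone$-Hopf/Whitehead-square map; the fact that these fit into an $\aone$-fiber sequence is the content of loc. cit.

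For part (2), the strategy is to produce a splitting of the fiber sequence after $R$-$\aone$-localization by exhibiting an explicit section of $\Omega Q_{2n} \to \Omega Q_{4n-1}$, using the modified Suslin matrices. Concretely, recall from Lemma~\ref{lem:ishomotopic} that the map $P_n: Q_{4n-1} \to Sp_{2^{n-1}}$ lifts to $Q_{4n-1} \to Sp_{2n}$, and there is an $\aone$-fiber sequence $Sp_{2n-2} \to Sp_{2n} \to Q_{4n-1}$, so the composite $Q_{4n-1} \to Sp_{2n} \to Q_{4n-1}$ has motivic degree $(n-1)!$ (by the computation in the proof of Corollary~\ref{cor:symplecticsuslinsplitting} combined with Lemma~\ref{lem:degreeofsuslin}). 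After inverting $(n-1)!$ — which is allowed since $R$ contains $\tfrac{1}{2}$ and, more to the point, we may invert $(n-1)!$ and use Lemma~\ref{lem:gwwhen2isinvertible} to see $GW(k) \otimes R \cong R$ via rank so that degree $(n-1)!$ becomes a unit — this gives a splitting of $Sp_{2n} \to Q_{4n-1}$, hence by looping a splitting of $\Omega Q_{4n-1}$ off of $\Omega Sp_{2n}$. To transfer this to $\Omega Q_{2n}$ I would use the $\aone$-fiber sequence $Sp_{2n-2} \to Sp_{2n} \to Q_{4n-1}$ together with the identification of $Q_{2n}$ with the relevant quotient and the compatibility of Suslin matrices with the symplectic stabilization tower from Lemma~\ref{lem:ishomotopic}. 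Once the second map in the fiber sequence of (1) admits a section after $R$-$\aone$-localization, Theorem~\ref{thm:Raonelocalizationoffibersequences} (all spaces in sight are $\aone$-connected, indeed $\aone$-nilpotent as iterated $\gm{}$-loops and quotients of split groups, so weakly $\aone$-nilpotent by Theorems~\ref{thm:flagmanifoldaonenilpotent} and~\ref{thm:gmloopsaonenilpotent}) shows the fiber sequence remains a fiber sequence after applying $\LR$, and a split $\aone$-fiber sequence of $\aone$-connected spaces yields the product decomposition $\LR \Omega Q_{2n} \simeq \LR Q_{2n-1} \times \LR \Omega Q_{4n-1}$, which is the asserted $R$-$\aone$-equivalence.

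The main obstacle, I expect, is the explicit construction of the section of $\Omega Q_{2n} \to \Omega Q_{4n-1}$ and verifying that its composite with the fiber-sequence map has the right (invertible-after-localization) degree. In the classical case the EHP splitting for $\Omega S^{2n}$ after inverting $2$ comes from the James splitting together with a section built from the Whitehead product being divisible by $2$; here the analogous section must be produced from the symplectic Suslin matrix $P_n$, and one has to track carefully how $P_n$ interacts with the $\aone$-EHP map rather than merely with the stabilization $Sp_{2n-2}\hookrightarrow Sp_{2n}$. The degree bookkeeping — showing the relevant self-map of $\Omega Q_{4n-1}$ (equivalently, of ${\mathbb A}^{2n}\setminus 0$ up to looping) has motivic Brouwer degree a power of $2$ times a unit, so becomes invertible in $R$ once $2$ is inverted and $GW(k)\otimes R \cong R$ — is routine given Lemma~\ref{lem:degreeofsuslin} and Lemma~\ref{lem:gwwhen2isinvertible}, but identifying exactly which degree appears requires the same cohomology-of-$SL_n$ and cohomology-of-$Sp_{2n}$ input used in the proof of Theorem~\ref{thm:rationalizedmotivicspheres}. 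A secondary point to be careful about is that $Q_{2n}$ and $\pone^{\sma n}$ are only $\aone$-weakly equivalent, not equal, so one must consistently work in $\ho{k}$; this is harmless but needs to be stated.
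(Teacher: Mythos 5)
Your part (1) is exactly the paper's argument: both cite the $\aone$-EHP fiber sequence of Wickelgren--Williams applied to $Q_{2n-1}\cong \Sigma^{n-1}\gm{\sma n}$, using that $k$ is not formally real. No issues there.

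Part (2) has a genuine gap, and it is precisely at the point you flag as the "main obstacle." The splitting map you propose --- the symplectic Suslin matrix $P_n: Q_{4n-1}\to Sp_{2n}$ followed by $Sp_{2n}\to Q_{4n-1}$ --- has motivic Brouwer degree a multiple of $(n-1)!$ (a factorial, by Lemma~\ref{lem:degreeofsuslin} and Lemma~\ref{lem:ishomotopic}), \emph{not} a power of $2$ times a unit. For $n\geq 4$ this involves odd primes, so it does not become a unit in $GW(k)\otimes R\cong R$ when $R$ merely inverts $2$; your parenthetical "we may invert $(n-1)!$" is not permitted by the hypotheses of the statement, and following that route only proves the weaker splitting after inverting $(n-1)!$. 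Moreover, even granting that localization, the section you construct splits the map $Sp_{2n}\to Q_{4n-1}$, which is not the EHP map $\Omega Q_{2n}\to \Omega Q_{4n-1}$; the proposed "transfer" via the symplectic stabilization tower does not connect to $\Omega Q_{2n}\simeq \Omega(\pone^{\sma n})$, which is not a symplectic quotient.

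The correct ingredient --- and the one the paper uses --- is the motivic analog of the classical fact you allude to: take the Whitehead square $[\iota,\iota]: Q_{4n-1}\to Q_{2n}$ of the identity on $Q_{2n}$. By \cite[Theorem 4.4.1]{AWW}, the James--Hopf invariant $\mathrm{H}$ of $[\iota,\iota]$ is a class in $GW(k)$ of rank $2$ (the hyperbolic-type class), hence a unit in $GW(k)\otimes R\cong R$ once $2$ is inverted and $k$ is not formally real (Lemma~\ref{lem:gwwhen2isinvertible}). Looping $[\iota,\iota]$ then gives a section of $\mathrm{H}:\Omega Q_{2n}\to \Omega Q_{4n-1}$ after $R$-$\aone$-localization, and combined with Theorem~\ref{thm:Raonelocalizationoffibersequences} (which you correctly invoke) this yields the product decomposition. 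So the structure of your argument is right, but the section must come from the Whitehead square, not from Suslin matrices.
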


\begin{proof}
The first statement is a special case of: \cite[Theorem 8.3]{WickelgrenWilliams} applied to $\mathscr{X} = Q_{2n-1} \cong \Sigma^{n-1}\gm{\sma n}$ (see \cite[Corollary 1.4]{WickelgrenWilliams}: by hypothesis, $k$ is not formally real, and the parity condition from their statement is satisfied by construction).  Thus, we obtain an $\aone$-fiber sequence of the form:
\[
Q_{2n-1} \longrightarrow \Omega Q_{2n} \longrightarrow \Omega Q_{4n-1}.
\]

For the second point, the $\aone$-fiber sequence in question remains an $R$-$\aone$-fiber sequence, e.g., by appeal to Theorem~\ref{thm:Raonelocalizationoffibersequences}.  The long exact sequence in $\aone$-homotopy sheaves of the resulting fiber sequence takes the form:
\[
\cdots \longrightarrow \bpi_i^{\aone}(Q_{2n-1})_{R} \stackrel{\mathrm{E}}{\longrightarrow} \bpi_i^{\aone}(\Omega Q_{2n})_{R} \stackrel{\mathrm{H}}{\longrightarrow} \bpi_i^{\aone}(\Omega Q_{4n-1})_{R} \stackrel{\mathrm{P}}{\longrightarrow} \cdots
\]
Since $k$ is not formally real, $GW(k) \tensor R \cong R$ by Lemma~\ref{lem:gwwhen2isinvertible}.  Consider the Whitehead square $[\iota,\iota]$ of the identity map $\iota$ on $Q_{2n}$; this corresponds to an $\aone$-homotopy class of maps $Q_{4n-1} \to Q_{2n}$ (see \cite[\S 4.1]{AWW} for further discussion of Whitehead products in this motivic setting).  The computation of $\mathrm{H}$ on the Whitehead square of the identity is contained in \cite[Theorem 4.4.1]{AWW}; in this case, the relevant class lies in $GW(k) \tensor R$.  Indeed, it follows that this class is a unit $GW(k) \tensor R$ under the hypotheses on $k$ and $R$.  In other words, the simplicial loops of the Whitehead square of the identity yields a splitting of the $\aone$-EHP sequence and thus a decomposition of the form:
\[
\Omega Q_{2n} \cong Q_{2n-1} \times \Omega Q_{4n-1}.
\]
In essence, the argument we have given is a straightforward translation of Serre's proof to motivic homotopy theory; the results of \cite{AWW} and \cite{WickelgrenWilliams} guarantee that the necessary technology to make this translation sensible is in place.
\end{proof}

In classical topology there is a $\Q$-fiber sequence $S^{2n} \to K(\Z,2n) \to K(\Z,4n)$, where the first map is given by the fundamental class and the second map is given by the squaring map.  In motivic homotopy theory, the $\aone$-weak equivalence $Q_{2n} \cong {\pone}^{\sma n}$ shows that $\mathbf{M}(Q_{2n}) \cong \Z \oplus \Z(n)[2n]$ in Voevodsky's derived category of motives.  As a consequence $H^{2n,n}(Q_{2n},Z) = \Z$, and a choice of generator corresponds to an $\aone$-homotopy class of maps $Q_{2n} \to K(\Z(n),2n)$.  The following result, which refines and extends Proposition~\ref{prop:evenspheres} is the second statement of Theorem~\ref{thmintro:rationalizedspheres} from the introduction.

\begin{thm}
\label{thm:fibersequenceevenspheres}
If $k$ is a field that is not formally real, then for any integer $n \geq 1$, there is a $\Q$-$\aone$-fiber sequence of the form
\[
Q_{2n} \longrightarrow K(\Z(n),2n) \longrightarrow K(\Z(2n),4n),
\]
where the first morphism is the fundamental class in motivic cohomology, and the second morphism is the squaring map.
\end{thm}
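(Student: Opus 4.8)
The plan is to realize the claimed sequence as the $\Q$-$\aone$-localization of a fibration built from the cup square, and to identify its fiber with $Q_{2n}$ by feeding Theorem~\ref{thm:rationalizedmotivicspheres} into the $\aone$-EHP splitting of Proposition~\ref{prop:evenspheres}. Let $s\colon K(\Z(n),2n)\to K(\Z(2n),4n)$ be the squaring (cup-square) map, set $\mathscr{F}:=\operatorname{hofib}(s)$, so there is an $\aone$-fiber sequence $K(\Z(2n),4n-1)\to\mathscr{F}\xrightarrow{p}K(\Z(n),2n)\xrightarrow{s}K(\Z(2n),4n)$, and write $u\colon Q_{2n}\to K(\Z(n),2n)$ for the fundamental class. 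Since $Q_{2n}\simeq(\pone)^{\sma n}$ one has $\M(Q_{2n})\cong\Z\oplus\Z(n)[2n]$, and as $H^{p,q}(k,\Z)=0$ for $p>q$ this gives $H^{4n,2n}(Q_{2n},\Z)=0$; hence $s\circ u$ is null-$\aone$-homotopic, and a choice of null-homotopy yields a lift $\tilde g\colon Q_{2n}\to\mathscr{F}$ with $p\circ\tilde g=u$. The theorem amounts to: $\tilde g$ is a $\Q$-$\aone$-weak equivalence. (As in the proof of Theorem~\ref{thm:rationalizedmotivicspheres} one may reduce to a perfect non-formally-real base field by base change.)

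First I would dispose of the bookkeeping. The spaces $K(\Z(n),2n)$ and $K(\Z(2n),4n)$ are $\aone$-simple (Example~\ref{ex:basicexamples}); $Q_{2n}\simeq S^n\sma\gm{\sma n}$ is $\aone$-simple for $n\geq 2$ and is $\aone$-nilpotent for $n=1$ (it is $\pone$, Theorem~\ref{thm:flagmanifoldaonenilpotent}); so all of them, together with $\mathscr{F}$ and $\mathscr{G}:=\operatorname{hofib}(u)$, are weakly $\aone$-nilpotent (Corollary~\ref{cor:weaklyaonenilpotentfibersequences}; the fiber sequences in play are of connected spaces, surjectivity on $\bpi_1^{\aone}$ being automatic or easily checked). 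Consequently $\Q$-$\aone$-weak equivalences among them are detected on rational $\aone$-homotopy sheaves (Corollary~\ref{cor:raoneweakequivalencesaredetectedonhomotopysheaves}), and $\Q$-$\aone$-localization preserves all the $\aone$-fiber sequences in sight (Theorem~\ref{thm:Raonelocalizationoffibersequences}), so I may run five-lemma arguments in the associated long exact sequences of rational $\aone$-homotopy sheaves. Comparing $\mathscr{G}\to Q_{2n}\xrightarrow{u}K(\Z(n),2n)$ with $K(\Z(2n),4n-1)\to\mathscr{F}\xrightarrow{p}K(\Z(n),2n)$ along $\tilde g$, which is the identity on the common base, the five lemma reduces the problem to showing that the induced map on fibers $\bar g\colon\mathscr{G}\to K(\Z(2n),4n-1)$ is a $\Q$-$\aone$-weak equivalence.

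Next I would identify $\mathscr{G}$ rationally. The Whitehead square $[\iota,\iota]\colon Q_{4n-1}\to Q_{2n}$ of Proposition~\ref{prop:evenspheres} satisfies $u\circ[\iota,\iota]\in H^{2n,n}(Q_{4n-1},\Z)=0$ (again $\M(Q_{4n-1})\cong\Z\oplus\Z(2n)[4n-1]$ and $H^{p,q}(k)=0$ for $p>q$), so it lifts to a map $\beta\colon Q_{4n-1}\to\mathscr{G}$. Using the rational splitting $\Omega Q_{2n}\simeq_{\Q}Q_{2n-1}\times\Omega Q_{4n-1}$ of Proposition~\ref{prop:evenspheres}(2), the $\aone$-EHP fiber sequence $Q_{2n-1}\to\Omega Q_{2n}\to\Omega Q_{4n-1}$ of Proposition~\ref{prop:evenspheres}(1), and the equivalences $Q_{2n-1}\simeq_{\Q}K(\Z(n),2n-1)$ and $Q_{4n-1}\simeq_{\Q}K(\Z(2n),4n-1)$ of Theorem~\ref{thm:rationalizedmotivicspheres}, one checks that $\Omega u$ restricts to the $\Q$-$\aone$-equivalence of Theorem~\ref{thm:rationalizedmotivicspheres} on the suspension summand $Q_{2n-1}$ and to a null map on the Whitehead summand $\Omega Q_{4n-1}$, with no cross terms in the relevant bidegree (a short computation: $\tilde H^{2n-1,n}(Q_{2n-1}\sma\Omega Q_{4n-1})=0$ by connectivity for $n\geq2$). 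Taking homotopy fibers, $\beta\colon Q_{4n-1}\to\mathscr{G}$ is a $\Q$-$\aone$-weak equivalence, so it suffices to prove that $\bar g\circ\beta\colon Q_{4n-1}\to K(\Z(2n),4n-1)$ is a $\Q$-$\aone$-weak equivalence.

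Now $[Q_{4n-1},K(\Z(2n),4n-1)]_{\aone}\cong\Z$ (from $\M(Q_{4n-1})\cong\Z\oplus\Z(2n)[4n-1]$), generated by the class of \eqref{eqn:generatoranminus0} with $n$ there taken to be $2n$, so $\bar g\circ\beta=m\cdot(\text{that generator})$ for some integer $m$, and by Theorem~\ref{thm:rationalizedmotivicspheres} the problem is exactly to show $m\neq0$. This is the step that replaces the classical Serre-spectral-sequence computation of the first $k$-invariant of $S^{2n}$, and is the main obstacle. I would argue as follows: all of the data ($u$, $s$, $[\iota,\iota]$, and the null-homotopies defining $\tilde g$ and $\beta$) can be taken over $\Spec\Z$, and $[Q_{4n-1},K(\Z(2n),4n-1)]_{\aone}$ together with its generator is insensitive to base change, so $m$ is a universal integer; in characteristic zero complex realization carries the whole diagram to the classical rational fibration $S^{2n}\to K(\Z,2n)\to K(\Z,4n)$, for which the analogous integer is nonzero, whence $m\neq0$, and the positive-characteristic case follows by spreading out over $\Spec\Z$. (Alternatively, and uniformly in the characteristic, one pins down $m$ by tracing $[\iota,\iota]$ and the relevant Suslin matrix through the stable splitting of Proposition~\ref{prop:splittingofstableclassicalgroupsI} and the rational degeneration of Proposition~\ref{prop:stablerationalequivalences}, where Suslin's theorem provides a generator of $SK_1$.) Finally, the case $n=1$ requires a separate look at fundamental sheaves: $\bpi_1^{\aone}(Q_2)=\bpi_1^{\aone}(\pone)=F_{\aone}(\gm{})$ is a central extension of $\gm{}$ by $\K^{MW}_2$ (Example~\ref{ex:pi1poneaonenilpotent}), $\bpi_1^{\aone}(\mathscr{F})$ is a central extension of $\gm{}$ by $\K^{M}_2$, and since $-1$ is a sum of squares in $k$ the comparison $\K^{MW}_2\to\K^{M}_2$ is a rational isomorphism; a five-lemma comparison of these extensions shows $\tilde g$ induces an isomorphism on $\bpi_1^{\aone}(-)_{\Q}$, while the higher sheaves are handled exactly as above.
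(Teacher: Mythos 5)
Your reduction runs parallel to the paper's for most of its length: both proofs lift the fundamental class $u$ through the fiber $\mathscr{F}$ of the cup-square using $H^{4n,2n}(Q_{2n},\Z)=0$, both invoke Theorem~\ref{thm:Raonelocalizationoffibersequences} and Corollary~\ref{cor:raoneweakequivalencesaredetectedonhomotopysheaves} to test the lift on rational $\aone$-homotopy sheaves, and both use the EHP splitting of Proposition~\ref{prop:evenspheres} together with Theorem~\ref{thm:rationalizedmotivicspheres} to match the two ``summands'' $\bpi_{*-1}^{\aone}(Q_{2n-1})_{\Q}$ and $\bpi_*^{\aone}(Q_{4n-1})_{\Q}$ on each side. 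Your reorganization through $\mathscr{G}=\operatorname{hofib}(u)$ and the five lemma is a cosmetic repackaging of the paper's direct comparison of $\bpi_i^{\aone}(Q_{2n})_{\Q}$ with $\bpi_i^{\aone}(\mathscr{F})_{\Q}$. Both arguments then bottle-neck at the same point, which you correctly isolate: the integer $m$ with $\bar g\circ\beta=m\cdot\xi$ must be shown nonzero. Here the paper stays internal to motivic homotopy theory: the rational lift of $[\iota,\iota]$ composed with $Q_{2n}\to\mathscr{F}$ into the fiber $K(\Z(2n),4n-1)$ is pinned down using the fact that $[\iota,\iota]$ splits the $\aone$-EHP sequence, i.e., that its EHP Hopf invariant is a unit in $GW(k)\tensor\Q$ by \cite[Theorem 4.4.1]{AWW}; this is exactly the input that replaces the classical Serre spectral sequence, and it is available over any field satisfying the hypotheses.

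Your proposed closure of this step is where the genuine gap lies. Complex realization is fine in characteristic zero (the cycle class map identifies $[Q_{4n-1},K(\Z(2n),4n-1)]\cong\Z$ with $[S^{4n-1},K(\Z,4n-1)]$ compatibly with fundamental classes, and the classical Whitehead square has Hopf invariant $\pm 2$), but the passage to positive characteristic by ``spreading out over $\Spec\Z$'' is unjustified: the theorem's hypothesis is satisfied by \emph{every} field of positive characteristic (finite fields being a principal intended application, cf.\ Remark~\ref{rem:otherreformulations}), motivic Eilenberg--Mac\,Lane spaces and the groups $[\mathscr{X},K(\Z(q),p)]$ are not known to specialize across mixed characteristic in the way your argument needs, and no such specialization machinery is set up in the paper. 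Your fallback via Suslin matrices and Proposition~\ref{prop:splittingofstableclassicalgroupsI} is not developed enough to assess: the Whitehead square $[\iota,\iota]\colon Q_{4n-1}\to Q_{2n}$ does not appear in the stable splitting of $SL$, and no mechanism is given for extracting $m$ from $SK_1$. To repair the argument uniformly in the characteristic you should instead compute $m$ from the EHP Hopf invariant of $[\iota,\iota]$, as the paper does. (Two smaller points: your ``no cross terms by connectivity'' claim for $\tilde H^{2n-1,n}(Q_{2n-1}\sma\Omega Q_{4n-1})$ fails on pure connectivity grounds when $n=2$ and needs a weight argument; and the five-lemma comparison at $\bpi_1^{\aone}$ for $n=1$, which you sketch, requires checking that the boundary map $\K^M_2=\bpi_2^{\aone}(K(\Z(2),4))\to\bpi_1^{\aone}(\mathscr{F})$ is rationally injective, which is again the $m\neq 0$ statement in disguise.)
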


\begin{rem}
The standard proof of the corresponding statement in classical topology goes as follows.  One considers the squaring map $K(\Z,2n) \to K(\Z,4n)$, and then computes the rational cohomology of the homotopy fiber by a Serre spectral sequence argument.  The map from $S^{2n}$ to the homotopy fiber is easily seen to be a rational cohomology isomorphism and the result follows.  Just as with Theorem~\ref{thm:rationalizedmotivicspheres}, our proof of the corresponding motivic result is not a direct translation of this standard proof from topology.  In addition to our lack of a useable Serre spectral sequence, there are ``weight" issues that would arise.  In conjunction with Corollary~\ref{cor:beilinsonsoule}, Theorem~\ref{thm:fibersequenceevenspheres} gives another geometric interpretation of the Beilinson--Soul\'e vanishing conjecture; the resulting statement is our candidate for the natural motivic analog of the vanishing statement for rational homotopy groups of even-dimensional spheres.
\end{rem}

\begin{proof}
The cup square map corresponds to a morphism
\[
K(\Z(n),2n) \longrightarrow K(\Z(2n),4n).
\]
As discussed before the statement, we may choose a map $Q_{2n} \to K(\Z(n),2n)$ corresponding to a generator of $H^{2n,n}(Q_{2n},\Z)$.  Since $\mathbf{M}(Q_{2n}) = \Z \oplus \Z(n)[2n]$, the group $H^{4n,2n}(Q_{2n},\Z)$ vanishes and thus the composite map $Q_{2n} \to K(\Z(n),2n) \to K(\Z(2n),4n)$ is null $\aone$-homotopic.  A choice of a null $\aone$-homotopy yields a morphism fitting into the following diagram:
\[
\xymatrix{
\mathscr{F} \ar[r] &  K(\Z(n),2n) \ar[r] & K(\Z(2n),4n); \\
& Q_{2n} \ar[u]\ar@{-->}[ul]^{\exists} &
}
\]
Fix such a choice once and for all for the rest of this proof.

Since the spaces $K(\Z(n),2n)$ and $K(\Z(2n),4n)$ are $\aone$-simple, it follows from Theorem~\ref{cor:weaklyaonenilpotentfibersequences} that the $\aone$-homotopy fiber is $\aone$-nilpotent.  In that case, Theorem~\ref{thm:Raonelocalizationoffibersequences} implies that the $\Q$-$\aone$-localization of the induced fiber sequence
\[
\mathscr{F} \longrightarrow K(\Z(n),2n) \longrightarrow K(\Z(2n),4n)
\]
is again an $\aone$-fiber sequence.  We want to show that the map $Q_{2n} \to \mathscr{F}$ corresponding to our choice of null-homotopy of the composite is a $\Q$-$\aone$-weak equivalence.  Corollary~\ref{cor:raoneweakequivalencesaredetectedonhomotopysheaves} states that to check this morphism is a $\Q$-$\aone$-weak equivalence, it suffices to show that the induced map on $\Q$-$\aone$-homotopy sheaves is an isomorphism.

We first compute the $\Q$-$\aone$-homotopy sheaves of $\mathscr{F}$.  By appeal to Theorem~\ref{thm:aonelocalizationofnilpotentspaces}, $\bpi_i^{\aone}(\mathrm{L}_{\mathbb{Q}} \mathscr{F}) = \bpi_i^{\aone}(\mathscr{F})_{\Q}$.  The $\Q$-$\aone$-homotopy sheaves of $K(\Z(n),2n)$ and $K(\Z(2n),4n)$ may be computed by using the path-loop fibration
\[
K(\Z(n),2n-1) \longrightarrow P K(\Z(n),2n) \longrightarrow K(\Z(n),2n),
\]
and Theorem~\ref{thm:rationalizedmotivicspheres}.  In particular, we conclude that
\[
\bpi_i^{\aone}(K(\Z(n),2n))_{\Q} \cong \bpi_{i-1}^{\aone}(K(\Z(n),2n-1)_{\Q} \cong \bpi_{i-1}^{\aone}(Q_{2n-1})_{\Q}.
\]
Likewise, we conclude that
\[
\bpi_i^{\aone}(K(\Z(2n),4n))_{\Q} \cong \bpi_{i-1}^{\aone}(Q_{4n-1})_{\Q}.
\]
Putting these facts together, rationalizing the long exact sequence in $\aone$-homotopy sheaves yields an exact sequence of the form:
\[
\cdots \longrightarrow \bpi_{i}^{\aone}(Q_{2n-1})_{\Q} \longrightarrow \bpi_{i}^{\aone}(Q_{4n-1})_{\Q} \longrightarrow \bpi_i^{\aone}(\mathscr{F})_{\Q} \longrightarrow \bpi_{i-1}^{\aone}(Q_{2n-1})_{\Q} \longrightarrow \bpi_{i-1}^{\aone}(Q_{4n-1})_{\Q} \longrightarrow \cdots.
\]
We claim that this sequence is canonically split.

Consider the cup-squaring map $K(\Z(n),2n) \to K(\Z(2n),4n)$.  We claim that this map is null-$\aone$-homotopic after taking simplicial loops.  Indeed, the map obtained by simplicial looping is adjoint to a map
\[
\Sigma \Omega K(\Z(n),2n) \longrightarrow K(\Z(2n),4n).
\]
Since cup-products are null-homotopic in a suspension, it follows that the above map is null-$\aone$-homotopic; this observation yields the claimed splitting.

We use this observation now to show that the map $Q_{2n} \to \mathscr{F}$ described before the statement induces an isomorphism on homotopy sheaves, and is therefore a $\Q$-$\aone$-weak equivalence.  We may directly compute the $\Q$-$\aone$-homotopy sheaves of $Q_{2n}$ by appeal to Proposition~\ref{prop:evenspheres}.  Indeed, we observe that $\bpi_i^{\aone}(Q_{2n})_{\Q} \cong \bpi_{i-1}^{\aone}(Q_{2n-1})_{\Q} \oplus \bpi_i^{\aone}(Q_{4n-1})_{\Q}$.  To prove that the induced maps are isomorphisms, we just have to show that the induced maps on each summand are isomorphisms; this involves unwinding the definitions of the various maps.

To see that the map $Q_{2n} \to \mathscr{F}$ induces an isomorphism on the summand $\bpi_{i-1}^{\aone}(Q_{2n-1})_{\Q}$ we proceed as follows.  The summand of $\bpi_{i-1}^{\aone}(Q_{2n-1})_{\Q}$ in the $\bpi_i^{\aone}(Q_{2n})_{\Q}$ arises via the map $Q_{2n-1} \to \Omega \Sigma Q_{2n-1} \cong \Omega Q_{2n}$ in the $\aone$-EHP sequence.  Likewise, the summand of $\bpi_{i-1}^{\aone}(Q_{2n-1})_{\Q}$ in $\bpi_i^{\aone}(\mathscr{F})_{\Q}$ arises from the map $\mathscr{F} \to K(\Z(n),2n)$.  We therefore have a diagram of the form:
\[
\xymatrix{
Q_{2n-1} \ar[r]\ar[d] & \Omega Q_{2n} \ar[r]\ar[d] & \Omega \mathscr{F} \ar[d] \\
K(\Z(n),2n-1) \ar[r] &  \Omega \Sigma K(\Z(n),2n-1) \ar[r] & \Omega K(\Z(n),2n), \\
}
\]
where the top left horizontal morphism arises from the EHP sequence, the leftmost vertical morphism is the fundamental class, the middle vertical morphism is obtained by applying $\Omega \Sigma$ to the leftmost vertical morphism, and the right vertical map is the simplicial loops of the map $\mathscr{F} \to K(\Z(n),2n)$.  By construction the composite of the top right horizontal morphism and the rightmost vertical morphism is the loops on the fundamental class $Q_{2n} \to K(\Z(n),2n)$.  The bottom right horizontal morphism exists by appeal to the suspension isomorphism in motivic cohomology: there is an isomorphism in motivic cohomology $H^{2n-1,n}(Q_{2n-1},\Z) \cong H^{2n,n}(Q_{2n},\Z)$, and the resulting square commutes.  The claim about the summand $\bpi_{i-1}^{\aone}(Q_{2n-1})_{\Q}$ then follows.

The statement about the summand $\bpi_i^{\aone}(Q_{4n-1})_{\Q}$ is obtained similarly.  Indeed, as in the proof of Proposition~\ref{prop:evenspheres}, the summand in question in $\bpi_i^{\aone}(Q_{2n})$ arises from the choice of map $Q_{4n-1} \to Q_{2n}$; for concreteness we fix the one given by the $\aone$-homotopy class of the Whitehead square $[\iota,\iota]$.  This morphism fits into a square of the form:
\[
\xymatrix{
Q_{4n-1} \ar[d]\ar[r]^{[\iota,\iota]} & Q_{2n} \ar[d] \\
K(\Z(2n),4n-1) \ar[r] & \mathscr{F},
}
\]
where the left vertical map is the fundamental class, the bottom horizontal map is the map $\Omega K(\Z(2n),4n) \to \mathscr{F}$ from the fiber sequence structure.  Under the assumptions on $k$, the left vertical map is a $\Q$-$\aone$-weak equivalence, and the bottom horizontal map induces the summand of $\bpi_i^{\aone}(Q_{4n-1})_{\Q}$ in $\bpi_i^{\aone}(\mathscr{F})_{\Q}$.  This square commutes after rationalization.  Indeed, the composite map $Q_{4n-1} \to Q_{2n} \to K(\Z(n),2n)$ is null, and therefore lifts (rationally) to a map $Q_{4n-1} \to K(\Z(2n),4n-1)$.  The group of such homomorphisms is isomorphic to $\Q$, so it suffices to observe that the lifted map is non-zero, in which case it is a multiple of the fundamental class.  Since $[\iota,\iota]$ splits the $\aone$-EHP sequence, it induces the identity $\Omega Q_{4n-1} \to \Omega Q_{4n-1}$ and thus is not null-$\aone$-homotopic.
\end{proof}

\begin{rem}
If $k$ is a subfield of $\cplx$, then the complex realization of the $\Q$-$\aone$-fiber sequence of Theorem~\ref{thm:fibersequenceevenspheres} is the $\Q$-fiber sequence $S^{2n} \to K(\Z,2n) \to K(\Z,4n)$.  This fact follows immediately from the fact that the complex realization of $Q_{2n}$ is $S^{2n}$ and that the complex realizations of motivic Eilenberg-Mac Lane spaces of this form are corresponding ordinary Eilenberg-Mac Lane spaces \cite[Corollary 3.48]{VMEM}.  Indeed, the induced map $H^{2n,n}(Q_{2n},\Z) \to H^{2n}(S^{2n},\Z)$ is precisely the cycle class map, which sends the fundamental class to the fundamental class, and a similar statement holds for the squaring map.
\end{rem}

\begin{footnotesize}
\bibliographystyle{alpha}
\bibliography{a1nilpotent}

\begin{thebibliography}{AWW17}

\bibitem[Abe69]{Abe}
E.~Abe.
\newblock Chevalley groups over local rings.
\newblock {\em Tohoku Math. J. (2)}, 21:474--494, 1969.

\bibitem[ADF17]{ADF}
A.~Asok, B.~Doran, and J.~Fasel.
\newblock Smooth models of motivic spheres and the clutching construction.
\newblock {\em Int. Math. Res. Not. IMRN}, 6:1890--1925, 2017.

\bibitem[AF14a]{AsokFaselSpheres}
A.~Asok and J.~Fasel.
\newblock Algebraic vector bundles on spheres.
\newblock {\em J. Topol.}, 7(3):894--926, 2014.

\bibitem[AF14b]{AsokFaselThreefolds}
A.~Asok and J.~Fasel.
\newblock A cohomological classification of vector bundles on smooth affine
  threefolds.
\newblock {\em Duke Math. J.}, 163(14):2561--2601, 2014.

\bibitem[AF16]{AFComparison}
A.~Asok and J.~Fasel.
\newblock Comparing {E}uler classes.
\newblock {\em Q. J. Math.}, 67(4):603--635, 2016.

\bibitem[AF17]{AsokFaselKO}
A.~Asok and J.~Fasel.
\newblock An explicit {$KO$}-degree map and applications.
\newblock {\em J. Topol.}, 10(1):268--300, 2017.

\bibitem[AFH20]{AsokFaselHopkinsplocal}
A.~Asok, J.~Fasel, and M.J. Hopkins.
\newblock Algebraic vector bundles and {$p$}-local {${\mathbb A}^1$}-homotopy
  theory.
\newblock {\em Preprint}, available at \url{https://arxiv.org/abs/2008.03363},
  2020.

\bibitem[AHW17]{AHW}
A.~Asok, M.~Hoyois, and M.~Wendt.
\newblock Affine representability results in {${\mathbb A}^1$}-homotopy theory
  {I}: vector bundles.
\newblock {\em Duke Math. J.}, 166(10):1923--1953, 2017.

\bibitem[AHW18]{AHWII}
A.~Asok, M.~Hoyois, and M.~Wendt.
\newblock Affine representability results in {$\Bbb A^1$}-homotopy theory,
  {II}: {P}rincipal bundles and homogeneous spaces.
\newblock {\em Geom. Topol.}, 22(2):1181--1225, 2018.

\bibitem[AHW20]{AHWIII}
A.~Asok, M.~Hoyois, and M.~Wendt.
\newblock Affine representability results in {$\mathbb{A}^1$}-homotopy theory
  {III}: finite fields and complements.
\newblock {\em Algebr. Geom.}, 7(5):634--644, 2020.

\bibitem[AWW17]{AWW}
A.~Asok, K.~Wickelgren, and T.~B. Williams.
\newblock The simplicial suspension sequence in {$\mathbb{A}^1$}-homotopy.
\newblock {\em Geom. Topol.}, 21(4):2093--2160, 2017.

\bibitem[Bas74]{Bass}
H.~Bass.
\newblock Clifford algebras and spinor norms over a commutative ring.
\newblock {\em Amer. J. Math.}, 96:156--206, 1974.

\bibitem[Big12]{Biglari}
S.~Biglari.
\newblock Motives of reductive groups.
\newblock {\em Amer. J. Math.}, 134(1):235--257, 2012.

\bibitem[BK72]{BK}
A.~K. Bousfield and D.~M. Kan.
\newblock {\em Homotopy limits, completions and localizations}.
\newblock Lecture Notes in Mathematics, Vol. 304. Springer-Verlag, Berlin-New
  York, 1972.

\bibitem[Blo86]{Bloch}
S.~Bloch.
\newblock Algebraic cycles and higher {$K$}-theory.
\newblock {\em Adv. in Math.}, 61(3):267--304, 1986.

\bibitem[Caz12]{Cazanave}
C.~Cazanave.
\newblock Algebraic homotopy classes of rational functions.
\newblock {\em Ann. Sci. \'Ec. Norm. Sup\'er. (4)}, 45(4):511--534 (2013),
  2012.

\bibitem[CD19]{CisinskiDeglise}
D.-C. Cisinski and F.~D{\'e}glise.
\newblock Triangulated categories of mixed motives.
\newblock pages xlii+406, [2019] \copyright 2019.

\bibitem[CH21]{ChoudhuryHogadi}
U.~Choudhury and A.~Hogadi.
\newblock The {H}urewicz map in motivic homotopy theory.
\newblock {\em available at} \url{https://arxiv.org/abs/2101.01489}, 2021.

\bibitem[Con14]{Conrad}
B.~Conrad.
\newblock Reductive group schemes.
\newblock In {\em Autour des sch\'emas en groupes. {V}ol. {I}}, volume 42/43 of
  {\em Panor. Synth\`eses}, pages 93--444. Soc. Math. France, Paris, 2014.

\bibitem[CP93]{CasacubertaPeschke}
C.~Casacuberta and G.~Peschke.
\newblock Localizing with respect to self-maps of the circle.
\newblock {\em Trans. Amer. Math. Soc.}, 339(1):117--140, 1993.

\bibitem[D{\'e}g07]{DegliseFinCor}
F.~D{\'e}glise.
\newblock Finite correspondences and transfers over a regular base.
\newblock In {\em Algebraic cycles and motives. {V}ol. 1}, volume 343 of {\em
  London Math. Soc. Lecture Note Ser.}, pages 138--205. Cambridge Univ. Press,
  Cambridge, 2007.

\bibitem[D{\'e}g08]{Deglise}
F.~D{\'e}glise.
\newblock Motifs g\'en\'eriques.
\newblock {\em Rend. Semin. Mat. Univ. Padova}, 119:173--244, 2008.

\bibitem[Del09]{Deligne}
P.~Deligne.
\newblock Voevodsky's lectures on motivic cohomology 2000/2001.
\newblock In {\em Algebraic topology}, volume~4 of {\em Abel Symp.}, pages
  355--409. Springer, Berlin, 2009.

\bibitem[DK80]{DwyerKan}
W.~G. Dwyer and D.~M. Kan.
\newblock Function complexes in homotopical algebra.
\newblock {\em Topology}, 19(4):427--440, 1980.

\bibitem[EKM08]{EKM}
R.~Elman, N.~Karpenko, and A.~Merkurjev.
\newblock {\em The algebraic and geometric theory of quadratic forms},
  volume~56 of {\em American Mathematical Society Colloquium Publications}.
\newblock American Mathematical Society, Providence, RI, 2008.

\bibitem[Fas08]{FaselChowWitt}
J.~Fasel.
\newblock Groupes de {C}how-{W}itt.
\newblock {\em M\'em. Soc. Math. Fr. (N.S.)}, 113:viii+197, 2008.

\bibitem[Fas12]{Fasel}
J.~Fasel.
\newblock A degree map on unimodular rows.
\newblock {\em J. Ramanujan Math. Soc.}, 27(1):23--42, 2012.

\bibitem[Goe95]{Goerss}
P.~G. Goerss.
\newblock Simplicial chains over a field and {$p$}-local homotopy theory.
\newblock {\em Math. Z.}, 220(4):523--544, 1995.

\bibitem[Gro97]{Gross}
B.~H. Gross.
\newblock On the motive of a reductive group.
\newblock {\em Invent. Math.}, 130(2):287--313, 1997.

\bibitem[Guz19]{Guzman}
G.~Guzman.
\newblock Rational and {$p$}-local motivic homotopy theory.
\newblock {\em Preprint}, available at \url{https://arxiv.org/abs/1911.05061},
  2019.

\bibitem[Har61]{Harris}
B.~Harris.
\newblock On the homotopy groups of the classical groups.
\newblock {\em Ann. of Math. (2)}, 74:407--413, 1961.

\bibitem[Hir03]{Hirschhorn}
P.~S. Hirschhorn.
\newblock {\em Model categories and their localizations}, volume~99 of {\em
  Mathematical Surveys and Monographs}.
\newblock American Mathematical Society, Providence, RI, 2003.

\bibitem[HMR75]{HMR}
P.~Hilton, G.~Mislin, and J.~Roitberg.
\newblock {\em Localization of nilpotent groups and spaces}.
\newblock North-Holland Publishing Co., Amsterdam-Oxford; American Elsevier
  Publishing Co., Inc., New York, 1975.
\newblock North-Holland Mathematics Studies, No. 15, Notas de Matem\'atica, No.
  55. [Notes on Mathematics, No. 55].

\bibitem[Hol08]{Hollander}
S.~Hollander.
\newblock A homotopy theory for stacks.
\newblock {\em Israel J. Math.}, 163:93--124, 2008.

\bibitem[Hop45]{Hopf}
H.~Hopf.
\newblock \"{U}ber die {B}ettischen {G}ruppen, die zu einer beliebigen {G}ruppe
  geh\"{o}ren.
\newblock {\em Comment. Math. Helv.}, 17:39--79, 1945.

\bibitem[Hov99]{Hovey}
M.~Hovey.
\newblock {\em Model categories}, volume~63 of {\em Mathematical Surveys and
  Monographs}.
\newblock American Mathematical Society, Providence, RI, 1999.

\bibitem[Hoy15]{HoyoisHM}
M.~Hoyois.
\newblock From algebraic cobordism to motivic cohomology.
\newblock {\em J. Reine Angew. Math.}, 702:173--226, 2015.

\bibitem[Jar00]{JardineSymmetric}
J.~F. Jardine.
\newblock Motivic symmetric spectra.
\newblock {\em Doc. Math.}, 5:445--552, 2000.

\bibitem[Jar01]{JardineStacks}
J.~F. Jardine.
\newblock Stacks and the homotopy theory of simplicial sheaves.
\newblock volume~3, pages 361--384. 2001.
\newblock Equivariant stable homotopy theory and related areas (Stanford, CA,
  2000).

\bibitem[Jar15]{Jardine}
J.~F. Jardine.
\newblock {\em Local homotopy theory}.
\newblock Springer Monographs in Mathematics. Springer, New York, 2015.

\bibitem[Kah05]{Kahn}
B.~Kahn.
\newblock Algebraic {$K$}-theory, algebraic cycles and arithmetic geometry.
\newblock In {\em Handbook of {$K$}-theory. {V}ol. 1, 2}, pages 351--428.
  Springer, Berlin, 2005.

\bibitem[Lev94]{LevineQ}
M.~Levine.
\newblock Bloch's higher {C}how groups revisited.
\newblock {\em Ast\'erisque}, 226:10, 235--320, 1994.
\newblock $K$-theory (Strasbourg, 1992).

\bibitem[Lev10]{LevineSandT}
M.~Levine.
\newblock Slices and transfers.
\newblock {\em Doc. Math.}, (Extra vol.: Andrei A. Suslin sixtieth
  birthday):393--443, 2010.

\bibitem[Lur09]{Lurie}
J.~Lurie.
\newblock {\em Higher topos theory}, volume 170 of {\em Annals of Mathematics
  Studies}.
\newblock Princeton University Press, Princeton, NJ, 2009.

\bibitem[Mat66]{Matsumoto}
H.~Matsumoto.
\newblock Subgroups of finite index in certain arithmetic groups.
\newblock In {\em Algebraic {G}roups and {D}iscontinuous {S}ubgroups ({P}roc.
  {S}ympos. {P}ure {M}ath., {B}oulder, {C}olo., 1965)}, pages 99--103. Amer.
  Math. Soc., Providence, R.I., 1966.

\bibitem[ML98]{MacLane}
S.~Mac~Lane.
\newblock {\em Categories for the working mathematician}, volume~5 of {\em
  Graduate Texts in Mathematics}.
\newblock Springer-Verlag, New York, second edition, 1998.

\bibitem[Mor04]{Mpi0}
F.~Morel.
\newblock On the motivic {$\pi_0$} of the sphere spectrum.
\newblock In {\em Axiomatic, enriched and motivic homotopy theory}, volume 131
  of {\em NATO Sci. Ser. II Math. Phys. Chem.}, pages 219--260. Kluwer Acad.
  Publ., Dordrecht, 2004.

\bibitem[Mor05]{MStable}
F.~Morel.
\newblock The stable {${\mathbb A}^1$}-connectivity theorems.
\newblock {\em $K$-Theory}, 35(1-2):1--68, 2005.

\bibitem[Mor06a]{MICM}
F.~Morel.
\newblock {$\mathbb A^1$}-algebraic topology.
\newblock In {\em International {C}ongress of {M}athematicians. {V}ol. {II}},
  pages 1035--1059. Eur. Math. Soc., Z\"urich, 2006.

\bibitem[Mor06b]{MRational}
F.~Morel.
\newblock Rational stable splitting of grassmannians and rational motivic
  sphere spectrum.
\newblock 2006.

\bibitem[Mor11]{MFM}
F.~Morel.
\newblock On the {F}riedlander-{M}ilnor conjecture for groups of small rank.
\newblock In {\em Current developments in mathematics, 2010}, pages 45--93.
  Int. Press, Somerville, MA, 2011.

\bibitem[Mor12]{MField}
F.~Morel.
\newblock {\em {$\mathbb A^1$}-algebraic topology over a field}, volume 2052 of
  {\em Lecture Notes in Mathematics}.
\newblock Springer, Heidelberg, 2012.

\bibitem[MP12]{MayPonto}
J.~P. May and K.~Ponto.
\newblock {\em More concise algebraic topology}.
\newblock Chicago Lectures in Mathematics. University of Chicago Press,
  Chicago, IL, 2012.
\newblock Localization, completion, and model categories.

\bibitem[MS20]{MorelSawant}
F.~Morel and A.~Sawant.
\newblock Cellular {${\mathbb A}^1$-homology and the motivic version of
  {M}atsumoto's theorem}.
\newblock {\em Preprint}, available at \url{https://arxiv.org/abs/2007.14770},
  2020.

\bibitem[MV99]{MV}
F.~Morel and V.~Voevodsky.
\newblock {${\mathbf A}^1$}-homotopy theory of schemes.
\newblock {\em Inst. Hautes \'Etudes Sci. Publ. Math.}, 90:45--143 (2001),
  1999.

\bibitem[MVW06]{MVW}
C.~Mazza, V.~Voevodsky, and C.~Weibel.
\newblock {\em Lecture notes on motivic cohomology}, volume~2 of {\em Clay
  Mathematics Monographs}.
\newblock American Mathematical Society, Providence, RI; Clay Mathematics
  Institute, Cambridge, MA, 2006.

\bibitem[Pus04]{Pushin}
O.~Pushin.
\newblock Higher {C}hern classes and {S}teenrod operations in motivic
  cohomology.
\newblock {\em $K$-Theory}, 31(4):307--321, 2004.

\bibitem[PW10]{PaninWalterPontryaginClasses}
I.~Panin and C.~Walter.
\newblock Quaternionic {G}rassmannians and {P}ontryagin classes in algebraic
  geometry.
\newblock {\em Preprint} available at \url{http://arxiv.org/abs/1011.0649},
  2010.

\bibitem[Qui69]{QuillenRHT}
D.G. Quillen.
\newblock Rational homotopy theory.
\newblock {\em Ann. of Math. (2)}, 90:205--295, 1969.

\bibitem[Rio10]{Riou}
J.~Riou.
\newblock Algebraic {$K$}-theory, {${\bf A}^1$}-homotopy and {R}iemann-{R}och
  theorems.
\newblock {\em J. Topol.}, 3(2):229--264, 2010.

\bibitem[Ser53]{Serre}
J.-P. Serre.
\newblock Groupes d'homotopie et classes de groupes ab\'eliens.
\newblock {\em Ann. of Math. (2)}, 58:258--294, 1953.

\bibitem[ST15]{SchlichtingTripathi}
M.~Schlichting and G.~S. Tripathi.
\newblock Geometric models for higher {G}rothendieck-{W}itt groups in
  {$\Bbb{A}^1$}-homotopy theory.
\newblock {\em Math. Ann.}, 362(3-4):1143--1167, 2015.

\bibitem[Str12]{Strunk}
F.~Strunk.
\newblock On motivic spherical bundles.
\newblock 2012.
\newblock {\em Thesis Institut f\"ur Mathematik Universit\"at Osnabr\"uck},
  available at
  \url{https://repositorium.uni-osnabrueck.de/bitstream/urn:nbn:de:gbv:700-2013052710851/3/thesis_strunk.pdf}.

\bibitem[Sul05]{Sullivan}
D.~P. Sullivan.
\newblock {\em Geometric topology: localization, periodicity and {G}alois
  symmetry}, volume~8 of {\em $K$-Monographs in Mathematics}.
\newblock Springer, Dordrecht, 2005.
\newblock The 1970 MIT notes, Edited and with a preface by Andrew Ranicki.

\bibitem[Sus77]{Suslinstablyfree}
A.~A. Suslin.
\newblock Stably free modules.
\newblock {\em Mat. Sb. (N.S.)}, 102(144)(4):537--550, 632, 1977.

\bibitem[Sus82]{SuslinMennicke}
A.~A. Suslin.
\newblock Mennicke symbols and their applications in the {$K$}-theory of
  fields.
\newblock In {\em Algebraic {$K$}-theory, {P}art {I} ({O}berwolfach, 1980)},
  volume 966 of {\em Lecture Notes in Math.}, pages 334--356. Springer,
  Berlin-New York, 1982.

\bibitem[Voe03]{VRed}
V.~Voevodsky.
\newblock Reduced power operations in motivic cohomology.
\newblock {\em Publ. Math. Inst. Hautes \'Etudes Sci.}, 98:1--57, 2003.

\bibitem[Voe10]{VMEM}
V.~Voevodsky.
\newblock Motivic {E}ilenberg-{M}aclane spaces.
\newblock {\em Publ. Math. Inst. Hautes \'Etudes Sci.}, 112:1--99, 2010.

\bibitem[VW16]{VoelkelWendt}
K.~Voelkel and M.~Wendt.
\newblock On {$\mathbb A^1$}-fundamental groups of isotropic reductive groups.
\newblock {\em C. R. Math. Acad. Sci. Paris}, 354(5):453--458, 2016.

\bibitem[Whi78]{Whitehead}
G.~W. Whitehead.
\newblock {\em Elements of homotopy theory}, volume~61 of {\em Graduate Texts
  in Mathematics}.
\newblock Springer-Verlag, New York-Berlin, 1978.

\bibitem[Wil12]{Williams}
B.~Williams.
\newblock The motivic cohomology of {S}tiefel varieties.
\newblock {\em J. K-Theory}, 10(1):141--163, 2012.

\bibitem[WW19]{WickelgrenWilliams}
K.~Wickelgren and T.B. Williams.
\newblock The simplicial {EHP} sequence in {${\mathbb A}^1$}-algebraic
  topology.
\newblock {\em Geom. Topol.}, 23(4):1691--1777, 2019.

\end{thebibliography}
\end{footnotesize}
\Addresses
\end{document}